\newtheorem{lemma}{Lemma}[section]
\newtheorem{theorem}[lemma]{Theorem}
\newtheorem*{theorem*}{Theorem}
\newtheorem{corollary}[lemma]{Corollary}
\newtheorem{proposition}[lemma]{Proposition}
\newtheorem*{proposition*}{Proposition}
\newtheorem{problem}[lemma]{Problem}
\newtheorem*{problem*}{Problem}
\theoremstyle{definition}
\newtheorem{claim}{Claim}
\newtheorem*{claim*}{Claim}
\newtheorem{definition}[lemma]{Definition}
\newtheorem{example}[lemma]{Example}
\DeclareMathOperator*{\E}{\mathbb{E}}
\newcommand{\C}{{\mathbb C}}
\newcommand{\N}{{\mathbb N}}
\renewcommand{\P}{{\mathbb P}}
\newcommand{\Q}{{\mathbb Q}}
\newcommand{\R}{{\mathbb R}}
\newcommand{\T}{{\mathbb T}}
\newcommand{\Z}{{\mathbb Z}}
\newcommand{\CA}{{\mathcal A}}
\newcommand{\CB}{{\mathcal B}}
\newcommand{\CC}{{\mathcal C}}
\newcommand{\CD}{{\mathcal D}}
\newcommand{\CH}{{\mathcal H}}
\newcommand{\CI}{{\mathcal I}}
\newcommand{\CK}{{\mathcal K}}
\newcommand{\CN}{{\mathcal N}}
\newcommand{\CP}{{\mathcal P}}
\newcommand{\CQ}{{\mathcal Q}}
\newcommand{\CX}{{\mathcal X}}
\newcommand{\CZ}{{\mathcal Z}}
\newcommand{\FD}{{\mathfrak D}}
\newcommand{\FL}{{\mathfrak L}}
\newcommand{\ba}{\mathbf{a}}
\renewcommand{\b}{{\mathbf{b}}}
\newcommand{\bb}{{\mathbf{b}}}
\newcommand{\bc}{{\mathbf{c}}}
\newcommand{\bm}{{\mathbf{m}}}
\newcommand{\bn}{{\mathbf{n}}}
\newcommand{\be}{{\mathbf{e}}}
\newcommand{\bg}{{\mathbf{g}}}
\newcommand{\bk}{{\mathbf{k}}}
\newcommand{\bp}{{\mathbf{p}}}
\newcommand{\p}{{\mathbf{p}}}
\newcommand{\bq}{{\mathbf{q}}}
\newcommand{\q}{{\mathbf{q}}}
\let\ringaccent\r
\def\aa{\ringaccent a}
\def\aa{\ringaccent a}
\renewcommand{\r}{{\mathbf{r}}}
\newcommand{\br}{{\mathbf{r}}}
\newcommand{\bu}{{\mathbf{u}}}
\newcommand{\bv}{{\mathbf{v}}}
\newcommand{\bx}{{\mathbf{x}}}
\newcommand{\balpha}{{\boldsymbol{\alpha}}}
\newcommand{\bgamma}{{\boldsymbol{\gamma}}}
\newcommand{\bbeta}{{\boldsymbol{\beta}}}
\newcommand{\uh}{{\underline{h}}}
\newcommand{\uk}{{\underline{k}}}
\newcommand{\un}{{\underline{n}}}
\newcommand{\um}{{\underline{m}}}
\newcommand{\ur}{{\underline{r}}}
\newcommand{\uu}{{\underline{u}}}
\newcommand{\uv}{{\underline{v}}}
\newcommand{\veps}{\varepsilon}
\newcommand{\eps}{\epsilon}
\newcommand{\ueps}{{\underline{\epsilon}}}
\newcommand{\supp}{\textrm{supp}}
\newcommand{\norm}[1]{\left\Vert #1\right\Vert}
\newcommand{\fnnorm}[1]{|\!|\!| #1|\!|\!|}
\newcommand{\nnorm}[1]{\left|\!\left|\!\left| #1\right|\!\right|\!\right|}
\newcommand{\bignnorm}[1]{\Big|\!\Big|\!\Big| #1\Big|\!\Big|\!\Big|}
\newcommand{\inv}{^{-1}}
\DeclareMathOperator{\Span}{Span}
\DeclareMathOperator{\nonpol}{nonpol}
\DeclareMathOperator{\pol}{pol}
\DeclareMathOperator{\fracdeg}{frac\; deg}
\newcommand{\Krat}{{\CK_{\text{\rm rat}}}}
\newcommand{\abs}[1]{\mathopen{}\left| #1\mathclose{}\right|}
\newcommand{\bigabs}[1]{\bigl| #1 \bigr|}
\newcommand{\Bigabs}[1]{\Bigl| #1 \Bigr|}
\newcommand{\brac}[1]{\mathopen{}\left( #1 \mathclose{}\right)}
\newcommand{\Bigbrac}[1]{\Bigl( #1 \Bigr)}
\newcommand{\sfloor}[1]{{\lfloor #1 \rfloor}}
\newcommand{\floor}[1]{{\left \lfloor #1 \right \rfloor}}
\newcommand{\ceil}[1]{{\left \lceil #1 \right \rceil}}
\newcommand{\rem}[1]{\left \{ #1 \right \}}
\title[]{Seminorm estimates and joint ergodicity for pairwise independent Hardy sequences}
\author{Sebasti\'an Donoso, Andreas Koutsogiannis, Borys Kuca,\\ Wenbo Sun, and Konstantinos Tsinas}
\address[Sebasti\'an Donoso]{Departamento de Ingenier\'{\i}a
	Matem\'atica and Centro de Modelamiento Ma\-te\-m\'a\-ti\-co, Universidad de Chile and IRL-CNRS 2807, Beauchef 851, Santiago,
	Chile.} 
\email{sdonosof@uchile.cl}
\address[Andreas Koutsogiannis]{
Department of Mathematics, Aristotle University of Thessaloniki, Thessaloniki 54124, Greece}
\email{akoutsogiannis@math.auth.gr}
\address[Borys Kuca]{Faculty of Mathematics and Computer Science, Jagiellonian University, 30-348 Krak\'ow, Poland}
\email{borys.kuca@uj.edu.pl}
\address[Wenbo Sun]{Department of Mathematics, Virginia Tech, 225 Stanger Street, Blacksburg, VA, 24061, USA}
\email{swenbo@vt.edu}
\address[Konstantinos Tsinas]{Institute of Mathematics, Ecole Polytechnique F\'{e}d\'{e}rale de Lausanne (EPFL), Lausanne 1015,
Switzerland}
\email{konstantinos.tsinas@epfl.ch}
\thanks{The first author was partially funded by Centro de Modelamiento Matemático (CMM) FB210005, BASAL funds for centers of excellence from ANID-Chile and ANID/Fondecyt/1241346. The second author was supported by the ``Excellence in Research'' program of the Special Account for Research Funds AUTh (Code 10316).  The third author was supported by the NCN Polonez Bis 3 grant No. 2022/47/P/ST1/00854 (H2020 MSCA GA No. 945339); for the initial stages of the project, the third author was supported by the ELIDEK Grant No: 1684. The fourth author was partially supported by the NSF Grant DMS-2247331. The fifth author was initially funded by ELIDEK--Fellowship number 5367 (3rd Call for HFRI Ph.D. Fellowships) and currently supported by the Swiss National Science Foundation grant TMSGI2-211214. For the purpose of Open Access, the authors have applied a CC-BY public copyright licence to any Author Accepted Manuscript (AAM) version arising from this submission.}
\subjclass[2020]{Primary: 37A44; Secondary: 11B30, 28D05}
\keywords{Ergodic averages, joint ergodicity, recurrence, Host-Kra seminorms, Hardy fields.}
\begin{document}

\begin{abstract}

    We develop a robust structure theory for multiple ergodic averages of commuting transformations along Hardy sequences of polynomial growth. We then apply it to derive a number of novel results on joint ergodicity, recurrence and convergence. 
    In particular, we prove joint ergodicity for (a) pairwise independent Hardy sequences and weakly mixing transformations, (b) strongly independent Hardy sequences and ergodic transformations, (c) strongly irrationally independent Hardy sequences and totally ergodic transformations. 
     We use these joint ergodicity results to provide new recurrence results for multidimensional patterns along strongly independent Hardy sequences, showing for instance that all subsets of $\Z^2$ of positive upper density contain patterns of the form
    $$    (m_1, m_2),\; (m_1 + \sfloor{n^{\sqrt{2}}}, m_2),\; (m_1, m_2 + \sfloor{n^{\sqrt{2}} + n^{1/2}}).$$
    Last but not least, we positively resolve the joint ergodicity classification problem for pairwise independent Hardy sequences, of which the aforementioned families are special cases.
    
    
    While building on recent technical advances (e.g. PET coefficient tracking schemes and joint ergodicity criteria), our work introduces a number of technical developments of its own. We construct a suitable generalization of Host-Kra and box seminorms that quantitatively control ergodic averages along Hardy sequences. 
    We subsequently use them to obtain Host-Kra seminorm estimates for averages along all pairwise independent Hardy sequences. Furthermore, we develop an ergodic version of the quantitative concatenation argument that has recently found extensive use in combinatorics, number theory and harmonic analysis. Lastly, we obtain new simultaneous Taylor approximations for Hardy sequences, a crucial ingredient to deal with the aforementioned classes of Hardy sequences.

    
\end{abstract}

\maketitle

\tableofcontents

\section{Introduction}
Furstenberg's celebrated proof of the Szemer\'edi theorem \cite{Fu77} initiated the study of the limiting behavior of multiple ergodic averages, a rich field within ergodic theory with profound connections to combinatorics, number theory, Lie groups and harmonic analysis. Given a multiple ergodic average, one is interested in understanding whether the average converges (in norm or pointwise) and what recurrence properties it satisfies.  For either purpose, one usually needs to have some structural description of the average; that is, one needs to identify a (useful) seminorm and/or a factor that controls its limiting behavior.

In this paper, we develop a robust machinery to examine the $L^2(\mu)$-limiting behavior of multiple ergodic averages of the form  
\begin{align}\label{E: general average}
    \E_{n\in[N]}\Bigbrac{\prod_{i=1}^k T_i^{\floor{a_{1i}(n)}}}f_1\cdots \Bigbrac{\prod_{i=1}^k T_i^{\floor{a_{\ell i}(n)}}}f_\ell
\end{align}
(here, $\E\limits_{n\in[N]}:=\frac{1}{N}\sum\limits_{n=1}^N$ as customary in the field),
where $(X, \CX, \mu, T_1, \ldots, T_k)$ is a system of commuting invertible measure-preserving transformations on a standard probability space $(X, \CX, \mu)$ (henceforth referred to as \textit{system}); $f_1, \ldots, f_\ell$ are $L^\infty(\mu)$ functions; and $a_{ji}:\R\to\R$ belong to the broad class $\CH$ of \textit{Hardy functions of polynomial growth}, defined in Section \ref{SS: Hardy field functions}, that includes real polynomials (e.g. $\sqrt{2}t^2 + t$), fractional powers (e.g. $t^{3/2}$), and functions involving exponentials and logarithms (e.g. $t\log t$ or $\exp(\sqrt{\log t})$). As a special subclass of interest, we study averages 
\begin{align}\label{E: general average 2}
    \E_{n\in[N]} T_1^{\floor{a_1(n)}}f_1\cdots T_\ell^{\floor{a_\ell(n)}}f_\ell
\end{align}
exemplified by
\begin{align}
\label{E: average example 1.5}
            \E_{n\in[N]} T_1^{\sfloor{n^{\sqrt{2}}}}f_1\cdot T_2^{\sfloor{n^{\sqrt{2}}+n^{1/2}}}f_2;
\end{align}
for the latter average, no convergence or recurrence properties have been known so far except when $T_1 = T_2$. For those broad classes of averages, we obtain novel seminorm estimates and use them to derive new ergodic identities and combinatorial applications that significantly generalize recent advances on multiple ergodic averages of commuting transformations along polynomials and Hardy sequences \cite{DFKS22, DKS22, DKS23, Fr21, FrKu22a, KoTs23, Ts22}. 

{As an example of our results, we not only show that \eqref{E: average example 1.5} converges in $L^2(\mu)$, but we in fact find its limit to explicitly equal $\E(f_1|\CI(T_1))\cdot \E(f_2|\CI(T_2))$, where $\E(f_j|\CI(T_j))$ is the conditional expectation of $f_j$ onto the $\sigma$-algebra of $T_j$-invariant functions. When $T_1, T_2$ are ergodic, this simplifies to $\int f_1\; d\mu\cdot \int f_2\; d\mu$. We then use this limiting formula in conjunction with the Furstenberg correspondence principle to deduce that each subset of $\Z^2$ of positive upper density contains a configuration 
\begin{align*}
    (m_1, m_2),\; (m_1 + \sfloor{n^{\sqrt{2}}}, m_2),\; (m_1, m_2 + \sfloor{n^{\sqrt{2}} + n^{1/2}})
\end{align*}
for some $n\neq 0$ (in fact, we can ensure that $n$ is a prime number).
}

\subsection{Joint ergodicity}\label{SS: joint ergodicity results}
Our most exciting results concern \textit{joint ergodicity}, whose definition we recall now.
\begin{definition}[Joint ergodicity]
    Sequences $a_1, \ldots, a_\ell:\Z\to\R$ are \textit{jointly ergodic} for a system $(X, \CX, \mu, T_1, \ldots, T_\ell)$ if     
\begin{align}\label{E: joint ergodicity}
        \lim_{N\to\infty}\norm{\E_{n\in[N]}\prod_{j=1}^\ell T_j^{\floor{a_{j}(n)}}f_j - \prod_{j=1}^\ell \int f_j\, d\mu}_{L^2(\mu)} = 0
    \end{align}
    holds for all $f_1, \ldots, f_\ell\in L^\infty(\mu)$.
\end{definition}
 Introduced in the 1980s by Berend and Bergelson \cite{BB84}, joint ergodicity and its variants have been a subject of intense scrutiny in the last few years \cite{Ber87, BeHK09, BLS16, BMR20, BS22, BS23, BFM22, CFH11, DFKS22, DKS22, DKS23, DKS24, Fr10, Fr12, Fr22, Fr21, FrKr05,  FrKu22b, FrKu22c, FrKu22a, KK19, Kouts21,  KoSu23, KoTs23, Ts22}.  We derive a number of new joint ergodicity results on Hardy sequences, which vary in terms of how we balance the assumptions imposed on Hardy sequences and the system. First, we list various independence conditions on Hardy sequences, {each of which gives rise to new joint ergodicity results under suitable  assumptions on the system.}
\begin{definition}[Independence properties of Hardy sequences]\label{D: independence}
Let $a_1, \ldots, a_\ell\in\CH$. We call them:
\begin{enumerate}
\item \textit{pairwise independent} if for any $i\neq j$ and $c_i, c_j\in\R$ not both zero, we have
\begin{align*}
    \lim_{t\to\infty}\frac{|c_i a_i(t) + c_j a_j(t)|}{\log t}=\infty;
\end{align*}
\item \textit{strongly independent} if every nontrivial linear combination of $a_1, \ldots, a_\ell$ stays logarithmically away from rational polynomials, i.e. for all $c_1, \ldots, c_\ell\in\R$ not all zero and for all $p\in\Q[t]$, we have
\begin{equation}\label{E: lafrp}
    \lim_{t\to\infty}\frac{|c_1 a_1(t) + \cdots + c_\ell a_\ell(t) - p(t)|}{\log t} = \infty;
\end{equation}
    \item \textit{strongly irrationally independent} if \eqref{E: lafrp} holds for all $c_1, \ldots, c_\ell\in\R$ not all rational and for all $p\in\Q[t]$;
    \item  \textit{irrationally independent} if \eqref{E: lafrp} holds for all $c_1, \ldots, c_\ell\in \mathbb{R}\backslash\mathbb{Q}_{\ast}:=(\mathbb{R}\backslash\mathbb{Q})\cup\{0\}$ not all zero and for all $p\in\Q[t]$.
\end{enumerate}
\end{definition}

Clearly, strong independence implies strong irrational independence,  which implies irrational independence, which in turn implies pairwise independence. For instance,
 \begin{enumerate}
     \item the family $t^{\sqrt{2}},\; t^{\sqrt{2}}+t^{1/2}$ from \eqref{E: average example 1.5} is strongly independent; same with $t^{3/2},\; t^{3/2}+t^{1/2}$ or any other family of linearly independent \textit{fractional polynomials} $a_j(t) = \sum\limits_{i=1}^d \alpha_{ji}t^{b_i}$ for \emph{noninteger} positive real powers $b_1, \ldots, b_d$.
     \item $t^{\sqrt{2}},\; t^{\sqrt{2}}+t$ is a  strongly irrationally independent family that is not strongly independent; more generally, families of linearly independent {fractional polynomials} $a_j(t) = \sum\limits_{i=1}^d \alpha_{ji}t^{b_i}$ for (possibly integer) positive reals $b_1, \ldots, b_d$ and rational coefficients $\alpha_{ji}$ will be strongly irrationally independent, but not necessarily strongly independent;
     \item $t^3+at^2+a^2t,$ $t^2+at,$ for $a\in\mathbb{R}\backslash\mathbb{Q},$ is an irrationally independent family that is not strongly irrationally independent;
     \item linearly independent real polynomials are pairwise independent, but they need not even be strongly irrationally independent (consider e.g. $\sqrt{2}t^2,\; \sqrt{2}t^2 + \sqrt{3} t$).
 \end{enumerate}

{While technical, the properties from Definition \ref{D: independence} are natural for our purposes, as:
\begin{itemize}
    \item the ergodic averages along sequences satisfying condition (i) admit Host-Kra seminorm estimates (see Theorem \ref{T: HK control}); {that is because under condition (i), analytic maneuvers applied to obtain Theorem \ref{T: HK control} allow us to distinguish between $f_i$ and $f_j$ for $i\neq j$;}
    \item condition (ii) characterizes those sequences whose nontrivial linear combinations are equidistributed modulo 1 (see Theorem~\ref{T: Boshernitzan} below) and are therefore predestined to be jointly ergodic in the light of the joint ergodicity criteria (see Theorem \ref{T: joint ergodicity criteria});
    \item condition (iii) is a sufficient condition for joint ergodicity in totally ergodic systems (see Theorem \ref{T: joint ergodicity for strongly independent Hardy sequences}, which extends an earlier result of Frantzikinakis \cite[Theorem~1.9 (i)]{Fr21}); and  
    \item condition (iv) is a sufficient condition for (total) joint ergodicity in totally ergodic systems (see the second part of Theorem \ref{T: joint ergodicity for strongly independent Hardy sequences}, which extends \cite[Theorem~E]{KoSu23} of the second and fourth authors).
\end{itemize}
}


Our first joint ergodicity result deals with pairwise independent Hardy sequences and weakly mixing transformations. We emphasize that our definition of $\CH$ from Section \ref{SS: Hardy field functions} includes the assumption of polynomial growth.
    \begin{theorem}[Joint ergodicity for pairwise independent Hardy sequences and weakly mixing transformations]\label{T: Joint ergodicity for weakly mixing transformations}
     Let $\ell\in\N$ and $a_1, \ldots, a_\ell\in \CH$ be pairwise independent. Then $a_1, \ldots, a_\ell$ are jointly ergodic for all systems $(X, \CX, \mu, T_1, \ldots, T_\ell)$ in which $T_1, \ldots, T_\ell$ are all weakly mixing. 
    \end{theorem}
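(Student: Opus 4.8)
The plan is to reduce the theorem to a single analytic statement --- a Host--Kra seminorm estimate for the averages in \eqref{E: joint ergodicity}, valid for every pairwise independent tuple $a_1,\dots,a_\ell\in\CH$ --- and then exploit that weakly mixing systems have trivial Host--Kra factors. Concretely, suppose one has shown that there exist $s_1,\dots,s_\ell\in\N$, depending only on the tuple, such that for all systems $(X,\CX,\mu,T_1,\dots,T_\ell)$ and all $f_1,\dots,f_\ell$ with $\norm{f_j}_{L^\infty(\mu)}\le 1$,
\[
\limsup_{N\to\infty}\;\norm{\E_{n\in[N]}\prod_{j=1}^\ell T_j^{\floor{a_j(n)}}f_j}_{L^2(\mu)}\;\ll_{\,a_1,\dots,a_\ell}\;\min_{1\le j\le\ell}\nnorm{f_j}_{s_j,T_j}.
\]
Granting this, the theorem is immediate: write $f_j=\int f_j\,d\mu+g_j$ with $\int g_j\,d\mu=0$, expand the average in \eqref{E: joint ergodicity} multilinearly over subsets $S\subseteq\{1,\dots,\ell\}$, and observe that every term with $S\neq\varnothing$ is, after replacing the functions outside $S$ by the constant $1$ (using $T_j^k\one=\one$) and a harmless normalization, bounded in the $L^2$-limit by $\nnorm{g_j}_{s_j,T_j}$ for any $j\in S$; since $T_j$ is weakly mixing and $\int g_j\,d\mu=0$, the Host--Kra factors of $T_j$ are trivial, so $\nnorm{g_j}_{s_j,T_j}=0$. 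The only surviving term is $\prod_{j}\int f_j\,d\mu$, which is exactly the asserted limit. Thus weak mixing is used only to convert the seminorm bound into convergence --- no information about Kronecker-type factors of the $T_j$ is needed, which is why this is the most permissive of the three joint ergodicity theorems.

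The heart of the matter is therefore the displayed seminorm estimate, and I would prove it by a PET-type induction on the tuple $(a_1,\dots,a_\ell)$ driven by repeated applications of the van der Corput inequality. Each van der Corput step converts the average into an average over a shift parameter $h$ of correlation averages whose exponents are built from the differenced Hardy functions $a_j(\,\cdot+h)-a_j(\,\cdot)$, of strictly smaller growth; the floor functions are handled via $\floor{a(n+h)}-\floor{a(n)}=\floor{a(n+h)-a(n)}+\mathrm{O}(1)$, absorbing the bounded error by a further van der Corput or a pigeonhole. Iterating --- this is where the PET coefficient-tracking scheme does its bookkeeping --- lowers the complexity of the tuple until one is left controlling a single function $f_{j_0}$ twisted by a Hardy sequence that is either (i) essentially an integer polynomial, handled by the classical polynomial PET and producing a bound by $\nnorm{f_{j_0}}_{s,T_{j_0}}$; (ii) convergent to a constant, which is absorbed as a fixed power of $T_{j_0}$; or (iii) genuinely non-polynomial, where Boshernitzan-type equidistribution of Hardy sequences, combined with the generalized Host--Kra and box seminorms constructed in the paper, gives the required decay. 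Pairwise independence is precisely what keeps the induction from stalling: the hypothesis $\abs{c_i a_i(t)+c_j a_j(t)}/\log t\to\infty$ for $(c_i,c_j)\neq(0,0)$ rules out the sole obstruction --- a surviving combination behaving like $n^{\mathrm{i}\theta}$, equivalently an integer polynomial up to an $\mathrm{O}(\log t)$ error --- under which van der Corput fails to produce a gain.

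The main obstacle is exactly this seminorm estimate, and within it the treatment of pairwise independent tuples that are \emph{not} strongly independent. For such tuples the naive differencing scheme does not by itself collapse every correlation to a single seminorm: after enough van der Corput steps one is left with error terms governed by averages along several partially dependent ``directions'', and one needs the ergodic version of the quantitative concatenation argument --- developed in the paper as the analogue of the concatenation technique from additive combinatorics --- to fuse these directional seminorms into a genuine Host--Kra seminorm of one function. A secondary technical point is uniformity: van der Corput outputs averages that must be estimated uniformly over the shift parameters, which for Hardy (rather than polynomial) sequences requires the improved simultaneous Taylor approximations, so that the non-polynomial pieces can be compared on the relevant scales with polynomials of controlled degree, keeping the induction finite. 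Once the seminorm estimate is in hand, the reduction in the first paragraph is entirely routine.
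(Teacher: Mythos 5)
Your proposal is correct and follows essentially the same route as the paper: the paper deduces the theorem directly from the Host--Kra seminorm estimate (Theorem~\ref{T: HK control}) together with the fact that $\nnorm{f}_{s,T_j}=\abs{\int f\,d\mu}$ for weakly mixing $T_j$, concluding by a telescoping identity, which is the same routine step as your multilinear expansion into mean-zero parts. Your sketch of the seminorm estimate itself (van der Corput/PET with coefficient tracking, quantitative concatenation, improved simultaneous Taylor approximations, with pairwise independence guaranteeing the differences grow faster than $\log$) matches the paper's actual proof strategy for Theorem~\ref{T: HK control}.
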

While known for integer polynomials \cite{Ber87, FrKu22a}, Theorem \ref{T: Joint ergodicity for weakly mixing transformations} is completely new even for pairwise independent fractional polynomials, let alone pairwise independent elements of $\CH$ in general.

The assumption that pairwise linear combinations of $a_1, \ldots, a_\ell$ ``stay away'' from $\log$ is necessary because functions growing slower than $\log$ tend to behave badly with regards to norm convergence.
For instance, the averages $$\E_{n\in [N]} T^{\floor{\log_2 n}}f \quad \text{or } \quad \E_{n\in [N]} T^{\floor{n+\log_2 n}}f_1\cdot T^n f_2$$ {fail to converge in general by considering eigenfunctions of $T$ and breaking the average into dyadic intervals}. {Without the pairwise independence assumption, one cannot hope for a useful structural description of the average without making rather restrictive assumptions on the system.}

For strongly independent families of Hardy sequences, we obtain the following joint ergodicity result. In contrast to Theorem \ref{T: Joint ergodicity for weakly mixing transformations}, it makes no mixing assumptions on the system.
    \begin{theorem}[Joint ergodicity for strongly independent Hardy sequences]\label{T: joint ergodicity for strongly independent Hardy sequences}
     Let $\ell\in\N$ and $a_1, \ldots, a_\ell\in \CH$ be strongly independent. Then for all systems $(X, \CX, \mu, T_1, \ldots, T_\ell)$ and functions $f_1, \ldots, f_\ell\in L^\infty(\mu)$, we have
	\begin{align*}
        \lim_{N\to\infty}\norm{\E_{n\in[N]}\prod_{j=1}^\ell T_j^{\floor{a_{j}(n)}}f_j - \prod_{j=1}^\ell \E(f_j|\CI(T_j))}_{L^2(\mu)} = 0,
	\end{align*}
	where $\E(f_j|\CI(T_j))$ is the orthogonal projection of $f_j$ onto the invariant factor of $T_j$. 
     
     In particular, $a_1, \ldots, a_\ell$ are jointly ergodic for all systems $(X, \CX, \mu, T_1, \ldots, T_\ell)$ with $T_1, \ldots, T_\ell$ being ergodic.
 	\end{theorem}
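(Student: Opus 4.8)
The plan is to feed the paper's Host--Kra seminorm estimates for pairwise independent Hardy sequences, together with a joint ergodicity criterion, into a single Boshernitzan-type equidistribution check that strong independence is tailor-made to pass. I would begin with the customary invariant-factor splitting. Writing $f_j=\E(f_j|\CI(T_j))+f_j'$ with $\E(f_j'|\CI(T_j))=0$ and expanding $\prod_j T_j^{\floor{a_j(n)}}\bigbrac{\E(f_j|\CI(T_j))+f_j'}$, one uses that $T_j^m$ fixes every $\CI(T_j)$-measurable function; the term assembled from all the conditional expectations equals $\prod_j\E(f_j|\CI(T_j))$ with no limiting needed, whereas in every remaining term some factor $f_{j_0}'$ satisfies $\E(f_{j_0}'|\CI(T_{j_0}))=0$ and the other, now $n$-independent, conditional-expectation factors pull out of the $L^2$-norm since conditional expectation contracts $L^\infty$. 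As sub-families of strongly independent families are strongly independent, it then suffices to prove: \emph{if $a_1,\ldots,a_\ell\in\CH$ are strongly independent and $\E(f_{j_0}|\CI(T_{j_0}))=0$ for some $j_0$, then $\norm{\E_{n\in[N]}\prod_{j=1}^\ell T_j^{\floor{a_j(n)}}f_j}_{L^2(\mu)}\to 0$ as $N\to\infty$.}

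Next I would invoke seminorm control. Since strong independence implies pairwise independence, the paper's seminorm estimates apply to this average and supply an $s\in\N$, depending only on $a_1,\ldots,a_\ell$, such that the average tends to $0$ in $L^2(\mu)$ whenever $\nnorm{f_j}_{s,T_j}=0$ for some $j$. Hence, replacing each $f_j$ by its projection onto the characteristic factor $\CZ_s(T_j)$ of $\nnorm{\cdot}_{s,T_j}$ — which leaves $\E(f_j|\CI(T_j))$ unchanged, since $\CI(T_j)$ is contained in $\CZ_s(T_j)$ — I may assume every $f_j$ is $\CZ_s(T_j)$-measurable and, after standard inverse-limit approximations afforded by the Host--Kra structure theory, that each $T_j$ acts as a nilrotation on a nilmanifold. (For non-ergodic $T_j$ this also needs an auxiliary ergodic-decomposition argument; the substantive case is that all $T_j$ are ergodic, so that $\CI(T_j)$ is trivial and that statement becomes genuine joint ergodicity, and I would carry out that case in detail.)

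With seminorm control in hand, a joint ergodicity criterion reduces the task to verifying its equidistribution hypothesis, which — after the internal reduction through nilsystems, vertical Fourier analysis, Leibman's equidistribution theorem, and the paper's improved simultaneous Taylor approximations for Hardy sequences — amounts to the scalar statement that for all reals $c_1,\ldots,c_\ell$ not all zero (the nonzero vertical frequencies) and all $p\in\Q[t]$ (the lower-order and floor corrections produced by the nilpotent structure), the sequence $n\mapsto c_1a_1(n)+\cdots+c_\ell a_\ell(n)+p(n)$ is equidistributed modulo $1$. But $b:=c_1a_1+\cdots+c_\ell a_\ell+p$ lies in $\CH$, and strong independence says precisely that $\lim_{t\to\infty}|b(t)-q(t)|/\log t=\infty$ for every $q\in\Q[t]$ (absorb $p$ into $q$); Boshernitzan's equidistribution criterion for Hardy functions, recalled in Section~\ref{SS: Hardy field functions}, then forces $b(n)$ to be equidistributed modulo $1$, which closes the argument.

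The main obstacle is the middle step: distilling, from a joint average of several \emph{distinct} commuting transformations, a finite and controllable list of scalar Hardy-field equidistribution problems. One must keep track of the fractional parts $\{a_j(n)\}$ (which are not Hardy functions and are tamed only through the simultaneous Taylor approximations), of the polynomial terms manufactured by the nilpotent structure, and — most delicately — one must check that on the non-invariant part the relevant vertical frequency vectors $(c_1,\ldots,c_\ell)$ are genuinely nonzero, so that strong independence has something to act on. This bookkeeping is exactly the role played by the paper's generalized Host--Kra and box seminorms, the ergodic concatenation argument, and the joint ergodicity criterion; once they are in place, strong independence enters only through the clean Boshernitzan input of the last step.
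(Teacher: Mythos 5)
Your high-level skeleton matches the paper's proof: both reduce to three ingredients, namely the Host--Kra seminorm estimates of Theorem~\ref{T: HK control}, a joint ergodicity criterion, and the Boshernitzan equidistribution theorem. The opening reduction (decomposing $f_j$ into its $\CI(T_j)$-measurable part and the orthogonal complement, then pulling the $T_j$-invariant factors out of the $L^2$-norm by a sup-bound) is sound, if a little more elaborate than what the paper does, since the criterion of Theorem~\ref{T: joint ergodicity criteria} already builds in the passage to the invariant factors. Your closing Boshernitzan step is also right, modulo the observation that the polynomial $p\in\Q[t]$ is immediately absorbed into the polynomial $q$ appearing in the strong-independence condition, so it carries no extra information.

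The genuine gap is in the middle. You describe the criterion's equidistribution hypothesis as something obtained ``after the internal reduction through nilsystems, vertical Fourier analysis, Leibman's equidistribution theorem, and the paper's improved simultaneous Taylor approximations,'' and earlier you explicitly propose to pass via the Host--Kra structure theorem to nilmanifolds and compute there. That is not how the paper closes the argument, and if carried out literally it would hit a real obstacle: Leibman's equidistribution theorem is a statement about \emph{polynomial} orbits on a nilmanifold, and a Hardy-field analogue for several distinct commuting nilrotations with iterates $\floor{a_1(n)},\ldots,\floor{a_\ell(n)}$ is not available (Richter's work handles a single nilrotation along a single Hardy function and is already nontrivial). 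The paper's criterion, Theorem~\ref{T: joint ergodicity criteria} imported from \cite{FrKu22a}, is precisely designed to sidestep this: it is an abstract ergodic statement asserting that seminorm control plus verification of the limiting formula for \emph{nonergodic eigenfunctions} already gives the full conclusion, and checking it for eigenfunctions boils down, via \cite[Lemma~6.2]{Fr21} (the Riemann-integrability trick handling the floors), to the scalar condition $\E_{n\in[N]}e(\lambda_1\floor{a_1(n)}+\cdots+\lambda_\ell\floor{a_\ell(n)})\to 0$ for $\lambda_1,\ldots,\lambda_\ell$ not all zero, which Boshernitzan and strong independence supply. No nilmanifold equidistribution for Hardy sequences is needed, and the improved simultaneous Taylor approximations of Appendix~\ref{A: approximations} play their role inside the proof of the seminorm estimate (Theorem~\ref{T: box seminorm bound}), not in this equidistribution step. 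Replacing your nilsystem detour with a direct invocation of Theorem~\ref{T: joint ergodicity criteria} together with Lemma~\ref{L: equivalent form of good eq prop} brings your argument into line with the paper.
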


  	As a special case, Theorem~\ref{T: joint ergodicity for strongly independent Hardy sequences} applied to \eqref{E: average example 1.5} implies that 
   \begin{align*}
       \lim_{N\to\infty}\norm{\E_{n\in[N]} T_1^{\sfloor{n^{\sqrt{2}}}}f_1\cdot T_2^{\sfloor{n^{\sqrt{2}}+n^{1/2}}}f_2 - \int f_1\; d\mu\cdot \int f_2\; d\mu}_{L^2(\mu)} = 0
   \end{align*}
   if $T_1$ and $T_2$ are ergodic, a result not known before. More generally, it yields joint ergodicity for arbitrary linearly independent fractional polynomials with noninteger powers. 

   The relevance of strong independence comes from the equidistribution theorem of Boshernitzan, which implies that every nontrivial linear combination of strongly independent Hardy sequences is equidistributed.

\begin{theorem}[{\cite[Theorem 1.3]{Boshernitzan-equidistribution}}]\label{T: Boshernitzan}
    Let $a\in\mathcal{H}$. The sequence $(a(n))_{n\in \N}$ is equidistributed $\mod~\!1$ if and only if for every $p\in \Q[t]$ we have \[\lim_{t\to \infty}\frac{|a(t)-p(t)|}{\log t}=\infty.\]
\end{theorem}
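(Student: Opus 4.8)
The plan is to use Weyl's criterion and reduce everything to the growth rate of $a$. By Weyl's criterion, $(a(n))_n$ is equidistributed $\bmod 1$ exactly when $\E_{n\in[N]}e(h\,a(n))\to 0$ for every nonzero $h\in\Z$, writing $e(x):=e^{2\pi i x}$; since $ha\in\CH$ and $q\mapsto q/h$ permutes $\Q[t]$, scaling $a$ by $h$ affects neither side, so it is enough to treat $h=1$, i.e. to show that $\E_{n\in[N]}e(a(n))\to 0$ when the $\log$-separation holds and that $\E_{n\in[N]}e(a(n))\not\to 0$ when it fails.

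\emph{The ``only if'' direction.} Suppose $|a(t)-p(t)|/\log t$ does not tend to $\infty$ for some $p\in\Q[t]$. As the ratio lies in a Hardy field it converges in $[0,\infty]$, so the limit is finite and $a(t)-p(t)=O(\log t)$. Choosing $q\in\N$ with $q\,p(\Z)\subseteq\Z$ and scaling by $q$ reduces the matter to showing $\E_{n\in[N]}e(g(n))\not\to 0$ for $g:=q(a-p)\in\CH$ with $g(t)=O(\log t)$. I would argue that such slowly growing phases do not oscillate enough to cancel: $g$ is eventually monotone, so Euler--Maclaurin yields $\sum_{n\le N}e(g(n))=\int_1^N e(g(t))\,dt+O(\log N)$ (the error being $O\bigl(\int_1^N|g'(t)|\,dt\bigr)$, which is $O(\log N)$ since $g(t)=O(\log t)$), and after the substitution $u=g(t)$ the integrand's magnitude grows at least exponentially in $u$ near the endpoint $u=g(N)$, so a Laplace-type evaluation there gives $\bigl|\int_1^N e(g(t))\,dt\bigr|\asymp N$. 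Hence $\bigl|\E_{n\in[N]}e(g(n))\bigr|\gg 1$ and $(a(n))_n$ is not equidistributed.

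\emph{The ``if'' direction.} Assume the $\log$-separation for $a$. Using the asymptotic expansion of elements of $\CH$, peel off from $a$, in decreasing order of degree, every integer power of $t$ with a rational coefficient, so that $a=P+b$ with $P\in\Q[t]$ and $b\in\CH$ whose leading term is now a non-integer power of $t$, or an integer power with an irrational coefficient, or an integer power ``slowed down'' by a negative power of a logarithm (such as $t^m/\log t$). One checks, using that $q\mapsto q-P$ permutes $\Q[t]$ and that affine substitutions $t\mapsto rt+j$ preserve $\CH$ and the $\log$-separation, that $b$ and every $b(rt+j)$ again satisfy the $\log$-separation; in particular they are unbounded and outgrow $\log t$. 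For each of them the Weyl average $\E_{m}e(h\,b(rm+j))$ tends to $0$ by van der Corput's method: in the non-integer-degree and ``slowed-down'' regimes a suitable higher-order derivative of $b$ tends to zero at a non-degenerate rate, so the first/higher-derivative tests for exponential sums apply; in the integer-degree regime with irrational leading coefficient one iterates the van der Corput inequality until the phase is $\beta t+g(t)$ with $\beta$ irrational and $\deg g<1$, which equidistributes by Abel summation against the bounded partial sums of $\sum e(\beta n)$. Finally, $n\mapsto h\,P(n)\bmod 1$ is periodic with some period $r$, so partitioning $[N]$ into residue classes modulo $r$ writes $\E_{n\in[N]}e(h\,a(n))$ as a bounded linear combination of the averages $\E_{m}e(h\,b(rm+j))$, each of which tends to $0$; hence $\E_{n\in[N]}e(h\,a(n))\to 0$.

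The main obstacle is the ``if'' direction: it requires an asymptotic (Taylor) expansion of Hardy functions precise enough to carry out the peeling and to estimate the resulting remainders --- exactly the ``simultaneous Taylor approximations for Hardy sequences'' type of statement --- together with a careful choice of the exponential-sum estimate in each regime and, at every stage, a verification that the $\log$-separation hypothesis is inherited by the remainders, by their successive van der Corput differences, and by their affine rescalings, so that all of these inputs genuinely apply. It is this bookkeeping that makes the theorem delicate.
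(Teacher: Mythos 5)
The paper does not prove Theorem \ref{T: Boshernitzan}: it is stated as a citation to \cite[Theorem~1.3]{Boshernitzan-equidistribution} and used as a black box throughout. There is therefore no proof in the paper to compare yours against; the remarks below assess the sketch on its own terms.

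The outline (Weyl's criterion, reduction to $h=1$, a ``$\log$-close to a rational polynomial'' case, and a ``$\log$-separated'' case attacked by peeling off a rational polynomial and running van der Corput) has the right shape and broadly tracks Boshernitzan's original approach. Two places need repair.

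In the ``only if'' direction, the Laplace endpoint evaluation does not by itself give $\bigl|\int_1^N e(g(t))\,dt\bigr|\asymp N$. For $g$ growing strictly slower than $\log$ (try $g(t)=\sqrt{\log t}$), the endpoint contribution $e(g(N))/(2\pi i g'(N))$ has magnitude $\asymp N\sqrt{\log N}\gg N$, which is impossible for the integral of a unit-modulus function over $[1,N]$; the terms produced by integration by parts must cancel, so this cannot be where the lower bound comes from. The correct and simpler mechanism in that regime is that $g'\prec 1/t$, hence $g(N)-g(N/2)\to 0$ and the phase is essentially constant on dyadic blocks; when $g\asymp\log$ one instead computes $\int_1^N t^{2\pi i c}\,dt$ directly. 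The conclusion you want is true, but the argument as written would not establish it.

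In the ``if'' direction, the peeling implicitly assumes that every $a\in\CH$ has an asymptotic expansion in monomials $t^{\beta}$ (possibly ``slowed by a negative power of $\log$''). That trichotomy is incomplete: $t^2\log t$, $t^2\log\log t$, and similar functions belong to $\CH$ but are none of a non-integer power, an integer power with irrational coefficient, or $t^m/(\log t)^c$. The correct substitute is a decomposition into a real polynomial, a sum of strongly nonpolynomial pieces ($t^d\prec g\prec t^{d+1}$), and a decaying error, which is exactly Lemma~\ref{decomposition} in the paper. With that replacement the peeling is sound, but the derivative-test case analysis must then be run against a general strongly nonpolynomial leading term rather than against the three shapes you list; you flagged this bookkeeping as the crux, and this is where it lives.
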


Theorems \ref{T: Joint ergodicity for weakly mixing transformations} and \ref{T: joint ergodicity for strongly independent Hardy sequences} vastly generalize previously known joint ergodicity results that, broadly speaking, have dealt with
integer polynomials \cite{Ber87, CFH11, DFKS22, FrKr05, FrKu22a}, real polynomials \cite{KK19, Ko18}, and Hardy field functions \cite{BMR20, Fr10, Fr12, Fr21,  Ts22}, under various distinctness assumptions (in some cases for a single transformation, in other for multiple ones).
In particular, they imply norm convergence for a large class of ergodic averages for which no convergence results have been known before, an example being again the average \eqref{E: average example 1.5}.

 For strongly irrationally independent Hardy sequences, we obtain the following variant of Theorem \ref{T: joint ergodicity for strongly independent Hardy sequences}.

\begin{theorem}[Rational Kronecker control for strongly irrationally independent Hardy sequences]\label{T: Krat control}
    Let $\ell\in\N$ and $a_1, \ldots, a_\ell\in \CH$ be strongly irrationally independent. Then for all systems $(X, \CX, \mu, T_1, \ldots, T_\ell)$ and functions $f_1, \ldots, f_\ell\in L^\infty(\mu)$, we have
 \begin{align*}
        \lim_{N\to\infty}\norm{\E_{n\in[N]}\prod_{j=1}^\ell T_j^{\floor{a_{j}(n)}}f_j-\prod_{j=1}^\ell T_j^{\floor{a_{j}(n)}}\E(f_j|\Krat(T_j))}_{L^2(\mu)} = 0
	\end{align*}
  Here, $\E(f_j|\Krat(T_j))$ is the orthogonal projection of $f_j$ onto the rational Kronecker factor $\Krat(T_j)$.

      In particular, $a_1, \ldots, a_\ell$ are jointly ergodic for all systems $(X, \CX, \mu, T_1, \ldots, T_\ell)$ with $T_1, \ldots, T_\ell$ totally ergodic. The latter conclusion also holds if $a_1, \ldots, a_\ell$ are irrationally independent rather than strongly irrationally independent.\footnote{ This last conclusion extends \cite[Theorem~A]{KoSu23} to multiple commuting transformations.}
\end{theorem}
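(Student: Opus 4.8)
The plan is to reduce to the joint ergodicity statement (Theorem \ref{T: joint ergodicity for strongly independent Hardy sequences}) by passing to a convenient extension of the system. First I would invoke a version of the joint ergodicity criterion that reduces the identity
\[
\lim_{N\to\infty}\norm{\E_{n\in[N]}\prod_{j=1}^\ell T_j^{\floor{a_{j}(n)}}f_j-\prod_{j=1}^\ell T_j^{\floor{a_{j}(n)}}\E(f_j|\Krat(T_j))}_{L^2(\mu)} = 0
\]
to two ingredients: (i) a \emph{seminorm estimate} showing that the left-hand side average is controlled by Host--Kra-type seminorms of the individual functions (coming from the structure theory developed in the body of the paper for pairwise independent, hence in particular for strongly irrationally independent, Hardy families), and (ii) an \emph{equidistribution/limit-formula} input on nilsystems. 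Since each $a_j$ and each nontrivial combination $\sum c_j a_j$ is assumed to stay logarithmically away from $\Q[t]$ whenever at least one $c_j$ is irrational, the obstruction to full equidistribution is purely ``rational'': on a nilsystem, the orbit $(\prod_j g_j^{\floor{a_j(n)}}\Gamma)$ fails to equidistribute only through its projection to a finite (profinite/torus-rational) factor, which on the group-rotation level is exactly the rational Kronecker factor.

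Concretely, I would argue as follows. By the structure theorem it suffices to prove the identity when each $f_j$ is measurable with respect to a finite-step pro-nilfactor of $T_j$; standard approximation then lets me assume each $T_j$-system is an inverse limit of nilsystems, and by a further reduction (taking a suitable ergodic decomposition and a common nilpotent Lie group model via the commuting structure) that we are on an actual nilsystem. On a connected nilmanifold $G/\Gamma$ with $G$ connected and simply connected, Leibman's equidistribution theorem together with Boshernitzan's criterion (Theorem \ref{T: Boshernitzan}) forces the polynomial-like sequence $g(n)=\prod_j g_j^{\floor{a_j(n)}}$ to equidistribute in the closed subnilmanifold it generates, with the key point that the ``non-equidistributed directions'' all factor through the maximal torus $G/([G,G]G^0)$, i.e. through characters that see $a_j(n)$ only via the value of $\floor{a_j(n)}\bmod q$. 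Decomposing $f_j = \E(f_j|\Krat(T_j)) + (f_j - \E(f_j|\Krat(T_j)))$ and expanding the product multilinearly, every term containing at least one factor orthogonal to $\Krat(T_j)$ contributes an average whose nilsystem model has a function orthogonal to all such rational characters; the equidistribution then kills it in the limit. The surviving term is exactly $\prod_j T_j^{\floor{a_j(n)}}\E(f_j|\Krat(T_j))$, which is what we want to subtract.

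For the ``in particular'' clause: when $T_1,\ldots,T_\ell$ are totally ergodic we have $\Krat(T_j)$ trivial (the rational Kronecker factor of a totally ergodic system is the trivial factor), so $\E(f_j|\Krat(T_j)) = \int f_j\,d\mu$ and the main identity collapses to the joint ergodicity conclusion \eqref{E: joint ergodicity}. To upgrade from strongly irrationally independent to merely irrationally independent families (for totally ergodic systems), I would note that after replacing each $T_j$ by $T_j^q$ for an arbitrary integer $q\ge 1$ — which is still ergodic by total ergodicity — the relevant linear combinations get rescaled in a way that, combined with the fact that irrational independence allows only rational coefficients to produce slow growth, still yields equidistribution of every nontrivial orbit on the associated (rational) Kronecker pieces after passing to these powers; running the previous argument for all $q$ simultaneously and using that $\bigcap_q \Krat(T_j^q)$ is trivial under total ergodicity finishes the proof.

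The main obstacle I anticipate is step (ii): pinning down precisely that the failure of equidistribution of $(\prod_j g_j^{\floor{a_j(n)}}\Gamma)_n$ on a general (possibly disconnected, higher-step) nilmanifold is captured \emph{exactly} by the rational Kronecker factors of the individual transformations, rather than by some larger factor. This requires carefully combining Leibman's theorem with the floor-function bookkeeping — the maps $n\mapsto \floor{a_j(n)}$ are not polynomial, so one must either approximate them by genuine polynomial sequences on long subprogressions (using the improved simultaneous Taylor approximations advertised in the abstract) or work directly with the generalized-polynomial framework — and then verify that the only characters surviving are those that descend to $\Krat$. Handling the commuting (non-single-transformation) structure simultaneously, so that one genuinely lands on a nilsystem for the joint action rather than just for each $T_j$ separately, is the other delicate point; this is presumably where the seminorm estimates and the concatenation argument developed earlier in the paper do the heavy lifting.
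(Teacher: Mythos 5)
Your high-level skeleton (Host--Kra seminorm control plus an equidistribution input, fed into a joint ergodicity criterion) matches the paper's, but the step where you actually verify the equidistribution input is a genuine gap, and it is exactly the step you flag as your ``main obstacle.'' You propose to pass to pro-nilfactors via the Host--Kra structure theorem, reduce the commuting action to a single nilsystem, and run Leibman's theorem on the orbit $\prod_j g_j^{\floor{a_j(n)}}\Gamma$. None of this is carried out, and it cannot be carried out as stated: Leibman's theorem is about polynomial sequences, while $n\mapsto\floor{a_j(n)}$ is not polynomial, so you would need a full equidistribution theory for Hardy-field orbits on nilmanifolds (a separate body of work, cf.\ \cite{Fr10, R22, Ts24}); moreover, for \emph{commuting} transformations there is no reduction of the joint average to a single nilsystem of the kind you assume --- the seminorm estimates and concatenation machinery of the paper do not supply this, contrary to your closing remark. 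The paper's proof sidesteps all of this: Theorem~\ref{T: joint ergodicity criteria} (the criterion from \cite{FrKu22a}) reduces the identity \eqref{E: Krat control} to (a) seminorm control, which is Theorem~\ref{T: HK control}, and (b) verifying the identity only for \emph{nonergodic eigenfunctions}, i.e.\ to the purely abelian statement that $\E_{n\in[N]} e(\lambda_1\floor{a_1(n)}+\cdots+\lambda_\ell\floor{a_\ell(n)})\to 0$ whenever the eigenvalues $\lambda_j$ are not all rational. That vanishing is exactly ``good for strong irrational equidistribution,'' which follows from Boshernitzan's theorem via \cite[Lemma~6.4]{Fr21}; no nilmanifold analysis of the floor sequences is needed.

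Your treatment of the final clause (irrationally independent sequences and totally ergodic $T_j$) is also not a proof: the device of replacing $T_j$ by $T_j^q$ and intersecting the factors $\Krat(T_j^q)$ does not address the point, which is again an exponential-sum statement. Since total ergodicity forces all nonzero eigenvalues to be irrational, what must be shown is vanishing of the above exponential sums when $\lambda_1,\ldots,\lambda_\ell\in\R\setminus\Q_*$ are not all zero; this is the content of Lemma~\ref{2to3} (good for irrational equidistribution for totally ergodic systems, proved as in \cite[Proposition~2.8]{KoSu23}), and your sketch neither proves it nor reduces to it.
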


The appearance of the rational Kronecker factors comes from the fact that while the exponential averages of $c_1 a_1(n) + \cdots + c_\ell a_\ell(n)$ with at least one $c_j$ irrational will always vanish by Theorem \ref{T: Boshernitzan}, the same may not be the case when all $c_j$'s are rational.

Theorem~\ref{T: Krat control} is a multidimensional generalization of \cite[Theorem 1.9]{Fr21}. Since the family of strongly irrationally independent Hardy sequences includes linearly independent polynomials $a_1, \ldots, a_\ell\in\Z[t]$, Theorem~\ref{T: Krat control} contains as special cases all previously known results for linearly independent integer polynomials \cite{CFH11, FrKr05, FrKr06, FrKu22a}.

Before proceeding to state more results, we remark that the occurrences of floor functions in the previous statements can be individually replaced by other rounding functions, such as the ceiling function (denoted by $\lceil\cdot\rceil$) or the closest integer function (denoted by $[[\cdot]]$)  since $\lceil  t\rceil=-\floor{-t}$ and $[[t]]=\floor{t+1/2}$ for all $t\in\R.$ This is also the case for various other results in this paper, a fact that we will not explicitly mention again.
     
So far, we have stated results for broad, naturally occurring classes of Hardy sequences under either no assumption on the system or some fixed assumption (weak mixing or total ergodicity of individual transformations). Now, we switch gears and move towards establishing necessary and sufficient conditions on the system under which pairwise independent Hardy sequences are jointly ergodic. The definition below will frequently appear in the following discussion. 
    \begin{definition}[Ergodic sequences of transformations]
    A sequence of measure-preserving transformations $(T_n)_n$ on a probability space $(X, \CX, \mu)$ is \textit{ergodic} if 
    \[  \lim_{N\to\infty}\norm{\E_{n\in[N]} T_n f - \int f\, d\mu}_{L^2(\mu)} = 0    \]     
    for all $f\in L^\infty(\mu)$.
    \end{definition}
    
    There is a general heuristic that proposes a set of ``natural'' necessary and sufficient conditions for joint ergodicity. The problem below, stated before as \cite[Problem~1]{DKS23} (for the general case) or \cite[Conjecture 6.1]{BMR20} (for fractional polynomials), inquires about the extent to which this heuristic holds.

    \begin{problem}[Joint ergodicity classification problem]\label{Pr: joint ergodicity problem}
    Let $\ell\in\N$. Classify the sequences $a_1, \ldots, a_\ell:\N\to\R$ that are jointly ergodic for an (arbitrary) system  $(X, \CX, \mu, T_1, \ldots, T_\ell)$ if and only if the following two conditions hold:
    \begin{enumerate}
    \item \emph{(Difference ergodicity condition)} $(T_i^{\floor{a_i(n)}}T_j^{-\floor{a_j(n)}})_n$ is ergodic on $(X, \CX, \mu)$ for all distinct $i, j\in[\ell]$; and
    \item \emph{(Product ergodicity condition)} $(T_1^{\floor{a_1(n)}}\times \cdots \times T_\ell^{\floor{a_\ell(n)}})_n$ is ergodic on the product system $(X^\ell, \CX^{\otimes \ell}, \mu^\ell)$.
    \end{enumerate}
    \end{problem}
     The two conditions appearing in the problem are natural generalizations of the joint ergodicity conditions provided by  Berend and Bergelson in their pioneer work \cite{BB84}.
    
    Despite ongoing progress in the recent years \cite{BLS16, DFKS22, DKS22, DKS23, FrKu22b, FrKu22a}, the joint ergodicity classification problem remains challenging and open. 
    Our contribution to this question consists in the following.
    \begin{theorem}[{Resolution of Problem \ref{Pr: joint ergodicity problem} for pairwise independent Hardy sequences}]\label{T: joint ergodicity}
    
    
    {A family of pairwise independent Hardy sequences of polynomial growth is jointly ergodic for a system if and only if it satisfies the difference and product ergodicity conditions on this system.}
    \end{theorem}
    Theorem~\ref{T: joint ergodicity} contains both Theorem~\ref{T: Joint ergodicity for weakly mixing transformations} and the second parts of Theorem~\ref{T: joint ergodicity for strongly independent Hardy sequences} and \ref{T: Krat control} as special cases, and it vastly extends previously known joint ergodicity results.

\subsection{Applications to recurrence and combinatorics}\label{SS: recurrence}
As an immediate consequence, Theorem \ref{T: joint ergodicity for strongly independent Hardy sequences} gives a multiple recurrence result with an explicit lower bound which is optimal in that it cannot be improved for weakly mixing systems. 
\begin{corollary}[Multiple recurrence for strongly independent Hardy sequences]\label{C: lower bounds on recurrence}
	Let $\ell\in\N$ and $a_1, \ldots, a_\ell\in \CH$ be strongly independent. Then for all systems $(X, \CX, \mu, T_1, \ldots, T_\ell)$ and sets $E\in\CX$, we have
	\begin{align*}
	\lim_{N\to\infty}\E_{n\in[N]}\mu(E\cap T_1^{-\floor{a_1(n)}}E\cap \cdots \cap T_\ell^{-\floor{a_\ell(n)}}E) \geq (\mu(E))^{\ell+1}.
	\end{align*}
\end{corollary}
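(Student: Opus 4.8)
The plan is to deduce this from Theorem~\ref{T: joint ergodicity for strongly independent Hardy sequences}, reducing everything to a standard inequality for conditional expectations. First I would rewrite, for each $n\in[N]$,
\[
\mu\bigl(E\cap T_1^{-\floor{a_1(n)}}E\cap\cdots\cap T_\ell^{-\floor{a_\ell(n)}}E\bigr)
=\int_X \1_E\cdot\prod_{j=1}^{\ell}T_j^{\floor{a_j(n)}}\1_E\ d\mu .
\]
Averaging over $n\in[N]$ and applying Theorem~\ref{T: joint ergodicity for strongly independent Hardy sequences} with $f_1=\cdots=f_\ell=\1_E$, the ergodic averages $\E_{n\in[N]}\prod_{j=1}^{\ell}T_j^{\floor{a_j(n)}}\1_E$ converge in $L^2(\mu)$ to $\prod_{j=1}^{\ell}\E(\1_E\mid\CI(T_j))$; since taking the $L^2(\mu)$ inner product against the fixed function $\1_E$ is continuous, the limit in the corollary exists and equals
\[
\int_X \1_E\prod_{j=1}^{\ell}\E(\1_E\mid\CI(T_j))\,d\mu .
\]
It therefore remains to show that this quantity is at least $\mu(E)^{\ell+1}$.

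Set $g_j:=\E(\1_E\mid\CI(T_j))$, so that $0\le g_j\le 1$ and $\int_X g_j\,d\mu=\mu(E)$ for every $j$, and, since $g_j$ is the orthogonal projection of $\1_E$ onto an invariant subspace, $\int_X \1_E g_j\,d\mu=\int_X g_j^2\,d\mu\ge\bigl(\int_X g_j\,d\mu\bigr)^2=\mu(E)^2$. The desired bound $\int_X \1_E\prod_{j=1}^{\ell}g_j\,d\mu\ge\mu(E)^{\ell+1}$ is the ``optimal lower bound'' inequality familiar from the theory of multiple recurrence, and I would prove it by induction on $\ell$. The base case $\ell=1$ is exactly the Cauchy--Schwarz estimate displayed above. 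For the inductive step one isolates the last factor, using that $\1_E\prod_{j<\ell}g_j\le\1_E$ (each $g_j$ takes values in $[0,1]$), so that conditioning on $\CI(T_\ell)$ gives $w:=\E\bigl(\1_E\prod_{j<\ell}g_j\mid\CI(T_\ell)\bigr)\le g_\ell$, together with the identity $\int_X \1_E\prod_{j=1}^{\ell}g_j\,d\mu=\int_X w\,g_\ell\,d\mu$ (valid because $g_\ell$ is $\CI(T_\ell)$-measurable), the inductive bound $\int_X w\,d\mu=\int_X \1_E\prod_{j<\ell}g_j\,d\mu\ge\mu(E)^{\ell}$, and $\int_X g_\ell\,d\mu=\mu(E)$.

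The hard part is this inductive step: one must show $\int_X w\,g_\ell\,d\mu\ge\mu(E)^{\ell+1}$, and a naive use of Cauchy--Schwarz ($\int w g_\ell\ge\int w^2\ge(\int w)^2$) only produces the weaker exponent $2\ell$, because a conditional expectation need not be nonnegatively correlated with a function it dominates. Getting the sharp exponent $\ell+1$ thus requires exploiting more of the structure of $w$, which is not an arbitrary function bounded by $g_\ell$ but is tied to $\1_E$ and to the other $g_j$; this is the step I would either extract from the existing literature on optimal lower bounds for multiple recurrence or carry through by a more careful induction. As a sanity check, if all $T_j$ are weakly mixing then $g_j\equiv\mu(E)$ and the limit equals $\mu(E)\cdot\mu(E)^{\ell}=\mu(E)^{\ell+1}$, confirming that the bound is sharp.
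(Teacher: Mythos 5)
Your reduction is exactly the one the paper intends: Corollary~\ref{C: lower bounds on recurrence} is stated as an immediate consequence of Theorem~\ref{T: joint ergodicity for strongly independent Hardy sequences}, obtained by taking $f_1=\cdots=f_\ell=1_E$, pairing the $L^2(\mu)$-limit against $1_E$, and then invoking the lower bound $\int 1_E\prod_{j=1}^\ell\E(1_E|\CI(T_j))\,d\mu\ge\mu(E)^{\ell+1}$ for products of conditional expectations. The gap is that you do not prove this last inequality, and the inductive scheme you sketch cannot be completed from the facts you record. Indeed, from $0\le w\le g_\ell\le 1$, $\int g_\ell\,d\mu=\mu(E)$ and $\int w\,d\mu\ge\mu(E)^{\ell}$ alone, the bound $\int w\,g_\ell\,d\mu\ge\mu(E)^{\ell+1}$ is simply false: with $\ell=2$ and $\mu(E)=1/2$, take $g_\ell=1/3$ on a set of measure $3/4$ and $g_\ell=1$ on a set of measure $1/4$, and $w=1/3$ on the first set and $0$ on the second; then $\int w=1/4=\mu(E)^{\ell}$ but $\int wg_\ell=1/12<1/8$. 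So the inductive step genuinely requires information you have discarded (namely that every $g_j$ is the conditional expectation of the \emph{same} function $1_E$), and ``a more careful induction'' of the type you describe will not recover it.

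The standard way to close the gap (this is the lemma of Chu used in the multiple-recurrence literature the paper is implicitly citing) is a single global H\"older argument rather than an induction. Write $f=1_E$ and $g_j=\E(f|\CI(T_j))$. Since $f\ge 0$, one has $f=0$ a.e.\ on $\{g_j=0\}$, so on the support of $f$ one may factor
\begin{align*}
 f=\Bigl(f\prod_{j=1}^{\ell}g_j\Bigr)^{\frac{1}{\ell+1}}\cdot\prod_{j=1}^{\ell}\Bigl(\frac{f}{g_j}\Bigr)^{\frac{1}{\ell+1}},
\end{align*}
and H\"older's inequality with exponents $\ell+1,\ldots,\ell+1$ gives
\begin{align*}
 \mu(E)=\int f\,d\mu\le\Bigl(\int f\prod_{j=1}^{\ell}g_j\,d\mu\Bigr)^{\frac{1}{\ell+1}}\prod_{j=1}^{\ell}\Bigl(\int \frac{f}{g_j}\,d\mu\Bigr)^{\frac{1}{\ell+1}},
\end{align*}
while $\int f/g_j\,d\mu=\int \E(f|\CI(T_j))/g_j\,d\mu=\mu(\{g_j>0\})\le 1$ because $g_j$ is $\CI(T_j)$-measurable. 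Raising to the power $\ell+1$ yields $\int 1_E\prod_j g_j\,d\mu\ge\mu(E)^{\ell+1}$, which completes your argument; everything before this point in your write-up is correct and matches the paper's route.
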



A standard application of the Furstenberg correspondence principle to Corollary \ref{C: lower bounds on recurrence} not only gives many strongly independent Hardy progressions in sets of positive upper density, but it implies the existence of so-called {\em popular common differences}, i.e. the differences $n$ for which the upper density of the intersection 
$$E\cap (E - \bv_1\floor{a_1(n)})\cap \cdots \cap (E - \bv_\ell\floor{a_\ell(n)})$$ is close to what it would be if the sets $E,\; E - \bv_1\floor{a_1(n)},\; \ldots,\; E - \bv_\ell\floor{a_\ell(n)}$ were independent.
\begin{corollary}[Hardy progressions in positive density sets]\label{C: popular differences}
	Let $\ell\in\N$ and $a_1, \ldots, a_\ell\in \CH$ be strongly independent. Then for all sets $E\subseteq\Z^k$ with positive upper density\footnote{The upper density of $E\subseteq\Z^k$ is defined by $\overline{d}(E) = \limsup\limits_{N\to\infty} \frac{|E\cap\{-N, \ldots, N\}^k|}{(2N+1)^k}$.} 
    and vectors $\bv_1, \ldots, \bv_\ell\in\Z^k$, we have
	\begin{align*}
		\lim_{N\to\infty}\E_{n\in[N]}\overline{d}(E\cap (E - \bv_1\floor{a_1(n)})\cap \cdots \cap (E - \bv_\ell\floor{a_\ell(n)})) \geq (\overline{d}(E))^{\ell+1}.
	\end{align*}
        In particular, $E$ contains 
	\[
	\bm,\; \bm + \bv_1 \floor{a_1(n)},\; \ldots,\; \bm +  \bv_\ell\floor{a_\ell(n)}
	\]
	for some $\bm\in \Z^{k}$ and $n\in\N$.
\end{corollary}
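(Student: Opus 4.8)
The plan is to deduce the density lower bound from Corollary~\ref{C: lower bounds on recurrence} by means of the Furstenberg correspondence principle, and then to read off the configuration from the positivity of the resulting average. First, I would apply the multidimensional Furstenberg correspondence principle \cite{Fu77} to $E\subseteq\Z^k$: in its ``uniform'' form (a single system serving all translates simultaneously, obtained by a routine diagonal argument over the countably many tuples of integer shift vectors followed by a weak-$*$ limit), it produces a probability space $(X,\CX,\mu)$ carrying a measure-preserving $\Z^k$-action $(S^\bv)_{\bv\in\Z^k}$ and a set $A\in\CX$ with $\mu(A)=\overline{d}(E)$ such that, setting $T_j:=S^{\bv_j}$ for $j\in[\ell]$,
\[
    \overline{d}\Bigbrac{E\cap\bigcap_{j=1}^\ell\bigbrac{E-\bv_j m_j}}\ \geq\ \mu\Bigbrac{A\cap\bigcap_{j=1}^\ell T_j^{-m_j}A}\qquad\text{for all }m_1,\dots,m_\ell\in\Z.
\]
Here $T_1,\dots,T_\ell$ are commuting invertible measure-preserving transformations of $(X,\CX,\mu)$ (they commute because $\Z^k$ is abelian, and there is no problem if some of the $\bv_j$ coincide, since Corollary~\ref{C: lower bounds on recurrence} imposes no distinctness hypothesis on the transformations).

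Next, I would specialize $m_j=\floor{a_j(n)}$, average over $n\in[N]$, and let $N\to\infty$. Since $a_1,\dots,a_\ell$ are strongly independent by hypothesis, Corollary~\ref{C: lower bounds on recurrence} applies to the system $(X,\CX,\mu,T_1,\dots,T_\ell)$ and the set $A$, giving
\[
    \lim_{N\to\infty}\E_{n\in[N]}\mu\Bigbrac{A\cap\bigcap_{j=1}^\ell T_j^{-\floor{a_j(n)}}A}\ \geq\ \mu(A)^{\ell+1}=\overline{d}(E)^{\ell+1}.
\]
Averaging the pointwise-in-$n$ inequality from the previous paragraph and passing to the limit then yields
\[
    \liminf_{N\to\infty}\E_{n\in[N]}\overline{d}\Bigbrac{E\cap\bigcap_{j=1}^\ell\bigbrac{E-\bv_j\floor{a_j(n)}}}\ \geq\ \overline{d}(E)^{\ell+1},
\]
which is the stated estimate; the $\liminf$, rather than a genuine limit, is all the argument provides, which is harmless since $\overline{d}(\cdot)$ is itself a $\limsup$.

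Finally, the configuration follows at once: as $\overline{d}(E)^{\ell+1}>0$, the averages above are positive for all sufficiently large $N$, so some $n\in\N$ satisfies $\overline{d}\bigbrac{E\cap\bigcap_j(E-\bv_j\floor{a_j(n)})}>0$, whence this intersection is nonempty, and any $\bm$ in it gives $\bm,\ \bm+\bv_1\floor{a_1(n)},\ \dots,\ \bm+\bv_\ell\floor{a_\ell(n)}\in E$. I do not anticipate a genuine obstacle here: the analytic substance is entirely contained in Corollary~\ref{C: lower bounds on recurrence} (hence in Theorem~\ref{T: joint ergodicity for strongly independent Hardy sequences} and the seminorm estimates preceding it), and what remains is standard correspondence-principle bookkeeping, the only mild subtlety being the one-sided nature of the correspondence and the resulting passage from $\lim$ to $\liminf$.
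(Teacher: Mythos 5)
Your proof is correct and is exactly the ``standard application of the Furstenberg correspondence principle to Corollary~\ref{C: lower bounds on recurrence}'' that the paper invokes without writing out details: the uniform multidimensional correspondence with $T_j = S^{\bv_j}$, followed by the recurrence lower bound and positivity of the averages, is the intended argument. Your remark that the correspondence only yields a $\liminf$ for the combinatorial averages (the stated $\lim$ being a harmless overstatement, since only the one-sided inequality is available) is a fair and careful reading.
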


In particular, Corollaries~\ref{C: lower bounds on recurrence} and \ref{C: popular differences} give lower bounds on the number of patterns 
\begin{align*}
    (m_1, m_2),\; (m_1 + \sfloor{n^{\sqrt{2}}}, m_2),\; (m_1, m_2 + \sfloor{n^{\sqrt{2}} + n^{1/2}})
\end{align*}
in subsets of $\Z^2$ of positive upper density. The same conclusion follows for patterns involving strongly independent real polynomials such as
\begin{align*}
    (m_1, m_2),\; (m_1 + \sfloor{\sqrt{2}n^3+n^2}, m_2),\; (m_1, m_2 + \sfloor{\sqrt{3}n^3-n}).
\end{align*}

\subsection{Results along primes}\label{SS: primes}
Our new joint ergodicity and recurrence results can be combined with a transference principle from \cite{KoTs23} to deduce new ergodic and combinatorial results along primes. This transference principle itself is based on deep result on the Gowers uniformity of the modified von Mangoldt function on short intervals \cite{MSTT23}.
Studying multiple ergodic averages along primes has rich history, see e.g. \cite{BKS19, Bo88, Fr22, FHK07, FHK10, KK19, Ko18b, Kouts22, KoTs23, KMTT24, Na91, Na93, S15, Wierdl88, WZ12} for some earlier works. 
We denote the set of primes by $\P$ and the number of primes up to $N$ by $\pi(N)$. We will also use the averaging notation $$\E_{p\in \P\cap [N]} a_p=\frac{1}{\pi(N)} \sum_{p\in \P\colon p\leq N}a_p.$$

Our first theorem follows by combining Theorem \ref{T: joint ergodicity for strongly independent Hardy sequences} with \cite[Theorem 1.2]{KoTs23}. 

\begin{theorem}[Joint ergodicity for strongly independent Hardy sequences along primes]\label{T: joint ergodicity for strongly independent Hardy sequences along prime numbers}
Let $\ell\in\N$, $a_1, \ldots, a_\ell\in \CH$ be pairwise independent and $(X, \CX, \mu, T_1,$ $ \ldots, T_\ell)$ be a system. Suppose that one of the following conditions holds:
      \begin{enumerate}
          \item $a_1, \ldots, a_\ell$ are strongly independent; or
           \item $T_{1},\dots,T_{\ell}$ are weakly mixing.
      \end{enumerate}
      Then
	\begin{align*}
        \lim_{N\to\infty}\norm{\E_{p\in \P\cap [N]}\prod_{j=1}^\ell T_j^{\floor{a_{j}(p)}}f_j - \prod_{j=1}^\ell \E(f_j|\CI(T_j))}_{L^2(\mu)} = 0.
	\end{align*}
     In particular, for all systems $(X, \CX, \mu, T_1, \ldots, T_\ell)$ with $T_1, \ldots, T_\ell$ ergodic, we have \begin{equation*}
         \lim\limits_{N\to\infty}\norm{\E_{p\in \P\cap [N]}\prod_{j=1}^\ell T_j^{\floor{a_{j}(p)}}f_j-\prod_{j=1}^{\ell} \int f_j\ d\mu}_{L^2(\mu)} = 0.
     \end{equation*}
\end{theorem}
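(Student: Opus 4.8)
The plan is to derive Theorem~\ref{T: joint ergodicity for strongly independent Hardy sequences along prime numbers} as a corollary of the already-established joint ergodicity results for integer-valued averages over $[N]$, via the transference principle of \cite[Theorem 1.2]{KoTs23}. The point of that transference principle is precisely that it converts norm convergence of $\E_{n\in[N]}\prod_{j} T_j^{\floor{a_j(n)}}f_j$ (together with enough uniformity / equidistribution input, which in our setting is supplied by pairwise independence and Theorem~\ref{T: Boshernitzan}) into the corresponding convergence of $\E_{p\in\P\cap[N]}\prod_j T_j^{\floor{a_j(p)}}f_j$ to the \emph{same} limit. So the first step is simply to verify that the hypotheses of \cite[Theorem 1.2]{KoTs23} are met by any pairwise independent tuple $a_1,\dots,a_\ell\in\CH$ of polynomial growth --- this is the class under which the modified von Mangoldt weight can be removed using the Gowers uniformity estimates of \cite{MSTT23}. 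Pairwise independence is exactly the condition invoked in the hypothesis of the theorem, so this step is essentially a citation check.

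Second, I would invoke Theorem~\ref{T: joint ergodicity for strongly independent Hardy sequences} under assumption~(i) and Theorem~\ref{T: Joint ergodicity for weakly mixing transformations} under assumption~(ii) to identify the $[N]$-limit. Under~(i), strong independence gives
\[
\lim_{N\to\infty}\norm{\E_{n\in[N]}\prod_{j=1}^\ell T_j^{\floor{a_j(n)}}f_j - \prod_{j=1}^\ell \E(f_j\mid\CI(T_j))}_{L^2(\mu)}=0;
\]
under~(ii), weak mixing forces $\E(f_j\mid\CI(T_j))=\int f_j\, d\mu$, so by Theorem~\ref{T: Joint ergodicity for weakly mixing transformations} the $[N]$-limit is again $\prod_{j=1}^\ell \int f_j\,d\mu = \prod_{j=1}^\ell\E(f_j\mid\CI(T_j))$. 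In both cases the $[N]$-average converges in $L^2(\mu)$ to $\prod_{j=1}^\ell \E(f_j\mid\CI(T_j))$. Applying the transference principle then yields the same limit for the prime average, which is exactly the displayed conclusion. The final ``in particular'' clause is immediate: if each $T_j$ is ergodic then $\CI(T_j)$ is trivial and $\E(f_j\mid\CI(T_j))=\int f_j\, d\mu$.

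The one genuinely non-routine point is checking that \cite[Theorem 1.2]{KoTs23} really does apply verbatim: that transference theorem will be stated for a specific admissible family of Hardy tuples (typically requiring pairwise independence, polynomial growth, and perhaps a mild non-polynomiality or sub-linearity type restriction needed to match the short-interval Gowers-norm input of \cite{MSTT23}), and one must confirm that the classes in (i) and (ii) sit inside it --- in particular that \emph{no additional} hypothesis beyond pairwise independence is needed, since that is the uniform assumption made in the statement. I expect this to be the main obstacle only in the bookkeeping sense: one must trace through the definitions in \cite{KoTs23} to see that ``pairwise independent of polynomial growth'' is exactly the hypothesis under which the von Mangoldt weight can be replaced by the constant $1$ with negligible error, and that the transference preserves the limit rather than merely the existence of a limit. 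Everything else is a direct substitution of the $[N]$-results already proved in the paper.
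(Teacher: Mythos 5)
Your proposal follows essentially the same route as the paper: the paper's proof is exactly the combination of the transference principle \cite[Theorem 1.2]{KoTs23} with Theorem~\ref{T: joint ergodicity for strongly independent Hardy sequences} (case (i)) and Theorem~\ref{T: Joint ergodicity for weakly mixing transformations} (case (ii)). The only point you leave as unchecked ``bookkeeping'' that the paper makes explicit is the fact needed for the $W$-trick inside the transference, namely that strong independence (resp.\ pairwise independence) is preserved when $a_1,\ldots,a_\ell$ are replaced by $a_1(W\cdot+b),\ldots,a_\ell(W\cdot+b)$ for $W,b\in\N$, so that the $[N]$-results apply along every such progression.
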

Applying the previous theorem to functions $f_1=\cdots=f_{\ell}={ 1}_E$ for a measurable set $E$, we deduce the following multiple recurrence result.

\begin{corollary}[Multiple recurrence for strongly independent Hardy sequences along primes]\label{C: multiple recuurence along prime numbers}
    Let $\ell\in\N$ and $a_1, \ldots, a_\ell\in \CH$ be strongly independent. Then for all systems $(X, \CX, \mu, T_1, \ldots, T_\ell)$ and sets $E\in\CX$, we have \begin{equation*}
        \lim\limits_{N\to\infty} \E_{p\in \P\cap [N]} \mu(E\cap T_1^{-\floor{a_1(p)}}E\cap\dots\cap T_{\ell}^{-\floor{a_{\ell}(p)}}E)\geq (\mu(E))^{\ell+1}.
    \end{equation*}
\end{corollary}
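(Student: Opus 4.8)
The plan is to derive Corollary~\ref{C: multiple recuurence along prime numbers} as a direct consequence of Theorem~\ref{T: joint ergodicity for strongly independent Hardy sequences along prime numbers}, exactly as the sentence preceding the statement suggests. First I would specialize that theorem to the case $f_1 = \cdots = f_\ell = \mathbf{1}_E$ for the given measurable set $E$. Since the $a_1, \ldots, a_\ell$ are assumed strongly independent (and strong independence implies pairwise independence), hypothesis (i) of Theorem~\ref{T: joint ergodicity for strongly independent Hardy sequences along prime numbers} is satisfied, so we obtain
\begin{align*}
    \lim_{N\to\infty}\norm{\E_{p\in \P\cap [N]}\prod_{j=1}^\ell T_j^{\floor{a_{j}(p)}}\mathbf{1}_E - \prod_{j=1}^\ell \E(\mathbf{1}_E\mid\CI(T_j))}_{L^2(\mu)} = 0.
\end{align*}

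Next I would pair this $L^2$-convergent average against the fixed function $\mathbf{1}_E\in L^2(\mu)$. By the Cauchy--Schwarz inequality, $L^2$-convergence of the averages implies convergence of the inner products $\angle{\E_{p\in\P\cap[N]}\prod_{j=1}^\ell T_j^{\floor{a_j(p)}}\mathbf{1}_E,\ \mathbf{1}_E}$ to $\angle{\prod_{j=1}^\ell \E(\mathbf{1}_E\mid\CI(T_j)),\ \mathbf{1}_E}$. Expanding the left-hand inner product and using that the $T_j$ are measure-preserving, $\angle{T_1^{\floor{a_1(p)}}\mathbf{1}_E\cdots T_\ell^{\floor{a_\ell(p)}}\mathbf{1}_E,\ \mathbf{1}_E} = \mu(E\cap T_1^{-\floor{a_1(p)}}E\cap\cdots\cap T_\ell^{-\floor{a_\ell(p)}}E)$, so by linearity of the average we get
\begin{align*}
    \lim_{N\to\infty}\E_{p\in\P\cap[N]}\mu(E\cap T_1^{-\floor{a_1(p)}}E\cap\cdots\cap T_\ell^{-\floor{a_\ell(p)}}E) = \int \Bigbrac{\prod_{j=1}^\ell \E(\mathbf{1}_E\mid\CI(T_j))}\mathbf{1}_E\, d\mu.
\end{align*}

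It remains to bound the right-hand side below by $(\mu(E))^{\ell+1}$. Here I would run the standard positivity argument: successively insert conditional expectations onto the invariant $\sigma$-algebras, using at each step that conditional expectation is a self-adjoint projection and that $\E(\mathbf{1}_E\mid\CI(T_j))$ is $\CI(T_j)$-measurable, to rewrite the integral as $\int \prod_{j=1}^{\ell+1} g_j\, d\mu$ where $g_j = \E(\mathbf{1}_E\mid\CI(T_j))$ for $j\le\ell$ and $g_{\ell+1}=\mathbf{1}_E$; more precisely one peels off the factors one at a time, at each stage applying $\int f\cdot\E(h\mid\mathcal{B})\, d\mu = \int \E(f\mid\mathcal{B})\cdot h\, d\mu$. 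A clean way to close this is to invoke the Chung--Erd\H{o}s / generalized H\"older inequality for such products of conditional expectations (as is routinely done in this area, e.g. in the derivation of Khintchine-type lower bounds), which yields $\int \prod_{j=1}^\ell \E(\mathbf{1}_E\mid\CI(T_j))\cdot\mathbf{1}_E\, d\mu \ge \brac{\int \mathbf{1}_E\, d\mu}^{\ell+1} = (\mu(E))^{\ell+1}$. The only mild subtlety — and the one point worth being careful about — is justifying the passage from norm convergence to convergence of these correlations and the positivity manipulation, but both are entirely standard once Theorem~\ref{T: joint ergodicity for strongly independent Hardy sequences along prime numbers} is in hand; there is no genuine obstacle, the corollary being a formal consequence of the preceding theorem.
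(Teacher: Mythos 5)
Your proposal is correct and follows essentially the paper's own (one-line) derivation: apply Theorem~\ref{T: joint ergodicity for strongly independent Hardy sequences along prime numbers} with $f_1=\cdots=f_\ell=1_E$, pair the $L^2$-convergent averages against $1_E$ to get $\lim_{N\to\infty}\E_{p\in\P\cap[N]}\mu(E\cap T_1^{-\floor{a_1(p)}}E\cap\cdots\cap T_\ell^{-\floor{a_\ell(p)}}E)=\int 1_E\prod_{j=1}^{\ell}\E(1_E|\CI(T_j))\,d\mu$, and bound this below by $\mu(E)^{\ell+1}$ via the standard positivity lemma for products of conditional expectations. Two cosmetic remarks: the ``peeling off conditional expectations'' step is unnecessary (the integral is already in the required form, and the remaining product is not measurable with respect to any single $\CI(T_j)$ anyway), and the final inequality is more accurately justified by the usual Jensen-type argument ($\int_E\prod_j g_j\,d\mu\ge\mu(E)\exp\bigl(\mu(E)^{-1}\sum_j\int_E\log g_j\,d\mu\bigr)$ together with $\int_E\log g_j\,d\mu=\int g_j\log g_j\,d\mu\ge\mu(E)\log\mu(E)$ for $g_j=\E(1_E|\CI(T_j))$) rather than by Chung--Erd\H{o}s, but neither point affects correctness.
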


Once again, Furstenberg's correspondence principle translates the previous multiple recurrence result to the combinatorial setting.

\begin{corollary}[Prime configurations in positive density sets]\label{}
    	Let $\ell\in\N$ and $a_1, \ldots, a_\ell\in \CH$ be strongly independent. Then for all sets $E\subseteq\Z^k$ with positive upper density and vectors $\bv_1, \ldots, \bv_\ell\in\Z^k$, we have  
	\begin{align*}
		\lim\limits_{N\to\infty} \E_{p\in \P\cap[N]}\overline{d}(E\cap (E - \bv_1\floor{a_1(p)})\cap \cdots \cap (E - \bv_\ell\floor{a_\ell(p)})) \geq (\overline{d}(E))^{\ell+1}.
	\end{align*}
	 In particular, $E$ contains 
	\[
	\bm,\; \bm + \bv_1 \floor{a_1(p)},\; \ldots,\; \bm +  \bv_\ell\floor{a_\ell(p)}
	\]
	for some $\bm\in \Z^k$ and $p\in\P$.
\end{corollary}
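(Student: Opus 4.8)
The plan is to transfer the ergodic multiple recurrence bound of Corollary~\ref{C: multiple recuurence along prime numbers} to the combinatorial setting via the Furstenberg correspondence principle for $\Z^k$-actions. Fix $E\subseteq\Z^k$ with $\overline{d}(E)>0$. The correspondence principle provides a probability space $(X,\CX,\mu)$, commuting invertible measure-preserving transformations $S_1,\dots,S_k$ on it, and a set $A\in\CX$ with $\mu(A)=\overline{d}(E)$, such that for all $r\in\N$ and all $\bn_0,\dots,\bn_r\in\Z^k$,
\[
    \overline{d}\Bigl(\bigcap_{i=0}^{r}(E-\bn_i)\Bigr)\ \geq\ \mu\Bigl(\bigcap_{i=0}^{r}S^{-\bn_i}A\Bigr),
    \qquad S^{\bn}:=S_1^{n_1}\cdots S_k^{n_k};
\]
here one takes a weak-$*$ limit of the empirical measures on $\{0,1\}^{\Z^k}$ along a sequence of boxes realizing $\overline{d}(E)$, and it is the \emph{equality} $\mu(A)=\overline{d}(E)$ — rather than a mere inequality — that makes the final constant come out as $(\overline{d}(E))^{\ell+1}$.

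Next I would fold the direction vectors into the system. Writing $\bv_j=(v_{j,1},\dots,v_{j,k})$, set $T_j:=S_1^{v_{j,1}}\cdots S_k^{v_{j,k}}$ for $j=1,\dots,\ell$; since $S_1,\dots,S_k$ commute, so do $T_1,\dots,T_\ell$, and $S^{-\bv_j m}=T_j^{-m}$ for every $m\in\Z$. Applying the displayed inequality with $\bn_0=\bzero$ and $\bn_j=\bv_j\floor{a_j(p)}$, for each prime $p$ large enough that every $a_j$ is defined at $p$, yields
\[
    \overline{d}\bigl(E\cap(E-\bv_1\floor{a_1(p)})\cap\cdots\cap(E-\bv_\ell\floor{a_\ell(p)})\bigr)\ \geq\ \mu\bigl(A\cap T_1^{-\floor{a_1(p)}}A\cap\cdots\cap T_\ell^{-\floor{a_\ell(p)}}A\bigr).
\]
Averaging over $p\in\P\cap[N]$ (the finitely many small $p$ for which some $a_j$ is not defined do not affect the limit) and letting $N\to\infty$, the averages on the right converge, by Corollary~\ref{C: multiple recuurence along prime numbers} applied to $(X,\CX,\mu,T_1,\dots,T_\ell)$ and the set $A$ (its hypotheses hold because $a_1,\dots,a_\ell$ are strongly independent), to a limit that is at least $(\mu(A))^{\ell+1}=(\overline{d}(E))^{\ell+1}$. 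Hence $\liminf_{N\to\infty}\E_{p\in\P\cap[N]}\overline{d}\bigl(E\cap\bigcap_{j=1}^{\ell}(E-\bv_j\floor{a_j(p)})\bigr)\geq(\overline{d}(E))^{\ell+1}$, which is the claimed bound (with the lower limit, which is what this argument supplies).

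For the in-particular statement, since these averages are eventually at least $(\overline{d}(E))^{\ell+1}>0$, there is at least one prime $p$ for which $\overline{d}\bigl(E\cap\bigcap_{j=1}^{\ell}(E-\bv_j\floor{a_j(p)})\bigr)>0$, hence for which the set $E\cap\bigcap_{j=1}^{\ell}(E-\bv_j\floor{a_j(p)})$ is nonempty; any $\bm$ in it satisfies $\bm\in E$ and $\bm+\bv_j\floor{a_j(p)}\in E$ for all $j$, which is exactly the asserted configuration. There is no deep obstacle in the present deduction: all the difficulty is concentrated in Corollary~\ref{C: multiple recuurence along prime numbers} (equivalently in Theorem~\ref{T: joint ergodicity for strongly independent Hardy sequences along prime numbers}, itself built on Theorem~\ref{T: joint ergodicity for strongly independent Hardy sequences} and the prime transference principle of \cite{KoTs23}). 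The only points above that need attention are the use of a genuinely \emph{multidimensional} form of the correspondence principle — so that the resulting $T_1,\dots,T_\ell$ are commuting measure-preserving transformations to which Corollary~\ref{C: multiple recuurence along prime numbers} applies — and the routine bookkeeping that lets one pass the average over $p$ through the pointwise inequality.
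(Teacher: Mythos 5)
Your deduction is correct and is exactly the route the paper intends: it states this corollary as an immediate application of the Furstenberg correspondence principle (in its multidimensional form, with $T_j = S^{\bv_j}$) to Corollary~\ref{C: multiple recuurence along prime numbers}, and gives no further argument. The only caveat you already flag yourself — that the correspondence principle directly yields the bound for the lower limit of the combinatorial averages — is the same implicit convention under which the paper's displayed ``$\lim$'' should be read, so nothing is missing.
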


\subsection{Seminorm estimates}

The main new input that allows us to prove joint ergodicity results 
and their corollaries 
are novel seminorm estimates on averages \eqref{E: general average} and \eqref{E: general average 2}, which we then combine with joint ergodicity criteria from \cite{FrKu22a} and equidistribution results for Hardy sequences 
to derive Theorems~\ref{T: Joint ergodicity for weakly mixing transformations}, \ref{T: joint ergodicity for strongly independent Hardy sequences}, and \ref{T: joint ergodicity}. Specifically, we develop a generalization of box seminorms robust enough to control the averages \eqref{E: general average} with polynomial bounds under some necessary nondegeneracy condition on the Hardy sequences. The gist of our main result is captured in the following rather informal statement: for its (somewhat complicated) precise version, we refer the reader to Theorem~\ref{T: box seminorm bound no duals}.

\begin{theorem}[Generalized box seminorm estimates]\label{T: box seminorm bound intro}
    Let $k,\ell\in\N$ and $\ba_j=(a_{j1},\dots,$ $a_{jk})\in \CH^k$ be $k$-tuples of functions in $\CH$ for $1\leq j\leq \ell$. 
    Suppose that for every $0\leq j\leq \ell$ different from $1$, the tuple $\ba_1 -\ba_j$ has a coordinate that grows faster than $\log$. 
    Then for every system $(X, \CX, \mu, T_1, \ldots, T_k)$ there exists a generalized box seminorm on $\nnorm{\cdot}$ on $L^\infty(\mu)$ of degree $O_{\ba_1, \ldots, \ba_\ell}(1)$ such that 
    \begin{align*}
            \limsup_{N\to\infty}\norm{\E_{n\in[N]}\Bigbrac{\prod_{i=1}^k T_i^{\floor{a_{1i}(n)}}}f_1\cdots \Bigbrac{\prod_{i=1}^k T_i^{\floor{a_{\ell i}(n)}}}f_\ell}_{L^2(\mu)}^{O_{\ba_1, \ldots, \ba_\ell}(1)}\ll_{\ba_1, \ldots, \ba_\ell} \nnorm{f_1}
    \end{align*}
    for all 1-bounded functions $f_1, \ldots, f_\ell\in L^\infty(\mu)$.
\end{theorem}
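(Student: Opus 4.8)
\emph{Overall strategy.} The plan is to establish the sharper quantitative statement, Theorem~\ref{T: box seminorm bound no duals}, by a PET (van der Corput) induction run directly on the tuples of Hardy iterates, in the spirit of the coefficient-tracking schemes of \cite{FrKu22a}. To each finite family $\mathcal{F}$ of $k$-tuples of Hardy functions one attaches a \emph{complexity} recording, coordinate by coordinate, the relevant growth-rate and leading-behaviour data of the tuples, and one well-orders these complexities so that, away from a designated base case, a single van der Corput step replaces $\mathcal{F}$ by a family of strictly smaller complexity. The index $1$ is handled specially throughout: the hypothesis that for every $j\in\{0,2,\dots,\ell\}$, with $\ba_0:=\mathbf{0}$, the tuple $\ba_1-\ba_j$ has a coordinate growing faster than $\log$ is precisely what will allow the complexity to drop at each step \emph{without ever trivializing the slot occupied by a descendant of $f_1$}, and it will force the terminal family to be one from which a box seminorm in $f_1$ can be read off.

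\emph{The induction, in outline.} Write $g_n := \Bigbrac{\prod_{i=1}^k T_i^{\floor{a_{1i}(n)}}}f_1\cdots \Bigbrac{\prod_{i=1}^k T_i^{\floor{a_{\ell i}(n)}}}f_\ell$. The first step is to apply the quantitative van der Corput inequality
\[
\norm{\E_{n\in[N]} g_n}_{L^2(\mu)}^2 \;\ll\; \E_{|h|\le H}\;\E_{n\in[N]}\;\bigabs{\angle{g_{n+h},\,g_n}} \;+\; O(H/N),
\]
expand the inner product, and compose with suitable powers of $T_1,\dots,T_k$ so as to pass from the family $\{\ba_j\}_{j=1}^\ell$ to the differenced family $\{\ba_j(\cdot+h)-\ba_{j^\ast}(\cdot)\}_{j}\cup\{\ba_j(\cdot)-\ba_{j^\ast}(\cdot)\}_{j}$, where $j^\ast\neq 1$ is chosen so that complexity strictly decreases; the $j^\ast$-slot in the second group then carries the trivial iterate and its function drops out as a pointwise multiplier. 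Iterating, after $s=O_{\ba_1,\dots,\ba_\ell}(1)$ steps one reaches a base case in which the iterates still attached to the descendants of $f_1$ are affine in the running variable with pairwise distinct nondegenerate leading parts across $T_1,\dots,T_k$, and a concluding Cauchy--Schwarz (equivalently, one more van der Corput step) bounds the whole quantity by a generalized box seminorm $\nnorm{f_1}$ built from $T_1,\dots,T_k$, possibly with repetitions, of degree $O_{\ba_1,\dots,\ba_\ell}(1)$. Chaining the van der Corput inequalities and optimizing over $H$ then gives
\[
\limsup_{N\to\infty}\norm{\E_{n\in[N]} g_n}_{L^2(\mu)}^{2^{s}} \;\ll_{\ba_1,\dots,\ba_\ell}\; \nnorm{f_1},
\]
which is the claimed estimate, the exponent $2^{s}$ being $O_{\ba_1,\dots,\ba_\ell}(1)$.

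\emph{Dealing with Hardy functions and floors.} Two recurring technical points must be dealt with along the way. Since the iterates are genuine Hardy functions rather than polynomials, the ``leading-behaviour data'' and the complexity order have to be phrased in terms of growth rates; concretely, one linearizes each Hardy function on suitably short blocks by means of the improved simultaneous Taylor approximation (which also supplies the explicit exponents in the final polynomial bound), and treats separately those coordinates of $\ba_1-\ba_j$ whose growth is only mildly super-logarithmic, where a naive differencing step is not available. Secondly, $\floor{a_{ji}(n+h)}-\floor{a_{ji}(n)}$ differs from $a_{ji}(n+h)-a_{ji}(n)$, and more generally from the relevant polynomial-in-$(n,h)$ approximant, by a bounded integer; the boundedly many resulting cases are absorbed by passing to $O(1)$ many subaverages, which affects neither the structure of the induction nor the polynomial nature of the bound.

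\emph{Main obstacle.} I expect the hard part to be making the first two points cooperate: designing a single complexity function together with a differencing rule that (a) is compatible with the growth-rate arithmetic of arbitrary Hardy functions, (b) never touches index $1$ and keeps its slot nontrivial at every step --- this is exactly where the pairwise domination $\ba_1-\ba_j$ is genuinely used and where the bookkeeping is most delicate --- and (c) still terminates in a base case yielding an honest box seminorm of $f_1$; and, interlocked with this, pushing the Taylor approximation and the floor-function bookkeeping through uniformly in the van der Corput parameters while retaining polynomial dependence on $N$, which is the component most specific to the present paper.
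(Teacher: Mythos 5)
There is a genuine gap, and it sits exactly at your concluding step. After your van der Corput/PET induction, the iterates attached to the descendants of $f_1$ are not along fixed directions: each differencing step introduces parameters $h_1,\dots,h_s$, and for commuting transformations the leading parts of the differenced family are (multi)linear expressions in these parameters with vector coefficients in $\R^k$ (e.g.\ directions of the shape $\bbeta_{11}h_1h_2+\bbeta_{10}h_1$). So what the induction actually produces is an \emph{average over the parameters} of generalized box seminorms whose directions vary with those parameters — not a single seminorm — and your proposed ``concluding Cauchy--Schwarz'' cannot collapse this average. For a single transformation one can sometimes absorb the $h$-dependence, but for several commuting transformations one needs a concatenation mechanism to replace an average of seminorms along parameter-dependent groups by one seminorm of bounded degree along the groups generated by the coefficients, and to do so with polynomial losses (the qualitative Tao--Ziegler concatenation would destroy the quantitative bound and the identification of the directions). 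This is precisely the machinery the paper builds in Sections~\ref{S: concatenation} and \ref{S: polynomial concatenation} (Propositions~\ref{P: concatenation for general groups}, \ref{P: polynomial concatenation}) and invokes at two separate places in the proof of Theorem~\ref{T: box seminorm bound}; your proposal contains no substitute for it, and without it the induction does not close.

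A second, related shortfall: the coordinates of $\ba_1-\ba_j$ that grow only mildly super-logarithmically (sublinear or subfractional differences) cannot be disposed of by differencing at all, and you only remark that they must be ``treated separately.'' In the paper this is a full structural component: one passes to short intervals on which these terms are constant, runs the polynomial PET only on the within-interval variable, and then analyzes the resulting average over the interval position $N$ — which is an average along sums of polynomial and sublinear Hardy terms — by a separate argument (Propositions~\ref{P: sublinear} and \ref{P: sublinear + polynomial}), before the final concatenation. The Taylor-approximation input you cite (the analogue of Proposition~\ref{P: Taylor expansion ultimate}) is indeed needed, but it feeds into this two-variable scheme (inner polynomial PET, outer sublinear analysis, then concatenation), not into a single complexity-ordered PET on the Hardy iterates as you describe. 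So while your general instincts (van der Corput with coefficient tracking, Taylor approximation on short blocks, absorbing floor errors into $O(1)$ subaverages) match ingredients of the actual proof, the plan as stated is missing the quantitative concatenation step and the separate sublinear/short-interval analysis, both of which are essential and neither of which is routine.
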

Generalized box seminorms and their properties are defined and discussed at length in Section \ref{S: seminorms}. The important point is that the seminorm appearing in Theorem~\ref{T: box seminorm bound intro} has ``bounded degree'' and depends only on the (appropriately defined) ``leading coefficients'' of the sequences $\ba_1, \ba_1 - \ba_2, \ldots, \ba_1 - \ba_\ell$. As such, Theorem \ref{T: box seminorm bound intro} vastly generalizes previous results of similar flavor \cite{DFKS22, DKS22, DKS23}. To hint at what these generalized box seminorms look like, we look at perhaps the simplest nontrivial special case of Theorem~\ref{T: box seminorm bound intro}. For a double ergodic average 
\begin{align*}
    \E_{n\in[N]} T_1^{\sfloor{\sqrt{2}n}}f_1\cdot T_2^{\sfloor{\sqrt{3}n}}f_2,
\end{align*}
Theorem \ref{T: box seminorm bound intro} gives the estimate
\begin{align*}
    \limsup_{N\to\infty}\norm{\E_{n\in[N]} T_1^{\sfloor{\sqrt{2}n}}f_1\cdot T_2^{\sfloor{\sqrt{3}n}}f_2}_{L^2(\mu)}^8 \ll \nnorm{f_2}
\end{align*}
for the seminorm
\begin{multline*}\nnorm{f} = 
\left(\lim_{M_2\to\infty}\;\E_{m_2, m_2'\in[M_2]}\;\lim_{M_1\to\infty}\;\E_{m_1, m_1'\in[M_1]}\right.\\
\left|\int T_1^{\sfloor{-\sqrt{2}m_1}} T_2^{\sfloor{\sqrt{3}m_1}+\sfloor{\sqrt{3}m_2}}f\cdot T_1^{\sfloor{-\sqrt{2}m_1}} T_2^{\sfloor{\sqrt{3}m_1}+\sfloor{\sqrt{3}m_2'}}\overline{f}\right.\\
\left.\left. T_1^{\sfloor{-\sqrt{2}m_1'}} T_2^{\sfloor{\sqrt{3}m_1'}+\sfloor{\sqrt{3}m_2}}\overline{f}\cdot T_1^{\sfloor{-\sqrt{2}m_1'}} T_2^{\sfloor{\sqrt{3}m_1'}+\sfloor{\sqrt{3}m_2'}}f\; d\mu\right|^2\right)^{1/8}.
\end{multline*}
One should think of this seminorm as a box-type seminorm along real subgroups $\langle (0, \sqrt{3})\rangle$ and $\langle (-\sqrt{2}, \sqrt{3})\rangle$, the former of which corresponds to the action $(T_2^{\sfloor{\sqrt{3}n}})_n$ and the latter to the action of $(T_1^{\sfloor{-\sqrt{2}n}} T_2^{\sfloor{\sqrt{3}n}})_n$.
The existence of the limits in the definition of the seminorm is derived from known results in Section \ref{S: seminorms}.

For averages \eqref{E: general average 2} along pairwise independent Hardy sequences such as \eqref{E: average example 1.5}, we can improve the conclusion of Theorem~\ref{T: box seminorm bound intro} to a proper Host-Kra seminorm control.

\begin{theorem}[Host-Kra seminorm estimates for pairwise independent Hardy sequences]\label{T: HK control}
    Let $\ell\in\N$ and $a_1, \ldots, a_\ell\in \CH$ be pairwise independent. Then there exists a positive integer $s=O_{a_1, \ldots, a_\ell}(1)$ such that for all systems $(X, \CX, \mu, T_1, \ldots, T_\ell)$ and 1-bounded functions $f_1, \ldots, f_\ell\in L^\infty(\mu)$, we have
    \begin{align*}
            \limsup_{N\to\infty}\norm{    \E_{n\in[N]} T_1^{\floor{a_1(n)}}f_1\cdots T_\ell^{\floor{a_\ell(n)}}f_\ell}_{L^2(\mu)}^{O_{a_1, \ldots, a_\ell}(1)}\ll_{a_1, \ldots, a_\ell} \min_{1\leq j\leq \ell}\nnorm{f_j}_{s, T_j}.
    \end{align*}
\end{theorem}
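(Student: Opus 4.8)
The plan is to feed the average into the generalized box seminorm estimate of Theorem~\ref{T: box seminorm bound intro} (in its precise form, Theorem~\ref{T: box seminorm bound no duals}) and then argue that, for averages of the special shape \eqref{E: general average 2}, the resulting box seminorm is dominated by a genuine Host--Kra seminorm of a single transformation.

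First I would fix $j\in[\ell]$ and record that $a_j$ grows faster than $\log$: for $\ell\ge2$ this follows from pairwise independence applied to the coefficient vector with $c_j=1$ and $c_i=0$ for some $i\ne j$, while for $\ell=1$ the hypothesis $a_j\succ\log$ --- which is exactly what Theorem~\ref{T: box seminorm bound intro} requires --- holds under the conventions on $\CH$ from Section~\ref{SS: Hardy field functions}. Since the average and the pairwise independence hypothesis are invariant under simultaneous permutation of the triples $(T_i,f_i,a_i)$, I may relabel so that $(T_j,f_j,a_j)$ occupies the first slot; then, setting $k=\ell$ and $\ba_i=a_i e_i$, the average \eqref{E: general average 2} is exactly the average \eqref{E: general average} for these tuples. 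With $\ba_0:=0$, each tuple $\ba_j-\ba_i$ with $0\le i\le\ell$, $i\ne j$, has $j$-th coordinate $a_j$, which grows faster than $\log$, so Theorem~\ref{T: box seminorm bound intro} applies and produces a generalized box seminorm $\nnorm{\cdot}$ of degree $s_0=O_{a_1,\ldots,a_\ell}(1)$, depending only on the leading coefficients of $\ba_j,\ba_j-\ba_1,\ldots,\ba_j-\ba_\ell$, with
\begin{align*}
\limsup_{N\to\infty}\norm{\E_{n\in[N]}T_1^{\floor{a_1(n)}}f_1\cdots T_\ell^{\floor{a_\ell(n)}}f_\ell}_{L^2(\mu)}^{O_{a_1,\ldots,a_\ell}(1)}\ll_{a_1,\ldots,a_\ell}\nnorm{f_j}
\end{align*}
for $1$-bounded $f_1,\ldots,f_\ell$.

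The crucial point I would then isolate is that the subgroup $\langle e_j\rangle$ of $\R^\ell$ occurs among the real subgroups defining $\nnorm{\cdot}$: it is the subgroup attached to the tuple $\ba_j=a_j e_j$ itself, whose leading coefficient is a nonzero multiple of $e_j$ and hence spans $\langle e_j\rangle$ irrespective of its magnitude. This is the feature peculiar to \eqref{E: general average 2} --- each $T_i$ appears in a single factor, so $\ba_i=a_i e_i$ points purely along $e_i$ --- and it is exactly what fails for the general average \eqref{E: general average}. With this in hand I would invoke the Gowers--Cauchy--Schwarz inequality for box seminorms (established with the other basic properties in Section~\ref{S: seminorms}): iterating it allows one to replace, one at a time, each defining subgroup of $\nnorm{\cdot}$ other than $\langle e_j\rangle$ by $\langle e_j\rangle$, at the cost of dyadic exponents summing to $1$, so that $\nnorm{f_j}$ is at most a fixed positive power $\theta=2^{-O_{a_1,\ldots,a_\ell}(1)}$ of the degree-$s_0$ box seminorm all of whose defining subgroups equal $\langle e_j\rangle$, times $\norm{f_j}_{L^\infty(\mu)}^{1-\theta}\le 1$. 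Finally, by the definition of box seminorms along $\langle e_j\rangle$ given in Section~\ref{S: seminorms}, this last seminorm is expressed through averages $\lim_M\E_{m,m'\in[M]}$ of products of iterates $T_j^{\floor{\alpha(m-m')}+\cdots}$ with $\alpha$ the leading coefficient of $a_j$; equidistribution of $(\floor{\alpha m})_m$ identifies it with $\nnorm{f_j}_{s_0,T_j}$ when $\alpha$ is irrational, and bounds it by a constant multiple of $\nnorm{f_j}_{s_0,T_j}$ when $\alpha$ is rational. Combining these steps yields $\limsup_{N\to\infty}\norm{\cdots}_{L^2(\mu)}^{O_{a_1,\ldots,a_\ell}(1)}\ll_{a_1,\ldots,a_\ell}\nnorm{f_j}_{s_0,T_j}$; setting $s=\max_{1\le j\le\ell}s_0$ and using monotonicity of Host--Kra seminorms in the degree, this holds for each $j\in[\ell]$, hence with $\min_{1\le j\le\ell}\nnorm{f_j}_{s,T_j}$ on the right.

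The step I expect to be the main obstacle is making the crucial point rigorous, namely reading off from the precise statement Theorem~\ref{T: box seminorm bound no duals} that the pure direction $\langle e_j\rangle$ genuinely appears among the subgroups defining the output seminorm (rather than getting entangled with the remaining coordinates through the PET and van der Corput bookkeeping), together with the final identification of the degree-$s_0$ box seminorm along $\langle e_j\rangle$ --- built from $\floor{\alpha m}$-shifted products --- with the standard Host--Kra seminorm $\nnorm{\cdot}_{s_0,T_j}$, with due care in the case of rational leading coefficient $\alpha$ (which arises, for instance, when $a_j$ is an integer polynomial). By contrast, the Gowers--Cauchy--Schwarz reduction is routine once the list of defining subgroups is known.
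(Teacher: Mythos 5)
Your first step is sound: writing $\ba_i=a_i\be_i$, pairwise independence guarantees that every difference $\ba_j-\ba_i$ with $i\in[0,\ell]\setminus\{j\}$ has a coordinate growing faster than $\log$, so Theorem \ref{T: box seminorm bound no duals} controls the average by a generalized box seminorm of $f_j$ of bounded degree whose directions lie among the leading coefficients of $\ba_j,\ba_j-\ba_1,\ldots,\ba_j-\ba_\ell$. The gap is the next step. First, a secondary point: the theorem only places the output directions inside the set \eqref{E: set of coefficients}; it does not promise that the pure direction $\alpha\be_j$ (coming from $\ba_j$ itself) actually occurs among them. More seriously, the remaining directions are in general genuinely mixed vectors $\alpha\be_j-\beta\be_i$ (when $a_j$ and $a_i$ share the same leading generator), or even pure multiples of $\be_i$ with $i\neq j$ (when $a_i$ dominates $a_j$), and the Gowers--Cauchy--Schwarz inequality cannot ``replace, one at a time, each defining subgroup other than $\langle\be_j\rangle$ by $\langle\be_j\rangle$'': Lemma \ref{L: GCS} bounds a Gowers--Cauchy--Schwarz inner product of $2^s$ functions by the product of their seminorms along the \emph{same} groups $G_1,\ldots,G_s$; it never exchanges one group for another, and no inequality of the form $\nnorm{f}_{G_1,\ldots,G_s}\ll\nnorm{f}_{\langle\be_j\rangle^{s}}^{\theta}\norm{f}_{L^\infty(\mu)}^{1-\theta}$ holds. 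Concretely, on a product system with $T_1=S\times\mathrm{id}$, $T_2=\mathrm{id}\times R$, take $f(x,y)=g(x)$ with $S$ weakly mixing and $\int g\,d\mu_1=0$: then $\nnorm{f}_{\be_1^{s}}=\nnorm{g}_{s,S}=0$ for every $s$, while $\nnorm{f}_{\be_1,\be_2}^4\geq\brac{\int|g|^2\,d\mu_1}^2>0$. So the mixed-direction seminorm of a fixed function is not majorized by any power of its pure-$\be_1$ seminorms; the theorem is rescued only by the structure of the average itself, which your chain of inequalities never uses. (Your aside that $\ell=1$ forces $a_1\succ\log$ by the conventions on $\CH$ is also incorrect: $\CH$ contains $\log t$ and slower functions.)

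This is precisely the obstruction the paper singles out in the introduction (the discussion of ``$T_1^{-\sqrt2}T_2^{\sqrt3}$-invariance'' around \eqref{E: average example 2}), and Section \ref{S: HK control} exists to overcome it. Instead of a direct seminorm comparison, the paper runs a seminorm smoothing (``ping--pong'') argument, inducting on the length and type of the average (Propositions \ref{P: bounds for basic type}, \ref{P: smoothing pairwise}, \ref{P: iterated smoothing pairwise}, \ref{P: smoothing}) and trading each mixed direction $\alpha\be_j-\beta\be_i$ for boundedly many copies of $\be_j$ by passing control back and forth between $f_j$ and $f_i$, using Lemma \ref{L: dual replacement}, the dual-difference interchange (Lemma \ref{L: dual-difference interchange}), and the degree-one inverse theorem together with approximate invariance (Lemmas \ref{L: U^1 inverse} and \ref{L: approximate invariance}). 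That route forces one to handle averages twisted by dual sequences, with estimates uniform in the remaining functions and weights, which is why the input is Theorem \ref{T: box seminorm bound} together with Proposition \ref{P: sublinear} (packaged as Theorem \ref{T: box seminorm bound rephrased}) rather than the dual-free Theorem \ref{T: box seminorm bound no duals} you invoke. Only in the situation where all maximal-degree directions are already along $\be_j$ --- averages of basic type --- does a direct argument of your kind succeed, and there the conversion of $\alpha\be_j$ into $\be_j$ is done by the rescaling Lemma \ref{L: dilating seminorms}, not by Gowers--Cauchy--Schwarz; this is Proposition \ref{P: bounds for basic type}, the base case of the induction.
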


{In contrast to earlier results for polynomials, Theorem \ref{T: HK control} cannot be strengthened to averages along more general F{\o}lner sequences. See e.g. \cite{BMR20a} for an explanation of issues arising in this case.}

A reader may notice that unlike most seminorm estimates in ergodic theory, ours are not only quantitative, but they also come with good polynomial bounds. To the best of our knowledge, no such quantitative seminorm estimates exist in ergodic literature even for averages along integer polynomials.\footnote{For averages \eqref{E: general average 2} of integer polynomials along a single transformation $T_1 = \cdots = T_\ell$, a quantitative seminorm estimate could likely be proved by carefully quantifying the usual PET argument, and this would avoid the use of concatenation tools ubiquitous in this article. However, no avoidance of concatenation seems possible for quantitative estimates for averages involving several transformations.} 
We state Theorems \ref{T: box seminorm bound intro} and \ref{T: HK control} quantitatively not only because we can; the quantitative nature of Theorem \ref{T: box seminorm bound intro} gives considerable advantage in the proof of Theorem \ref{T: HK control}, even if one were only interested in the qualitative control implied by Theorem \ref{T: HK control}. Indeed, the first proof of Host-Kra seminorm control for averages \eqref{E: general average 2} along integer polynomials obtained in \cite{FrKu22a} necessitated a quantification of (qualitative) box seminorm control for averages \eqref{E: general average} along integer polynomials from \cite{DFKS22}. The quantification used in \cite{FrKu22a} was completely ineffective and relied on an abstract functional analytic argument that turned qualitative seminorm control of a multilinear operator into a soft quantitative one. By contrast, our proof of Theorem \ref{T: HK control} is completely effective and therefore recovers one of the main results from \cite{FrKu22a} using arguably more elementary tools. 

While proving seminorm estimates for ergodic averages along Hardy sequences (e.g. \cite{BMR20, Fr10, Fr12, Ts22}, one splits the average over $[N]$ into one over short intervals, and then Taylor expands the Hardy sequences on short intervals as polynomials (it is this step that requires Hardy sequences to have polynomial growth).  Therefore, an important tool in all previous seminorm estimates (e.g. \cite{BMR20, Fr10, Fr12, Ts22}) are simultaneous Taylor approximations for Hardy sequences that make it possible to control the degrees and leading coefficients of approximating polynomials. As it turns out, the existing approximations are far too weak to deal with commuting transformations. One of our key technical advances is Proposition \ref{P: Taylor expansion ultimate}, a new result on simultaneous Taylor approximations. The content of this result and its role in the overall strategy is outlined in Section \ref{SSS: approximations}

If one quantitatively strengthened the joint ergodicity criteria from \cite{FrKu22a}, one could combine them with our Theorem \ref{T: HK control} to give an alternative proof for the rational Kronecker control for linearly independent integer polynomials along commuting transformations from \cite{FrKu22a} that would avoid altogether the Host-Kra structure theorem. As our ambitions have been exhausted in different directions, we leave the details to the interested reader.

As an additional application, the quantitative seminorm estimates provided by Theorem \ref{T: HK control} can be used to recover Gowers norm estimates for multidimensional integer polynomial progressions proved recently in \cite{Kuc23}. Again, we do not provide the details of this derivation.

Last but not least, we emphasize that the proof of Theorem~\ref{T: HK control} only requires Walsh's result from \cite{W12} for commuting transformations with mutlivariable linear iterates (see also \cite{ZK16}), its corresponding strengthening to averages along real linear polynomials which follows from \cite{Ko18}, and the Fubini-type principle for ergodic averages \cite{BL15} (we need these ingredients to show that the limits in the definition of generalized box seminorms are well-defined and can be permuted). Hence it is highly possible that the only required norm convergence results are pre-Walsh norm convergence of ergodic averages along integer linear  polynomials \cite{A10b, H09, T08}.  Combined with the Host-Kra structure theorem \cite{HK05a} and Leibman's results on the convergence of polynomial averages on nilmanifolds \cite{L05a, L05b}, Theorem~\ref{T: HK control} gives an alternative proof of the norm convergence of ergodic averages of commuting transformations along pairwise independent integer polynomials.\footnote{By contrast, the qualitative Host-Kra seminorm control for averages along pairwise independent integer polynomials proved in \cite{FrKu22a}, as well as the general convergence result for real polynomial iterates from \cite{Ko18}, crucially use more general implications of Walsh's result.}

\subsection{Further applications of our result}
Since the release of this manuscript, Theorem \ref{T: box seminorm bound intro} and its corollaries have played a key part in two important developments \cite{DKKST25, FrKu25}. In both cases, Theorem \ref{T: box seminorm bound intro} supplied initial seminorm estimates for the averages under consideration.
These original estimates were then improved to optimal ones using techniques that depended on the particular problem.

In \cite{DKKST25}, we used Theorem \ref{T: box seminorm bound intro} to resolve Problem \ref{Pr: joint ergodicity problem} for all Hardy sequences of polynomial growth. In particular, we extended Theorem \ref{T: joint ergodicity} to all families of Hardy sequences that are ``nondegenerate'' in some suitable sense. We also constructed a counterexample of ``degenerate'' Hardy sequences, which is jointly ergodic for some system but fails to satisfy the difference ergodicity condition.

In \cite{FrKu25}, identities like
\begin{align*}
    \lim_{N\to\infty}\norm{\E_{n\in[N]}T_1^{\sfloor{n^{3/2}}}f_1 \cdot T_2^{\sfloor{n^{3/2}}}f_2 - \E_{n\in[N]}T_1^nf_1 \cdot T_2^n f_2}_{L^2(\mu)} = 0
\end{align*}
have been established by using the following special case of Theorem \ref{T: box seminorm bound intro} in which all the iterates are multiples of the same sequence (this is the setting orthogonal to that of Theorem \ref{T: HK control}).
\begin{corollary}[Seminorm estimates in the pairwise dependent case]\label{T: bound for pairwise dependent}
        Let $\ell\in\N$, and let $a\in\CH$ satisfy $a\succ \log$. Then there exists a positive integer $s=O_{a,\ell}(1)$ such that for all systems $(X, \CX, \mu, T_1, \ldots, T_\ell)$ and 1-bounded functions $f_1, \ldots, f_\ell\in L^\infty(\mu)$, we have
    \begin{align*}
            \limsup_{N\to\infty}\norm{\E_{n\in[N]} T_1^{\floor{a(n)}}f_1\cdots T_\ell^{\floor{a(n)}}f_\ell}_{L^2(\mu)}^{O_{a, \ell}(1)}\ll_{a,\ell} \nnorm{f_1}_{\be_1^{\times s}, (\be_1-\be_2)^{\times s}, \ldots, (\be_1-\be_\ell)^{\times s}}.
    \end{align*}
\end{corollary}
Corollary \ref{T: bound for pairwise dependent} has previously been known only when $a$ is an integer polynomial \cite{DFKS22} or a strongly nonpolynomial Hardy field function \cite{DKS23}. It is therefore new even for real polynomials like $\sqrt{2}t^2+t$. 

More generally, if we replace each $T_j^{\floor{a(n)}}$ in the average by $\prod_{i=1}^k T_i^{\floor{\alpha_{ji}a(n)}}$ for some $\balpha_j = (\alpha_{j1}, \ldots, \alpha_{jk})\in\R^k$, then Theorem \ref{T: HK control} gives control by a seminorm in which every $\be_j$ is replaced by $\balpha_j$.

\subsection{Outline of the paper and new ideas}

The paper is structured as follows. After presenting our notation and the basic properties of Hardy sequences in Section~\ref{S: background}, we move on to define generalized box seminorms and derive its various properties in Section~\ref{S: seminorms}. We then dedicate Section~\ref{S: concatenation} to a quantitative concatenation result for averages of generalized box seminorms that emulates similar results proved recently in the finitary setting \cite{GS24, KKL24a, Kuc23, Pel20, PP19}. Its culmination are Proposition~\ref{P: concatenation for general groups} and Corollary~\ref{C: concatenation for general groups II}, which can be seen as the quantitative analogues of the Bessel-type inequality from \cite[Corollary 1.22]{TZ16}, as well as Corollary~\ref{C: iterated concatenation for general groups}, tailor-made for subsequent applications. After some preliminary lemmas in Section~\ref{S: preliminaries}, the bulk of the paper (Sections~\ref{S: linear and sublinear}-\ref{S: general case}) is then devoted to proving Theorems~\ref{T: box seminorm bound no duals} and \ref{T: box seminorm bound}, precise versions of Theorem~\ref{T: box seminorm bound intro}, which assert that generalized box seminorms control averages \eqref{E: general average} with polynomial bounds. We then upgrade, in Section~\ref{S: HK control}, the generalized box seminorm estimates to the Host-Kra seminorm estimates for pairwise independent Hardy sequences, in effect proving Theorem \ref{T: HK control}. In Section~\ref{S: joint ergodicity proofs}, we use these estimates to derive joint ergodicity results. Lastly, Appendix \ref{A: Hardy properties} list various elementary properties of Hardy sequences while Appendix \ref{A: approximations} proves Proposition \ref{P: Taylor expansion ultimate}, a new result on simultaneous Taylor approximations of Hardy sequences, which is of independent interest.

\subsubsection{Quantitative concatenation}

Here, we outline the main arguments together with new ideas that come into the proofs, starting with quantitative concatenation. The basic idea behind concatenation  is that averages of generalized box seminorms of the form 
\begin{align*}
    \E_{i\in I}\nnorm{f}_{G_{1i}, \ldots, G_{si}}^{2^s},
\end{align*}
where $I$ is a finite indexing set and $\nnorm{\cdot}_{G_{1i}, \ldots, G_{si}}$ 
is a generalized box seminorm along finitely generated subgroups $G_{1i}, \ldots, G_{si}\subseteq\R^k$, are controlled by an average of generalized box seminorms along subgroups $G_{ji_1}+\cdots + G_{ji_\ell}$ for $1\leq j\leq s$ {and indices $i_1, \ldots, i_\ell\in I$ which typically will be pairwise distinct}. 

The utility of such control lies in the fact that the larger subgroups $G_{1i_1}+\cdots + G_{1i_\ell}$ often admit a more explicit description than the original ones. As a model example, the arguments from Section \ref{S: concatenation} show that the average
\begin{align}\label{E: 1-deg concat ex}
    \E_{h\in[\pm H]} \nnorm{f}_{\langle\sqrt{2} h\rangle}^2 := \E_{h\in[\pm H]}\; \lim_{M\to\infty}\;\E_{m, m'\in[\pm M]} \int T^{\floor{\sqrt{2}h m}}f \cdot T^{\floor{\sqrt{2}h m'}}\overline{f}\; d\mu
\end{align}
(where $[\pm H]:=\{-H, -H+1, \ldots, H-1, H\}$)
can be controlled with polynomial bounds by an average like
\begin{align*}
    \E_{h_1, h_2, h_3, h_4\in[\pm H]} \nnorm{f}_{\langle\sqrt{2} h_1\rangle+\cdots + \langle\sqrt{2} h_4\rangle}^{16}.
\end{align*}
One can then show that for almost all tuples $(h_1, h_2, h_3, h_4)\in\Z^4$, the group $$\langle\sqrt{2} h_1\rangle+\cdots + \langle\sqrt{2} h_4\rangle :=\langle \sqrt{2}(m_1 h_1 + m_2 h_2 + m_3 h_3 + m_4 h_4):\; m_1, m_2, m_3, m_4\in\Z\rangle$$ is close (in an appropriate sense) to the group $\langle \sqrt{2}\rangle$. Hence, once $H$ is large enough, the average \eqref{E: 1-deg concat ex} can be quantitatively bounded by a single seminorm along the group $\langle \sqrt{2}\rangle$. 

Section \ref{S: concatenation} provides the first quantitative concatenation results in the ergodic setting, which can be seen as quantitative improvements on the original concatenation results from \cite{TZ16}. While our proofs largely follow the finite-field argument from \cite{Kuc23} (and its integer adaptation from \cite{KKL24a}), the transition to the ergodic setting requires nontrivial modifications of certain technical bits. The output of Section \ref{S: concatenation} is then crucially used in Section \ref{S: polynomial concatenation} to deliver Proposition \ref{P: Box seminorm control for polynomials twisted by duals}, the version of Theorem \ref{T: box seminorm bound intro} for polynomials, in a way that will be explained shortly.

\subsubsection{Seminorm estimates: overview}

As already mentioned, the lion's share of our effort goes into deducing Theorem~\ref{T: box seminorm bound} which gives generalized box seminorm estimates on averages along Hardy sequences. Theorem~\ref{T: box seminorm bound} is then used to derive Theorem~\ref{T: box seminorm bound no duals}, a precise formulation of Theorem~\ref{T: box seminorm bound intro}, as well as Theorem \ref{T: HK control}, Host-Kra seminorm estimates for averages along pairwise independent Hardy sequences. The proof of Theorem \ref{T: box seminorm bound} is modelled on existing works that obtain box seminorm control for polynomial averages of commuting transformations \cite{DFKS22} and their finitary analogues \cite{KKL24a, Kuc23}, as well as Host-Kra seminorm control for averages of a single transformation along Hardy sequences \cite{Ts22} and averages of commuting transformations along the same Hardy iterate \cite{DKS23}. Yet dealing with Hardy sequences \textit{and} commuting transformations simultaneously raises the level of technicalities to the heights unseen in the aforementioned papers. To illustrate the point with one telling example, the arguments from  \cite{DFKS22, DKS23, KKL24a, Kuc23} invoke concatenation results once while \cite{Ts22} does not use them at all; we need them at two separate occasions. 

The proof of seminorm estimate for averages \eqref{E: general average} (Theorems \ref{T: box seminorm bound intro}, \ref{T: box seminorm bound no duals} and \ref{T: box seminorm bound}) is split into five different cases depending on the shape of Hardy sequences, where the simpler cases are invoked in the proofs of the more difficult cases. Rewriting \eqref{E: general average} as
\begin{align}\label{E: general average rephrased}
    \E_{n\in[N]} T^{\floor{\ba_1(n)}}f_1 \cdots T^{\floor{\ba_\ell(n)}}f_\ell 
\end{align}
for $T^{\floor{\ba_j(n)}} := T_1^{\sfloor{a_{j1}(n)}}\cdots T_k^{\sfloor{a_{jk}(n)}}$, we can list these five cases as follows:
\begin{enumerate}
    \item $\ba_j$'s are linear, i.e. $a_{ji}(t) = \alpha_{ji} t$ for some $\alpha_{ji}\in\R$ for all $j,i$; 
    \item $\ba_j$'s are sublinear, i.e. $a_{ji}(t)\prec t$ for all $j, i$,  e.g.
    \begin{align}\label{E: sublinear example}
        \ba_1(t) = (\sqrt{2}t^{1/2}, (\log t)^2),\quad \ba_2(t)= (\log t, \sqrt{3}t^{1/2});
    \end{align}
    \item $\ba_j\in\R^k[t]$;
    \item $\ba_j$'s are sums of a sublinear and a polynomial term, e.g. 
    \begin{align}\label{E: sublinear + polynomial example}
        \ba_1(t) = (\sqrt{2}t^2, (\log t)^2),\quad \ba_2(t)= (t^{2/3}, \sqrt{3}t^{2});
    \end{align}
    \item $\ba_j$'s are arbitrary; this includes the iterates
    \begin{align*}
        \ba_1(t) = (t^{\sqrt{2}}, 0), \quad \ba_2(t) = (0, t^{\sqrt{2}} + t^{1/2})
    \end{align*}
    from \eqref{E: average example 1.5}.
\end{enumerate}

\subsubsection{Seminorm estimates for linear and sublinear sequences}

When $\ba_j$'s are linear, the proof consists of $\ell$ applications of the van der Corput inequality, much like in the by now classical proof of Host-Kra seminorm control for Furstenberg averages 
\begin{align*}
    \E_{n\in[N]} T^n f_1\cdots T^{\ell n} f_\ell
\end{align*}
from \cite{HK05a}. They are followed by an extra application of the Cauchy-Schwarz inequality needed to remove the errors accrued while comparing $\floor{\alpha (n+m)}$ and $\floor{\alpha n}+\floor{\alpha m}$. The linear case, handled in Lemma \ref{L: linear averages} and Corollary \ref{C: linear averages}, already showcases the need for a more robust notion of seminorms than the regular box seminorms.

When $\ba_j$'s are sublinear, the proof of the generalized box seminorm estimates (isolated as Proposition \ref{P: sublinear} and Corollary \ref{C: sublinear}) is fairly classical and amounts to reducing the problem to the linear case. The starting point is to decompose each $\ba_j$ as a $\R^k$-linear combination of some functions $g_1, \ldots, g_m\in\CH$ satisfying the growth condition
\begin{align*}
1\prec g_1(t) \prec \cdots \prec g_m(t) \prec t;    
\end{align*}
 for \eqref{E: sublinear example}, we would take $$g_1(t) = \log t,\quad g_2(t) = (\log t)^2,\quad g_3(t) = t^{1/2}.$$ 

We then split 
\begin{align}\label{E: passing to short intervals}
    \E_{n\in[R]} \approx \E_{N\in[R]} \E_{n\in(N, N+L(N)]}
\end{align}
for some $1\prec L(t) \prec t$. The function $L$ is chosen in such a way as to make the contribution of $g_1, \ldots, g_{m-1}$ constant on each short interval; for \eqref{E: sublinear example}, any $L\in\CH$ satisfying $t^{1/2}\prec L(t)\prec t^{3/4}$ would do, giving roughly
\begin{multline*}
        \E_{n\in[R]}T_1^{\sfloor{\sqrt{2}n^{1/2}}}T_2^{\sfloor{(\log n)^2}}f_1 \cdot T_1^{\sfloor{\log n}} T_2^{\sfloor{\sqrt{3}n^{1/2}}} f_2\\ 
        \approx \E_{N\in[R]} \E_{n\in[L(N)]} T_1^{\sfloor{-\frac{\sqrt{2}}{2N^{1/2}} n}}\brac{T_1^{\sfloor{\sqrt{2}N^2}}T_2^{\sfloor{(\log N)^2}}f_1} \cdot T_2^{\sfloor{-\frac{\sqrt{3}}{2N^{1/2}} n}} \brac{T_1^{\sfloor{\log N}} T_2^{\sfloor{\sqrt{3}N^2}}f_2}
\end{multline*}
(for expository purposes, we ignore all the error terms that come from comparing an integer part of a sum with a sum of integer parts).
We then split each short interval into a union of arithmetic progressions and change variables so that on each progression, the coefficient in front of $n$ becomes independent of $N$ (for \eqref{E: sublinear example}, the appropriate step of arithmetic progressions to make this happen would be $\sim 2 N^{1/2}$). If the leading terms in $\ba_1$ and $\ba_j$ are different for all $j\neq 1$, then the claimed bound follows by invoking the linear case for the average over $n$. Otherwise we need to additionally analyze the average over $N$ to disentangle from $\ba_1$ those $\ba_j$ that have the same leading term, much like this was done in \cite[Proposition 5.3]{Ts22}.

\subsubsection{Averages with dual twists}
From the sublinear case onwards, we frequently need to deal with averages \eqref{E: general average rephrased} that have 1-bounded weights $c_n$ and are twisted by \textit{dual sequences}, i.e. sequences of the form $\CD(\floor{b(n)})$ for $b\in\CH$ and $\CD(n) = T^{n}\CD_{s, T}(f)$, where $\CD_{s, T}(f)$ is the level-$s$ dual function of $f$ with respect to $T$ defined in Section \ref{SS: dual functions}. In effect, we work with averages
\begin{align}\label{E: twisted general average}
    \E_{n\in[N]} c_n\cdot \prod_{j\in[\ell]} T^{\floor{\ba_j(n)}}f_j \cdot \prod_{j\in[J]}\CD_j(\floor{b_j(n)}),
\end{align}
where $c_n$ are bounded complex weights, $\CD_1, \ldots, \CD_J$ are dual sequences of bounded level and $b_1, \ldots, b_J$ are Hardy sequences of bounded growth. 
The motivation for studying these more general expressions comes from the proof of Theorem \ref{T: HK control}, in which we gradually replace arbitrary bounded functions $f_j$ by dual functions $\CD_{s,T}(g_j)$. (The reason why the replacement of arbitrary bounded functions to dual functions gives us an advantage is that much like nilsequences, dual sequences can be removed by finitely many applications of the van der Corput inequality.) A key tool to get rid of dual sequences from our expression is Lemma \ref{L: removing duals finitary}, a finitary version of \cite[Proposition~6.1]{Fr12}.

Whenever we deal with averages twisted by dual sequences, needed for the proof of Theorem \ref{T: HK control}, we typically have to make some maximum growth/degree assumption on the sequence $\ba_1$ acting on a function $f_1$ for which we want to obtain a seminorm estimate. However, in the absence of dual sequences, we can usually compose the integral with an iterate of maximum growth/degree, ensuring that the now modified sequence $\ba_1$ has itself maximum growth/degree. For instance, in the sublinear average twisted by dual sequences for which we prove seminorm estimates in Proposition \ref{P: sublinear}, the sequence $\ba_1$ must have maximum growth among $\ba_1, \ldots, \ba_\ell$. But in the sublinear average with no dual sequences considered in Corollary \ref{C: sublinear}, this assumption is no longer necessary. A similar disparity will later appear between Theorem \ref{T: box seminorm bound} (which covers averages \eqref{E: twisted general average} twisted by dual functions for general Hardy sequences except it requires $\ba_1$ to have positive fractional degree) and Theorem \ref{T: box seminorm bound no duals} (which covers averages \eqref{E: general average} with no dual sequences and therefore also works if $\ba_1$ is subfractional).

\subsubsection{Seminorm estimates for polynomial averages}
Seminorm estimates for polynomial averages are split over two sections, \ref{S: polynomial PET} and \ref{S: polynomial concatenation}, and involve a number of intermediate results, some of purely auxiliary character and others fine-tuned for later applications.
In contrast with the sublinear case, the entire argument for polynomials has to be carried out finitarily, i.e. at most stages we work with \eqref{E: twisted general average} for fixed $N$; the reason is that the proof of Theorem~\ref{T: box seminorm bound intro} actually requires seminorm estimates for finite polynomial averages. Section \ref{S: polynomial PET} contains the PET argument; its main outcomes are Proposition \ref{P: PET for polynomials twisted by duals}, which asserts that finite polynomial averages are controlled by finitary counterparts of generalized box seminorms, as well as Corollary \ref{C: PET for polynomials twisted by duals}, its infinitary version. In Section~\ref{S: polynomial concatenation}, we combine these PET estimates with a concatenation argument to derive Proposition~\ref{P: Box seminorm control for polynomials twisted by duals}, (generalized box seminorm estimates for polynomials). Previously, Proposition \ref{P: Box seminorm control for polynomials twisted by duals} was known only for integer polynomials and in a purely qualitative form \cite[Theorem~2.5]{DFKS22} due to its reliance on the Tao-Ziegler concatenation \cite{TZ16}. By contrast, Proposition~\ref{P: Box seminorm control for polynomials twisted by duals} holds for all real polynomials and is fully quantitative. 

The proofs in Sections \ref{S: polynomial PET} and \ref{S: polynomial concatenation} combine the latest technical advances from the ergodic and combinatorial settings with our novel notion of generalized box seminorms. Throughout the PET argument, we need to carefully track the coefficients of the polynomials emerging at the intermediate stages of the argument. This is done using a sophisticated coefficient tracking scheme from \cite{KKL24a} that itself refines an older scheme from \cite{DFKS22}. The polynomial concatenation argument from Section \ref{S: polynomial concatenation} emulates an analogous argument recently carried out in the combinatorial setting \cite{KKL24a}, and it combines the general concatenation argument from Section \ref{S: concatenation} with equidistribution results from \cite{KKL24a} for massive systems of multilinear forms that arise during the PET procedure. 

\subsubsection{Seminorm estimates in the polynomial + sublinear case}
The arguments for the sublinear and polynomial cases come together in Section \ref{S: polynomial + sublinear}, where we give seminorm estimates for sequences that are sums of sublinear and polynomial parts. Emulating the proof of \cite[Proposition 5.1]{Ts22}, we decompose a single average into a double average as in \eqref{E: passing to short intervals}, choosing the length $L$ of the short intervals in such a way as to render the sublinear terms constant on each short interval. For the family \eqref{E: sublinear + polynomial example}, any $1\prec L(t) \prec t^{1/3}$ would do, giving the approximation
\begin{multline*}
        \E_{n\in[R]}T_1^{\sfloor{\sqrt{2}n^2}}T_2^{\sfloor{(\log n)^2}}f_1 \cdot T_1^{\sfloor{n^{2/3}}} T_2^{\sfloor{\sqrt{3}n^2}} f_2\\ 
        \approx \E_{N\in[R]} \E_{n\in[L(N)]} T_1^{\sfloor{\sqrt{2}(N+n)^2}}\brac{T_2^{\sfloor{(\log N)^2}}f_1} \cdot T_2^{\sfloor{\sqrt{3}(N+n)^2}} \brac{T_1^{\sfloor{N^{2/3}}}f_2}.
\end{multline*}
The average over $n$ thus becomes a polynomial average, and we control it by an average of (finitary versions of) generalized box seminorms using the finitary results from Section~\ref{S: polynomial PET}. Then we move to the average over $N$, which contains the sublinear part of our Hardy sequences, and we handle it using Proposition \ref{P: sublinear}.  Once the sublinear part is dealt with, we concatenate the average of finitary generalized box seminorms obtained in the first step using results from Section \ref{S: polynomial concatenation}. This last step constitutes a marked departure from how the analogous argument in the single transformation case has been concluded in \cite{Ts22}. In that work, a simple PET argument sufficed to control the average of seminorms by a Host-Kra seminorm; in our case, we need the full power of the results from Sections \ref{S: polynomial PET}-\ref{S: polynomial concatenation} to finish the argument.

\subsubsection{Seminorm estimates in the general case}
All these proofs, already formidably technical, fade in comparison with the proofs from Section~\ref{S: general case} that complete the proof of Theorem~\ref{T: box seminorm bound no duals}, the precise form of Theorem~\ref{T: box seminorm bound intro}. The bulk of the section is devoted to the proof of Theorem~\ref{T: box seminorm bound}, seminorm estimates for general averages \eqref{E: twisted general average} twisted by dual sequences. Theorem~\ref{T: box seminorm bound} is the main component in the proof of Theorem~\ref{T: box seminorm bound no duals} with the caveat that the former requires $\ba_1$ (or more generally, at least one of $\ba_1, \ldots, \ba_\ell$) to have positive fractional degree. The missing case of Theorem~\ref{T: box seminorm bound no duals} in which all $\ba_1, \ldots, \ba_\ell$ are subfractional is supplied by Corollary~\ref{C: sublinear}.

The starting point of the proof of Theorem~\ref{T: box seminorm bound} is to split the average into short intervals as in \eqref{E: passing to short intervals} and Taylor approximate the sequences $\ba_j$ by polynomials on each short interval. Having passed to short intervals, we use the PET results for polynomials from Section \ref{S: polynomial PET} to control the finite averages over $n$ by a finite average (over integer tuples $\um, \um', \uh$) of finitary generalized box seminorms. It then takes a tremendous amount of effort to massage these resulting expressions until the directions of these finitary seminorms get realized as Hardy sequences in $N$. As it turns out, these new Hardy sequences are sums of polynomial and sublinear terms. This clever reduction, originally performed for a single transformation in \cite{Ts22}, allows us to use Proposition \ref{P: sublinear + polynomial} in order to bound the average over $N$. The result of these maneuvers is an average (over the tuples $\um, \um', \uh$ obtained at the previous step) of generalized box seminorms whose directions are polynomials with coefficients coming from the set of (appropriately defined) ``leading coefficients'' of $\ba_1, \ba_1 - \ba_2, \ldots, \ba_1 - \ba_\ell$. A final application of the concatenation results from Section \ref{S: polynomial concatenation} gives us a single generalized box seminorm along the leading coefficients of $\ba_1, \ba_1 - \ba_2, \ldots, \ba_1 - \ba_\ell$.

\subsubsection{Simultaneous Taylor approximations}\label{SSS: approximations}
Throughout Section \ref{S: general case}, even more so than in the previous sections, we go to considerable lengths to track the coefficients of polynomials and Hardy sequences that appear at intermediate stages. This is needed to ensure that the seminorm of $f_1$ that ultimately controls the original average \eqref{E: general average} has the ``correct'' directions, corresponding to the leading coefficients of $\ba_1, \ba_1 - \ba_2, \ldots, \ba_1-\ba_\ell$. As it transpires in the proof, we need a much finer understanding of the degrees and coefficients of simultaneous Taylor expansions than in previous works \cite{BMR20, Fr10, Fr12, KoTs23, R22, Ts22, Ts24}. These new results on simultaneous Taylor expansions, presented in Proposition \ref{P: Taylor expansion ultimate} and proved in Appendix \ref{A: approximations}, are of independent interest and will likely play a significant role in further works on Hardy sequences.


The rough idea behind Proposition \ref{P: Taylor expansion ultimate} is as follows. Throughout the proof, we split our (tuples of) Hardy sequences $\ba_1, \ldots, \ba_\ell$ into polynomial and nonpolynomial parts; we then take a basis $g_1, \ldots, g_m$ for the nonpolynomial part of our Hardy sequences subject to the growth condition $1\prec g_1(t) \prec \cdots \prec g_m(t)$. In order to track the coefficients of $\ba_1, \ldots, \ba_\ell$ and other intermediate sequences in terms of $g_1(t), \ldots, g_m(t), t, t^2, \ldots$, we need to find functions $H, L$ for which the Taylor approximants $g_{jN}$'s of $g_j$'s on the interval $(N, N + L(N)]$ satisfy the following properties:
\begin{enumerate}
    \item if $g_j$ has positive fractional degree, i.e. it grows faster than $t^\delta$ for some $\delta>0$, then the degree of $g_{jN}$ exceeds some a priori chosen threshold (in particular, $g_{jN}$ has higher degree than the polynomial parts of all our Hardy sequences, which is needed to separate the polynomial and nonpolynomial components of the Hardy sequences throughout the analysis);
    \item the Taylor approximants of any two $g_i, g_j$ have the same degree if and only if $g_i, g_j$ have the same fractional degree;
    \item after the change of variables $n\mapsto \floor{H(N)}n+r$, the leading coefficients $\chi_{g_j}(N)$ of $g_{jN}$ are increasing, sublinear, and have distinct growth rates (without changing variables this way, they would go to 0). Only then can we invoke the polynomial + sublinear case to deal with the average over $N$.
\end{enumerate}

The key difference between our Taylor approximations and those previously established for a single transformation $T$ (e.g. \cite{Fr09, Fr10, Ts22}) is that in those previous works, it was sufficient for the iterates appearing in various intermediate averages to be pairwise distinct. This condition alone was enough to control the averages by some seminorm of $T$. Here, however, each iterate is a sum of a polynomial in $\R^k[t]$ and a $\R^k$-linear combination of $g_1, \ldots, g_m$; hence each coefficient could in principle correspond to a different power of $T_1, \ldots, T_k$, giving rise to a different seminorm. To get seminorm estimates with correct directions, we thus need to be more delicate. It does not suffice for 
sublinear Hardy sequences $\chi_{g_1},\ldots, \chi_{g_m}$ to be pairwise distinct. Rather, 
they need to have \textit{distinct growth rates}. Only then can we keep track of the coefficients throughout the analysis in a way that will give us the desired seminorm estimates at the end. 

While we refrain  from discussing here the highly technical proof of Proposition \ref{P: Taylor expansion ultimate} contained in Appendix \ref{A: approximations}, we do point out that it considerably departs from proofs of earlier simultaneous Taylor approximation results, using ideas vaguely related to the theory of equidistribution and Diophantine approximations.

 

\subsubsection{Seminorm smoothing for pairwise independent polynomials}
Having proved generalized box seminorm estimates for general averages of the form \eqref{E: general average}, we focus our attention on the somewhat more specialized case of averages \eqref{E: general average 2} in which the Hardy sequences are pairwise independent. In Section \ref{S: HK control}, we deduce that the latter family of averages is controlled by Host-Kra seminorms, proving Theorem~\ref{T: HK control}. The proof goes via a variant of the seminorm smoothing argument developed in \cite{FrKu22a}. As it turns out, the argument does not really use any particular properties of Hardy sequences other than the estimates from Theorem~\ref{T: box seminorm bound} and Proposition~\ref{P: sublinear} (the latter supplies the case of all $\ba_1, \ldots, \ba_\ell$ being subfractional missing from Theorem~\ref{T: box seminorm bound}). We therefore write the proof of Theorem~\ref{T: HK control} down for an arbitrary family of sequences satisfying generalized seminorm estimates of the sort given by Theorem~\ref{T: box seminorm bound} and Proposition~\ref{P: sublinear}. This more abstract formulation makes the proof cleaner and renders more general estimates (Proposition~\ref{P: smoothing}) that may find applications beyond this paper. 

While the general strategy behind the proofs in Section~\ref{S: HK control} closely resembles the strategy from \cite{FrKu22a}, quite a few of the technical tools used abundantly in \cite{FrKu22a} need to be adjusted. Perhaps the most important issue is that the degree-1 generalized box seminorms do not admit as clean inverse theorems as degree-1 Host-Kra seminorms. To see where this issue arises, consider the average 
\begin{align}\label{E: average example 2}
    \E_{n\in[N]} T_1^{\floor{\sqrt{2}n^{3/2} + n\log n}}f_1\cdot T_2^{\floor{\sqrt{3}n^{3/2}}}f_2.
\end{align}
To bound it by Host-Kra seminorms, we need to invoke Host-Kra seminorm estimates for
\begin{align}\label{E: average example 2.1}
        \E_{n\in[N]} T_1^{\floor{\sqrt{2}n^{3/2} + n\log n}}f_1\cdot T_1^{\floor{\sqrt{2}n^{3/2}}}f_2.
\end{align}
If we copied blindly the argument from \cite{FrKu22a}, we would reduce from \eqref{E: average example 2} to \eqref{E: average example 2.1} by restricting to functions $f_2$ that are $T_1^{-\sqrt{2}}T_2^{\sqrt{3}}$-invariant. However, $T_1^{-\sqrt{2}}T_2^{\sqrt{3}}$ is not a well-defined transformation, and so to make this crucial reduction work, we need to find a meaningful way of talking about ``$T_1^{-\sqrt{2}}T_2^{\sqrt{3}}$-invariance'' in the context of our problem.

The following diagram illustrates the logical relationship between various seminorm estimates established in this paper. 

\medskip
\begin{center}
\scalebox{0.89}{
\begin{tikzpicture}[
    box/.style = {draw, minimum size=1cm},
    arrow/.style = {->, >=stealth, thick}
]
\node[box] (L61) at (0*3,5*2) {Lemma \ref{L: linear averages}};
\node[box] (C62) at (1*3,4*2) {Corollary \ref{C: linear averages}};
\node[box] (P77) at (2*3,5*2) {Proposition \ref{P: PET I}};
\node[box] (P78) at (4*3.285,5*2) {Proposition \ref{P: PET for polynomials with shifts}};
\node[box] (P63) at (0*3,3*2) {Proposition \ref{P: sublinear}};
\node[box] (C64) at (0*3,2*2) {Corollary \ref{C: sublinear}};
\node[box] (P79) at (3*3.05,4*2) {Proposition \ref{P: PET for polynomials with shifts 2}};
\node[box] (P86) at (2.1*3,3*2) {Proposition \ref{P: Box seminorm control for polynomials twisted by duals}};
\node[box] (C711) at (3.2*3.05,3*2) {Corollary \ref{C: PET for polynomials twisted by duals}};
\node[box] (P710) at (4.3*3.05,3*2) {Proposition \ref{P: PET for polynomials twisted by duals}};
\node[box] (P85) at (2.1*3,2*2) {Proposition \ref{P: polynomial concatenation}};
\node[box] (P91) at (1*3,2*2) {Proposition \ref{P: sublinear + polynomial}};
\node[box] (T102) at (2.1*3,1*2) {Theorem \ref{T: box seminorm bound}};
\node[box] (T112) at (3.2*3,1*2) {Theorem \ref{T: box seminorm bound rephrased}};
\node[box] (T113) at (4.2*3.05,1*2) {Theorem \ref{T: HK control}};
\node[box] (P113) at (4.125*3.1,2*2) {Proposition \ref{P: smoothing}};
\node[box] (Apps) at (4.1*3.125,0*2) { Applications};
\node[box] (T1102) at (0*3,1*2) {Theorem \ref{T: box seminorm bound intro}/\ref{T: box seminorm bound no duals}};

\draw[arrow] (L61) -- (P77);
\draw[arrow] (L61) -- (P63);
\draw[arrow] (C62) -- (P63);
\draw[arrow] (P63) -- (C64);
\draw[arrow] (C64) -- (T1102);
\draw[arrow] (P77) -- (P79);
\draw[arrow] (P77) -- (P91);
\draw[arrow] (P78) -- (P710);
\draw[arrow] (P79) -- (P710);
\draw[arrow] (P85) -- (P86);
\draw[arrow] (C711) -- (P86);
\draw[arrow] (P710) -- (C711);
\draw[arrow] (P710) -- (T102);
\draw[arrow] (P91) -- (T102);
\draw[arrow] (P85) -- (T102);
\draw[arrow] (T102) -- (T112);
\draw[arrow] (T113) -- (Apps);
\draw[arrow] (P113) -- (T113);
\draw[arrow] (L61) -- (C62);
\draw[arrow] (P63) -- (P91);
\draw[arrow] (P85) -- (P91);
\draw[arrow] (T102) -- (T1102);
\draw[arrow] (T112) -- (T113);
\draw[arrow] (P77) -- (P78);

\end{tikzpicture}
}
\end{center}

 \subsection*{Acknowledgments}
 We would like to thank Nikos Frantzikinakis for plenty of useful discussions and thorough feedback on the first version of the paper. We also thank Noah Kravitz for comments. 

\section{Notation, background on Hardy sequences, basic definitions}\label{S: background}

\subsection{Basic notation}\label{SS:notation}
We start with presenting basic notation used throughout the paper.

The symbols $\C, \R, \Z, \N_0, \N$ stand for the sets of complex numbers, real numbers, integers, nonnegative integers, and positive integers {respectively}.    With $\T$, we denote the one dimensional torus, and we often identify it with $\R/\Z$ or  with $[0,1)$. For a ring $R$, we denote the collection of polynomials over $R$ in variables $n_1, \ldots, n_k$ via $R[n_1, \ldots, n_k]$. If $E\subseteq G$ is a subset of a group $G$, then $\langle E \rangle$ is the subgroup generated by $E$.

We frequently use the interval notation to mean either a real interval or its restriction to $\Z$:
\begin{enumerate}
    \item for $M<N$, we set $[M,N]$ to be either $\{\ceil{M},\ldots,\floor{N}\}$ or, occasionally, the real interval from $M$ to $N$;
    \item for $N>0$, we set $[N]$ to be either $\{1, \ldots, \floor{N}\}$ or, occasionally, the real interval from 0 to $N$;
    \item for $N>0$, we set $[\pm N]$ to be either $[-\floor{N}, \floor{N}]$ or, occasionally, the real interval from $-N$ to $N$.
\end{enumerate}
When we average over an interval $[M,N], [N], [\pm N]$ or their product, then we always think of the intervals as restricted to integers; when however we look at expressions like $G\cap[\pm N]^k$ for some real subgroup $G\subseteq \R^k$, then $[\pm N]$ always denotes a real interval (and similarly with $[N]$ or $[M,N]$). In other cases, the meaning should be clear from the context.

For a finite set $E,$ we define the average of a sequence $a:E\to \C$ as 
\[\E_{n\in E}a(n) :=\frac{1}{|E|}\sum_{n\in E}a(n).\] We will mainly use finite averages along subsets of integers of the form $[N]^s$ or $[\pm N]^s$, also known as \textit{Ces\`aro averages}.

We let $\CC z := \overline{z}$ be the complex conjugate of $z\in \C$.

For a set $E$, we define its indicator function by $1_E$. Whenever convenient, we also use the notation $1(E)$ for an event $E$.

Typically, we use underlined symbols $\uh$ to denote elements of $\R^s$ (which we think as tuples of parameters), and we employ the bold notation $\bx$ to denote elements of $\R^k$ (which we think as points in a space); sometimes we also use bold notation to denote $s$-tuples of points in $\R^k$, e.g., $\bm=(\bm_1, \ldots, \bm_s)\in(\R^s)^k$.

We often write $\ueps\in\{0,1\}^s$ for a vector of 0s and 1s of length $s$. For $\ueps\in\{0,1\}^s$ and $\uh, \uh'\in\R^s$, we set $\ueps\cdot \uh:=\eps_1 h_1+\cdots+ \eps_s h_s$, $\abs{\uh} := |h_1|+\cdots+|h_s|$ and
\begin{gather}\label{E: h^eps}
    h_j^{\eps_j} =\begin{cases}
        h_j,\; &\eps_j =0\\
        h_j',\; &\eps_j = 1
    \end{cases}
    \quad \textrm{for}\quad j\in[s].
\end{gather}

Given a system $(X, \CX, \mu, T_1, \ldots, T_k)$ and $\bm=(m_{1},\dots,m_{k})\in\R^k$, we set $\floor{\bm} :=  (\floor{m_1}, \ldots, \floor{m_k})$ and $$T^{\floor{\bm}}:=T_1^{\floor{m_1}}\cdots T_k^{\floor{m_k}}.$$
For $j\in[k]$, we set $\be_j$ to be the unit vector in $\R^k$ in the $j$-th direction, and we let $\be_0 = \mathbf{0}$, so that $T^{\be_j} = T_j$ for $j\in[k]$ and $T^{\be_0}$ is the identity transformation. 

For $M>0$, we say that a function $f\in L^\infty(\mu)$ is \textit{$M$-bounded} if $\norm{f}_{L^\infty(\mu)}\leq M$.

For a $\sigma$-algebra $\CA\subseteq\CX$ and $f\in L^1(\mu)$, we let $\E(f|\CA)$ be the conditional expectation of $f$ with respect to $\CA$. By abuse of notation, we can identify the $\sigma$-algebra $\CA$ with the algebra of functions $L^2(\CA)$. The algebras that we often work with include the \textit{invariant factor}
$$\CI(T) = \{f\in L^2(\mu):\; Tf = f\},$$
and the \textit{rational Kronecker factor} 
\begin{align*}
    \Krat(T) = \bigvee_{i=1}^\infty \CI(T^i) = \overline{\{f\in L^\infty(\mu):\; T^i f = f\;\; \textrm{for\; some}\;\; i\in\N\}}.
\end{align*}

Given two functions $a,b:(t_0,\infty)\to\mathbb{R}$ for some $t_0\in\R$ we write
\begin{enumerate}
    \item $a\ll b$, $a = O(b)$, $b\gg a$, or $b = \Omega(a)$ if there exists $C>0$ and $t_1\geq t_0$ such that $|a(t)| \leq C |b(t)|$ for all $t\geq t_1$; 
    \item $a \asymp b$ if $a\ll b$ and $a \gg b$;
    \item $a \prec b$, $b \succ a$ or $a = o(b)$ if $\lim\limits_{t\to\infty}\frac{a(t)}{b(t)} = 0$;
    \item $a \lll b$ if there exists $\delta > 0$ such that $|a(t)|t^\delta \prec |b(t)|$.
\end{enumerate}
If the absolute constant in (i) depends on some parameters $t_1, \ldots, t_n$, then we write $a\ll_{t_1, \ldots, t_n} b$, $a = O_{t_1, \ldots, t_n}(b)$, etc. If in (ii) we want to emphasize what parameter we are taking to infinity, we can also write $a = o_{t\to\infty; t_1, \ldots, t_n} (b)$ to emphasize that $t_1, \ldots, t_n$ are kept fixed while $t$ goes to infinity.

\subsection{Hardy field functions}\label{SS: Hardy field functions}

Let $B$ be the collection of equivalence classes of real valued functions defined on some halfline $(t_0,\infty)$ for $t_0\geq 0$, where two functions that eventually agree are identified. These equivalence classes are called \emph{germs} of functions.  A \emph{Hardy field} is a subfield of the ring $(B, +, \cdot)$ that is closed under differentiation.\footnote{By abuse of notation, we use the word \emph{function} while referring to elements of $B$ which in reality are germs of functions, understanding that all the operations defined and statements made for elements of $B$ are considered only for sufficiently large values of $t\in \mathbb{R}$.} 
 
 As in \cite{Ts22}, we will work with a Hardy field $\mathcal{H}_0$ which is closed under composition and compositional
inversion of functions (whenever the latter is defined). That is, if $a_1, a_2\in \mathcal{H}_0$ with $\lim\limits_{t\to\infty}a_2(t)=\infty,$ then $a_1\circ a_2,$ $a_2^{-1}\in \mathcal{H}_0$. We say that $a\in\CH$ has \emph{polynomial growth} if it satisfies $a(t)\ll t^d$ for some $d>0$, and we then let
\begin{align*}
    \CH = \{a\in \CH_0:\; a(t) \ll t^d\;\; \textrm{for\; some}\;\; d>0\}
\end{align*}
be the restriction of $\CH_0$ to the functions of polynomial growth. The set $\CH$ is still a Hardy field closed under compositions, but it is no longer closed under inverses since, for instance, $\log^{-1} = \exp$ is not in $\CH$.

Usually, one considers Hardy fields $\CH_0$ that contain the class $\mathcal{LE}$ of \textit{logarithmico-exponential Hardy field functions}, i.e. functions defined on $(t_0,\infty)$ for some $t_0\geq 0$ by a finite combination of symbols $+, -, \times, \div, \sqrt[n]{\cdot}, \exp, \log$ acting on the real variable $t$ and on real constants. While $\mathcal{LE}$ is not closed under compositional inverses, it is contained in the Hardy field of Pfaffian functions that is. See \cite{Fr10, Fr12, Hardy12} for more on Hardy fields in general and $\mathcal{LE}$ in particular. 

We commonly refer to sequences of the form $(a(n))_{n\in\N}$ as \textit{Hardy sequences}. Since ultimately we only care about their asymptotic behavior, we can modify their first finitely many values whenever convenient, or make them arbitrary if $a$ is only defined on $[t_0, \infty)$ for some $t_0>0$. This is consistent with our viewpoint on Hardy functions as germs rather than individual functions.

For a Hardy function $a\in\CH$, we define its \textit{fractional degree} to be
     $$\fracdeg a:= \lim_{t\to\infty}\frac{ta'(t)}{a(t)};$$
     the limit always exists since Hardy fields are closed under differentiation, multiplication and division, and it is a real number (see Lemma~\ref{L:Relation of growth rates}).
     If additionally $\CH$ contains $\log$ and is closed under compositions, we can also express $\fracdeg a$ slightly more intuitively as
     \[
     \fracdeg a =\lim\limits_{t\to\infty} \frac{\log a(t)}{\log t}
     \]
     (the equivalence of the two formulations follows from the L' H\^opital's rule applied to the latter expression).
     For instance, the functions $t^{3/2}, t^{3/2}\log t, t^{3/2}/\log t,$ {while of different growths,} all have fractional degree $3/2$. For tuples $\ba\in\CH^k$, we similarly define $$\fracdeg \ba := \max_{i\in[k]}\fracdeg a_i.$$

The definition of fractional degree is quite natural as it allows us to succinctly rephrase various properties of Hardy functions. With this definition, we for instance have
\begin{align*}
    a \lll b \quad \Longleftrightarrow \quad \fracdeg a < \fracdeg b,
\end{align*}
and the condition of $a$ having polynomial growth can be restated concisely as $\fracdeg a < \infty$. For a list of basic properties of fractional degrees, see Lemma~\ref{L: properties of fracdeg}. 

In order to facilitate future discussion, we name various naturally occuring properties of Hardy functions. Thus, we call $a\in\CH$:
\begin{enumerate}
    \item \textit{strongly nonpolynomial} if $t^{d}\prec a(t)\prec t^{d+1}$ for some $d\in\N_0$;
    \item  \textit{subfractional} if $\fracdeg a = 0$, or equivalently if $a(t)\prec t^\delta$ for every $\delta>0$;
    \item \textit{sublinear} if $a(t)\prec t$.
\end{enumerate}
Thus, $t^C$ is strongly nonpolynomial for any $C\in\R_+\setminus{\Z}$ and sublinear for any $C<1$; $t\log t$ is strongly nonpolynomial; and $(\log t)^C$ is subfractional (and hence also sublinear) for any $C>0$. Occasionally, we also call $\ba\in \CH^k$ subfractional (resp. sublinear) if these properties hold for each coordinate of $\ba$.

In our PET argument, we will need to treat polynomial and strongly nonpolynomial functions separately, essentially because polynomial functions vanish after differentiating finitely many times whereas strongly nonpolynomial functions do not. Using the lemma below, we can always decompose a collection of functions from $\CH^k$ as a linear combination of polynomials, strongly nonpolynomial parts, and error terms.

\begin{lemma}[{\cite[Lemma A.3]{R22}}]\label{decomposition}
Let $k,\ell\in\N$ and $\ba_1,...,\ba_\ell\in\mathcal{H}^k$. Then there exist $m\in\N$, functions $g_1,...,g_m\in \mathcal{H}$, vectors $\balpha_{j,i}\in\R^k$ and polynomials $\bp_1,...,\bp_\ell\in\R^k[t]$ such that \begin{enumerate}
    \item $g_1\prec g_2\prec...\prec g_m$,
    \item $t^{d_i}\prec g_i(t)\prec t^{d_i+1}$ for some $d_i\in \N_0$ (i.e. they are strongly nonpolynomial) and
    \item for all $j\in[\ell]$ we have \begin{equation*}
        \ba_j=\sum_{i=1}^{m} \balpha_{j,i} g_i+\bp_j+\br_j,
    \end{equation*}
    where $\br_j(t)\to\mathbf{0}$ as $t\to\infty$.
\end{enumerate}
\end{lemma}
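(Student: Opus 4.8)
The plan is to prove the statement by reducing it to the one-dimensional case coordinatewise, and in the one-dimensional case by induction on the number of germs involved and on their fractional degrees. First I would recall the following basic structural fact about the Hardy field $\CH$: for any $a \in \CH$ that does not tend to $\mathbf{0}$, either $a$ is within an additive error term (i.e., $a - p \to 0$ for some $p \in \R[t]$) of a real polynomial, or there is a unique integer $d = \floor{\fracdeg a} \ge 0$ such that $t^d \prec a(t) \prec t^{d+1}$ after subtracting off a suitable polynomial of degree $\le d$; this follows from the L'Hôpital/Taylor-type comparison arguments for Hardy fields (the same ones underlying Lemma~\ref{L:Relation of growth rates} and the properties of $\fracdeg$ collected in Lemma~\ref{L: properties of fracdeg}). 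Concretely, given $a\in\CH$ of polynomial growth, one extracts its ``leading polynomial part'' by repeatedly comparing $a$ with $t^j$ for $j = \floor{\fracdeg a}, \floor{\fracdeg a}-1, \ldots$: the limit $\lim_{t\to\infty} a(t)/t^{\fracdeg a}$ exists in $\R \cup \{\pm\infty\}$ by the Hardy field axioms, and when it is a nonzero real and $\fracdeg a \in \N_0$ one subtracts $c t^{\fracdeg a}$ and recurses; when $\fracdeg a \notin \N_0$ or the limit is $\pm\infty$ one stops and $a$ (minus whatever polynomial was already peeled off) is strongly nonpolynomial.

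The key steps in order: (1) Collect the finite list of all coordinate germs $a_{j,i}$, $j\in[\ell]$, $i\in[k]$, together with all their pairwise $\R$-linear combinations that could appear — but actually it is cleaner to work directly with the $\R$-vector space $V \subseteq \CH$ spanned by $\{a_{j,i} : j\in[\ell], i\in[k]\}$, which is finite-dimensional. (2) Inside $V$, isolate the subspace of germs that are within an additive $o(1)$ error of a polynomial; call a complementary set of representatives the ``genuinely nonpolynomial'' directions. (3) Apply the one-dimensional extraction described above to successively peel off the fastest-growing strongly nonpolynomial germ: if $g$ is a germ of maximal fractional degree among the genuinely nonpolynomial elements of $V$, normalize so that $t^{d}\prec g \prec t^{d+1}$, and for each spanning germ $a_{j,i}$ write $a_{j,i} = \beta_{j,i} g + (\text{remainder in a lower-complexity space})$, where $\beta_{j,i}$ is the appropriate limit ratio. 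Here one must check the remainders still lie in a Hardy field and have strictly smaller ``complexity'' (fewer nonpolynomial directions, or the same number but smaller maximal fractional degree, with polynomial error terms unchanged up to $o(1)$), so the induction is well-founded. (4) Iterate until all nonpolynomial content is exhausted, collect the extracted germs as $g_1 \prec \cdots \prec g_m$ (reordering by growth, which is automatic since distinct strongly nonpolynomial germs extracted at different stages have distinct fractional degrees or are comparable by Hardy field totality), read off the vectors $\balpha_{j,i}\in\R^k$ from the scalars $\beta$, and absorb all the polynomial pieces into $\bp_j \in \R^k[t]$ and the accumulated $o(1)$ terms into $\br_j$.

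The main obstacle I anticipate is bookkeeping the induction invariant correctly: one must ensure that when a germ $a_{j,i}$ has $\fracdeg a_{j,i}$ equal to $\fracdeg g$ but $a_{j,i}$ is \emph{not} a scalar multiple of $g$ modulo lower-order terms, the difference $a_{j,i} - \beta_{j,i} g$ genuinely drops in complexity rather than merely staying at the same fractional degree — this requires the sharper Hardy-field fact that two strongly nonpolynomial germs of the same fractional degree, after subtracting the matching polynomial parts, either have a ratio tending to a nonzero constant (so they are ``the same direction'') or their difference has strictly smaller fractional degree. This is exactly the kind of fine control over simultaneous growth rates that the paper flags as needing the refined Taylor-expansion results of Appendix~\ref{A: approximations} (Proposition~\ref{P: Taylor expansion ultimate}); for the qualitative statement here, however, the cited source \cite[Lemma A.3]{R22} already packages this, so the honest proof is simply to invoke it, and the sketch above is the route one would take to reprove it from scratch.
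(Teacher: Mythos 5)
The paper does not prove this lemma---it is stated with a citation to \cite[Lemma A.3]{R22}---so there is no internal argument to compare your sketch against; I evaluate it as a reconstruction of Richter's proof. The overall plan (take the finite-dimensional $\R$-span $V$ of the coordinate germs, quotient by the subspace of germs that are $o(1)$-close to real polynomials, and pick a basis of the quotient ordered by growth) is the right one.

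The flaw is in the paragraph addressing your ``main obstacle''. You claim that two strongly nonpolynomial germs of the same fractional degree either have a ratio tending to a nonzero constant or have a difference of strictly smaller fractional degree. That is false: take $a(t) = t^{3/2}$ and $g(t) = t^{3/2}\log t$. Both are strongly nonpolynomial with $\fracdeg = 3/2$; the ratio $a/g \to 0$ is not a nonzero constant; and $\fracdeg(a - \beta g) = 3/2$ for every $\beta \in \R$ (for $\beta = 0$ it is $a$, and for $\beta \neq 0$ the difference is $\asymp g$). So fractional degree cannot serve as your induction measure, and the statement you lean on to make the recursion well-founded does not hold.

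The correct measure is growth order together with dimension, not fractional degree. For any $a, g \in \CH$ with $g$ eventually nonvanishing, the quotient $a/g$ lies in the Hardy field, is eventually monotone, and hence $\beta := \lim_{t\to\infty} a(t)/g(t)$ exists in $\R \cup \{\pm\infty\}$. If $g$ has maximal growth rate among the nonpolynomial directions in $V$ and $\beta$ is finite, then $a - \beta g \prec g$ strictly, even though the fractional degree may be unchanged; the residual therefore lies in the span of strictly slower-growing directions. Since $V$ is finite-dimensional and germs of $\CH$ are totally ordered by $\prec$ modulo $\asymp$, each peeling step strictly reduces the dimension of the residual span modulo polynomials and $o(1)$ terms, and the induction terminates. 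Replacing ``strictly smaller fractional degree'' by ``strictly slower growth, hence one fewer direction in the residual span'' repairs the invariant and closes the gap.
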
 

\section{Generalized box seminorms: basic theory}\label{S: seminorms}
An important new tool in this paper is a generalization of box seminorms which is useful for studying the limiting behavior in $L^2(\mu)$ of multiple ergodic averages of commuting transformations along Hardy sequences. Throughout the entire Section \ref{S: seminorms}, we fix a system $(X, \CX, \mu, T_1, \ldots, T_k)$. Let $G_1, \ldots, G_s\subseteq\R^k$ be finitely generated subgroups.
For functions $f_\ueps\in L^\infty(\mu)$ indexed by $\ueps\in\{0,1\}^s$, we define the \textit{Gowers-Cauchy-Schwarz inner product} along $G_1, \ldots, G_s$ to be
\begin{align}\label{E: GCS}
    \langle (f_\ueps)_\ueps\rangle_{\substack{G_1, \ldots, G_s}} 
    &= \E_{\bm_1, \bm_1'\in G_1}\cdots \E_{\bm_s, \bm_s'\in G_s}\int \prod_{\ueps\in\{0,1\}^s}\CC^{|\ueps|} T^{\floor{\bm_1^{\eps_1}}+\cdots + \floor{\bm_s^{\eps_s}}}f_\ueps\, d\mu,
\end{align}
where $$\E\limits_{\bm\in G} = \lim\limits_{M\to\infty}\E\limits_{\bm\in G\cap[\pm M]^k} \quad \textrm{and}\quad \E\limits_{\bm, \bm'\in G} = \lim\limits_{M\to\infty}\E\limits_{\bm, \bm'\in G\cap[\pm M]^k}$$ (here and in the majority of this section, $[\pm M]$ denotes the real interval from $-M$ to $M$), and also
$$\bm_i^{\eps_i} = \begin{cases}
\bm_i,\; &\eps_i = 0\\ 
\bm'_i,\; &\eps_i = 1
\end{cases}$$
much like in \eqref{E: h^eps}. For instance, if $s=2$ and $G_i = \langle \alpha_i \be_i\rangle$, then 
\begin{align*}
        \langle f_{00}, f_{01}, f_{10}, f_{11}\rangle_{\substack{G_1, G_2}} 
        = \E_{m_1,m_1'\in \Z}  \E_{m_2, m_2'\in\Z} \int &T_1^{\floor{\alpha_1 m_1}}T_2^{\floor{\alpha_2 m_2}} f_{00} \cdot T_1^{\floor{\alpha_1 m_1}}T_2^{\floor{\alpha_2 m_2'}}\overline{f_{01}}\\
        &T_1^{\floor{\alpha_1 m_1'}}T_2^{\floor{\alpha_2 m_2}}\overline{f_{10}}\cdot T_1^{\floor{\alpha_1 m_1'}}T_2^{\floor{\alpha_2 m_2'}}f_{11}\, d\mu.
\end{align*}

It is not a priori clear that the iterated limit in \eqref{E: GCS} exists; we show now that this is indeed the case by proving the $L^2(\mu)$ convergence of a much more general class of averages.

\begin{lemma}\label{L: convergence}
    Let $\ell, s\in\N$,  $G_1, \ldots, G_s\subseteq\R^k$ be finitely generated subgroups and $\gamma_{ji}\in\Z$ for $j\in[\ell]$ and $i\in[s]$. Then for all $f_1, \ldots, f_\ell\in L^\infty(\mu)$, the followings hold:
    \begin{enumerate}
        \item The iterated $L^2(\mu)$ limit 
        \begin{align}\label{E: iterated limit}
            \E_{\bm_1\in G_1} \cdots \E_{\bm_s\in G_s}\prod_{j\in[\ell]}T^{\gamma_{j1}\floor{\bm_1}+\cdots + \gamma_{js}\floor{\bm_s}}f_j
        \end{align}
        exists; in fact, we can replace $G_i\cap [\pm M]^k$ in the definition of $\E_{\bm_i\in G_i}$ by any F\o lner sequence on $G_i$;\footnote{ A \emph{F\o lner sequence} on an abelian group $(G,+)$ is a sequence $(I_M)_M$ of finite subsets of $G$ so that $\lim\limits_{M\to\infty}\frac{|I_M\Delta (I_M+g)|}{|I_M|}=0$ for all $g\in G.$} 
        \item The order of taking limits does not matter, i.e. for any permutation $\sigma$ on $[s]$, the limit \eqref{E: iterated limit} equals 
        \begin{align*}
                \E_{\bm_{\sigma(1)}\in G_{\sigma(1)}} \cdots \E_{\bm_{\sigma(s)}\in G_{\sigma(s)}}\prod_{j\in[\ell]}T^{\gamma_{j1}\floor{\bm_1}+\cdots + \gamma_{js}\floor{\bm_s}}f_j.
        \end{align*}
                \item The single $L^2(\mu)$ limit
        \begin{align}\label{E: single limit}
            \E_{(\bm_1, \ldots, \bm_s)\in G_1\times \cdots \times G_s}\prod_{j\in[\ell]}T^{\gamma_{j1}\floor{\bm_1}+\cdots + \gamma_{js}\floor{\bm_s}}f_j
        \end{align}
        exists along any F{\o}lner sequence on $G_1\times \cdots \times G_s$ and equals \eqref{E: iterated limit}.
    \end{enumerate}
\end{lemma}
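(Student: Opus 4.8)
The plan is to reduce everything to known convergence results for commuting transformations along linear (integer or real) iterates, namely Walsh's theorem \cite{W12} and its real-polynomial extension from \cite{Ko18}, together with the Fubini-type principle for ergodic averages \cite{BL15}. First I would observe that if $G_i$ is generated by $\bg_{i1}, \ldots, \bg_{id_i}\in\R^k$, then writing $\bm_i = u_{i1}\bg_{i1} + \cdots + u_{id_i}\bg_{id_i}$ with $u_{i1}, \ldots, u_{id_i}\in\Z$ turns $\E_{\bm_i\in G_i\cap[\pm M]^k}$ into an average over a (translate of a) box in the parameters $\uu_i = (u_{i1}, \ldots, u_{id_i})$. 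Crucially, since $\floor{\cdot}$ is taken coordinatewise, $\floor{\bm_i}$ is \emph{not} a linear function of $\uu_i$; however $\bm_i$ \emph{is}, so each coordinate of $\floor{\bm_i}$ is of the form $\floor{\text{linear in }\uu_i}$, i.e. a generalized (floor-of-real-linear) polynomial of degree $1$ in $\uu_i$. Thus the product $\prod_{j\in[\ell]} T^{\sum_i \gamma_{ji}\floor{\bm_i}}f_j$ becomes, after collecting all the $\uu_1, \ldots, \uu_s$ into a single parameter vector $\uu\in\Z^{d_1+\cdots+d_s}$, an average of the shape $\E_{\uu} \prod_{j} T_1^{\floor{L_{j1}(\uu)}}\cdots T_k^{\floor{L_{jk}(\uu)}}f_j$ for real linear forms $L_{jr}$ — exactly the class of averages covered by \cite{Ko18} (the real-linear analogue of Walsh). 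This gives existence of the single limit \eqref{E: single limit} along box F\o lner sequences, and a standard approximation argument (every F\o lner sequence on a finitely generated abelian group can be exhausted by, and compared against, boxes, as in \cite[\S2]{BL15}) upgrades this to convergence along an arbitrary F\o lner sequence on $G_1\times\cdots\times G_s$, proving (iii) modulo the identification with (i).

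For part (i), the existence of the iterated limit \eqref{E: iterated limit}, I would proceed by induction on $s$, peeling off one group at a time. Fix the innermost average over $\bm_s\in G_s$: by the $s=1$ case (again \cite{Ko18}, treating the other parameters as fixed and absorbing $T^{\sum_{i<s}\gamma_{ji}\floor{\bm_i}}f_j$ into a new bounded function) the inner limit $F(\bm_1,\ldots,\bm_{s-1}) := \lim_M \E_{\bm_s\in G_s\cap[\pm M]^k}\prod_j T^{\sum_i \gamma_{ji}\floor{\bm_i}}f_j$ exists in $L^2(\mu)$; one must check that $F$ depends on $(\bm_1,\ldots,\bm_{s-1})$ only through the $T^{\sum_{i<s}\gamma_{ji}\floor{\bm_i}}f_j$ in a way that keeps it in the inductive class — this is where a little care is needed, since $F$ need no longer be a simple product of shifted functions, but it is a norm limit of such, hence the $(s-1)$-fold iterated average of $F$ converges by the inductive hypothesis together with a uniformity/equicontinuity estimate in the parameters. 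Part (ii), independence of the order of the limits, then follows from part (iii): once the single limit over $G_1\times\cdots\times G_s$ is known to exist along \emph{all} F\o lner sequences, any iterated limit must agree with it (iterate along a product of box F\o lner sequences in any chosen order — by the Fubini-type principle of \cite{BL15} this iterated limit equals the single one), so all orderings give the same answer.

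The main obstacle I anticipate is the bookkeeping in part (i): after taking the innermost limit, the resulting function $F(\bm_1,\ldots,\bm_{s-1})$ is only an $L^2$-limit of products of shifted functions, not literally a product, so one cannot blindly reapply the $s=1$ base case. The clean way around this is to avoid the induction-on-$s$ route for (i) altogether and instead deduce (i) directly from (iii): approximate the nested box averages by the single box average using the Fubini-type principle \cite{BL15} (which says precisely that for these linear commuting averages the iterated and single Ces\`aro limits coincide), so that \eqref{E: iterated limit} exists and equals \eqref{E: single limit}. Thus the real content is concentrated in the single clean input — norm convergence for commuting transformations along real linear iterates — and everything else is the routine reparametrization $\bm_i \mapsto \uu_i$ plus invocation of \cite{BL15}. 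I would also remark that the only convergence input genuinely needed is the \emph{linear} (degree one) case, since the floor functions of linear forms are handled by splitting each $[\pm M]^{d_i}$-box into finitely many residue classes modulo the denominators appearing in the generators, on each of which $\floor{L_{jr}(\uu)}$ becomes an honest affine-linear function of the new integer variable plus a constant — reducing to integer linear iterates and hence to Walsh \cite{W12} / pre-Walsh results \cite{A10b, H09, T08}, as advertised in the introduction.
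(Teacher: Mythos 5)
Your overall route is the same as the paper's: for part (iii) you transfer the average to $\Z^K$ by parametrizing each finitely generated $G_i$ by integer coordinates and invoke the multivariable generalization of \cite[Theorem 3.2]{Ko18} (resting on \cite{W12}, \cite{ZK16}); part (i) is then deduced from (iii) via the Fubini-type principle \cite[Lemma 1.1]{BL15} --- the paper does exactly this, organized as an induction on $s$, precisely because of the obstacle you identified (after the inner limit the function is no longer a product of shifted functions); and (ii) is read off from the fact that the single limit does not see the ordering.

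Two steps as written, however, have genuine gaps. First, the pullback of $G_i\cap[\pm M]^k$ (or of a general F\o lner set on $G_1\times\cdots\times G_s$) under your parametrization is \emph{not} a translate of a box in the integer parameters --- it is the set of lattice points in a sheared polytope, and can even fail to be finite if $G_i$ is not discrete --- and the ``exhaust/compare with boxes as in \cite{BL15}'' upgrade is not something that reference provides (it supplies the Fubini principle for iterated versus joint limits, not a comparison of F\o lner sequences). What is actually needed, and what constitutes the real work in the paper's proof, is the verification that the pullback of an arbitrary F\o lner sequence on $G$ is again a F\o lner sequence on $\Z^K$ (in the paper this comes with a finite coset decomposition, since they parametrize via a basis of $\Span_\Q G_j$ and only have $G_j'\subseteq G_j\subseteq \frac{1}{C}G_j'$); one then quotes convergence along \emph{arbitrary} F\o lner sequences of $\Z^K$, which is what \cite{ZK16} delivers. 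Your plan goes through once the box claim is replaced by this F\o lner verification. Second, your closing remark that the real-linear case reduces to integer linear iterates by splitting into residue classes is false for irrational coefficients: $\floor{\sqrt{2}\,u}$ is not an affine function of $u$ on any arithmetic progression, so no such splitting linearizes it. The real-coefficient case genuinely requires the suspension/flow-extension mechanism behind \cite[Theorem 3.2]{Ko18} (which you do cite as your main input), so that aside should be dropped rather than relied upon.
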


\begin{proof}
First, we deduce the existence of the limit in \eqref{E: single limit} from a straightforward multivariable generalization of \cite[Theorem~3.2]{Ko18},\footnote{More specifically, \cite[Theorem~3.2]{Ko18} can immediately be adapted to multiple variables. Condition~(i) in \cite[Theorem 3.2]{Ko18} follows from a straightforward extension of \cite{W12} due to \cite{ZK16}, and Condition~(ii) in \cite[Theorem 3.2]{Ko18} can be derived in a similar way as in the proof of \cite[Theorem 1.4]{Ko18} using classical equidistribution results on tori. We use here the fact that an expression with a linear iterate  $T^{\floor{\alpha n+\beta}}$ can be seen as $S_\alpha^n \cdot S_\beta,$ where $(S_t)_{t\in\R}$ is a flow in a suitable extension of the system $(X, \CX, \mu, T)$ (see \cite{Ko18} for details). Hence, as we highlighted already in the introduction, we only need to use the linear multivariable variants of the results from \cite{W12, ZK16}.} and the goal of the forthcoming manipulations is to express \eqref{E: single limit} as an average along a F\o lner sequence on $\Z^K$ for some $K\in\N$ in order to apply the multivariable generalization of \cite[Theorem~3.2]{Ko18}.

Fix  a F\o lner sequence $(I_M)_{M\in\N}$ on 
\begin{align*}
    G = G_1\times \cdots \times G_s.
\end{align*}
Fix also $j\in[s]$.
Each group $G_j$ is finitely generated, {and so by the fundamental theorem of finitely generated abelian groups, it is group-isomorphic to $\Z^{K_j}$ for some $K_j\in\N$. We can therefore find $\b_{j1}, \ldots, \b_{jK_j}\in G_j$ and a group isomorphism
\begin{align*}
\phi_j: \Z^{K_j} &\to  G_j\\
(m_{j1}, \ldots, m_{jK_j}) & \mapsto \sum_{i\in[K_j]} \b_{ji} m_{ji}.
\end{align*}
Letting 
\begin{gather*}
G = G_1\times \cdots \times G_s\quad \textrm{and}\quad  K = K_1 + \cdots + K_s,
\end{gather*}
it is then easy to see that $(\phi\inv(I_M))_{M\in\N}$ is a F{\o}lner sequence on $\Z^K$.
Letting $$A(\bm) = \prod_{j\in[\ell]}T^{\gamma_{j1}\floor{\bm_1}+\cdots + \gamma_{js}\floor{\bm_s}}f_j,$$ the average that we investigate equals
\begin{align*}
	\E_{(\bm_1, \ldots, \bm_s)\in G\cap I_M} A(\bm)
=\E_{\um\in \phi^{-1}(I_M)}A\brac{\phi(\um)}.
\end{align*}   
}
    
Inducting on $s$, we now prove (i), showing at each stage that the iterated limit \eqref{E: iterated limit} exists and equals \eqref{E: single limit}. For $s=1$, the existence of the limit in  \eqref{E: single limit} and its independence of the choice of F\o lner sequence is a special case of (iii). Let $s>1$. By the induction hypothesis, we have 
    \begin{align*}
       &\E_{\bm_2\in G_2}\cdots \E_{\bm_s\in G_s} \prod_{j\in[\ell]}T^{\gamma_{j1}\floor{\bm_1}+\cdots + \gamma_{js}\floor{\bm_s}}f_j
       =  \E_{(\bm_2, \ldots, \bm_s)\in G_2 \times \cdots \times G_s}\prod_{j\in[\ell]}T^{\gamma_{j1}\floor{\bm_1}+\cdots + \gamma_{js}\floor{\bm_s}}f_j
    \end{align*} 
    for every $\bm_1\in G_1$,
    where the limits can be taken along any F\o lner sequences. This fact together with (iii) and \cite[Lemma 1.1]{BL15} imply that the limit in \eqref{E: iterated limit} exists and equals the single limit in (iii). Hence (i) follows.

    To see that (ii) holds, we note that the expression \eqref{E: single limit} does not depend on the ordering of $G_1, \ldots, G_s$, and hence the equality of the expressions \eqref{E: single limit} and \eqref{E: iterated limit} implies immediately that the indices $1, \ldots, s$ can be permuted.
\end{proof}

For $\bm, \bm'\in\R^s$ and $f\in L^\infty(\mu)$, we set $$\Delta_{({\bm}, {\bm'})}f = T^{\floor{\bm}}f\cdot T^{\floor{\bm'}}\overline{f}.$$ This is a symmetric version of the classical \textit{multiplicative derivative}. We define the \textit{generalized box seminorm} of $f$ along subgroups $G_1, \ldots, G_s\subseteq\R^k$ by the formula
\begin{align*}
    \nnorm{f}_{\substack{G_1, \ldots, G_s}} &:= \langle (f)_\ueps\rangle_{\substack{G_1, \ldots, G_s}}^{1/2^{s}}\\
    &= \brac{\E_{\bm_1, \bm_1'\in G_1}\cdots \E_{\bm_s, \bm_s'\in G_s}\int \Delta_{({\bm_1}, {\bm'_1}), \ldots, ({\bm_s}, {\bm'_s})}f\, d\mu}^{1/2^{s}}\\
    &= \brac{\E_{\bm_1, \bm_1'\in G_1}\cdots \E_{\bm_s, \bm_s'\in G_s}\int \prod_{\ueps\in\{0,1\}^s}\CC^{|\ueps|}T^{\floor{\bm_1^{\eps_1}}+\cdots + \floor{\bm_s^{\eps_s}}} f\, d\mu}^{1/2^{s}}.
\end{align*}
For instance, if $G_i = \langle \alpha_i \be_i\rangle$, then 
\begin{align*}
    \nnorm{f}_{G_1}^2 = \E_{m_1, m_1'\in \Z}\int T_1^{\floor{\alpha_1 m_1}}f \cdot T_1^{\floor{\alpha_1 m_1'}}\overline{f}\, d\mu
\end{align*}
and
\begin{align*}
    \nnorm{f}_{\substack{G_1, G_2}}^4= \E\limits_{m_{1},m'_{1}\in\Z}\E\limits_{m_{2},m'_{2}\in\Z} \int &T_1^{\floor{\alpha_1 m_1}}T_2^{\floor{\alpha_2 m_2}} f \cdot T_1^{\floor{\alpha_1 m_1}}T_2^{\floor{\alpha_2 m'_2}}\overline{f}\\
        &\cdot T_1^{\floor{\alpha_1 m'_1}}T_2^{\floor{\alpha_2 m_2}}\overline{f}\cdot T_1^{\floor{\alpha_1 m_1'}}T_2^{\floor{\alpha_2 m_2'}}f\, d\mu.
\end{align*}
We note that the formula for $ \nnorm{f}_{\substack{G_1, \ldots, G_s}}$ is well-defined since we can rewrite it as
\begin{align*}
    \brac{\E_{\bm_1, \bm_1'\in G_1}\cdots \E_{\bm_{s-1},\bm'_{s-1}\in G_{s-1}}\int \abs{\E_{\bm_s\in G_s}\Delta_{({\bm_1}, {\bm_1'}), \ldots, ({\bm_{s-1}}, {\bm'_{s-1}})}T^{\floor{\bm_s}}f}^2\, d\mu}^{1/2^{s}};
\end{align*}
hence the expression under the root is nonnegative, which allows us to take the root in the first place.\footnote{ From now on, because of the existence of limits and bounded convergence theorem, we will freely interchange summation with integration without mentioning it.}  From the definition, we immediately get the inductive formula
\begin{align}\label{E: inductive formula}
    \nnorm{f}_{\substack{G_1, \ldots, G_s}}^{2^s} = \E_{\bm_1, \bm_1'\in G_1}\cdots \E_{\bm_{s'},\bm'_{s'}\in G_{s'}}\nnorm{ \Delta_{({\bm_1}, {\bm_1'}), \ldots, ({\bm_{s'}}, {\bm'_{s'}})}f}_{G_{s'+1}, \ldots, G_s}^{2^{s-s'}}
\end{align}
for each $s'\in[s]$.

If $G_1, \ldots, G_s\subseteq \Z^k$, then $\nnorm{\cdot}_{G_1, \ldots, G_s}$ equals the usual box seminorm along $G_1, \ldots, G_s$ as defined e.g., in \cite{DFKS22, S21,TZ16}. 

We also define another family of seminorms which will show up more naturally in the PET procedure:
\begin{align*}
    \nnorm{f}^+_{\substack{G_1, \ldots, G_s}} &:=  \brac{\E_{\bm_1, \bm_1'\in G_1}\cdots \E_{\bm_s, \bm_s'\in G_s}\abs{\int \Delta_{({\bm_1}, {\bm_1'}), \ldots, ({\bm_s}, {\bm_s'})}f\, d\mu}^2}^{1/2^{s+1}}\\
      &= \brac{\E_{\bm_1, \bm_1'\in G_1}\cdots \E_{\bm_s, \bm_s'\in G_s}\int \Delta_{({\bm_1}, {\bm_1'}), \ldots, ({\bm_s}, {\bm_s'})}f\otimes\overline{f}\, d(\mu\times\mu)}^{1/2^{s+1}}\\
      &=\nnorm{f\otimes\overline{f}}_{G_1, \ldots, G_s}^{1/2}.
\end{align*}
Most of the groups that we end up dealing with will have a single generator, in which case the notation can be somewhat simplified. Whenever $G_i = \langle \bbeta_i\rangle$ for some $\bbeta_i\in\R^k$, we set
\begin{align*}
    \nnorm{f}_{\bbeta_1, \ldots, \bbeta_s}:=\nnorm{f}_{\substack{G_1, \ldots, G_s}}\quad \textrm{and}\quad \nnorm{f}^+_{\bbeta_1, \ldots, \bbeta_s}:=\nnorm{f}^+_{\substack{G_1, \ldots, G_s}};
\end{align*}
and if a vector $\bbeta$ appears $s$ times, then we abbreviate its $s$ occurrences as $\bbeta^s$, e.g.
\begin{align*}
    \nnorm{f}_{\bbeta_1, \bbeta_1, \bbeta_2} := \nnorm{f}_{\bbeta_1^2, \bbeta_2}\quad \textrm{and}\quad \nnorm{f}_{\bbeta_1, \bbeta_1, \bbeta_2}^+ := \nnorm{f}_{\bbeta_1^2, \bbeta_2}^+.
\end{align*}
We also use further conventions for iterated multiplicative derivatives. Wherever convenient, we write
\begin{align}\label{E: iterated multiplicative derivative}
    \Delta_{\bbeta_1, \ldots, \bbeta_s; (\um, \um')} = \Delta_{(\bbeta_1 m_1, \bbeta_1 m_1'), \ldots, (\bbeta_s m_s, \bbeta_s m_s')}.
\end{align}
for $\um=(m_{1},\dots,m_{s}), \um'=(m'_{1},\dots,m'_{s})\in\Z^s$.

Because of Lemma \ref{L: convergence}, we can replaced the iterated limits in the definition of generalized box seminorms by single limits, and we can permute the order of taking subgroups, obtaining the identities
\begin{align}\label{E: permutation invariance}
    \nnorm{f}_{\bbeta_1, \ldots, \bbeta_s} = \nnorm{f}_{\bbeta_{\sigma(1)}, \ldots, \bbeta_{\sigma(s)}}\quad \textrm{and}\quad \nnorm{f}^+_{\bbeta_1, \ldots, \bbeta_s} = \nnorm{f}^+_{\bbeta_{\sigma(1)}, \ldots, \bbeta_{\sigma(s)}}
\end{align}
for any permutation $\sigma:[s]\to[s]$.

Like Host-Kra seminorms or box seminorms, generalized box seminorms satisfy the Gowers-Cauchy-Schwarz inequality, which we will derive from the standard finitary Gowers-Cauchy-Schwarz inequality (it can be proved e.g., by adapting \cite[Lemma B.2]{GT10b}). 
\begin{lemma}[Finitary Gowers-Cauchy-Schwarz inequality]\label{L: GCS finitary}
    Let $s\in\N$, $E_1, \ldots, E_s$ be finite nonempty sets, $(X, \CX, \mu)$ be a probability space, and $A_\ueps(m_1, \ldots, m_s)\in L^\infty(\mu)$ for every $(m_1, \ldots, m_s)\in E_1\times \cdots \times E_s$ and $\ueps\in\{0,1\}^s$. Then
    \begin{multline*}
        \abs{\E_{m_1, m_1'\in E_1}\cdots \E_{m_s, m_s'\in E_s}\int \prod_{\ueps\in\{0,1\}^s}\CC^{|\ueps|} A_\ueps(m_1^{\eps_1}, \ldots, m_s^{\eps_s})\; d\mu }^{2^s}\\
        \leq \prod_{\underline{\omega}\in\{0,1\}^s}\E_{m_1, m_1'\in E_1}\cdots \E_{m_s, m_s'\in E_s}\int \prod_{\ueps\in\{0,1\}^s}\CC^{|\ueps|} A_{\underline{\omega}}(m_1^{\eps_1}, \ldots, m_s^{\eps_s})\; d\mu.
    \end{multline*}
\end{lemma}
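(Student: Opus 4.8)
The plan is to deduce the finitary Gowers--Cauchy--Schwarz inequality for the weighted inner product along $E_1 \times \cdots \times E_s$ from the ordinary finitary Gowers--Cauchy--Schwarz inequality by viewing the averages over the $E_i$ as part of the ambient probability space. Concretely, I would introduce the product probability space $(Y, \CY, \nu) := (X \times E_1 \times \cdots \times E_s, \CX \otimes 2^{E_1} \otimes \cdots \otimes 2^{E_s}, \mu \times u_1 \times \cdots \times u_s)$, where $u_i$ is the uniform probability measure on $E_i$. For each $\ueps \in \{0,1\}^s$, define a function $B_\ueps \in L^\infty(\nu)$ by $B_\ueps(x, m_1, \ldots, m_s) := A_\ueps(m_1, \ldots, m_s)(x)$. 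Then the left-hand side of the claimed inequality, namely $\big|\E_{m_1, m_1' \in E_1} \cdots \E_{m_s, m_s' \in E_s} \int \prod_{\ueps} \CC^{|\ueps|} A_\ueps(m_1^{\eps_1}, \ldots, m_s^{\eps_s})\, d\mu\big|^{2^s}$, is \emph{not} literally the Gowers inner product of the $B_\ueps$ because the $B_\ueps$ do not themselves carry the $m_i^{\eps_i}$ substitution pattern; I need to be a little more careful about which variables get duplicated.

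The cleaner route is the following. Fix the probability space to be just $(X, \CX, \mu)$, set $F := E_1 \times \cdots \times E_s$, and for each $\ueps \in \{0,1\}^s$ think of the array $(m_1, \ldots, m_s) \mapsto A_\ueps(m_1, \ldots, m_s)$ as a single function on the ``cube'' $F$ valued in $L^\infty(\mu)$. The quantity to be bounded has exactly the shape of the modulus-$2^s$ power of a Gowers inner product $\langle (A_\ueps)_\ueps \rangle$ in which the averaging is over pairs $(m_i, m_i') \in E_i \times E_i$ for each coordinate $i \in [s]$ and the coupling function is the bilinear form $(g, h) \mapsto \int g \overline{h}\, d\mu$ (more precisely, the multilinear form $(g_\ueps)_\ueps \mapsto \int \prod_\ueps \CC^{|\ueps|} g_\ueps\, d\mu$). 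This is precisely the setting of the abstract Gowers--Cauchy--Schwarz inequality: one has $s$ ``directions'', in each direction one doubles a variable, and the inner product is a fixed $2^s$-linear functional that is a tensor power of an inner product. The standard proof — iterating the Cauchy--Schwarz inequality once per coordinate $i = 1, \ldots, s$, each time doubling the $i$-th variable and pulling the resulting square inside — goes through verbatim with $\int \cdot\, d\mu$ playing the role of the inner product and the finite averages $\E_{m_i \in E_i}$ playing the role of the integrations; this is exactly the adaptation of \cite[Lemma B.2]{GT10b} alluded to in the statement. At the end of the $s$-fold iteration one arrives at the product over $\underline{\omega} \in \{0,1\}^s$ of the ``diagonal'' inner products $\E_{m_1, m_1' \in E_1} \cdots \E_{m_s, m_s' \in E_s} \int \prod_\ueps \CC^{|\ueps|} A_{\underline{\omega}}(m_1^{\eps_1}, \ldots, m_s^{\eps_s})\, d\mu$, which is the right-hand side.

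In more detail, I would set up the single induction on $s$. For $s = 0$ the statement is the trivial identity $|\int A_{\emptyset}\, d\mu|^{1} \le \int A_{\emptyset}\, d\mu$ (here the right side is real and equals the left by convention, or one simply checks the base case $s=1$ directly: Cauchy--Schwarz in $L^2(\mu)$ after averaging over $m_1, m_1'$). For the inductive step, write the left-hand side with the averaging over $(m_s, m_s')$ pulled outside: it is $\big|\E_{m_s \in E_s} \E_{m_1, m_1', \ldots} \int (\prod_{\ueps: \eps_s = 0}) \cdot \overline{(\prod_{\ueps: \eps_s = 1})}\big|^{2^s}$, recognize the inner quantity as $\E_{m_s \in E_s} \langle G(m_s), H(m_s)\rangle$ for appropriate $L^\infty(\mu)$-valued (indeed, after integrating out $x$, scalar-valued) arrays $G, H$ depending on the remaining doubled variables, apply Cauchy--Schwarz to split into $\E_{m_s, m_s' \in E_s}$ of two terms each of which is an $(s-1)$-fold inner product, and invoke the induction hypothesis on each of the two resulting $(s-1)$-dimensional expressions; multiplying the two bounds and reindexing $\{0,1\}^{s-1} \times \{0,1\} = \{0,1\}^s$ yields the claim. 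I do not expect any genuine obstacle here — the only thing requiring a modicum of care is bookkeeping the substitution pattern $m_i^{\eps_i}$ through the Cauchy--Schwarz steps so that the final product is indexed correctly by $\underline{\omega} \in \{0,1\}^s$, exactly as in the classical Gowers norm argument; everything else is the standard iterated Cauchy--Schwarz with integration against $\mu$ in place of an inner product. Since this is entirely routine and the statement explicitly points to \cite[Lemma B.2]{GT10b}, the proof in the paper will presumably just be the remark that it follows by adapting that reference.
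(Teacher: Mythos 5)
Your proposal is correct and matches the paper's approach: the paper gives no proof of this lemma at all, merely remarking that it follows by adapting \cite[Lemma B.2]{GT10b}, and your iterated Cauchy--Schwarz/induction-on-$s$ argument (with $\int\cdot\,d\mu$ as the coupling and the finite averages over $E_i$ in place of integrations, the post-Cauchy--Schwarz factors being $(s-1)$-dimensional inner products over the enlarged space incorporating $E_s\times E_s$) is precisely that standard adaptation.
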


\begin{lemma}[Gowers-Cauchy-Schwarz inequality]\label{L: GCS}
    Let $s\in\N$ and  $G_1, \ldots, G_s\subseteq\R^k$ be finitely generated subgroups. For all functions $f_\ueps\in L^\infty(\mu)$ indexed by $\ueps\in\{0,1\}^s$, we have
    \begin{align*}
        \abs{\langle (f_\ueps)_\ueps\rangle_{\substack{G_1, \ldots, G_s}}}\leq\prod_{\ueps\in\{0,1\}^s}\nnorm{f_\ueps}_{G_1, \ldots, G_s}. 
    \end{align*}
\end{lemma}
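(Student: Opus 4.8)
The plan is to deduce the infinitary Gowers–Cauchy–Schwarz inequality for generalized box seminorms from the finitary version (Lemma~\ref{L: GCS finitary}) by a limiting argument, using Lemma~\ref{L: convergence} to guarantee that all the relevant averages converge. The main point to be careful about is that the averages defining $\langle(f_\ueps)_\ueps\rangle_{G_1,\dots,G_s}$ and $\nnorm{f_\ueps}_{G_1,\dots,G_s}$ involve \emph{iterated} limits over F\o lner sequences on the groups $G_1,\dots,G_s$, whereas Lemma~\ref{L: GCS finitary} is a statement about genuine finite averages over finite sets $E_1,\dots,E_s$; so we must insert the finitary inequality at a finite stage and then pass to the limit.

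First I would reduce to the case where each $G_i$ is finitely generated and fix, for each $i\in[s]$, a F\o lner sequence $(I^{(i)}_M)_{M}$ on $G_i$ (for concreteness, $I^{(i)}_M = G_i\cap[\pm M]^k$). For each fixed $M$, apply Lemma~\ref{L: GCS finitary} with $E_i = I^{(i)}_M$ and with the functions $A_\ueps(\bm_1,\dots,\bm_s) = T^{\floor{\bm_1}+\cdots+\floor{\bm_s}}f_\ueps$ (which are indeed in $L^\infty(\mu)$). This gives
\begin{align*}
    \abs{\E_{\bm_1,\bm_1'\in I^{(1)}_M}\cdots\E_{\bm_s,\bm_s'\in I^{(s)}_M}\int\prod_{\ueps}\CC^{\abs\ueps}T^{\floor{\bm_1^{\eps_1}}+\cdots+\floor{\bm_s^{\eps_s}}}f_\ueps\,d\mu}^{2^s}
    \leq \prod_{\underline\omega\in\{0,1\}^s}\E_{\bm_1,\bm_1'\in I^{(1)}_M}\cdots\E_{\bm_s,\bm_s'\in I^{(s)}_M}\int\prod_{\ueps}\CC^{\abs\ueps}T^{\floor{\bm_1^{\eps_1}}+\cdots+\floor{\bm_s^{\eps_s}}}f_{\underline\omega}\,d\mu,
\end{align*}
where on the right-hand side the factor indexed by $\underline\omega$ has \emph{every} function equal to $f_{\underline\omega}$. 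Note the left-hand side is the $M$-th term in the double-averaged definition of $\langle(f_\ueps)_\ueps\rangle_{G_1,\dots,G_s}$ (before taking any limit), and the $\underline\omega$-th factor on the right is exactly the $M$-th term in the definition of $\nnorm{f_{\underline\omega}}_{G_1,\dots,G_s}^{2^s}$.

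Next I would let $M\to\infty$. By Lemma~\ref{L: convergence}(i),(iii) (applied to the appropriate tuples, viewing each $\Delta$-type product as an $L^2(\mu)$-convergent average along the F\o lner sequence on $G_1\times\cdots\times G_s$, and noting that the single limit over $M$ with all blocks $(\bm_i,\bm_i')$ ranging over $I^{(i)}_M\times I^{(i)}_M$ agrees with the iterated limit), each of the $M$-indexed averages appearing above converges as $M\to\infty$: the left-hand side converges to $\abs{\langle(f_\ueps)_\ueps\rangle_{G_1,\dots,G_s}}^{2^s}$ and the $\underline\omega$-th factor on the right converges to $\nnorm{f_{\underline\omega}}_{G_1,\dots,G_s}^{2^s}$, which is in particular a nonnegative real number (as observed right after the definition of the generalized box seminorm). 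Taking limits on both sides of the displayed inequality — permissible since both sides converge and the inequality is preserved under limits — yields
\begin{align*}
    \abs{\langle(f_\ueps)_\ueps\rangle_{G_1,\dots,G_s}}^{2^s}\leq\prod_{\underline\omega\in\{0,1\}^s}\nnorm{f_{\underline\omega}}_{G_1,\dots,G_s}^{2^s},
\end{align*}
and taking $2^s$-th roots gives the claimed inequality. The only genuine obstacle is bookkeeping: one must check that the quantity in Lemma~\ref{L: GCS finitary} with the chosen $A_\ueps$ really is the finite-$M$ truncation of the Gowers–Cauchy–Schwarz inner product (this is immediate from unwinding the definition \eqref{E: GCS}), and that the convergence statement needed — convergence of the relevant $2^s$-fold Ces\`aro-type average over $M$ for a \emph{fixed} choice of functions $f_\ueps$ in the $2^s$ slots — is indeed an instance of Lemma~\ref{L: convergence}; this follows because each such average is, after expanding the integrand, of the form $\int\prod_j T^{\gamma_{j1}\floor{\bm_1}+\cdots}f_j\,d\mu$ averaged over a F\o lner sequence, exactly as in \eqref{E: single limit}.
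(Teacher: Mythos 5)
Your proposal is correct and follows essentially the same route as the paper: the paper's proof is precisely the one-line observation that the statement follows from the finitary Gowers--Cauchy--Schwarz inequality (Lemma~\ref{L: GCS finitary}) together with Lemma~\ref{L: convergence}, which justifies replacing the iterated limits in the definitions by a single limit along the product F\o lner sequence and passing to the limit in the finitary inequality. Your write-up merely fills in the bookkeeping that the paper leaves implicit, and does so correctly.
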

\begin{proof}
    The proof follows easily from Lemma \ref{L: GCS finitary}; our ability to change the order of limits in a way that allows us to deduce the infinitary statement from the finitary one is a consequence of Lemma \ref{L: convergence}.
\end{proof}

Much like it is the case for Host-Kra and box seminorms, the Gowers-Cauchy-Schwarz inequality is a crucial fact ensuring that the generalized box seminorms are indeed seminorms.

\begin{lemma}[Triangle inequality]\label{L: triangle}
    Let $s\in\N$ and $G_1, \ldots, G_s\subseteq\R^k$ be finitely generated subgroups. For all $f, g\in L^\infty(\mu)$, we have
    \begin{align}\label{E: triangle 1}
        &\nnorm{f+g}_{\substack{G_1, \ldots, G_s}}\leq \nnorm{f}_{\substack{G_1, \ldots, G_s}} + \nnorm{g}_{\substack{G_1, \ldots, G_s}}\\
        \label{E: triangle 2}
        \quad \textrm{and}\quad &\nnorm{f+g}_{\substack{G_1, \ldots, G_s}}^+\leq \nnorm{f}_{\substack{G_1, \ldots, G_s}}^+ + \nnorm{g}_{\substack{G_1, \ldots, G_s}}^+.
    \end{align}
\end{lemma}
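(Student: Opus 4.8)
The plan is to deduce the triangle inequality for both families of seminorms from the Gowers--Cauchy--Schwarz inequality of Lemma~\ref{L: GCS}, following the standard pattern used for Host--Kra and box seminorms. First I would treat \eqref{E: triangle 1}. Write $s' = 2^s$ and expand $\nnorm{f+g}_{G_1, \ldots, G_s}^{s'} = \langle (f+g)_\ueps\rangle_{G_1, \ldots, G_s}$, where $(f+g)_\ueps = f+g$ for every $\ueps\in\{0,1\}^s$. Since the Gowers--Cauchy--Schwarz inner product \eqref{E: GCS} is multilinear in its $2^s$ entries (each $f_\ueps$ enters linearly, either directly or through $\CC$), expanding $f+g$ in each of the $2^s$ slots gives
\begin{align*}
    \nnorm{f+g}_{G_1, \ldots, G_s}^{2^s} = \sum_{\underline{\omega}\in\{0,1\}^{2^s}} \langle (h^{\omega_\ueps}_\ueps)_\ueps\rangle_{G_1, \ldots, G_s},
\end{align*}
where for each binary string $\underline{\omega}$ indexed by $\ueps\in\{0,1\}^s$ we set $h^0 = f$ and $h^1 = g$. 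By Lemma~\ref{L: GCS}, each summand is bounded in absolute value by $\prod_{\ueps}\nnorm{h^{\omega_\ueps}_\ueps}_{G_1, \ldots, G_s}$, which equals $\nnorm{f}_{G_1, \ldots, G_s}^{2^s - |\underline{\omega}|}\nnorm{g}_{G_1, \ldots, G_s}^{|\underline{\omega}|}$ where $|\underline{\omega}|$ counts the number of slots assigned to $g$. Summing over all $\underline{\omega}\in\{0,1\}^{2^s}$ and recognizing the binomial expansion yields
\begin{align*}
    \nnorm{f+g}_{G_1, \ldots, G_s}^{2^s} \leq \sum_{j=0}^{2^s}\binom{2^s}{j}\nnorm{f}_{G_1, \ldots, G_s}^{2^s-j}\nnorm{g}_{G_1, \ldots, G_s}^j = \brac{\nnorm{f}_{G_1, \ldots, G_s} + \nnorm{g}_{G_1, \ldots, G_s}}^{2^s},
\end{align*}
and taking $2^s$-th roots gives \eqref{E: triangle 1}. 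This of course requires that $\nnorm{f+g}_{G_1, \ldots, G_s}^{2^s}$ is a nonnegative real number, which was already verified after the definition of the seminorm, so that taking the root is legitimate.

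For \eqref{E: triangle 2}, I would exploit the identity $\nnorm{f}^+_{G_1, \ldots, G_s} = \nnorm{f\otimes\overline{f}}_{G_1, \ldots, G_s}^{1/2}$ recorded just above the statement, working in the product system $(X\times X, \CX\otimes\CX, \mu\times\mu)$ with the commuting transformations $T_i\times T_i$ (so that the generalized box seminorm $\nnorm{\cdot}_{G_1, \ldots, G_s}$ on this product system makes sense along the same subgroups $G_1, \ldots, G_s\subseteq\R^k$). Then
\begin{align*}
    (f+g)\otimes\overline{(f+g)} = f\otimes\overline f + f\otimes\overline g + g\otimes\overline f + g\otimes\overline g,
\end{align*}
so by \eqref{E: triangle 1} applied on the product system,
\begin{align*}
    \nnorm{f+g}^+_{G_1, \ldots, G_s} \leq \brac{\nnorm{f\otimes\overline f}_{G_1, \ldots, G_s}^{1/2} + \nnorm{f\otimes\overline g}_{G_1, \ldots, G_s}^{1/2} + \nnorm{g\otimes\overline f}_{G_1, \ldots, G_s}^{1/2} + \nnorm{g\otimes\overline g}_{G_1, \ldots, G_s}^{1/2}}.
\end{align*}
By Lemma~\ref{L: GCS} applied on the product system with the $2^s$ functions alternating between $f\otimes\overline f$ and $g\otimes\overline g$ in a way that realizes $f\otimes\overline g$ (or $g\otimes\overline f$) as one of the slot-assignments, we get the cross-term bound $\nnorm{f\otimes\overline g}_{G_1, \ldots, G_s} \leq \nnorm{f\otimes\overline f}_{G_1, \ldots, G_s}^{1/2}\nnorm{g\otimes\overline g}_{G_1, \ldots, G_s}^{1/2}$, hence $\nnorm{f\otimes\overline g}_{G_1, \ldots, G_s}^{1/2} \leq \nnorm{f}^+_{G_1, \ldots, G_s}{}^{1/2}\cdot\nnorm{g}^+_{G_1, \ldots, G_s}{}^{1/2}\cdot\ldots$; more cleanly, the four terms on the right combine as $\brac{\nnorm{f}^+{}^{1/2\cdot 2} + \nnorm{g}^+{}^{1/2\cdot 2}}$-type square, i.e. the right-hand side is at most $\brac{\nnorm{f}^+_{G_1, \ldots, G_s} + \nnorm{g}^+_{G_1, \ldots, G_s}}$ after recognizing it as a perfect square $\brac{a+b}^2$ with $a = \nnorm{f}^+_{G_1,\ldots,G_s}{}$, $b=\nnorm{g}^+_{G_1,\ldots,G_s}{}$ raised to appropriate powers. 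Carrying out this bookkeeping gives \eqref{E: triangle 2}.

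The main obstacle I anticipate is purely bookkeeping: making the multilinear expansion in the $2^s$ slots of the Gowers--Cauchy--Schwarz inner product precise, and correctly matching the resulting sum with the binomial expansion of $(\nnorm{f} + \nnorm{g})^{2^s}$. One must be careful that the complex conjugation $\CC^{|\ueps|}$ in \eqref{E: GCS} does not interfere — it does not, since $\overline{f+g} = \overline f + \overline g$, so linearity in each slot genuinely holds. The only other point requiring a word of care is the legitimacy of extracting $2^s$-th roots, which reduces to the nonnegativity already established via the "folding" identity displayed right before equation~\eqref{E: inductive formula}, and the analogous nonnegativity for $\nnorm{\cdot}^+$ which follows from its expression as $\nnorm{f\otimes\overline f}^{1/2}$ together with $\nnorm{f\otimes\overline f}_{G_1,\ldots,G_s} = \abs{\nnorm{f\otimes\overline f}_{G_1,\ldots,G_s}}\geq 0$ being a genuine (real, nonnegative) box seminorm value.
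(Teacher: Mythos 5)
Your proof of \eqref{E: triangle 1} is correct and is exactly the paper's argument (the paper only says ``in the standard way'': expand $\nnorm{f+g}_{G_1,\ldots,G_s}^{2^s}$ as a Gowers--Cauchy--Schwarz inner product, split by multilinearity, bound each of the $2^{2^s}$ terms by Lemma~\ref{L: GCS}, and recognize the binomial expansion).

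Your treatment of \eqref{E: triangle 2}, however, has a genuine gap in the final bookkeeping: you take the square root too early. After writing $\nnorm{f+g}^+_{G_1,\ldots,G_s}=\nnorm{(f+g)\otimes\overline{(f+g)}}_{G_1,\ldots,G_s}^{1/2}$ and applying \eqref{E: triangle 1} on the product system, the correct intermediate bound is
\begin{align*}
\brac{\nnorm{f+g}^+_{G_1,\ldots,G_s}}^2 \leq \nnorm{f\otimes\overline f}_{G_1,\ldots,G_s}+\nnorm{f\otimes\overline g}_{G_1,\ldots,G_s}+\nnorm{g\otimes\overline f}_{G_1,\ldots,G_s}+\nnorm{g\otimes\overline g}_{G_1,\ldots,G_s},
\end{align*}
whereas you distribute the square root over the four summands. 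Your displayed inequality is still true (square root is subadditive), but it is too lossy: bounding each cross term by $\brac{\nnorm{f}^+\nnorm{g}^+}^{1/2}$, the sum of square roots is only controlled by $\bigl((\nnorm{f}^+)^{1/2}+(\nnorm{g}^+)^{1/2}\bigr)^2$, which is \emph{not} $\leq \nnorm{f}^+ + \nnorm{g}^+$ in general (take $\nnorm{f}^+=\nnorm{g}^+=1$: you get $4$ versus $2$). The ``perfect square'' structure you invoke only appears at the level of the squares: one must bound the cross terms by $\nnorm{f\otimes\overline g}_{G_1,\ldots,G_s}\leq \nnorm{f}^+_{G_1,\ldots,G_s}\nnorm{g}^+_{G_1,\ldots,G_s}$ \emph{before} any root is taken, obtaining $\brac{\nnorm{f+g}^+}^2\leq \brac{\nnorm{f}^+ + \nnorm{g}^+}^2$, and only then extract the square root; this is exactly the paper's order of operations. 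A secondary issue: your justification of the cross-term bound via Lemma~\ref{L: GCS} with ``alternating slot assignments'' is not a valid application of that lemma, which bounds a mixed inner product by a product of seminorms and does not directly estimate $\nnorm{f\otimes\overline g}$. The paper instead uses the definition of the seminorm on the product system to factor the integral over $\mu\times\mu$ into $\int(\cdots)f\,d\mu\cdot\overline{\int(\cdots)g\,d\mu}$ and applies the ordinary Cauchy--Schwarz inequality in the averaging variables, which yields $\nnorm{f\otimes\overline g}\leq \nnorm{f}^+\nnorm{g}^+$ directly. With these two repairs your argument becomes the paper's proof.
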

\begin{proof}
    The lemma for \eqref{E: triangle 1} follows in the standard way by rewriting $\nnorm{f+g}_{\substack{G_1, \ldots, G_s}}^{2^s}$ as a Gowers-Cauchy-Schwarz inner product along $G_1, \ldots, G_s$, then splitting it using multilinearity into terms involving $f$ and $g$, and bounding each of them using Lemma \ref{L: GCS}. For \eqref{E: triangle 2}, we express $(\nnorm{f+g}^+_{G_1, \ldots, G_s})^2=\nnorm{(f+g)\otimes(f+g)}_{G_1, \ldots, G_s}$ and split
    \begin{align*}
        (f+g)\otimes \overline{(f+g)} = f\otimes \overline{f}+g\otimes\overline{f} + f\otimes\overline{g}+g\otimes\overline{g},
    \end{align*}
    so that
    \begin{align*}
        (\nnorm{f+g}^+_{G_1, \ldots, G_s})^2 \leq \nnorm{f\otimes \overline{f}}_{G_1, \ldots, G_s}+\nnorm{g\otimes \overline{f}}_{G_1, \ldots, G_s}+\nnorm{f\otimes \overline{g}}_{G_1, \ldots, G_s}+\nnorm{g\otimes \overline{g}}_{G_1, \ldots, G_s}.
    \end{align*}
    From the definition of seminorms and Lemma \ref{L: convergence}, we have
    \begin{multline*}
        \nnorm{f\otimes \overline{g}}_{G_1, \ldots, G_s}^{2^s} = \E_{(\bm_1, \bm_1',\ldots,\bm_s, \bm_s')\in G_1\times G_1\times\cdots\times G_s\times G_s}\\
        \int \prod_{\ueps\in\{0,1\}^s}\CC^{|\ueps|}T^{\floor{\bm_1^{\eps_1}}+\cdots + \floor{\bm_s^{\eps_s}}} f\, d\mu \cdot \overline{\int \prod_{\ueps\in\{0,1\}^s}\CC^{|\ueps|}T^{\floor{\bm_1^{\eps_1}}+\cdots + \floor{\bm_s^{\eps_s}}} g\, d\mu},
    \end{multline*}
    and so by the Cauchy-Schwarz inequality, we get
    \begin{align*}
        \nnorm{f\otimes\overline{g}}_{G_1, \ldots, G_s}\leq \nnorm{f}_{G_1, \ldots, G_s}^+\cdot \nnorm{g}_{G_1, \ldots, G_s}^+.
    \end{align*}
    Hence
    \begin{align*}
        (\nnorm{f+g}^+_{G_1, \ldots, G_s})^2 \leq (\nnorm{f}_{G_1, \ldots, G_s}^+)^2 + 2\nnorm{f}_{G_1, \ldots, G_s}^+ \nnorm{g}_{G_1, \ldots, G_s}^+ +(\nnorm{g}_{G_1, \ldots, G_s}^+)^2,
    \end{align*}
    and the result follows by taking square roots on both sides.
\end{proof}

Similarly to Host-Kra and box seminorms, generalized box seminorms satisfy an important monotonicity property.
\begin{lemma}[Monotonicity property]\label{L: monotonicity property}
    Let $s\in\N$ and $G_1, \ldots, G_{s+1}\subseteq\R^k$ be finitely generated subgroups. For all $f\in L^\infty(\mu)$, we have
    \begin{align*}
        &\nnorm{f}_{G_1, \ldots, G_s}\leq \nnorm{f}_{G_1, \ldots, G_{s+1}}\\
        \quad \textrm{and}\quad &\nnorm{f}_{G_1, \ldots, G_s}^+\leq \nnorm{f}_{G_1, \ldots, G_{s+1}}^+.
    \end{align*}
\end{lemma}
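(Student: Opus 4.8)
The plan is to reduce both inequalities to the single-group estimate
\[
\Bigabs{\int_X g\, d\mu}\le \nnorm{g}_{G}\qquad\text{for every finitely generated }G\subseteq\R^k\text{ and }g\in L^\infty(\mu),
\]
which I would prove first. Unwinding the definition, $\nnorm{g}_G^2=\E_{\bm,\bm'\in G}\int T^{\floor{\bm}}g\cdot T^{\floor{\bm'}}\overline{g}\,d\mu$; since $\E_{\bm,\bm'\in G}$ is the \emph{joint} Ces\`aro limit over $G\cap[\pm M]^k$, bilinearity of the $L^2(\mu)$-inner product rewrites the right-hand side as $\lim_{M\to\infty}\norm{\E_{\bm\in G\cap[\pm M]^k}T^{\floor{\bm}}g}_{L^2(\mu)}^2$. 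By Lemma~\ref{L: convergence}(i) (applied with $\ell=1$) the $L^2(\mu)$-limit $Pg:=\E_{\bm\in G}T^{\floor{\bm}}g$ exists, so by continuity of the norm $\nnorm{g}_G^2=\norm{Pg}_{L^2(\mu)}^2$. Since each $T_i$ preserves $\mu$ we have $\int T^{\floor{\bm}}g\,d\mu=\int g\,d\mu$, hence $\int Pg\,d\mu=\int g\,d\mu$, and the Cauchy--Schwarz inequality against the constant function $1$ gives $\bigabs{\int_X g\,d\mu}=\bigabs{\int_X Pg\cdot\overline{1}\,d\mu}\le\norm{Pg}_{L^2(\mu)}=\nnorm{g}_G$.

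Next I would deduce the first inequality of the lemma. Applying the inductive formula~\eqref{E: inductive formula} with $s'=s$ (and $s$ replaced by $s+1$) gives
\[
\nnorm{f}_{G_1,\dots,G_{s+1}}^{2^{s+1}}=\E_{\bm_1,\bm_1'\in G_1}\cdots\E_{\bm_s,\bm_s'\in G_s}\nnorm{\Delta_{(\bm_1,\bm_1'),\dots,(\bm_s,\bm_s')}f}_{G_{s+1}}^{2}.
\]
Bounding each inner seminorm from below by $\bigabs{\int\Delta_{(\bm_1,\bm_1'),\dots,(\bm_s,\bm_s')}f\,d\mu}^2$ via the single-group estimate, and then applying Jensen's inequality for the convex function $z\mapsto\abs{z}^2$ to the (genuine, finite) Ces\`aro averages at each finite stage, I obtain
\[
\nnorm{f}_{G_1,\dots,G_{s+1}}^{2^{s+1}}\ \ge\ \Bigabs{\E_{\bm_1,\bm_1'\in G_1}\cdots\E_{\bm_s,\bm_s'\in G_s}\int\Delta_{(\bm_1,\bm_1'),\dots,(\bm_s,\bm_s')}f\,d\mu}^{2}.
\]
The quantity inside the absolute value is exactly $\nnorm{f}_{G_1,\dots,G_s}^{2^s}$, which is a nonnegative real; hence the right-hand side equals $\nnorm{f}_{G_1,\dots,G_s}^{2^{s+1}}$, and taking $2^{s+1}$-th roots finishes the first inequality. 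For the $\nnorm{\cdot}^{+}$ version I would simply apply the inequality just proved in the product system $(X\times X,\CX\otimes\CX,\mu\times\mu,T_1\times T_1,\dots,T_k\times T_k)$ to the function $f\otimes\overline{f}$, combined with the identity $\nnorm{f}^{+}_{G_1,\dots,G_s}=\nnorm{f\otimes\overline{f}}_{G_1,\dots,G_s}^{1/2}$ recorded in the definition of $\nnorm{\cdot}^{+}$.

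The only genuinely non-formal point — and the main (mild) obstacle — is the bookkeeping of limits: identifying $\nnorm{g}_G^2$ with $\norm{Pg}_{L^2(\mu)}^2$ and interchanging the limits with the finite-stage Jensen inequality both rely on Lemma~\ref{L: convergence} to pass freely between iterated Ces\`aro limits and a single joint limit along a F\o lner sequence on $G_1\times G_1\times\cdots\times G_s\times G_s$, together with $L^2(\mu)$-continuity of the relevant linear functionals. Everything else is a formal consequence of the inductive formula, the measure-preservation of the $T_i$, and convexity.
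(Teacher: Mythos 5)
Your proof is correct. The second inequality is handled exactly as in the paper (via the identity $\nnorm{f}^+_{G_1,\dots,G_s}=\nnorm{f\otimes\overline{f}}_{G_1,\dots,G_s}^{1/2}$ applied in the product system). For the first inequality, the paper simply invokes the Gowers--Cauchy--Schwarz inequality as a ``standard consequence'' --- the implied argument is to pad the GCS inner product of degree $s$ with copies of the constant function $1$ to obtain a degree-$(s+1)$ GCS inner product, and then apply Lemma~\ref{L: GCS} together with $\nnorm{1}_{G_1,\dots,G_{s+1}}=1$. Your route instead isolates the single-group estimate $\bigabs{\int g\,d\mu}\le\nnorm{g}_G$ (which is really the one-step Cauchy--Schwarz hidden inside GCS), feeds it into the inductive formula~\eqref{E: inductive formula}, and then uses Jensen at finite stages to carry the absolute value out of the outer averages. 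The two arguments are mechanically different but morally the same: your version is more explicit and self-contained (it does not call the $2^s$-fold GCS inequality, only bilinearity and convexity), while the paper's is shorter once GCS is taken as a black box. You correctly flag the one non-trivial bookkeeping point --- interchanging iterated limits and finite-stage Jensen requires Lemma~\ref{L: convergence} --- and the way you handle it is sound, since the relevant limits exist, $z\mapsto|z|^2$ is continuous, and $L^2$-convergence on a probability space implies convergence of integrals.
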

\begin{proof}
    The second inequality follows from the first and the fact that $\nnorm{f}^+_{G_1, \ldots, G_s} = \nnorm{f\otimes\overline{f}}_{G_1, \ldots, G_s}^{1/2}$ while the first one is a standard consequence of the Gowers-Cauchy-Schwarz inequality.
\end{proof}

The following lemma gives sufficient conditions for two generalized box seminorms to agree.
\begin{lemma}[Invariance property]
    Let $s\in\N$ and $G_1, G_1'\ldots, G_s, G_s'\subseteq\R^k$ be finitely generated subgroups with the property that
    \begin{align}\label{E: ergodicity property}
        \E_{\bm\in G_i} T^{\floor{\bm}}f = \E_{\bm\in G_i'} T^{\floor{\bm}}f
    \end{align}
    for every $f\in L^\infty(\mu)$ and $i\in[s]$.
    Then 
    \begin{align*}
        \langle (f_\ueps)_\ueps\rangle_{G_1, \ldots, G_s} = \langle (f_\ueps)_\ueps\rangle_{G'_1, \ldots, G'_s}
    \end{align*}
    for all $f_\ueps\in L^\infty(\mu)$ indexed by $\ueps\in\{0,1\}^s$.
\end{lemma}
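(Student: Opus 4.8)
The plan is to swap the subgroups one at a time. It suffices to prove the following one‑step replacement: if $G_j=G_j'$ for all $j\neq i$ for some fixed $i\in[s]$, then the two Gowers--Cauchy--Schwarz inner products coincide. Iterating this over $i=1,\ldots,s$ then gives the lemma, since hypothesis \eqref{E: ergodicity property} for the index $i$ involves only $G_i$ and $G_i'$ and is thus unaffected by having already replaced the other groups. By the permutation invariance furnished by Lemma~\ref{L: convergence}(ii), we may assume $i=s$, so the $\bm_s,\bm_s'$-averages are the innermost ones in the definition of $\langle(f_\ueps)_\ueps\rangle_{G_1,\ldots,G_s}$.

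Next I would fix the parameters $\bm_1,\bm_1',\ldots,\bm_{s-1},\bm_{s-1}'$ and rewrite the integrand. Splitting the product over $\ueps\in\{0,1\}^s$ according to whether $\eps_s=0$ or $\eps_s=1$, and using that each $T^{\floor{\bm_s}}$ is multiplicative and commutes with complex conjugation, one obtains
\begin{align*}
\prod_{\ueps\in\{0,1\}^s}\CC^{|\ueps|}T^{\floor{\bm_1^{\eps_1}}+\cdots+\floor{\bm_s^{\eps_s}}}f_\ueps = T^{\floor{\bm_s}}H\cdot T^{\floor{\bm_s'}}H',
\end{align*}
where $H=\prod_{\ueps\,:\,\eps_s=0}\CC^{|\ueps|}T^{\floor{\bm_1^{\eps_1}}+\cdots+\floor{\bm_{s-1}^{\eps_{s-1}}}}f_\ueps$ and $H'=\prod_{\ueps\,:\,\eps_s=1}\CC^{|\ueps|}T^{\floor{\bm_1^{\eps_1}}+\cdots+\floor{\bm_{s-1}^{\eps_{s-1}}}}f_\ueps$ lie in $L^\infty(\mu)$, depend only on the fixed parameters, and satisfy $\norm{H}_{L^\infty(\mu)},\norm{H'}_{L^\infty(\mu)}\leq\prod_{\ueps}\norm{f_\ueps}_{L^\infty(\mu)}$ uniformly. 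Since the average over $\bm_s,\bm_s'\in G_s$ runs over the product group $G_s\times G_s$ and the one‑variable limits $\E_{\bm_s\in G_s}T^{\floor{\bm_s}}H$ and $\E_{\bm_s'\in G_s}T^{\floor{\bm_s'}}H'$ exist in $L^2(\mu)$ by Lemma~\ref{L: convergence}(i), a routine splitting argument (using the uniform boundedness to interchange limit and integral) gives
\begin{align*}
\E_{\bm_s,\bm_s'\in G_s}\int T^{\floor{\bm_s}}H\cdot T^{\floor{\bm_s'}}H'\,d\mu = \int\Bigl(\E_{\bm_s\in G_s}T^{\floor{\bm_s}}H\Bigr)\Bigl(\E_{\bm_s'\in G_s}T^{\floor{\bm_s'}}H'\Bigr)d\mu.
\end{align*}

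Now hypothesis \eqref{E: ergodicity property} applies to each of the two factors, with $f$ replaced by $H$ and by $H'$ respectively, so the right‑hand side above is unchanged when $G_s$ is replaced by $G_s'$; running the computation backwards, the inner double average over $G_s$ equals the inner double average over $G_s'$ for every choice of the fixed parameters $\bm_1,\bm_1',\ldots,\bm_{s-1},\bm_{s-1}'$. Averaging this identity over those parameters (an operation identical on both sides, since $G_j=G_j'$ for $j<s$) yields $\langle(f_\ueps)_\ueps\rangle_{G_1,\ldots,G_{s-1},G_s}=\langle(f_\ueps)_\ueps\rangle_{G_1,\ldots,G_{s-1},G_s'}$, completing the one‑step replacement.

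The main obstacle is bookkeeping rather than conceptual: one must invoke Lemma~\ref{L: convergence} carefully to guarantee that all of the iterated $L^2(\mu)$ limits involved exist at every stage, that they may be reordered so that the $\bm_s,\bm_s'$-averages sit innermost (the only place the permutation invariance is needed), and that the outer F\o lner averages commute with integration against $\mu$. Once the integrand is put in the factored form $T^{\floor{\bm_s}}H\cdot T^{\floor{\bm_s'}}H'$, hypothesis \eqref{E: ergodicity property} does precisely what is required, and nothing about Hardy functions or the specific structure of the groups enters the argument.
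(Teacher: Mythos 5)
Your proposal is correct and follows essentially the same route as the paper's proof: split off the $\eps_s$-coordinate so that the innermost $\bm_s,\bm_s'$-averages factor into two one-variable averages of bounded functions, apply hypothesis \eqref{E: ergodicity property} to each factor to swap $G_s$ for $G_s'$, and then iterate over the remaining indices using the permutation invariance supplied by Lemma~\ref{L: convergence}. The only cosmetic difference is that you explicitly justify pulling the inner averages out of the integral via uniform boundedness and $L^2$-convergence, a step the paper handles implicitly through its standing convention on interchanging limits and integration.
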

\begin{proof}
    We write $\ueps = (\Tilde{\ueps},\eps_s)$, so that $\langle (f_\ueps)_\ueps\rangle_{G_1, \ldots, G_s}$ equals
\begin{align*}
    \E_{\bm_1, \bm_1'\in G_1}\cdots \E_{\bm_{s-1}, \bm_{s-1}'\in G_{s-1}} 
    \int &\E_{\bm_s\in G_s} \prod_{\tilde{\ueps}\in\{0,1\}^{s-1}}\CC^{|\tilde{\ueps}|} T^{\floor{\bm_1^{\eps_1}}+\cdots + \floor{\bm_{s-1}^{\eps_{s-1}}} + \floor{\bm_s}}f_{\tilde{\ueps}0}\\
    &\E_{\bm'_s\in G_s} \prod_{\tilde{\ueps}\in\{0,1\}^{s-1}}\CC^{|\tilde{\ueps}|} T^{\floor{\bm_1^{\eps_1}}+\cdots + \floor{\bm_{s-1}^{\eps_{s-1}}} + \floor{\bm_s'}}\overline{f_{\tilde{\ueps}1}}\, d\mu.
\end{align*}
Using \eqref{E: ergodicity property} for $G_s$, we rewrite the two averages inside the integral as averages over $G_s'$ instead. It follows that
\begin{align*}
    \langle (f_\ueps)_\ueps\rangle_{G_1, \ldots, G_{s-1}, G_s} = \langle (f_\ueps)_\ueps\rangle_{G_1, \ldots, G_{s-1}, G'_s}.
\end{align*}
Using the fact that we can swap the order of summation in the formula for Gowers-Cauchy-Schwarz inner product thanks to Lemma \ref{L: convergence}, we iterate the procedure over the indices $i=1, \ldots, s-1$, obtaining the claim.
\end{proof}

Much like for box seminorms, we can compare the generalized box seminorm along $G_1, \ldots, G_s$ to a seminorm along its subgroups.
\begin{lemma}\label{L: seminorms of subgroups}
   Let $G_1, G_1', \ldots, G_s, G_s'\subseteq\R^k$ be finitely generated subgroups such that $G'_i\subseteq G_i$ for each $i\in[s]$. For every 1-bounded function $f\in L^\infty(\mu)$, we have
\begin{align*}
    \nnorm{f}_{G_1, \ldots, G_s}^+\ll_{k,s} \nnorm{f}_{G'_1, \ldots, G'_s}^+.
\end{align*}

Conversely, if the index of $G'_{i}$ in $G_{i}$ is at most $D$ for all $i\in[s]$, then
\begin{align*}
\nnorm{f}_{G'_1, \ldots, G'_s}^+\ll_{D,s} \nnorm{f}_{G_1, \ldots, G_s}^+.
\end{align*}
\end{lemma}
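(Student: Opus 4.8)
The plan is to treat the two inequalities separately, reducing both to the case $s=1$ of a single pair of groups, and for the second (easy) inequality to exploit the absolute value built into $\nnorm{\cdot}^+$.

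For the first inequality I would first pass to the plain seminorm via the identity $\nnorm{f}^+_{G_1,\dots,G_s}=\nnorm{f\otimes\overline f}_{G_1,\dots,G_s}^{1/2}$ recorded after the definition, where the right-hand side is the generalized box seminorm on the product system $(X\times X,\CX\otimes\CX,\mu\times\mu,T_1\times T_1,\dots,T_k\times T_k)$; so it suffices to show $\nnorm{F}_{G_1,\dots,G_s}\ll_{k,s}\nnorm{F}_{G_1',\dots,G_s'}$ for an arbitrary $1$-bounded $F$. Using the inductive formula \eqref{E: inductive formula} together with the permutation invariance of generalized box seminorms furnished by Lemma~\ref{L: convergence}, one strips off one group at a time, so the matter reduces to: for finitely generated $G'\subseteq G\subseteq\R^k$ and any $H\in L^\infty(\mu)$, $\nnorm{H}_{G}\ll_k\nnorm{H}_{G'}$. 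Here $\nnorm{H}_{G}^2=\|P_GH\|_{L^2(\mu)}^2$, where $P_GH:=\lim_{M\to\infty}\E_{\bm\in G\cap[\pm M]^k}T^{\floor{\bm}}H$ is the averaging operator, which converges in $L^2(\mu)$ by Lemma~\ref{L: convergence}, and similarly for $G'$. If there were no floor functions — in particular if $G,G'\subseteq\Z^k$ — then $P_G$ and $P_{G'}$ would be the orthogonal projections onto the $G$- and $G'$-invariant functions, the range of $P_G$ would be contained in that of $P_{G'}$, hence $P_G=P_GP_{G'}$, and $\|P_GH\|=\|P_GP_{G'}H\|\le\|P_{G'}H\|$ with constant $1$. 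In the presence of floors this requires real work, and this is the main obstacle: I would refine a F\o lner sequence on $G$ by cosets of $G'$, expand $P_GH$ as a (limit of an) average over these cosets of operators of the shape $T^{\floor{\bg}}\circ P_{G'}$, observe that the discrepancy between $T^{\floor{\bg+\bm'}}$ and $T^{\floor{\bg}}T^{\floor{\bm'}}$ is a shift by a vector in $\{0,1\}^k$, and then sort the $\bm'$'s according to the value of this correction and invoke equidistribution of the fractional parts of elements of $G'$ to conclude that the total contribution is $\ll_k\|P_{G'}H\|$; the factor $O_k(1)$ arises precisely from the at most $2^k$ possible corrections. (Alternatively one can pass to a suspension extension of the type in \cite{Ko18} on which $\bm\mapsto T^{\floor{\bm}}$ becomes a genuine $\R^k$-action, apply the mean ergodic theorem and the elementary $\Z^k$-comparison above there, and transfer back, absorbing the finitely many boundary corrections into the constant.) Iterating over the $s$ coordinates then yields $\nnorm{F}_{G_1,\dots,G_s}\ll_{k,s}\nnorm{F}_{G_1',\dots,G_s'}$, hence the claim.

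For the second inequality I would argue directly with $\nnorm{\cdot}^+$. Fix coset representatives $\bzero\in R_i$ of $G_i'$ in $G_i$, so $|R_i|=[G_i:G_i']\le D$. Decomposing each averaging set $G_i\cap[\pm M]^k$ according to these cosets (in both the $\bm_i$ and the $\bm_i'$ variables) and letting $M\to\infty$ — which is legitimate because each coset piece is a F\o lner sequence on $G_i'$ and, by Lemma~\ref{L: convergence}, the relevant limits exist and do not depend on the F\o lner sequence — one obtains
\[
\brac{\nnorm{f}^+_{G_1,\dots,G_s}}^{2^{s+1}}=\frac{1}{\prod_{i=1}^s[G_i:G_i']^2}\sum_{(\bg_i,\bg_i')_{i\in[s]}}\ \E_{\substack{\bm_i,\bm_i'\in G_i'\\ i\in[s]}}\ \Bigabs{\int\Delta_{(\bm_1+\bg_1,\bm_1'+\bg_1'),\dots,(\bm_s+\bg_s,\bm_s'+\bg_s')}f\,d\mu}^2,
\]
the outer sum running over all tuples of coset representatives $\bg_i,\bg_i'\in R_i$. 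Every summand is nonnegative, and the summand with all representatives equal to $\bzero$ is exactly $\brac{\nnorm{f}^+_{G_1',\dots,G_s'}}^{2^{s+1}}$; discarding the remaining nonnegative summands gives
\[
\brac{\nnorm{f}^+_{G_1',\dots,G_s'}}^{2^{s+1}}\le\Bigbrac{\prod_{i=1}^s[G_i:G_i']^2}\brac{\nnorm{f}^+_{G_1,\dots,G_s}}^{2^{s+1}},
\]
that is, $\nnorm{f}^+_{G_1',\dots,G_s'}\le D^{s/2^s}\nnorm{f}^+_{G_1,\dots,G_s}\ll_{D,s}\nnorm{f}^+_{G_1,\dots,G_s}$. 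This last step genuinely uses the modulus in the definition of $\nnorm{\cdot}^+$: the corresponding statement for $\nnorm{\cdot}$ is false already for $s=1$, $G=\Z$, $G'=2\Z$ on a two-point system, which is precisely why the lemma is stated for the $+$-seminorms.
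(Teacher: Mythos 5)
Your treatment of the converse (second) inequality is correct and is a legitimate variant of the paper's argument: rather than inducting on $s$ as the paper does, you decompose all $2s$ averaging variables at once into cosets, exploit the nonnegativity of each summand to discard everything but the trivial-representative term, and also correctly identify that this nonnegativity is exactly what the modulus in $\nnorm{\cdot}^+$ supplies (and that the analogous claim for $\nnorm{\cdot}$ is false). That part is fine.

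The first inequality, however, has a genuine gap, and it lies precisely where you conceded that ``real work'' is needed. You first reduce, via $\nnorm{f}^+_{G_1,\ldots,G_s}=\nnorm{f\otimes\overline f}_{G_1,\ldots,G_s}^{1/2}$ and the inductive formula, to the claim $\nnorm{H}_G\ll_k\nnorm{H}_{G'}$ for \emph{arbitrary} $1$-bounded $H$ and a single pair $G'\subseteq G$. This reduction is over-ambitious: the paper never establishes the plain-seminorm comparison for an arbitrary $H$, and it is not clear that it holds --- the averaging operators $P_G,P_{G'}$ are \emph{not} orthogonal projections in the presence of floors ($P_G^2\neq P_G$ because $\floor{\bm}+\floor{\bm'}\neq\floor{\bm+\bm'}$), so the clean range-inclusion argument you invoke in the integer case has no analogue here, and the mean ergodic theorem on a suspension does not transfer back verbatim because the $\bs$-slices of the suspension average differ from $P_GH$ by $\bm$-dependent shifts. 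Your proposed fix --- refine the F\o lner sequence on $G$ by $G'$-cosets, sort the $\bm'\in G'$ according to the correction vector $\br\in\{0,1\}^k$, and ``invoke equidistribution of fractional parts'' --- does not close the gap. After sorting, each piece $\{\bm'\in G':\br_{\bg,\bm'}=\br\}$ is a positive-density subset of $G'$ but is \emph{not} a F\o lner sequence for $G'$, and averages of $T^{\floor{\bm'}+\br}H$ along such a restricted set need not converge to anything comparable to $P_{G'}H$; equidistribution of the fractional parts only controls the \emph{density} of each piece, not the behaviour of the restricted ergodic average. The paper's proof does something structurally different: it inserts the extra $G'$-average via the identity $\E_{\bm\in G}A(\bm)=\E_{\bk\in G'}\E_{\bm\in G}A(\bm+\bk)$, applies Cauchy--Schwarz to the $\bm$-average \emph{first} (and this is exactly where the square built into $\nnorm{\cdot}^+$ is used, yielding an integrand $\int\bigabs{\E_{\bk\in G'}(T\times T)^{\floor{\bm+\bk}}\overline f\otimes f}^2$ which is nonnegative), and only then removes the $\bm$-dependence of the floor correction by pigeonhole --- the nonnegativity makes discarding/extending over correction values harmless, which is exactly the mechanism that is missing from your plain-seminorm sketch.
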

\begin{proof}
    We prove both inequalities for $s=1$ (in which case we simply write $G'=G'_1$, and $G=G_1$) since the cases $s>1$ follow by induction using the inductive formula \eqref{E: inductive formula} and the fact that we can permute the order of groups in the definition of box norms (Lemma~\ref{L: convergence} (ii)).
    
    For the first inequality, we first note that if $G'\subseteq G$, then
    \begin{align*}
        \E_{\bm\in G}A(\bm) = \E_{\bm\in G}A(\bm + \bk)
    \end{align*}
    for every $\bk\in G'$ (as long as the first limit exists), and hence
    \begin{align}\label{E: shifting by a subgroup}
        \E_{\bm\in G}A(\bm) = \E_{\bk\in G}\E_{\bm\in G}A(\bm + \bk).
    \end{align}

    Now, by definition of the degree 1 seminorm, we have
    \begin{align}\label{E: degree 1 + identity}
        \brac{\nnorm{f}_{G}^+}^4 &= \int \E_{\bm\in G}(T\times T)^{\floor{\bm}}f\otimes \overline{f}\cdot \E_{\bm'\in G}(T\times T)^{\floor{\bm'}}\overline{f}\otimes f\; d(\mu\times\mu).
    \end{align}
    The identity \eqref{E: shifting by a subgroup} and Lemma \ref{L: convergence} give
    \begin{align*}
        \E_{\bm\in G}(T\times T)^{\floor{\bm}}\overline{f}\otimes f = \E_{\bm\in G}\E_{\bk\in G'}(T\times T)^{\floor{\bm+\bk}}\overline{f}\otimes f.
    \end{align*}
    Inserting this identity into \eqref{E: degree 1 + identity} and applying the Cauchy-Schwarz inequality, we get
    \begin{align*}
        \brac{\nnorm{f}_{G}^+}^4 &\leq \E_{\bm\in G}\int \abs{\E_{\bk\in G'}(T\times T)^{\floor{\bm+\bk}}\overline{f}\otimes f}^2\; d(\mu\times\mu).
    \end{align*}
    This looks almost like what we aim for except for the presence of $\bm$. We can remove it by composing the integral with $(T\times T)^{-\floor{\bm}}$, swapping the subtraction inside the fractional part and making the error term $\floor{\bm+\bk}-\floor{\bm}$ independent of $\bm$ and $\bk$ using the pigeonhole principle (this causes a loss in the multiplicative factor of $O_{k}(1)$ since there are that many choices for the error term). This gives us a bound of the form
    \begin{align*}
        \brac{\nnorm{f}_{G}^+}^4 &\ll_k \int \E_{\bk\in G'}(T\times T)^{\floor{\bk}+\br}\overline{f}\otimes f\cdot \E_{\bk'\in G'}(T\times T)^{\floor{\bk'}+\br'}\overline{f}\otimes f\; d(\mu\times\mu)
    \end{align*}    
    for some $\br, \br'\in \Z^k$. These error terms can be removed using an application of the Gowers-Cauchy-Schwarz inequality, after which the claimed inequality follows.
    
    For the converse direction, assume that
    the index of $G'$ in $G$ is at most $D$. If the averages of a function $A: G\to [0, \infty)$ along $G, G'$ converge, then 
    \begin{align*}
        \E_{\bm\in G'}A(\bm) &= \lim_{M\to\infty}\frac{1}{|G'\cap[\pm M]^k|}\sum_{\bm\in G'\cap[\pm M]^k}A(\bm)\\
        &\leq \lim_{M\to\infty}\frac{D}{|G\cap[\pm M]^k|}\sum_{\bm\in G\cap[\pm M]^k}A(\bm) = D\cdot \E_{\bm\in G}A(\bm);
    \end{align*}
    the point is that nonnegativity of $A$ allows us to extend the summation from $G'$ to $G$. Applying this argument twice to averages over $\bm$ and $\bm'$, we get that
    \begin{align*}
        \brac{\nnorm{f}_{G'}^+}^4 = \E_{\bm,\bm'\in G'}\abs{\int \Delta_{(\bm, \bm')}f\, d\mu}^2 \ll_D \E_{\bm,\bm'\in G}\abs{\int \Delta_{(\bm, \bm')}f\, d\mu}^2 = \brac{\nnorm{f}_{G}^+}^4,
    \end{align*} and the claim follows.
\end{proof}

Generalized box seminorms are also essentially invariant under rescalings of the groups, as indicated by the result below.
\begin{lemma}[Rescaling property]\label{L: dilating seminorms}
    Let $k,K, s\in\N$, $\alpha\geq 1$ be a real number, $G_1,\ldots, G_{s}\subseteq\R^k$ be finitely generated subgroups such that for every $i\in[s]$, we have $G_i = \langle \bg_{i1}, \ldots, \bg_{iK_i}\rangle$ with $K_i\leq K$. For $i\in[s], j\in[K_i]$, let $\alpha_{ij}\in\R\setminus\{0\}$ satisfy $1/\alpha\leq \alpha_{ij}\leq \alpha$, and let 
    \begin{align*}
        G'_i = \langle \alpha_{i1}\bg_{i1}, \ldots, \alpha_{iK_i} \bg_{iK_i}\rangle .
    \end{align*}
    Then for any $f\in L^\infty(\mu)$, we have
    \begin{align*}
        \nnorm{f}^+_{G'_1, \ldots, G'_s}\asymp_{\alpha, k, K, s} \nnorm{f}^+_{G_1, \ldots, G_s}.
    \end{align*}
    {Whenever $s\geq 2$, we also have
    \begin{align*}
        \nnorm{f}_{G'_1, \ldots, G'_s}\asymp_{\alpha, k, K, s} \nnorm{f}_{G_1, \ldots, G_s}.
    \end{align*}}
\end{lemma}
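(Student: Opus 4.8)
The plan is to reduce everything to the single-group case $s=1$ and then to a single generator, exactly in the spirit of the proof of Lemma~\ref{L: seminorms of subgroups}. For the reduction in $s$: using the inductive formula \eqref{E: inductive formula} together with the permutation invariance from Lemma~\ref{L: convergence}(ii), it suffices to show that replacing a single group $G_i$ by $G_i'$ changes the seminorm by a multiplicative factor $\asymp_{\alpha,k,K,s}1$, all other groups being held fixed; iterating over $i\in[s]$ then gives the claim with an absolute constant depending only on $\alpha,k,K,s$. So we may assume $s=1$, write $G=\langle \bg_1,\dots,\bg_K\rangle$ and $G'=\langle\alpha_1\bg_1,\dots,\alpha_K\bg_K\rangle$ with $1/\alpha\le\alpha_j\le\alpha$, and aim to prove $\nnorm{f}^+_{G'}\asymp_{\alpha,k,K}\nnorm{f}^+_G$. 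By symmetry of the hypotheses under swapping $(G,\alpha_j)\leftrightarrow(G',1/\alpha_j)$, it is enough to prove one inequality, say $\nnorm{f}^+_{G'}\ll_{\alpha,k,K}\nnorm{f}^+_G$.

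The key step is a \emph{rational approximation / common refinement} argument. Pick a large integer $Q=Q(\alpha,K)$ and choose rationals $p_j/Q$ with $1/\alpha\le p_j/Q\le\alpha$ and $|\alpha_j-p_j/Q|$ as small as we like; more cleanly, introduce the auxiliary subgroup $H=\langle \bg_1,\dots,\bg_K, \alpha_1\bg_1,\dots,\alpha_K\bg_K\rangle$, which contains both $G$ and $G'$, and note that $H$ is again finitely generated with $O_K(1)$ generators. One would like to compare $\nnorm{f}^+_{G'}$ and $\nnorm{f}^+_G$ each to $\nnorm{f}^+_H$ via Lemma~\ref{L: seminorms of subgroups}; the first inequality there gives $\nnorm{f}^+_H\ll_{k}\nnorm{f}^+_{G}$ and $\nnorm{f}^+_H\ll_k\nnorm{f}^+_{G'}$ for free, but the \emph{converse} inequalities require a bounded-index condition, which fails here since $G$ need not have finite index in $H$ when the $\alpha_j$ are irrational. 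Hence the honest argument must instead run directly at the level of the degree-$1$ averages, mimicking the proof of Lemma~\ref{L: seminorms of subgroups}: expand
\[
\brac{\nnorm{f}^+_{G'}}^4=\E_{\bm,\bm'\in G'}\Bigabs{\int \Delta_{(\bm,\bm')}f\,d\mu}^2,
\]
use the shift-averaging identity \eqref{E: shifting by a subgroup} (valid whenever the relevant limits exist, by Lemma~\ref{L: convergence}) to write $\E_{\bm\in G'}=\E_{\bk\in G'}\E_{\bm\in G'}(\cdots \text{shifted by }\bk)$ where now the shift $\bk$ is taken to range over a scaled copy of $G$ lying inside $G'$ up to bounded index, then compose the integral with $(T\times T)^{-\floor{\bm}}$ and pigeonhole on the bounded number $O_{\alpha,k,K}(1)$ of possible error vectors $\floor{\bm+\bk}-\floor{\bm}-\floor{\bk}\in\Z^k$ (bounded because each coordinate of $\bm,\bk,\bm+\bk$ changes the floor by $O(1)$), landing on an expression controlled by $\nnorm{f}^+_G$ up to the Gowers--Cauchy--Schwarz cleanup of the two residual integer shifts.

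I expect the main obstacle to be the bookkeeping that makes the shift group genuinely a finite-index subgroup of $G'$ (or vice versa) after rescaling — i.e., producing, from the $\alpha_{ij}$ with $1/\alpha\le\alpha_{ij}\le\alpha$, a \emph{single} integer $Q=O_{\alpha,K}(1)$ such that $Q\bg_{ij}$ and $Q\alpha_{ij}\bg_{ij}$ generate subgroups that are mutually commensurable with index $O_{\alpha,K}(1)$, so that Lemma~\ref{L: seminorms of subgroups}'s bounded-index half applies. Concretely: the subgroup $\langle Q\bg_{ij}\rangle$ has index $\le Q$ in $\langle\bg_{ij}\rangle$, and $\langle\bg_{ij}\rangle$ has index $\le \alpha$-ish in... — this breaks for irrational $\alpha_{ij}$, so the resolution is exactly the one used in Lemma~\ref{L: seminorms of subgroups} and above: do not pass through index arguments at all for the ``hard'' direction, but instead carry out the van-der-Corput-style composition-and-pigeonhole directly, where the only quantitative inputs are (a) the number of distinct floor-error vectors is $O_{\alpha,k,K}(1)$ because $\|\bm-\text{(scaled }\bm)\|_\infty$ is bounded in terms of the generators' rescaling, and (b) Gowers--Cauchy--Schwarz (Lemma~\ref{L: GCS}) to strip the residual shifts, together with monotonicity (Lemma~\ref{L: monotonicity property}) to reinsert any auxiliary group we introduced. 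Once one direction is done, the hypotheses are symmetric under $\alpha_{ij}\mapsto 1/\alpha_{ij}$ (with $G$ and $G'$ interchanged, and $\alpha$ unchanged), so the reverse inequality follows verbatim, yielding the two-sided bound $\nnorm{f}^+_{G_1',\dots,G_s'}\asymp_{\alpha,k,K,s}\nnorm{f}^+_{G_1,\dots,G_s}$.
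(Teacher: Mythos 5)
Your reduction to $s=1$ via the inductive formula \eqref{E: inductive formula} and the permutation invariance from Lemma~\ref{L: convergence}, and the observation that the two directions are exchanged by $\alpha_{ij}\mapsto 1/\alpha_{ij}$, both match the paper and are correct. You also correctly diagnose that a common-refinement/index argument via Lemma~\ref{L: seminorms of subgroups} cannot work when the $\alpha_j$ are irrational, and you have identified the right floor-approximation $\abs{\floor{\alpha_1\bg_1 m_1+\cdots+\alpha_K\bg_K m_K}-\floor{\bg_1\floor{\alpha_1 m_1}+\cdots+\bg_K\floor{\alpha_K m_K}}}\ll 1$, which the paper uses to pigeonhole on a fixed error vector $\br\in\Z^k$ and a subset $E\subseteq\Z^K$.

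The genuine gap is the step that actually converts the average over $G'$ into one over $G$. After the pigeonhole you land (up to composing with $(T\times T)^{-\br}$) on an expression involving iterates $\bg_1\floor{\alpha_1 m_1}+\cdots+\bg_K\floor{\alpha_K m_K}$ averaged over $\um\in\Z^K$; this is \emph{not} yet a seminorm over $G$, because $(\floor{\alpha_1 m_1},\dots,\floor{\alpha_K m_K})$ does not range over all of $\Z^K$. You try to bridge this with a shift-averaging identity mimicking Lemma~\ref{L: seminorms of subgroups}, but that move does not apply: there is no scaled copy of $G$ inside $G'$ of bounded index when $\alpha_j\notin\Q$, and a shift-average $\E_{\bk\in G'}\E_{\bm\in G'}(\cdots)$ keeps you inside $G'$. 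The missing idea is a \emph{change of variables}: set $n_j = \floor{\alpha_j m_j}$. Since $1/\alpha\le\alpha_j\le\alpha$, each integer $n_j$ is attained by at most $O_\alpha(1)$ values of $m_j\in\Z$, so after substitution the $\Z^K$-average in $\um$ becomes a $\Z^K$-average in $\un$ carrying a nonnegative, $O_{\alpha,K}(1)$-bounded weight $c_{\un}$; because the integrand $\bigabs{\int\Delta_{(\cdot,\cdot)}f\,d\mu}^2$ is nonnegative, this weight can simply be dropped (or added) at the cost of a multiplicative $O_{\alpha,K}(1)$ factor. This gives $(\nnorm{f}^+_{G'})^4\ll_{\alpha,k,K}(\nnorm{f}^+_G)^4$ directly — no van der Corput, no Gowers--Cauchy--Schwarz cleanup, and no shift-averaging is needed; the only cleanup is the pigeonholed error vector $\br$, which is removed by the single composition with $(T\times T)^{-\br}$. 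Without the bounded-to-one change of variables, your outline does not close the argument.
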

\begin{proof}
We only prove the first statement, as the second follows similarly. 

We prove the forward inequality $\nnorm{f}^+_{G'_1, \ldots, G'_s}\ll_{\alpha, K, s} \nnorm{f}^+_{G_1, \ldots, G_s}$ for $s=1$, and the higher degree case will follow from the inductive formula \eqref{E: inductive formula} and the fact that we can permute the subgroups thanks to Lemma \ref{L: convergence}. The converse inequality then follows by inverting the roles of subgroups $G_i, G_i'$. We let all the constants depend on $\alpha, K, s$.
   
    Recall that 
    \begin{align*}
        \brac{\nnorm{f}_{G'}^+}^4 &= \E_{\bm, \bm'\in \langle \alpha_1 \bg_1, \ldots, \alpha_K \bg_K\rangle}\abs{\int \Delta_{(\bm, \bm')}f\, d\mu}^4\\
        &= \E_{\substack{\um, \um'\in\Z^K}} \abs{\int \Delta_{(\alpha_1 \bg_1 m_1 + \cdots + \alpha_K \bg_K m_K, \alpha_1 \bg_1 m'_1 + \cdots + \alpha_K \bg_K m'_K)}f\, d\mu}^4\\
        &= \int \abs{\E_{\um\in\Z^K}(T\times T)^{\floor{\alpha_1 \bg_1 m_1 + \cdots + \alpha_K \bg_K m_K}}f\otimes \overline{f}}^2\; d(\mu\times\mu),
    \end{align*}
where we are dropping the indices $i$ for clarity (here, $[\pm M]$ denotes the appropriate integer interval). 
    We observe first that 
    \begin{align}\label{E: integer part approximation}
        \abs{\floor{\alpha_1 \bg_1 m_1 + \cdots + \alpha_K \bg_K m_K} - \floor{\bg_1 \floor{\alpha_1 m_1} + \cdots + \bg_K \floor{\alpha_K m_K}}}\ll 1,
    \end{align}
    and so by the pigeonhole principle, there exist $\br\in\Z^k$ and a set $E\subset\Z^K$ such that
    \begin{multline*}
        \brac{\nnorm{f}_{G'}^+}^4 \ll \int \abs{\E_{\um\in\Z^K}1_E(\um) \cdot (T\times T)^{\floor{\bg_1 \floor{\alpha_1 m_1} + \cdots + \bg_K \floor{\alpha_K m_K}}+\br}f\otimes \overline{f}}^2\; d(\mu\times\mu)
    \end{multline*}
    ($E$ consists of those tuples for which the difference between two tuples in \eqref{E: integer part approximation} is $\br$).
    By composing with $(T\times T)^{-\br}$, we can assume that $\br = \mathbf{0}$.
   Each integer takes the form $\floor{\alpha_i m_i}$ for  at most $O(1)$ many $m_i\in\Z$, and hence 
        \begin{align*}
        \brac{\nnorm{f}_{G'}^+}^4 \ll \int \abs{\E_{\um\in\Z^K}c_{\um} \cdot (T\times T)^{\floor{\bg_1 m_1 + \cdots + \bg_K m_K}}f\otimes \overline{f}}^2\; d(\mu\times\mu)
    \end{align*}
    for some nonnegative $O(1)$-bounded weight $c_{\um}$. The right-hand side can then be expanded as 
    \begin{align*}
         \E_{\substack{\um, \um'\in\Z^K}} c_{\substack{\um, \um'}} \cdot \abs{\int \Delta_{(\bg_1 m_1 + \cdots + \bg_K m_K, \bg_1 m'_1 + \cdots + \bg_K m'_K)}f}^2\; d\mu
    \end{align*}
    for some nonnegative, $O(1)$-bounded weight $c_{\um, \um'}$. The weight can be removed by non-negativity at the cost of losing a factor $O(1)$, and the result follows.
     \end{proof}

Last but not least, the two families of seminorms that we have defined, those with and without plus, are related to each other as follows.
\begin{lemma}\label{L: plus vs normal seminorm}
    Let $s\in\N$ and $G_1,\ldots, G_{s+1}\subseteq\R^k$ be finitely generated subgroups. For every $f\in L^\infty(\mu)$, we have
    \begin{align*}
        \nnorm{f}_{G_1, \ldots, G_s}\leq\nnorm{f}_{G_1, \ldots, G_s}^+\leq\nnorm{f}_{G_1, \ldots, G_{s+1}}.
    \end{align*}
\end{lemma}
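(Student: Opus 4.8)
The plan is to establish the two inequalities separately; each unfolds the definitions and uses the Cauchy--Schwarz inequality exactly once, the only delicate point being the passage to the limit, which is licensed throughout by Lemma~\ref{L: convergence}.

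\smallskip

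\emph{The inequality $\nnorm{f}_{G_1, \ldots, G_s}\leq\nnorm{f}_{G_1, \ldots, G_s}^+$.} By construction $\nnorm{f}_{G_1, \ldots, G_s}^{2^s} = \langle (f)_\ueps\rangle_{G_1, \ldots, G_s}$ is a nonnegative real number, so squaring gives
\[
    \nnorm{f}_{G_1, \ldots, G_s}^{2^{s+1}}
    = \abs{\,\E_{\bm_1, \bm_1'\in G_1}\cdots \E_{\bm_s, \bm_s'\in G_s}\int \Delta_{(\bm_1, \bm_1'), \ldots, (\bm_s, \bm_s')}f\, d\mu\,}^2 .
\]
I would rewrite the iterated average as a single average over a F\o lner sequence on $G_1\times G_1\times\cdots\times G_s\times G_s$ (Lemma~\ref{L: convergence}(iii)), apply the finitary Cauchy--Schwarz inequality at each finite stage, and let the scale tend to infinity; this bounds the right-hand side by $\E_{\bm_1, \bm_1'\in G_1}\cdots\E_{\bm_s, \bm_s'\in G_s}\bigabs{\int \Delta_{(\bm_1, \bm_1'), \ldots, (\bm_s, \bm_s')}f\, d\mu}^2$, which is exactly $(\nnorm{f}_{G_1, \ldots, G_s}^+)^{2^{s+1}}$. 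Taking $2^{s+1}$-th roots finishes this half.

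\smallskip

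\emph{The inequality $\nnorm{f}_{G_1, \ldots, G_s}^+\leq\nnorm{f}_{G_1, \ldots, G_{s+1}}$.} Here I would first invoke the inductive formula \eqref{E: inductive formula} (with $s+1$ groups and $s'=s$) to write
\[
    \nnorm{f}_{G_1, \ldots, G_{s+1}}^{2^{s+1}}
    = \E_{\bm_1, \bm_1'\in G_1}\cdots \E_{\bm_s, \bm_s'\in G_s}\nnorm{\Delta_{(\bm_1, \bm_1'), \ldots, (\bm_s, \bm_s')}f}_{G_{s+1}}^2 .
\]
The crucial elementary fact is that $\nnorm{g}_{G_{s+1}}^2 \geq \bigabs{\int g\, d\mu}^2$ for every $g\in L^\infty(\mu)$: indeed $\nnorm{g}_{G_{s+1}}^2 = \int \bigabs{\E_{\bm\in G_{s+1}}T^{\floor{\bm}}g}^2\, d\mu \geq \bigabs{\int \E_{\bm\in G_{s+1}}T^{\floor{\bm}}g\, d\mu}^2 = \bigabs{\int g\, d\mu}^2$, using the Cauchy--Schwarz inequality against the constant function $1$ on the probability space $(X,\mu)$, the continuity of $\int(\cdot)\, d\mu$ on $L^2(\mu)$, and the fact that each $T^{\floor{\bm}}$ preserves $\mu$. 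Applying this with $g = \Delta_{(\bm_1, \bm_1'), \ldots, (\bm_s, \bm_s')}f$ for each fixed tuple and then averaging (the termwise inequality survives in the limit, all limits in question existing by Lemma~\ref{L: convergence}), the right-hand side of the last display is at least $\E_{\bm_1, \bm_1'\in G_1}\cdots\E_{\bm_s, \bm_s'\in G_s}\bigabs{\int \Delta_{(\bm_1, \bm_1'), \ldots, (\bm_s, \bm_s')}f\, d\mu}^2 = (\nnorm{f}_{G_1, \ldots, G_s}^+)^{2^{s+1}}$. Taking $2^{s+1}$-th roots completes the argument.

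\smallskip

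\emph{Expected main obstacle.} There is no obstacle of substance: both halves are bookkeeping plus a single Cauchy--Schwarz. The only thing to be scrupulous about is the interplay between averaging and the remaining operations---taking absolute values before averaging, upgrading a pointwise inequality to one between averages, and knowing that the limit defining $\nnorm{\cdot}^+$ exists---but all of this is precisely what Lemma~\ref{L: convergence} (with the bounded convergence theorem) supplies, so no new ideas are needed.
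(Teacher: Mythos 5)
Your proof is correct and takes essentially the same route as the paper: for the first inequality both arguments invoke Cauchy--Schwarz in the form $\abs{\E X}^2\leq\E\abs{X}^2$, and for the second both use the inductive formula \eqref{E: inductive formula} together with the observation $\nnorm{g}_{G_{s+1}}^2\geq\bigabs{\int g\,d\mu}^2$ (the paper phrases this as inserting an extra average over $G_{s+1}$ and then applying Cauchy--Schwarz, which is the same inequality read in the other direction). The only cosmetic difference is that the paper first proves the $s=1$ case and appeals to the inductive formula to lift it, whereas you apply the inductive formula to $\nnorm{f}_{G_1,\ldots,G_{s+1}}$ directly and then use the pointwise bound; this is an equivalent presentation.
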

\begin{proof}
    The first inequality follows from a straightforward application of the Cauchy-Schwarz inequality. For the second inequality, we prove the case $s=1$, and the case $s>1$ will follow from the inductive formula \eqref{E: inductive formula}. By definition, we have 
    \begin{align*}
        (\nnorm{f}_{G_1}^+)^4 = \E_{\bm_1, \bm_1'\in G_1}\abs{\int \Delta_{(\bm_1, \bm_1')}f\,d\mu}^2.
    \end{align*}
    For every subgroup $G_2$, we can introduce an extra averaging over $G_2$ so that
    \begin{align*}
        \int \Delta_{(\bm_1, \bm_1')}f\,d\mu = \E_{\bm_2\in G_2}\int T^{\floor{\bm_2}}\Delta_{(\bm_1, \bm_1')}f\,d\mu, 
    \end{align*}
    and hence
    \begin{align*}
        (\nnorm{f}_{G_1}^+)^4 = \E_{\bm_1, \bm_1'\in G_1}\abs{\E_{\bm_2\in G_2}\int T^{\floor{\bm_2}}\Delta_{(\bm_1, \bm_1')}f\,d\mu}^2.
    \end{align*}
    By the Cauchy-Schwarz inequality, we have
    \begin{align*}
        (\nnorm{f}_{G_1}^+)^4 \leq \E_{\bm_1, \bm_1'\in G_1}\int\abs{\E_{\bm_2\in G_2} T^{\floor{\bm_2}}\Delta_{(\bm_1, \bm_1')}f\,d\mu}^2,
    \end{align*}
    and since $\int\abs{\E_{\bm_2\in G_2} T^{\floor{\bm_2}}\Delta_{(\bm_1, \bm_1')}f\,d\mu}^2 = \nnorm{\Delta_{(\bm_1, \bm_1')}f}_{G_2}^2$,
    we get the claimed inequality from the inductive formula \eqref{E: inductive formula}.
\end{proof}
 
 \subsection{Dual functions and sequences}\label{SS: dual functions}
 To generalized box seminorms we can assign a notion of dual functions that naturally extends dual functions for Host-Kra seminorms. Let $s\in\N$ and $\{0,1\}^s_* = \{0,1\}^s\setminus\{\underline{0}\}$. For a function $f\in L^\infty(\mu)$ and a finitely generated subgroup $G\subseteq\R^k$ we define the \textit{level-$s$ dual function of $f$ with respect to $G$} to be
\begin{align*}
    \CD_{s, G}(f) := \E_{\bm_1, \bm_1', \ldots, \bm_s, \bm_{s'}\in G_1}\prod_{\ueps\in\{0,1\}^s_*}\CC^{|\ueps|}T^{\floor{\bm_1^{\eps_1}}-\floor{\bm_1}+\cdots + \floor{\bm_s^{\eps_s}}-\floor{\bm_s}} f;
\end{align*}
by Lemma \ref{L: convergence}, the iterated limit exists and equals a single limit. Thus, for instance, 
\begin{align*}
    \CD_{1, G}(f) &= \E_{\bm_1, \bm_1'\in G}T^{\floor{\bm'_1}-\floor{\bm_1}}\overline{f}\\
    \CD_{2, G}(f) &= \E_{\bm_1, \bm_1',\bm_2,\bm_2'\in G}T^{\floor{\bm'_1}-\floor{\bm_1}}\overline{f}\cdot T^{\floor{\bm'_2}-\floor{\bm_2}}\overline{f}\cdot T^{\floor{\bm'_1}-\floor{\bm_1}+\floor{\bm'_2}-\floor{\bm_2}}f.
\end{align*}
Dual functions satisfy the identity
\begin{align}\label{dual identity}
    \nnorm{f}_{s, G}^{2^s} = \int f \cdot \CD_{s,G}(f)\, d\mu,
\end{align}
which justifies their name.

If $G = \langle \b \rangle$ for some $\b\in\R^k$, we simply write $\CD_{s, \b}(f) = \CD_{s, G}(f)$; similarly, if $\b = \be_j$, then we may write $\CD_{s, T_j}(f) = \CD_{s,\b}(f)$. In the latter case, the dual functions take the better known form
\begin{align*}
    \CD_{s, T_j}(f) := \lim_{M\to\infty}\E_{\um\in [\pm M]^s}\prod_{\ueps\in\{0,1\}^s_*}\CC^{|\ueps|}T_j^{\ueps\cdot\um}f;
\end{align*}
the average over $[\pm M]^s$ can also be replaced by the one-sided average over $[M]^s$.

For $d\in\N$, we denote
 \begin{equation}\label{E:Ds}
 \FD_d(T_{1},\dots,T_{k}):= \{(T_j^n \CD_{s, T_j}(f))_{n\in\Z}\colon\; \norm{f}_{L^\infty(\mu)}\leq 1,\ j\in[k],\ s\in[d]\}
 \end{equation}
to be the set of sequences of 1-bounded functions coming from dual functions of degree up to $d$ for the transformations $T_1,\ldots, T_k$. We call each $(T_j^n \CD_{s, T_j}(f))_{n\in\Z}$ a \textit{level-$s$ dual sequence}. When there is no confusion regarding the system $(X,\mathcal{X},\mu,T_{1},\dots,T_{k})$, we write $\FD_d:=\FD_d(T_{1},\dots,T_{k})$ for short.

\section{Quantitative concatenation of generalized box seminorms}\label{S: concatenation}
Our goal now is to present concatenation results for averages of generalized box seminorms. Contrary to the original concatenation work of Tao-Ziegler \cite{TZ16} which introduced the notion of concatenation of seminorms for the first time, our result is fully quantitative. Its proof closely follows (and significantly borrows from) recent quantitative concatenation arguments in the discrete setting \cite{KKL24a, Kuc23, Pel20, PP19}. Once again, we fix a system $(X, \CX, \mu, T_1, \ldots, T_k)$ throughout this section.

One of the key tools in our argument is the following lemma which allows us to concatenate averages along sumsets. 
\begin{lemma}[Sumset lemma]\label{L: sumset lemma}
    Let $G,G'\subseteq\R^k$ be finitely generated groups. Then 
    \begin{align*}
        \E_{\bm'\in G'}\E_{\bm\in G}T^{\floor{\bm-\bm'}}f = \E_{\bm\in G+G'}T^{\floor{\bm}}f
    \end{align*}
    for every $f\in L^\infty(\mu)$.
\end{lemma}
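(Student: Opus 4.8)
The plan is to use that the map $\Psi\colon G\times G'\to G+G'$, $\Psi(\bm,\bm')=\bm-\bm'$, is a surjective group homomorphism, and to turn the double average on the left into a single average over $G+G'$ by pushing a F\o lner sequence forward along $\Psi$, essentially repeating the coset-decomposition bookkeeping from the proof of Lemma~\ref{L: convergence}.

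First I would record two convergence facts. The right-hand side $\E_{\bm\in G+G'}T^{\floor{\bm}}f$ exists (along any F\o lner sequence on the finitely generated group $G+G'$) by the $s=1$ case of Lemma~\ref{L: convergence}. For the left-hand side, the inner averages and the resulting iterated limit exist, and the iterated limit equals the single limit $\E_{(\bm,\bm')\in G\times G'}T^{\floor{\bm-\bm'}}f$ over any F\o lner sequence on $G\times G'$; this follows by running the proof of Lemma~\ref{L: convergence} with the iterate $T^{\floor{\bm-\bm'}}$ in place of $T^{\floor{\bm}-\floor{\bm'}}$, the only ingredient being norm convergence for commuting transformations along real linear iterates, which accommodates $\bm-\bm'$ just as well. (One cannot simply quote Lemma~\ref{L: convergence}(iii) verbatim here because $\floor{\bm-\bm'}\neq\floor{\bm}-\floor{\bm'}$ in general; this is the one genuinely new wrinkle compared with that lemma.) It then suffices to prove $\E_{(\bm,\bm')\in G\times G'}T^{\floor{\bm-\bm'}}f=\E_{\bm\in G+G'}T^{\floor{\bm}}f$.

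The core of the argument is group-theoretic. The kernel of $\Psi$ is $\{(\bk,\bk)\colon \bk\in G\cap G'\}$, which is finitely generated since it is a subgroup of the finitely generated free abelian group $G$. Lifting a $\Z$-basis of $G+G'$ through the surjection $\Psi$ yields a subgroup $L\subseteq G\times G'$ with $\Psi(L)=G+G'$ and $L\cap\ker\Psi$ trivial, and comparing ranks shows $L\oplus\ker\Psi$ has finite index in $G\times G'$. I would then split the $(G\times G')$-average over the finitely many cosets of $L\oplus\ker\Psi$ (the set $E$ of coset representatives), and on each coset average separately over a F\o lner sequence on $L$ and over one on $\ker\Psi$, exactly as in the passage through the isomorphism $\phi$ and the reweighting by $|E|$ in the proof of Lemma~\ref{L: convergence}. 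Since $\Psi$ is trivial on $\ker\Psi$ and restricts to an isomorphism from $L$ onto $G+G'$, the image is, up to an $o_{M\to\infty}(1)$ error and a translation by $\Psi$ of a coset representative (harmless by F\o lner-invariance of the already-constructed limit), an average of $T^{\floor{\bm}}f$ along a F\o lner sequence on $G+G'$; invoking the existence and translation-invariance of that limit then yields the identity.

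The main obstacle, apart from the floor mismatch flagged above, is purely bookkeeping around F\o lner sequences: one must verify that the preimages of Euclidean boxes under the $\Q$-linear coordinate identifications of $G$, $G'$, $L$, and $\ker\Psi$ with integer lattices are F\o lner sequences (dilating polytopes), and that the reweighting factors $|E|\cdot|(L\oplus\ker\Psi)\cap(I_M-\be)|/|I_M|$ tend to $1$. Both are handled by exactly the computations already carried out in the proof of Lemma~\ref{L: convergence}, so the write-up can largely cite that proof rather than reproduce it.
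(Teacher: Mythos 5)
Your proof is correct and reaches the identity by the same underlying mechanism as the paper's --- decompose $G\times G'$ in terms of $G+G'$ and $G_0=G\cap G'$, then compare F\o lner averages --- but the bookkeeping is organized differently. The paper works from the right-hand side: since that limit exists, it inserts an artificial $G_0$-average by translation invariance, applies the Fubini principle \cite[Lemma 1.1]{BL15} to pass to a single limit over $(G+G')\times G_0$, and transports the F\o lner sequence across a hands-on decomposition $G=G_0\oplus K\oplus E$, $G'=G_0\oplus K'\oplus E'$; a final Fubini step then yields the iterated limit on the left. You work from the left: upgrade the iterated limit to a single limit over $G\times G'$, decompose over cosets of the finite-index subgroup $L\oplus\ker\Psi$ obtained by lifting a basis of $G+G'$ through $\Psi$, and kill the $\ker\Psi$-average. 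Your packaging via $\Psi$ and its kernel is a cleaner encoding of the same arithmetic as the paper's manual induction producing $K,K',E,E'$, and both routes need the same two non-obvious inputs: F\o lner-sequence comparisons in the style of the proof of Lemma~\ref{L: convergence}, and the observation --- which you flag explicitly and the paper leaves implicit --- that the underlying linear-iterate convergence accommodates iterates of the form $T^{\floor{\bm-\bm'}}$ even though Lemma~\ref{L: convergence} is stated for $T^{\gamma_1\floor{\bm_1}+\cdots+\gamma_s\floor{\bm_s}}$. One small economy available to you: once the coset decomposition is in place, the existence of the single limit over $G\times G'$ drops out as a by-product (each coset reduces to a shifted $G+G'$ average, which exists), so the preliminary re-run of Lemma~\ref{L: convergence} for the iterate $T^{\floor{\bm-\bm'}}$ is not strictly necessary.
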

\begin{proof}
    Let $G_0 = G\cap G'$; {we note that $G_0$ is finitely generated itself since it is a subgroup of a finitely generated abelian group $G$}. Our first goal is to decompose
    \begin{align}\label{E: direct sum decomposition}
        G = G_0 \oplus K \oplus E \quad \textrm{and}\quad G' = G_0 \oplus K' \oplus E'
    \end{align}
    for some subgroups $K\subseteq G,\; K'\subseteq G'$ and finite sets $E\subseteq G,\; E'\subseteq G'$. If $G/G_0$ is finite, then there exists a finite set $E\subseteq G$ for which $G = G_0 \oplus E$. Otherwise $G/G_0$ is infinite, and since it is finitely generated,\footnote{A quotient $G/G_0$ of a finitely generated group $G$ is finitely generated since if $\bg_1, \ldots, \bg_n$ generate $G$, then their cosets generate $G/G_0$.} there must exist $\bg_1\in G$ such that $\langle\bg_1\rangle$ intersects $G_0$ trivially. Next, either $G/(G_0 \oplus \langle\bg_1\rangle)$ is finite, and so $G = G_0 \oplus \langle\bg_1\rangle \oplus E$ for some finite set $E$, or there exists $\bg_2\in G$ such that $\langle\bg_2\rangle$ intersects $G_0 \oplus \langle\bg_1\rangle$ trivially. Iterating this way, we find a subgroup $K\subseteq G$ and a set $E\subseteq G$ satisfying the claim; similarly for $G'$.

    As a consequence of \eqref{E: direct sum decomposition}, we can decompose each $\bm\in G+G'$ uniquely as
    \begin{align}\label{E: sumset decomposition}
        \bm = \bg + \bk + \be - \bk' - \be'
    \end{align}
    for {$\bg\in G_0,$} $\bk\in K$, $\be\in E$, $\bk'\in K'$, $\be'\in E'$. 

    Given any $\bg'\in G_0$, we have 
    \begin{align*}
        \E_{\bm\in G+G'}T^{\floor{\bm}}f = \E_{\bm\in G+G'}T^{\floor{\bm-\bg'}}f = \E_{\bg'\in G_0} \E_{\bm\in G+G'}T^{\floor{\bm-\bg'}}f,
    \end{align*}
    which by the Fubini-type principle \cite[Lemma 1.1]{BL15} equals
    \begin{align}\label{E: Fubini}
        \E_{(\bm, \bg')\in (G+G')\times G_0}T^{\floor{\bm-\bg'}}f.
    \end{align}
    Given the uniqueness of the decomposition \eqref{E: sumset decomposition}, for every F{\o}lner sequence $(I_M)_M$ on $(G+G')\times G_0$, we define
    \begin{align*}
        \tilde{I}_M = \{(\bg + \bk + \be, \bg'+\bk'+\be')\in G\times G':\; (\bg + \bk + \be - \bk' - \be', \bg')\in I_M\};
    \end{align*}
    it is an easy exercise to see that $(\tilde{I}_M)_M$ is a F{\o}lner sequence on $G\times G'$. Hence \eqref{E: Fubini} can be rephrased 
    \begin{align*}
        \E_{(\bm,\bm')\in G\times G'}T^{\floor{\bm-\bm'}}f = \E_{\bm'\in G'}\E_{\bm\in G}T^{\floor{\bm-\bm'}}f,
    \end{align*}
    the latter equality being once again a consequence of \cite[Lemma 1.1]{BL15}.
\end{proof}

In order to illustrate better how concatenation works, we start with a simple concatenation result for degree 1 seminorms.
\begin{lemma}[Concatenation of degree 1 norms]\label{L: concatenation degree 1}
Let $k\in\N$, $I$ be a finite indexing set and $G_i\subseteq\R^k$ be a finitely generated subgroup for each $i\in I$. Then for every 1-bounded function $f\in L^\infty(\mu)$, we have
\begin{align*}
    \brac{\E_{i\in I}\nnorm{f}^+_{G_i}}^{16}\ll_k \E_{i, i'\in I}(\nnorm{f}^+_{G_i+G_{i'}})^4.
\end{align*}
\end{lemma}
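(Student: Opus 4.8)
The idea is a standard "double Cauchy–Schwarz plus van der Corput" argument to extract sumsets, carried out in the $+$-seminorm world because the $+$-seminorm has the Fubini/permutation flexibility we need. First I would write out the definition
\[
    \brac{\nnorm{f}^+_{G_i}}^4 = \E_{\bm, \bm'\in G_i}\abs{\int \Delta_{(\bm,\bm')}f\, d\mu}^2 = \int \abs{\E_{\bm\in G_i}(T\times T)^{\floor{\bm}}(f\otimes\overline f)}^2\, d(\mu\times\mu),
\]
so that writing $F = f\otimes\overline f$ on $X\times X$ (a $1$-bounded function), we have $\brac{\nnorm{f}^+_{G_i}}^4 = \norm{\E_{\bm\in G_i}(T\times T)^{\floor{\bm}}F}_{L^2(\mu\times\mu)}^2$. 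Thus $\E_{i\in I}\nnorm{f}^+_{G_i}$ is, up to the fourth-root bookkeeping, an average of $L^2$-norms of the projection-like operators $P_i := \E_{\bm\in G_i}(T\times T)^{\floor{\bm}}$ applied to $F$. By Hölder (or Cauchy–Schwarz applied twice) in the variable $i$, it suffices to prove
\[
    \brac{\E_{i\in I}\norm{P_i F}_{L^2}^{1/2}}^8 \ll_k \E_{i,i'\in I}\norm{P_{i+i'}F}_{L^2}^2,
\]
where $G_{i}+G_{i'}$ enters through $P_{i+i'} = \E_{\bm\in G_i+G_{i'}}(T\times T)^{\floor{\bm}}$; the Sumset Lemma (Lemma~\ref{L: sumset lemma}) applied to the product system is exactly what identifies $\E_{\bm'\in G_{i'}}\E_{\bm\in G_i}(T\times T)^{\floor{\bm-\bm'}}$ with $P_{i+i'}$ after composing with a suitable shift to flip the sign.

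**Key steps in order.** (1) Pass to $F = f\otimes\overline f$ on the product system and rewrite $\nnorm{f}^+_{G_i}$ as above. (2) Use the trivial bound $\norm{P_iF}_{L^2}\le 1$ together with Cauchy–Schwarz in $i$ to reduce the $16$th power on the left to controlling $\E_{i\in I}\norm{P_iF}_{L^2}^2$ raised to a suitable power — more precisely, $\brac{\E_{i}\nnorm{f}^+_{G_i}}^{16} = \brac{\E_i \norm{P_iF}_{L^2}^{1/2}}^{16}\le \brac{\E_i\norm{P_iF}_{L^2}^2}^{?}$ by Hölder; the exponents are chosen so that two applications of Cauchy–Schwarz land us at $\brac{\E_i\norm{P_iF}_{L^2}^2}^4$. (3) Expand $\E_i\norm{P_iF}_{L^2}^2 = \E_i\angle{P_iF, P_iF}$. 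Here I would insert the $T\times T$ action, use van der Corput / Cauchy–Schwarz in the $i$ average to replace $\E_i\angle{P_iF,P_iF}$ by something like $\brac{\E_{i,i'}\abs{\angle{P_iF, P_{i'}F}}^2}^{1/2}$ — this is the step that brings two indices $i,i'$ into play. (4) Recognize $\angle{P_iF,P_{i'}F} = \angle{P_{i'}^*P_i F, F}$ and, since $(T\times T)$ is measure preserving so $P_{i'}^* = \E_{\bm'\in G_{i'}}(T\times T)^{-\floor{\bm'}}$, the composition $P_{i'}^*P_i = \E_{\bm'\in G_{i'}}\E_{\bm\in G_i}(T\times T)^{\floor{\bm}-\floor{\bm'}}$. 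Up to the $O_k(1)$-many error terms coming from comparing $\floor{\bm}-\floor{\bm'}$ with $\floor{\bm-\bm'}$ (handled by pigeonholing into $O_k(1)$ residue classes, exactly as in the proof of Lemma~\ref{L: seminorms of subgroups}), this is $\E_{\bm\in G_i+G_{i'}}(T\times T)^{\floor{\bm}}$ composed with a bounded shift, i.e. essentially $P_{i+i'}$ after conjugating by $(T\times T)^{\br}$. (5) Therefore $\abs{\angle{P_iF,P_{i'}F}} \ll_k \norm{P_{i+i'}F}_{L^2}$ (Cauchy–Schwarz, using $\norm{F}_{L^2}\le 1$), and squaring and averaging over $i,i'$ gives $\E_{i,i'}\abs{\angle{P_iF,P_{i'}F}}^2 \ll_k \E_{i,i'}\norm{P_{i+i'}F}_{L^2}^2 = \E_{i,i'}\brac{\nnorm{f}^+_{G_i+G_{i'}}}^4$. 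Chaining the inequalities from (2)–(5) and matching exponents yields the claimed bound with the $16$th power on the left.

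**Main obstacle.** The genuine technical nuisance — and the reason the exponent is $16$ rather than something smaller — is the mismatch between $\floor{\bm}-\floor{\bm'}$ and $\floor{\bm-\bm'}$: the Sumset Lemma is stated for $\floor{\bm-\bm'}$, but the inner product $\angle{P_iF,P_{i'}F}$ naturally produces $\floor{\bm}-\floor{\bm'}$. Resolving this requires pigeonholing the $O_k(1)$ possible values of the integer vector $\floor{\bm}-\floor{\bm'}-\floor{\bm-\bm'}$, composing with the resulting bounded shift (which costs an $O_k(1)$ factor), and only then invoking Lemma~\ref{L: sumset lemma} on the product system. A secondary point of care is bookkeeping the exponents through the two Cauchy–Schwarz steps so that everything collapses cleanly to the $16$th power; this is routine but must be done honestly. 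The rest — the reduction to the product system, the van der Corput in the $i$-variable, and the final Cauchy–Schwarz — are all standard manipulations that parallel the finitary arguments of \cite{Kuc23, KKL24a}.
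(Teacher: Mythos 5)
Your plan is, in operator clothing, the same argument the paper gives: pass to the product system with $F=f\otimes\overline f$, double the index $i$ by Cauchy--Schwarz, identify the composed group averages with an average over $G_i+G_{i'}$ via the Sumset Lemma after pigeonholing the $O_k(1)$ floor discrepancies, and finish with one more Cauchy--Schwarz. Steps (1), (4) and (5) are sound, and the identity $\brac{\nnorm{f}^+_{G_i}}^4=\norm{P_iF}_{L^2(\mu\times\mu)}^2$ is correct. The gap is in step (3), which is precisely the step that is supposed to produce the two indices. The inequality you invoke there, $\E_{i\in I}\langle P_iF,P_iF\rangle \ll \brac{\E_{i,i'\in I}\abs{\langle P_iF,P_{i'}F\rangle}^2}^{1/2}$, is false in general: van der Corput controls $\norm{\E_i v_i}^2$ by $\E_{i,i'}\langle v_i,v_{i'}\rangle$, not the diagonal average $\E_i\norm{v_i}^2$, and a diagonal average cannot be dominated by off-diagonal correlations. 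Concretely, even respecting the structure $v_i=P_iF$: take $P_i$ orthogonal projections onto mutually orthogonal lines and $F=n^{-1/2}(e_1+\cdots+e_n)$ with $|I|=n$; then the left-hand side is $1/n$ while the right-hand side is $n^{-3/2}$.

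The correct move --- and it is what the paper does --- is to exploit the common $F$ \emph{before} doubling the index: write $\norm{P_iF}^2=\langle P_i^*P_iF,F\rangle$ (in the paper this corresponds to composing the inner integral with $T^{\floor{\bm'}}$ so that one factor is the untranslated $f\otimes\overline f$) and then apply Cauchy--Schwarz against the fixed function $F$. This gives $\E_i\norm{P_iF}^2\le\norm{\E_iP_i^*P_iF}_{L^2(\mu\times\mu)}$, whose square is $\E_{i,i'}\langle P_i^*P_iF,P_{i'}^*P_{i'}F\rangle$ --- correlations of the \emph{composed} operators, not of $P_iF$ with $P_{i'}F$. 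It is to these compositions (e.g.\ $P_{i'}P_i^*$, and then $P_{i+i'}P_i$) that the Sumset Lemma plus floor pigeonholing is applied; note also that the pigeonholing must be performed at a stage where the relevant integrals are manifestly nonnegative (as in the paper's proof), since restricting to a level set of the floor error otherwise leaves an indicator weight that cannot simply be dropped. With step (3) replaced by this Cauchy--Schwarz against $F$, your steps (4)--(5) and the exponent bookkeeping go through and you recover essentially the paper's proof; as written, however, the index-doubling step would fail.
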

In this and subsequent results in this section, the implicit constants and $O(1)$ terms do not depend on the system. Also, we frequently use Lemma \ref{L: convergence} (without mentioning it explicitly) to swap the order of limits, replace an iterated limit by a single limit, or do the opposite. 

\begin{proof}
Let $\veps = \E_{i\in I}\nnorm{f}^+_{G_i}$. Composing the emerging integral with $T^{\floor{\bm'}}$ and expanding the definition of the seminorm, we get 
\begin{align*}
    \E_{i\in I}\E_{\bm, \bm'\in G_i}\abs{\int f\cdot T^{\floor{\bm'} - \floor{\bm}}\overline{f}\, d\mu}^2 =\veps^4.
\end{align*}
Passing to the product system allows us to get rid of the absolute value, giving
\begin{align*}
    \int f\otimes\overline{f}\cdot \E_{i\in I} \E_{\bm,\bm'\in G_i} (T\times T)^{\floor{\bm'} - \floor{\bm} }\overline{f}\otimes f\, d(\mu\times\mu) = \veps^4.
\end{align*}
An application of the Cauchy-Schwarz inequality duplicates the group $G_i$, giving
\begin{align*}
    \E_{i,i'\in I}\E_{\bm,\bm'\in G_i}\E_{\bm'', \bm'''\in G_{i'}} \int (T\times T)^{\floor{\bm'} - \floor{\bm}}f\otimes\overline{f}\cdot  (T\times T)^{\floor{\bm'''}-\floor{\bm''}}\overline{f}\otimes f\, d(\mu\times\mu) \geq \veps^8.
\end{align*}
We now compose the integral with $(T\times T)^{-(\floor{\bm'} - \floor{\bm})}$ and  change the order of summations, so that
\begin{align*}
    \E_{i,i'\in I}\int\E_{\bm,\bm'\in G_i}\E_{\bm'', \bm'''\in G_{i'}}  f\otimes\overline{f}\cdot  (T\times T)^{\floor{\bm'''}-\floor{\bm''}-\floor{\bm'}+\floor{\bm}}\overline{f}\otimes f\, d(\mu\times\mu) \geq \veps^8.
\end{align*}
By going back from the product system to the original system, we can easily see that each integral above is real and nonnegative, and so we employ
the pigeonhole principle to choose $\br\in\Z^k$ with $|\br|\leq k$ independently of $i, i',\bm,\bm',\bm'',\bm'''$ so that
\begin{align*}
    \E_{i,i'\in I}\int f\otimes \overline{f}\cdot \E_{\bm,\bm'\in G_i}\E_{\bm'',\bm'''\in G_{i'}}(T\times T)^{\floor{(\bm-\bm'-\bm''+\bm''')}+\br}\overline{f}\otimes f\, d(\mu\times \mu) \gg_k \veps^8.
\end{align*}
Splitting the double average $\E_{\bm,\bm'\in G_i}$ as $\E_{\bm'\in G_i}\E_{\bm\in G_i}$ (and similarly for the average over $G_{i'}$) and changing variables $\bm_0 = \bm - \bm'$ as well as $\bm_1 = \bm''-\bm'''$ (for each fixed $\bm'\in G_i, \bm'''\in G_i$), we get
\begin{align*}
    \E_{i,i'\in I}\int f\otimes \overline{f}\cdot \E_{\bm_0\in G_i}\E_{\bm_1\in G_{i'}}(T\times T)^{\floor{\bm_0 - \bm_1}+\br}\overline{f}\otimes f\, d(\mu\times \mu) \gg_k \veps^8.
\end{align*}
By Lemma \ref{L: sumset lemma}, we can replace the iterated limit by a single limit over the sumset $G_i+G_{i'}$, getting
\begin{align*}
    \E_{i,i'\in I}\int f\otimes \overline{f}\cdot \E_{\bm\in G_i+G_{i'}}(T\times T)^{\floor{\bm}+\br}\overline{f}\otimes f\, d(\mu\times \mu) \gg_k \veps^8.
\end{align*}
The claim follows from one more application of the Cauchy-Schwarz inequality followed by composing the resulting integral with $(T\times T)^{-\br}$.
\end{proof}

\begin{example}
Let $G_{h_1, h_2} = \langle \alpha \be_1 h_1+\beta \be_2 h_2\rangle$ for all $(h_1, h_2)\in\Z^2$. Then Lemma \ref{L: concatenation degree 1} gives
\begin{align*}
    \brac{\limsup_{H\to\infty}\E_{h_1, h_2\in[\pm H]}{\nnorm{f}^+_{G_{h_1, h_2}}}}^{16}\ll_k \limsup_{H\to\infty}\E_{h_1, h_1', h_2, h'_2\in[\pm H]}{\nnorm{f}^+_{G_{h_1, h_2}+G_{h_1', h_2'}}}.
\end{align*}

For all $(h_1, h_1', h_2, h_2')\in\Z^4$, the group $G_{h_1, h_2}+G_{h_1', h_2'}$ contains the subgroup
\begin{align*}
    G'_{h_1, h_1', h_2, h_2'} = \langle \alpha \be_1 (h_1 h_2'-h_1' h_2),\beta\be_2 (h_1 h_2'-h_1' h_2)\rangle,
\end{align*}
and hence 
\begin{align*}
    \limsup_{H\to\infty}\brac{\E_{h_1, h_2\in[\pm H]}\nnorm{f}^+_{G_{h_1, h_2}}}^{16}\ll_k \limsup_{H\to\infty}\E_{h_1, h_1', h_2, h'_2\in[\pm H]}{\nnorm{f}^+_{G'_{h_1, h_1', h_2, h_2'}}}
\end{align*}
by Lemma \ref{L: seminorms of subgroups}. Importantly, the subgroups $G'_{h_1, h_1', h_2, h_2'}$ are finite index subgroups of $\langle \alpha \be_1, \beta\be_2\rangle$ for all $(h_1, h_1', h_2, h_2')\in\Z^4$ except a zero density set of tuples satisfying $h_1 h_2'-h_1' h_2 = 0$, and so by Lemma \ref{L: seminorms of subgroups}, we have
\begin{align*}
    \lim_{H\to\infty}\E_{h_1, h_2\in[\pm H]}\nnorm{f}^+_{G_{h_1, h_2}} = 0    \quad \textrm{whenever}\quad \nnorm{f}^+_{\langle \alpha \be_1, \beta\be_2\rangle} = 0.
\end{align*} 
 Later on, we will quantify this reasoning, showing that in fact
\begin{align*}
    \brac{\limsup_{H\to\infty}\E_{h_1, h_2\in[\pm H]}\nnorm{f}^+_{G_{h_1, h_2}}}^{O(1)}\ll_k \nnorm{f}^+_{\langle \alpha \be_1, \beta\be_2\rangle}.
\end{align*} 
\end{example}

The argument becomes more complicated when we deal with averages of generalized box seminorms of degree $s>1$. In handling this general case, we will iteratively use the lemma below, which is an ergodic version of \cite[Lemma 2.2]{Kuc23} and \cite[Lemma 6.1]{KKL24a}.
Its proof relies on rather elementary maneuvers that involve the Gowers-Cauchy-Schwarz inequality, multiple applications of the inductive formula \eqref{E: inductive formula} and a simple change of variables.

\begin{lemma}\label{L: concatenation lemma}
    Let $s\in\N$, $I$ be a finite indexing set and $G_i, K_{1i}, \ldots, K_{si}$ be finitely generated subgroups of $\R^k$ for each $i\in I$. For each 1-bounded function $f\in L^\infty(\mu)$, we have
    \begin{align*}
        \brac{{\E_{i\in I}\nnorm{f}_{G_i, K_{1i}, \ldots, K_{si}}^{+}}}^{2^{3s+4}} \ll_{k,s} \E_{i, i'\in I}\brac{\nnorm{f}_{K_{1i}, \ldots, K_{si}, K_{1 i'}, \ldots, K_{s i'}, G_i+G_{i'}}^{+}}^{2^{2s+2}}.
    \end{align*}
\end{lemma}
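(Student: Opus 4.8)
The estimate is the ergodic counterpart of the finitary concatenation lemmas \cite[Lemma~2.2]{Kuc23} and \cite[Lemma~6.1]{KKL24a}, and I would prove it by adapting their argument, the only genuinely new feature being the pigeonhole device for comparing $\floor{\bm}-\floor{\bm'}$ with $\floor{\bm-\bm'}$ already used in the proof of Lemma~\ref{L: concatenation degree 1}. \emph{Step~1: reduction to a box-seminorm statement.} Using $\nnorm{g}^{+}_{H_1,\ldots,H_r}=\nnorm{g\otimes\overline{g}}^{1/2}_{H_1,\ldots,H_r}$ and the identity $\Delta_{(\bm,\bm')}(g\otimes\overline{g})=(\Delta_{(\bm,\bm')}g)\otimes\overline{\Delta_{(\bm,\bm')}g}$, I would pass to the product system, put $F:=f\otimes\overline{f}$ (a $1$-bounded function), and apply the power--mean inequality to the concave function $x\mapsto x^{1/2^{s+2}}$ on $[0,1]$. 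Writing $N_i:=\nnorm{F}^{2^{s+1}}_{G_i,K_{1i},\ldots,K_{si}}\in[0,1]$ and $M_{i,i'}:=\nnorm{F}^{2^{2s+1}}_{K_{1i},\ldots,K_{si},K_{1i'},\ldots,K_{si'},G_i+G_{i'}}\in[0,1]$ (these are the raw box-seminorm averages along $s+1$ and $2s+1$ groups), the claim reduces to
\begin{align*}
\Bigbrac{\E_{i\in I}N_i}^{2^{2s+2}}\ \ll_{k,s}\ \E_{i,i'\in I}M_{i,i'},
\end{align*}
which one proves for an arbitrary $1$-bounded $F$ on an arbitrary system (so the product structure plays no further role); for $s=0$ this is exactly Lemma~\ref{L: concatenation degree 1}.

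\emph{Step~2: the concatenation.} By permutation invariance (Lemma~\ref{L: convergence}(ii)), repeated use of the inductive formula \eqref{E: inductive formula}, and the generalized-box-seminorm form of the dual identity \eqref{dual identity}, one writes $N_i=\int F\cdot\CD_{K_{1i},\ldots,K_{si},G_i}(F)\,d\mu$ with $\CD_{K_{1i},\ldots,K_{si},G_i}(F)$ a $1$-bounded average of products of translates of $F$. Since $F$ does not depend on $i$, the Cauchy--Schwarz inequality gives
\begin{align*}
\Bigbrac{\E_iN_i}^2\le\Bignnorm{}\quad\text{— more precisely}\quad\Bigbrac{\E_iN_i}^2\le\norm{\E_i\CD_{K_{1i},\ldots,K_{si},G_i}(F)}_{L^2(\mu)}^2=\E_{i,i'}\int\CD_{K_{1i},\ldots,K_{si},G_i}(F)\,\overline{\CD_{K_{1i'},\ldots,K_{si'},G_{i'}}(F)}\,d\mu,
\end{align*}
which introduces the second index $i'$ together with a second independent copy of the $K$- and $G$-directions. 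Expanding the two dual functions and separating, in each factor, the $G$-direction shift from the $K$-direction shifts, the $K$-shifts of the two factors realize box-seminorm directions $K_{1i},\ldots,K_{si}$ and $K_{1i'},\ldots,K_{si'}$, while the two $G$-direction contributions — after a change of variables (composing the integral with an appropriate translate) and the pigeonhole replacement of $\floor{\bm}-\floor{\bm'}$ by $\floor{\bm-\bm'}$ plus an $O_k(1)$-bounded error, which is legitimate at a stage where the integral is manifestly real and nonnegative, as is arranged in Lemma~\ref{L: concatenation degree 1} by descending from the product system to the base system — are merged by the sumset lemma (Lemma~\ref{L: sumset lemma}) into a single average over $G_i+G_{i'}$.

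\emph{Step~3: conclusion.} A final application of the (finitary) Gowers--Cauchy--Schwarz inequality (Lemmas~\ref{L: GCS finitary} and \ref{L: GCS}, the former allowing the functions to depend on the averaging variables) removes the residual bounded shifts and absorbs the factors not yet in box-seminorm shape, bounding the previous expression by $O_{k,s}(1)\cdot(\E_{i,i'}M_{i,i'})^{c}$ for some $c=c(s)>0$; raising $(\E_iN_i)^2\le O_{k,s}(1)\cdot(\E_{i,i'}M_{i,i'})^{c}$ to a suitable power and using $M_{i,i'}\le1$ yields $(\E_iN_i)^{2^{2s+2}}\ll_{k,s}\E_{i,i'}M_{i,i'}$. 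Feeding $F=f\otimes\overline{f}$ back into Step~1 finishes the proof, and the factor $2^{s+2}$ lost in the power--mean step of Step~1 together with the $2^{2s+2}$ here account for the exponent $2^{3s+4}=2^{s+2}\cdot2^{2s+2}$ in the statement.

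\emph{Main difficulty.} The delicate part is the execution of Steps~2--3: one has to sequence the Gowers--Cauchy--Schwarz, change-of-variables and pigeonhole manipulations so that (i) every indicator of a pigeonholed set is discarded only inside an integral that is provably nonnegative — which, exactly as in Lemma~\ref{L: concatenation degree 1}, is achieved by passing from the product system back to the base system — and (ii) the seminorm that finally emerges has precisely the directions $K_{1i},\ldots,K_{si},K_{1i'},\ldots,K_{si'},G_i+G_{i'}$ and no spurious extra ones, all while keeping the rather intricate exponent bookkeeping straight. This is the point at which ``the transition to the ergodic setting requires nontrivial modifications of certain technical bits'' of the finitary arguments of \cite{Kuc23, KKL24a}.
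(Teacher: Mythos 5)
Your proposal is correct and follows essentially the same route as the paper's proof: the power-mean reduction to the raw box-seminorm averages, the Cauchy--Schwarz doubling of the index $i$, the recognition of a Gowers--Cauchy--Schwarz inner product over the $K$-directions, the pigeonhole device for the integer-part errors (carried out inside a nonnegative integral arranged via the product-system structure), the sumset lemma, and a final Cauchy--Schwarz. The paper organizes Step~2 by composing the integral with $\tilde{T}^{-\floor{\bm}}$ immediately after expanding the seminorm, rather than packaging things as a pairing $\int F\cdot\CD(F)\,d\tilde\mu$ with a generalized dual function, but this is a notational difference; your exponent bookkeeping $2^{3s+4}=2^{s+2}\cdot 2^{2s+2}$ agrees with the paper's count of $2^{s+2}\cdot 2\cdot 2^{2s}\cdot 2$.
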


\begin{proof} 
    Let $\veps = {\E_{i\in I}\nnorm{f}_{G_i, K_{1i}, \ldots, K_{si}}^{+}}$ and $K_i = K_{1i}\times \cdots\times K_{si}$. 
    Using the H\"older inequality, expanding the definition of the seminorm and passing to the product system, we obtain
    \begin{align*}
        \E_{i\in I} \E_{\substack{\bk, \bk'\in K_i}}\E_{\bm, \bm'\in G_i}\int \Delta_{(\bk_1, \bk'_1), \ldots, (\bk_s, \bk'_s), (\bm, \bm')}\tilde{f}\; d\tilde{\mu} \geq
        \veps^{2^{s+2}}, 
    \end{align*}
    where $\tilde{T}=T\times T,\; \tilde{f}=f\otimes \overline{f}$ and $\tilde{\mu}=\mu\times\mu$. Composing the integral with $\tilde{T}^{-\floor{\bm}}$ and changing the order of summations, we obtain
    \begin{multline*}
        \int \tilde{f}\cdot \E_{i\in I} \E_{\substack{\bk, \bk'\in K_i}}\E_{\bm, \bm'\in G_i}\tilde{T}^{\floor{\bm'} - \floor{\bm}}\overline{\tilde{f}}\\ \prod_{\ueps\in\{0,1\}^{s}_{\ast}}\CC^{|\ueps|} T^{\floor{\bk_1^{\eps_1}}+\cdots + \floor{\bk_s^{\eps_s}}}(\tilde{f}\cdot \tilde{T}^{\floor{\bm'} - \floor{\bm}}\overline{\tilde{f}}) \; d\tilde{\mu} \geq
        \veps^{2^{s+2}},
    \end{multline*}
    where we recall that $\{0,1\}^s_* = \{0,1\}^s\setminus\{\underline{0}\}$.
We then apply the Cauchy-Schwarz inequality to remove $\Tilde{f}$ and change the order of summation back, so that
\begin{multline*}
    \E_{i, i'\in I} \E_{\substack{\bm, \bm'\in G_i,\\ \bm'',\bm'''\in G_{i'}}}\E_{\substack{\bk,\bk' \in K_i,\\ \bk'', \bk''' \in K_{i'}}}\int \tilde{T}^{\floor{\bm'} - \floor{\bm}}\overline{\tilde{f}} \cdot \tilde{T}^{\floor{\bm'''}-\floor{\bm''}}{\tilde{f}}\\
    \prod_{\ueps\in\{0,1\}^{s}_{\ast}}\CC^{|\ueps|} T^{\floor{\bk_1^{\eps_1}}+\cdots + \floor{\bk_s^{\eps_s}}}(\tilde{f}\cdot \tilde{T}^{\floor{\bm'} - \floor{\bm}}\overline{\tilde{f}})\\
    \prod_{\ueps'\in\{0,1\}^{s}_{\ast}}\CC^{|\ueps'|} T^{\floor{{\bk''_1}^{\eps'_1}}+\cdots + \floor{{\bk''_s}^{\eps'_s}}}(\overline{\tilde{f}}\cdot \tilde{T}^{\floor{\bm'''}-\floor{\bm''}}\tilde{f})\; d\tilde{\mu} \geq
        \veps^{2^{s+3}},
\end{multline*}
where
\begin{align*}
    \bk_i^{\eps_i} = \begin{cases}
\bk_i,\; &\eps_i = 0\\ 
\bk'_i,\; &\eps_i = 1
\end{cases},\quad \textrm{and}\quad 
    {\bk''_i}^{\eps'_i} = \begin{cases}
\bk''_i,\; &\eps'_i = 0\\ 
\bk'''_i,\; &\eps'_i = 1
\end{cases}.
\end{align*}

The crucial observation is that for each fixed $i,i'\in I$, $\bm,\bm'\in G_i,$ and $\bm'',\bm'''\in G_{i'}$, the average
\begin{multline*}
\E_{\substack{\bk,\bk' \in K_i,\\ \bk'', \bk''' \in K_{i'}}}\int \tilde{T}^{\floor{\bm'} - \floor{\bm}}\overline{\tilde{f}} \cdot \tilde{T}^{\floor{\bm'''}-\floor{\bm''}}{\tilde{f}}\cdot
    \prod_{\ueps\in\{0,1\}^{s}_{\ast}}\CC^{|\ueps|} T^{\floor{\bk_1^{\eps_1}}+\cdots + \floor{\bk_s^{\eps_s}}}(\tilde{f}\cdot \tilde{T}^{\floor{\bm'} - \floor{\bm}}\overline{\tilde{f}})\\
    \prod_{\ueps'\in\{0,1\}^{s}_{\ast}}\CC^{|\ueps'|} T^{\floor{{\bk''_1}^{\eps'_1}}+\cdots + \floor{{\bk''_s}^{\eps'_s}}}(\overline{\tilde{f}}\cdot \tilde{T}^{\floor{\bm'''}-\floor{\bm''}}\tilde{f})\; d\tilde{\mu}
\end{multline*}
is a Gowers-Cauchy-Schwarz inner product along $K_{1i}, \ldots, K_{si}, K_{1 i'}, \ldots, K_{s i'}$; more specifically, it
    can be written as 
       \begin{equation*}
    \begin{split}
     \E_{\substack{\bk,\bk' \in K_i,\\ \bk'', \bk''' \in K_{i'}}}\int \prod_{(\ueps,\ueps')\in\{0,1\}^{2s}}\CC^{|{\ueps}|+ |{\ueps'}|} \tilde{T}^{\floor{\bk_1^{\eps_1}}+\cdots + \floor{\bk_s^{\eps_s}} + \floor{{\bk''_1}^{\eps'_1}}+\cdots + \floor{{\bk''_s}^{\eps'_s}}}\tilde{g}_{\ueps, \ueps',\bm,\bm'}\, d\tilde{\mu}
       \end{split}
    \end{equation*}
    for some functions $\tilde{g}_{\ueps, \ueps',\bm,\bm'}=g_{\ueps, \ueps',\bm,\bm'}\otimes\overline{g_{\ueps, \ueps',\bm,\bm'}}$ with
    \begin{align}\label{E: 0 coordinate function}
        \tilde{g}_{\underline{{0}},\underline{{0}},\bm,\bm'} &=\tilde{T}^{\floor{\bm'} - \floor{\bm}}\overline{\tilde{f}} \cdot \tilde{T}^{\floor{\bm'''}-\floor{\bm''}}{\tilde{f}}\\
        \nonumber
        &= ({T}^{\floor{\bm'} - \floor{\bm}}\overline{{f}}\cdot \tilde{T}^{\floor{\bm'''}-\floor{\bm''}}{{f}})\otimes ({T}^{\floor{\bm'} - \floor{\bm}}{{f}}\cdot \tilde{T}^{\floor{\bm'''}-\floor{\bm''}}\overline{{f}}).
    \end{align}
    By the Gowers-Cauchy-Schwarz inequality (Lemma \ref{L: GCS}) and a change of the order of summations, we thus have
    \begin{align*}
        \E_{i, i'\in I} \E_{\substack{\bm, \bm'\in G_i,\\ \bm'',\bm'''\in G_{i'}}}\int\E_{\substack{\bk,\bk' \in K_i,\\ \bk'', \bk''' \in K_{i'}}} \Delta_{(\bk, \bk'), (\bk'', \bk''')}(\tilde{T}^{\floor{\bm'} - \floor{\bm}}\overline{\tilde{f}} \cdot \tilde{T}^{\floor{\bm'''}-\floor{\bm''}}{\tilde{f}})\; d\tilde{\mu} \geq \veps^{2^{3s+3}}. 
    \end{align*}
    The product structure of the function $\tilde{f}$ implies that each integral above is real and nonnegative (since it is a square of an integral over the original system). Composing the integral with $\tilde{T}^{-(\floor{\bm'} - \floor{\bm})}$ and moving all the summation and subtraction inside the integer part, we argue as in the proof of Lemma \ref{L: concatenation degree 1} to find $\br\in\Z^k$ such that 
    \begin{align*}
        \E_{i, i'\in I} \E_{\substack{\bm_0\in G_i,\\ \bm_1\in G_{i'}}}\E_{\substack{\bk,\bk' \in K_i,\\ \bk'', \bk''' \in K_{i'}}}\int \Delta_{(\bk, \bk'), (\bk'', \bk''')}(\overline{\tilde{f}} \cdot \tilde{T}^{\floor{\bm_1-\bm_0}+\br}{\tilde{f}})\; d\tilde{\mu} \gg_{k,s} \veps^{2^{3s+3}}.
    \end{align*}
    Lemma \ref{L: sumset lemma} and a rearrangement gives
    \begin{align*}
        \E_{i, i'\in I} \E_{\substack{\bk,\bk' \in K_i,\\ \bk'', \bk''' \in K_{i'}}}\int \Delta_{(\bk, \bk'), (\bk'', \bk''')}\overline{\tilde{f}} \cdot \E_{\substack{\bm\in G_i+G_{i'}}}\Delta_{(\bk, \bk'), (\bk'', \bk''')}\tilde{T}^{\floor{\bm}+\br}{\tilde{f}}\; d\tilde{\mu} \gg_{k,s} \veps^{2^{3s+3}}.
    \end{align*}
    A final application of the Cauchy-Schwarz inequality gives
    \begin{align*}
        \E_{i, i'\in I} \E_{\substack{\bk,\bk' \in K_i,\\ \bk'', \bk''' \in K_{i'}}}\E_{\substack{\bm,\bm'\in G_i+G_{i'}}}\int \Delta_{(\bk, \bk'), (\bk'', \bk'''), (\bm,\bm')}{\tilde{f}}\; d\tilde{\mu} \gg_{k,s} \veps^{2^{3s+4}},
    \end{align*}
    and the result follows upon moving back to the original system.
\end{proof}

By repeatedly applying Lemma \ref{L: concatenation lemma}, we derive the following concatenation result, which is an ergodic version of \cite[Proposition 2.3]{Kuc23} and \cite[Proposition 6.2]{KKL24a}.
\begin{proposition}[Concatenation of box seminorms, version I]\label{P: concatenation for general groups}
    Let $s\in\N$, $I$ be a finite indexing set and $G_{1i}, \ldots, G_{si}\subseteq \R^k$ be finitely generated subgroups for each $i\in I$. For all 1-bounded functions $f\in L^\infty(\mu)$, we have
    \begin{align*}
        \brac{\E_{i\in I}\nnorm{f}_{G_{1i}, \ldots, G_{si}}^+}^{O_{s}(1)} \ll_{k,s} \E_{\substack{i_\ueps\in I:\\ \ueps\in\{0,1\}^s}}\nnorm{f}^+_{\substack{\{G_{j i_\ueps}+G_{j i_{\ueps'}}:\ j\in[s],\ \ueps,\ueps'\in\{0,1\}^s\\ \mathrm{with}\ (\epsilon_1, \ldots,  \epsilon_{s-j}) = (\epsilon'_1, \ldots,  \epsilon'_{s-j}),\ \epsilon_{s+1-j} =1,\ \epsilon'_{s+1-j}=0\}}}.
    \end{align*}
\end{proposition}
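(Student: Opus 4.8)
The plan is to prove the proposition by induction on $s$, iterating Lemma~\ref{L: concatenation lemma}. The base case $s=1$ is Lemma~\ref{L: concatenation degree 1}: it gives $(\E_{i\in I}\nnorm{f}^+_{G_{1i}})^{16}\ll_k\E_{i,i'\in I}(\nnorm{f}^+_{G_{1i}+G_{1i'}})^4$, and since $f$ is $1$-bounded we have $(\nnorm{f}^+_{G_{1i}+G_{1i'}})^4\le\nnorm{f}^+_{G_{1i}+G_{1i'}}$; relabelling $i=i_{(1)}$, $i'=i_{(0)}$ and using commutativity of $+$ gives the claim.

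For $s\ge2$ I would run an $s$-round procedure, the $t$-th round eliminating the group $G_{(s+1-t)\,\bullet}$, i.e. processing $G_{s\,\bullet},G_{(s-1)\,\bullet},\dots,G_{1\,\bullet}$ in this order. Upon entering round $t$ the quantity $(\E_{i\in I}\nnorm{f}^+_{G_{1i},\dots,G_{si}})^{O_s(1)}$ will have been bounded by $\ll_{k,s}$ an average over variables $i_{\ueps}\in I$ with $\ueps\in\{0,1\}^{t-1}$ of $O_s(1)$-th powers of seminorms $\nnorm{f}^+$ along finitely many subgroups, of which exactly $2^{t-1}$ are the current copies $G_{(s+1-t)\,i_{\ueps}}$ of the $(s+1-t)$-th group while the remaining slots are already sumsets. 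In round $t$ I would apply the natural multi-direction strengthening of Lemma~\ref{L: concatenation lemma} — proved by the very same manipulations (H\"older, the Gowers--Cauchy--Schwarz inequality, a change of variables, the Sumset Lemma~\ref{L: sumset lemma}) run in parallel over all $2^{t-1}$ copies but with a \emph{single} fresh index variable, i.e. splitting $i_{\ueps}\mapsto(i_{(\ueps,0)},i_{(\ueps,1)})$. This turns each copy $G_{(s+1-t)\,i_\ueps}$ into the sumset $G_{(s+1-t)\,i_{(\ueps,1)}}+G_{(s+1-t)\,i_{(\ueps,0)}}$, duplicates every other slot (a sumset slot reappearing twice, in each copy with all of its indices carrying the same value of the new coordinate), and keeps the running exponent of size $O_s(1)$. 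After $s$ rounds all slots are sumsets, the index variables are the $i_{\ueps}$ for $\ueps\in\{0,1\}^s$, and one final use of $1$-boundedness collapses the remaining powers to $1$ on the right, giving an inequality of the stated shape with exponent $O_s(1)$ and constants $O_{k,s}(1)$ independent of the system.

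It remains to match the subgroups this produces with those in the statement. Tracking the bookkeeping, for each $j\in[s]$ the slots coming from the $j$-th group are precisely $G_{j\,i_{\ueps}}+G_{j\,i_{\ueps'}}$ where $\ueps,\ueps'$ agree on the first $s-j$ coordinates, have $\epsilon_{s+1-j}=1$ and $\epsilon'_{s+1-j}=0$, and agree on the remaining coordinates. This is a subcollection of the one in the statement, where the coordinates $\epsilon_{s+2-j},\dots,\epsilon_s$ and $\epsilon'_{s+2-j},\dots,\epsilon'_s$ are additionally allowed to vary independently; hence by the monotonicity property (Lemma~\ref{L: monotonicity property}) the bound along the produced subgroups dominates the one we must prove. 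I expect the bookkeeping to be the main obstacle: setting up the multi-direction form of Lemma~\ref{L: concatenation lemma} so that exactly one fresh index variable enters per round (keeping the final index set equal to $I^{\{0,1\}^s}$ rather than something larger), and then unwinding the emerging tree of sumsets to recognize the explicit combinatorial description. This mirrors the finitary bookkeeping of \cite[Proposition~2.3]{Kuc23} and \cite[Proposition~6.2]{KKL24a}.
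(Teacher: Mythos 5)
Your strategy differs structurally from the paper's. The paper applies Lemma~\ref{L: concatenation lemma} verbatim, $1+2+\cdots+2^{s-1}=2^s-1$ times, organized as a depth-$s$ binary tree: each application concatenates exactly \emph{one} group at a \emph{fixed} outer index, and between applications the inductive formula \eqref{E: inductive formula} is used to reinterpret the expression so that the next group to be concatenated sits inside a genuine degree-reduced seminorm of a suitably differenced function. Because it only ever concatenates one group at a time, the paper never needs anything beyond the stated form of Lemma~\ref{L: concatenation lemma}; the price is that each application also duplicates the already-formed sumset slots, which is how the full set of pairs $(\ueps,\ueps')$ with $(\epsilon_1,\dots,\epsilon_{s-j})=(\epsilon'_1,\dots,\epsilon'_{s-j})$ but $\epsilon_{s+1-j}\ne\epsilon'_{s+1-j}$ arises. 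Your plan instead runs $s$ ``batch'' rounds, each eliminating all $2^{t-1}$ current copies of one group simultaneously via a single fresh bit; this produces the strict subcollection of sumsets where the low coordinates of $\ueps$ and $\ueps'$ also agree, and you correctly note that monotonicity (Lemma~\ref{L: monotonicity property}) closes the gap to the stated bound. Both the index-set count $I^{\{0,1\}^s}$ and the exponent budget $O_s(1)$ work out in your scheme.

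The one genuine gap is the ``multi-direction strengthening'' of Lemma~\ref{L: concatenation lemma}, which you invoke but do not prove; saying it follows ``by the very same manipulations run in parallel'' understates what has to be verified. The proof of Lemma~\ref{L: concatenation lemma} composes with a single $\tilde T^{-\floor{\bm}}$, applies one Cauchy--Schwarz (duplicating the single index $i$ into $i,i'$), then one Gowers--Cauchy--Schwarz over the $K$-slots, then a change of variables $\bm-\bm'\mapsto \bm_0$, $\bm''-\bm'''\mapsto\bm_1$ feeding into the sumset lemma. With $2^{t-1}$ groups to concatenate at once, (a) the ``zero-coordinate'' function in the GCS step is a product over all $\ueps$ of shifted copies of $\tilde f$, not a single pair, so the nonnegativity argument and the assignment of $f_{\underline 0}$ need rechecking; (b) the $\tilde T^{-\floor{\bm}}$-composition, the pigeonholing of $O(1)$ error vectors $\br$, and the sumset lemma all have to be performed once per group in parallel, and one must verify that pigeonholing over $2^{t-1}$ error vectors and then extracting $2^{t-1}$ sumsets from a single integral is actually legitimate in one pass; and (c) the exponent grows as a tower in the round number (the seminorm in round $t$ has $\asymp 2^{t-1}$ groups to concatenate plus the sumset slots, and Lemma~\ref{L: concatenation lemma} already costs a factor $2^{3s+4}$ in exponent). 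All of this is plausible, and the net result would indeed be a slightly cleaner subcollection, but it is a new lemma requiring its own careful proof. The paper avoids these issues entirely by paying with $2^s-1$ bookkeeping-heavy applications of the lemma it already has.
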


\begin{proof}
    The proof of Proposition \ref{P: concatenation for general groups} relies on a gradual concatenation of the ``unconcatenated'' subgroups $G_{1i}, \ldots, G_{si}$ in the average using Lemma \ref{L: concatenation lemma}. We repeatedly use the inductive formula \eqref{E: inductive formula} in order to reinterpret the average in such a way  that successive applications of Lemma \ref{L: concatenation lemma} concatenate the subgroups $G_{1i}, \ldots, G_{si}$ one by one. 
    
    In order to simplify the notation, we shall write in this proof $(\bm,\bk)$ for what we would typically write as $(\bm,\bm')$; we also assume that $\bm_j^{\ueps}, \bk_j^{\ueps}$ are always elements of $G_{j i_\ueps}$ and write $\E_{\bm_j^{\ueps}, \bk_j^\ueps} = \E_{(\bm_j^{\ueps}, \bk_j^\ueps)\in G_{j i_\ueps}\times G_{j i_\ueps}}$.
    We let all the constants depend on $k,s$, noting however that the powers of $\veps$ will not depend on $k$.

       \smallskip
    \textbf{The case $s=2$:}
    \smallskip
    
    For illustrative purposes, we first prove  the statement of Proposition \ref{P: concatenation for general groups} for degree 2 box seminorms.  Let $\veps = \E_{i\in I}\nnorm{f}^+_{G_{1i}, G_{2 i}}$. We first want to concatenate the group $G_{2 i}$. By Lemma \ref{L: concatenation lemma}, we have
\begin{align}\label{E: concatenation s=2 1}
    \E_{i_0, i_1\in I}{(\nnorm{f}^+_{G_{1 i_0}, G_{1 i_1}, G_{2 i_0}+G_{2 i_1}})}^{2^4}\gg\veps^{O(1)},
\end{align}
and so the group $G_{2 i}$ indeed got concatenated. The price we paid for this is that while applying Lemma \ref{L: concatenation lemma}, the unconcatenated group $G_{1i}$ doubled into $G_{1 i_0}$ and $G_{1 i_1}$. The next goal is therefore to concatenate these groups one by one. Using the inductive formula for box seminorms and the 1-boundedness of $f$, we deduce from \eqref{E: concatenation s=2 1} that\footnote{The only reason why we keep the seminorm $\nnorm{\Delta_{(\bm^1_1, \bk^1_1)}f}^+_{G_{1 i_0}, G_{2 i_0} + G_{2 i_1}}$ raised to the power $2^s$ is so that the averages $\E_{\bm^1_1, \bk^1_1}\E_{i_0\in I}\brac{\nnorm{\Delta_{(\bm^1_1, \bk^1_1)}f}^+_{G_{1 i_0}, G_{2 i_0} + G_{2 i_1}}}^{2^3}$ are guaranteed to converge by Lemma~\ref{L: convergence}. Without the power of $2^3$ being there, the convergence would not be guaranteed. }
\begin{align*}
    \E_{i_1\in I}\E_{\bm^1_1, \bk^1_1}\E_{i_0\in I}\brac{\nnorm{\Delta_{(\bm^1_1, \bk^1_1)}f}^+_{G_{1 i_0}, G_{2 i_0} + G_{2 i_1}}}^{2^3}\gg\veps^{O(1)}. 
\end{align*}
Applying Lemma \ref{L: concatenation lemma} to each\footnote{Technically, to use Lemma \ref{L: concatenation lemma} as stated, we need first to use 1-boundedness to bound $\E_{i_0\in I}\brac{\nnorm{\Delta_{(\bm^1_1, \bk^1_1)}f}^+_{G_{1 i_0}, G_{2 i_0} + G_{2 i_1}}}^{2^3}\leq \E_{i_0\in I}{\nnorm{\Delta_{(\bm^1_1, \bk^1_1)}f}^+_{G_{1 i_0}, G_{2 i_0} + G_{2 i_1}}}$.}
$\E_{i_0\in I}\brac{\nnorm{\Delta_{(\bm^1_1, \bk^1_1)}f}^+_{G_{1 i_0}, G_{2 i_0} + G_{2 i_1}}}^{2^3}$, we deduce that
\begin{align*}
    \E_{i_{00}, i_{01}, i_1\in I}\E_{\bm^1_1, \bk^1_1}\brac{\nnorm{\Delta_{(\bm^1_1, \bk^1_1)}f}^+_{G_{1 i_{00}}+G_{1 i_{01}},  G_{2 i_{00}} + G_{2 i_1}, G_{2 i_{01}} + G_{2 i_1}}}^{2^4}\gg\veps^{O(1)}.
\end{align*}
The inductive formula for box seminorms and Lemma \ref{L: sumset lemma} once again give us 
\begin{align*}
    \E_{i_{00}, i_{01}\in I}\E_{\substack{\bm^{00}_1, \bk^{00}_1,\\ \bm^{01}_1, \bk^{01}_1}} \E_{i_1\in I}\brac{\nnorm{\Delta_{(\bm^{00}_1+\bm^{01}_1, \bk^{00}_1+\bk^{01}_1)}f}^+_{G_{1 i_1},  G_{2 i_{00}} + G_{2 i_1}, G_{2 i_{01}} + G_{2 i_1}}}^{2^4}\gg\veps^{O(1)}.
\end{align*}
We then apply Lemma \ref{L: concatenation lemma} for the last time, this time to each expression $$\E_{i_1\in I}\brac{\nnorm{\Delta_{(\bm^{00}_1+\bm^{01}_1, \bk^{00}_1+\bk^{01}_1)}f}^+_{G_{1 i_1},  G_{2 i_{00}} + G_{2 i_1}, G_{2 i_{01}} + G_{2 i_1}}}^{2^4},$$ obtaining
\begin{align*}
    \E_{i_{00}, i_{01}, i_{10}, i_{11}\in I}\E_{\substack{\bm^{00}_1, \bk^{00}_1,\\ \bm^{01}_1, \bk^{01}_1}} \brac{\nnorm{\Delta_{(\bm^{00}_1+\bm^{01}_1, \bk^{00}_1+\bk^{01}_1)}f}^+_{\substack{G_{1 i_{10}}+G_{1 i_{11}},  G_{2 i_{00}} + G_{2 i_{10}}, G_{2 i_{00}} + G_{2 i_{11}},\\  G_{2 i_{01}} + G_{2 i_{10}}, G_{2 i_{01}} + G_{2 i_{11}}}}}^{2^{6}}\gg\veps^{O(1)}.    
\end{align*}
The inductive formula for box seminorms, 1-boundedness, and Lemma \ref{L: sumset lemma} then imply that
\begin{align*}
    \E_{i_{00}, i_{01}, i_{10}, i_{11}\in I}\nnorm{f}^+_{\substack{G_{1 i_{00}}+ G_{1 i_{01}}, G_{1 i_{10}}+G_{1 i_{11}},  G_{2 i_{00}} + G_{2 i_{10}},\\ G_{2 i_{00}} + G_{2 i_{11}},  G_{2 i_{01}} + G_{2 i_{10}}, G_{2 i_{01}} + G_{2 i_{11}}}}\gg\veps^{O(1)},
\end{align*}
finishing the $s=2$ case.

 We note that the proof of Proposition~\ref{P: concatenation for general groups} for $s=2$ relies on 3 applications of Lemma~\ref{L: concatenation lemma}. More generally, the proof for an arbitrary $s\geq 2$ will require $1+2+\cdots + 2^{s-1} = 2^s-1$ applications of Lemma~\ref{L: concatenation lemma}.

    \smallskip
    \textbf{The general case:}
    \smallskip

We move on to prove the general case. Starting with $\veps = \E_{i\in I}\nnorm{f}_{G_{1i}, \ldots, G_{si}}^+$, we apply Lemma~\ref{L: concatenation lemma} to bound
\begin{align}\label{E: concatenation s>2 1}
    \E_{i_0, i_1\in I}\brac{\nnorm{f}^+_{G_{1 i_0}, \ldots, G_{(s-1) i_0}, G_{1 i_1}, \ldots, G_{(s-1) i_1}, G_{s i_0}+G_{s i_1}}}^{2^{2s}}\gg\veps^{O(1)}.
\end{align}
We note that we passed from having $s$ unconcatenated groups indexed by $i$ to $s-1$ unconcatenated groups indexed by $i_0$ and another $s-1$ unconcatenated groups indexed by $i_1$ (in addition to the concatenated group $G_{s i_0}+G_{s i_1}$). Thus, the total number of groups almost doubled, but what matters is that for each index $i_0, i_1$, the number of unconcatenated groups with this index went down by 1. At the next stage of the argument, we will apply Lemma~\ref{L: concatenation lemma} twice to concatenate $G_{(s-1) i_0}$ first and then $G_{(s-1) i_1}$. As a consequence, the two indices $i_0, i_1$ will be replaced by four indices $i_{00}, i_{01}, i_{10}, i_{11}$, and for each of them we will have exactly $s-2$ unconcatenated groups. 
We will continue in this manner: at each stage, the number of indices $i_\ueps$ will double, but the number of unconcatenated groups with each index $i_\ueps$ will decrease by 1. Eventually, on the $s$-th step, we will be left with $2^{s-1}$ unconcatenated groups $G_{1 i_\ueps}$, and $2^{s-1}$ applications of Lemma~\ref{L: concatenation lemma} will allow us to concatenate them all without producing any new unconcatenated groups. This will finish the argument.

This is the general strategy; let us see in detail what happens at the second stage, i.e., after obtaining the bound \eqref{E: concatenation s>2 1}. Using the inductive formula for box seminorms, we can rephrase \eqref{E: concatenation s>2 1} as
\begin{align*}
    \E_{i_1\in I} \E_{\substack{\bm^1_1, \ldots, \bm^1_{s-1},\\ \bk^1_1, \ldots, \bk^1_{s-1}}}\E_{i_0\in I}\brac{\nnorm{\Delta_{(\bm^1_1, \bk^1_1), \ldots, (\bm^1_{s-1}, \bk^1_{s-1})} f}^+_{G_{1 i_0}, \ldots, G_{(s-1) i_0}, G_{s i_0}+G_{s i_1}}}^{2^{s+1}}\gg\veps^{O(1)}.
\end{align*}
 For each fixed $i_1, \bm^1_1, \ldots, \bm^1_{s-1}, \bk^1_1, \ldots, \bk^1_{s-1}$, we apply Lemma~\ref{L: concatenation lemma} separately to each average over $i_0$, obtaining 
\begin{multline*}
    \E_{i_1\in I} \E_{\substack{\bm^1_1, \ldots, \bm^1_{s-1},\\ \bk^1_1, \ldots, \bk^1_{s-1}}} \E_{i_{00}, i_{01}\in I}\\
    \brac{\nnorm{\Delta_{(\bm^1_1, \bk^1_1), \ldots, (\bm^1_{s-1}, \bk^1_{s-1})} f}^+_{\substack{G_{1 i_{00}}, \ldots, G_{(s-2) i_{00}}, G_{1 i_{01}}, \ldots, G_{(s-2) i_{01}},\\ G_{(s-1) i_{00}}+G_{(s-1) i_{01}}, G_{s i_{00}}+G_{s i_1}, G_{s i_{01}}+G_{s i_1}}}}^{2^{2s}}\gg\veps^{O(1)}.    
\end{multline*}
We rearrange the inequality above using the inductive formula for box seminorms as 
\begin{multline*}
    \E_{i_{00}, i_{01}\in I} 
    \E_{\substack{\bm^{00}_1, \ldots, \bm^{00}_{s-1},\\ \bm^{01}_1, \ldots, \bm^{01}_{s-1}}} 
    \E_{\substack{\bk^{00}_1, \ldots, \bk^{00}_{s-1},\\ \bk^{01}_1, \ldots, \bk^{01}_{s-1}}} \E_{i_1\in I}\\
    \brac{\bignnorm{\Delta_{\substack{(\bm^{00}_1, \bk^{00}_1), \ldots, (\bm^{00}_{s-2}, \bk^{00}_{s-2}),\\ (\bm^{01}_1, \bk^{01}_1), \ldots, (\bm^{01}_{s-2}, \bk^{01}_{s-2}),\\ (\bm^{00}_{s-1}+\bm^{01}_{s-1}, \bk^{00}_{s-1}+\bk^{01}_{s-1})}} f}^+_{\substack{G_{1 i_1}, \ldots, G_{(s-1) i_1},\\ G_{s i_{00}}+G_{s i_1}, G_{s i_{01}}+G_{s i_1}}}}^{2^{s+2}}\gg\veps^{O(1)}.      
\end{multline*}
in order to concatenate $G_{(s-1) i_1}$. By Lemma \ref{L: concatenation lemma} applied separately to each average over $i_1\in I$, we have
\begin{multline*}
    \E_{\substack{i_{00}, i_{01},\\ i_{10}, i_{11}\in I}}
    \E_{\substack{\bm^{00}_1, \ldots, \bm^{00}_{s-1},\\ \bm^{01}_1, \ldots, \bm^{01}_{s-1}}} 
    \E_{\substack{\bk^{00}_1, \ldots, \bk^{00}_{s-1},\\ \bk^{01}_1, \ldots, \bk^{01}_{s-1}}} \\
    \brac{\bignnorm{\Delta_{\substack{(\bm^{00}_1, \bk^{00}_1), \ldots, (\bm^{00}_{s-2}, \bk^{00}_{s-2}),\\ (\bm^{01}_1, \bk^{01}_1), \ldots, (\bm^{01}_{s-2}, \bk^{01}_{s-2}),\\ (\bm^{00}_{s-1}+\bm^{01}_{s-1}, \bk^{00}_{s-1}+\bk^{01}_{s-1})}} f}^+_{\substack{G_{1 i_{10}}, \ldots, G_{(s-2) i_{10}}, G_{1 i_{11}}, \ldots, G_{(s-2) i_{11}},\\ G_{(s-1) i_{10}}+G_{(s-1) i_{11}}, G_{s i_{00}}+G_{s i_{10}}, G_{s i_{00}}+G_{s i_{11}},\\ G_{s i_{01}}+G_{s i_{10}}, G_{s i_{01}}+G_{s i_{11}}}}}^{2^{2s+2}}\gg\veps^{O(1)}. 
\end{multline*}
An application of the inductive formula for box seminorms and 1-boundedness then gives
\begin{align*}
    \E_{\substack{i_{00}, i_{01},\\ i_{10}, i_{11}\in I}} {\nnorm{f}^+_{\substack{G_{1 i_{00}}, \ldots, G_{(s-2) i_{00}}, G_{1 i_{01}}, \ldots, G_{(s-2) i_{01}},\\ G_{1 i_{10}}, \ldots, G_{(s-2) i_{10}}, G_{1 i_{11}}, \ldots, G_{(s-2) i_{11}},\\ G_{(s-1) i_{00}}+G_{(s-1) i_{01}}, G_{(s-1) i_{10}}+G_{(s-1) i_{11}},\\ G_{s i_{00}}+G_{s i_{10}}, G_{s i_{00}}+G_{s i_{11}},\\ G_{s i_{01}}+G_{s i_{10}}, G_{s i_{01}}+G_{s i_{11}}}}}\gg\veps^{O(1)},
\end{align*}
which can be written more compactly as
\begin{align*}
   \E_{\substack{i_{00}, i_{01},\\ i_{10}, i_{11}\in I}} {\nnorm{f}^+_{\substack{\{G_{j i_\ueps}:\ \ueps\in\{0,1\}^2,\ j=1, \ldots, s-2\},\\ \{G_{(s-1) i_\ueps}+G_{(s-1) i_{\ueps'}}:\ \ueps,\ueps'\in\{0,1\}^2\ \mathrm{with}\ \epsilon_1 = \epsilon'_1,\ \epsilon_2=1, \epsilon'_2=0\},\\  
    \{G_{s i_\ueps}+G_{s i_{\ueps'}}:\ \ueps,\ueps'\in\{0,1\}^2\ \mathrm{with}\ \epsilon_1=1, \epsilon'_1=0\}}}}\gg\veps^{O(1)}.
\end{align*}

We have thus successfully concatenated all the groups $G_{(s-1) i_\ueps}$ and $G_{s i_\ueps}$. 

At the next stage, we concatenate the groups $G_{(s-2) i_\ueps}$. Applying Lemma~\ref{L: concatenation lemma} and the inductive formula for box seminorms four times like before, each time to an average over $i_{00}, i_{01}, i_{10}, i_{11}$ respectively, we arrive at the inequality
\begin{align*}
        \E_{\substack{i_{\ueps}\in I,\\ \ueps\in\{0,1\}^3}} \nnorm{f}^+_{\substack{\{G_{j i_\ueps}:\ \ueps\in\{0,1\}^3,\ j\in[s-3]\},\\ \{G_{j i_\ueps}+G_{j i_{\ueps'}}:\ j= s-2, s-1, s,\ \ueps,\ueps'\in\{0,1\}^3\\ \mathrm{with}\ (\epsilon_1, \ldots, \epsilon_{s-j}) = (\epsilon'_1, \ldots, \epsilon'_{s-j}),\ \epsilon_{s+1-j}=1,\; \epsilon'_{s+1-j}=0\}}}\gg\veps^{O(1)}.
\end{align*}
This time, we have successfully concatenated the groups $G_{(s-2) i_\ueps}$. At the next step, 8 applications of Lemma~\ref{L: concatenation lemma} and the inductive formula for box seminorms allow us to concatenate groups $G_{(s-3) i_\ueps}$. Continuing the argument like this, we arrive, after a total of 
\begin{align*}
    1 + 2 + 2^2 + \cdots + 2^{s-1} = 2^s - 1
\end{align*}
applications of Lemma~\ref{L: concatenation lemma} and the inductive formula for box seminorms, at the claimed inequality. 
\end{proof}

For applications, the following weaker but notationally lighter corollary of Proposition~\ref{P: concatenation for general groups} is sufficient. It can be seen as an ergodic version of \cite[Corollary~2.4]{Kuc23} or \cite[Corollary~6.3]{KKL24a}.

\begin{corollary}[Concatenation of box seminorms, version II]\label{C: concatenation for general groups II}
Let $s\in\N$, $t=2^s$, $I$ be a finite indexing set and $G_{1i}, \ldots, G_{si}\subseteq\R^k$ be finitely generated subgroups for each $i\in I$. For all 1-bounded functions $f\in L^\infty(\mu)$, we have
    \begin{align}\label{E: concatenation simplified}
        \brac{\E_{i\in I}{\nnorm{f}^+_{G_{1i}, \ldots, G_{si}}}}^{O_s(1)} \ll_{k,s} \E_{\substack{k_1, \ldots, k_t\in I}}\nnorm{f}^+_{\{G_{j k_{i_1}}+G_{j k_{i_2}}:\ j\in[s],\ 1\leq i_1 < i_2 \leq t\}}.
    \end{align}
\end{corollary}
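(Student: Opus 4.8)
The plan is to deduce Corollary~\ref{C: concatenation for general groups II} from Proposition~\ref{P: concatenation for general groups} by nothing more than relabeling the averaging variables and enlarging the family of directions, exploiting the monotonicity of generalized box seminorms. Fix an enumeration $\{0,1\}^s=\{\ueps^{(1)},\ldots,\ueps^{(t)}\}$ with $t=2^s$. Given a tuple $(i_\ueps)_{\ueps\in\{0,1\}^s}\in I^{\{0,1\}^s}$, set $k_r:=i_{\ueps^{(r)}}$ for $r\in[t]$; since $r\mapsto\ueps^{(r)}$ is a bijection of $[t]$ with $\{0,1\}^s$, this identifies the averaging set $\E_{i_\ueps\in I:\,\ueps\in\{0,1\}^s}$ appearing on the right of Proposition~\ref{P: concatenation for general groups} with the averaging set $\E_{k_1,\ldots,k_t\in I}$ appearing on the right of \eqref{E: concatenation simplified}. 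Note also that the left-hand side of \eqref{E: concatenation simplified} is literally the left-hand side of the bound in Proposition~\ref{P: concatenation for general groups}, so it suffices to compare the two right-hand sides.

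The key observation is that every direction group of the seminorm on the right of Proposition~\ref{P: concatenation for general groups} has the shape $G_{j i_\ueps}+G_{j i_{\ueps'}}$ for some $j\in[s]$ and some pair $\ueps,\ueps'\in\{0,1\}^s$ with $\epsilon_{s+1-j}=1$ and $\epsilon'_{s+1-j}=0$; in particular $\ueps\neq\ueps'$, so the corresponding labels $r,r'$ (those with $\ueps=\ueps^{(r)}$, $\ueps'=\ueps^{(r')}$) are distinct, and by commutativity of the group operation $G_{j i_\ueps}+G_{j i_{\ueps'}}=G_{j k_r}+G_{j k_{r'}}$ lies in the family $\{G_{j k_{i_1}}+G_{j k_{i_2}}:\ j\in[s],\ 1\leq i_1<i_2\leq t\}$ that indexes the seminorm on the right of \eqref{E: concatenation simplified}. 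Hence, for each fixed tuple $(i_\ueps)_\ueps$, the directions appearing in Proposition~\ref{P: concatenation for general groups} form a sub-family of those in \eqref{E: concatenation simplified}. Applying the monotonicity property (Lemma~\ref{L: monotonicity property}) one group at a time, together with the permutation invariance of generalized box seminorms supplied by Lemma~\ref{L: convergence}, we obtain the pointwise inequality
\[
\nnorm{f}^+_{\substack{\{G_{j i_\ueps}+G_{j i_{\ueps'}}:\ j\in[s],\ \ueps,\ueps'\in\{0,1\}^s\\ \mathrm{with}\ (\epsilon_1, \ldots,  \epsilon_{s-j}) = (\epsilon'_1, \ldots,  \epsilon'_{s-j}),\ \epsilon_{s+1-j} =1,\ \epsilon'_{s+1-j}=0\}}}\ \leq\ \nnorm{f}^+_{\{G_{j k_{i_1}}+G_{j k_{i_2}}:\ j\in[s],\ 1\leq i_1 < i_2 \leq t\}}
\]
for every choice of $(i_\ueps)_\ueps$, where on the right $k_r=i_{\ueps^{(r)}}$. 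Averaging over all tuples (the two uniform averages matching under the relabeling) and combining with Proposition~\ref{P: concatenation for general groups} yields \eqref{E: concatenation simplified}, with the implicit constant and the $O_s(1)$ power inherited from the Proposition.

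I do not anticipate a genuine obstacle here: on top of Proposition~\ref{P: concatenation for general groups} this is purely a bookkeeping argument. The only point requiring a moment's care is verifying that in each direction $G_{j i_\ueps}+G_{j i_{\ueps'}}$ of the Proposition's seminorm the indices $\ueps,\ueps'$ are distinct --- which is immediate from $\epsilon_{s+1-j}=1\neq 0=\epsilon'_{s+1-j}$ --- so that after relabeling the summands $G_{j k_r}$, $G_{j k_{r'}}$ carry distinct indices and the direction indeed belongs to the family on the right of \eqref{E: concatenation simplified}.
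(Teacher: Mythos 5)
Your proposal is correct and follows essentially the same route as the paper: apply Proposition~\ref{P: concatenation for general groups}, identify $\{0,1\}^s$ with $[t]$ so that the Proposition's direction groups become a subfamily of $\{G_{j k_{i_1}}+G_{j k_{i_2}}:\ j\in[s],\ 1\leq i_1<i_2\leq t\}$, and then enlarge to the full family via the monotonicity property (Lemma~\ref{L: monotonicity property}). The only cosmetic difference is that the paper fixes an order-preserving lexicographic identification while you use an arbitrary enumeration together with commutativity and permutation invariance; the content is identical.
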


 Corollary \ref{C: concatenation for general groups II} plays the role of the Bessel-type inequality from \cite[Corollary 1.22]{TZ16}. However, its advantage over \cite[Corollary 1.22]{TZ16} is that we are quantitatively comparing averages of generalized box seminorms rather than the $L^2(\mu)$ norms of projections onto appropriate Host-Kra factors. Since usually we have no quantitative information on the latter, the results from \cite{TZ16} are in effect of purely qualitative nature, and they were used as such in previous ergodic works on polynomial concatenation \cite{DFKS22, DKS22}. The most important difference is of course that we quantitatively compare two averages of box seminorms while an analogous comparison in \cite{TZ16} is fully qualitative. By contrast, the quantitative nature of Corollary \ref{C: concatenation for general groups II} is a key improvement that allows us to obtain quantitative statements in Theorems \ref{T: box seminorm bound intro} and \ref{T: HK control}.

 A further difference between Corollary \ref{C: concatenation for general groups II} and \cite[Corollary 1.22]{TZ16} concerns the nature of seminorms and groups that appear in the right-hand side of the inequality \eqref{E: concatenation simplified}. In our result, we only sum subgroups with the same index $j$ whereas results from \cite{TZ16} involve subgroups of the form $G_{jk}+G_{j'k'}$ for all indices $j,j'$. The price that we pay is that our argument necessitates the introduction of $t=2^s$ indices $k_1, \ldots, k_t$ on the right hand side of \eqref{E: concatenation simplified} while the arguments in \cite{TZ16} allow to average on the right hand side over only two indices $k, k'$, and also the degree of the generalized box seminorms in our output is much larger than the one in \cite{TZ16}.
 At the end of the day, however, these inefficiencies are of little importance; what matters is that the number $t$ of indices and the degree of the seminorms are both $O_s(1)$.
 
\begin{proof}
    Let $\veps = \E_{i\in I}{\nnorm{f}^+_{G_{1i}, \ldots, G_{si}}}$. We let all the constants depend on $k,s$, noting however that powers of $\veps$ will not depend on $k$.
    Proposition~\ref{P: concatenation for general groups} immediately implies that
    \begin{align*}
        \E_{\substack{i_\ueps\in I:\\ \ueps\in\{0,1\}^s}}\nnorm{f}^+_{\substack{\{G_{j i_\ueps}+G_{j i_{\ueps'}}:\ j\in[s],\ \ueps,\ueps'\in\{0,1\}^s\\ \mathrm{with}\ (\epsilon_1, \ldots,  \epsilon_{s-j}) = (\epsilon'_1, \ldots,  \epsilon'_{s-j}),\ \epsilon_{s+1-j} =1,\ \epsilon'_{s+1-j}=0\}}} \gg \veps^{O(1)}.
    \end{align*}
    Ordering $\{0,1\}^s$ lexicographically and identifying it with $[2^s]=[t]$ in an order-preserving way, we obtain a subset $\CK_0$ of $\CK := $$\{(i_1,i_2) \in [t]\times [t]: i_1<i_2\}$ such that
    \begin{align*}
        \E_{\substack{k_1, \ldots, k_t\in I}}\nnorm{f}^+_{\substack{\{G_{j k_{i_1}}+G_{j k_{i_2}}:\; j\in[s],\ ({i_1}, {i_2})\in\CK_0\}}} \gg \veps^{O(1)}.
    \end{align*}    
    By Lemma~\ref{L: monotonicity property}, we can replace $\CK_0$ by the full set $\CK$, giving the claimed result. 
\end{proof}

We shall use the following consequence of Corollary~\ref{C: concatenation for general groups II}, which can be compared with \cite[Corollary 2.5]{Kuc23} and \cite[Corollary 6.4]{KKL24a}, and which is obtained from an iterated application of Corollary~\ref{C: concatenation for general groups II}. Its advantage is that it allows us to take the directions in the concatenated box seminorms to be arbitrarily long sums of the original directions rather than just double sums, as is the case in Corollary~\ref{C: concatenation for general groups II}.
\begin{corollary}[Iterated concatenation of box seminorms]\label{C: iterated concatenation for general groups}
    Let $s\in\N$, $\ell$ be a nonnegative integer power of 2, $I$ be a finite indexing set and $G_{1i}, \ldots, G_{si}\subseteq \R^k$ be finitely generated subgroups for each $i\in I$. There exists a positive integer $t=O_{\ell, s}(1)$ such that for all 1-bounded functions $f\in L^\infty(\mu)$, we have
    \begin{align}\label{E: iterated concatenation}
         \brac{\E_{i\in I}\nnorm{f}^+_{G_{1i}, \ldots, G_{si}}}^{O_{\ell, s}(1)} \ll_{k, \ell, s} \E_{\substack{k_1, \ldots, k_t\in I}}\nnorm{f}^+_{\substack{G_{jk_{i_1}}+\cdots + G_{jk_{i_\ell}}:\; j\in[s],\; 1\leq i_1 < \cdots < i_\ell\leq t}}.
    \end{align}
\end{corollary}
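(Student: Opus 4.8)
The plan is to iterate Corollary \ref{C: concatenation for general groups II} exactly $\log_2 \ell$ times, doubling the length of the sums of subgroups at each stage. Write $\ell = 2^r$. At each stage I maintain an inequality of the form
\begin{align*}
    \brac{\E_{i\in I}\nnorm{f}^+_{G_{1i}, \ldots, G_{si}}}^{O_{r,s}(1)} \ll_{k,r,s} \E_{\substack{k_1, \ldots, k_{t_p}\in I}}\nnorm{f}^+_{\substack{H^{(p)}_{j, \bk}:\; j\in[s],\; \bk\ \in\ \CS_p}},
\end{align*}
where at stage $p$ each ``direction'' $H^{(p)}_{j,\bk}$ is a sum $G_{jk_{i_1}} + \cdots + G_{jk_{i_{2^p}}}$ of $2^p$ of the original subgroups with the same first index $j$, ranging over a suitable index set $\CS_p$ of strictly increasing tuples from $[t_p]$, and $t_p = O_{p,s}(1)$. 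The base case $p=0$ is trivial (it is just the left-hand side with $t_0$ arbitrary, or $p=1$ is exactly Corollary \ref{C: concatenation for general groups II}).

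For the inductive step I apply Corollary \ref{C: concatenation for general groups II} to the family of groups $\{H^{(p)}_{j,\bk}\}$ indexed by the tuple of indices $(k_1,\ldots,k_{t_p})\in I^{t_p}$, treating $I^{t_p}$ as the new finite indexing set. Corollary \ref{C: concatenation for general groups II} then produces $t = 2^s$ new copies of such tuples, say labelled by superscripts, and the resulting directions are sums $H^{(p)}_{j,\bk'} + H^{(p)}_{j,\bk''}$ of two of the stage-$p$ directions with the same $j$; each of these is a sum of $2^{p+1}$ of the original subgroups $G_{j\cdot}$. Relabelling all the resulting $I$-indices flatly as $k_1,\ldots,k_{t_{p+1}}$ with $t_{p+1} = 2^s t_p = O_{p,s}(1)$, and using Lemma \ref{L: monotonicity property} to replace the particular index set of $2^{p+1}$-tuples that comes out of the construction by the full set of strictly increasing $2^{p+1}$-tuples from $[t_{p+1}]$ (monotonicity lets us add seminorm directions freely), we obtain the stage-$(p+1)$ inequality, with the power of $\veps$ worsening by a multiplicative $O_s(1)$ factor. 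After $r = \log_2\ell$ steps we reach directions that are sums of $\ell$ original subgroups, with $t = t_r = O_{\ell,s}(1)$ and the final power $O_{\ell,s}(1)$, which is \eqref{E: iterated concatenation}.

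The one genuinely technical point — and the main thing to be careful about rather than a real obstacle — is the bookkeeping of which $\ell$-tuples of original subgroups actually appear as directions after iterating, and confirming they can be arranged so that Lemma \ref{L: monotonicity property} upgrades them to \emph{all} strictly increasing $\ell$-tuples $1\le i_1 < \cdots < i_\ell \le t$. Concretely: at each doubling we sum a direction spanning indices from one ``block'' of the previous $t_p$ coordinates with a direction spanning indices from another block; after flattening, these land on disjoint coordinate ranges, so the $\ell$ chosen original indices are automatically distinct and can be listed in increasing order, and any strictly increasing $\ell$-tuple is dominated (in the subgroup-inclusion sense needed for Lemma \ref{L: monotonicity property}, or simply by adding it as an extra direction) by one that arises. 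One also needs to interleave the usual auxiliary steps — using $1$-boundedness of $f$ to pass between seminorm powers, and invoking Lemma \ref{L: convergence} to ensure all the averages in play converge — exactly as in the proof of Corollary \ref{C: concatenation for general groups II}; none of this changes the order of magnitude of the exponents or of $t$.
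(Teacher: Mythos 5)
Your proposal is correct and follows essentially the same route as the paper's proof: both induct on the power of $2$, at each doubling step apply Corollary~\ref{C: concatenation for general groups II} with the previous indexing set replaced by $I^{t_p}$, observe that the four (or $2^{p+1}$) original indices appearing in each summed direction come from distinct $(i,l)$ pairs and hence can be reindexed as a strictly increasing tuple, and finally upgrade the particular family of tuples produced to all strictly increasing $\ell$-tuples via Lemma~\ref{L: monotonicity property}. One small bookkeeping slip: when you apply Corollary~\ref{C: concatenation for general groups II} at stage $p$, the degree of the input box seminorms is $s\cdot|\CS_p|$, not $s$, so the number of new copies produced is $2^{s|\CS_p|}$ rather than $2^{s}$; this does not affect the conclusion since the quantity is still $O_{p,s}(1)$, but the specific recursion $t_{p+1}=2^{s}t_p$ is not accurate.
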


It will be important for our applications later on that in the sum $G_{jk_{i_1}}+\cdots + G_{jk_{i_\ell}}$ inside \eqref{E: iterated concatenation},
the indices $i_1, \ldots, i_\ell$ are all distinct since otherwise the group might not be larger than its summands.

\begin{proof}
    The proof proceeds by induction on $\ell$, which we recall is a power of 2. For $\ell=1$, the claim is trivial (with $t=1$), and for $\ell=2$, it follows from Corollary~\ref{C: concatenation for general groups II} (with $t=2^s$). We will just prove the cases $\ell = 4$ since this will make the strategy for the general case clear enough. 

    Let $\veps = \E_{i\in I}{\nnorm{f}^+_{G_{1i}, \ldots, G_{si}}}$. Then by Corollary \ref{C: concatenation for general groups II}, we have
    \begin{align}\label{E: iterated concatenation induction}
        \E_{\substack{k_1, \ldots, k_{t_1}\in I}}\nnorm{f}^+_{\{G_{j k_{i_1}}+G_{j k_{i_2}}:\ j\in[s],\ 1\leq i_1 < i_2 \leq t_1\}} \gg \veps^{O(1)}
    \end{align}
    with $t_1 = 2^s$
    (once again, the constants are allowed to depend on $k,\ell, s$ except that the powers of $\veps$ do not depend on $k$).
    To prove the result for $\ell=4$, we apply Corollary~\ref{C: concatenation for general groups II} to \eqref{E: iterated concatenation induction}, taking $(k_1, \ldots, k_{t_1})\in I^{t_1}$ in place of $i\in I$. This gives
    \begin{align}\label{E: iterated concatenation induction 2}
        \E_{\substack{k_{i l}\in I:\\ i\in[t_1],\; l\in[t_1']}}\nnorm{f}^+_{\substack{\{G_{j k_{i_1l_1}}+G_{j k_{i_2l_1}} + G_{j k_{i_1l_2}}+G_{j k_{i_2l_2}}:\\ j\in[s],\; 1\leq i_1< i_2\leq t_1,\; 1\leq l_1<l_2\leq t_1'\}}}\gg\veps^{O(1)}
    \end{align}
    for some positive integer $t_1' = O(1)$. Reindexing $(i, l)\in[t_1]\times[t_1']$ as $i\in[t_2]$ and using the monotonicity property (Lemma~\ref{L: monotonicity property}), we argue as in the proof of Corollary~\ref{C: concatenation for general groups II} that
    \begin{align*}
                \E_{\substack{k_1, \ldots, k_{t_2}\in I}}\nnorm{f}^+_{\{G_{j k_{i_1}}+\cdots + G_{j k_{i_4}}:\ j\in[s],\ 1\leq i_1< \cdots < i_4\leq t_2\}}\gg\veps^{O(1)}.
    \end{align*}
    This proves the case $\ell = 4$. We continue in the same manner for larger $\ell$.
\end{proof}

\section{Standard techniques for averages along Hardy sequences}\label{S: preliminaries}

We have so far proved basic properties of generalized box seminorms and developed the theory of their quantitative concatenation. From now on, we will show how these seminorms can be used to control multiple ergodic averages along Hardy sequences. Before we embark on this, we list several standard technical lemmas that shall be profusely applied in the process. 
We start with two variants of the classical van der Corput lemma. The derivation of van der Corput-type inequalities is standard, and we refer the reader to \cite{BM16} for more background. 
\begin{lemma}\label{L: finitary van der Corput}
Let $N\in\mathbb{N}$ and $v_1, \ldots , v_N$ be 1-bounded vectors in an inner product space. For every natural number $H\leq N$, we have
\begin{equation*}
\norm{\E_{n\in [N]}v_n}^2\leq 6\E_{h\in[\pm H]}\abs{\E_{n\in[N]}\langle v_n, v_{n+h}\rangle} + \frac{6H}{N}
\end{equation*} 
and
\begin{equation*}
\norm{\E_{n\in [N]}v_n}^2\leq 3\E_{h,h'\in[\pm H]}\E_{n\in[N]}\langle v_{n+h}, v_{n+h'}\rangle + \frac{3H}{N}.
\end{equation*} 
\end{lemma}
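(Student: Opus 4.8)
The plan is to run the classical van der Corput argument in its two-sided form for the second estimate and in its one-sided form for the first, paying attention only to the boundary losses. First I would extend the vectors by setting $v_n=0$ for $n\notin[N]$; the extended family is still $1$-bounded. Everything then rests on two elementary boundary estimates: for any integer $h$ with $|h|\le H$, the Ces\`aro average $\E_{n\in[N]}v_{n+h}$ differs from $\E_{n\in[N]}v_n$ by at most $|h|/N\le H/N$ in norm (the two averages share all but $|h|$ of their $1$-bounded summands, and the ``extra'' summands on the shifted side vanish by the zero extension), and likewise $\E_{n\in[N]}\langle v_{n+h},v_{n+h'}\rangle$ differs from $\E_{n\in[N]}\langle v_n,v_{n+h'-h}\rangle$ by at most $|h|/N\le H/N$.

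For the second inequality, set $w=\E_{n\in[N]}v_n$ and $z=\E_{h\in[\pm H]}\E_{n\in[N]}v_{n+h}$; averaging the first boundary estimate over $h\in[\pm H]$ gives $\norm{w-z}\le H/N$, and clearly $\norm{z}\le1$. Interchanging the two finite averages, $z=\E_{n\in[N]}u_n$ with $u_n:=\E_{h\in[\pm H]}v_{n+h}$ again $1$-bounded, so the Cauchy--Schwarz inequality (convexity of $\norm{\cdot}^2$) yields
\begin{equation*}
\norm{z}^2\le\E_{n\in[N]}\norm{u_n}^2=\E_{h,h'\in[\pm H]}\E_{n\in[N]}\langle v_{n+h},v_{n+h'}\rangle .
\end{equation*}
Since $\norm{z}\le1$ and $H\le N$, we conclude $\norm{w}^2\le(\norm{z}+H/N)^2\le\norm{z}^2+3H/N$, which is the claimed bound (the factor $3$ in front of the nonnegative correlation average being wasteful but harmless).

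For the first inequality one cannot simply substitute $g=h'-h$ above, since $h,h'\in[\pm H]$ only forces $g\in[\pm2H]$; the remedy is to replace the symmetric window by the one-sided window $\{0,1,\dots,H-1\}$. Comparing $w$ instead with $z'=\tfrac1H\sum_{0\le h<H}\E_{n\in[N]}v_{n+h}$ (again $\norm{w-z'}\le H/N$ and $\norm{z'}\le1$), the same interchange-and-Cauchy--Schwarz step gives $\norm{z'}^2\le\tfrac1{H^2}\sum_{0\le h,h'<H}\E_{n\in[N]}\langle v_{n+h},v_{n+h'}\rangle$, and the second boundary estimate replaces each summand by $\E_{n\in[N]}\langle v_n,v_{n+h'-h}\rangle$ up to an error $H/N$. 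Now $h'-h$ ranges over $[\pm(H-1)]\subseteq[\pm H]$, and since for each $g$ the number of pairs with $h'-h=g$ is $H-|g|\le H$, collecting these nonnegative weights gives $\tfrac1{H^2}\sum_{0\le h,h'<H}|\E_{n\in[N]}\langle v_n,v_{n+h'-h}\rangle|\le 3\,\E_{g\in[\pm H]}|\E_{n\in[N]}\langle v_n,v_{n+g}\rangle|$. Feeding this through $\norm{w}^2\le(\norm{z'}+H/N)^2$ and absorbing every accumulated $O(H/N)$ boundary loss into a single $6H/N$ yields the first inequality. There is no real obstacle here: the only point requiring care is keeping the correlation shift inside $[\pm H]$ (hence the switch to one-sided windows) and tracking that each boundary correction is $O(H/N)$; both estimates are otherwise routine, as signalled by the reference to \cite{BM16}.
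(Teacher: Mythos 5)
The paper states this lemma without proof, citing \cite{BM16} as a reference for the standard derivation; your argument is a correct instance of that standard van der Corput method (zero-extend, compare with a window-smoothed average, apply Cauchy--Schwarz, then re-index over the shift difference $g=h'-h$), and in fact delivers constants slightly better than the stated $6$ and $3$. The two delicate points — using a one-sided window so that $h'-h$ stays inside $[\pm H]$, and inserting absolute values since the individual correlation terms $\E_{n}\angle{v_n,v_{n+g}}$ need not be nonnegative even though their symmetric average is — are both handled correctly.
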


We then move on to prove the finitary version of \cite[Proposition 6.1]{Fr12} which allows us to remove from our average dual functions evaluated at Hardy sequences. We state two versions (based on two different versions of the van der Corput trick from Lemma \ref{L: finitary van der Corput}), each of which will be useful at different stages of the PET argument. We recall for the convenience of the reader that the collection $\FD_d$ of dual sequences of degree at most $d$ is defined in Section \ref{SS: dual functions}.

\begin{lemma}\label{L: removing duals finitary}
			Let $d, J, k\in\N$, and suppose that $b_1, \ldots, b_J\in \mathcal{H}$ satisfy the growth condition $b_j(t)\ll t^d$. Then there exists a positive integer $s=O_{d, J}(1)$ such that for all systems $(X,\mathcal{X},\mu, T_1, \ldots, T_k)$, sequences of 1-bounded functions $A$ and $\CD_1, \ldots, \CD_J\in \FD_d$, and numbers $n_0\in\N_0$ and $K, N\in\N$ with $K\leq N$, we have
			\begin{multline*}
				\norm{\E_{n\in[N]} A(n_0+n)\cdot\prod_{j\in[J]} \CD_{j}(\floor{b_j(n_0+n)})}_{L^2(\mu)}^{2^s}\\
				\ll_{d,J, k} \E\limits_{\uk\in[\pm K]^s}    \sup_{|c_n|\leq 1}    \norm{\E_{n\in[N]} c_n \cdot \prod_{\ueps \in\{0,1\}^s} \mathcal{C}^{|\ueps|}A(n_0 + n+\ueps \cdot\uk)}_{L^2(\mu)}\\
    +\frac{K}{N}+ o_{N\to\infty; d, J, k, b_1, \ldots, b_J}(1).
			\end{multline*}
   Similarly, we have
   \begin{multline*}
				\norm{\E_{n\in[N]} A(n_0+n)\cdot\prod_{j\in[J]} \CD_{j}(\floor{b_j(n_0+n)})}_{L^2(\mu)}^{2^s}\\
				\ll_{d,J, k} \E\limits_{\uk, \uk'\in[\pm K]^s}    \sup_{|c_n|\leq 1}    \norm{\E_{n\in[N]} c_n \cdot \prod_{\ueps \in\{0,1\}^s} \mathcal{C}^{|\ueps|}A(n_0 + n + ({\underline{1}}-\ueps) \cdot\uk + \ueps\cdot\uk'
    )}_{L^2(\mu)}\\
    +\frac{K}{N}+ o_{N\to\infty; d, J, k, b_1, \ldots, b_J}(1).
			\end{multline*}

   If $b_1, \ldots, b_J$ are polynomials, then the term $o_{N\to\infty; d, J, k, b_1, \ldots, b_J}(1)$ can be taken to be 0.
		\end{lemma}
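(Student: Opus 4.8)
The plan is to prove this by a standard van der Corput iteration, where each application of van der Corput removes one of the dual sequences $\CD_j(\floor{b_j(n_0+n)})$ while simultaneously introducing a "derivative" shift on the main sequence $A$. The key point is that dual sequences, like nilsequences, are "structured" in the precise sense that their multiplicative derivatives (twisted by Hardy shifts) average to something controllable; concretely, for a level-$d$ dual sequence $\CD(n) = T^n \CD_{d,T}(f)$ we have that $\overline{\CD(n)}\cdot\CD(n+h) = T^n\brac{\overline{\CD_{d,T}(f)}\cdot T^h\CD_{d,T}(f)}$, and the inner function $\overline{\CD_{d,T}(f)}\cdot T^h\CD_{d,T}(f)$, when we average over $h$ in a suitable van der Corput step, is itself (essentially, after telescoping the derivative structure of the dual function) expressible as a dual sequence of one lower level, or else it collapses. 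More carefully, the right way to organize the induction is on the quantity $\sum_j \deg \CD_j$ (the total level), using the fact that differentiating a level-$s$ dual function once in a given direction produces a product of level-$(s-1)$ dual functions.

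First I would reduce to the case $J = 1$ by peeling off dual sequences one at a time: apply a single van der Corput step (the second, symmetric, variant from Lemma~\ref{L: finitary van der Corput}, so as to land on shifts of the form $(\underline 1 - \ueps)\cdot\uk + \ueps\cdot\uk'$), which replaces the product $A(n_0+n)\prod_j \CD_j(\floor{b_j(n_0+n)})$ inside the correlation by a product over $\ueps\in\{0,1\}$ of the conjugated shifts, and then use the telescoping identity for dual functions together with the near-additivity $\floor{b_j(n_0+n+h)} = \floor{b_j(n_0+n)} + \floor{b_j'(n_0+n)h} + O(\ldots)$ — but here, crucially, since $b_j$ has polynomial growth and we are taking a finite van der Corput shift $h\in[\pm H]$ with $H$ small relative to $N$, the quantity $\floor{b_j(n_0+n+h)} - \floor{b_j(n_0+n)}$ can be controlled so that the dual sequence of level $d$ shifted by $h$ differs from the original by a dual sequence of level $d-1$ (this is exactly the finitary analogue of the identity used in \cite[Proposition 6.1]{Fr12}). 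Each such step reduces the total level by one and increases the number of variables geometrically, so after $O_{d,J}(1)$ steps all dual sequences are gone and we are left with an average $\E_{n\in[N]} c_n \prod_\ueps \CC^{|\ueps|} A(n_0+n+\cdots)$ with a $1$-bounded weight $c_n$ (the weight $c_n$ absorbs all the leftover bounded factors coming from the van der Corput cross terms and from the Hardy error terms, which is why we take a supremum over all such weights). The first (non-symmetric) variant of the conclusion is proved identically but using the first variant of Lemma~\ref{L: finitary van der Corput}; here one should be slightly careful that the shift variable $h$ appears only additively, which matches the stated form $A(n_0 + n + \ueps\cdot\uk)$.

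The error terms are bookkept as follows: each van der Corput application over $[\pm H]$ contributes an additive $O(H/N)$, and since we iterate $O_{d,J}(1)$ times these accumulate to $O(K/N)$ after renaming $H \rightsquigarrow K$ and absorbing constants (one chooses the same $K$ at every stage, or a bounded number of scales, which only changes the implied constants). The terms coming from the Hardy approximations $\floor{b_j(n_0+n+h)} \approx \floor{b_j(n_0+n)}+\cdots$ are not uniformly $o(1)$ for a fixed $N$; rather, for each fixed tuple of shift variables the proportion of $n\in[N]$ for which the relevant integer parts fail to line up tends to $0$ as $N\to\infty$, with a rate depending on $b_1,\ldots,b_J$ (and on $d, J, k$). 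These contributions are collected into the term $o_{N\to\infty; d, J, k, b_1, \ldots, b_J}(1)$. When all $b_j$ are polynomials, the discrepancies $\floor{b_j(n_0+n+h)} - \floor{b_j(n_0+n)} - (\text{polynomial in }n,h)$ are bounded by an absolute constant (no genuine Taylor tail), so these can be folded into the finitely many pigeonholed cases and the $1$-bounded weight $c_n$ without any loss, giving the sharper statement with the $o(1)$ term replaced by $0$.

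The main obstacle I expect is the precise finitary handling of the dual-function telescoping under the floor-of-Hardy shifts: one must show that $\overline{\CD_{d,T}(f)}\cdot T^{\floor{b(n_0+n+h)}-\floor{b(n_0+n)}}\CD_{d,T}(f)$, after averaging over the van der Corput variable, can be dominated by a bounded combination of level-$(d-1)$ dual sequences evaluated at \emph{some} integer-valued sequences of $n$ (not necessarily Hardy sequences, but that is fine since we only need them to be $O(t^{d-1})$-growing integer sequences to keep feeding the induction), uniformly in $n_0$ and in the already-introduced shift variables. This requires being careful that the "new" iterates created at each stage still satisfy a polynomial growth bound of the required degree so that the inductive hypothesis applies, and that the number of pigeonhole cases for the $O(1)$-sized integer-part discrepancies stays bounded by $O_{d,J,k}(1)$ throughout. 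The combinatorial tracking of variables — ensuring the final shift structure is exactly $\ueps\cdot\uk$ (resp. $(\underline 1-\ueps)\cdot\uk + \ueps\cdot\uk'$) with $|\ueps|$-dependent conjugation — is routine but needs care, and is handled exactly as in the classical derivation of seminorm control via van der Corput.
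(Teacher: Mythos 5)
There is a genuine gap, and it sits exactly where you flag your "main obstacle": your induction is on the total dual level, driven by the claim that a multiplicative derivative $\overline{\CD_{d,T}(f)}\cdot T^{h}\CD_{d,T}(f)$ (or its Hardy-shifted version) can be dominated by a bounded combination of level-$(d-1)$ dual sequences. No such identity or domination holds: a dual function is a limit of averages of products of shifts of $f$, and multiplying two translates of it does not telescope into lower-level duals (the case $d=1$, where the derivative is invariant, is misleadingly special). Since every subsequent step of your argument feeds this "level-lowering" output back into the inductive hypothesis, the scheme does not close. The paper's proof never lowers the level of the duals at all. Instead, following \cite[Proposition~6.1]{Fr12}, it first rewrites the whole product $\prod_{j\in[J]}\CD_j(\floor{b_j(n_0+n)})$ as a single multiparameter average $\lim_{M\to\infty}\E_{\bm\in[\pm M]^l}\prod_{j\in[\ell]}d_j(\bm+\floor{\bb_j(n_0+n)})$ of shifted $1$-bounded functions, with $l,\ell=O_{d,J}(1)$ and $\bb_j\in\{0,b_1,\ldots,b_J\}^d$, and then inducts on the PET-type of the family $\{\bb_j\}$: after each van der Corput step in $n$ one performs the change of variables $\bm\mapsto\bm-\floor{\bb(n_0+n)}$ for a suitably chosen $\bb$, which strictly reduces the type, until in the base case the sequences $\floor{\bb_j(n_0+n)}$ are eventually constant and the $\bm$-average decouples from $n$ by H\"older. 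The Hardy shifts are thus absorbed into the internal averaging variables, not into a structural simplification of the dual functions; this is the idea your proposal is missing.

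Two secondary points. The bounded integer-part discrepancies $\br,\br'\in\{0,1\}^l$ created by the change of variables are handled by partitioning into $O_{l,\ell}(1)$ sets on which they are constant (absorbed into the sup over weights/indicator sets), not by arguing that the proportion of bad $n$ tends to $0$; and the $o_{N\to\infty;\,d,J,k,b_1,\ldots,b_J}(1)$ term arises in the base case because "eventually constant" sequences $\floor{\bb_j(n_0+n)}$ may misbehave for the initial values of $n$ (which is also why it can be taken to be $0$ for polynomials, where the type reduction terminates in genuinely constant data). Your use of the two variants of the van der Corput inequality to produce the shift patterns $\ueps\cdot\uk$ and $(\underline{1}-\ueps)\cdot\uk+\ueps\cdot\uk'$ is consistent with the paper, but without the rewriting-plus-type-induction mechanism the core of the argument does not go through.
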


\begin{proof}
The proof proceeds by a bounded number of applications of the van der Corput inequality in a PET-like manner (much like in \cite[Proposition 6.1]{Fr12}), where at each step we reduce to an average of smaller complexity.
To obtain the first claim, we will repeatedly use the first part of Lemma~\ref{L: finitary van der Corput} in order to remove all the dual terms. The second claim will follow analogously from the second part of Lemma~\ref{L: finitary van der Corput}. 

Our first task is to rephrase the average that we study in a way that facilitates the induction. By the definition of the dual functions, we can find {positive integers $l,\ell=O_{d, J}(1)$}, tuples of Hardy field functions $\bb_1,\ldots, \bb_\ell\in\{0,b_1,\ldots,b_J\}^d$, and 1-bounded sequences $d_1,\ldots, d_\ell:{\Z}^l\to L^{\infty}(\mu)$ for which 
\begin{align}\label{E: dual representation}
    \prod_{j\in [J]} \CD_{j}(\floor{b_j(n_0+n)})=\lim_{M\to\infty}\E_{{\bm}\in [\pm M]^l}\prod_{j\in [\ell]} d_{j}({\bm}+\floor{\bb_j(n_0+n)}).
\end{align}
For instance, we have\footnote{We note that the following representation is not unique: we could equally well set $\bb_3(n) = (0, b_1(n), 0)$ if we wanted to ``assign'' $b_1(n)$ to $m_2$ rather than $m_1$.} 
\begin{align*}
    T_1^{\floor{b_1(n_0+n)}}\CD_{2, T_1}(f_1)\cdot T_2^{\floor{b_2(n_0+n)}}\CD_{1, T_2}(f_2) = \lim_{M\to\infty}\E_{{\bm}\in [\pm M]^3}\prod_{j\in[4]}d_j(\bm + \floor{\bb_j(n_0+n)})
\end{align*}
for \begin{gather*}
    d_j(m_1, m_2, m_3) = \begin{cases}
        T_1^{m_1}\overline{f_1},\; &j=1\\
        T_1^{m_2}\overline{f_1},\; &j=2\\
        T_1^{m_1+m_2}{f_1},\; &j=3\\
        T_2^{m_3}\overline{f_2},\; &j=4
    \end{cases}, \quad \textrm{and}\quad 
    \bb_j(n) = \begin{cases}
        (b_1(n), 0, 0),\; &j=1\\
        (0, b_1(n), 0),\; &j=2\\
        (b_1(n), 0, 0),\; &j=3\\
        (0, 0, b_2(n)),\; &j=4
    \end{cases}.
\end{gather*} 

We remark that in \eqref{E: dual representation}, we have used Lemma \ref{L: convergence} to combine limits coming from various dual functions.

We then set
\begin{align*}
    \veps :=
    \sup_{E\subseteq \N}\limsup\limits_{M\to\infty}\norm{\E_{n\in[N]} A(n_0+n)\cdot\E_{{\bm}\in [\pm M]^l}\prod_{j\in [\ell]} d_{j}({\bm}+\floor{\bb_j(n_0+n)})\cdot 1_E(n)}_{L^2(\mu)}.
\end{align*}

Let $W$ be the type of the family of sequences $\bb_1, \ldots, \bb_\ell$ defined as in \cite[Subsection~5.1]{Fr12}.\footnote{Technically, \cite[Subsection~5.1]{Fr12} defines the type for $l$-tuples $(\CB_1,\ldots,$ $\CB_J),$ where $\CB_i=(b_{1i}, \ldots, b_{\ell i})$, but it is easy to move between the two formalisms.} We notice that since $d, l, \ell$ are fixed, we have, by \cite[Lemma~5.3]{Fr12}, only $O_{d,\ell, J}(1)$ many $W$'s. Assuming that $b_{11}$ is the function of the largest growth rate, we proceed by induction on $W.$
\medskip

{\bf Base case:} Assume that $b_{ji}$ converge in $\R$ for all $j,i$. 
Consequently, the sequences $\floor{\bb_j(n_0+n)}$ are constant for $n$ large enough. Hence the average over $\bm$ becomes independent of $n$ and can be removed using the H\"older inequality, giving 
$$\sup_{E\subseteq \N}\norm{\E_{n\in [N]}A(n_0+n)\cdot 1_{E}(n)}_{L^2(\mu)} + o_{N\to\infty; \bb_1, \ldots, \bb_\ell}(1) \geq \veps,$$
with the error term corresponding to the small values $n$. The result follows immediately since $1_E(n)$ is a 1-bounded weight. 

{\bf Inductive step:} Suppose now that $b_{11}(t)\to\infty$, and assume that for all families of $2\ell$ sequences with type $W'<W$, the statement holds with $s = s_0(W', l, 2\ell)$. We then set 
$$ s_0(W, l, \ell):= \max_{W'<W}(s_0(W', l, 2\ell))+1.$$

Swapping the order of summation of $n$ and $\bm$, using the Cauchy-Schwarz inequality, and then the van der Corput inequality (the first part of Lemma~\ref{L: finitary van der Corput}), we get

\begin{multline*} \E_{k_1\in [\pm K]}\limsup\limits_{M\to\infty}\Bigg\|\E_{n\in [N]}A(n_0+n+k_1)\cdot \overline{A}(n_0+n)\cdot \E_{{\bm}\in [\pm M]^l} \prod_{j\in [\ell]} d_{j}({\bm}+\floor{\bb_j(n_0+n+k_1)}) \\
\overline{d}_{j}({\bm}+\floor{\bb_j(n_0+n)})\cdot 1_{E}(n+k_1)\cdot 1_{E}(n)\Bigg\|_{L^2(\mu)}+\frac{K}{N} \gg \veps^2.
\end{multline*}
We now choose the vector $\bb$ that reduces the type of $W$ and make the change of variables ${\bm}\to {\bm}-\floor{\bb(n_0+n)}$, so that
\begin{multline*} \E_{k_1\in [\pm K]}\limsup\limits_{M\to\infty}\Bigg\|\E_{n\in [N]}A(n_0+n+k_1)\cdot \overline{A}(n_0+n)\\
\E_{{\bm}\in [\pm M]^l} \prod_{j\in [\ell]} d_{j}({\bm}+\floor{\bb_j(n_0+n+k_1)-\bb(n_0+n)}
+\br_{j,k_1}(n))\\ \overline{d}_{j}({\bm}+\floor{\bb_j(n_0+n)-\bb(n_0+n)}+\br'_{j}(n))\cdot 1_{E_{k_1}}(n)\Bigg\|_{L^2(\mu)} +\frac{K}{N} \gg \veps^2,
\end{multline*}
where the error terms $\br, \br'$ take values in $\{0,1\}^l,$ and $E_{k_1}=E\cap (E-k_1).$ 

Partitioning $E_{k_1}$ into $O_{l, \ell}(1)$ sets in which the $\br, \br'$ are constant, we get
\begin{multline*}
\sup_{E\subseteq [N]}\E_{k_1\in [\pm K]}\limsup\limits_{M\to\infty}\Bigg\|\E_{n\in [N]}A(n_0+n+k_1)\cdot \overline{A}(n_0+n)\\ \E_{{\bm}\in [\pm M]^l} \prod_{j\in [\ell]} d_{j}({\bm}+\floor{\bb_j(n_0+n+k_1)-\bb(n_0+n)}) \\
\overline{d}_{j}({\bm}+\floor{\bb_j(n_0+n)-\bb(n_0+n)})\cdot 1_{E}(n)\Bigg\|_{L^2(\mu)} +\frac{K}{N} \gg_{l,\ell} \veps^2.
\end{multline*}
Since the new averages have smaller type, the claim follows by the induction hypothesis.
\end{proof}  

We also need a lemma for removing error terms that take finitely many values. This lemma, which is a variation on \cite[Lemma 3.2]{Ts22}, will be used to remove error terms that come from swapping the order of summing and taking integer parts. { We omit the proof.}
\begin{lemma}\label{L: errors}
    Let $k, \ell,  N\in\N$ and assume that for every $j\in[\ell]$, the functions $\br_{j}:\Z\to\Z^k$ take values in a finite set $S$. Then for any family of sequences $\ba_{j}:\Z\to\Z^k$ there exist $\br'_1, \ldots,\br'_\ell\in\Z^k$ (where we can take $\br'_1 = \mathbf{0}$) such that for all systems $(X, \CX, \mu, T_1, \ldots, T_k)$ and 1-bounded functions $f_1, \ldots, f_\ell\in L^\infty(\mu)$, we have
    \begin{multline*}
      \sup_{|c_n|\leq 1}\norm{  \E_{n\in[N]}  c_{n}\cdot \prod_{j\in[\ell]}T^{\ba_{j}(n)+\br_{j}(n)}f_j}_{L^2(\mu)}
      \ll_{k,\ell, S}
      \sup_{|c_{n}|\leq 1}\norm{\E_{n\in[N]}  c_{n}\cdot \prod_{j\in[\ell]}T^{\ba_{j}(n)+\br'_j}f_j}_{L^2(\mu)}.
    \end{multline*} 
\end{lemma}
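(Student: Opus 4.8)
The plan is to slice $[N]$ into the level sets of the error tuple and dispose of the errors one coordinate at a time. For $\bs=(\bs_{1},\dots,\bs_{\ell})\in S^{\ell}$ set $E_{\bs}=\{n\in[N]\colon\ \br_{j}(n)=\bs_{j}\ \text{for all }j\in[\ell]\}$; these sets are pairwise disjoint, cover $[N]$, and at most $|S|^{\ell}$ of them are nonempty. Given any $1$-bounded weight $(c_n)$, we have $\E_{n\in[N]}c_{n}\prod_{j\in[\ell]}T^{\ba_{j}(n)+\br_{j}(n)}f_{j}=\sum_{\bs\in S^{\ell}}\E_{n\in[N]}c_{n}1_{E_{\bs}}(n)\prod_{j\in[\ell]}T^{\ba_{j}(n)+\bs_{j}}f_{j}$, because on $E_{\bs}$ we may replace $\br_{j}(n)$ by the constant $\bs_{j}$. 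Since each $c_{n}1_{E_{\bs}}(n)$ is again $1$-bounded, the triangle inequality reduces the whole problem to bounding, for each fixed $\bs$, the quantity $\sup_{|c_{n}|\le 1}\norm{\E_{n\in[N]}c_{n}\prod_{j\in[\ell]}T^{\ba_{j}(n)+\bs_{j}}f_{j}}_{L^{2}(\mu)}$ by a constant depending only on $k,\ell,S$ times the right-hand side of the lemma.

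Next I would normalize the first coordinate. Because $T^{\bs_{1}}$ is a composition of the commuting measure-preserving transformations $T_{1},\dots,T_{k}$, it acts multiplicatively on products of functions and is an isometry of $L^{2}(\mu)$, so
\[
\prod_{j\in[\ell]}T^{\ba_{j}(n)+\bs_{j}}f_{j}=T^{\bs_{1}}\prod_{j\in[\ell]}T^{\ba_{j}(n)+(\bs_{j}-\bs_{1})}f_{j},
\]
and after applying the isometry we may delete the outer $T^{\bs_{1}}$ without changing the $L^{2}(\mu)$ norm. The first exponent is now exactly $\ba_{1}(n)$, i.e.\ the normalized error in that slot is $\mathbf{0}$, while the remaining normalized errors $\bs_{j}-\bs_{1}$ for $j\ge 2$ lie in the difference set $S-S$, which is finite and depends only on $S$. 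Hence each slice is at most
\[
\sup_{|c_{n}|\le 1}\norm{\E_{n\in[N]}c_{n}\prod_{j\in[\ell]}T^{\ba_{j}(n)+\bu_{j}}f_{j}}_{L^{2}(\mu)},\qquad \bu=(\mathbf{0},\bs_{2}-\bs_{1},\dots,\bs_{\ell}-\bs_{1})\in\{\mathbf{0}\}\times(S-S)^{\ell-1}.
\]

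Summing over the at most $|S|^{\ell}$ nonempty slices and bounding the sum by $|S|^{\ell}$ times its largest term, the left-hand side of the lemma is $\ll_{k,\ell,S}$ the maximum, over $\bu$ in the fixed finite set $\{\mathbf{0}\}\times(S-S)^{\ell-1}$, of $\sup_{|c_{n}|\le 1}\norm{\E_{n\in[N]}c_{n}\prod_{j\in[\ell]}T^{\ba_{j}(n)+\bu_{j}}f_{j}}_{L^{2}(\mu)}$; this set depends only on $k,\ell,S$, and taking $(\br'_{1},\dots,\br'_{\ell})$ to be a tuple achieving the maximum yields the assertion, with $\br'_{1}=\mathbf{0}$ for free from the normalization. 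I do not expect any genuine analytic obstacle: the only inputs are the multiplicativity of $T^{\bs}$ on products and its $L^{2}(\mu)$-isometry, exactly as in \cite[Lemma 3.2]{Ts22}. The one point that requires a little care is the bookkeeping showing that the residual error tuples $\bu$ range over a set of cardinality $O_{k,\ell,S}(1)$, so that the passage from ``finitely many admissible $\br'$'' to a single $\br'$ costs only the harmless factor $|S|^{\ell}$; in the applications this is further smoothed over because the subsequent seminorm estimates are insensitive to the values of $\br'_{2},\dots,\br'_{\ell}$ and only use that $\br'_{1}=\mathbf{0}$.
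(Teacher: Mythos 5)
Your approach---slicing $[N]$ into the level sets of the error tuple $(\br_{1},\dots,\br_{\ell})$, absorbing each indicator $1_{E_{\bs}}$ into the weight $(c_{n})$, and composing by $T^{-\bs_{1}}$ to normalize the first error to $\mathbf{0}$---is precisely the intended argument; the paper omits the proof and points to it being a variation of Ts22, Lemma~3.2, which is this same slicing trick, with the triangle inequality and the factor $|S|^{\ell}$ supplying the stated constant. The one delicate point is exactly the one you flag: the tuple $\bu\in\{\mathbf{0}\}\times(S-S)^{\ell-1}$ that maximizes the right-hand side depends on the system and on $f_{1},\dots,f_{\ell}$, so ``take $\br'$ to be a maximizer'' does not by itself produce a single tuple satisfying the quantifier ``for all systems and $1$-bounded functions''. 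What the slicing actually yields is the (a priori weaker, but in practice equivalent) bound with a maximum over the finite set $\{\mathbf{0}\}\times(S-S)^{\ell-1}$ on the right, and this is all the applications need: either the shift $\br'_{j}$, $j\geq 2$, is later absorbed by renaming $f_{j}$ as $T^{\br'_{j}}f_{j}$ under a supremum over $f_{2},\dots,f_{\ell}$, or a single tuple is pinned down by a further pigeonhole --- note that the paper's own invocations read ``By Lemma~\ref{L: errors} and the pigeonhole principle''. So your reasoning is correct in substance and matches the intended routine proof; the residual ambiguity lies in the lemma's phrasing rather than in your argument, and your closing remark already accounts for it.
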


Lastly, we shall use a version of \cite[Lemma 3.3]{Ts22} that allows us to pass to short intervals.

\begin{lemma}\label{L: double averaging}
Let $d, \ell\in\N$ and $L\in\CH$ with $1\prec L(t)\prec t$. Let also $(X, \CX, \mu)$ be a probability space and for each $n\in[N]$ and $N\in\N$, let $A_{nN}: L^\infty(\mu)^\ell\to L^\infty(\mu)$ be a function such that $|A_{nN}(f_1, \ldots, f_\ell)|\leq 1$ for all 1-bounded functions $f_1, \ldots, f_\ell\in L^\infty(\mu)$. Then 
\begin{multline*}
    \brac{\limsup_{N\to\infty}\sup_{\norm{f_1}_\infty, \ldots, \norm{f_\ell}_\infty\leq 1}\sup_{|c_n|\leq 1}\norm{\E_{n\in[N]}c_n\cdot A_{nN}(f_1, \ldots, f_\ell)}_{L^2(\mu)}}^d\\ 
    \leq \limsup_{R\to\infty}\sup_{\norm{f_1}_\infty, \ldots, \norm{f_\ell}_\infty\leq 1}\sup_{|c_n|\leq 1} \E\limits_{N\in[R]} \norm{\E_{n\in(N, N+L(N)]}c_n\cdot A_{nR}(f_1, \ldots, f_\ell)}_{L^2(\mu)}^d.
\end{multline*}
\end{lemma}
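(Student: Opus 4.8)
The plan is to prove Lemma~\ref{L: double averaging} by a direct application of Lemma~\ref{L: finitary van der Corput} (the van der Corput inequality) combined with a smoothing argument that replaces a single Ces\`aro average by a double average over short intervals. First I would fix 1-bounded functions $f_1, \ldots, f_\ell$ and a 1-bounded sequence $(c_n)$ realizing (up to $\eps$) the supremum on the left-hand side for a given large $N$, and write $v_n = c_n\cdot A_{nN}(f_1, \ldots, f_\ell)\in L^2(\mu)$, which is $1$-bounded. The key elementary observation is the standard fact that for any $1\prec L(t)\prec t$ one has
\begin{align*}
    \E_{n\in[N]}v_n = \E_{N'\in[N]}\E_{n\in(N', N'+L(N'))}v_n + o_{N\to\infty}(1),
\end{align*}
where the error is controlled because the short intervals $(N', N'+L(N')]$ overlap ``smoothly'' as $N'$ ranges over $[N]$ and $L(N')\ll L(N) = o(N)$; quantitatively one compares $\sum_{n\le N}$ with a weighted sum in which each $n$ gets weight proportional to the number of $N'$ for which $n\in(N', N'+L(N')]$, and this weight is $(1+o(1))L(n)$ after normalization, with the discrepancy near the endpoints of size $O(L(N)/N) = o(1)$.

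Next, by the triangle inequality in $L^2(\mu)$ and Jensen's (H\"older's) inequality applied to the convex function $x\mapsto x^d$, one obtains
\begin{align*}
    \norm{\E_{n\in[N]}v_n}_{L^2(\mu)}^d \leq \brac{\E_{N'\in[N]}\norm{\E_{n\in(N', N'+L(N'))}v_n}_{L^2(\mu)}}^d + o_{N\to\infty}(1) \leq \E_{N'\in[N]}\norm{\E_{n\in(N', N'+L(N'))}v_n}_{L^2(\mu)}^d + o_{N\to\infty}(1).
\end{align*}
The inner norm is exactly of the shape appearing on the right-hand side of the statement, with $R = N$, the same functions $f_1, \ldots, f_\ell$, and the same $1$-bounded weight $(c_n)$ (restricted to each short interval). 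Taking the supremum over $f_1, \ldots, f_\ell$ and $(c_n)$ on the right, then letting $N\to\infty$ on both sides (and noting that the $o_{N\to\infty}(1)$ term vanishes in the limit), yields the claimed inequality after observing that $\limsup_{N\to\infty}$ of the left side raised to the $d$-th power is bounded by $\limsup_{R\to\infty}$ of the right side.

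I expect the main technical point — though still routine — to be the justification of the smoothing step: carefully bookkeeping the weights when one rewrites $\E_{n\in[N]}v_n$ as an average over $N'\in[N]$ of the short averages $\E_{n\in(N', N'+L(N'))}v_n$, and checking that the discrepancy between this and the uniform average over $[N]$ is $o_{N\to\infty}(1)$ uniformly in the $1$-bounded choices of $v_n$. This is where the hypotheses $1\prec L(t)$ (so the short intervals are nonempty and long enough for the averaging to make sense) and $L(t)\prec t$ (so the short intervals are genuinely short, keeping boundary effects negligible) are both used. Since this is essentially \cite[Lemma 3.3]{Ts22} adapted to the present setting with an extra $d$-th power handled by convexity, I would cite that argument for the details and only indicate the modifications: the presence of the auxiliary functions $A_{nN}$ and weights $c_n$ changes nothing in the argument because everything is done at the level of the abstract $1$-bounded vectors $v_n$, and the passage from the first power to the $d$-th power is an immediate consequence of Jensen's inequality.
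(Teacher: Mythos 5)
Your proposal is correct and follows essentially the same route as the paper's proof: for each scale you fix near-optimal $f_1,\ldots,f_\ell$ and $(c_n)$, invoke the smoothing argument of \cite[Lemma 3.3]{Ts22} to replace the long average by the double average over short intervals (uniformly over $1$-bounded vectors), and conclude with the triangle inequality plus Jensen/H\"older to push the $d$-th power inside the average over the short-interval left endpoints before bounding by the suprema and taking $\limsup$. The only differences are cosmetic (you apply the convexity step at fixed $N$ rather than after passing to the limit), so there is no gap.
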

\begin{proof}
    Let 
    \begin{align*}
        \veps = \limsup_{N\to\infty}\sup_{\norm{f_1}_\infty, \ldots, \norm{f_\ell}_\infty\leq 1}\sup_{|c_n|\leq 1}\norm{\E_{n\in[N]}c_n\cdot A_{nN}(f_1, \ldots, f_\ell)}_{L^2(\mu)}.
    \end{align*}
    We can find 1-bounded functions $f_{jN}$ and constants $c_{nN}$ approximating the suprema well enough so that
\begin{align*}
     \veps = \limsup_{N\to\infty}\norm{\E_{n\in[N]}c_{nN}\cdot A_{nN}(f_{1N}, \ldots, f_{\ell N})}_{L^2(\mu)}.
\end{align*}    
Setting $B_{nN} = c_{nN}\cdot A_{nN}(f_{1N}, \ldots, f_{\ell N})$ and using the same argument as in the proof of \cite[Lemma 3.3]{Ts22}, we observe that
\begin{align*}
    \lim_{R\to\infty}\norm{\E_{n\in[R]}B_{nR} - \E\limits_{N\in[R]}\E_{n\in(N, N+L(N)]}B_{nR}}_{L^2(\mu)} = 0,
\end{align*}
and so
\begin{align*}
    \limsup_{R\to\infty}\E\limits_{N\in[R]}\norm{\E_{n\in(N, N+L(N)]}c_{nR}\cdot A_{nR}(f_{1R}, \ldots, f_{\ell R})}_{L^2(\mu)}\geq \veps.
\end{align*}
The bound then follows by bounding 
\begin{multline*}
    \norm{\E_{n\in(N, N+L(N)]}c_{nR}\cdot A_{nR}(f_{1R}, \ldots, f_{\ell R})}_{L^2(\mu)}\\ \leq \sup_{\norm{f_1}_\infty, \ldots, \norm{f_\ell}_\infty\leq 1}\sup_{|c_n|\leq 1}\E\limits_{N\in[R]} \norm{\E_{n\in(N, N+L(N)]}c_n\cdot A_{nR}(f_1, \ldots, f_\ell)}_{L^2(\mu)},
\end{multline*}
taking the limit $R\to\infty$ 
and applying the H\"older inequality.
\end{proof}

\section{Seminorm estimates for linear and sublinear sequences}\label{S: linear and sublinear}
Having listed all the prerequisites, we are ready to give the first applications of our generalized box seminorms. In this section, we will prove that these seminorms quantitatively control multiple ergodic averages along linear and sublinear Hardy sequences. We start with a standard lemma that allows us to bound finitary multiple ergodic averages along linear functions.
\begin{lemma}\label{L: linear averages}
    Let $(X, \CX, \mu, T_1,...,T_k)$ be a system and $\balpha_{1}, \ldots, \balpha_\ell\in\R^k$ be nonzero. For any $M, N\in\N$ with $M\leq N$
    and 1-bounded $f_1, \ldots, f_\ell\in L^\infty(\mu)$, we have
    \begin{multline*}
        \sup_{|c_n|\leq 1}\norm{\E_{n\in[N]}c_n \cdot \prod_{j\in[\ell]}T^{\floor{\balpha_j n}}f_j}_{L^2(\mu)}^{2^{\ell+1}}\\
        \ll_{k, \ell} \E_{\um, \um'\in[\pm M]^\ell}\abs{\int \Delta_{\balpha_1, \balpha_1 - \balpha_\ell, \ldots, \balpha_1 - \balpha_{2}; (\um, \um')} f_1\, d\mu}^2 + \frac{M}{N}.
    \end{multline*}
\end{lemma}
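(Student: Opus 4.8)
The plan is to prove Lemma~\ref{L: linear averages} by $\ell$ successive applications of the van der Corput inequality (Lemma~\ref{L: finitary van der Corput}), followed by one additional Cauchy--Schwarz step to clean up the rounding errors. This is the ergodic-theoretic analogue of the classical Host--Kra argument for the linear average $\E_{n\in[N]}T^n f_1\cdots T^{\ell n}f_\ell$, adapted to (i) commuting transformations $T_1,\dots,T_k$, (ii) real linear iterates $\floor{\balpha_j n}$ rather than integer ones, and (iii) the presence of a $1$-bounded weight $c_n$, which is harmless since the van der Corput inequality does not care about such weights (the cross term $\langle v_n,v_{n+h}\rangle$ produces $c_n\overline{c_{n+h}}$, still $1$-bounded).

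The first step is to compose the integrand with $T^{-\floor{\balpha_\ell n}}$ to normalize so that the last iterate becomes the identity, at the cost of replacing each $\balpha_j$ by $\balpha_j-\balpha_\ell$ for $j<\ell$ and turning $f_\ell$ into an unconjugated factor with no iterate. Then apply the first part of Lemma~\ref{L: finitary van der Corput} with shift parameter bounded by $M$: the function $f_\ell$ with no $n$-dependence drops out (after Cauchy--Schwarz), and we are left with $\E_{h\in[\pm M]}$ of an average over $n$ of $\ell-1$ factors of the form $T^{\floor{(\balpha_j-\balpha_\ell)(n+h)}-\floor{(\balpha_j-\balpha_\ell)n}}(\cdot)$ applied to multiplicative derivatives $\Delta$ of the $f_j$. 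Here one must be careful: $\floor{\beta(n+h)}-\floor{\beta n}$ is \emph{not} equal to $\floor{\beta h}$, but the difference $\floor{\beta(n+h)}-\floor{\beta n}-\floor{\beta h}$ takes only boundedly many values (it lies in $\{-1,0,1\}$ coordinatewise). One records this error term, partitions $[N]$ (or rather the $h$-range) into $O_k(1)$ pieces on which it is constant, and absorbs the constant into a modified weight via Lemma~\ref{L: errors} (or directly, since a constant integer shift can be composed away and reintroduced by Cauchy--Schwarz at the end); the upshot is that the iterate effectively becomes $T^{\floor{(\balpha_j-\balpha_\ell)h}}$, i.e.\ genuinely linear in the new variable $h$. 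Iterating this $\ell-1$ more times — each application killing one more function and introducing one more pair of box-seminorm variables $(m_i,m_i')$ — reduces us to an expression involving only $f_1$ and its iterated multiplicative derivative along the directions $\balpha_1,\ \balpha_1-\balpha_\ell,\ \dots,\ \balpha_1-\balpha_2$ (the order comes from peeling off $f_\ell, f_{\ell-1},\dots,f_2$ in turn, which produces exactly the difference vectors $\balpha_1-\balpha_\ell,\dots,\balpha_1-\balpha_2$, with $\balpha_1$ itself left over from normalizing against the first factor at the final stage). A final application of Cauchy--Schwarz (to remove the residual bounded error shifts $\br$ accumulated along the way, as in the proof of Lemma~\ref{L: concatenation degree 1}) turns the expression into $\abs{\int\Delta_{\balpha_1,\balpha_1-\balpha_\ell,\dots,\balpha_1-\balpha_2;(\um,\um')}f_1\,d\mu}^2$, and collecting the error terms $\frac{H}{N}$ from each van der Corput step (all with $H\le M$, so their sum is $O_\ell(M/N)$) gives the stated bound, with the total power $2^{\ell+1}$ being the product of the $\ell$ factors of $2$ from van der Corput and the final factor of $2$ from the last Cauchy--Schwarz that produces the square of the integral.

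The main obstacle is bookkeeping rather than conceptual: one must track carefully which difference vectors $\balpha_1-\balpha_j$ appear and in which slots of the final box seminorm, ensure the accumulated rounding errors $\floor{\cdot}-\floor{\cdot}-\floor{\cdot}$ really do take only $O_k(1)$ values (this uses $\abs{\balpha_j}$ being fixed, so that the error is bounded independently of $n,h$), and verify that composing away a constant integer shift and later reinstating it by Cauchy--Schwarz does not disturb the structure — this is exactly the manoeuvre already used in the proofs of Lemmas~\ref{L: concatenation degree 1} and \ref{L: concatenation lemma}, so it can be invoked almost verbatim. A secondary subtlety is that the weight $c_n$ and the supremum over it persist through every step: since $\abs{c_n}\le 1$ and van der Corput only ever multiplies $c_n$ by conjugates of other $c$'s, the product remains $1$-bounded, so at each stage we may freely replace the running weight by a fresh arbitrary $1$-bounded weight (which is what $\sup_{\abs{c_n}\le 1}$ on the left already allows), and by the end the weight has been entirely dominated and can be dropped. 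No new ideas beyond Lemmas~\ref{L: finitary van der Corput} and \ref{L: errors} and the elementary integer-part estimate are needed.
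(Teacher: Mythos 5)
The paper itself gives only a one-line justification (``standard, several applications of Lemma~\ref{L: finitary van der Corput}''), and your overall plan — iterated van der Corput with Lemma~\ref{L: errors} to manage the floor-function errors, plus a final Cauchy--Schwarz — is the right one and matches the paper's outline in the introduction. However, there is a genuine bookkeeping error in where you place the normalization, and it changes which directions appear in the final seminorm.

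You compose with $T^{-\floor{\balpha_\ell n}}$ \emph{before} the first van der Corput step. Once you do that, $f_1$ carries iterate $\floor{(\balpha_1-\balpha_\ell)n}$, and the very first $\Delta$ you pick up for $f_1$ is in direction $\balpha_1-\balpha_\ell$, not $\balpha_1$. Tracing through, the directions you accumulate are $\balpha_1-\balpha_\ell,\ \balpha_1-\balpha_{\ell-1},\ \dots,\ \balpha_1-\balpha_2$ — exactly $\ell-1$ of them — and at the final stage $f_1$ has iterate $\balpha_1-\balpha_2$, so the last van der Corput either duplicates $\balpha_1-\balpha_2$ or is vacuous, but in no case produces $\balpha_1$. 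Your sentence ``with $\balpha_1$ itself left over from normalizing against the first factor at the final stage'' is therefore not coherent: after the initial normalization, the vector $\balpha_1$ never reappears. The resulting finitary box quantity has $\ell-1$ (or a duplicated $\ell$) directions rather than the $\ell$ directions $\balpha_1,\ \balpha_1-\balpha_\ell,\dots,\balpha_1-\balpha_2$ of the statement, and the exponents then also come out as $2^\ell$ rather than $2^{\ell+1}$. Passing from your weaker finitary quantity to the stated one is not free: the finitary analogue of the monotonicity Lemma~\ref{L: monotonicity property} does not hold without additional averaging, so this is a real gap, not a cosmetic one.

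The fix is simple: apply the van der Corput inequality \emph{first}, and only then compose the integral with $T^{-\floor{\balpha_\ell n}}$. Then the $j$-th factor becomes $T^{\floor{\balpha_j n}-\floor{\balpha_\ell n}}f_j\cdot T^{\floor{\balpha_j(n+h_1)}-\floor{\balpha_\ell n}}\overline{f_j}$, which after Lemma~\ref{L: errors} has the form $T^{\floor{(\balpha_j-\balpha_\ell)n}}\bigl(T^{r}f_j\cdot T^{\floor{\balpha_j h_1}+r'}\overline{f_j}\bigr)$: the multiplicative derivative of $f_j$ is along $\balpha_j$, not $\balpha_j-\balpha_\ell$, while the new iterate is $\balpha_j-\balpha_\ell$ and the $\ell$-th factor becomes $n$-free. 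Iterating — normalizing against the currently-last iterate \emph{after} each van der Corput — the $\Delta$-directions for $f_1$ come out as $\balpha_1$ (round $1$), $\balpha_1-\balpha_\ell$ (round $2$), $\balpha_1-\balpha_{\ell-1}$ (round $3$), $\dots$, $\balpha_1-\balpha_2$ (round $\ell$), which is exactly the tuple in the statement. With this correction your proof goes through; the remaining bits (tracking the $1$-bounded weight, using the errors lemma to constant-ify rounding terms, the final Cauchy--Schwarz to clean up the shifts $\br$ and produce $\abs{\int\cdot\,d\mu}^2$) are all fine as you have them. One small secondary remark: since the statement's $\Delta_{\cdot;(\um,\um')}$ involves \emph{pairs} $(m_i,m_i')$, it is cleaner to use the second (two-sided) form of Lemma~\ref{L: finitary van der Corput} at each step rather than the first, though the two are interchangeable up to one extra Cauchy--Schwarz.
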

We recall here the convention \eqref{E: iterated multiplicative derivative} for the iterated multiplicative derivative.

The proof of Lemma~\ref{L: linear averages} is standard and follows from several applications of Lemma~\ref{L: finitary van der Corput}. Taking the supremum over 1-bounded $f_2, \ldots, f_\ell$ followed by the limits $N\to\infty$ and $M\to\infty$ (in this order), we obtain the following infinitary corollary.
\begin{corollary}\label{C: linear averages}
    Let $(X, \CX, \mu, T_1,...,T_k)$ be a system and $\balpha_{1}, \ldots, \balpha_\ell\in\R^k$ be nonzero. For any 1-bounded $f_1\in L^\infty(\mu)$, we have
    \begin{multline*}
        \limsup_{N\to\infty}\sup_{\substack{\norm{f_2}_\infty, \ldots, \norm{f_\ell}_\infty\leq 1}}\sup_{|c_n|\leq 1}\norm{\E_{n\in[N]}c_n \cdot \prod_{j\in[\ell]}T^{\floor{\balpha_j n}}f_j}_{L^2(\mu)}
        \ll_{k, \ell} \nnorm{f_1}_{\balpha_1, \balpha_1 - \balpha_\ell, \ldots, \balpha_1 - \balpha_{2}}^+.
    \end{multline*}    
\end{corollary}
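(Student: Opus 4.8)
The plan is to prove Lemma~\ref{L: linear averages} by $\ell$ successive applications of the van der Corput inequality (Lemma~\ref{L: finitary van der Corput}), followed by a cleanup step that replaces integer parts of sums with sums of integer parts, and then deducing Corollary~\ref{C: linear averages} as an immediate consequence by taking suprema and limits in the right order.

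\begin{proof}[Proof of Lemma~\ref{L: linear averages}]
We may assume $f_1,\dots,f_\ell$ are $1$-bounded. Write $v_n := c_n\prod_{j\in[\ell]}T^{\floor{\balpha_j n}}f_j$, so the left-hand side is $\norm{\E_{n\in[N]}v_n}_{L^2(\mu)}^{2^{\ell+1}}$. We apply the second part of Lemma~\ref{L: finitary van der Corput} with parameter $M$ in place of $H$; after composing the resulting integral with $T^{-\floor{\balpha_\ell(n+h)}}$ (to strip off the $\ell$-th factor, which depends on $n$ only through the shift) and changing variables, we reduce to an expression of the form
\begin{align*}
\E_{m_\ell,m_\ell'\in[\pm M]}\sup_{|c_n'|\le 1}\norm{\E_{n\in[N]}c_n'\prod_{j\in[\ell-1]}T^{\floor{\balpha_j(n+m_\ell)}-\floor{\balpha_\ell(n+m_\ell)}}f_j\cdot \overline{(\cdots)}}_{L^2(\mu)}^{2}
\end{align*}
up to an error $O(M/N)$ and up to finitely many error vectors coming from $\floor{\balpha_j(n+m_\ell)}-\floor{\balpha_j n}-\floor{\balpha_j m_\ell}$, which are handled by Lemma~\ref{L: errors}. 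Iterating this $\ell$ times — at the $r$-th step eliminating the factor corresponding to $\balpha_{\ell-r+1}$ by composing with its inverse — we are left, after $\ell$ van der Corput steps and a final Cauchy--Schwarz, with an integral of the shape $\int \Delta_{\balpha_1,\ \balpha_1-\balpha_\ell,\ \ldots,\ \balpha_1-\balpha_2;\ (\um,\um')}f_1\,d\mu$ averaged over $\um,\um'\in[\pm M]^\ell$, plus a total error $O_{k,\ell}(M/N)$; the appearance of the differences $\balpha_1-\balpha_j$ is exactly the usual PET bookkeeping (each shift introduced by van der Corput survives in one factor and cancels in the others). The main technical nuisance — and the step I expect to cost the most care — is controlling the error terms $\br_j(n)\in\{0,1\}^k$ produced each time we swap $\floor{\alpha(n+m)}$ for $\floor{\alpha n}+\floor{\alpha m}$: these must be absorbed uniformly via Lemma~\ref{L: errors} without the seminorm directions changing, which is why the statement is phrased with $\sup_{|c_n|\le 1}$ and the constant is allowed to depend on $k,\ell$. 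Collecting the errors and taking absolute values inside the integral (legitimate since after passing to the product system the resulting quantity is a square) yields the claimed bound.
\end{proof}

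\begin{proof}[Proof of Corollary~\ref{C: linear averages}]
Apply Lemma~\ref{L: linear averages}, take the supremum over $1$-bounded $f_2,\dots,f_\ell$ and over $|c_n|\le 1$ on the left, then let $N\to\infty$ (killing the $M/N$ term) and finally $M\to\infty$; the right-hand side converges to $\brac{\nnorm{f_1}_{\balpha_1,\ \balpha_1-\balpha_\ell,\ \ldots,\ \balpha_1-\balpha_2}^+}^{2^{\ell+1}}$ by the definition of the $\nnorm{\cdot}^+$ seminorm together with the existence of the limit guaranteed by Lemma~\ref{L: convergence}. Taking $2^{\ell+1}$-th roots gives the claim.
\end{proof}
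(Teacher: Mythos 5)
Your derivation of the corollary is exactly the paper's: apply Lemma~\ref{L: linear averages}, take the suprema over $1$-bounded $f_2,\dots,f_\ell$ and the weights, then let $N\to\infty$ (killing $M/N$) and $M\to\infty$, identifying the limit with $\brac{\nnorm{f_1}^+_{\balpha_1,\balpha_1-\balpha_\ell,\dots,\balpha_1-\balpha_2}}^{2^{\ell+1}}$ via Lemma~\ref{L: convergence} and taking roots. Your accompanying van der Corput sketch of the lemma matches what the paper declares "standard," so there is nothing genuinely different here.
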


The seminorm control for linear sequences can then be used to prove a similar result for sublinear sequences, whose proof requires considerably more effort, though. We first prove a result for averages twisted by dual sequences and then use it to derive a stronger corollary for sublinear averages without dual twists.
\begin{proposition}\label{P: sublinear}
    Let $d, k, \ell, m\in\N$, $J\in\N_0$, $0\leq m_1 \leq m$ and $g_1,...,g_m, b_1, \ldots, b_J\in \mathcal{H}$ be Hardy functions satisfying the following growth conditions:
    \begin{enumerate}
        \item $1 \prec g_1(t) \prec \cdots \prec g_m(t) \prec t$;
        \item $g_i(t)\ll \log t$ iff $i\leq m_1$;
        \item $b_j(t)\ll t^d$ for every $j\in[J]$.
    \end{enumerate}
    For $j\in[\ell]$, let 
    \begin{align*}
        \ba_j = \sum_{i\in[m]}\balpha_{ji} g_i + \br_j\in\CH^k \quad\textrm{for\; some}\quad \balpha_{ji}\in\R^k\quad \textrm{and}\quad \lim_{t\to\infty}\br_j(t)=\mathbf{0},
    \end{align*}
    and for distinct $j,j'\in[0, \ell]$, let
    \begin{align*}
        d'_{jj'} = \max\{i\in[m]:\; \balpha_{ji}\neq \balpha_{j'i}\}
    \end{align*}
    (where $\ba_0 = \mathbf{0}$ and $\balpha_{0i} = \mathbf{0}$). 
    Suppose that the following two conditions are satisfied:
    \begin{enumerate}
        \item for every $j\in[0,\ell]\setminus\{1\}$, we have $d'_{1j}>m_1$, i.e., at least one coordinate of $\ba_1 - \ba_{j}$ grows faster than log; 
        \item $\balpha_{1m}\neq \textbf{0}$, i.e., $\ba_1$ has maximum growth.
    \end{enumerate}
    Then there exists a positive integer $s=O_{d, J, \ell, m}(1)$ such that for any system $(X,\CX, \mu,T_1,$ $...,T_k)$ and 1-bounded $f_1\in L^\infty(\mu)$,
    we have 
    \begin{multline*}
         \limsup_{N\to\infty}\sup_{\substack{\norm{f_2}_\infty, \ldots, \norm{f_\ell}_\infty\leq 1,\\ \CD_1, \ldots, \CD_J \in \FD_d}} \sup_{|c_n|\leq 1}\norm{\E_{n\in [N]} c_{n}\cdot \prod_{j\in [\ell]}  T^{\floor{\ba_j(n)}} f_j \cdot \prod_{j\in[J]}\CD_j(\floor{b_j(n)})}_{L^2(\mu)}^{O_{d, J, \ell, m}(1)}\\ 
         \ll_{d, J, k, \ell, m} \nnorm{f_1}_{\balpha_{1d'_{10}}^s, (\balpha_{1d'_{12}}-\balpha_{2d'_{12}})^s, \ldots,  (\balpha_{1d'_{1\ell}}-\balpha_{\ell d'_{1\ell}})^s}^+.
    \end{multline*}
\end{proposition}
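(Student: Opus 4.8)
The plan is to reduce this sublinear case to the linear case (Corollary~\ref{C: linear averages}) by passing to short intervals, on each of which the sublinear functions $g_i$ are, after Taylor expansion, essentially affine. First I would apply Lemma~\ref{L: double averaging} to replace the single average $\E_{n\in[N]}$ by a double average $\E_{N\in[R]}\E_{n\in(N,N+L(N)]}$ for a suitable $L\in\CH$ with $1\prec L(t)\prec t$; the key is to choose $L$ so small that on each window $(N,N+L(N)]$, every $g_i$ with $i<m$ (and every $b_j$ of bounded growth after appropriate normalization) becomes constant, while $g_m$ becomes affine in the local variable $n$ with slope $g_m'(N)\asymp g_m(N)/N$. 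Concretely, Taylor expanding $g_i(N+n) = g_i(N) + g_i'(N)n + O(g_i''(\xi)L(N)^2)$, one picks $L$ in the "gap" so that the higher-order terms vanish in the limit; since $1\prec g_1\prec\cdots\prec g_m\prec t$ one can always find such an $L$ lying strictly between consecutive growth rates. After swapping the integer part of a sum with the sum of integer parts (incurring errors in $\{0,1\}^k$ that I remove via Lemma~\ref{L: errors}), the inner average becomes
\begin{align*}
    \E_{n\in(N,N+L(N)]} c_n\cdot \prod_{j\in[\ell]} T^{\floor{\balpha_{jm}g_m'(N)\,n}}\!\left(T^{\floor{\ba_j(N)-\balpha_{jm}g_m'(N)N}}f_j\right)\cdot(\text{dual terms, now constant in }n),
\end{align*}
up to lower-order errors; the dual sequences $\CD_j(\floor{b_j(n)})$ are constant on the window once $b_j$ grows no faster than the window length allows, or else they can be peeled off first using Lemma~\ref{L: removing duals finitary}.

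Next I would split each short interval into arithmetic progressions of common difference $q(N)\asymp 1/g_m'(N)$ (rounded to an integer), so that after the change of variables $n = q(N)n' + r$ the linear coefficient $\balpha_{jm}g_m'(N)q(N)$ becomes $\approx\balpha_{jm}$, independent of $N$. On each such progression the inner average is a finitary linear multiple ergodic average to which Lemma~\ref{L: linear averages} applies: it is controlled by
\begin{align*}
    \left(\E_{\um,\um'\in[\pm M]^\ell}\Babs{\int \Delta_{\balpha_{1m},\,\balpha_{1m}-\balpha_{\ell m},\,\ldots,\,\balpha_{1m}-\balpha_{2m};(\um,\um')}\!\left(T^{\floor{\ba_1(N)-\balpha_{1m}g_m'(N)N}}f_1\right)d\mu}^2\right)^{1/2^{\ell+1}}
\end{align*}
plus an $M/L(N)$ error. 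Composing the integral with $T^{-\floor{\ba_1(N)-\balpha_{1m}g_m'(N)N}}$ and absorbing that shift (which commutes with the $T_i$'s and can be handled by one more Cauchy--Schwarz / pigeonhole step as in Lemma~\ref{L: seminorms of subgroups}), we are left with a bound on the average over $N$ of generalized box seminorms of $f_1$ in the fixed directions $\balpha_{1m}, \balpha_{1m}-\balpha_{2m},\ldots,\balpha_{1m}-\balpha_{\ell m}$.

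Here is where the case distinction matters, exactly as in \cite[Proposition~5.3]{Ts22}. If $\balpha_{jm}\ne\balpha_{1m}$ for all $j\ne 1$, i.e.\ $d'_{1j}=m$ for all $j$, then after taking $N\to\infty$ the box seminorm directions are already $\balpha_{1m}$ and the nonzero vectors $\balpha_{1m}-\balpha_{jm}$, and we are done with $d'_{10}=m$, $d'_{1j}=m$. If, however, $\balpha_{jm}=\balpha_{1m}$ for some $j\ne 1$, the corresponding direction $\balpha_{1m}-\balpha_{jm}$ vanishes and gives no information; we must then recurse on the average over $N$, whose iterates now involve the functions $g_1,\ldots,g_{m-1}$ with leading coefficients $\balpha_{j,m-1}$ — note the hypothesis $d'_{1j}>m_1$ guarantees that at least one nonpolynomial-type direction survives, and $d'_{10}\ge$ (first $i$ with $\balpha_{1i}\ne\mathbf 0$) is well-defined because $\balpha_{1m}\ne\mathbf 0$. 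Iterating this reduction at most $m$ times (each time lowering the top index) produces, via Lemma~\ref{L: convergence} to interchange limits and the inductive formula \eqref{E: inductive formula}, the asserted bound by $\nnorm{f_1}^+_{\balpha_{1d'_{10}}^s,(\balpha_{1d'_{12}}-\balpha_{2d'_{12}})^s,\ldots,(\balpha_{1d'_{1\ell}}-\balpha_{\ell d'_{1\ell}})^s}$, where the multiplicity $s=O_{d,J,\ell,m}(1)$ arises from accumulating the constantly-many van der Corput / Cauchy--Schwarz steps and using Lemma~\ref{L: plus vs normal seminorm} and monotonicity (Lemma~\ref{L: monotonicity property}) to fold the extra copies of each direction into the $s$-fold seminorm.

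The main obstacle I anticipate is the careful bookkeeping when passing to short intervals and arithmetic progressions: one must simultaneously (a) choose $L$ and the progression step $q(N)$ compatibly with \emph{all} the growth rates $g_1\prec\cdots\prec g_m$ so that the non-leading terms freeze and the leading term linearizes with an $N$-independent slope, (b) control the Taylor remainders and the integer-part swapping errors uniformly in $N$ (this is where the precise Hardy-field estimates, and ultimately Appendix~\ref{A: approximations}, are needed), and (c) make sure that the dual twists $\CD_j(\floor{b_j(n)})$ are genuinely removable — either constant on the window or killed by Lemma~\ref{L: removing duals finitary} — without their count or degree blowing up the final $s$. The recursion in the "equal leading coefficient" case also needs a clean induction hypothesis; I would set it up as an induction on $m$ (the number of sublinear building blocks), with the base case $m=1$ being an immediate application of Corollary~\ref{C: linear averages} after the short-interval reduction.
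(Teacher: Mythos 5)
Your approach is essentially the paper's: pass to short intervals via Lemma~\ref{L: double averaging}, Taylor-expand so that $g_1,\ldots,g_{m-1}$ freeze and $g_m$ becomes affine, rescale to arithmetic progressions of step $\asymp |g_m'(N)|^{-1}$ so the slope becomes $N$-independent, apply the linear estimates (Lemma~\ref{L: linear averages}/Corollary~\ref{C: linear averages}), and then handle coinciding leading coefficients by recursing on the average over $N$. Two points where the paper's execution differs and where your sketch, as written, would run into trouble:

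(1) \emph{Dual removal must precede Taylor expansion.} The sequences $b_j$ only satisfy $b_j(t)\ll t^d$, so $\CD_j(\floor{b_j(n)})$ is generally far from constant on a window of length $L(N)\prec N$; your fallback ``constant on the window'' does not hold. The paper applies Lemma~\ref{L: removing duals finitary} \emph{first}, which replaces the dual factors by an average over shift parameters $\uk,\uk'\in[\pm K_N]^{s_1}$ with $K_N$ chosen so that $K_N\prec L(N)$; these shifts then ride along through the Taylor expansion and change of variables, and are ultimately removed via the Gowers--Cauchy--Schwarz inequality at the very end. If you peel off the duals after linearizing instead, the error control in Lemma~\ref{L: removing duals finitary} (which needs $K/N\to 0$ at the scale of the remaining average) and the bookkeeping of the extra shift variables become considerably messier.

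(2) \emph{The induction parameter.} You propose inducting on $m$. The paper inducts on the length $\ell$: after applying Lemma~\ref{L: linear averages} to split off the indices $j$ with $\balpha_{jm}\neq\balpha_{1m}$, one is left with an average over $N$ involving only the $\ell_0-1$ functions (in the product system) corresponding to indices sharing the leading coefficient, which is a strictly shorter average. This is cleaner because the statement being proved is already quantified over all $m$, so the inductive hypothesis for $\ell-1$ applies verbatim; an induction on $m$ would need a separate argument that the number of distinct sequences genuinely shrinks, and the base case $m=1$ does not in itself imply a single application of Corollary~\ref{C: linear averages} when $\ell>1$ unless the leading coefficients are all distinct. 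In practice both $m$ and $\ell$ decrease together, so the two inductions are close, but the $\ell$-induction is the one that cleanly closes.

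Aside from these two points, your identification of the key steps and the anticipated obstacles (compatible choice of $L$ and $q(N)$, error control for Taylor remainders and integer-part swaps via Lemma~\ref{L: errors}) is on target.
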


\begin{proof}[Proof]
We assume that $m>m_1$ (i.e. $g_m\succ \log$) since in the case $m=m_1$ the proposition is vacuously true. The idea of the proof is to first apply Lemma \ref{L: removing duals finitary} to remove the dual sequences, then pass to short intervals appropriately chosen so that on Taylor expanding $g_1, \ldots, g_m$, all but the contribution of $g_m$ to $\ba_1, \ldots, \ba_\ell$ becomes constant. Subsequently, we pass to arithmetic progressions on which our sequences become linear, and for those we can obtain the claimed seminorm bound using Corollary \ref{C: linear averages}.

We let all the quantities in this proof depend on $d, k, m, J, \ell$ without mentioning the dependence explicitly, noting however that $s$ (to be chosen later) and powers of $\veps$ do not depend on $k$.

\smallskip
 \textbf{Step 1: Passing to short intervals, removing the dual functions and changing  variables.}
\smallskip

Let
\begin{align*}
    \veps = \limsup_{N\to\infty}\sup_{\substack{\norm{f_2}_\infty, \ldots, \norm{f_\ell}_\infty\leq 1,\\ \CD_1, \ldots, \CD_J \in \FD_d}} \sup_{|c_n|\leq 1}\norm{\underset{n\in [N]}{\E} c_{n}\cdot \prod_{j\in[\ell]}  T^{\floor{\ba_j(n)}} f_j\cdot \prod_{j\in[J]}\CD_j(\floor{b_j(n)})}_{L^2(\mu)}.
\end{align*}
 Using Lemma \ref{L: double averaging}, we pass to short intervals of some length $1\prec L(t)\prec t$ to be chosen later, obtaining
\begin{multline*}
    \limsup_{R\to\infty}\sup_{\substack{\norm{f_2}_\infty, \ldots, \norm{f_\ell}_\infty\leq 1,\\ \CD_1, \ldots, \CD_J \in \FD_d}} \sup_{|c_n|\leq 1}\E\limits_{N\in[R]}\\
    \norm{\E_{n\in [L(N)]} c_{N+n}\cdot \prod_{j\in[\ell]}  T^{\floor{\ba_j(N+n)}} f_j\cdot \prod_{j\in[J]}\CD_j(\floor{b_j(N+n)})}_{L^2(\mu)}\geq \veps.
\end{multline*}
Subsequently, we apply Lemma \ref{L: removing duals finitary} in order to remove the functions $\CD_1, \ldots, \CD_J$, so that
\begin{multline*}
    \limsup_{R\to\infty}\;\sup_{\substack{\norm{f_2}_\infty, \ldots, \norm{f_\ell}_\infty\leq 1}} \;\E\limits_{N\in[R]}\;\E\limits_{\uk, \uk'\in[\pm K_N]^{s_1}}\;\sup_{|c_n|\leq 1}\\
    \norm{\E_{n\in [L(N)]} c_n \cdot \prod_{j\in[\ell]} \prod_{\ueps\in\{0,1\}^{s_1}}\CC^{|\ueps|}  T^{\floor{\ba_j(N+n+({\underline{1}}-\ueps) \cdot\uk + \ueps\cdot\uk')}} f_j}_{L^2(\mu)}+
    \frac{K_N}{L(N)}\gg \veps^{O(1)}
\end{multline*}
for some integer $s_1 = O(1)$ (independent of $k$) and $K_N>0$ that we choose shortly.

Recall that $\ba_j = \sum_{i\in[m]}\balpha_{ji} g_i$ and $g_1\prec \cdots \prec g_m$. We want to choose the length $L(t)$ in such a way that the functions $g_1, \ldots, g_{m-1}$ are approximately constant on each interval $(N, N+L(N)]$ while $g_m$ can be approximated by a linear polynomial. This can be accomplished by choosing any $L$ in the range 
$$g_m'(t)^{-1}\prec L(t)\prec \min(g_m''(t)^{-1/2}, g_{m-1}'(t)^{-1}).$$
That this range is nonempty can be seen as follows: the relation $g_m'(t)^{-1}\prec g_m''(t)^{-1/2}$ is a consequence of Lemma~\ref{L: degree inequality} and the sublinearity of $g_m$ while $g_m'(t)^{-1}\prec  g_{m-1}'(t)^{-1}$ follows from the L'H\^opital rule. Using Lemma~\ref{L: degree inequality} once more, we get $1\prec g_m'(t)^{-1}$ and $\min(g_m''(t)^{-1/2}, g_{m-1}'(t)^{-1})\prec t,$ from which we have $1\prec L(t)\prec t,$ and so the application of Lemma~\ref{L: double averaging} above is justified. 

We additionally let $K_N = K \floor{|g'_m(N)^{-1}|}$ for some $K>0$; with this choice, the quantity $K_N$ satisfies the upper bound $\lim\limits_{N\to\infty}\frac{K_N}{K L(N)} = 0$ for each fixed $K$. If $n\in[L(N)]$ and $\uk, \uk'\in[\pm K_N]^{s_1}$, then $$n+({\underline{1}}-\ueps) \cdot\uk + \ueps\cdot\uk' \in [-s_1 K_N, L(N)+s_1 K_N]\subseteq[-o(K L(N)), L(N)(1+o(K))],$$ and so by Taylor expanding the functions $g_i$ around $N$, we can approximate
\begin{multline*}
    g_i(N+n+({\underline{1}}-\ueps) \cdot\uk + \ueps\cdot\uk')\\
    = \begin{cases} g_m(N) + g'_m(N) (n+({\underline{1}}-\ueps) \cdot\uk + \ueps\cdot\uk') + r_{mNn\ueps\uk\uk'},\; &i = m\\
    g_i(N)+r_{iNn\ueps\uk\uk'},\;  &i\in[m-1]
    \end{cases}
\end{multline*}
for error terms $r_{iNn\ueps\uk\uk'}$ satisfying\footnote{The bound on the error term follows since for every $i<m$, the error term $r_{iNn\ueps\uk\uk'}$ coming from the Taylor expansion of $g_i(N+n)$ at $N$ equals $g'_i(N) n_0$ for some $n_0\in [-s_1 K_N, L(N)+s_1 K_N]$ and can be bounded by $r_{iNn\ueps\uk\uk'}\prec g_i'(N) L(N)(1+o(K)) \prec 1$ for every fixed $K$. A similar argument works for $i=m$, this time however we use the second derivative of $g_m$.}
\begin{align*}
    \lim_{N\to\infty}\;\max_{n\in[L(N)]}\;\max_{\uk,\uk'\in[\pm K_N]^{s_1}}|r_{iNn\ueps\uk\uk'}| = {0}\quad\textrm{for\; all}\quad (i,\ueps)\in[m]\times\{0,1\}^{s_1}.
\end{align*}

Then
\begin{align*}
    &\ba_j(N+n+({\underline{1}}-\ueps) \cdot\uk + \ueps\cdot\uk')\\
    &\qquad\qquad\qquad= \balpha_{jm} g'_m(N) (n+({\underline{1}}-\ueps) \cdot\uk + \ueps\cdot\uk')+\sum_{i\in[m]}\balpha_{ji}g_i(N) + \br_{jNn\ueps\uk\uk'}\\
    &\qquad\qquad\qquad= \balpha_{jm} g'_m(N) (n+({\underline{1}}-\ueps) \cdot\uk + \ueps\cdot\uk') + \ba_j(N)+ \br_{jNn\ueps\uk\uk'}
\end{align*}
for some error terms $\br_{jNn\ueps\uk\uk'}\in\R^k$ satisfying
\begin{align*}
    \lim_{N\to\infty}\max_{n\in[L(N)]}\max_{\uk,\uk'\in[\pm K_N]^{s_1}}|\br_{jNn\ueps\uk\uk'}| = \mathbf{0}\quad\textrm{for\; all}\quad (j,\ueps)\in[\ell]\times\{0,1\}^{s_1}.
\end{align*}
By Lemma \ref{L: errors} and the pigeonhole principle, we can find $\br_{j\ueps}\in\Z^k$ for which
\begin{multline*}
    \limsup_{R\to\infty}\;\sup_{\substack{\norm{f_2}_\infty, \ldots, \norm{f_\ell}_\infty\leq 1}}\; \E\limits_{N\in[R]}\; \E\limits_{\uk, \uk'\in[\pm K_N]^{s_1}}\; \sup_{|c_n|\leq 1}\\
    \norm{\E_{n\in [L(N)]} c_n \cdot \prod_{j\in[\ell]}\prod_{\ueps\in\{0,1\}^{s_1}}\CC^{|\ueps|}  T^{\floor{\balpha_{jm} g'_m(N) (n+({\underline{1}}-\ueps) \cdot\uk + \ueps\cdot\uk')}+\floor{\ba_j(N)}+\br_{j\ueps}} f_j}_{L^2(\mu)}\gg \veps^{O(1)}.
\end{multline*}

The next step is to remove the coefficient $g'_m(N)$ so that all the sequences become linear. This is accomplished by splitting the range $[L(N)]$ of $n$ into arithmetic progressions of difference $\floor{|g'_m(N)|^{-1}}$, and similarly with the ranges of $\uk,\uk'$. Passing to arithmetic progressions, we in effect
substitute $n\floor{|g'_m(N)|^{-1}} + r$ for $n$, $k_i\floor{|g'_m(N)|^{-1}} + r_i$ for $k_i$ and $k_i'\floor{|g'_m(N)|^{-1}} + r_i'$ for $k_i'$. Letting $$\Tilde{L}(N) =\frac{L(N)}{\floor{|g'_m(N)|^{-1}}}\succ 1$$ 
be the length of the new interval in $n$, we deduce that 
\begin{multline*}
    \limsup_{R\to\infty}\; \sup_{\substack{\norm{f_{2\ueps}}_\infty, \ldots, \norm{f_{\ell\ueps}}_\infty\leq 1}}\;
    \E\limits_{N\in[R]}\; \E\limits_{\uk,\uk'\in[\pm K]^{s_1}}\; \E_{\substack{r, r_1, \ldots, r_{s_1},\\ r_1', \ldots, r_{s_1}'\in [g'_m(N)^{-1}]}}\;
     \sup_{|c_n|\leq 1}\\ \norm{\underset{n\in [\tilde{L}(N)]}{\E} c_n \cdot \prod_{j\in[\ell]} \prod_{\ueps\in\{0,1\}^{s_1}}\CC^{|\ueps|} 
     T^{\floor{\balpha_{jm}  (n + ({\underline{1}}-\ueps) \cdot\uk + \ueps\cdot\uk') + \balpha_{jm}  g_m'(N)(r+({\underline{1}}-\ueps) \cdot\ur + \ueps\cdot\ur'))}+\floor{\ba_j(N)}+\br_{j\ueps}}
     f_{j}}_{L^2(\mu)}\\
     \gg \veps^{O(1)}.
\end{multline*}
We then split the large integer part and handle the error terms using Lemma \ref{L: errors} (possibly modifying $\br_{j\ueps}$ if needed); since $g_m'(N)(r+({\underline{1}}-\ueps) \cdot\ur + \ueps\cdot\ur') \ll 1$, this term can be ignored altogether at the cost of $O(1)$ factor in the lower bound. Thus 
\begin{multline*}
    \limsup_{R\to\infty}\; \sup_{\substack{\norm{f_{2\ueps}}_\infty, \ldots, \norm{f_{\ell\ueps}}_\infty\leq 1}}\;
    \E\limits_{N\in[R]}\; \E\limits_{\uk,\uk'\in[\pm K]^{s_1}}\;
     \sup_{|c_n|\leq 1}\\ \norm{\underset{n\in [\tilde{L}(N)]}{\E} c_n \cdot \prod_{j\in[\ell]} \prod_{\ueps\in\{0,1\}^{s_1}}\CC^{|\ueps|} T^{\floor{\balpha_{jm}n}+ \floor{\balpha_{jm}k_1^{\eps_1}}+ \cdots + \floor{\balpha_{jm}k_{s_1}^{\eps_{s_1}}}+\floor{\ba_j(N)}+\br_{j\ueps}} f_{j}}_{L^2(\mu)} \gg \veps^{O(1)}.
\end{multline*}

We choose functions $f_{jR}$ for $j\geq 2$ close enough to the supremum, while also setting $f_{1R} = f_{1}$, so that upon applying the property $$\limsup\limits_{R\to\infty}(A_R + B_R)\leq \limsup_{R\to\infty}A_R + \limsup_{R\to\infty} B_R$$ to exchange $\limsup\limits_{R\to\infty}$ with $\E\limits_{\uk, \uk'\in[\pm K]^{s_1}}$, we get
\begin{multline*}
    \E\limits_{\uk,\uk'\in[\pm K]^{s_1}}\limsup_{R\to\infty} \E\limits_{N\in[R]} \sup_{|c_n|\leq 1}\\
    \norm{\underset{n\in [\tilde{L}(N)]}{\E} c_n \cdot \prod_{j\in[\ell]}\prod_{\ueps\in\{0,1\}^{s_1}} \CC^{|\ueps|} T^{\floor{\balpha_{jm}n}+ \floor{\balpha_{jm}k_1^{\eps_1}}+ \cdots + \floor{\balpha_{jm}k_{s_1}^{\eps_{s_1}}}+\floor{\ba_j(N)}+\br_{j\ueps}} f_{jR}}_{L^2(\mu)}  \gg \veps^{O(1)}.
\end{multline*}

At this point, we split into the cases $\ell =1$ and $\ell>1$.

\smallskip
\textbf{Step 2: The case $\ell = 1$.} 
\smallskip

 If $\ell = 1$, then we can compose the integral defining the $L^2(\mu)$ norm above with $T^{-\floor{\ba_1(N)}}$, so that the expression above reduces to
\begin{multline*}
    \E_{\uk,\uk'\in[\pm K]^{s_1}}\limsup_{R\to\infty}\E\limits_{N\in[R]}
    \sup_{|c_n|\leq 1}\\
    \norm{\underset{n\in [\tilde{L}(N)]}{\E} c_n \cdot T^{\floor{\balpha_{1m}n}}\brac{\prod_{\ueps\in\{0,1\}^{s_1}}\CC^{|\ueps|} T^{\floor{\balpha_{1m}k_1^{\eps_1}}+ \cdots + \floor{\balpha_{1m}k_{s_1}^{\eps_{s_1}}}+\br_{1\ueps}} f_{1}}}_{L^2(\mu)} \gg \veps^{O(1)}.
\end{multline*}
 Corollary \ref{C: linear averages} then gives
\begin{align*}
    \E_{\uk,\uk'\in[\pm K]^{s_1}}\bignnorm{\prod_{\ueps\in\{0,1\}^{s_1}}\CC^{|\ueps|} T^{\floor{\balpha_{1m}k_1^{\eps_1}}+ \cdots + \floor{\balpha_{1m}k_{s_1}^{\eps_{s_1}}}+\br_{1\ueps}} f_{1}}_{\balpha_{1m}}^+\gg\veps^{O(1)}.
\end{align*}
Applying the H\"older inequality, taking $K\to\infty$, expanding the expression above (with the seminorm raised to an appropriate powers) as a Gowers-Cauchy-Schwarz inner product, and using the translation invariance of seminorms, we obtain $\nnorm{f_1}_{\balpha_{1m}^{s_1+1}}^+\gg \veps^{O(1)}$ from the Gowers-Cauchy-Schwarz inequality (Lemma \ref{L: GCS}).

\smallskip
\textbf{Step 3: The case $\ell>1$.}
\smallskip

We move on to the case $\ell>1$. This case is much more notationally complicated than the previous one, yet the underlying goal is simple: we want to reduce to an average of length less than $\ell$ and apply the induction hypothesis.  Reordering if necessary, we can find $\ell_0\in[\ell]$ such that $\balpha_{1m} = \balpha_{j m}$ iff $j\in[\ell_0]$. Upon setting
\begin{align*}
    F_{RN\uk\uk'} := \prod_{j\in[\ell_0]}T^{\floor{\ba_j(N)}}\brac{\prod_{\ueps\in\{0,1\}^{s_1}}\CC^{|\ueps|} T^{\floor{\balpha_{1m}k_1^{\eps_1}}+ \cdots + \floor{\balpha_{1m}k_{s_1}^{\eps_{s_1}}}+\br_{j\ueps}}f_{j R}}
\end{align*}
and
\begin{align*}
    F_{jRN\uk\uk'} := T^{\floor{\ba_j(N)}}\prod_{\ueps\in\{0,1\}^{s_1}}\CC^{|\ueps|} T^{\floor{\balpha_{jm}k_1^{\eps_1}}+ \cdots + \floor{\balpha_{jm}k_{s_1}^{\eps_{s_1}}}+\br_{j\ueps}}f_{j R}\quad \textrm{for}\quad j\in[\ell_0+1,\ell],    
\end{align*}
we have
\begin{multline*}
    \E_{\uk,\uk'\in[\pm K]^{s_1}}\limsup_{R\to\infty} \E\limits_{N\in[R]} \sup_{|c_n|\leq 1}\\
    \norm{\underset{n\in [\tilde{L}(N)]}{\E} c_{n}\cdot T^{\floor{\balpha_{1m}n}} F_{RN\uk\uk'} \cdot\prod_{j=\ell_0+1}^{\ell}T^{\floor{\balpha_{jm}n}}F_{jRN\uk\uk'}}_{L^2(\mu)}  \gg \veps^{O(1)}.
\end{multline*}
By Lemma \ref{L: linear averages} (here we use the assumption that $\balpha_{1m}\neq \mathbf{0}$), for any $H>0$ we have 
\begin{multline*}
    \E_{\uk,\uk'\in[\pm K]^{s_1}} \E_{\uh, \uh'\in[\pm H]^{\ell-\ell_0+1}} \limsup_{R\to\infty}\E\limits_{N\in[R]}\\ 
    \abs{\int \Delta_{\balpha_{1m}, \balpha_{1m} - \balpha_{\ell m}, \ldots, \balpha_{1 m} - \balpha_{(\ell_0+1)m}; (\uh, \uh')} F_{RN\uk\uk'}\, d\mu}^2 \gg \veps^{O(1)}.
\end{multline*}
The definition of $F_{RN\uk\uk'}$ gives us
\begin{align*}
    \E_{\uk,\uk'\in[\pm K]^{s_1}} \E_{\uh, \uh'\in[\pm H]^{\ell-\ell_0+1}}\limsup_{R\to\infty}\E\limits_{N\in[R]}
    \abs{\int \prod_{j\in[\ell_0]} T^{\floor{\ba_j(N)}}f_{jR\uk\uk'\uh\uh'}\, d\mu}^2  \gg \veps^{O(1)},
\end{align*}
where 
\begin{multline*}
    f_{jR\uk\uk'\uh\uh'} := \Delta_{\balpha_{1m}, \balpha_{1m} - \balpha_{\ell m}, \ldots, \balpha_{1 m} - \balpha_{(\ell_0+1)m}; (\uh, \uh')}\\
    \prod_{\ueps\in\{0,1\}^{s_1}}\CC^{|\ueps|} T^{\floor{\balpha_{1m}k_1^{\eps_1}}+ \cdots + \floor{\balpha_{1m}k_{s_1}^{\eps_{s_1}}}+\br_{j\ueps}}f_{j R}
\end{multline*}
for each $j\in[\ell_0]$, $\uk,\uk'\in[\pm K]^{s_1}$ and $\uh, \uh'\in[\pm H]^{\ell-\ell_0+1}$. Composing with $T^{-\floor{\ba_{\ell_0}(N)}}$, swapping subtraction inside integer parts, and using Lemma \ref{L: errors} to handle error terms, we find $\br_j\in\Z^k$ 
such that
\begin{align*}
    \E_{\uk,\uk'\in[\pm K]^{s_1}} \E_{\uh, \uh'\in[\pm H]^{\ell-\ell_0+1}}\limsup_{R\to\infty}\E\limits_{N\in[R]}
    \abs{\int \prod_{j\in[\ell_0]} T^{\floor{\ba_j(N)-\ba_{\ell_0}(N)}+\br_j}f_{jR\uk\uk'\uh\uh'}\, d\mu}^2  \gg \veps^{O(1)}.
\end{align*}
Passing to the product system, taking $\E\limits_{N\in[R]}$ inside the average and applying the Cauchy-Schwarz inequality, we deduce that
\begin{multline*}
    \E_{\uk,\uk'\in[\pm K]^{s_1}} \E_{\uh, \uh'\in[\pm H]^{\ell-\ell_0+1}} \limsup_{R\to\infty}\\
    \norm{\E\limits_{N\in[R]}\prod_{j\in[\ell_0-1]} (T\times T)^{\floor{\ba_j(N)-\ba_{\ell_0}(N)}+\br_j}(f_{jR\uk\uk'\uh\uh'} \otimes \overline{f_{jR\uk\uk'\uh\uh'}})}_{L^2(\mu\times \mu)} \gg \veps^{O(1)}.
\end{multline*}
Each average indexed by $\uk\uk'\uh\uh'$ has length $\ell_0-1\leq \ell - 1$, and so applying the induction hypothesis and recalling that $f_{1\uk\uk'\uh\uh'} := f_{1R\uk\uk'\uh\uh'}$ does not depend on $R$, we deduce that there exists a positive integer $s_2=O(1)$ (independent of $k$) such that
\begin{align*}
    \E_{\uk,\uk'\in[\pm K]^{s_1}} \E_{\uh, \uh'\in[\pm H]^{\ell-\ell_0+1}}\nnorm{f_{1\uk\uk'\uh\uh'}}^+_{\balpha_{1m}^{s_2}, (\balpha_{1d'_{12}} - \balpha_{2d'_{12}})^{s_2}, \ldots, (\balpha_{1d'_{12}} - \balpha_{\ell_0 d'_{1\ell_0}})^{s_2}}  \gg \veps^{O(1)}.
\end{align*}
Note that $d'_{1j}=m$ for $j\in[\ell_{0}+1,\ell]$. The result follows on applying the H\"older inequality, taking limits (first $H\to\infty$ and then $K\to\infty$), and using the definition of $f_{1\uk\uk'\uh\uh'}$ together with the inductive formula \eqref{E: inductive formula} and Lemma \ref{L: GCS} (for the average over $\uk$) to remove the shifts $\br_{j\ueps}$.

While applying the induction hypothesis, we implicitly use the assumption that some coordinate of the sequences $\ba_1 - \ba_{\ell_0}$ and $(\ba_1 - \ba_{\ell_0}) - (\ba_j - \ba_{\ell_0}) = \ba_1 - \ba_j$ grows faster than $\log$, which follows from the assumption that $d'_{1 j }>m_1$ for all $j\neq 1$.
\end{proof}

If the average has no dual sequences, we can remove the assumption of $\ba_1$ having maximum growth.
\begin{corollary}\label{C: sublinear}
    Let $d, k, \ell, m\in\N$, $0\leq m_1 \leq m$ and $g_1,...,g_m\in \mathcal{H}$ be Hardy functions satisfying the following growth conditions:
    \begin{enumerate}
        \item $1 \prec g_1(t) \prec \cdots \prec g_m(t) \prec t$;
        \item $g_i(t)\ll \log t$ iff $i\leq m_1$.
    \end{enumerate}
    For $j\in[\ell]$, let 
    \begin{align*}
        \ba_j = \sum_{i\in[m]}\balpha_{ji} g_i + \br_j\in\CH^k \quad\textrm{for\; some}\quad \balpha_{ji}\in\R^k\quad \textrm{and}\quad \lim_{t\to\infty}\br_j(t)=\mathbf{0},
    \end{align*}
    and for distinct $j,j'\in[0, \ell]$, let
    \begin{align*}
        d'_{jj'} = \max\{i\in[m]:\; \balpha_{ji}\neq \balpha_{j'i}\}
    \end{align*}
    (where $\ba_0 = \mathbf{0}$ and $\balpha_{0i} = \mathbf{0}$). 
    Suppose that for all $j\in[0,\ell]\setminus\{1\}$, we have $d'_{1j}>m_1$, i.e., at least one coordinate of $\ba_1 - \ba_{j}$ grows faster than log. 
    Then there exists a positive integer $s=O_{d, \ell, m}(1)$ such that for any system $(X,\CX, \mu,T_1,...,T_k)$ and 1-bounded $f_1\in L^\infty(\mu)$, we have 
    \begin{multline*}
         \limsup_{N\to\infty}\sup_{\substack{\norm{f_2}_\infty, \ldots, \norm{f_\ell}_\infty\leq 1}} \sup_{|c_n|\leq 1}\norm{\E_{n\in [N]} c_{n}\cdot \prod_{j\in [\ell]}  T^{\floor{\ba_j(n)}} f_j}_{L^2(\mu)}^{O_{d, \ell, m}(1)}\\ 
         \ll_{d, k, \ell, m} \nnorm{f_1}_{\balpha_{1d'_{10}}^s, (\balpha_{1d'_{12}}-\balpha_{2d'_{12}})^s, \ldots,  (\balpha_{1d'_{1\ell}}-\balpha_{\ell d'_{1\ell}})^s}^+.
    \end{multline*}
\end{corollary}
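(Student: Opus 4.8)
The plan is to deduce Corollary~\ref{C: sublinear} from Proposition~\ref{P: sublinear} (with $J=0$, i.e. no dual sequences) by a pre-composition trick that artificially makes one of the sequences have maximum growth. The only extra hypothesis in Proposition~\ref{P: sublinear} compared to Corollary~\ref{C: sublinear} is condition~(ii), that $\balpha_{1m}\neq\mathbf{0}$. So the task is purely to remove this assumption in the absence of dual functions. The idea, following the standard maneuver (used e.g. in \cite{Ts22}), is that without dual twists we may compose the integral defining the $L^2(\mu)$ norm with the inverse of an iterate of maximum growth, thereby shifting every $\ba_j$ by a fixed Hardy sequence of maximum growth and in particular arranging that the (new) first sequence has a nonzero $g_m$-coefficient.

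\textbf{Key steps.} First I would reduce to the case where some $\ba_j$ has nonzero $g_m$-coefficient at all. If all $\balpha_{jm}=\mathbf{0}$ for $j\in[\ell]$, simply discard $g_m$ from the list $g_1,\dots,g_m$: the sequences $\ba_1,\dots,\ba_\ell$ and the quantities $d'_{jj'}$ are unchanged, and we repeat until $\balpha_{jm}\neq\mathbf{0}$ for at least one $j$. (If at some point the list becomes empty, then all $\ba_j$ converge, the average over $n$ is asymptotically a single transformation applied to a product of $\ell$ functions, and the conclusion is trivial since the right-hand side seminorm is just $\norm{\mathbb{E}(f_1\mid\cdots)}_{L^2}$, or one treats it directly.) So assume $\balpha_{j_0 m}\neq\mathbf{0}$ for some $j_0$; relabelling is not allowed since the roles of the indices are not symmetric, so instead I proceed as follows. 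Write $g := g_m$, so $g\succ\log$ (we may assume $m>m_1$, else the statement is vacuous as in the proof of Proposition~\ref{P: sublinear}). Inside the $L^2(\mu)$ norm $\norm{\mathbb{E}_{n\in[N]} c_n \prod_{j\in[\ell]} T^{\floor{\ba_j(n)}}f_j}_{L^2(\mu)}$, I would compose the integrand with $T^{-\floor{\balpha_{j_0 m}g(n)}}$, which is a measure-preserving transformation and hence leaves the $L^2$ norm unchanged, up to the error of comparing $\floor{\ba_j(n)}-\floor{\balpha_{j_0 m}g(n)}$ with $\floor{\ba_j(n)-\balpha_{j_0 m}g(n)}$; this error takes finitely many values, so by Lemma~\ref{L: errors} it can be absorbed into fixed shifts $\br_j\in\Z^k$ that will ultimately be removed by the Gowers--Cauchy--Schwarz inequality. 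The new sequences are $\tilde\ba_j := \ba_j - \balpha_{j_0 m}g$, with $g_m$-coefficients $\tilde\balpha_{jm}=\balpha_{jm}-\balpha_{j_0 m}$. Now the first sequence $\tilde\ba_1$ has $g_m$-coefficient $\balpha_{1m}-\balpha_{j_0 m}$, which may still be zero (if $j_0=1$ or $\balpha_{1m}=\balpha_{j_0 m}$). To guarantee maximum growth for the \emph{first} sequence, instead subtract $\br_{\mathrm{new}} g$ where $\br_{\mathrm{new}}\in\R^k$ is chosen so that $\balpha_{1m}-\br_{\mathrm{new}}\neq\mathbf{0}$ but $\balpha_{1m}-\br_{\mathrm{new}}$ still differs from $\balpha_{jm}-\br_{\mathrm{new}}$ exactly when $\balpha_{1m}\neq\balpha_{jm}$ (any generic $\br_{\mathrm{new}}$, e.g. $\br_{\mathrm{new}}=2\balpha_{1m}$ if $\balpha_{1m}\neq\mathbf 0$, or $\br_{\mathrm{new}}=\be_1$ if $\balpha_{1m}=\mathbf 0$, works since $\br_{\mathrm{new}}=2\balpha_{1m}$ gives $\balpha_{1m}-\br_{\mathrm{new}}=-\balpha_{1m}\neq\mathbf 0$ and preserves all the relevant (in)equalities). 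Thus after this composition all differences $\tilde\ba_j-\tilde\ba_{j'}=\ba_j-\ba_{j'}$ are unchanged, so all $d'_{jj'}$ for $j,j'\in[\ell]$ are unchanged; the only quantity that changes is $d'_{10}$, which becomes $m$ (since $\tilde\balpha_{1m}\neq\mathbf 0$), but that is precisely what the right-hand side of Proposition~\ref{P: sublinear} wants in this situation, and we have $\balpha_{1d'_{10}}^s$ replaced by $\tilde\balpha_{1m}^s = (\pm\balpha_{1m})^s$ or $\be_1^s$ — which I would then need to reconcile with the stated right-hand side of Corollary~\ref{C: sublinear}.

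\textbf{Reconciling the conclusion.} Here lies the only real subtlety. Applying Proposition~\ref{P: sublinear} with $J=0$ to the modified sequences $\tilde\ba_1,\dots,\tilde\ba_\ell$ (which satisfy both hypotheses (i), still valid since $\tilde\ba_1-\tilde\ba_j=\ba_1-\ba_j$, and (ii), now enforced) yields a bound
\begin{align*}
\nnorm{f_1}^+_{\tilde\balpha_{1m}^s,\ (\balpha_{1d'_{12}}-\balpha_{2d'_{12}})^s,\ \ldots,\ (\balpha_{1d'_{1\ell}}-\balpha_{\ell d'_{1\ell}})^s}.
\end{align*}
If the original $\balpha_{1d'_{10}}$ (the leading coefficient of $\ba_1$ in the list) happened to be nonzero, we can simply take $\br_{\mathrm{new}}=2\balpha_{1d'_{10}}$ and $\tilde\balpha_{1m}=-\balpha_{1d'_{10}}$; since $\nnorm{f_1}^+_{\bbeta,\ldots}$ depends on $\bbeta$ only through the group $\langle\bbeta\rangle=\langle-\bbeta\rangle$, the seminorm is literally the same and we are done. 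The remaining case is $\balpha_{1d'_{10}}=\mathbf{0}$, i.e. $d'_{10}$ is undefined because $\ba_1$ itself is identically zero modulo the error term; but then $\floor{\ba_1(n)}$ is eventually constant, $f_1$ can be pulled out of the average (composing with a fixed power of $T$), and the right-hand side of Corollary~\ref{C: sublinear} must be read as $\norm{\mathbb E(f_1\mid \mathcal I(T^{\be_1}\text{-something}))}$ — more carefully, one checks that in this degenerate case the statement reduces to showing $f_1$ is controlled by an empty/trivial seminorm, i.e. the average already converges without any seminorm input on $f_1$, which holds since $T^{\floor{\ba_1(n)}}f_1$ is eventually just $T^{\mathrm{const}}f_1$. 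I expect the bookkeeping around these degenerate cases — precisely matching the index $d'_{10}$ and the vector $\balpha_{1d'_{10}}$ appearing in the two statements — to be the main (if modest) obstacle; the substantive analytic work has already been done in Proposition~\ref{P: sublinear}. A cleaner alternative, which I would adopt if the case analysis gets unwieldy, is to simply append to the list $g_1,\dots,g_m$ one extra function $g_{m+1}$ with $g_m\prec g_{m+1}\prec t$ (e.g. an appropriate fractional power between the fractional degrees of $g_m$ and $t$, or $g_m(t)\log t$ if that is still sublinear, otherwise $t/\log t$ suffices provided $g_m\prec t/\log t$), give $\ba_1$ the coefficient $\balpha_{1,m+1}=\be_1$ and $\ba_j$ the coefficient $\mathbf 0$ for $j\geq 2$, compensating by composing the integral with $T^{-\floor{\be_1 g_{m+1}(n)}}$ as above; then $\ba_1$ has maximum growth, $d'_{1j}$ is unchanged for $j\in[\ell]$, and $d'_{10}=m+1$ with $\balpha_{1,m+1}=\be_1$, so Proposition~\ref{P: sublinear} applies directly and the resulting seminorm $\nnorm{f_1}^+_{\be_1^s,\ldots}$ dominates (by the monotonicity property, Lemma~\ref{L: monotonicity property}, after noting $\langle\be_1\rangle$ contains nothing forcing extra control — one uses here that a degree-1 seminorm along any direction is dominated by the Host--Kra seminorm, or more simply that we may always enlarge the group) — however this introduces an $\be_1$-direction not present in the claimed bound, so the first approach (subtracting a multiple of the existing $g_m$) is the one that gives exactly the stated conclusion, and that is the route I would write up in detail.
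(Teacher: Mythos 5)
Your reduction hinges on the claim that, inside the norm $\norm{\E_{n\in[N]}c_n\prod_{j\in[\ell]} T^{\floor{\ba_j(n)}}f_j}_{L^2(\mu)}$, you may ``compose the integrand with $T^{-\floor{\br_{\mathrm{new}} g_m(n)}}$'' and that this ``leaves the $L^2$ norm unchanged.'' This is false: the transformation depends on $n$, and composing each summand of the average with a different measure-preserving map does change the norm. The legitimate version of this maneuver goes through duality: write the norm as $\int \overline{F_N}\cdot\E_{n\in[N]} c_n\prod_j T^{\floor{\ba_j(n)}}f_j\,d\mu$ for a $1$-bounded $F_N$, compose the integral with the $n$-dependent iterate for each fixed $n$, and then apply the Cauchy--Schwarz inequality. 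Done this way, the dual function $F_N$ acquires the iterate $T^{-\floor{\br_{\mathrm{new}} g_m(n)}}$ and enters the product as an additional term; since $\br_{\mathrm{new}} g_m$ is not one of the existing sequences, you are left with an average of length $\ell+1$, not $\ell$. Your write-up never confronts this extra term, and your ``reconciling the conclusion'' paragraph silently assumes it is absent. The paper's proof of this corollary avoids the issue by subtracting an entire existing sequence $\ba_i$ with $\balpha_{im}\neq\mathbf{0}$: then the $i$-th iterate becomes trivial, $f_i$ pulls out via Cauchy--Schwarz, $F_N$ simply occupies the vacated $i$-th slot, and the family $\{\tilde{\ba}_1,\tilde{\ba}_1-\tilde{\ba}_j:j\in[2,\ell]\}$ coincides exactly with $\{\ba_1,\ba_1-\ba_2,\ldots,\ba_1-\ba_\ell\}$, so Proposition~\ref{P: sublinear} returns precisely the directions in the statement, with no rescaling and no case analysis.

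Your idea could in principle be repaired: in the corrected length-$(\ell+1)$ average the new difference is $\tilde{\ba}_1-(-\br_{\mathrm{new}} g_m)=\ba_1$, whose leading coefficient $\balpha_{1d'_{10}}$ already appears in the claimed list, and choosing $\br_{\mathrm{new}}$ to be a nonzero multiple of $\balpha_{1d'_{10}}$ makes the remaining new direction $\balpha_{1m}-\br_{\mathrm{new}}=-\br_{\mathrm{new}}$ parallel to $\balpha_{1d'_{10}}$, after which Lemma~\ref{L: dilating seminorms} and permutation invariance give the stated bound with a larger $s$; but none of this is in your argument. Two further points of confusion: if $\balpha_{1m}\neq\mathbf{0}$ no trick is needed at all (Proposition~\ref{P: sublinear} applies directly), and your degenerate case $\balpha_{1d'_{10}}=\mathbf{0}$ cannot occur, since the hypothesis $d'_{10}>m_1\geq 0$ forces $d'_{10}$ to be well defined with $\balpha_{1d'_{10}}\neq\mathbf{0}$; in particular the fallback choice $\br_{\mathrm{new}}=\be_1$ would introduce a direction not in the claimed list, and the informal discussion of that case is not a proof of the stated inequality. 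As written, the central composition step is incorrect, so the proposal has a genuine gap.
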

\begin{proof}
Once again, we assume that $m>m_1$ since otherwise the result follows vacuously.  
If $\balpha_{1m}\neq \mathbf{0}$, then the estimate follows immediately from Proposition \ref{P: sublinear}. If $\balpha_{1m} = \mathbf{0}$, then we perform the standard trick of composing the average with $T^{-\floor{\ba_{i}(n)}}$ for some index $i\in[\ell]$ for which $\balpha_{im}\neq\mathbf{0}$ (we can assume without loss of generality that such an index exists). This gives
    \begin{align*}
        \norm{\E_{n\in [N]} c_{n}\cdot \prod_{j\in [\ell]}  T^{\floor{\ba_j(n)}} f_j}_{L^2(\mu)}
        = \int f_i \cdot \E_{n\in[N]}c_n\cdot T^{-\floor{\ba_i(n)}}F_N\cdot \prod_{\substack{j\in[\ell],\\ j\neq i}} T^{\floor{\ba_j(n)}-\floor{\ba_i(n)}}f_j\, d\mu
    \end{align*}
     for some 1-bounded function $F_N$. By applying the Cauchy-Schwarz inequality, swapping the order of subtraction and taking integer parts, and using Lemma \ref{L: errors} to handle the ensuing error terms, we can bound the expression above by $O_{d, k,\ell, m}(1)$ times 
     \begin{align*}
         \sup_{\substack{\norm{f_2}_\infty, \ldots, \norm{f_\ell}_\infty\leq 1}} \sup_{|c_n|\leq 1}\norm{\E_{n\in [N]} c_n\cdot \prod_{\substack{j\in[\ell]}} T^{\floor{\tilde{\ba}_j(n)}}f_j}_{L^2(\mu)},
     \end{align*}
     where
    \begin{align*}
        \tilde{\ba}_j = \begin{cases} \ba_j - \ba_i,\; &j\neq i\\
        -\ba_i,\; & j = i
        \end{cases}.
    \end{align*}
    Crucially, the sequence $\tilde{\ba}_1 =\ba_1 -\ba_i$ has maximum growth, and so the limsup as $N\to\infty$ of the average above can be controlled using Proposition \ref{P: sublinear}. Moreover, the families
    \begin{align*}
        \{\ba_1, \ba_1-\ba_j: j\in[2,\ell]\} \quad\textrm{and}\quad \{\tilde{\ba}_1, \tilde{\ba}_1-\tilde{\ba}_j: j\in[2,\ell]\}
    \end{align*}
    are identical, and hence so are their leading coefficients. The claimed estimate follows from an application of Proposition \ref{P: sublinear}.
\end{proof}
We remark that the trick of composing the average with $T^{-\floor{\ba_{i}(n)}}$ would not work when applied to an average twisted by dual functions like the one considered in Proposition~\ref{P: sublinear}; the issue is that the resulting terms $T^{-\floor{\ba_{i}(n)}}\CD(\floor{b(n)})$ cannot be removed using Lemma~\ref{L: removing duals finitary}. Therefore here and in the results to come,  averages twisted by dual functions will usually have some growth assumptions on the sequence $\ba_1$ with respect to which we obtain seminorm estimates.

\section{PET: seminorm estimates for polynomials}\label{S: polynomial PET}
The purpose of this section is to take first steps towards proving generalized box seminorm bounds for another naturally occurring family of sequences: polynomials over $\R^k$. Even though many seminorm estimates for multiple ergodic averages along polynomials have been obtained in the past, no work so far has done it in this generality, not least because the appropriate family of seminorms has not been previously identified. The purpose of this section is to show in Proposition \ref{P: PET for polynomials twisted by duals} that finite averages along polynomials twisted by dual sequences are controlled by averages of (finite analogues of) generalized box seminorms. The argument whereby we achieve this is known in the literature as PET, and it consists of many applications of the van der Corput inequality combined with keeping careful track of coefficients of the polynomials that appear in the intermediate steps of the induction. From Proposition \ref{P: PET for polynomials twisted by duals} we then deduce in Corollary \ref{C: PET for polynomials twisted by duals} that infinite averages are controlled quantitatively by averages of generalized box seminorms. In the next section, we will combine these auxiliary results with the quantitative concatenation machinery to identify a single generalized box seminorm that controls a given polynomial average with polynomial bounds. 

\subsection{Setting up the induction scheme}

In order to carry out the PET argument in general, we first need to set up some notation that allows us to talk about general multidimensional polynomial families. The formalism that we employ is borrowed directly from \cite{KKL24a}, which itself draws heavily on previous works performing the PET argument (e.g. \cite{Ber87, BL96, CFH11, DFKS22, DKS22, Pel20}).

Let $d, k, \ell, s_1,s_2\in\N$. {In the rest of the section,} $\uh$ is always a tuple in $\Z^{s_1}$  unless explicitly stated otherwise. All the polynomial families that we shall consider will take the following form.
\begin{definition}[Essentially distinct and normal polynomial families]
    Let $\CQ = (\q_1, \ldots,$ $\q_{s_2})$ be a family of polynomials in $\R^k[n, \uh]$ of degree at most $d$. We call $\CQ$ \textit{essentially distinct} if the polynomials $\q_1, \ldots, \q_{s_2}$ and their pairwise differences are nonconstant as polynomials in $n$. In a similar spirit, we say that $\q_1$ is \emph{essentially distinct} from $\q_2, \ldots, \q_s$ if $\q_1, \q_1 - \q_2, \ldots, \q_1 - \q_s$ are nonconstant as polynomials in $n$.
    Lastly, we  call $\CQ$ \textit{normal} if it is essentially distinct, $\q_1$ has the highest degree with respect to the variable $n$, and $\q_1(0, \uh), \ldots, \q_{s_2}(0,\uh)$ are the zero polynomial for all $\uh\in\Z^{s_1}$.
\end{definition}

Let $\CP = (\p_1, \ldots, \p_\ell)$ be a normal family of polynomials in $\R^k[n]$ of degree at most $d$. Without loss of generality, we can assume that $\deg\p_1 = d$. 
We denote the coefficients of the polynomials $\p_j$ by $\bp_j(n) = \sum_{i=1}^d \bbeta_{ji}n^i$, and we  also set $\bp_0 := \mathbf{0}$ and $\bbeta_{0i} := \mathbf{0}$ (applying the same convention for families of polynomials in $\R^k[n,\uh]$ whenever convenient). During the PET argument, we will subject $\CP$ to an iterated application of the following operation.
      \begin{definition}[van der Corput operation]
          Let $\CQ = (\q_1, \ldots, \q_{s_2})$ be a normal family of polynomials in $\R^k[n,\uh]$. Given $m\in[s]$, we define the new polynomial family
      \begin{align*}
            \partial_m\CQ := &(\sigma_{h_{s_1+1}}{\q}_1-{\q}_m, \ldots, \sigma_{h_{s_1+1}}{\q}_{s_2} - {\q}_m, {\q}_1-{\q}_m, \ldots,  {\q}_{s_2}-{\q}_m)^*,
      \end{align*}
      where $\sigma_{h_{s_1+1}}\q(n, \uh)$ is the polynomial in $\mathbb{R}^{k}[n,\uh,h_{s_{1}+1}]$   defined by
      \begin{align*}
          \sigma_{h_{s_1+1}}\q(n, \uh) := \q(n+h_{s_1+1}, \uh) - \q(h_{s_1+1}, \uh),
      \end{align*}
      and the $^*$ operation removes all monomials independent of $n$ and all but the first copy of each polynomial that appears multiple times.
            \end{definition}
            We refer to the operation $\CQ\mapsto \partial_m \CQ$ as a \textit{van der Corput operation} on $\CQ$ since the new family $\partial_m \CQ$ will arise from $\CQ$ via an application of the van der Corput inequality. It is through this operation that we obtain a new polynomial family from $\CQ$ at each step of the PET induction scheme.

      Our next goal is to describe the properties of $\CP$ that are preserved by the van der Corput operation. Let $\CQ = (\q_1, \ldots, \q_{s_2})$ be a normal family of polynomials in $\R^k[n, \uh]$, and write 
      \begin{align}\label{E: q_j}
        \bq_j(n,\uh) = \sum_{i=1}^d \bgamma_{ji}(\uh)n^i      
      \end{align}
     for some $\bgamma_{ji}\in\R[\uh]$. In other words, the $\bgamma_{ji}$'s are the coefficients of $\bq_j$ as a polynomial in $n$, and they are themselves polynomials in $\uh$. For $j,j'\in[0,s_2]$, we also define
      $$d_{jj'} := \max\{i\in[d]:\; \bgamma_{ji}(\uh)\neq \bgamma_{j'i}(\uh)\}$$
      to be the degree of $\q_j - \q_{j'}$ as a polynomial in $n$,
      where we set $\bq_0 := \mathbf{0}$ and $\bgamma_{0i}:=\mathbf{0}$ for all $i\in\N_0$.
      We want to show that if $\CQ$ is a family obtained from $\CP$ at some stage of the PET induction scheme, then the polynomials $\bgamma_{ji}$ preserve several useful properties. The following definition is taken from \cite{KKL24a}.
      \begin{definition}[Descendent family of polynomials]\label{D: descendence}
      A {normal} family $\CQ = (\q_1, \ldots, \q_{s_2})$ of polynomials in $\R^k[n, \uh]$ \textit{descends from} $\CP$ if for each $j\in[s_2]$, the polynomials $\bgamma_{ji}$ in \eqref{E: q_j} can be expressed as\footnote{The multinomial coefficient
      \begin{align*}
          {{\binom{u_1+\cdots + u_{s_1}+i}{u_1, \ldots, u_{s_1}}}} = \frac{(u_1+\cdots+u_{s_1}+i)!}{u_1!\cdots u_{s_1}!\cdot i!}
      \end{align*}
      is the coefficient of $\uh^{\uu}n^i = h_1^{u_1}\cdots h_{s_1}^{u_{s_1}} n^i$ in $(n+h_1+\cdots + h_{s_1})^{i+u_1+\cdots+u_{s_1}}$.}
          \begin{align}\label{E: gamma_ji}
              \bgamma_{ji}(\uh) =  \sum_{\substack{\uu\in \N_0^{s_{1}},\\ |\uu|\leq d-i}} {{\binom{u_1+\cdots + u_{s_1}+i}{u_1, \ldots, u_{s_1}}}}(\bbeta_{w_{j\uu}(|\uu|+i)}-\bbeta_{w_{\uu}(|\uu|+i)})\uh^\uu
          \end{align}
        for some indices $w_{\uu}, w_{j\uu}\in[0,\ell]$ satisfying the following properties: 
      \begin{enumerate}
          \item\label{i: w_1} (Coefficients of $\q_1$) $w_{1\uu} = 1$ for all $\uu$;
          \item\label{i: multilinear} (Multilinearity) for each $j$, the polynomial $\bgamma_{1d_{1j}}-\bgamma_{jd_{1j}}$ (the leading coefficient of $\bq_1 - \bq_{j}$ as a polynomial in $n$) is multilinear in $\uh$;
        \item\label{i: leading coeffs} (Leading coefficients) for each $j$ and $\uu$, the vector $\bbeta_{1(|\uu|+d_{1j})}-\bbeta_{w_{j\uu}(|\uu|+d_{1j})}$ is nonzero only if it is the leading coefficient of  $\bp_1-\bp_{w_{j\uu}}$.
      \end{enumerate}
      \end{definition}

The following lemma ascertains that the van der Corput operation preserves the abovementioned properties inherited by $\CQ$ from $\CP$ by virtue of descending from $\CP$. While the following two results were proved for integer polynomials, the exact same arguments work for real polynomials.
    
    \begin{proposition}[{\cite[Proposition 4.6]{KKL24a}}]\label{P: vdC preserves descendancy}
        Let $d, k, \ell, s_1, s_2\in\N$, and let $\CP = (\p_1, \ldots, \p_\ell)$ be a normal family of polynomials in $\R^k[n]$ with coefficients given by $\bp_j(n) = \sum_{i=1}^d \bbeta_{ji}n^i$.
        Suppose that the family $\CQ = (\q_1, \ldots, \q_{s_2})$ of polynomials in $\R^k[n,\uh]$ descends from $\CP$. 
        Then the family $\partial_m \CQ$ also descends from $\CP$ for each $m\in[s_2]$ for which $\sigma_{h_{s_1+1}}{\q}_1-{\q}_m$
        has the highest degree as a polynomial in $n$ among the elements of $\partial_m \CQ$. 
    \end{proposition}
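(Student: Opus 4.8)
The plan is to verify directly that $\partial_m\CQ$ satisfies every clause in the definition of "descending from $\CP$": it is a normal family in $\R^k[n,h_1,\dots,h_{s_1+1}]$ whose $n$-coefficients admit a representation of the shape \eqref{E: gamma_ji}, now with $s_1+1$ auxiliary variables, obeying properties (i)--(iii). The whole argument is a bookkeeping exercise built on the binomial expansion of the shift operator $\sigma_{h_{s_1+1}}$; it is exactly the argument of \cite[Proposition~4.6]{KKL24a}, and since nothing in it uses integrality of the coefficients, it applies verbatim to real polynomials.

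First I would record how $\sigma_{h_{s_1+1}}$ acts on coefficients. Writing $\bq_r(n,\uh)=\sum_i\bgamma_{ri}(\uh)n^i$, one has for $i\geq1$
\begin{align*}
    \sigma_{h_{s_1+1}}\bq_r(n,\uh)=\sum_{i\geq1}\Bigl(\sum_{b\geq0}\tbinom{i+b}{b}\bgamma_{r,i+b}(\uh)\,h_{s_1+1}^{\,b}\Bigr)n^i ,
\end{align*}
so the $n^i$-coefficient of $\sigma_{h_{s_1+1}}\bq_r-\bq_m$ is $\sum_{b\geq0}\tbinom{i+b}{b}\bgamma_{r,i+b}(\uh)h_{s_1+1}^{\,b}-\bgamma_{mi}(\uh)$, while that of $\bq_r-\bq_m$ is $\bgamma_{ri}(\uh)-\bgamma_{mi}(\uh)$. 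Substituting the descendence formula \eqref{E: gamma_ji} into each $\bgamma_{r,i+b}$ and regrouping the monomials in $\uh':=(h_1,\dots,h_{s_1+1})$, the multinomial identity
\begin{align*}
    \tbinom{i+b}{b}\tbinom{u_1+\cdots+u_{s_1}+i+b}{u_1,\dots,u_{s_1}}=\tbinom{u_1+\cdots+u_{s_1}+b+i}{u_1,\dots,u_{s_1},b}
\end{align*}
shows the new coefficients again have the shape \eqref{E: gamma_ji}: one reads off the new index functions by declaring, for the block $\sigma_{h_{s_1+1}}\bq_r-\bq_m$, that the base index $w'_{(\uv,b)}$ equals $w_{\uv}$ when $b\geq1$ and $w_{m\uv}$ when $b=0$, that the $r$-labelled index equals $w_{r\uv}$, and analogously (with only $b=0$ surviving) for the block $\bq_r-\bq_m$. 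One then checks these assignments are globally consistent, i.e.\ $w'_{\uu'}$ depends only on $\uu'$, and that normality is inherited: essential distinctness of $\partial_m\CQ$ follows from that of $\CQ$ together with the $\ast$-operation, vanishing of the constant-in-$n$ terms is forced by $\ast$, and the hypothesis on $m$ guarantees $\sigma_{h_{s_1+1}}\bq_1-\bq_m$ still has maximal $n$-degree.

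It remains to verify (i)--(iii). Property (i) is immediate: the first member of $\partial_m\CQ$ is $\sigma_{h_{s_1+1}}\bq_1-\bq_m$, and since $w_{1\uv}=1$ for all $\uv$, all its $r$-labelled indices are $1$. For (ii) and (iii) the crucial point is that the leading $n$-coefficient of $\bq'_1-\bq'_{j'}$ is built from leading coefficients already present in $\CQ$. If $\bq'_{j'}=\sigma_{h_{s_1+1}}\bq_r-\bq_m$, then $\bq'_1-\bq'_{j'}=\sigma_{h_{s_1+1}}(\bq_1-\bq_r)$, whose top $n$-coefficient coincides with that of $\bq_1-\bq_r$ (the shift fixes the leading coefficient); it is thus multilinear in the sub-tuple $\uh$ of $\uh'$ by (ii) for $\CQ$, and its $\bbeta$-constituents are leading coefficients of $\bp_1-\bp_{w_{r\uv}}$ by (iii) for $\CQ$. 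If instead $\bq'_{j'}=\bq_r-\bq_m$, then $\bq'_1-\bq'_{j'}=\sigma_{h_{s_1+1}}\bq_1-\bq_r$; here one separates the generic case, where the top $n$-coefficient is $\bgamma_{1,d}(\uh)-\bgamma_{r,d}(\uh)$ for the appropriate degree $d$ (again a leading coefficient from $\CQ$), from the degenerate case in which these top coefficients cancel, where $\sigma_{h_{s_1+1}}$ lowers the $n$-degree by one and the new leading coefficient takes the form $c\,\bgamma_{1,D_1}(\uh)\,h_{s_1+1}+(\text{lower-order leading coefficient from }\CQ)$, with $D_1=\deg_n\bq_1$ and $\bgamma_{1,D_1}$ the leading coefficient of $\bq_1$, which is multilinear by (ii) applied with $\bq_0=\mathbf 0$; in all cases multilinearity and the leading-coefficient property are retained.

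The main obstacle will be this last step, executed uniformly over all members $\bq'_{j'}$ of $\partial_m\CQ$: one must run the complete case analysis of how $\sigma_{h_{s_1+1}}$ can drop the $n$-degree, confirm that the index functions $w'_{\uu'},w'_{j'\uu'}$ chosen above are simultaneously consistent for every block, and check that property (iii) — "$\bbeta_{1(|\uu'|+d'_{1j'})}-\bbeta_{w'_{j'\uu'}(|\uu'|+d'_{1j'})}$ is nonzero only if it is a genuine leading coefficient of $\bp_1-\bp_{w'_{j'\uu'}}$" — survives the index shift $|\uv|\mapsto|\uv|+b$ created by the new variable. Because, as established above, the leading coefficient of $\bq'_1-\bq'_{j'}$ never involves $h_{s_1+1}$ except through the multilinear factor $\bgamma_{1,D_1}(\uh)\,h_{s_1+1}$ in the degenerate case, this index shift only ever produces constituents of the form $\bbeta_{1(|\uv|+d_{1r})}-\bbeta_{w_{r\uv}(|\uv|+d_{1r})}$ or $\bbeta_{1(|\uv|+D_1)}-\bbeta_{w_{\uv}(|\uv|+D_1)}$, to which property (iii) for $\CQ$ (for the indices $j=r$ and $j=0$, respectively) applies, closing the induction.
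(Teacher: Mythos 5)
The paper offers no proof of this proposition: it is imported verbatim from \cite[Proposition 4.6]{KKL24a} with the sole remark that the argument never uses integrality of the coefficients and hence applies to real polynomials unchanged. Your proposal is a faithful reconstruction of exactly that argument — the binomial expansion of $\sigma_{h_{s_1+1}}$, the multinomial identity absorbing the new variable, the block-wise reassignment of the indices $w_{j\uu}$ and $w_{\uu}$, and the leading-coefficient case analysis (including the degenerate degree-drop case, resolved via properties (ii)--(iii) applied with $j=0$) — so it is correct and takes essentially the same route the paper relies on.
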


    The next result ensures that we reach a family of linear polynomials after a bounded number of van der Corput operations; this fact is the foundation of all PET induction schemes. 
        \begin{proposition}[{\cite[Proposition 4.7]{KKL24a}}]\label{P: vdC terminates}
        Let $d, k, \ell\in\N$ and $\CP$ be a normal family of  polynomials in $\R^k[n]$ of degree at most $d$. There exist positive integers $s=O_{d,\ell}(1)$ and $m_1, \ldots, m_s$ such that for every $i\in[s]$ the family $\CQ_i = \partial_{m_i}\cdots\partial_{m_1}\CP$ descends from $\CP$, and moreover the elements of $\CQ_s$ are linear in $n$. 
    \end{proposition}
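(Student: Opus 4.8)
The plan is to carry out the classical PET termination argument: attach to each normal family a numerical \emph{type} that ranges over a finite set of size $O_{d,\ell}(1)$ and strictly decreases under an appropriately chosen van der Corput operation.

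For a normal family $\CQ=(\q_1,\dots,\q_{s_2})$ of polynomials in $\R^k[n,\uh]$ and each $w\in\{2,\dots,d\}$, let $a_w(\CQ)$ be the number of equivalence classes among the members of $n$-degree exactly $w$, where $\q\sim\q'$ iff $\deg_n(\q-\q')<w$; since $\CQ$ is essentially distinct, $a_w(\CQ)$ equals the number of distinct leading coefficients $\mathrm{lc}(\q)\in\R^k[\uh]$ of the degree-$w$ members. Set $\tau(\CQ)=(a_2(\CQ),\dots,a_d(\CQ))$ and order types reverse-lexicographically (top coordinate first). The crux is the following claim: if $\CQ$ is normal, descends from $\CP$, and has a member of $n$-degree $\geq 2$, then there is an index $m$ such that $\sigma_{h_{s_1+1}}\q_1-\q_m$ attains the top $n$-degree among the elements of $\partial_m\CQ$ (so that $\partial_m\CQ$ is normal and, by Proposition \ref{P: vdC preserves descendancy}, descends from $\CP$) and $\tau(\partial_m\CQ)<\tau(\CQ)$. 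To prove it I would let $w_0$ be the least $n$-degree $\geq 2$ occurring in $\CQ$ and take $\q_m$ of degree $w_0$ lying outside the equivalence class of $\q_1$ whenever this is possible; in the borderline case where $w_0=d$ and all degree-$d$ members form a single class, I would instead take $m=1$. Inspecting $\partial_m\CQ=(\sigma_{h_{s_1+1}}\q_i-\q_m,\ \q_i-\q_m)^*$ one checks that for every $w>w_0$ each degree-$w$ member has the same leading coefficient as its parent, so $a_w$ is unchanged, whereas at degree $w_0$ the surviving members are precisely the $\sigma_{h_{s_1+1}}\q_i-\q_m$ with leading term distinct from that of $\q_m$, whose leading coefficients are the differences $\mathrm{lc}(\q_i)-\mathrm{lc}(\q_m)$ — one value fewer than before; in the case $w_0=d$, $m=1$, leading-term cancellation sends $a_d$ from $1$ to $0$ without creating a higher-degree term. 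In each case $\tau$ strictly decreases.

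It then remains to bound the range of $\tau$. Because $\CQ$ descends from $\CP$, every coefficient of every member has the constrained form \eqref{E: gamma_ji}, and properties (ii)--(iii) of Definition \ref{D: descendence} force the leading coefficients occurring at each degree to be assembled from the finitely many leading coefficients of the differences $\bp_1-\bp_{j'}$, $j'\in[0,\ell]$; consequently $a_w(\CQ)\le M$ for some $M=M(d,\ell)$ and all $w$, so $\tau(\CQ)\in\{0,\dots,M\}^{d-1}$. Starting from $\CQ_0=\CP$ (normal, and descending from itself) and iterating the van der Corput operation with the index $m$ supplied by the claim, the type strictly decreases at each step inside a well-order of cardinality $(M+1)^{d-1}=O_{d,\ell}(1)$; hence after $s=O_{d,\ell}(1)$ steps we reach a family $\CQ_s$ with no member of $n$-degree $\geq 2$, which by normality (its first element has the top $n$-degree) means every element of $\CQ_s$ is linear in $n$. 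Each intermediate $\CQ_i$ descends from $\CP$ by Proposition \ref{P: vdC preserves descendancy} together with an induction on $i$, which is exactly the assertion.

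The main obstacle I expect is the case analysis buried in the claim: verifying that the index $m$ can always be selected so that $\partial_m\CQ$ stays normal — that is, $\sigma_{h_{s_1+1}}\q_1-\q_m$ genuinely attains the top $n$-degree — while the type still drops, the subcases being $\deg_n\q_m<d$, $\deg_n\q_m=d$ with $a_d\geq 2$, and the single-class case $a_d=1$ handled via $m=1$. A second delicate point, and the reason the refined coefficient-tracking scheme of \cite{KKL24a} is needed here, is extracting the uniform bound $M(d,\ell)$ on the number of equivalence classes, since the number of auxiliary variables $s_1$ increases along the induction and only the descendancy constraints of Definition \ref{D: descendence} keep the relevant leading coefficients confined to a set of bounded size.
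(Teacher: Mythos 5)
Your proposal follows the right general template (a PET weight that decreases under a well-chosen van der Corput operation, plus Proposition \ref{P: vdC preserves descendancy} to keep descendancy), but the specific scheme you set up has a genuine gap: the central claim that some admissible $m$ strictly decreases your type $\tau=(a_2,\dots,a_d)$ is false, because you never difference against linear elements and your type ignores degree-$1$ classes. Take $\CP=\{n^2,\alpha n\}$ (normal, essentially distinct, $d=2$). Your rule gives $w_0=2=\deg_n\q_1$ with a single top class, so you take $m=1$; but then $\q_2-\q_1=\alpha n-n^2$ still has degree $2$, while $\sigma_{h}\q_1-\q_1=2hn$ is linear, so the first element of $\partial_1\CP$ does not attain the top $n$-degree (normality and the hypothesis of Proposition \ref{P: vdC preserves descendancy} fail), and $a_2$ stays at $1$ — indeed no choice of $m$ can make $a_2$ drop here in one step. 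The same defect hits your ``main move'': for $\CQ=\{n^3,n^2,\alpha n\}$ you would difference against $n^2$, but the linear member gets promoted ($\alpha n-n^2$ has degree $2$ with leading coefficient $-1$), so $a_2$ stays at $1$ and $\tau$ does not decrease. The standard PET bookkeeping (this is how \cite[Proposition 4.7]{KKL24a} runs, and the paper does not reprove it — it only observes that the integer-coefficient argument transfers verbatim to $\R^k[n]$) counts classes at \emph{all} degrees $1,\dots,d$ and differences against a minimal-degree nonconstant element — possibly linear — chosen outside $\q_1$'s class whenever possible, reserving $m=1$ for the case where \emph{every} element lies in $\q_1$'s class; with that rule the weight does drop (in the example, differencing against $\alpha n$ sends $(a_1,a_2,a_3)=(1,1,1)$ to $(0,1,1)$), and the top-degree requirement needed for normality and descendancy is preserved.

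A second, smaller issue is the chain-length bound. Lexicographically decreasing chains in $\N^{d-1}$ can be arbitrarily long unless the coordinates are uniformly bounded, and your bound $a_w(\CQ_i)\le M(d,\ell)$ ``from descendancy'' is not justified: the leading coefficients allowed by \eqref{E: gamma_ji} are multilinear polynomials in $\uh$ whose number of admissible shapes grows with the number of $\uh$-variables, i.e.\ with the number of steps already taken, so the argument as written is circular. The standard way out is not a uniform bound on the number of classes but a quantitative control of how much the weight of $\partial_m\CQ$ can exceed that of $\CQ$ at the lower coordinates (the family at most doubles per step), from which one extracts by recursion a step count depending only on $d$ and $\ell$. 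So to repair the proof you should (i) switch to the full weight including degree-$1$ classes and the minimal-degree selection rule, and (ii) replace the ``bounded type space'' argument by the recursive bound on weights, or simply invoke the transference of \cite[Proposition 4.7]{KKL24a} as the paper does.
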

    We remark that $s$ is independent of the dimension $k$ of the underlying space.

    The following lemma shows how we pass from the family $\CQ$ to $\partial_m\CQ$ in applications. In the language of \cite{DFKS22, DKS22}, the lemma below says that the new family $\partial_m\CQ$ is \textit{1-standard}. 
    \begin{lemma}\label{L: bounding averages in vdC}
        Let $d, k, \ell, s_1, s_2, H, N\in\N$ with $H\leq N$ and $\CP$ be a normal family of polynomials. Let $\CQ = \{\q_1, \ldots, \q_{s_2}\}$ be a family of polynomials in $\R^k[n,\uh]$ with $\deg \q_1 >1$, and suppose that $\CQ$ and $\partial_m\CQ$ both descend from $\CP$ for some $m\in[s_2]$. For every 1-bounded $f_1\in L^\infty(\mu)$ and $n_0\in\Z$, we have
        \begin{multline*}
            \sup_{\norm{f_2}_\infty, \ldots, \norm{f_s}_\infty\leq 1}\sup_{|c_n|\leq 1}\norm{\E_{n\in[N]}c_n\cdot T^{\floor{\q_1(n_0 + n,\uh)}}f_1\cdots T^{\floor{\q_{s_2}(n_0 + n,\uh)}}f_s}_{L^2(\mu)}^2\\
            \ll_{d,k, \ell, s_1,s_2}\E_{h_{s_1+1}\in[\pm H]} \sup_{\substack{\norm{f_\bq}_\infty\leq 1,\\ \bq\in\partial_m\CQ\setminus\{\sigma_{h_{s_1+1}}\q_1 - \q_m\}}}\sup_{|c_n|\leq 1}\\
            \norm{\E_{n\in[N]}c_n\cdot \prod_{\bq\in\partial_m\CQ} T^{\floor{\q(n_0 + n,\uh, h_{s_1+1})}}f_{\bq}}_{L^2(\mu)}+\frac{H}{N}
        \end{multline*}
        for $f_{\sigma_{h_{s_1+1}}\q_1 - \q_m} = f_1$.
    \end{lemma}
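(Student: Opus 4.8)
The plan is to prove Lemma~\ref{L: bounding averages in vdC} by a single application of the van der Corput inequality (Lemma~\ref{L: finitary van der Corput}), followed by a change of variables and an error-term cleanup via Lemma~\ref{L: errors}. First I would fix $n_0$, take arbitrary $1$-bounded $f_2, \ldots, f_{s_2}$ and weights $c_n$, and write $v_n = c_n \cdot T^{\floor{\q_1(n_0+n,\uh)}}f_1 \cdots T^{\floor{\q_{s_2}(n_0+n,\uh)}}f_{s_2}$, which is a $1$-bounded sequence of vectors in $L^2(\mu)$. Applying the first part of Lemma~\ref{L: finitary van der Corput} with parameter $H$ gives
\begin{align*}
    \norm{\E_{n\in[N]}v_n}_{L^2(\mu)}^2 \leq 6\,\E_{h\in[\pm H]}\Bigabs{\E_{n\in[N]}\langle v_n, v_{n+h}\rangle} + \frac{6H}{N}.
\end{align*}
Expanding $\langle v_n, v_{n+h}\rangle$ as an integral of $c_n\overline{c_{n+h}}$ times a product over $j\in[s_2]$ of $T^{\floor{\q_j(n_0+n,\uh)}}f_j \cdot \overline{T^{\floor{\q_j(n_0+n+h,\uh)}}f_j}$, I would compose the integrand with $T^{-\floor{\q_m(n_0+n+h,\uh)}}$ (legitimate since $T$ preserves $\mu$), renaming $h = h_{s_1+1}$. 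This shifts every exponent $\floor{\q_j(\cdot)}$ by $-\floor{\q_m(n_0+n+h_{s_1+1},\uh)}$.

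The key computation is to recognize the resulting exponents. For the ``shifted'' terms one gets $\floor{\q_j(n_0+n+h_{s_1+1},\uh)} - \floor{\q_m(n_0+n+h_{s_1+1},\uh)}$, which differs from $\floor{\q_j(n_0+n+h_{s_1+1},\uh) - \q_m(n_0+n+h_{s_1+1},\uh)}$ by a bounded error. Now $\q_j(n+h_{s_1+1},\uh) - \q_m(n+h_{s_1+1},\uh)$, viewed as a polynomial in $n$, equals $\sigma_{h_{s_1+1}}\q_j(n,\uh,h_{s_1+1}) - \sigma_{h_{s_1+1}}\q_m(n,\uh,h_{s_1+1})$ plus a term constant in $n$ (because $\sigma$ subtracts the value at $n$-argument $h_{s_1+1}$); and $\sigma_{h_{s_1+1}}\q_j - \q_m$ already appears among the entries of $\partial_m\CQ$ up to removal of $n$-independent monomials and duplicates, exactly as in the definition of $\partial_m\CQ$. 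Similarly the ``unshifted'' terms $\floor{\q_j(n_0+n,\uh)} - \floor{\q_m(n_0+n+h_{s_1+1},\uh)}$ differ boundedly from $\floor{\q_j(n_0+n,\uh) - \q_m(n_0+n+h_{s_1+1},\uh)}$; writing $\q_m(n+h_{s_1+1},\uh) = \sigma_{h_{s_1+1}}\q_m(n,\uh,h_{s_1+1}) + \q_m(h_{s_1+1},\uh)$ and absorbing the $n$-independent piece $\q_m(h_{s_1+1},\uh)$, the remaining $n$-dependent polynomial is a bounded perturbation of $\q_j(n,\uh) - \q_m(n,\uh)$, again an entry of $\partial_m\CQ$. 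Thus, after these manipulations, the integrand is a product of iterates $T^{\floor{\q(n_0+n,\uh,h_{s_1+1}) + \br(n,\uh,h_{s_1+1})}}$ over $\q\in\partial_m\CQ$, where each $\br$ takes values in a fixed finite set (the integer parts differ by $O_{d,s_1,s_2}(1)$), with the distinguished term $\sigma_{h_{s_1+1}}\q_1 - \q_m$ carrying the fixed function $f_1$ (coming from the $T^{\floor{\q_1(n_0+n+h_{s_1+1},\uh)}}f_1$ factor, after the shift) and all other terms carrying $1$-bounded functions built from $f_1, \ldots, f_{s_2}$ and their conjugates.

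Having reached this form, I would apply Lemma~\ref{L: errors} (with the role of the $1$-bounded weight played by the product of $c_n\overline{c_{n+h_{s_1+1}}}$ and the indicator restricting to valid $n$, absorbed into a generic $|c_n|\leq 1$ weight) to replace the bounded shifts $\br(n,\uh,h_{s_1+1})$ by constants $\br'_\bq$, at the cost of a multiplicative $O_{d,k,s_1,s_2}(1)$; these constants can then be pushed into the functions $f_\bq$ without changing their $1$-boundedness, and for the distinguished term $\sigma_{h_{s_1+1}}\q_1 - \q_m$ the constant shift of $f_1$ is harmless since we are taking a supremum and can simply absorb $T^{\br'}$ into the seminorm later (or, more simply, observe that the bound we prove is for a supremum over the \emph{other} functions with $f_1$ fixed, and a fixed shift of $f_1$ produces an equivalent quantity; alternatively relabel). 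Passing to the supremum over all the non-distinguished $1$-bounded $f_\bq$ and over $1$-bounded weights $c_n$, and noting $\partial_m\CQ$ descends from $\CP$ so its entries are genuinely indexed by $\bq\in\partial_m\CQ$, yields precisely the claimed inequality. The main obstacle I anticipate is the bookkeeping in the previous paragraph: correctly matching each exponent produced by van der Corput and the $T^{-\floor{\q_m}}$-shift against an entry of $\partial_m\CQ$, keeping track that the $n$-independent monomials and repeated polynomials removed by the $^*$ operation only contribute bounded (finite-valued) errors absorbable by Lemma~\ref{L: errors}, and verifying that the distinguished term indeed ends up being $\sigma_{h_{s_1+1}}\q_1 - \q_m$ attached to $f_1$ rather than to a conjugate or a different function. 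None of this is deep, but it must be done carefully.
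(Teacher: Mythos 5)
Your overall plan---one application of the van der Corput inequality, a composition with a power of $T$, and cleanup of integer-part errors via Lemma~\ref{L: errors}---is the right one and agrees with the paper's proof in spirit. But you compose the integrand with $T^{-\floor{\q_m(n_0+n+h_{s_1+1},\uh)}}$, the \emph{shifted} $\q_m$, and this is a genuine error: you must compose with the \emph{unshifted} $T^{-\floor{\q_m(n_0+n,\uh)}}$ to match the definition of $\partial_m\CQ$, whose entries are $\sigma_{h_{s_1+1}}\q_j-\q_m$ and $\q_j-\q_m$ (both subtract the unshifted $\q_m$). With your choice, the shifted $j$-th exponent is, up to $n$-independent terms, $\sigma_{h_{s_1+1}}\q_j(n)-\sigma_{h_{s_1+1}}\q_m(n)$ and the unshifted one is $\q_j(n)-\sigma_{h_{s_1+1}}\q_m(n)$; each differs from the corresponding entry of $\partial_m\CQ$ by $\sigma_{h_{s_1+1}}\q_m-\q_m$, which is a \emph{nonconstant} polynomial in $n$ of positive degree whenever $\q_m$ is nonlinear in $n$ (which is allowed). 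That discrepancy is neither an $O(1)$ integer-part error removable by Lemma~\ref{L: errors} nor an $n$-independent shift absorbable into the $1$-bounded functions, so your intermediate claim that $\q_j(n)-\sigma_{h_{s_1+1}}\q_m(n)$ is ``a bounded perturbation'' of $\q_j(n)-\q_m(n)$ is false. With the correct (unshifted) composition, the unshifted $j$-th exponent is $(\q_j-\q_m)(n)$ on the nose, the shifted one is $(\sigma_{h_{s_1+1}}\q_j-\q_m)(n)+\q_j(h_{s_1+1},\uh)$, and the extra piece $\q_j(h_{s_1+1},\uh)$ is \emph{unbounded} (so not handled by Lemma~\ref{L: errors}) but $n$-independent; it is absorbed into the supremum over the $1$-bounded $f_\bq$ for the non-distinguished indices and removed for the distinguished term by a final composition with $T^{-\floor{\q_1(h_{s_1+1},\uh)}}$, which is exactly how the paper arranges $f_{\sigma_{h_{s_1+1}}\q_1-\q_m}=f_1$. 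Keep these two correction mechanisms (bounded integer-part errors versus unbounded $n$-independent shifts) clearly separate.

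A second, smaller point: you defer the check that $\sigma_{h_{s_1+1}}\q_1-\q_m$ is essentially distinct, as a polynomial in $n$, from every other polynomial produced by the van der Corput step, but this is precisely where the hypothesis $\deg\q_1>1$ is used. If $\q_1$ were linear in $n$, then $\sigma_{h_{s_1+1}}\q_1-\q_m$ and $\q_1-\q_m$ would differ only by a polynomial in $(\uh,h_{s_1+1})$ and the $^*$ operation would identify them, grouping $f_1$ with another function and invalidating the conclusion. This is not routine bookkeeping but the point at which a stated hypothesis is consumed, and it should appear explicitly in the proof.
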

    \begin{proof}
        First, we observe crucially that all the polynomials
        \begin{align}\label{E: polys after vdC}
            \sigma_{h_{s_1+1}}{\q}_2-{\q}_m, \ldots, \sigma_{h_{s_1+1}}{\q}_{s_2} - {\q}_m, {\q}_1-{\q}_m, \ldots,  {\q}_{s_2}-{\q}_m
        \end{align}
        are essentially distinct from $\sigma_{h_{s_1+1}}{\q}_1-{\q}_m$ since all the polynomials in $\CQ$ are essentially distinct from $\bq_1$. In making this deduction, we crucially use the assumption that $\q_1$ is nonlinear in $n$ for otherwise ${\q}_1-{\q}_m$ and $\sigma_{h_{s_1+1}}{\q}_1-{\q}_m$ would differ by a polynomial in $(\uh, h_{s_1+1})$. Consequently, after applying the $*$ operation, no other polynomial from \eqref{E: polys after vdC} gets identified with $\sigma_{h_{s_1+1}}{\q}_1-{\q}_m$. We then apply the van der Corput inequality (Lemma~\ref{L: finitary van der Corput}), compose with $T^{-\floor{\bq_m(n)}}$, and use Lemma \ref{L: errors} in order to make error terms independent of $n$. While applying the van der Corput inequality, we remove the terms that do not depend on $n$, and we also group together functions that correspond to classes of iterates that are not essentially distinct. The essential distinctness of $\sigma_{h_{s_1+1}}{\q}_1-{\q}_m$ from other polynomials ensures that $f_{\sigma_{h_{s_1+1}}\q_1 - \q_m}$ is not grouped together with anything else. Finally, we compose the integral with some expression $T^{-\ba(\uh, h_{s_1+1})}$ in such a way as to ensure that $f_{\sigma_{h_{s_1+1}}\q_1 - \q_m} = f_1$.
    \end{proof}

    \subsection{Preliminary estimates for finite polynomial averages}
    Having set up the formalism for studying polynomial averages and described how they change after applying the van der Corput inequality, we move on to provide first seminorm estimates for finite polynomial averages. The starting point for all the results in this section is the following result that essentially follows from iterated applications of Lemma \ref{L: bounding averages in vdC}.
		\begin{proposition}[PET bound for polynomials]\label{P: PET I}
			Let $d, k, \ell\in\N$. There exist positive integers $s_1, s_2=O_{d, \ell}(1)$  with the following property: for all $H,M, N\in\N$ with $H,M\leq N$, $n_0\in\Z$, systems $(X, \CX, \mu, T_1, \ldots, T_k)$, 1-bounded functions $f_1\in L^\infty(\mu)$, and polynomials $\p_1, \ldots, \p_\ell\in\R^k[n]$ with degrees at most $d$,
             form $\p_j(n) = \sum_{i=0}^d \bbeta_{ji} n^i$, and such that $\bp_1$ is essentially distinct from $\bp_2, \ldots, \bp_\ell$, we have the bound
			\begin{multline}\label{E: PET bound inequality}
				\sup_{\norm{f_2}_\infty, \ldots, \norm{f_\ell}_\infty\leq 1}\sup_{|c_n|\leq 1}\norm{\E_{n\in[N]}c_n\cdot T^{\floor{\p_1(n_0 + n)}}f_1\cdots T^{\floor{\p_\ell(n_0 + n)}}f_\ell}_{L^2(\mu)}^{O_{d,\ell}(1)}\\
        \ll_{d,k, \ell} \E\limits_{\uh\in[\pm H]^{s_1}} \E_{\um, \um'\in[\pm M]^{s_2}} \abs{\int \Delta_{{\bc_1(\uh)}, \ldots, {\bc_{s_2}(\uh)}; (\um,\um')} f_1\, d\mu}
        + \frac{H}{N}
        +\frac{M}{N},
			\end{multline}
			for nonzero polynomials $\bc_1, \ldots, \bc_{s_2}\in\R^{k}[\uh]$ that depend only on $\bp_1, \ldots, \bp_\ell$, are multilinear and take the form
			\begin{align}\label{E: polynomials c_j}
				\bc_{j}(\uh) = \sum_{\substack{\uu\in \{0,1\}^{s_1},\\ |\uu|\leq d-1}} 
    (|\uu|+1)! \cdot (\bbeta_{1(|\uu|+1)}-\bbeta_{w_{j\uu}(|\uu|+1)})\uh^\uu
			\end{align}
                for some indices $w_{j\uu}\in[0,\ell]$ (with $\bbeta_{0(|\uu|+1)}:=\mathbf{0}$). Moreover, each nonzero vector $\bbeta_{1(|\uu|+1)}-\bbeta_{w_{j\uu}(|\uu|+1)}$ is the leading coefficient of $\p_1 - \p_{w_{j\uu}}$.
		\end{proposition}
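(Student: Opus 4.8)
\textbf{Proof plan for Proposition~\ref{P: PET I}.}
The plan is to run the PET induction scheme encapsulated in Propositions~\ref{P: vdC preserves descendancy} and \ref{P: vdC terminates}, using Lemma~\ref{L: bounding averages in vdC} as the single inductive step. First I would put the family $\CP = (\p_1, \ldots, \p_\ell)$ into normal form: by composing the average with $T^{-\floor{\p_j(n_0+n)}}$ for a suitable $j$ I may assume $\p_1$ has the highest degree in $n$ (the essential distinctness of $\p_1$ from the others is preserved under such a translation, and the set of leading coefficients of $\p_1, \p_1-\p_2, \ldots, \p_1-\p_\ell$ is unchanged), and by subtracting the constant terms I may assume $\p_j(0) = \mathbf{0}$ for all $j$; absorbing $n_0$ appropriately lets me also assume $n_0 = 0$ at the cost of tracking it through, or alternatively carry $n_0$ as a harmless parameter since the van der Corput operation commutes with the shift $n \mapsto n_0 + n$. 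The trivial family $\CP$ descends from itself in the sense of Definition~\ref{D: descendence} (with $s_1 = 0$, $w_{\uu} = 0$, $w_{1\uu} = 1$, $w_{j\uu} = j$), so the induction hypothesis is satisfied at the start.

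Next I would apply Proposition~\ref{P: vdC terminates} to obtain $s = O_{d,\ell}(1)$ and indices $m_1, \ldots, m_s$ such that each $\CQ_i = \partial_{m_i}\cdots\partial_{m_1}\CP$ descends from $\CP$ and $\CQ_s$ is linear in $n$. Then I would apply Lemma~\ref{L: bounding averages in vdC} successively: at stage $i$ the family $\CQ_{i-1}$ has $\deg \q_1 > 1$ (for $i \le s$, since $\CQ_s$ is the first linear family), both $\CQ_{i-1}$ and $\partial_{m_i}\CQ_{i-1} = \CQ_i$ descend from $\CP$ by Proposition~\ref{P: vdC preserves descendancy}, and Lemma~\ref{L: bounding averages in vdC} bounds the square of the $L^2(\mu)$-norm of the average over $\CQ_{i-1}$ by an average over a new variable $h_i$ of $L^2(\mu)$-norms of averages over $\CQ_i$, plus an error $H/N$; crucially the ``distinguished'' function $f_{\sigma_{h_i}\q_1 - \q_m}$ gets identified with $f_1$ after composing with a suitable power of $T$, so the seminorm estimate stays anchored at $f_1$. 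After $s$ iterations the $H/N$ errors accumulate to $O_{d,\ell}(H/N)$ (there are $O_{d,\ell}(1)$ of them), and I am left with a finite average over $\uh = (h_1, \ldots, h_{s_1}) \in [\pm H]^{s_1}$, where $s_1 = s$, of squared $L^2$-norms of averages along the linear-in-$n$ family $\CQ_s$.

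For the terminal linear family I would invoke Lemma~\ref{L: linear averages}: the elements of $\CQ_s$ are of the form $\bc_j(\uh) n$ (plus constants in $n$, which after composing with an appropriate power of $T$ and applying Lemma~\ref{L: errors} to handle the resulting bounded error terms can be discarded) where $\bc_1, \ldots, \bc_{s_2}$ are the leading ($n$-degree-one) coefficients of the polynomials in $\CQ_s$ as functions of $\uh$. Lemma~\ref{L: linear averages} then bounds the average over $n$ by $\E_{\um, \um' \in [\pm M]^{s_2}}\abs{\int \Delta_{\bc_1(\uh), \ldots, \bc_{s_2}(\uh); (\um,\um')} f_1\, d\mu}$ plus $M/N$ (after one more Cauchy-Schwarz to remove the $f_j$, $j \ge 2$, and to reduce everything to $f_1$; I should be slightly careful that Lemma~\ref{L: linear averages} as stated removes the absolute value via Cauchy–Schwarz—I would use the version that keeps it, or insert an extra averaging/Cauchy–Schwarz step, which only changes the $O_{d,\ell}(1)$ exponent). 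Raising to an appropriate power absorbs all the intermediate exponent losses into a single $O_{d,\ell}(1)$ power on the left. The shape \eqref{E: polynomials c_j} of the $\bc_j$, their multilinearity, and the fact that each nonzero $\bbeta_{1(|\uu|+1)}-\bbeta_{w_{j\uu}(|\uu|+1)}$ is the leading coefficient of $\p_1 - \p_{w_{j\uu}}$ are read off directly from the descendence property \eqref{E: gamma_ji} of $\CQ_s$ specialized to $i = 1$ (the $n^1$-coefficient): the multinomial coefficients become $(|\uu|+1)!$ when $\uu \in \{0,1\}^{s_1}$ and $i=1$, multilinearity is clause~\eqref{i: multilinear} of Definition~\ref{D: descendence}, and the leading-coefficient statement is clause~\eqref{i: leading coeffs} with $d_{1j} = 1$ in the linear family. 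Nonvanishing of each $\bc_j$ follows from essential distinctness of $\CQ_s$, which is part of being normal.

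\textbf{Main obstacle.} The conceptual content is entirely outsourced to Propositions~\ref{P: vdC preserves descendancy} and \ref{P: vdC terminates} (proved in \cite{KKL24a} for integer polynomials, with the same proofs working over $\R^k$), so the real work is bookkeeping: ensuring that through each application of Lemma~\ref{L: bounding averages in vdC} the distinguished function remains $f_1$ (so that the final seminorm is a seminorm of $f_1$ and not of some derived function), correctly matching the polynomial $\bc_j$ emerging from the linear family $\CQ_s$ to the formula \eqref{E: polynomials c_j} via Definition~\ref{D: descendence}, and verifying that the leading-coefficient clause~\eqref{i: leading coeffs} indeed forces each nonzero $\bbeta_{1(|\uu|+1)}-\bbeta_{w_{j\uu}(|\uu|+1)}$ to be a genuine leading coefficient of $\p_1 - \p_{w_{j\uu}}$ (rather than merely some intermediate coefficient). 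The error-term management—converting $\floor{\q(n_0+n,\uh)}$ into $T$-iterates with bounded fractional-part errors and then clearing those via Lemma~\ref{L: errors}—is routine but must be done uniformly in $N$ so that the final $H/N + M/N$ error bound holds with constants depending only on $d, \ell$ (and $k$ for the implicit multiplicative constant, but not for the exponents).
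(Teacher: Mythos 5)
Your proposal is correct and follows essentially the same route as the paper: normalize so that $\p_1$ has maximal degree (composing with an iterate, as in Corollary~\ref{C: sublinear}) and handle $n_0$ by carrying it through to the linear stage, run the PET scheme via Propositions~\ref{P: vdC preserves descendancy}--\ref{P: vdC terminates} and iterated applications of Lemma~\ref{L: bounding averages in vdC}, finish with Lemma~\ref{L: linear averages}, and read off \eqref{E: polynomials c_j} from Definition~\ref{D: descendence}. The only small imprecision is that the $\bc_j$ delivered by Lemma~\ref{L: linear averages} are $\b_{11}, \b_{11}-\b_{21}, \ldots, \b_{11}-\b_{s_2 1}$ (the leading linear coefficient and its differences) rather than the raw linear coefficients of $\CQ_{s_1}$, but since $w_{1\uu}=1$ in Definition~\ref{D: descendence} these differences again have the form \eqref{E: polynomials c_j}, so your argument goes through unchanged.
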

  Note that in the proposition above, the polynomials and functions are chosen for fixed $N$, meaning that they can vary with $N$. The absolute constants depend only on the degree and number of polynomials as well as the number of transformations; they can be chosen uniformly so that they do not depend on the coefficients of the polynomials.  

  \begin{proof}
      We allow all the quantities to depend on $d,k, \ell$, noting however that $s_1, s_2$ will be independent of $k$.
      
      We assume that $n_0 = 0$ and that $\p_1$ has maximum degree among $\p_1, \ldots, \p_\ell$, and in the end we will explain the necessary modifications in the general case. Under this assumption, the family $\CP := \{\bp_1 - \bp_1(0), \ldots, \bp_\ell-\bp_\ell(0)\}$ is normal, and so we can use Proposition  \ref{P: vdC terminates} in order to find $s_1=O(1)$ (independent of $k$) and $m_1, \ldots, m_{s_1}\in\N$ such that for $i\in[s_1]$ the families $\CQ_i := \partial_{m_i}\cdots\partial_{m_1}\CP$ are normal and descend from $\CP$, and the family $\CQ_{s_1}$ is linear in $n$ and has length $s_2$. The van der Corput operation at most doubles the length of the polynomial family, giving us a crude estimate $s_2 = |\CQ_{s_1}|\leq 2^{s_1}|\CP|\ll 1$. We note in particular that the upper bound on $s_2$ is independent of $k$.
      
      Let $$\b_{11}(\uh)n, \ldots, \b_{s_21}(\uh)n$$ be the elements of $\CQ_{s_1}$. Applying Lemma \ref{L: bounding averages in vdC} inductively to each $\CQ_0, \ldots, \CQ_{s_1-1}$ (with $\CQ_0 = \CP$), we deduce that 
    \begin{multline*}
        \sup_{\norm{f_2}_\infty, \ldots, \norm{f_\ell}_\infty\leq 1}\sup_{|c_n|\leq 1}\norm{\E_{n\in[N]}c_n\cdot T^{\floor{\p_1(n)}}f_1\cdots T^{\floor{\p_\ell(n)}}f_\ell}_{L^2(\mu)}^{2^{s_1}}\\
        \ll\E\limits_{\uh\in[\pm H]^{s_1}} \sup_{\norm{f_{2}}_\infty, \ldots, \norm{f_{\ell}}_\infty\leq 1} \sup_{|c_n|\leq 1}\norm{\E_{n\in[N]} c_n\cdot \prod_{j\in[s_2]} T^{\floor{\b_{j1}(\uh)n}} f_{j}}_{L^2(\mu)}
        +\frac{H}{N}.
    \end{multline*}
    Since the family $\CQ_{s_1}$ descends from $\CP$, the polynomials $\b_{11}, \ldots, \b_{s_21}$ are all distinct. The claimed inequality then follows from Lemma \ref{L: linear averages} upon letting $\bc_1, \ldots, \bc_{s_2}$ be the polynomials $\b_{1 1}, \b_{11}-\b_{21}, \ldots, \b_{11} - \b_{s_2 1}$. That the polynomials $\bc_1, \ldots, \bc_{s_2}$ are nonzero is a consequence of the distinctness of $\b_{11}, \ldots, \b_{s_21}$. The formula \eqref{E: polynomials c_j} follows from patching together various properties that make up Definition \ref{D: descendence}.
    
    When $n_0\neq 0$, then we carry out the argument as above but with $n_0+n$ in place of $n$ at every step. Arriving at the linear polynomials $$\b_{11}(\uh)(n_0+n), \ldots, \b_{s_21}(\uh)(n_0+n),$$ we simply compose with $T^{-\floor{\b_{11}(\uh)n_0}}$ and dispose of the error terms using Lemma \ref{L: errors}, obtaining the exact same result as in the case $n_0 = 0$.

     When $\p_1$ does not have maximum degree, then we argue as in the proof of Corollary~\ref{C: sublinear}.
  \end{proof}

The following two results extend Proposition \ref{P: PET I} to families of polynomials with cubic shifts that we will need to consider later. The statement and proof differ somewhat depending on whether $\p_1$ is linear or not, and so we consider these two cases separately.

\begin{proposition}[Finitary PET bound for polynomials with cubic shifts, nonlinear case]\label{P: PET for polynomials with shifts}
    Let $s\in\N_0$ and $d, k, \ell\in \N$ with $d\geq 2$. There exist positive integers $s_1, s_2=O_{d, \ell,s}(1)$ with the following property: for all $H, K, M, N\in\N$ with $H,M\leq N$, $n_0\in\Z$, systems $(X, \CX, \mu, T_1, \ldots, T_k)$,  1-bounded functions $f_{1\underline{1}}\in L^\infty(\mu)$, and polynomials $\p_1, \ldots, \p_\ell\in\R^k[n]$ with degrees at most $d$,
             form $\p_j(n) = \sum_{i=0}^d \bbeta_{ji} n^i$, and such that $\bp_1$ is nonlinear and essentially distinct from $\bp_2, \ldots, \bp_\ell$, we have
    \begin{multline*}
        \sup_{\substack{\norm{f_{j\ueps}}_\infty\leq 1,\\ (j,\ueps)\neq(1,\underline{1})}}\sup_{|c_n|\leq 1}\norm{\E_{n\in[N]}c_n\cdot\prod_{j\in[\ell]}\prod_{\ueps\in\{0,1\}^s} T^{\floor{\bp_j(n_0+n+\ueps\cdot\uk)}}f_{j\ueps}}_{L^2(\mu)}^{O_{d,\ell,s}(1)}\\         \ll_{d, k, \ell, s} 
        \E\limits_{\uh\in[\pm H]^{s_1}} \E_{\um, \um'\in[\pm M]^{s_2}} \abs{\int \Delta_{\bc_1(\uh, \uk), \ldots, \bc_{s_2}(\uh, \uk); (\um,\um')} f_{1\underline{1}}\, d\mu} 
         + \frac{H}{N}+\frac{M}{N}
    \end{multline*}
    for all but at most $O_{d,k, \ell,s}(K^{s-1})$ values $\uk\in [\pm K]^{s}$ and nonzero polynomials $\bc_1, \ldots, \bc_{s_2}\in\R^k[\uh, \uk]$ that depend only on $\bp_1, \ldots, \bp_\ell$ and $s$, are multilinear and take the form
    \begin{align}\label{E: c_j with h, k}  
        \bc_{j}(\uh, \uk) = \sum_{\substack{(\uu,\uv)\in \N_0^{s+s_1},\\ |\uu|+|\uv|\leq d-1}} (|\uu|+|\uv|+1)!\cdot(\bbeta_{1(|\uu|+|\uv|+1)}-\bbeta_{w_{j\uu\uv}(|\uu|+|\uv|+1)})\uh^\uu \uk^\uv,
    \end{align}
                for some indices $w_{j\uu\uv}\in[0,\ell]$ (with $\bbeta_{0(|\uu|+|\uv|+1)}:=\mathbf{0}$). Moreover, each nonzero vector $\bbeta_{1(|\uu|+|\uv|+1)}-\bbeta_{w_{j\uu\uv}(|\uu|+|\uv|+1)}$ is the leading coefficient of $\p_1 - \p_{w_{j\uu\uv}}$.
\end{proposition}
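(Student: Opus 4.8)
\textbf{Proof strategy for Proposition~\ref{P: PET for polynomials with shifts}.}

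The plan is to reduce the shifted polynomial average to the unshifted setting handled by Proposition~\ref{P: PET I}, treating the cubic-shift variables $\uk$ as additional ``frozen'' parameters. First I would absorb the shifts: writing $\bp_j(n_0+n+\ueps\cdot\uk)$ and expanding via the binomial theorem, each $\bp_j(\,\cdot\, + \ueps\cdot\uk)$ is again a polynomial in $n$ of degree $\le d$, whose coefficients are polynomials in $(\uk, n_0)$ — more precisely, the coefficient of $n^i$ is $\sum_{i' \ge i} \binom{i'}{i} \bbeta_{j i'} (n_0 + \ueps\cdot\uk)^{i'-i}$, and in particular the leading coefficient $\bbeta_{jd}$ is unchanged. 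So the family $\CP_{\uk} := \{\bp_j(\,\cdot\,+n_0+\ueps\cdot\uk) : j\in[\ell], \ueps\in\{0,1\}^s\}$ has $\le \ell\cdot 2^s = O_{\ell,s}(1)$ members, all of degree $\le d$, and — this is the crucial point — for \emph{all but} $O_{d,k,\ell,s}(K^{s-1})$ choices of $\uk\in[\pm K]^s$, the member $\bp_1(\,\cdot\,+n_0+\underline{1}\cdot\uk)$ is essentially distinct from the others in $\CP_{\uk}$ as polynomials in $n$. Indeed, $\bp_1(\,\cdot\,+n_0+\underline{1}\cdot\uk) - \bp_j(\,\cdot\,+n_0+\ueps\cdot\uk)$ is constant in $n$ only if its degree-$\ge 1$ coefficients (polynomials in $\uk$, not identically zero precisely because $\bp_1$ is nonlinear and essentially distinct from $\bp_2,\dots,\bp_\ell$ — the degree-$1$ coefficient being $\bc_1$-type and nonzero since $\deg\bp_1\ge 2$ forces a genuine $\uk$-dependence unless $\bp_1=\bp_j$, which is excluded) all vanish at $\uk$; the zero set of a nonzero polynomial of bounded degree in $s$ variables, intersected with $[\pm K]^s$, has size $O(K^{s-1})$ by the Schwartz–Zippel bound.

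Having fixed such a good $\uk$, I would apply Proposition~\ref{P: PET I} to the family $\CP_\uk$ with $f_{1\underline{1}}$ playing the role of $f_1$ (after permuting so that $\bp_1(\,\cdot\,+n_0+\underline{1}\cdot\uk)$ sits first; note $\deg$ may not be maximal in $\CP_\uk$, but Proposition~\ref{P: PET I} as stated only requires essential distinctness of the first polynomial, having itself disposed of the maximal-degree reduction via the Corollary~\ref{C: sublinear}-style trick). This yields the desired bound with multilinear polynomials $\bc_1',\dots,\bc_{s_2}'\in\R^k[\uh]$ of the form \eqref{E: polynomials c_j}, where the coefficients $\bbeta_{\cdot(|\uu|+1)}$ are now the $n$-coefficients of members of $\CP_\uk$, hence (expanding back through the binomial identities) polynomials in $\uk$. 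Tracking this dependence carefully gives polynomials $\bc_j(\uh,\uk)$ that are multilinear in $\uh$ and polynomial in $\uk$; the multilinearity in $\uk$ and the precise shape \eqref{E: c_j with h, k}, together with the assertion that each nonzero $\bbeta_{1(\cdot)}-\bbeta_{w_{j\uu\uv}(\cdot)}$ is a genuine leading coefficient of some $\bp_1-\bp_{w}$, follow from the corresponding properties in Definition~\ref{D: descendence} and the combinatorics of the binomial expansion — this is the bookkeeping that \cite[Proposition 4.6]{KKL24a} is designed to facilitate, and the argument is the exact analogue of \cite[Proposition 4.10 or 4.11]{KKL24a}. The uniformity of $s_1,s_2$ and the constants in $d,\ell,s$ (and independence of $k$ for $s_1,s_2$) is inherited from Proposition~\ref{P: PET I} since $|\CP_\uk|\le \ell 2^s$.

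The main obstacle I anticipate is \emph{not} the PET machinery itself — that is black-boxed via Proposition~\ref{P: PET I} and the descendancy results — but rather verifying that the exceptional set of ``bad'' $\uk$ (where essential distinctness fails) genuinely has size $O_{d,k,\ell,s}(K^{s-1})$, and that on the good set the leading-coefficient structure is preserved exactly. The subtlety is that ``essentially distinct as polynomials in $n$'' must hold simultaneously for all $\binom{\ell 2^s}{2}$ pairwise differences in $\CP_\uk$; each failure is confined to the zero set of some fixed nonzero polynomial in $\uk$, and one must check these polynomials are not identically zero — which is exactly where the hypotheses ``$\bp_1$ nonlinear'' and ``$\bp_1$ essentially distinct from $\bp_2,\dots,\bp_\ell$'' (in the original, shift-free sense) get used, since $\deg\bp_1\ge 2$ ensures $\bp_1(n+c)-\bp_1(n)$ is a genuinely nonconstant polynomial in $c$. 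A secondary technical point is handling the error terms $\floor{\bp_j(n_0+n+\ueps\cdot\uk)}$ versus integer parts of the expanded polynomials: the shifts $\ueps\cdot\uk$ are integers, so $\floor{\bp_j(n_0+n+\ueps\cdot\uk)}$ is literally the integer part of the expanded real polynomial, with no rounding discrepancy, so Lemma~\ref{L: errors} is only needed in its usual role inside the proof of Proposition~\ref{P: PET I}, not here.
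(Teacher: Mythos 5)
Your overall route is the same as the paper's: freeze $\uk$, regard the shifted polynomials as a family of polynomials in $n$ with $\uk$-dependent coefficients, discard the $O(K^{s-1})$ values of $\uk$ where essential distinctness of $\p_1(n_0+n+\underline{1}\cdot\uk)$ from the other members fails, and apply Proposition~\ref{P: PET I} with $f_{1\underline{1}}$ as the distinguished function. Your one structural deviation is that you keep all $\ell 2^s$ shifted polynomials, whereas the paper first splits off the linear $\p_j$, writing $\floor{\p_j(n+\ueps\cdot\uk)}$ as $\floor{\bbeta_{j1}n}+\floor{\bbeta_{j1}\ueps\cdot\uk}$ plus errors and absorbing the constants into the functions via Lemma~\ref{L: errors}; this deviation is acceptable, since Proposition~\ref{P: PET I} as stated only requires the first polynomial to be essentially distinct from the rest, so the constant-shifted copies of a linear $\p_j$ may remain in the family.

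The genuine gap is in the final step, which you dismiss with ``follows from Definition~\ref{D: descendence} and the combinatorics of the binomial expansion''. What Proposition~\ref{P: PET I} (i.e.\ descendance) gives when applied to the shifted family is that the nonzero coefficients of the output polynomials are leading coefficients of the differences $\p_1(n+n_0+\underline{1}\cdot\uk)-\p_j(n+n_0+\ueps\cdot\uk)$, expressed through the $\uk$-dependent coefficients of the \emph{shifted} polynomials --- not yet the form \eqref{E: c_j with h, k} in the original $\bbeta$'s, nor multilinearity in $\uk$, nor the assertion that every nonzero vector appearing is a leading coefficient of an original $\p_1-\p_{w}$. Converting one into the other is the actual content of the paper's proof and needs a case analysis you do not carry out: if $\ueps=\underline{1}$, or if $\p_1,\p_w$ have distinct leading coefficients, the leading coefficient of the shifted difference equals that of $\p_1-\p_w$ and nothing changes; but if $\ueps\neq\underline{1}$ and the leading coefficients agree (say both of degree $d$), the leading coefficient of the shifted difference is $d\bbeta_{1d}(\underline{1}-\ueps)\cdot\uk+\bbeta_{1(d-1)}-\bbeta_{w(d-1)}$, which is not a leading coefficient of any original difference. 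One must re-expand it in the monomials $\uk^{\uv}$, use the identity $(|\uu|+1)!\cdot d=(|\uu|+|\uv|+1)!$ for $|\uv|=1$ (here $|\uu|=d-2$), assign $w_{j\uu\uv}=0$ to the $|\uv|=1$ terms (so that the coefficient $\bbeta_{1d}$ is the leading coefficient of $\p_1-\p_0=\p_1$) and $w_{j\uu\uv}=1$ to $|\uv|\geq 2$ (so those coefficients vanish), and note that $\bbeta_{1(d-1)}-\bbeta_{w(d-1)}$, although a non-leading coefficient of the shifted data, \emph{is} the leading coefficient of $\p_1-\p_w$ precisely because the degree-$d$ coefficients cancel. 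Without this verification, the shape \eqref{E: c_j with h, k} and the leading-coefficient property --- which are what the later concatenation step relies on --- remain unproven; the appeal to descendance and to the analogue in \cite{KKL24a} points at the right bookkeeping but does not substitute for it.
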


    \begin{proof}
As in the proof of Proposition \ref{P: PET I}, we can assume without loss of generality that $n_0 = 0$.
Fix $\uk\in[\pm K]^{s}$ and let 
\begin{align*}
    \veps =
    \sup_{\substack{\norm{f_{j\ueps}}_\infty\leq 1,\\ (j,\ueps)\neq(1,\underline{1})}}\sup_{|c_n|\leq 1}\norm{\E_{n\in[N]}c_n\cdot\prod_{j\in[\ell]}\prod_{\ueps\in\{0,1\}^s} T^{\floor{\bp_j(n+\ueps\cdot\uk)}}f_{j\ueps}}_{L^2(\mu)}.
\end{align*}
We allow all the quantities in the proof to depend on $d,k,\ell,s$ but not on $H, K, M,N, n_0$ (noting however that $s_1, s_2$ and the powers of $\veps$ will be independent of $k$). Regrouping the polynomials if necessary, we pick $\ell_0\in[\ell]$ such that $\p_1, \ldots, \p_{\ell_0}$ are nonlinear while $\p_{\ell_0+1}, \ldots, \p_{\ell}$ are linear. By assumption, $\ell_0\geq 1$, i.e. $\p_1$ and possibly several other polynomials are nonlinear. Splitting $\floor{\p_j(n+\ueps\cdot\uk)}$ into $\floor{\bbeta_{j1}n} + \floor{\bbeta_{j1} \ueps\cdot\uk}$ and the error terms for $j>\ell_0$ and subsequently removing the error terms using Lemma \ref{L: errors}, we obtain
\begin{multline*}
    \sup_{\substack{\norm{f_{j\ueps}}_\infty\leq 1,\\ (j,\ueps)\neq(1,\underline{1})}}\; \sup_{\norm{f_{\ell_0+1}}_\infty, \ldots, \norm{f_{\ell}}_\infty\leq 1}\;
    \sup_{|c_n|\leq 1}\\
    \norm{\E_{n\in[N]}c_n\cdot\prod_{j\in[\ell_0]}\prod_{\ueps\in\{0,1\}^s} T^{\floor{\bp_j(n+\ueps\cdot\uk)}}f_{j\ueps}\cdot \prod_{j=\ell_0+1}^\ell T^{\floor{\bp_j(n)}}f_{j}}_{L^2(\mu)}\gg\veps
\end{multline*}
(we note that $f_{j\ueps}$ for $j>\ell_0$ will depend on $\uk$).
After this regrouping, all the polynomials in the average above are essentially distinct for all but $O(K^{s-1})$ values of $\uk\in[\pm K]^s$, which will make it amenable to an application of Proposition \ref{P: PET I} later on.

We can rewrite
 \begin{align*}
     \bp_j(n+\ueps\cdot\uk) = \sum_{i=0}^d \bbeta_{ji}(n+\ueps\cdot\uk)^i = \sum_{i=0}^d\bbeta_{ji}\sum_{l=0}^i{{\binom{i}{l}}}n^l(\ueps\cdot\uk)^{i-l} = \sum_{l=0}^d\bbeta_{j\ueps\uk l}n^l
 \end{align*}
 for $\bbeta_{j\ueps\uk l} = \sum_{i=l}^d \bbeta_{ji}{{\binom{i}{l}}}(\ueps\cdot\uk)^{i-l}$.
 By Proposition \ref{P: PET I} applied with $f_{1\underline{1}}$ in place of $f_1$ to all the averages over $n$ except those corresponding to the $O(K^{s-1})$ exceptional values of $\uk$, there exist positive integers $s_1, s_2=O(1)$ (independent of $k$) such that
 \begin{align*}
        \E\limits_{\uh\in[\pm H]^{s_1}} \E_{\um, \um'\in[\pm M]^{s_2}} \abs{\int \Delta_{{\bc_1(\uh, \uk)}, \ldots, {\bc_{s_2}(\uh, \uk)}; (\um,\um')} f_{1\underline{1}}\, d\mu}
        +\frac{H}{N}+\frac{M}{N} \gg \veps^{O(1)}
			\end{align*}
			for nonzero polynomials $\bc_1, \ldots, \bc_{s_2}:\R^k[\uh, \uk]$. Moreover, the polynomials $\bc_1, \ldots, \bc_{s_2}$ are multilinear, depend only on $\bp_1, \ldots, \bp_\ell$ and $\uk$, and take the form
			\begin{align}\label{E: c_j preliminary}
	        \bc_{j}(\uh, \uk) = \sum_{\substack{\uu\in \{0,1\}^{s_1},\\ |\uu|\leq d-1}} (|\uu|+1)!\cdot (\bbeta_{1\underline{1}\uk(|\uu|+1)}-\bbeta_{\tilde{w}_{j\uu}\uk(|\uu|+1)})\uh^\uu,			
			\end{align}
		for some $\tilde{w}_{j\uu}= (w_{j\uu}, \ueps)\in[0,\ell]\times\{0,1\}^{s}$, where $\bbeta_{1\underline{1}\uk(|\uu|+1)}=\bbeta_{1(|\uu|+1)}$, and  $\bbeta_{1\underline{1}\uk(|\uu|+1)}-\bbeta_{\tilde{w}_{j\uu}\uk(|\uu|+1)}$ is the leading coefficient of $\p_1(n+\underline{1}\cdot\uk) - \p_{{w}_{j\uu}}(n+\ueps\cdot\uk)$. 
  
  Our task is to massage the formula \eqref{E: c_j preliminary} for $\bc_j(\uh, \uk)$ in order to reach the formula \eqref{E: c_j with h, k} from the statement of the proposition. Specifically, we want to show that the coefficient of $\uh^\uu \uk^\uv$ in \eqref{E: c_j preliminary} takes the form given by \eqref{E: c_j with h, k}, and the way to proceed is to scrutinize the expressions $\bbeta_{1\underline{1}\uk(|\uu|+1)}-\bbeta_{\tilde{w}_{j\uu}\uk(|\uu|+1)}$ for fixed $j$, $\uu$ and $\ueps$. 
  If $\ueps = \underline{1}$, or if the polynomials $\p_1, \p_{w_{j\uu}}$ have distinct leading coefficients, then the leading coefficient of $$\p_1(n+\underline{1}\cdot\uk) - \p_{{w}_{j\uu}}(n+\ueps\cdot\uk)$$ is the same as the leading coefficient of $\p_1(n) - \p_{{w}_{j\uu}}(n)$ and equals $\bbeta_{1(|\uu|+1)}-\bbeta_{w_{j\uu}(|\uu|+1)}$ for $|\uu| = \deg(\p_1-\p_{w_{j\uu}}) -1$. Then, for each $\uv$, the coefficient of $\uh^\uu \uk^\uv$ in \eqref{E: c_j preliminary} matches the formula from \eqref{E: c_j with h, k} with $w_{j\uu\uv} = w_{j\uu}$ if $|\uv|=0$ and $w_{j\uu\uv} = 1$ otherwise.
  
  The remaining case is when $\ueps\neq \underline{1}$ and $\p_1, \p_{w_{j\uu}}$ have the same leading coefficients. Assuming without loss of generality that $\deg \p_1 = \deg \p_{w_{j\uu}} = d$ (so that $\bbeta_{1d}=\bbeta_{w_{j\uu}d}$), the leading coefficient of $\p_1(n+\underline{1}\cdot\uk) - \p_{{w}_{j\uu}}(n+\ueps\cdot\uk)$ becomes 
  \begin{align*}
      \bbeta_{1\underline{1}\uk(|\uu|+1)}-\bbeta_{\tilde{w}_{j\uu}\uk(|\uu|+1)} = d\bbeta_{1d}(\underline{1}-\ueps)\cdot\uk  + \bbeta_{1(d-1)}-\bbeta_{w_{j\uu}(d-1)}
  \end{align*}
  for $|\uu| = d-2$, and so
  \begin{multline*}
      (|\uu|+1)!\cdot (\bbeta_{1\underline{1}\uk(|\uu|+1)}-\bbeta_{\tilde{w}_{j\uu}\uk(|\uu|+1)})\uh^\uu\\ 
      = \sum_{\substack{\uv\in\{0,1\}^s,\\ |\uv|\leq d-(|\uu|+1)}}(|\uu|+|\uv|+1)! \cdot (\bbeta_{1(|\uu|+|\uv|+1)}-\bbeta_{{w}_{j\uu\uv}(|\uu|+|\uv|+1)})\uh^\uu\uk^{\uv}
  \end{multline*}
  with
  \begin{align*}
      w_{j\uu\uv} = \begin{cases} w_{j\uu},\; &|\uv| = 0\\
      0,\; &|\uv| = 1\\
      1,\; & |\uv|>1
      \end{cases}.
  \end{align*}
  We implicitly use here the observations that $\bbeta_{1(|\uu|+|\uv|+1)}-\bbeta_{{w}_{j\uu\uv}(|\uu|+|\uv|+1)}$ vanishes when $|\uv|\geq 2$ and that  $(|\uu|+|\uv|+1)! = (|\uu|+1)! \cdot d $ for $|\uv|=1$ since in this case we have $|\uu|+|\uv|+1 = |\uu|+2 = d$. Hence for such $\uu$, the coefficient of $\uh^\uu \uk^\uv$ in \eqref{E: c_j preliminary} once again takes the form prescribed by \eqref{E: c_j with h, k}.
\end{proof}

We now prove a version of Proposition \ref{P: PET for polynomials with shifts} in which $\p_1$ is linear. We note that the average that we aim to bound slightly differs from the one in the previous proposition; this has to do with the way in which Proposition \ref{P: PET for polynomials with shifts 2} is meant to be applied. 

\begin{proposition}[Finitary PET bound for polynomials with cubic shifts, linear case]\label{P: PET for polynomials with shifts 2}
    Let $d, k, \ell\in \N$  and $s\in\N_0$. There exist positive integers $s_1, s_2=O_{d, \ell,s}(1)$ with the following property: for all $H, K, M, N\in\N$ with $H,M\leq N$, $n_0\in\Z$, systems $(X, \CX, \mu, T_1, \ldots,$ $ T_k)$,  1-bounded functions $f_{1{\ueps}}\in L^\infty(\mu)$, and polynomials $\p_1, \ldots, \p_\ell\in\R^k[n]$ with degrees at most $d$,
             form $\p_j(n) = \sum_{i=0}^d \bbeta_{ji} n^i$, and such that $\bp_1$ is linear and essentially distinct from $\bp_2, \ldots, \bp_\ell$, we have
    \begin{multline*}
        \sup_{\substack{\norm{f_{2\ueps}}_\infty, \ldots, \norm{f_{\ell\ueps}}_\infty\leq 1}}\sup_{|c_n|\leq 1}\norm{\E_{n\in[N]}c_n\cdot\prod_{j\in[\ell]}\prod_{\ueps\in\{0,1\}^s} T^{\floor{\bp_j(n_0+n+ ({\underline{1}}-\ueps) \cdot\uk + \ueps\cdot\uk')
        }}f_{j\ueps}}_{L^2(\mu)}^{O_{d,\ell,s}(1)}\\         \ll_{d, k,\ell, s}  
        \E\limits_{\uh\in[\pm H]^{s_1}}\E_{\um, \um'\in[\pm M]^{s_2}}\\
        \abs{\int \prod_{\ueps\in\{0,1\}^{s_1}}
        T^{ \floor{\bbeta_{11}k_1^{\eps_1}}+\cdots + \floor{\bbeta_{11}k_s^{\eps_s}}
        +\br_{1\ueps}}\Delta_{\bc_1(\uh), \ldots, \bc_{s_2}(\uh); (\um,\um')} f_{1\ueps}\, d\mu}
        +  \frac{H}{N}+\frac{M}{N}
    \end{multline*}
    for all but at most $O_{d,k,\ell,s}(K^{2s-1})$ values $(\uk,\uk')\in [\pm K]^{2s}$,
    some shifts $\br_{1\ueps}\in\Z^k$ and nonzero polynomials $\bc_1, \ldots, \bc_{s_2}:\R^k[\uh]$ that depend only on $\bp_1, \ldots, \bp_\ell, s$ and satisfy the same properties as those listed in Proposition \ref{P: PET I}.
\end{proposition}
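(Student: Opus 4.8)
The plan is to follow the proof of Proposition~\ref{P: PET I}, exploiting that because $\bp_1$ is linear, all $2^s$ of its shifted copies carry the same iterate in $n$ and hence collapse into a single iterate that can be detached from the running variable; the price is the appearance of the shifts $\floor{\bbeta_{11}k_i^{\eps_i}}$ on the functions $f_{1\ueps}$. As in Proposition~\ref{P: PET I}, a first application of Lemma~\ref{L: errors} lets us assume $n_0=0$. Fix $(\uk,\uk')\in[\pm K]^{2s}$ and note that the argument shift of the $\ueps$-copy is $(\underline{1}-\ueps)\cdot\uk+\ueps\cdot\uk'=\sum_{i=1}^s k_i^{\eps_i}$ in the convention of~\eqref{E: h^eps}. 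Reorder so that $\bp_1,\dots,\bp_{\ell_0}$ are linear (with $\bp_1$ among them) and $\bp_{\ell_0+1},\dots,\bp_\ell$ are nonlinear. For every linear $\bp_j$ and every $\ueps$, split $\floor{\bp_j(n+\sum_i k_i^{\eps_i})}=\floor{\bbeta_{j1}n}+\sum_{i=1}^s\floor{\bbeta_{j1}k_i^{\eps_i}}+\br_{j\ueps}$ for bounded vectors $\br_{j\ueps}\in\Z^k$, removing the $n$-dependence of the error via Lemma~\ref{L: errors} (this is exactly Step~1 of the proof of Proposition~\ref{P: sublinear}). Since measure-preserving transformations act as algebra homomorphisms, the $2^s$ copies attached to a fixed linear $\bp_j$ amalgamate into $T^{\floor{\bbeta_{j1}n}}f_j^\ast$ with $f_j^\ast:=\prod_{\ueps}T^{\sum_i\floor{\bbeta_{j1}k_i^{\eps_i}}+\br_{j\ueps}}f_{j\ueps}$ a $1$-bounded function; write $f_1^\ast$ for this amalgam when $j=1$. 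We are reduced to bounding
\[\norm{\E_{n\in[N]}c_n\cdot T^{\floor{\bbeta_{11}n}}f_1^\ast\cdot\prod_{j=2}^{\ell_0}T^{\floor{\bbeta_{j1}n}}f_j^\ast\cdot\prod_{j=\ell_0+1}^{\ell}\prod_{\ueps\in\{0,1\}^s}T^{\floor{\bp_j(n+\sum_i k_i^{\eps_i})}}f_{j\ueps}}_{L^2(\mu)}.\]

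Next I would check that the polynomial family above --- with first element $\bbeta_{11}n$ --- is essentially distinct for all but $O_{d,k,\ell,s}(K^{2s-1})$ of the $(\uk,\uk')$. The distinct linear iterates $\bbeta_{j1}n$, $j\le\ell_0$, are essentially distinct because $\bbeta_{j1}\neq\bbeta_{j'1}$ whenever $\bp_j-\bp_{j'}$ is nonconstant; a linear iterate and a nonlinear iterate $\bp_j(n+\sum_i k_i^{\eps_i})$, $j>\ell_0$, are always essentially distinct, having different degrees in $n$; and two nonlinear iterates $\bp_j(n+\sum_i k_i^{\eps_i}),\bp_{j'}(n+\sum_i k_i^{\eps_i'})$ with $(j,\ueps)\neq(j',\ueps')$ fail to be essentially distinct only when a nonzero affine form in $(\uk,\uk')$ --- built from a leading coefficient of $\bp_j$ and lower coefficients of $\bp_j,\bp_{j'}$ --- vanishes, which rules out $O_{d,k,\ell,s}(K^{2s-1})$ tuples (this genericity step is the analogue of the corresponding passage in Proposition~\ref{P: PET for polynomials with shifts}). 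For the surviving $(\uk,\uk')$ we apply the PET machinery underlying Proposition~\ref{P: PET I} to the displayed average with $f_1^\ast$ in the role of $f_1$; since $\bbeta_{11}n$ need not have maximal degree, we first pass via Cauchy--Schwarz to a precomposition by $T^{-\floor{\bp_i(n+\sum_i k_i^{\eps_i})}}$ for a maximal-degree nonlinear $\bp_i$, exactly as in the proof of Corollary~\ref{C: sublinear}, which changes neither the leading coefficients of the differences $\bp_1-\bp_w$ nor, consequently, the directions produced by the PET. This yields positive integers $s_1,s_2=O_{d,\ell,s}(1)$ (independent of $k$) and nonzero multilinear polynomials $\bc_1,\dots,\bc_{s_2}\in\R^k[\uh]$ of the shape in Proposition~\ref{P: PET I}, depending only on $\bp_1,\dots,\bp_\ell$ and $s$ --- in particular every nonzero coefficient $\bbeta_{1(|\uu|+1)}-\bbeta_{w_{j\uu}(|\uu|+1)}$ occurring in $\bc_j$ is a leading coefficient of some $\bp_1-\bp_{w_{j\uu}}$, hence independent of $(\uk,\uk')$ --- such that the $O_{d,\ell,s}(1)$-th power of the displayed average is
\[\ll_{d,k,\ell,s}\E_{\uh\in[\pm H]^{s_1}}\E_{\um,\um'\in[\pm M]^{s_2}}\Bigabs{\int\Delta_{\bc_1(\uh),\dots,\bc_{s_2}(\uh);(\um,\um')}f_1^\ast\,d\mu}+\frac HN+\frac MN.\]

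Finally I would unfold the amalgam. Since the multiplicative derivative is multiplicative in its argument and $\Delta_{\bc_1(\uh),\dots,\bc_{s_2}(\uh);(\um,\um')}(T^{\br}g)=T^{\br}\Delta_{\bc_1(\uh),\dots,\bc_{s_2}(\uh);(\um,\um')}g$ for every $\br\in\Z^k$, substituting $f_1^\ast=\prod_{\ueps}T^{\sum_i\floor{\bbeta_{11}k_i^{\eps_i}}+\br_{1\ueps}}f_{1\ueps}$ into the last integral turns it into
\[\int\prod_{\ueps}T^{\floor{\bbeta_{11}k_1^{\eps_1}}+\cdots+\floor{\bbeta_{11}k_s^{\eps_s}}+\br_{1\ueps}}\Delta_{\bc_1(\uh),\dots,\bc_{s_2}(\uh);(\um,\um')}f_{1\ueps}\,d\mu,\]
which is precisely the claimed right-hand side (the shifts $\br_{1\ueps}$ also absorbing the constants $\bbeta_{10}$ and $\bbeta_{11}n_0$ from the reduction $n_0=0$). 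The main obstacle --- everything else being a faithful transcription of arguments already used for Propositions~\ref{P: PET I} and~\ref{P: PET for polynomials with shifts} and Corollary~\ref{C: sublinear} --- is the essential-distinctness bookkeeping for the nonlinear shifted polynomials, together with the verification that the PET directions $\bc_j(\uh)$ depend only on the leading coefficients of the $\bp_1-\bp_w$ and are therefore free of $(\uk,\uk')$.
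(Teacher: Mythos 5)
Your proposal is correct and follows essentially the same route as the paper's proof: split off the linear iterates, absorb the $(\uk,\uk')$-shifts into the functions $f_{j\ueps}$ via Lemma~\ref{L: errors} so that the $2^s$ copies attached to each linear polynomial amalgamate over a single iterate $T^{\floor{\bbeta_{j1}n}}$, discard the $O_{d,k,\ell,s}(K^{2s-1})$ tuples where essential distinctness fails, invoke Proposition~\ref{P: PET I} with the amalgam in the role of $f_1$, check that the leading coefficients of $\bbeta_{11}n-\bp_w(n+\mathrm{shift})$ (hence the directions $\bc_j$) coincide with those of $\bp_1,\bp_1-\bp_2,\dots,\bp_1-\bp_\ell$ and are therefore independent of $(\uk,\uk')$, and finally unfold the amalgam using multiplicativity of $\Delta$ and commutation with the integer shifts. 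The only cosmetic difference is that you perform the maximal-degree composition trick explicitly before applying PET, whereas the paper leaves that reduction inside the proof of Proposition~\ref{P: PET I}.
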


\begin{proof}
Let 
\begin{align*}
    \veps = 
    \sup_{\substack{\norm{f_{2\ueps}}_\infty, \ldots, \norm{f_{\ell\ueps}}_\infty\leq 1}}\sup_{|c_n|\leq 1}\norm{\E_{n\in[N]}\prod_{j\in[\ell]}\prod_{\ueps\in\{0,1\}^s} T^{\floor{\bp_j(n_0+n+(\underline{1}-\ueps)\cdot\uk+\ueps\cdot\uk')}}f_{j\ueps}}_{L^2(\mu)}.
\end{align*}
We allow all the quantities to depend on $d, J, k, \ell$, noting however that $s_1, s_2$ and powers of $\veps$ will be independent of $k$.
    We regroup the polynomials so that $\p_j$ is linear if and only if $j\in[\ell_0]$ for some $\ell_0\in[\ell]$. By the pigeonhole principle, there exist $\br_{j\ueps\uk\uk'}\in\Z^k$ for which 
    \begin{multline*}
   \sup_{\substack{\norm{f_{2\ueps}}_\infty, \ldots, \norm{f_{\ell\ueps}}_\infty\leq 1}}\sup_{|c_n|\leq 1}
    \left|\!\left|\E_{n\in[N]}\prod_{j\in[\ell_0]}T^{\floor{\bbeta_{j1}n}}\prod_{\ueps\in\{0,1\}^s} T^{\floor{\bbeta_{j1}k_1^{\eps_1}}+\cdots + \floor{\bbeta_{j1}k_s^{\eps_s}}+\br_{j\ueps\uk\uk'}}f_{j\ueps}\right.\right.\\ \left.\left. \prod_{j=\ell_0+1}^\ell \prod_{\ueps\in\{0,1\}^{s_1}} T^{\floor{\bp_j(n+(\underline{1}-\ueps)\cdot\uk+\ueps\cdot\uk')}} f_{j\ueps}\right|\!\right|_{L^2(\mu)}\gg\veps.
\end{multline*}
For all $(\uk,\uk')\in[\pm K]^{2s}$ save an exceptional set of size $O(K^{2s-1})$, the polynomials in the expression above are essentially distinct as polynomials in $n$, and so we can apply Proposition \ref{P: PET I} to these ``good'' values $(\uk,\uk')$ to find positive integers $s_1, s_2 = O(1)$ (independent of $k$) such that
\begin{multline*}
    \E\limits_{\uh\in[\pm H]^{s_1}}\E_{\um, \um'\in[\pm M]^{s_2}}
    \left|\int \prod_{\ueps\in\{0,1\}^{s_1}} T^{\floor{\bbeta_{11}k_1^{\eps_1}}+\cdots + \floor{\bbeta_{11}k_s^{\eps_s}}+\br_{1\ueps\uk\uk'}}\right.\\
    \left.   \vphantom{\prod_{\ueps\in\{0,1\}^{s_1}}}
    \brac{\Delta_{\bc_1(\uh,\uk,\uk'), \ldots, \bc_{s_2}(\uh,\uk,\uk'); (\um,\um')} f_{1\ueps}}\, d\mu\right| +  \frac{H}{N}+\frac{M}{N}\gg\veps^{O(1)}
\end{multline*}
			for nonzero polynomials $\bc_1, \ldots, \bc_{s_2}\in\R^k[\uh,\uk,\uk']$. Moreover, the polynomials $\bc_1, \ldots, \bc_{s_2}$ are multilinear, depend only on $\bp_1, \ldots, \bp_\ell$, $\uk,\uk'$ and $s$, and take the form
			\begin{align*}
	        \bc_{j}(\uh,\uk,\uk') = \sum_{\substack{\uu\in \{0,1\}^{s_1},\\ |\uu|\leq d-1}} (|\uu|+1)!\cdot (\bbeta_{1\underline{1}\uk\uk'(|\uu|+1)}-\bbeta_{\tilde{w}_{j\uu}\uk\uk'(|\uu|+1)})\uh^\uu,			
			\end{align*}
		for some $$\tilde{w}_{j\uu}= (w_{j\uu}, \ueps) \in [0,\ell]\times\{0,1\}^{s},$$ where $\bbeta_{1\underline{1}\uk\uk'(|\uu|+1)}=\bbeta_{1(|\uu|+1)}$ and  $\bbeta_{1\underline{1}\uk\uk'(|\uu|+1)}-\bbeta_{\tilde{w}_{j\uu}\uk\uk'(|\uu|+1)}$ is the leading coefficient of
  \begin{align}\label{E: leading coeff 7.8}
    \bbeta_{11}n - \p_{{w}_{j\uu}}(n+(\underline{1}-\ueps)\cdot\uk+\ueps\cdot\uk').    
  \end{align}
   The leading coefficient of \eqref{E: leading coeff 7.8} is either $\bbeta_{11}$ (if $w_{j\uu}=0$), or $\bbeta_{11}-\bbeta_{j1}$ (if $w_{j\uu}\in[\ell_0]$), or it is minus times the leading coefficient of $\p_{{w}_{j\uu}}$ whenever $j\in[\ell_0+1,\ell]$ since then $\deg \p_{{w}_{j\uu}} \geq 2$. It follows from the observation that the collection of possible leading coefficients of \eqref{E: leading coeff 7.8} is the same as the collection of possible leading coefficients of $\p_{1}, \p_{1}-\p_2, \ldots, \p_1-\p_\ell$. In particular, the leading coefficients are independent of $\uk$, and so are the polynomials $\bc_1, \ldots, \bc_{s_2}$.
\end{proof}

\subsection{Estimates for polynomial averages with dual twists}
In later applications, we need to consider polynomial averages twisted by dual sequences. The next two results allow us to bound such averages by an average of generalized box seminorms (and their finitary versions), building on Propositions \ref{P: PET for polynomials with shifts} and \ref{P: PET for polynomials with shifts 2}.

\begin{proposition}[Finitary PET bound for polynomials with dual twists]\label{P: PET for polynomials twisted by duals}
    Let $d, k, \ell\in \N$ and $J\in\N_0$. There exist positive integers $s_1, s_2=O_{d,J,\ell}(1)$ with the following property: for all $H,M,N\in\N$ with $H,M\leq N$, $n_0\in\N_0$,  systems $(X, \CX, \mu, T_1, \ldots, T_k)$,  1-bounded functions $f_{1}\in L^\infty(\mu)$, Hardy sequences $b_1, \ldots, b_J\in\CH$ of growth $b_j(t)\ll t^d$, and polynomials $\p_1, \ldots, \p_\ell\in\R^k[n]$ with degrees at most $d$,
             form $\p_j(n) = \sum_{i=0}^d \bbeta_{ji} n^i$, and such that $\bp_1$ is essentially distinct from $\bp_2, \ldots, \bp_\ell$, we have
    \begin{multline*}
        \sup_{n_0\in\N_0}\sup_{\substack{\norm{f_2}_\infty, \ldots, \norm{f_\ell}_\infty\leq 1,\\ \CD_1, \ldots, \CD_J\in\FD_d}}\sup_{|c_n|\leq 1}\norm{\E_{n\in[N]}c_n\cdot \prod_{j\in[\ell]} T^{\floor{\bp_j(n_0+n)}}f_j\cdot \prod_{j\in[J]}\CD_j(\floor{b_j(n_0+n)})}_{L^2(\mu)}^{O_{d,J,\ell}(1)}\\
        \ll_{d, J, k, \ell}  \E\limits_{\uh\in[\pm H]^{s_1}} \E_{\um, \um'\in[\pm M]^{s_2}} \abs{\int \Delta_{\bc_1(\uh), \ldots, \bc_{s_2}(\uh); (\um,\um')} f_{1}\, d\mu}\\
        + \frac{1}{H} + \frac{1}{M} + \frac{H}{N}+\frac{M}{N} + o_{N\to\infty; d, J, k, \ell, b_1, \ldots, b_J}(1)
    \end{multline*}
    for nonzero polynomials $\bc_1, \ldots, \bc_{s_2}\in\R^{k}[\uh]$ that depend only on $\bp_1, \ldots, \bp_\ell$, $d$ and $L$ and satisfy the same properties as those listed in Proposition \ref{P: PET I}.
\end{proposition}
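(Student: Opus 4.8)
The idea is to remove the dual sequences $\CD_1, \ldots, \CD_J$ first, at the cost of introducing a bounded number of shifts, and then invoke the shifted polynomial PET bounds (Propositions~\ref{P: PET for polynomials with shifts} and \ref{P: PET for polynomials with shifts 2}) to conclude. Concretely, I would first apply Lemma~\ref{L: removing duals finitary} (first version) to the average in the variable $n$, with $A(n_0+n) := c_n\cdot\prod_{j\in[\ell]} T^{\floor{\bp_j(n_0+n)}}f_j$ and the dual sequences $\CD_1, \ldots, \CD_J$ of level at most $d$: since the $b_j$'s are \emph{not} polynomials in general, the lemma costs us a $o_{N\to\infty; d, J, k, \ell, b_1, \ldots, b_J}(1)$ error term and a factor $K/N$. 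This produces, for some $s = O_{d,J}(1)$ independent of $k$, an average over $\uk\in[\pm K]^s$ of expressions of the form
\begin{align*}
    \sup_{|c_n|\leq 1}\norm{\E_{n\in[N]} c_n \cdot \prod_{\ueps\in\{0,1\}^s}\CC^{|\ueps|} \prod_{j\in[\ell]} T^{\floor{\bp_j(n_0+n+\ueps\cdot\uk)}}f_j^{\ueps}}_{L^2(\mu)},
\end{align*}
where $f_j^{\ueps}$ equals $f_j$ or $\overline{f_j}$ according to the parity of $|\ueps|$. (The factors $\prod_{\ueps}\CC^{|\ueps|}$ can be absorbed into the function labels, so that we arrive at precisely the kind of shifted polynomial average treated in Propositions~\ref{P: PET for polynomials with shifts} and \ref{P: PET for polynomials with shifts 2}.)

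The second step is to split into cases according to whether $\bp_1$ is nonlinear or linear. If $\bp_1$ is nonlinear, apply Proposition~\ref{P: PET for polynomials with shifts} with $f_{1\underline{1}} := f_1$ to each of the averages over $n$, except for the $O_{d,k,\ell,s}(K^{s-1})$ exceptional values $\uk\in[\pm K]^s$ on which the bound need not hold — these are harmless since $K^{s-1}/K^s = 1/K$, and $K$ will be chosen large. This yields, for each good $\uk$, a bound by an average over $\uh$ and $\um,\um'$ of $\abs{\int \Delta_{\bc_1(\uh,\uk), \ldots, \bc_{s_2}(\uh,\uk); (\um,\um')} f_1\, d\mu}$ for multilinear polynomials $\bc_j(\uh,\uk)$ of the form \eqref{E: c_j with h, k}, whose leading coefficients are among the leading coefficients of $\p_1, \p_1 - \p_2, \ldots, \p_1 - \p_\ell$. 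Averaging over $\uk\in[\pm K]^s$ and relabelling $(\uh,\uk)$ as a single tuple $\uh'\in[\pm H']^{s_1'}$ with $H' = \max(H,K)$ (padding the shorter of the two ranges and noting that the $\bc_j$ are already multilinear in the combined variables), we obtain exactly an estimate of the desired shape with the $K/N$ and exceptional-set contributions giving the $1/H + 1/M + H/N + M/N$ error terms after optimizing $K, H, M$. The case where $\bp_1$ is linear is handled identically, using Proposition~\ref{P: PET for polynomials with shifts 2} in place of Proposition~\ref{P: PET for polynomials with shifts}: here the resulting seminorm expression additionally carries shifts $T^{\floor{\bbeta_{11}k_1^{\eps_1}}+\cdots}+\br_{1\ueps}$, but these can be stripped off using the Gowers--Cauchy--Schwarz inequality (Lemma~\ref{L: GCS}) and the translation invariance of generalized box seminorms, after expanding as a Gowers--Cauchy--Schwarz inner product and averaging over the shift variables, at the cost of raising the seminorm to a larger (still $O_{d,J,\ell}(1)$) power; the $\um,\um'$-average absorbs the extra directions $\balpha_{11}^{s}$ into the final list of $s_2$ directions. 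Tracking that $s_1, s_2$ and all exponents are independent of $k$ is immediate since each ingredient (Lemma~\ref{L: removing duals finitary}, Propositions~\ref{P: PET for polynomials with shifts}, \ref{P: PET for polynomials with shifts 2}, \ref{P: PET I}) has this property.

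The main obstacle I anticipate is purely bookkeeping rather than conceptual: after removing the duals we have introduced $s$ extra ``shift'' variables $\uk$ (and in the linear case also the shifts $\br_{1\ueps}$ inside the integral), and one must carefully verify that the polynomials $\bc_j$ coming out of Propositions~\ref{P: PET for polynomials with shifts}/\ref{P: PET for polynomials with shifts 2} — which a priori depend on $\uk$ — in fact have their \emph{leading coefficients} depending only on $\bp_1, \ldots, \bp_\ell$ (this is asserted in those propositions), so that the final seminorm directions are controlled by the leading coefficients of $\p_1, \p_1 - \p_2, \ldots, \p_1 - \p_\ell$ alone, independent of all the auxiliary parameters. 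One also has to be slightly careful that the error terms combine correctly: the $K/N$ from the dual removal, the $H/N + M/N$ from the PET step, and the $1/H + 1/M$ coming from the relative size of the exceptional $\uk$-set (of density $O(1/K)$, optimized against $H$) all need to be present, which is why the statement lists precisely $1/H + 1/M + H/N + M/N$ plus the $o_{N\to\infty}(1)$ Hardy-field error. The case split $\bp_1$ linear vs.\ nonlinear, and the need to dispose of the internal shifts in the linear case via Gowers--Cauchy--Schwarz, together with Lemma~\ref{L: errors} to handle integer-part discrepancies whenever we split $\floor{\bbeta_{j1}(n+\ueps\cdot\uk)}$ into $\floor{\bbeta_{j1}n}$ plus shifts, are the only places where genuine (if routine) work is required.
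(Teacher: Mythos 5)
Your proposal tracks the paper's proof closely in its overall structure: strip the dual sequences via Lemma~\ref{L: removing duals finitary}, then apply the shifted PET bounds (Proposition~\ref{P: PET for polynomials with shifts} when $\deg\bp_1>1$, Proposition~\ref{P: PET for polynomials with shifts 2} when $\deg\bp_1=1$), finally recombining the auxiliary shift parameters into the $(\uh,\um,\um')$-averages. Two points need correcting.

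First, in the linear case you cannot use the \emph{first} form of Lemma~\ref{L: removing duals finitary}. That form produces shifts $n+\ueps\cdot\uk$, i.e.\ the shift attached to the function indexed by $\ueps$ is $\eps_1 k_1+\cdots+\eps_s k_s$, where each coordinate either contributes $0$ or $k_i$. Proposition~\ref{P: PET for polynomials with shifts 2} is stated and proved for the \emph{cube} form $n+(\underline{1}-\ueps)\cdot\uk+\ueps\cdot\uk'$, and this is not cosmetic. When $\bp_1$ is linear the PET step leaves residual shifts $\floor{\bbeta_{11}k_1^{\eps_1}}+\cdots+\floor{\bbeta_{11}k_s^{\eps_s}}+\br_{1\ueps}$ on $f_{1\ueps}$ inside the integral, and the only way to dispose of them and convert the $(\uk,\uk')$-average into genuine $\Delta_{\bbeta_{11}^{s_0};(\uk,\uk')}$-directions (subsequently absorbed into the $(\um,\um')$-average) is the finitary Gowers--Cauchy--Schwarz inequality (Lemma~\ref{L: GCS finitary}), which requires each coordinate to range over a pair of independent variables $k_i,k_i'$; the one-endpoint structure $\{0,k_i\}$ that comes out of the first dual-removal form does not give a GCS cube, since both endpoints must vary over the same set. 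You do implicitly write the cube notation $k_1^{\eps_1}$ in your description of the linear case, so the conceptual picture may be right, but the stated plan (``first version'') is internally inconsistent and would not compile into a proof. The paper uses the \emph{second} form of Lemma~\ref{L: removing duals finitary} precisely in this case.

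Second, there is no free parameter $K$ to ``optimize.'' The paper sets $K=H$ in the nonlinear case, so that the exceptional-$\uk$ error is $1/H$ and the tuples $(\uh,\uk)$ combine into a single $[\pm H]^{s_1}$-average of multilinear parameters; and it sets $K=M$ in the linear case, so that the exceptional error is $1/M$ and the variables $\uk,\uk'$ join the $(\um,\um')$-average as extra $\bbeta_{11}$-directions. Both choices are forced by the need to align ranges, not by any minimization.
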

\begin{proof}
 Let 
\begin{align*}
    \veps =  \sup_{n_0\in\N_0}\sup_{\substack{\norm{f_2}_\infty, \ldots, \norm{f_\ell}_\infty\leq 1,\\ \CD_1, \ldots, \CD_J\in\FD_d}}\sup_{|c_n|\leq 1}\norm{\E_{n\in[N]}c_n\cdot\prod_{j\in[\ell]} T^{\floor{\bp_j(n_0+n)}}f_j\cdot \prod_{j\in[J]}\CD_j(\floor{b_j(n_0+n)})}_{L^2(\mu)}.
\end{align*}
We allow all the quantities to depend on $d, J, k, \ell$, noting however that $s_0, s_1, s_2$ and powers of $\veps$ will not depend on $k$. We split into two cases based on the degree of $\p_1$.

\smallskip
\textbf{Case 1: $\deg \p_1 >1$.}
\smallskip

Suppose first that $\deg \p_1 >1$.
By Lemma \ref{L: removing duals finitary} applied with $K=H$, there exists a positive integer $s_0=O(1)$ (independent of $k$) such that
\begin{multline*}
    \E_{\uk\in[\pm H]^{s_0}}\sup_{n_0\in\N_0}\sup_{\substack{\norm{f_2}_\infty, \ldots, \norm{f_\ell}_\infty\leq 1,}}\sup_{|c_n|\leq 1}\norm{\E_{n\in[N]}c_n\cdot\prod_{j\in[\ell]} T^{\floor{\bp_j(n_0+n+\ueps\cdot\uk)}}f_j}_{L^2(\mu)}\\
    +\frac{H}{N} + o_{N\to\infty}(1)
    \gg \veps^{O(1)}
\end{multline*}
(the $o_{N\to\infty}(1)$ term is allowed to depend on $b_1, \ldots, b_J$ in addition to $d,k,\ell, L$).
 Proposition \ref{P: PET for polynomials with shifts} gives us positive integers $s_1, s_2=O(1)$ independent of $k$ and polynomials $\bc_1, \ldots, \bc_{s_2}\in\R^k[\uh, \uk]$ satisfying the properties listed in Proposition \ref{P: PET I} for which
    \begin{multline*}
        \E_{(\uh, \uk)\in[\pm H]^{s_0+s_1}} \E_{\um, \um'\in[\pm M]^{s_2}} \abs{\int \Delta_{\bc_1(\uh, \uk), \ldots, \bc_{s_2}(\uh, \uk); (\um,\um')} f_{1}\, d\mu}\\ 
        + \frac{1}{H}
         + \frac{H}{N}+\frac{M}{N}+ o_{N\to\infty}(1)\gg \veps^{O(1)}
    \end{multline*}
    (the error $1/H$ comes from the exceptional set of $\uk$'s mentioned in Proposition \ref{P: PET for polynomials with shifts}).
    The result follows upon renaming $(\uh, \uk)$ as $\uh$ and $s_0+s_1$ as $s_1$.

\smallskip
\textbf{Case 2: $\deg \p_1 = 1$.}
\smallskip

If $\deg \bp = 1$, then Lemma \ref{L: removing duals finitary} applied with $K=M$ gives a positive integer $s_0=O(1)$ (independent of $k$) such that
\begin{multline*}
    \E_{\uk, \uk'\in[\pm M]^{s_0}}\sup_{n_0\in\N_0}\sup_{\substack{\norm{f_2}_\infty, \ldots, \norm{f_\ell}_\infty\leq 1}}\sup_{|c_n|\leq 1}\\
    \norm{\E_{n\in[N]}c_n\cdot\prod_{j\in[\ell]} T^{\floor{\bp_j(n_0+n+({\underline{1}}-\ueps) \cdot\uk + \ueps\cdot\uk'
    )}}f_j}_{L^2(\mu)}+\frac{M}{N} + o_{N\to\infty}(1)
    \gg \veps^{O(1)}
\end{multline*}
(again, the $o_{N\to\infty}(1)$ term is allowed to depend on $b_1, \ldots, b_J$ in addition to $d,k,\ell, L$).
Proposition~\ref{P: PET for polynomials with shifts 2} then gives positive integers $s_1, s_2 = O(1)$ (also independent of $k$), polynomials $\bc_1, \ldots, \bc_{s_2}\in\R^k[\uh]$ satisfying the properties listed in Proposition \ref{P: PET I}, and shifts $\br_{1\ueps}\in\Z^k$ such that
\begin{multline*}
    \E_{\uk, \uk'\in[\pm M]^{s_0}}\E\limits_{\uh\in[\pm H]^{s_1}} \E_{\um, \um'\in[\pm M]^{s_2}}\\ \abs{\int \prod_{\ueps\in\{0,1\}^{s_0}} T^{\floor{\bbeta_{11}k_1^{\eps_1}}+\cdots + \floor{\bbeta_{11}k_{s_0}^{\eps_{s_0}}}
    +\br_{1\ueps}}\Delta_{\bc_1(\uh), \ldots, \bc_{s_2}(\uh); (\um,\um')} f_1\, d\mu}\\
    + \frac{1}{M}
         + \frac{H}{N}+\frac{M}{N} + o_{N\to\infty}(1)
    \gg\veps^{O(1)}.
\end{multline*}
This is almost what we want except for the shifts $\br_{1\ueps}$. After squaring the absolute value using the Cauchy-Schwarz inequality and passing to the product system, the shifts $\br_{1\ueps}$ can however be removed using Lemma \ref{L: GCS finitary} (the finitary Gowers-Cauchy-Schwarz inequality) applied to the average over $\uk, \uk'$ as well as translation invariance. This gives
 \begin{multline*}
    \E_{\uk, \uk'\in[\pm M]^{s_0}}\E\limits_{\uh\in[\pm H]^{s_1}}\E_{\um, \um'\in[\pm M]^{s_2}}\\
    \abs{\int \Delta_{\bbeta_{11}^{s_0}; (\uk, \uk')}\Delta_{\bc_1(\uh), \ldots, \bc_{s_2}(\uh); (\um,\um')} f_1\, d\mu}^2
    + \frac{1}{M}
         + \frac{H}{N}+\frac{M}{N} + o_{N\to\infty}(1)
    \gg\veps^{O(1)}.
\end{multline*}
The result follows upon including $s_0$ copies of $\bbeta_{11}$ into the family $\bc_1, \ldots, \bc_{s_2}$ (this new family of polynomials still satisfies the conditions set in Proposition \ref{P: PET I}) and renaming $s_0+s_2$ as $s_2$, $(\uk,\um)$ as $\um$ and $(\uk',\um')$ as $\um'$.
\end{proof}

The result below is a straightforward infinitary corollary of Proposition \ref{P: PET for polynomials twisted by duals}.
\begin{corollary}[Infinitary PET bound for polynomials with dual twists]\label{C: PET for polynomials twisted by duals}
    Let $d, k, \ell\in \N$ and $J\in\N_0$. There exist positive integers $s_1, s_2=O_{d,J,\ell}(1)$ with the following property: for all $H\in\N$, systems $(X, \CX, \mu, T_1, \ldots, T_k)$,  1-bounded functions $f_{1}\in L^\infty(\mu)$, Hardy sequences $b_1, \ldots, b_J\in\CH$ of growth $b_j(t)\ll t^d$, and polynomials $\p_1, \ldots, \p_\ell\in\R^k[n]$ with degrees at most $d$,
             form $\p_j(n) = \sum_{i=0}^d \bbeta_{ji} n^i$, and such that $\bp_1$ is essentially distinct from $\bp_2, \ldots, \bp_\ell$, we have
    \begin{multline*}
        \limsup_{N\to\infty}\sup_{n_0\in\N_0}\sup_{\substack{\norm{f_2}_\infty, \ldots, \norm{f_\ell}_\infty\leq 1,\\ \CD_1, \ldots, \CD_J\in\FD_d}}\sup_{|c_n|\leq 1}\\
        \norm{\E_{n\in[N]}c_{n}\cdot\prod_{j\in[\ell]} T^{\floor{\bp_j(n_0+n)}}f_j\cdot \prod_{j\in[J]}\CD_j(\floor{b_j(n_0+n)})}_{L^2(\mu)}^{O_{d,J,\ell}(1)}\\
        \ll_{d, J, k, \ell}  \E\limits_{\uh\in[\pm H]^{s_1}} \nnorm{f_1}_{\bc_1(\uh), \ldots, \bc_{s_2}(\uh)}^+ + \frac{1}{H}
    \end{multline*}
    for nonzero polynomials $\bc_1, \ldots, \bc_{s_2}\in\R^{k}[\uh]$ that depend only on $\bp_1, \ldots, \bp_\ell$, $d$ and $L$ and satisfy the same properties as those listed in Proposition \ref{P: PET I}. 
\end{corollary}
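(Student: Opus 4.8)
The plan is to deduce Corollary~\ref{C: PET for polynomials twisted by duals} from Proposition~\ref{P: PET for polynomials twisted by duals} by a routine limiting argument in which one first takes $N\to\infty$ and then recognizes the finitary box-seminorm average on the right-hand side as converging to the infinitary generalized box seminorm of Section~\ref{S: seminorms}. Fix the integers $s_1, s_2 = O_{d,J,\ell}(1)$ and the polynomials $\bc_1(\uh), \ldots, \bc_{s_2}(\uh)\in\R^k[\uh]$ supplied by Proposition~\ref{P: PET for polynomials twisted by duals}; note these depend only on $\bp_1,\ldots,\bp_\ell$, $d$ and $L$, not on $N$ or $M$. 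For a fixed value of $H$, apply Proposition~\ref{P: PET for polynomials twisted by duals} and take $\limsup_{N\to\infty}$ of both sides: since the polynomials $\bc_j$ do not depend on $N$ and the terms $\tfrac{H}{N}+\tfrac{M}{N}$ vanish in the limit provided we let $M=M(N)\to\infty$ sufficiently slowly (e.g. $M = \floor{\log N}$), the $o_{N\to\infty; d,J,k,\ell,b_1,\ldots,b_J}(1)$ error also disappears, and we are left with
\begin{multline*}
    \limsup_{N\to\infty}\sup_{n_0\in\N_0}\sup_{\substack{\norm{f_2}_\infty, \ldots, \norm{f_\ell}_\infty\leq 1,\\ \CD_1, \ldots, \CD_J\in\FD_d}}\sup_{|c_n|\leq 1}\\
    \norm{\E_{n\in[N]}c_{n}\cdot\prod_{j\in[\ell]} T^{\floor{\bp_j(n_0+n)}}f_j\cdot \prod_{j\in[J]}\CD_j(\floor{b_j(n_0+n)})}_{L^2(\mu)}^{O_{d,J,\ell}(1)}\\
    \ll_{d, J, k, \ell}  \limsup_{M\to\infty}\E\limits_{\uh\in[\pm H]^{s_1}} \E_{\um, \um'\in[\pm M]^{s_2}} \abs{\int \Delta_{\bc_1(\uh), \ldots, \bc_{s_2}(\uh); (\um,\um')} f_{1}\, d\mu}
     + \frac{1}{H}.
\end{multline*}

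The next step is to interchange the finite average $\E_{\uh\in[\pm H]^{s_1}}$ with the limit in $M$ and identify the inner limit with a generalized box seminorm. Since $[\pm H]^{s_1}$ is a finite index set, the limit in $M$ commutes with $\E_{\uh}$, and for each fixed $\uh$ we observe that $\bc_1(\uh), \ldots, \bc_{s_2}(\uh)\in\R^k$ are fixed vectors; hence by Lemma~\ref{L: convergence} the iterated/single limit
\begin{align*}
    \lim_{M\to\infty}\E_{\um, \um'\in[\pm M]^{s_2}} \int \Delta_{\bc_1(\uh), \ldots, \bc_{s_2}(\uh); (\um,\um')} f_{1}\otimes\overline{f_1}\, d(\mu\times\mu)
\end{align*}
exists and, by definition of the plus-seminorm and the identity $\nnorm{f}^+_{G_1,\ldots,G_s} = \nnorm{f\otimes\overline f}_{G_1,\ldots,G_s}^{1/2}$ together with the remark that $\left(\nnorm{f_1}^+_{\bc_1(\uh),\ldots,\bc_{s_2}(\uh)}\right)^{2^{s_2+1}} = \E_{\um,\um'}\abs{\int\Delta_{\bc_1(\uh),\ldots,\bc_{s_2}(\uh);(\um,\um')}f_1\,d\mu}^2$, equals $\left(\nnorm{f_1}^+_{\bc_1(\uh),\ldots,\bc_{s_2}(\uh)}\right)^{2^{s_2+1}}$. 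To pass from the $L^1$-type average $\E_{\um,\um'}\abs{\int\Delta\, f_1\,d\mu}$ appearing above to the $L^2$-type average $\E_{\um,\um'}\abs{\int\Delta\, f_1\,d\mu}^2$ that defines the plus-seminorm, we apply the Cauchy–Schwarz inequality in the variables $\um,\um'$ (at the cost of an exponent, absorbed into the $O_{d,J,\ell}(1)$ power), giving
\begin{align*}
    \E_{\um, \um'\in[\pm M]^{s_2}} \abs{\int \Delta_{\bc_1(\uh), \ldots, \bc_{s_2}(\uh); (\um,\um')} f_{1}\, d\mu}
    \leq \left(\E_{\um, \um'\in[\pm M]^{s_2}} \abs{\int \Delta_{\bc_1(\uh), \ldots, \bc_{s_2}(\uh); (\um,\um')} f_{1}\, d\mu}^2\right)^{1/2},
\end{align*}
whose right-hand side tends to $\left(\nnorm{f_1}^+_{\bc_1(\uh),\ldots,\bc_{s_2}(\uh)}\right)^{2^{s_2}}$ as $M\to\infty$. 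Raising everything to a suitable bounded power and using $1$-boundedness of $f_1$ (so the seminorm is itself $\ll_k 1$, making the power change harmless) yields the stated bound $\ll_{d,J,k,\ell}\E_{\uh\in[\pm H]^{s_1}}\nnorm{f_1}^+_{\bc_1(\uh),\ldots,\bc_{s_2}(\uh)} + \tfrac1H$.

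There is no substantial obstacle here; the only mild subtleties are bookkeeping ones. First, one must take care that the left-hand side, which involves $\sup_{n_0}\sup_{f_j}\sup_{\CD_j}\sup_{c_n}$ \emph{inside} the $\limsup_{N\to\infty}$, survives the passage to the limit — this is immediate because the right-hand side of Proposition~\ref{P: PET for polynomials twisted by duals} is already uniform in $n_0$, the $f_j$ ($j\geq2$), the $\CD_j$, and the $c_n$, depending only on $f_1$ and $H,M,N$. Second, one must choose the auxiliary parameter $M=M(N)$ as a function of $N$ tending to infinity slowly enough that $M/N\to0$ while the right-hand average $\E_{\um,\um'\in[\pm M(N)]^{s_2}}$ still converges to the seminorm; any $M(N)\to\infty$ with $M(N)=o(N)$ works, since the convergence in Lemma~\ref{L: convergence} is along arbitrary F\o lner sequences and in particular along $[\pm M(N)]^{s_2}$. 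Third, the various applications of Cauchy–Schwarz and of raising to powers only change the implied $O_{d,J,\ell}(1)$ exponent by bounded factors, which is permissible. Assembling these observations completes the proof.
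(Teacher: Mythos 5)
Your argument is correct and is exactly the "straightforward infinitary corollary" the paper has in mind: apply Proposition~\ref{P: PET for polynomials twisted by duals}, let $N\to\infty$ (killing the $H/N$, $M/N$ and $o_{N\to\infty}(1)$ terms) and then $M\to\infty$, use Cauchy--Schwarz in $\um,\um'$ together with Lemma~\ref{L: convergence} to identify the limit with $(\nnorm{f_1}^+_{\bc_1(\uh),\ldots,\bc_{s_2}(\uh)})^{2^{s_2}}$, and use $1$-boundedness of $f_1$ (so the seminorm is at most $1$) to replace this power by the first power. The minor bookkeeping points you flag (uniformity in $n_0$, $f_j$, $\CD_j$, $c_n$; choice of $M$ versus taking $M\to\infty$ after $N\to\infty$) are handled correctly and do not affect the conclusion.
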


\section{Quantitative concatenation for seminorms   with polynomial parameters}\label{S: polynomial concatenation}

The purpose of this section is to show that the averages 
\begin{align}\label{E: average of polynomial box seminorms}
    \limsup_{H\to\infty}\E\limits_{\uh\in[\pm H]^{s_1}}\nnorm{ f}^+_{\bc_1(\uh), \ldots, \bc_{s_2}(\uh)}
\end{align}
of box seminorms along multilinear polynomials $\bc_1, \ldots, \bc_{s_2}\in\R[\uh]$ that have appeared in the previous section can be quantitatively controlled by a single box seminorm in an appropriate choice of directions. The way in which we ``concatenate'' the box seminorms appearing in \eqref{E: average of polynomial box seminorms} is closely modeled on the finitary argument recently developed in \cite{KKL24a}, with both the general strategy and the main inputs being directly borrowed from that paper.

The quantitative concatenation of \eqref{E: average of polynomial box seminorms} into a single box seminorm relies on three main ingredients. The first one is a consequence of Corollary \ref{C: iterated concatenation for general groups} that allows us to replace the polynomials $\bc_1, \ldots, \bc_{s_2}$ by ones in more variables that are consequently better distributed in their ranges than the original polynomials. The second ingredient is an equidistribution result for a system of multilinear forms that appear while concatenating \eqref{E: average of polynomial box seminorms}. The last component is a technical lemma that allows us to eliminate all the ``nongeneric'' tuples $\uh$ that might potentially bring trouble. The last two components are taken verbatim from \cite{KKL24a}, while the first one is an ergodic adaptation of the arguments from that paper.

Below we state and prove these results; we illustrate the main ideas with the following example.
\begin{example}\label{Ex: concatenation example}
Suppose that $s=1$ and consider
\begin{align}\label{E: c 1}
    \bc(h_1, h_2) = \bbeta_{11} h_1 h_2 + \bbeta_{10} h_1.    
\end{align}
By passing to the product system, we can deal with $\nnorm{\cdot}_{\bc(\uh)}$ rather than $\nnorm{\cdot}_{\bc(\uh)}^+$.
For the sake of simplicity, suppose that $\bbeta_{11}, \bbeta_{10}\in\Z^k$ so that we do not have to deal with integer parts.
Let
\begin{align*}
   \veps = \E\limits_{\uh\in[\pm H]^{2}}{\nnorm{f}^2_{\bc(\uh)}} &= \E_{h_1, h_2\in[\pm H]}\lim_{M\to\infty}\E_{m,m'\in[\pm M]}{\int T^{\bc(h_1, h_2)m'}f\cdot T^{\bc(h_1, h_2)m}\overline{f}\, d\mu}\\
    &= \lim_{M\to\infty}{\int f\cdot \E_{h_1, h_2\in[\pm H]}\E_{m\in[\pm M]} T^{\bc(h_1, h_2)m}\overline{f}\, d\mu};
\end{align*}
the second line follows by composing with $T^{-\bc(h_1, h_2)m'}$ and recalling from Lemma~\ref{L: sumset lemma} that the limits $\lim\limits_{M\to\infty}\E_{m,m'\in[\pm M]}S^{m-m'} f$ and $\lim\limits_{M\to\infty}\E_{m\in[\pm M]}S^{m} f$ are the same for any transformation $S$. Our point is to apply the Cauchy-Schwarz inequality several times to increase the number of variables so that the expression $\bc(h_1, h_2)m$ is replaced by one better distributed inside $\langle \bbeta_{11}, \bbeta_{10}\rangle$. Applying the Cauchy-Schwarz inequality and invoking the 1-boundedness of $f$, we have
\begin{align*}
    \veps^2 &\leq \lim_{M\to\infty}\E_{\substack{h_{11}, h_{12},\\ h_{21}, h_{22}\in[\pm H]}}\E_{m_1, m_2\in[\pm M]}\int T^{\bc(h_{11}, h_{21})m_1}f\cdot  T^{\bc(h_{12}, h_{22})m_2}\overline{f}\, d\mu\\
    &= \lim_{M\to\infty}\int f\cdot \E_{\substack{h_{11}, h_{12},\\ h_{21}, h_{22}\in[\pm H]}}\E_{m_1, m_2\in[\pm M]} T^{\bc(h_{12}, h_{22})m_2 - \bc(h_{11}, h_{21})m_1}\overline{f}\, d\mu.
\end{align*}
We note that after this and the next applications of the Cauchy-Schwarz inequality, the limit as $M\to\infty$ exists because the expression that we are arriving at is a finite average of box seminorms.

The polynomial $\bc(h_{12}, h_{22})m_2 - \bc(h_{11}, h_{21})m_1$ is much better equidistributed inside its range than $\bc(h_1, h_2)m$ was. However, it is still not quite as well distributed as necessary. We therefore apply the Cauchy-Schwarz inequality one more time, getting 
\begin{align*}
   \veps^4 &\leq \lim_{M\to\infty}\E_{\substack{h_{1k}, h_{2k}\in[\pm H]:\\ k\in[4]}}\; \E_{\substack{m_k\in[\pm M]:\\ k\in[4]}}\\
   &\qquad\qquad\qquad\qquad\int T^{\bc(h_{12}, h_{22})m_2 - \bc(h_{11}, h_{21})m_1} f\cdot
   T^{\bc(h_{13}, h_{23})m_3 - \bc(h_{14}, h_{24})m_4} \overline{f}\, d\mu \\ 
    &= \lim_{M\to\infty}\E_{\substack{h_{1k}, h_{2k}\in[\pm H]:\\ k\in[4]}}\; \E_{\substack{m_k\in[\pm M]:\\ k\in[4]}}\\
    &\qquad\qquad\qquad\qquad\int f\cdot T^{\bc(h_{11}, h_{21})m_1 - \bc(h_{12}, h_{22})m_2 + \bc(h_{13}, h_{23})m_3 - \bc(h_{14}, h_{24})m_4} \overline{f}\, d\mu.
\end{align*} 
By definition, we have
\begin{multline*}
    \bc(h_{11}, h_{21})m_1 - \bc(h_{12}, h_{22})m_2 + \bc(h_{13}, h_{23})m_3 - \bc(h_{14}, h_{24})m_4\\ = \bbeta_{11}(h_{11} h_{21} m_1 - h_{12} h_{22}m_2 + h_{13} h_{23}m_3 - h_{14} h_{24}m_4)\\ + \bbeta_{10}(h_{11}  m_1 - h_{12} m_2 + h_{13} m_3 - h_{14} m_4)
\end{multline*}
As $h_{1k}, m_k$ range over $[\pm H], [\pm M]$ respectively, the bilinear form $h_{11}  m_1 - h_{12} m_2 + h_{13} m_3 - h_{14} m_4$ will range fairly uniformly over $[\pm 4 HM]$; however, this is not the case for the trilinear form $h_{11} h_{21} m_1 - h_{12} h_{22}m_2 + h_{13} h_{23}m_3 - h_{14} h_{24}m_4$. To uniformize the coefficient of $\bbeta_{11}$, we apply the Cauchy-Schwarz inequality two more times, each time only doubling the variables $h_{2k}, m_{k}$. This gives us
\begin{align*}
    \veps^{16} &\leq \lim_{M\to\infty}\E_{\substack{h_{1k_1}, h_{2k_1 k_2}\in[\pm H]:\\ k_1, k_2\in[4]}}\; \E_{\substack{m_{k_1 k_2}\in[\pm M]:\\ k_1, k_2\in[4]}}\int f\cdot T^{\sum_{k_1, k_2\in[4]}(-1)^{k_1+k_2}\bc(h_{1k_1}, h_{2k_1k_2})m_{k_1k_2}} \overline{f}\, d\mu.
\end{align*}
Since the range of all the variables $h_{1k_1}, h_{2k_1k_2}, m_{k_1k_2}$ is symmetric around the origin, we can make suitable choices of variables to remove the $(-1)^{k_1+k_2}$ factor, obtaining
\begin{align*}
    \veps^{16} &\leq \lim_{M\to\infty}\E_{\substack{h_{1k_1}, h_{2k_1 k_2}\in[\pm H]:\\ k_1, k_2\in[4]}}\; \E_{\substack{m_{k_1 k_2}\in[\pm M]:\\ k_1, k_2\in[4]}}\int f\cdot T^{\sum_{k_1, k_2\in[4]}\bc(h_{1k_1}, h_{2k_1k_2})m_{k_1k_2}} \overline{f}\, d\mu.
\end{align*}
The polynomial appearing in the average above takes the form
\begin{multline}\label{E: c 2}
    \sum_{k_1, k_2\in[4]}\bc(h_{1k_1}, h_{2k_1k_2})m_{k_1k_2}\\
    = \bbeta_{11}(h_{11}(h_{211}m_{11} + \cdots +h_{214}m_{14}) + \cdots + h_{14}(h_{241}m_{41} + \cdots + h_{244}m_{44}))\\
    + \bbeta_{10}(h_{11}(m_{11} + \cdots +m_{14}) + \cdots + h_{14}(m_{41} + \cdots + m_{44})).
\end{multline}
Fix $k_1\in[4]$. As $h_{2k_1 k_2}, m_{k_1 k_2}$ range over $[\pm H], [\pm M]$ respectively, it turns out that the expression
\begin{align}\label{E: vector of multilinear forms}
    \begin{pmatrix}
        h_{2k_11}m_{k_11} + h_{2k_1 2}m_{k_1 2} + h_{2k_1 3}m_{k_1 3} + h_{2k_1 4}m_{k_1 4}\\
        m_{k_11} + m_{k_1 2} + m_{k_1 3} + m_{k_1 4}
    \end{pmatrix}    
\end{align}
is fairly uniformly distributed inside $[\pm 4HM]\times [\pm 4M]$ in that any element of this set takes the form \eqref{E: vector of multilinear forms} for no more than $O(H^3 M^2)$ values $(h_{2k_1 k_2}, m_{k_1 k_2})_{k_2\in[4]}$. Morally, this allows us to replace \eqref{E: vector of multilinear forms} by $\begin{pmatrix}
    t_{k_1 1}\\ t_{k_1 0}
\end{pmatrix}$
ranging in $[\pm 4HM]\times [\pm 4M]$ at the cost of a loss of a factor $O(1)$ in the bound, yielding
\begin{align*}
    \veps^{16} &\ll \limsup_{M\to\infty}\E_{\substack{h_{1k_1}\in[\pm H]:\\ k_1\in[4]}}\; \E_{\substack{t_{k_1 1}\in[\pm 4HM]:\\ k_1 \in[4]}}\; \E_{\substack{t_{k_1 0}\in[\pm 4M]:\\ k_1 \in[4]}}\abs{\int f\cdot T^{\sum_{k_1\in[4]}\tilde{\bc}(h_{1k_1}, t_{k_11}, t_{k_1 0})} \overline{f}\, d\mu}
\end{align*}
for 
\begin{align*}
    \tilde{\bc}(h_{1k_1}, t_{k_11}, t_{k_1 0}) = \bbeta_{11} h_{1k_1} t_{k_1 1} + \bbeta_{10} h_{1k_1} t_{k_1 0}.
\end{align*}
We have thus reduced the trilinear polynomial $\bc(h_{1k_1}, h_{2k_1k_2})m_{k_1k_2}$ to the bilinear polynomial $\tilde{\bc}(h_{1k_1}, t_{k_11}, t_{k_1 0})$. The iterate then takes the form
\begin{align*}
    \sum_{k_1\in[4]}\tilde{\bc}(h_{1k_1}, t_{k_11}, t_{k_1 0}) = \bbeta_{11}(h_{11}t_{11}+\cdots + h_{14} t_{41}) + \bbeta_{10}(h_{11}t_{10} + \cdots + h_{14}t_{40}).
\end{align*}
A similar argument as before shows that as $h_{1k_1}, t_{k_1 1}, t_{k_1 0}$ range over $[\pm H], [\pm 4HM], [\pm 4M]$ respectively, the vectors
\begin{align}\label{E: vector of multilinear forms 2}
    \begin{pmatrix}
        h_{11}t_{11}+h_{12}t_{21}+h_{13}t_{31} + h_{14} t_{41}\\
        h_{11}t_{10}+h_{12}t_{20}+h_{13}t_{30} + h_{14} t_{40}        
    \end{pmatrix}
\end{align}
are uniformly distributed in their range, which now takes the form $[\pm 16H^2 M]\times[\pm 16HM]$ in that no element of this box equals \eqref{E: vector of multilinear forms 2} more than $O(1)$ times the expected number. This allows us to replace each vector \eqref{E: vector of multilinear forms 2} by a vector $\begin{pmatrix}
    t_{1}\\ t_{0}
\end{pmatrix}$ ranging in $[\pm 16H^2 M]\times[\pm 16HM]$, giving
\begin{align*}
\veps^{16} &\ll \limsup_{M\to\infty}\E_{\substack{t_{ 1}\in[\pm 16H^2M]}}\;\E_{\substack{t_{0}\in[\pm 16HM]}}\abs{\int f\cdot T^{\bbeta_{11}t_1 + \bbeta_{10}t_0} \overline{f}\, d\mu}.
\end{align*}
Squaring both sides and passing to the product system, we get 
\begin{align*}
    \veps^{32}&\ll \lim_{M\to\infty}\E_{\substack{t_{ 1}\in[\pm 16H^2M]}}\E_{\substack{t_{0}\in[\pm 16HM]}}\int f\otimes \overline{f}\cdot (T\times T)^{\bbeta_{11}t_1 + \bbeta_{10}t_0} \overline{f}\otimes f\, d(\mu\times\mu)\\
    &=\lim_{M\to\infty}\E_{\substack{t_0, t_1\in[\pm M]}}\int f\otimes \overline{f}\cdot (T\times T)^{\bbeta_{11}t_1 + \bbeta_{10}t_0} \overline{f}\otimes f\, d(\mu\times\mu);
\end{align*}
the fact that the limit along $[\pm 16H^2M]\times [\pm 16HM]$ exists and can be replaced by a limit along $[\pm M]^\times[\pm M]$ follows from Lemma~\ref{L: convergence}. It follows that
$$\nnorm{f}^+_{G} = \nnorm{f\otimes \overline{f}}_G^{1/2}\gg \veps^{8},$$
where $G = \langle \bbeta_{11}, \bbeta_{10}\rangle$, completing the concatenation step.

For longer averages involving higher degree polynomials, there will naturally be additional complications. Replacing the polynomials $\bc_1, \ldots, \bc_s$ by better distributed ones will be the subject of Proposition \ref{P: concatenation of polynomials I}. The relevant equidistribution results also become more complicated and are covered in Proposition~\ref{P: systems of multilinear equations}. Lastly, we will need to restrict the tuples of $\uh$ to those which are not too small and whose elements have pairwise small greatest common divisor. That such tuples are generic is guaranteed by Lemma~\ref{L: H_l}. However, the underlying idea behind dealing with the general case of \eqref{E: average of polynomial box seminorms} is similar: we first dramatically increase the number of variables  to make the polynomials better distributed, and then we use this well-distribution to replace all multilinear polynomials by single variables. 

\end{example}

\begin{proposition}\label{P: concatenation of polynomials I}
    Let $d, k, s_1, s_2\in\N$ and $\ell$ be a nonnegative integer power of 2. There exists a positive integer $t=O_{\ell, s_1, s_2}(1)$ such that for all polynomials $\bc_1, \ldots, \bc_{s_2}\in\R^k[\uh]$ of degree at most $d$ and coefficients $O(1)$ (where $\uh$ are vectors in $\Z^{s_{1}}$), numbers $H\in\N$, and 1-bounded functions $f\in L^\infty(\mu)$, we have
    \begin{multline}\label{E: more variables}
        \brac{\E\limits_{\uh\in[\pm H]^{s_1}}\nnorm{f}^+_{\bc_1(\uh), \ldots, \bc_{s_2}(\uh)}}^{O_{d, \ell, s_1, s_2}(1)}\\
        \ll_{d, k, \ell, s_1, s_2} \E_{\substack{h_{lk_1\cdots k_l}\in[\pm H]:\\ l\in[s_1],\; k_1, \ldots, k_{s_1}\in[t]}}
        \nnorm{f}^+_{\substack{\langle \bc_j(h_{1k_{1i_1}}, \ldots, h_{s_1 k_{1i_1}\cdots k_{s_1i_{s_1}}}):\; i_1, \ldots, i_{s_1}\in[\ell]\rangle:\\ j\in[s_2],\; 1\leq k_{l1} < \cdots < k_{l \ell}\leq t,\; l\in[s_1]}}.
    \end{multline}
\end{proposition}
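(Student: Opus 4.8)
The plan is to iterate Corollary~\ref{C: iterated concatenation for general groups} once, and then convert the resulting box seminorms along the subgroups $\langle \bc_j(\uh_{k_1}), \ldots \rangle$ (with the variables $\uh_{k_1}, \ldots, \uh_{k_\ell}$ ranging over distinct indices) into box seminorms along the ``layered'' subgroups appearing in the right-hand side of \eqref{E: more variables}. The first move is to pass from $\nnorm{\cdot}^+$ to $\nnorm{\cdot}$ by working on the product system $(X \times X, \mu \times \mu, T_1 \times T_1, \ldots, T_k \times T_k)$ with the function $f \otimes \overline f$; this is harmless since $\nnorm{f}^+_{G_1, \ldots, G_s} = \nnorm{f\otimes\overline f}^{1/2}_{G_1, \ldots, G_s}$, and it lets us carry out the Cauchy-Schwarz manipulations illustrated in Example~\ref{Ex: concatenation example} without the outer absolute values. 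The subgroups here are one-generated, $G_{j\uh} = \langle \bc_j(\uh)\rangle \subseteq \R^k$, indexed by $i = \uh \in [\pm H]^{s_1}$.

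First I would apply Corollary~\ref{C: iterated concatenation for general groups} with $s = s_2$, the indexing set $I = [\pm H]^{s_1}$, and the power $\ell$ (a power of $2$ by hypothesis), producing a $t_0 = O_{\ell, s_2}(1)$ and the bound
\begin{align*}
\brac{\E_{\uh\in[\pm H]^{s_1}}\nnorm{f}^+_{\bc_1(\uh), \ldots, \bc_{s_2}(\uh)}}^{O_{\ell, s_2}(1)} \ll_{k, \ell, s_2} \E_{\substack{\uh^{(1)}, \ldots, \uh^{(t_0)}\in[\pm H]^{s_1}}}\nnorm{f}^+_{\substack{\langle \bc_j(\uh^{(i_1)})\rangle + \cdots + \langle \bc_j(\uh^{(i_\ell)})\rangle:\\ j\in[s_2],\; 1\leq i_1 < \cdots < i_\ell\leq t_0}}.
\end{align*}
Since $\langle \bc_j(\uh^{(i_1)})\rangle + \cdots + \langle \bc_j(\uh^{(i_\ell)})\rangle = \langle \bc_j(\uh^{(i_1)}), \ldots, \bc_j(\uh^{(i_\ell)})\rangle$ (the subgroup generated by the $\ell$ vectors), the remaining task is to replace each such subgroup, where the $\ell$ vector arguments $\uh^{(i_1)}, \ldots, \uh^{(i_\ell)}$ are \emph{independent} $s_1$-tuples, by the ``multilinearized'' subgroup $\langle \bc_j(h_{1k_{1i_1}}, \ldots, h_{s_1 k_{1i_1}\cdots k_{s_1 i_{s_1}}}): i_1, \ldots, i_{s_1}\in[\ell]\rangle$ in which the coordinates are reindexed by nested tuples. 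The mechanism for this is exactly the repeated Cauchy-Schwarz / variable-doubling argument of Example~\ref{Ex: concatenation example}: because $\nnorm{f}^+_{\langle \bv_1\rangle, \ldots, \langle \bv_r\rangle}$ only depends on the subgroup $\langle\bv_1, \ldots, \bv_r\rangle$ up to finite index, and because (by Lemma~\ref{L: seminorms of subgroups}) we may freely pass to a subgroup at the cost of an $O(1)$ power loss, it suffices to show that after $s_1$ rounds of Cauchy-Schwarz on the averages over the $\uh^{(i)}$, each of which doubles a block of the $h$-coordinates, the iterate $\sum (\pm)\bc_j(\cdot)$ can be rewritten so that the $j$-th coordinate of each argument is a multilinear form $\prod_{l} h_{l\cdots}$ in genuinely new, independent variables. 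This is the content (in the finitary discrete setting) of the argument in \cite[Section~6]{KKL24a}, and the ergodic version differs only in that averages over $[\pm M]$ must be introduced, Lemma~\ref{L: sumset lemma} is used to turn iterated limits into single limits over sumsets, and Lemma~\ref{L: convergence} guarantees the limits exist and can be permuted. The sign factors $(-1)^{i_1+\cdots}$ that arise are killed using the symmetry of the ranges $[\pm H], [\pm M]$ about the origin, as in Example~\ref{Ex: concatenation example}.

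The step I expect to be the main obstacle is the \emph{equidistribution input}: after the Cauchy-Schwarz rounds one is left with an iterate of the form $\sum_{\uk} \bc_j\bigl(h_{1k_1}, h_{2 k_1 k_2}, \ldots, h_{s_1 k_1 \cdots k_{s_1}}\bigr) m_{\uk}$, and to collapse the layered multilinear forms $(h_{l k_1 \cdots k_l} m_{\uk})$-type expressions down to single free variables (ranging over an appropriately dilated interval $[\pm c H^a M]$) one needs that the relevant system of multilinear forms is roughly uniformly distributed over its range — i.e. each value is attained by at most $O(1)$ times the expected number of argument tuples. This is precisely the quantitative equidistribution statement proved as Proposition~\ref{P: systems of multilinear equations} (referenced in the excerpt), combined with the genericity lemma Lemma~\ref{L: H_l} which lets one discard the ``bad'' tuples $\uh$ whose coordinates share large common factors or are too small; the bad tuples contribute a negligible $o(1)$ or $O(1/H)$ fraction and can be absorbed. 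Once these inputs are in hand, the reduction from $r$-linear to $(r-1)$-linear forms is a bookkeeping induction on the number $s_1$ of layers, and assembling the pieces — applying Lemma~\ref{L: dilating seminorms} to normalize the dilated intervals, Lemma~\ref{L: seminorms of subgroups} to pass between the subgroup and its finite-index sublattice, and finally undoing the product-system passage to return from $\nnorm{\cdot}$ to $\nnorm{\cdot}^+$ — yields \eqref{E: more variables} with $t = t_0 \cdot (\text{number of doubling rounds}) = O_{\ell, s_1, s_2}(1)$ and an overall power $O_{d, \ell, s_1, s_2}(1)$.
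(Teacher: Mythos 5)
Your first step—one application of Corollary~\ref{C: iterated concatenation for general groups} with $I=[\pm H]^{s_1}$—matches the paper exactly, and your observation that $\langle\bc_j(\uh^{(i_1)})\rangle+\cdots+\langle\bc_j(\uh^{(i_\ell)})\rangle=\langle\bc_j(\uh^{(i_1)}),\ldots,\bc_j(\uh^{(i_\ell)})\rangle$ is correct. But after that you go off-course by importing machinery that belongs to the \emph{next} proposition, not this one. The paper's proof of Proposition~\ref{P: concatenation of polynomials I} consists of \emph{nothing but} $s_1$ iterated applications of Corollary~\ref{C: iterated concatenation for general groups}: after the first application one has index tuples $\uh_1,\ldots,\uh_{t_1}\in[\pm H]^{s_1}$; one then peels off the first coordinate, writing $\uh_k=(h_{1k},\tilde{\uh}_{1k})$, and applies Corollary~\ref{C: iterated concatenation for general groups} again, \emph{for each fixed} $(h_{11},\ldots,h_{1t_1})$, with the $(s_1-1)t_1$-tuple $(\tilde{\uh}_{11},\ldots,\tilde{\uh}_{1t_1})$ as the new index set; this produces the second layer $h_{2k_1k_2}$. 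Peeling off one coordinate per round and repeating $s_1$ times produces precisely the nested indexing $h_{lk_1\cdots k_l}$ on the right-hand side of \eqref{E: more variables}, and then Lemma~\ref{L: monotonicity property} with $t=\max(t_1,\ldots,t_{s_1})$ finishes.

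There is no equidistribution input in this proposition at all, and your invocation of Proposition~\ref{P: systems of multilinear equations} and Lemma~\ref{L: H_l} is a genuine misstep. Those tools are used one result later, in Proposition~\ref{P: polynomial concatenation}: that is where the seminorm on the right of \eqref{E: more variables} is expanded into $m$-averages and the layered multilinear forms $\sum_{i_1,\ldots,i_{s_1}}h_{1k_{1i_1}}^{u_1}\cdots m_{j\uk i_1\cdots i_{s_1}}$ are collapsed to single free variables $n_{j\uk\uu}$. If you actually carried out the ``collapse'' you describe, you would land on a single box seminorm along subgroups spanned by the coefficient vectors $\bv_{j\uu}$, which is the conclusion of Proposition~\ref{P: polynomial concatenation}, not the right-hand side of \eqref{E: more variables}—so your proposed argument does not even prove the stated inequality. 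The misconception likely arises from Example~\ref{Ex: concatenation example}, which the paper uses to motivate the entire chain Proposition~\ref{P: concatenation of polynomials I}--Proposition~\ref{P: systems of multilinear equations}--Lemma~\ref{L: H_l}--Proposition~\ref{P: polynomial concatenation} at once; reading that example as a proof sketch for this proposition alone conflates the ``blow up the number of variables'' step (this proposition, no equidistribution) with the ``exploit equidistribution to collapse'' step (Proposition~\ref{P: polynomial concatenation}). The correct proof of the present statement is purely the self-concatenation trick: it never expands the seminorm, never introduces $m$-averages, and never appeals to the distribution of multilinear forms.
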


Proposition \ref{P: concatenation of polynomials I} generalizes the first step of Example \ref{Ex: concatenation example} in which we increase the number of variables. In Example \ref{Ex: concatenation example}, the role of $\bc_1$ in the left-hand side of \eqref{E: more variables} is played  by \eqref{E: c 1}, and the subgroup in the right-hand side of \eqref{E: more variables} consists of expressions \eqref{E: c 2} with $m$'s ranging over $\Z$;
the bound \eqref{E: more variables} then holds with $s_1 = 2$ (since $\bc_1$ involves two parameters $h_1, h_2$), $s_2 = 1$ (corresponding to the degrees of the input box seminorms), $t = 4$ (corresponding to the range of the parameters $k_1, k_2$) and $\ell = 4$ (corresponding to adding four different $k_2$'s in \eqref{E: vector of multilinear forms} and four different $k_1$'s in \eqref{E: vector of multilinear forms 2}). 

\begin{proof}
Let 
\begin{align*}
    \veps = \E\limits_{\uh\in[\pm H]^{s_1}}\nnorm{f}^+_{\bc_1(\uh), \ldots, \bc_{s_2}(\uh)}.
\end{align*}
    The proof consists of multiple applications of Corollary \ref{C: iterated concatenation for general groups}. First, we apply it with the indexing set $\uh\in[\pm H]^{s_1}$ to find a positive integer $t_1 = O_{\ell, s_2}(1)$ such that
    \begin{align*}
        \E\limits_{\uh_1, \ldots, \uh_{t_1}\in[\pm H]^{s_1}}\nnorm{f}^+_{\substack{\langle\bc_j(\uh_{k_{11}}),\; \ldots,\; \bc_j(\uh_{k_{1 \ell}}) \rangle:\\ j\in[s_2],\; 1\leq k_{11} < \cdots < k_{1 \ell}\leq t_1}}\gg_{k, \ell, s_2} \veps^{O_{\ell, s_2}(1)}.
    \end{align*}

    In the second application of Corollary \ref{C: iterated concatenation for general groups}, we rewrite $\uh_k = (h_{1k}, \Tilde{\uh}_{1k})$, and we take the indexing set to be $(\tilde{\uh}_{11}, \ldots, \tilde{\uh}_{1 t_1})\in[\pm H]^{(s_1-1)t_1}$. Then Corollary \ref{C: iterated concatenation for general groups}, applied separately to each $(h_{11}, \ldots, h_{1t_1})$, gives a positive integer $t_2 = O_{\ell, s_2}(1)$ for which
    \begin{multline*}
        \E_{\substack{h_{1k_1}\in[\pm H]:\\ k_1\in[t_1]}}\;
        \E_{\substack{\Tilde{\uh}_{1k_1 k_2}\in[\pm H]^{s_1-1}:\\ k_1\in [t_1],\; k_2\in[t_2]}}
        \nnorm{f}^+_{\substack{\langle \bc_j(h_{1k_{11}}, \Tilde{\uh}_{1k_{11}k_{21}}),\; \ldots,\; \bc_j(h_{1k_{11}}, \Tilde{\uh}_{1k_{11}k_{2\ell}}),\\
        \ldots, \bc_j(h_{1k_{1\ell}}, \Tilde{\uh}_{1k_{1\ell}k_{21}}),\; \ldots,\; \bc_j(h_{1k_{1\ell}}, \Tilde{\uh}_{1k_{1\ell}k_{2\ell}})\rangle:\\ j\in[s],\; 1\leq k_{11} < \cdots < k_{1 \ell}\leq t_1,\; 1\leq k_{21} < \cdots < k_{2 \ell}\leq t_2 }} \gg_{k, \ell, s_2} \veps^{O_{\ell, s_2}(1)}.
    \end{multline*}

    At the subsequent step, we similarly write $\Tilde{\uh}_{1k_1k_2}=(h_{2k_1k_2}, \Tilde{h}_{2k_1k_2}).$ Taking $$(\tilde{h}_{2k_1k_2})_{k_1\in[t_1], k_2\in[t_2]}\in[\pm H]^{(s_1-2)t_1 t_2}$$ as the indexing set, we apply Corollary \ref{C: iterated concatenation for general groups} separately to each $(h_{1k_1}, h_{2k_1k_2})_{k_1, k_2}$, obtaining a positive integer $t_3 = O_{\ell, s_2}(1)$ for which
    \begin{multline*}
        \E_{\substack{h_{1k_1}\in[\pm H]:\\ k_1\in[t_1]}}\; \E_{\substack{h_{1k_1 k_2}\in[\pm H]:\\ k_1\in [t_1],\; k_2\in[t_2]}}\;
        \E_{\substack{\Tilde{\uh}_{2k_1 k_2 k_3}\in[\pm H]^{s_1-2}:\\ k_1\in [t_1],\; k_2\in[t_2],\; k_3\in[t_3]}}\\
        \nnorm{f}^+_{\substack{\langle\bc_j(h_{1k_{1i_1}}, h_{2k_{1i_1}k_{2i_2}}, \Tilde{\uh}_{2k_{1i_1}k_{2i_2}k_{3i_3}}):\; i_1, i_2, i_3\in[\ell]\rangle:\\ j\in[s_2],\; 1\leq k_{l1} < \cdots < k_{l \ell}\leq t_l,\; l\in[3]}} \gg_{k, \ell, s_2} \veps^{O_{\ell, s_2}(1)}.
    \end{multline*}
    Continuing in this fashion $s_1-3$ more times, we find positive integers $t_4, \ldots, t_{s_1}=O_{\ell, s_1, s_2}(1)$ such that 
        \begin{align*}
        \E_{\substack{h_{lk_1\cdots k_l}\in[\pm H]:\\ l\in[s_1],\; k_1\in[t_1],\; \ldots,\; k_{s_1}\in[t_{s_1}]}}
        \nnorm{f}^+_{\substack{\langle \bc_j(h_{1k_{1i_1}}, \ldots, h_{s_1k_{1i_1}\cdots k_{s_1i_{s_1}}}):\; i_1, \ldots, i_{s_1}\in[\ell]\rangle:\\ j\in[s_2],\; 1\leq k_{l1} < \cdots < k_{l \ell}\leq t_l,\; l\in[s_1]}}\gg_{k, \ell, s_1, s_2} \veps^{O_{\ell, s_1, s_2}(1)}.
    \end{align*}
    We could just end here, however, it is notationally convenient if we can deal with only one $t_l$. Setting $t=\max(t_1, \ldots, t_{s_1})$, we deduce the result from the monotonicity property of the seminorms.
\end{proof} 

Each group $\langle \bc_j(h_{1k_{1i_1}}, \ldots, h_{s_1 k_{1i_1}\cdots k_{s_1i_{s_1}}})\colon\; i_1, \ldots, i_{s_1}\in[\ell]\rangle$ in the left-hand side of \eqref{E: more variables} consists of multilinear forms like \eqref{E: c 2}. Importantly, as long as $\ell$ is sufficiently large ($\ell\geq 3$ will do), the  system of multilinear forms that comes from taking all the groups in the right hand side of \eqref{E: more variables} is equidistributed in its range. This statement is made precise by the following proposition.

\begin{proposition}[Equidistribution of systems of multilinear forms, {\cite[Proposition~7.5]{KKL24a}}]\label{P: systems of multilinear equations}
    Let $\veps>0$, $\ell, s_1, s_2, t\in\N$ with $3\leq\ell\leq t$, and let $H, M\in\N$ with $$\veps^{-O_{\ell, s_1, s_2, t}(1)}\ll_{\ell, s_1, s_2, t} H\leq M.$$ Let 
    \begin{align}\label{E: K}
        \CK = \{(k_{li})_{\substack{(l,i)\in[s_1]\times[\ell]}}\in[t]^{s_1\ell}:\; 1\leq k_{li}<\cdots < k_{l\ell}\leq t\;\; \textrm{for\; all}\;\; l\in[s_1]\},
    \end{align}
    and for each $l\in[s_1]$, let also
    \begin{multline}\label{E: H_l}
        \CH_{l,\veps} = \{(h_{lk_1\cdots k_l})_{k_1, \ldots, k_l\in[t]}\in[\pm H]^{t^l}:\; |h_{lk_1\cdots k_l}-h_{lk''_1\cdots k''_l}|\geq \veps H,\\ \gcd(h_{lk_1\cdots k_l}-h_{lk''_1\cdots k''_l}, h_{lk'_1\cdots k'_l}-h_{lk''_1\cdots k''_l})\leq \veps\inv\\
    \textrm{for\; distinct}\; (k_1, \ldots, k_l),\; (k'_1, \ldots, k'_l),\; (k''_1, \ldots, k''_l)\in[t]^l\}.
    \end{multline}
    Then 
        \begin{multline*}
\max_{\substack{n_{j\uk\uu}\in\Z:\; j\in[s],\\ \uk\in\CK,\; \uu\in\{0,1\}^{s_1}}}\;\E_{\substack{m_{j\uk i_1\cdots i_{s_1}}\in[\pm M]:\\ j\in[s_2],\; \uk\in\CK,\; i_1, \ldots,  i_{s_1}\in[\ell]}}\;
                \E_{\substack{h_{lk_{1}\cdots k_{l}}\in[\pm H]:\\  k_1, \ldots, k_{s_1}\in[t],\; l\in[s_1]}}\; \brac{\prod_{l\in[s_1]}1_{\CH_{l, \veps}}((h_{lk_{1}\cdots k_{l}})_{k_1, \ldots, k_l})}\\
        \prod_{j\in[s_2]}\; \prod_{\substack{\uk\in\CK}}\prod_{\substack{\uu\in\{0,1\}^{s_1}
        }} 1\Bigbrac{\sum\limits_{i_1, \ldots, i_{s_1}\in[\ell]} h_{1k_{1i_1}}^{u_1} \cdots h_{s_1k_{1i_1}\cdots k_{s_1i_{s_1}}}^{u_{s_1}}m_{j\uk i_1\cdots i_{s_1}} = n_{j\uk\uu}}\\
        \ll_{\ell, s_1, s_2, t} \veps^{-O_{\ell, s_1, s_2, t}(1)}  M^{-2^{s_1}s_2|\CK|}H^{-s_1 2^{s_1-1}s_2|\CK|},
    \end{multline*}
    where we recall our convention for the indicator function from Section \ref{S: background}.

\end{proposition}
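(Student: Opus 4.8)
This is Proposition~\ref{P: systems of multilinear equations}, an equidistribution result for systems of multilinear forms, which the excerpt attributes verbatim to \cite[Proposition~7.5]{KKL24a}. Accordingly, the plan is to reduce the statement to its finitary combinatorial core and invoke the cited result, but let me describe how one would prove it from scratch if needed.

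\begin{proof}[Proof sketch]
The claim is a purely arithmetic statement about counting solutions to a large system of multilinear equations over boxes, with the $h$-variables constrained to the ``generic'' sets $\CH_{l,\veps}$, and it contains no ergodic-theoretic content. The plan is to proceed by induction on $s_1$ (the number of layers of variables), peeling off one layer at a time, exactly as in Example~\ref{Ex: concatenation example} where the trilinear form is reduced first to a bilinear one and then to a single variable. At each stage, we fix the outer variables $h_{l'k_1\cdots k_{l'}}$ for $l' < l$ (lying in their respective $\CH_{l',\veps}$) and estimate, uniformly over the fixed outer variables and over the target integers $n_{j\uk\uu}$, the number of choices of the innermost $m$-variables and $h_{l}$-variables solving the relevant equations. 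The genericity conditions defining $\CH_{l,\veps}$ — that the pairwise differences $h_{lk_1\cdots k_l} - h_{lk''_1\cdots k''_l}$ are both $\gg \veps H$ in size and have pairwise $\gcd$ bounded by $\veps^{-1}$ — are precisely what is needed to ensure that the linear map sending the $m$-variables of a fixed block $\uk$ to the vector of multilinear forms $\bigl(\sum_{i_1,\dots,i_{s_1}} h_{1k_{1i_1}}^{u_1}\cdots h_{s_1 k_{1i_1}\cdots k_{s_1 i_{s_1}}}^{u_{s_1}} m_{j\uk i_1\cdots i_{s_1}}\bigr)_{\uu}$ is ``non-degenerate'': its image covers a positive proportion (depending only on $\veps$ and the combinatorial parameters) of the expected box, and each point of that box has $O_{\ell,s_1,s_2,t}(\veps^{-O(1)})$ preimages. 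Summing the $O(1)$-per-point bound over all blocks $\uk \in \CK$ and all layers $l \in [s_1]$ accumulates the factor $\veps^{-O_{\ell,s_1,s_2,t}(1)}$ and the normalizing powers $M^{-2^{s_1} s_2 |\CK|} H^{-s_1 2^{s_1-1} s_2 |\CK|}$, matching the claimed right-hand side.

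Concretely, the base case $s_1 = 1$ amounts to the following: given generic $(h_{k_1})_{k_1\in[t]} \in \CH_{1,\veps}$, and given any target integers $n_{j\uk 0}, n_{j\uk 1}$, bound the number of $(m_{j\uk i_1})$ in $[\pm M]$ with $\sum_{i_1} m_{j\uk i_1} = n_{j\uk 0}$ and $\sum_{i_1} h_{k_{1i_1}} m_{j\uk i_1} = n_{j\uk 1}$ for each $j$ and $\uk$. For a single block, this is a $2\times\ell$ linear system; the $\ell \geq 3$ hypothesis guarantees at least one free variable after accounting for the two constraints, and the constraint that the $2\times 2$ minors $h_{k_{1i}} - h_{k_{1i'}}$ are nonzero (indeed $\gg \veps H$) makes the system have rank $2$, so the solution count is $O(M^{\ell - 2})$ uniformly in the targets; dividing by $M^\ell$ and multiplying over $\uk\in\CK$ gives the factor $M^{-2|\CK|}$. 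Averaging over $(h_{k_1})$ in $\CH_{1,\veps}$ rather than over all of $[\pm H]^t$ only helps since $|\CH_{1,\veps}| \leq H^t$, but one must check that restricting to $\CH_{1,\veps}$ does not create an obstruction to the rank condition — it does not, because the defining inequalities of $\CH_{1,\veps}$ force the relevant differences to be nonzero. The $\gcd$ condition is needed in the inductive step, not the base case: when one of the ``effective coefficients'' seen by an inner layer is itself a difference of products of outer $h$-variables, bounded $\gcd$ is what prevents divisibility obstructions from concentrating the multilinear form on a sparse subprogression of its range.

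The main obstacle, and the part that requires genuine care rather than bookkeeping, is the inductive step: when we reduce layer $l$, the ``coefficients'' multiplying the layer-$l$ variables are not the bare $h$'s but products $h_{1k_{1i_1}}^{u_1}\cdots h_{(l-1)k_{1i_1}\cdots k_{(l-1)i_{l-1}}}^{u_{l-1}}$ times the surviving inner multilinear forms, and one must verify that for generic outer variables these products, together with the genericity of the layer-$l$ variables themselves, still yield a non-degenerate linear system. This is exactly the content of \cite[Proposition~7.5]{KKL24a} and its proof, and since the statement here is quoted verbatim from that source with identical hypotheses, we simply invoke it. (No ergodic input is used, so there is nothing to adapt from the integer setting of \cite{KKL24a}.)
\end{proof}
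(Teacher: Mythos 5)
The paper offers no proof of this proposition — it is quoted verbatim from \cite[Proposition~7.5]{KKL24a} and used as a black box — and your proposal ultimately does exactly the same, so it takes the paper's approach. One caveat about your auxiliary sketch: already in the base case $s_1=1$ the stated bound demands an extra saving of $H^{-1}$ per block $(j,\uk)$ (the target is $M^{-2s_2|\CK|}H^{-s_2|\CK|}$, not just $M^{-2s_2|\CK|}$), and this comes from the divisibility/congruence condition modulo $h_{k_{11}}-h_{k_{12}}$ when solving for $m_1,m_2$, which uses both the lower bound $|h_{k_{1i}}-h_{k_{1i'}}|\geq\veps H$ and the pairwise $\gcd$ bound — so your remark that the $\gcd$ condition is only needed in the inductive step, and your count of $O(M^{\ell-2})$ solutions per block, are not quite right.
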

On the first read, we advise the reader to assume that $t=\ell$, so that the indexing set $\CK$ consists of only one element, $k_{li_l}$ can be replaced by $k_l$, and the product $\prod_{\substack{\uk\in\CK}}$ disappears.

The point of restricting the range of $h$'s to \eqref{E: H_l} is that multilinear forms like $(h-h'')(h'-h'')$ that appear in the proof of Proposition \ref{P: systems of multilinear equations} are only well distributed in its range if $h-h'', h'-h''$ are not too small and do not have large gcd. The next lemma shows that the set of such well-behaved $h$'s is generic.
\begin{lemma}[{\cite[Lemma 7.8]{KKL24a}}]\label{L: H_l}
    Let $\veps>0$, $H, l, t\in\N$, and $\CH_{l,\veps}$ be as in \eqref{E: H_l}. Then all but a $O_{l,t}(\veps)$ proportion of elements of $[\pm H]^{t^l}$ lie in $\CH_{l, \veps}$.
\end{lemma}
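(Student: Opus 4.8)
The plan is to bound, separately, the proportion of tuples in $[\pm H]^{t^l}$ that violate each of the two defining conditions of $\CH_{l,\veps}$, and then take a union bound over the $O_{l,t}(1)$ many multi-indices $\uk,\uk',\uk''\in[t]^l$ that appear in those conditions. Throughout I may assume $\veps<1$, since otherwise the asserted proportion bound is vacuous, and I may also restrict to the range $H\geq\veps\inv$, as this is the only range in which the lemma gets used (compare the hypothesis $\veps\inv\ll H$ of Proposition~\ref{P: systems of multilinear equations}); for smaller $H$ one can absorb the discrepancy into the implied constant.

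First I would handle the separation condition $|h_{l\uk}-h_{l\uk''}|\geq\veps H$. Fix distinct $\uk,\uk''\in[t]^l$ and choose $(h_{l\uk})_\uk$ uniformly at random from $([\pm H]\cap\Z)^{t^l}$. Conditioning on the value of $h_{l\uk''}$, the coordinate $h_{l\uk}$ remains uniform in $[\pm H]\cap\Z$, and the number of its values with $|h_{l\uk}-h_{l\uk''}|<\veps H$ is at most $2\veps H+1\ll\veps H$, so this pair is ``bad'' with probability $O(\veps)$. Summing over the at most $t^{2l}$ pairs of distinct multi-indices shows that the separation condition fails on a $O_{l,t}(\veps)$ proportion of $[\pm H]^{t^l}$.

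Next I would treat the gcd condition, which is where the only real work lies. Fix distinct $\uk,\uk',\uk''$ and put $a=h_{l\uk}-h_{l\uk''}$ and $b=h_{l\uk'}-h_{l\uk''}$; on the event that the separation condition already holds, these are nonzero integers of absolute value $O(H)$, so every common divisor of $a$ and $b$ is at most $2H$, and the event $\gcd(a,b)>\veps\inv$ forces $d\mid a$ and $d\mid b$ for some integer $d$ with $\veps\inv<d\leq 2H$. After conditioning on $h_{l\uk''}$, the variables $h_{l\uk}$ and $h_{l\uk'}$ are independent and uniform in $[\pm H]\cap\Z$, so each of $d\mid a$ and $d\mid b$ has conditional probability at most $1/d+O(1/H)\ll 1/d$, whence $d\mid a$ and $d\mid b$ hold simultaneously with conditional probability $O(1/d^2)$. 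A union bound over $d$ then shows $\gcd(a,b)>\veps\inv$ with probability $\ll\sum_{d>\veps\inv}1/d^2\ll\veps$, and summing over the at most $t^{3l}$ triples of distinct multi-indices shows the gcd condition fails on a $O_{l,t}(\veps)$ proportion of tuples. Adding the two contributions proves the lemma; this is precisely the argument of \cite[Lemma~7.8]{KKL24a}, to which one may alternatively just appeal.

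I expect the main (and only mildly delicate) obstacle to be the gcd step: one must correctly pass from ``$\gcd$ large'' to ``there is a large common divisor $d$ of $a$ and $b$'', exploit the conditional independence of $a$ and $b$ obtained by fixing the common base $h_{l\uk''}$ in order to get the $O(1/d^2)$ bound, and then sum the convergent tail $\sum_{d>\veps\inv}d^{-2}\ll\veps$. Everything else reduces to elementary counting, and the union bounds cost only the harmless factor $t^{O(l)}=O_{l,t}(1)$.
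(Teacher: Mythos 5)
Your proof is correct and is the expected one. The paper itself supplies no proof here — it simply cites \cite[Lemma~7.8]{KKL24a} — so the only thing to check is that your argument is sound, which it is: you bound the separation failures by elementary counting, and for the gcd failures you condition on the base coordinate $h_{l\uk''}$ to make $a$ and $b$ conditionally independent, so that $\Pr[d\mid a \text{ and } d\mid b] \ll 1/d^2$, and you then sum the convergent tail $\sum_{d>\veps^{-1}} d^{-2} \ll \veps$ and union bound over the $O_{l,t}(1)$ triples. That is exactly the standard route.

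One small imprecision: you write that for $H<\veps^{-1}$ ``one can absorb the discrepancy into the implied constant.'' This is not quite right. When $\veps H < 1$, the separation condition degenerates to pairwise distinctness, which fails on a $\asymp_{l,t} 1/H$ proportion of tuples; that is not $O_{l,t}(\veps)$ when $\veps$ is much smaller than $1/H$, no matter what the implied constant is, because the constant must be independent of $\veps$ and $H$. So the lemma, as literally stated, carries an implicit hypothesis $H \gg_{l,t} \veps^{-1}$. You correctly observe that this is the only regime in which the lemma is invoked (compare the hypothesis $\veps^{-O(1)} \ll H$ in Proposition~\ref{P: systems of multilinear equations}, and the fact that in Proposition~\ref{P: polynomial concatenation} one ultimately sends $H\to\infty$ with $\veps$ fixed), so this imprecision is cosmetic and does not affect the correctness of the surrounding arguments; just don't claim the small-$H$ case is salvageable by enlarging constants.
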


In what follows, we let $A_{d,s_1} := \left\{\uu\in\N_0^{s_1}:\ |\uu|\leq d \right\}$. The next result establishes that averages of generalized box seminorms in which boxes are generated by multilinear polynomials $\bc_1, \ldots, \bc_{s_2}$ can be controlled with polynomial losses by a single generalized box seminorm of bounded degree along subgroups spanned by the coefficients of the polynomials $\bc_1, \ldots, \bc_{s_2}$. 
    \begin{proposition}[Quantitative concatenation of box seminorms along polynomials]\label{P: polynomial concatenation}
        Let $d, k, s_1, s_2\in\N$. There exists a positive integer $s_3=O_{d,s_1, s_2}(1)$ with the following property: for all systems $(X, \CX, \mu, T_1, \ldots, T_k)$, 1-bounded functions $f\in L^\infty(\mu)$, and polynomials $\bc_1, \ldots, \bc_{s_2}\in\R^k[\uh]$ 
        (where $\uh$ are vectors in $\Z^{s_{1}}$)
        of the form
        \begin{align*}
            \bc_j(\uh) =  \sum_{\substack{\uu\in A_{d, s_1}}}\bv_{j\uu}\uh^{\uu}
        \end{align*}
        for some $\bv_{j\uu}\in\R^{k}$,
        we have
        \begin{align*}
            \limsup_{H\to\infty}\brac{\E\limits_{\uh\in[\pm H]^{s_1}}\nnorm{f}^+_{{\bc_1(\uh)}, \ldots, {\bc_{s_2}(\uh)}}}^{O_{s_1,s_2}(1)} \ll_{k, s_1, s_2} \nnorm{f}^+_{G_1^{s_3}, \ldots, G_{s_2}^{s_3}},
        \end{align*}
        where $G_j = \langle \bv_{j\uu}:\ \uu\in A_{d,s_1}\rangle$ for $j\in[s_2]$.
    \end{proposition}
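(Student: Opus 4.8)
The plan is to combine the three ingredients highlighted in the surrounding text—Proposition~\ref{P: concatenation of polynomials I} (increasing the number of variables), Proposition~\ref{P: systems of multilinear equations} (equidistribution of systems of multilinear forms), and Lemma~\ref{L: H_l} (genericity of well-spread tuples)—in a way that mirrors the finitary argument from \cite{KKL24a} and the worked Example~\ref{Ex: concatenation example}. First I would pass to the product system so that it suffices to bound $\nnorm{f\otimes\overline f}_{\bc_1(\uh),\ldots,\bc_{s_2}(\uh)}$, i.e.\ work with the unprimed seminorm $\nnorm{\cdot}$ rather than $\nnorm{\cdot}^+$; this is harmless because $\nnorm{f}^+_{G_1^{s_3},\ldots,G_{s_2}^{s_3}} = \nnorm{f\otimes\overline f}_{G_1^{s_3},\ldots,G_{s_2}^{s_3}}^{1/2}$. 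Fix $\ell$ to be a power of $2$ with $\ell\geq 3$ (say $\ell=4$), and let $t=O_{\ell,s_1,s_2}(1)$ be the integer furnished by Proposition~\ref{P: concatenation of polynomials I}. Applying that proposition gives, for every $H$,
\begin{align*}
    \brac{\E\limits_{\uh\in[\pm H]^{s_1}}\nnorm{f}^+_{\bc_1(\uh), \ldots, \bc_{s_2}(\uh)}}^{O(1)}
    \ll_{k}\ \E_{\substack{h_{lk_1\cdots k_l}\in[\pm H]}}\nnorm{f}^+_{\substack{\langle \bc_j(h_{1k_{1i_1}}, \ldots, h_{s_1 k_{1i_1}\cdots k_{s_1i_{s_1}}}):\ i_1, \ldots, i_{s_1}\in[\ell]\rangle}},
\end{align*}
with the indices $k_{l1}<\cdots<k_{l\ell}$ ranging in $[t]$ and $j\in[s_2]$.

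Next I would unfold the right-hand side. Each generalized box seminorm on the right is a $\nnorm{\cdot}^+$-seminorm of degree $s_2|\CK|$ along the subgroups $\langle \bc_j(h_{1k_{1i_1}},\ldots)\rangle$; expanding its definition and composing with appropriate powers of $T\times T$ (as in the derivation of the inductive formula \eqref{E: inductive formula} and in Example~\ref{Ex: concatenation example}), it becomes an integral against $f\otimes\overline f$ of $(T\times T)$ raised to a sum $\sum_{j,\uk,\ueps} \pm\, \bc_j(h_{1k_{1i_1}},\ldots)\, m_{j\uk\ueps}$ where each $m_{j\uk\ueps}$ runs over $\Z$ along a F\o lner sequence (here I use Lemma~\ref{L: convergence} to pass between iterated and single limits and to realize the subgroup average as an average over $m$'s). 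Averaging over $\uh$, restricting to the generic set $\prod_l\CH_{l,\veps}$ via Lemma~\ref{L: H_l} (the discarded part contributes $O_{l,t}(\veps)$ and can be absorbed since we only seek a polynomial-in-$\veps$ lower bound, where $\veps$ is the left-hand side quantity), and applying Cauchy--Schwarz once more to pass to the product-of-product system and eliminate absolute values, I reach an expression of the shape considered in Proposition~\ref{P: systems of multilinear equations}: an average over the $m$'s and over $\uh\in\prod_l\CH_{l,\veps}$ of indicators that the multilinear forms $\sum_{i_1,\ldots,i_{s_1}} h_{1k_{1i_1}}^{u_1}\cdots h_{s_1k_{1i_1}\cdots}^{u_{s_1}} m_{j\uk i_1\cdots i_{s_1}}$ equal prescribed integers $n_{j\uk\uu}$. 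Proposition~\ref{P: systems of multilinear equations} then says that every target value $n_{j\uk\uu}$ is attained with multiplicity at most $\veps^{-O(1)}$ times the average multiplicity $M^{-2^{s_1}s_2|\CK|}H^{-s_1 2^{s_1-1}s_2|\CK|}$; consequently, up to a factor $\veps^{-O(1)}$, each multilinear form $\sum_{\ldots} h^{\uu} m_{j\uk i_1\cdots i_{s_1}}$ may be replaced by a single fresh integer variable $t_{j\uk\uu}$ ranging over an interval symmetric about $0$ (one such variable for each monomial $\uh^{\uu}$, $\uu\in A_{d,s_1}$, appearing in $\bc_j$). This is exactly the ``collapse of multilinear forms to single variables'' performed twice in Example~\ref{Ex: concatenation example}.

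After this replacement the iterate becomes $\sum_{j}\sum_{\uu} \bv_{j\uu}\, t_{j\uk\uu}$ summed over $j\in[s_2]$ and $\uk\in\CK$, with each $t_{j\uk\uu}$ an independent integer variable. Grouping the variables by $j$, the resulting expression is precisely (a power of) the generalized box seminorm $\nnorm{f}_{G_1^{r},\ldots,G_{s_2}^{r}}$ along the subgroups $G_j=\langle \bv_{j\uu}:\uu\in A_{d,s_1}\rangle$, where $r = |\CK|\cdot|A_{d,s_1}| = O_{d,s_1,s_2}(1)$: indeed, the inner average over $(t_{j\uk\uu})_{\uu}$ for fixed $j,\uk$ is an average over the subgroup $G_j\cap[\pm M']^k$ which, by Lemma~\ref{L: convergence}, converges to the corresponding $G_j$-average as $M'\to\infty$, and by Lemma~\ref{L: dilating seminorms} the precise interval lengths of the $t$'s are irrelevant up to $O(1)$ factors. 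Taking $\limsup_{H\to\infty}$, using $\frac1H\to 0$, and taking square roots to return from $f\otimes\overline f$ to $f$ (recalling $\nnorm{f\otimes\overline f}_{G_1^r,\ldots,G_{s_2}^r} = (\nnorm{f}^+_{G_1^{r-?},\ldots})^{\cdots}$, more simply bounding $\nnorm{f\otimes\overline f}_{G_1^r,\ldots}\leq (\nnorm{f}^+_{G_1^r,\ldots,G_{s_2}^r})^2$ by the monotonicity property Lemma~\ref{L: monotonicity property} and Lemma~\ref{L: plus vs normal seminorm}) yields the claimed bound with $s_3 = r$. The main obstacle is bookkeeping rather than conceptual: one must carefully match the combinatorial structure of the seminorm expansion (which monomials $\uh^{\uu}$ occur, which signs $\pm$ attach to which terms, and how the indices $\uk\in\CK$ and $\ueps\in\{0,1\}^{s_1}$ interleave) with the precise hypotheses of Proposition~\ref{P: systems of multilinear equations}, and verify that the sign factors $(-1)^{\sum\eps}$ can be removed using the symmetry of the ranges of the $m$'s and $t$'s exactly as in Example~\ref{Ex: concatenation example}; additionally one must track that all integer-part errors incurred when $\bv_{j\uu}\notin\Z^k$ are handled by Lemma~\ref{L: errors} and absorbed into $O(1)$ factors, and that the degrees and powers of $\veps$ produced at each stage remain $O_{d,s_1,s_2}(1)$ and independent of $k$.
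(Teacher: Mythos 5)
Your proposal follows the paper's proof essentially step for step: apply Proposition~\ref{P: concatenation of polynomials I} to thicken the indexing, restrict via Lemma~\ref{L: H_l} to a generic set of $\uh$'s, and then use Proposition~\ref{P: systems of multilinear equations} to replace the multilinear forms by independent integer variables, thereby landing on a generalized box seminorm in the directions $G_1,\dots,G_{s_2}$. The one small slip is the exponent: the final seminorm has degree $|\CK|$ in each $G_j$ (one direction per pair $(j,\uk)$, since all the fresh variables $t_{j\uk\uu}$ for a fixed $(j,\uk)$ merely parametrize a \emph{single} average over $G_j$), not $|\CK|\cdot|A_{d,s_1}|$; this is harmless since the statement only requires $s_3 = O_{d,s_1,s_2}(1)$.
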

    \begin{proof}
       Set
        \begin{align*}
            \veps = \E\limits_{\uh\in[\pm H]^{s_1}}\nnorm{f}_{{\bc_1(\uh)}, \ldots, {\bc_{s_2}(\uh)}}^+.
        \end{align*}
               We let all the constants depend on $k, s_1, s_2$, noting however that powers of $\veps$ do not depend on $k$.
    By Proposition \ref{P: concatenation of polynomials I} applied with $\ell = 4$,\footnote{The reason for taking $\ell = 4$ is as follows: 4 is the smallest power of 2 greater than or equal to 3, and we have a lower bound $\ell\geq 3$ in Proposition \ref{P: systems of multilinear equations} that we will apply shortly.}  there exists a positive integer $t=O(1)$ (independent of $k$) for which
    \begin{align}\label{E: unexpanded}
        \E_{\substack{h_{lk_1\cdots k_l}\in[\pm H]:\\ l\in[s_1],\; k_1, \ldots, k_{s_1}\in[t]}}
        \nnorm{f}_{\substack{\langle \bc_j(h_{1k_{1i_1}}, \ldots, h_{s_1k_{1i_1}\cdots k_{s_1i_{s_1}}}):\; i_1, \ldots, i_{s_1}\in[4]\rangle:\\ j\in[s_2],\;        \uk\in\CK}} \gg \veps^{O(1)};
    \end{align}
    we have also applied here Lemma \ref{L: plus vs normal seminorm} to replace $\nnorm{\cdot}^+$ with $\nnorm{\cdot}$, which we can by increasing $t$ by 1, say.
    For each choice of $j\in[s_2], \uk\in\CK$ and $h$'s, the group $$\langle \bc_j(h_{1k_{1i_1}}, \ldots, h_{s_1k_{1i_1}\cdots k_{s_1i_{s_1}}}):\; i_1, \ldots, i_{s_1}\in[4]\rangle$$ consists of elements
    \begin{align}\label{E: expandeddd}
        \sum_{\uu\in\{0,1\}^{s_1}} \bv_{j\uu} \sum\limits_{i_1, \ldots, i_{s_1}\in[4]} h_{1k_{1i_1}}^{u_1}\cdots h_{s_1 k_{1i_1}\cdots k_{s_1i_{s_1}}}^{u_{s_1}} m_{j\uk i_1\cdots i_{s_1}}
    \end{align}
    with $m$'s ranging over $\Z$, and so applying the H\"older inequality to \eqref{E: unexpanded} and expanding the definition of the seminorm, we get
    \begin{multline*}
        \lim_{M\to\infty}\E_{\substack{h_{lk_1\cdots k_l}\in[\pm H]:\\ l\in[s_1],\; k_1, \ldots, k_{s_1}\in[t]}}\;\E_{\substack{m_{j\uk i_1\cdots i_{s_1}}, m_{j\uk i_1\cdots i_{s_1}}'\in[\pm M]:\\ j\in[s_2],\; \uk\in\CK,\; i_1, \ldots,  i_{s_1}\in[4]}}\\ 
        \int \Delta_{\substack{\big(\sum\limits_{i_1, \ldots, i_{s_1}\in[4]}\bc_j(h_{1k_{1i_1}}, \ldots, h_{s_1 k_{1i_1}\cdots k_{s_1 i_{s_1}}})m_{j \uk i_1\cdots i_{s_1}},\\
        \sum\limits_{i_1, \ldots, i_{s_1}\in[4]}\bc_j(h_{1k_{1i_1}}, \ldots, h_{s_1 k_{1i_1}\cdots k_{s_1 i_{s_1}}})m_{j \uk i_1\cdots i_{s_1}}'\big):\; j\in[s_2],\; \uk\in\CK}}\; f\; d\mu \gg \veps^{O(1)}.
    \end{multline*}    
    To simplify the expression above, we compose the integral with
    \begin{align*}
        T^{-\sum_{j\in[s_2]}\sum_{\uk\in\CK}\floor{\sum\limits_{i_1, \ldots, i_{s_1}\in[4]}\bc_j(h_{1k_{1i_1}}, \ldots, h_{s_1 k_{1i_1}\cdots k_{s_1 i_{s_1}}})m_{j \uk i_1\cdots i_{s_1}}'}}.
    \end{align*}
    Swapping the order of subtraction and integer parts, replacing $m_{j \uk i_1\cdots i_{s_1}}- m_{j \uk i_1\cdots i_{s_1}}'$ by $m_{j \uk i_1\cdots i_{s_1}}$, dealing with the error terms using Lemma \ref{L: errors}, and using the triangle inequality to get rid of resulting weights, we get
      \begin{multline}\label{E: expanded}
        \lim_{M\to\infty}\E_{\substack{h_{lk_1\cdots k_l}\in[\pm H]:\\ l\in[s_1],\; k_1, \ldots, k_{s_1}\in[t]}}\;\E_{\substack{m_{j\uk i_1\cdots i_{s_1}}\in[\pm 2M]:\\ j\in[s_2],\; \uk\in\CK,\; i_1, \ldots,  i_{s_1}\in[4]}}\\
        \abs{\int \prod_{\ueps\in\{0,1\}^{s_2|\CK|}}\CC^{|\ueps|}T^{\eps_{j\uk}\cdot\floor{\sum\limits_{i_1, \ldots, i_{s_1}\in[4]}\bc_j(h_{1k_{1i_1}}, \ldots, h_{s_1 k_{1i_1}\cdots k_{s_1 i_{s_1}}})m_{j \uk i_1\cdots i_{s_1}}}+\br_{\ueps}}f\; d\mu}\gg \veps^{O(1)}
    \end{multline}      
    for some $\br_\ueps\in\Z^k$.

    Our goal is to use Proposition \ref{P: systems of multilinear equations} to show that the gigantic system of multilinear forms
    \begin{align}\label{E: multilinear form}
        \sum\limits_{i_1, \ldots, i_{s_1}\in[4]} h_{1k_{1i_1}}^{u_1}\cdots h_{s_1 k_{1i_1}\cdots k_{s_1i_{s_1}}}^{u_{s_1}} m_{j\uk i_1\cdots i_{s_1}}
    \end{align}
    indexed by $j\in[s_2],\; \uk\in\CK,\; \uu\in\{0,1\}^{s_1}$ that appears in \eqref{E: expanded} after substituting $\bc_j(h_{1k_{1i_1}}, \ldots, h_{s_1 k_{1i_1}\cdots k_{s_1 i_{s_1}}})$ by an element of the form (\ref{E: expandeddd}) 
    is equidistributed in its range, in that we can replace each \eqref{E: multilinear form}  by a separate variable $n_{j\uk\uu}$ ranging over $[\pm C_{\uu}H^{|\uu|}M]$ for some $C_\uu>0$. The first step to achieve this is to throw out those tuples of $h$'s whose coordinates are either close to each other or have a large greatest common divisor.
    For some small $\veps'>0$ to be chosen later, let $\CH_l = \CH_{l,\veps'}$ be the set defined in \eqref{E: H_l}. By Lemma~\ref{L: H_l}, the set $\CH_{l,\veps'}$ contains all but a $O(\veps')$ proportion of elements of $[\pm H]^{t^l}$; that means that generically, the differences of the numbers $h_{lk_1 \cdots k_l}$ are far from 0 and have small greatest common divisor. Taking $\veps' = c\veps^{1/c}$ for a sufficiently small $c>0$, we can ignore in \eqref{E: expanded} all the tuples $(h_{l k_1\cdots k_l})_{k_1, \ldots, k_l}$ not in $\CH_l$. Thus, we obtain 
    \begin{multline*}
        \limsup_{M\to\infty}\E_{\substack{h_{lk_1\cdots k_l}\in[\pm H]:\\ l\in[s_1],\; k_1, \ldots, k_{s_1}\in[t]}}\; {{ \prod_{l\in[s_1]}1_{\CH_l}((h_{l k_1\cdots k_l})_{k_1, \ldots, k_l})}}\cdot\E_{\substack{m_{j\uk i_1\cdots i_{s_1}}\in[\pm M]:\\ j\in[s_2],\; \uk\in\CK,\; i_1, \ldots,  i_{s_1}\in[4]}}\\ 
        \abs{\int \prod_{\ueps\in\{0,1\}^{s_2|\CK|}}\CC^{|\ueps|}T^{\eps_{j\uk}\cdot\floor{\sum\limits_{i_1, \ldots, i_{s_1}\in[4]}\bc_j(h_{1k_{1i_1}}, \ldots, h_{s_1 k_{1i_1}\cdots k_{s_1 i_{s_1}}})m_{j \uk i_1\cdots i_{s_1}}}+\br_{\ueps}}f\; d\mu}\gg \veps^{O(1)}.
    \end{multline*}
    The introduction of extra summations over $n_{j\uk\uu}$ for $j\in[s_2], \uk\in\CK, \uu\in\{0,1\}^{s_1}$ allows us to bound
    \begin{multline*}
        \limsup_{M\to\infty} \sum_{\substack{n_{j\uk\uu}\ll H^{|\uu|}M:\\ j\in[s_2],\; \uk\in\CK,\; \uu\in\{0,1\}^{s_1}}}\CN(\un)\\
        \abs{\int \prod_{\ueps\in\{0,1\}^{s_2|\CK|}}\CC^{|\ueps|}T^{\eps_{j\uk}\cdot\floor{\sum_{\substack{\uu\in\{0,1\}^{s_1}}}\bv_{j\uu} n_{j\uk\uu}}+\br_{\ueps}}f\; d\mu}\gg \veps^{O(1)},
    \end{multline*}
    for 
    \begin{multline*}
        \CN(\un) = \E_{\substack{m_{j\uk i_1\cdots i_{s_1}}\in[\pm M]:\\ j\in[s_2],\; \uk\in\CK,\; i_1, \ldots,  i_{s_1}\in[\ell]}} \; 
        \E_{\substack{h_{lk_{1}\cdots k_{l}}\in[\pm H]:\\  k_1, \ldots, k_l\in[t],\; l\in[s_1]}}\; {\prod_{l\in[s_1]}1_{\CH_l}((h_{l k_1\cdots k_l})_{k_1, \ldots, k_l})} \\
        \prod_{j\in[s]}\; \prod_{\substack{\uk\in\CK}}\prod_{\substack{\uu\in\{0,1\}^{s_1}
        }} 1\Bigbrac{\sum\limits_{i_1, \ldots, i_{s_1}\in[\ell]} h_{1k_{1i_1}}^{u_1} \cdots h_{s_1 k_{1i_1}\cdots k_{s_1 i_{s_1}}}^{u_{s_1}}m_{j\uk i_1\cdots i_{s_1}} = n_{j\uk\uu}}.
    \end{multline*}
    The kernel $\CN$ is supported on the tuples $(n_{j\uk\uu})_{\substack{j\in[s_2],\; \uk\in\CK,\\ \uu\in\{0,1\}^{s_1}}}$ satisfying $n_{j\uk\uu}\ll H^{|\uu|} M$, of which there are 
    \begin{align*}
        \ll \prod_{j\in[s_2]}\prod_{\uk\in\CK} \prod_{\uu\in\{0,1\}^{s_1}} H^{|\uu|} M \ll  M^{2^{s_1} s_2 |\CK|} H^{s_1 2^{s_1-1} s_2 |\CK|} 
    \end{align*}
    due to the identity
        \begin{align*}
        \sum_{u_{1} = 0}^1\cdots \sum_{u_{s_1} = 0}^1 (u_{1}+\cdots + u_{s_1}) = \sum_{i=0}^{s_1} i {\binom{s_1}{i}} = s_12^{s_1-1}
    \end{align*}
    that can easily be proved by induction.
    
    Crucially, Proposition \ref{P: systems of multilinear equations} allows us to bound
    \begin{align*}
        \CN(\un)\ll \veps^{-O(1)}  M^{- 2^{s_1} s_2 |\CK|} H^{-s_1 2^{s_1-1} s_2 |\CK|}
    \end{align*}
    as long as $M\geq H\gg\veps^{-O(1)}$,
    and so we get the estimate
    \begin{align*}
        \limsup_{M\to\infty}\E_{\substack{n_{j\uk\uu}\ll H^{|\uu|}M:\\ j\in[s_2],\; \uk\in\CK,\; \uu\in\{0,1\}^{s_1}
        }}\abs{\int \Delta_{\sum_{\substack{\uu\in\{0,1\}^{s_1}
        }}\bv_{j\uu} n_{j\uk\uu}:\; j\in[s_2],\; \uk\in\CK}\; f\, d\mu}\gg \veps^{O(1)}.
    \end{align*}
    The claimed inequality promptly follows upon taking $M\to\infty$, followed by taking $H\to\infty$.\footnote{We note here that taking $H\to\infty$ is not strictly necessary; a close analysis of the argument shows that if
    \begin{align*}
        \E\limits_{\uh\in[\pm H]^{s_1}}\nnorm{f}^+_{{\bc_1(\uh)}, \ldots, {\bc_{s_2}(\uh)}}\geq \veps,
    \end{align*}
    then
    \begin{align*}
        \brac{\E\limits_{\uh\in[\pm H]^{s_1}}\nnorm{f}^+_{{\bc_1(\uh)}, \ldots, {\bc_{s_2}(\uh)}}}^{O_{s_1,s_2}(1)} \ll_{k, s_1, s_2} \nnorm{f}^+_{G_1^{s_3}, \ldots, G_{s_2}^{s_3}},
    \end{align*} as long as $H\gg_{k, s_1, s_2} \veps^{-O_{s_1, s_2}(1)}.$} 
    \end{proof}

Propositions \ref{P: PET I} and  \ref{P: polynomial concatenation} together give the quantitative control of polynomial averages by a bounded-degree generalized box seminorm that depends only on the leading coefficients of the polynomials and their differences.

\begin{proposition}[Box seminorm control for polynomial with dual twists]\label{P: Box seminorm control for polynomials twisted by duals}
    Let $d, k, \ell\in \N$, $J\in\N_0$. There exists a positive integer $s=O_{d,J,\ell}(1)$ with the following property: for all $n_0\in\N_0$, systems $(X, \CX, \mu, T_1, \ldots, T_k)$, 1-bounded functions $f_1\in L^\infty(\mu)$, Hardy sequences $b_1, \ldots, b_J\in\CH$ of growth $b_j(t)\ll t^d$, and polynomials $\bp_1, \ldots, \bp_\ell\in\R^k[n]$  of degrees at most $d$, form $\bp_j(n) = \sum_{i=0}^d \bbeta_{ji} n^i$, and such that $\bp_1$ is essentially distinct from $\bp_2, \ldots, \bp_\ell$, we have
    \begin{multline*}
        \limsup_{N\to\infty}\sup_{n_0\in\N_0}\sup_{\substack{\norm{f_2}_\infty, \ldots, \norm{f_\ell}_\infty\leq 1,\\ \CD_1, \ldots, \CD_J\in\FD_d}}
        \norm{\E_{n\in[N]}\prod_{j\in[\ell]} T^{\floor{\bp_j(n_0+n)}}f_j\cdot \prod_{j\in[J]}\CD_j(\floor{b_j(n_0+n)})}_{L^2(\mu)}^{O_{d, \ell, J}(1)}\\
        \ll_{d, J, k, \ell}  \nnorm{f_1}^+_{\bbeta_{1d_{10}}^s, (\bbeta_{1d_{12}} - \bbeta_{2d_{12}})^s, \ldots, (\bbeta_{1d_{1\ell}} - \bbeta_{\ell d_{1\ell}})^s},
    \end{multline*}
    where $d_{1j} = \deg (\p_1 -\p_j)$ for $j\in[0, \ell]$ and $\p_0 = {\bf 0}$.
\end{proposition}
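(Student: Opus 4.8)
The plan is to chain the two main results of Sections~\ref{S: polynomial PET} and~\ref{S: polynomial concatenation}: first use Corollary~\ref{C: PET for polynomials twisted by duals} to dominate the twisted polynomial average by an average of generalized box seminorms along multilinear polynomial directions, and then use Proposition~\ref{P: polynomial concatenation} to collapse that average into a single bounded-degree box seminorm. The remaining work is purely bookkeeping: tracing the directions that come out of the concatenation step back to the leading coefficients of $\p_1,\p_1-\p_2,\ldots,\p_1-\p_\ell$.

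\textbf{PET step.} Raising the $L^2(\mu)$ norm in the proposition to a suitable power and invoking Corollary~\ref{C: PET for polynomials twisted by duals} (whose hypotheses match ours), we obtain $s_1,s_2=O_{d,J,\ell}(1)$ and nonzero multilinear polynomials $\bc_1,\ldots,\bc_{s_2}\in\R^k[\uh]$, depending only on $\bp_1,\ldots,\bp_\ell$ and $d$, such that the left-hand side of the proposition is $\ll_{d,J,k,\ell}\E_{\uh\in[\pm H]^{s_1}}\nnorm{f_1}^+_{\bc_1(\uh),\ldots,\bc_{s_2}(\uh)}+1/H$ for every $H$; letting $H\to\infty$ drops the $1/H$. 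The crucial structural fact is the explicit formula \eqref{E: polynomials c_j} of Proposition~\ref{P: PET I}: writing $\bc_j(\uh)=\sum_\uu\bv_{j\uu}\uh^\uu$, every coefficient has the form $\bv_{j\uu}=(|\uu|+1)!\,(\bbeta_{1(|\uu|+1)}-\bbeta_{w_{j\uu}(|\uu|+1)})$, and whenever this is nonzero the vector $\bbeta_{1(|\uu|+1)}-\bbeta_{w_{j\uu}(|\uu|+1)}$ is the leading coefficient of $\p_1-\p_{w_{j\uu}}$, hence equals $\bbeta_{1d_{1w}}-\bbeta_{wd_{1w}}$ for some $w=w_{j\uu}\in\{0,2,3,\ldots,\ell\}$ (the choice $w=1$ contributes only the zero vector). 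Consequently each nonzero $\bv_{j\uu}$ is a positive integer multiple, of factor at most $d!$, of one of the $\ell$ vectors $\bv_0:=\bbeta_{1d_{10}}$ and $\bv_j:=\bbeta_{1d_{1j}}-\bbeta_{jd_{1j}}$ ($j=2,\ldots,\ell$); all of these are nonzero since $\bp_1$ is essentially distinct from $\bp_2,\ldots,\bp_\ell$, i.e.\ $\p_1,\p_1-\p_2,\ldots$ are nonconstant in $n$.

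\textbf{Concatenation and cleanup.} Applying Proposition~\ref{P: polynomial concatenation} produces $s_3=O_{d,s_1,s_2}(1)$ and the bound $\big(\limsup_{H\to\infty}\E_{\uh\in[\pm H]^{s_1}}\nnorm{f_1}^+_{\bc_1(\uh),\ldots,\bc_{s_2}(\uh)}\big)^{O_{s_1,s_2}(1)}\ll_{k,s_1,s_2}\nnorm{f_1}^+_{G_1^{s_3},\ldots,G_{s_2}^{s_3}}$, where $G_j=\langle\bv_{j\uu}\rangle$. By the previous paragraph each $G_j$ contains a cyclic group $\langle c_j\bv_{i_j}\rangle$ with $c_j\le d!$ and $\bv_{i_j}\in\{\bv_0,\bv_2,\ldots,\bv_\ell\}$, and $\langle\bv_{i_j}\rangle\subseteq\langle\bv_0,\bv_2,\ldots,\bv_\ell\rangle$. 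Then Lemma~\ref{L: seminorms of subgroups} (first passing to the subgroups $\langle c_j\bv_{i_j}\rangle\subseteq G_j$, then using the finite-index comparison to replace $\langle c_j\bv_{i_j}\rangle$ by $\langle\bv_{i_j}\rangle$ at a cost $O_{d,s_2,s_3}(1)$) gives $\nnorm{f_1}^+_{G_1^{s_3},\ldots,G_{s_2}^{s_3}}\ll\nnorm{f_1}^+_{\langle\bv_{i_1}\rangle^{s_3},\ldots,\langle\bv_{i_{s_2}}\rangle^{s_3}}$, and since this multiset of directions is contained in $\{\langle\bv_0\rangle^{s_2s_3},\langle\bv_2\rangle^{s_2s_3},\ldots,\langle\bv_\ell\rangle^{s_2s_3}\}$, the monotonicity property (Lemma~\ref{L: monotonicity property}) bounds it by $\nnorm{f_1}^+_{\bv_0^s,\bv_2^s,\ldots,\bv_\ell^s}$ with $s:=s_2s_3=O_{d,J,\ell}(1)$. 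Assembling the three estimates and folding all bounded powers into one $O_{d,J,\ell}(1)$ exponent on the $L^2(\mu)$ norm gives the proposition.

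\textbf{Main difficulty.} Every analytically substantial ingredient — the PET induction with its coefficient-tracking scheme, the van der Corput removal of the dual sequences, and the quantitative concatenation of box seminorms along multilinear forms with its equidistribution input — is already packaged in Corollary~\ref{C: PET for polynomials twisted by duals} and Proposition~\ref{P: polynomial concatenation}, so the argument above is little more than careful bookkeeping. The one point that genuinely requires care is the PET step: verifying, via the three defining conditions of Definition~\ref{D: descendence} as transmitted through \eqref{E: polynomials c_j}, that the coefficient subgroups $G_j$ are (up to bounded-index cyclic subgroups) spanned by subsets of $\{\bv_0,\bv_2,\ldots,\bv_\ell\}$ — which in turn hinges on noting that the index $w_{j\uu}=1$ contributes nothing, that $|\uu|+1=\deg(\p_1-\p_{w_{j\uu}})$, and that essential distinctness keeps all of $\bv_0,\bv_2,\ldots,\bv_\ell$ nonzero.
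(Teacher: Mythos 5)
Your proposal is correct and follows essentially the same route as the paper: apply Corollary~\ref{C: PET for polynomials twisted by duals}, observe via the coefficient formula \eqref{E: polynomials c_j} that the directions are factorial multiples of the leading coefficients of $\p_1,\p_1-\p_2,\ldots,\p_1-\p_\ell$, then conclude with Proposition~\ref{P: polynomial concatenation} and Lemma~\ref{L: seminorms of subgroups}. Your cleanup (passing to cyclic subgroups, using the finite-index comparison to drop the $(|\uu|+1)!$ factors, then monotonicity) simply makes explicit what the paper leaves to the reader.
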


\begin{proof}
Let 
\begin{multline*}
    \veps = \limsup_{N\to\infty}\sup_{n_0\in\N_0}\sup_{\substack{\norm{f_2}_\infty, \ldots, \norm{f_\ell}_\infty\leq 1,\\ \CD_1, \ldots, \CD_J\in\FD_d}}\sup_{|c_n|\leq 1}\\
        \norm{\E_{n\in[N]}\prod_{j\in[\ell]} T^{\floor{\bp_j(n_0+n)}}f_j\cdot \prod_{j\in[J]}\CD_j(\floor{b_j(n_0+n)})}_{L^2(\mu)}.
\end{multline*}
Corollary \ref{C: PET for polynomials twisted by duals} implies that 
    \begin{align*}
        \limsup_{H\to\infty}\E\limits_{\uh\in[\pm H]^{s_1}} \nnorm{f_1}_{\bc_1(\uh), \ldots, \bc_{s_2}(\uh)}^+ \gg_{d, J, k, \ell} \veps^{O_{d, \ell, J}(1)}
    \end{align*}
    for some $s_1,s_2 = O_{d, \ell, J}(1)$ (independent of $k$) and nonzero polynomials $\bc_1, \ldots, \bc_{s_2}\in\R^k[\uh]$ that depend only on $\bp_1, \ldots, \bp_\ell$, $d,J,\ell$ and take the form
    \begin{align*}
	\bc_{j}(\uh) = \sum_{\substack{\uu\in \{0,1\}^{s_1},\\ |\uu|\leq d-1}} (|\uu|+1)! \cdot (\bbeta_{1(|\uu|+1)}-\bbeta_{w_{j\uu}(|\uu|+1)})\uh^\uu
    \end{align*}
    for some indices $w_{j\uu}\in[0,\ell]$, where all nonzero vectors $\bbeta_{1(|\uu|+1)}-\bbeta_{w_{j\uu}(|\uu|+1)}$ are the leading coefficients of the polynomials $\p_1 - \p_{w_{j\uu}}$, and so they belong to the family
    \begin{align*}
        \bbeta_{1d_{10}},\; \bbeta_{1d_{12}} - \bbeta_{2d_{12}},\; \ldots,\; \bbeta_{1d_{1\ell}} - \bbeta_{\ell d_{1\ell}}.
    \end{align*}
    The claim follows from Proposition \ref{P: polynomial concatenation} as well as Lemma \ref{L: seminorms of subgroups}; the latter allows us to ignore the scaling factors $(|\uu|+1)!$.
\end{proof}

\section{Seminorms estimates in the polynomial + sublinear case}\label{S: polynomial + sublinear}
    The next case that we need to deal with is when our Hardy sequences are sums of sublinear and polynomial functions. 
  \begin{proposition}\label{P: sublinear + polynomial}
			Let $d, k, \ell, m\in\N$, $0\leq m_1\leq m$ be an integer,  $g_1,...,g_m\in \mathcal{H}$ be Hardy functions satisfying the growth conditions
   \begin{enumerate}
       \item $1 \prec g_1(t) \prec \cdots  \prec g_m(t) \prec t$;
       \item $g_i(t)\ll \log t$ iff $i\in[m_1]$. 
   \end{enumerate}
    For $j\in[\ell]$, let
     \begin{gather*}
     \ba_j(n) = \bu_j(n) + \bp_j(n) +\br_j(n)\in\CH^k\\
\textrm{for}\quad \bu_j(n) = \sum_{i\in[m]}\balpha_{ji} g_i,\quad\p_j(n) = \sum_{i=0}^d \bbeta_{ji} n^i\in\R^k[n]\quad \textrm{and}\quad \lim_{n\to\infty}\br_j(n)\to\mathbf{0}
     \end{gather*}
     for some $\balpha_{ji},\bbeta_{ji}\in\R^{k}$.
   Lastly, let
   \begin{align*}
        d_{jj'} &= \max\{i\in[d]:\; \bbeta_{ji}\neq \bbeta_{j'i}\}\\ 
        d'_{jj'} &= \max\{i\in[m]:\; \balpha_{ji}\neq \balpha_{j'i}\},
   \end{align*}
   and assume that $d_{1j}>0$ or $d'_{1j}>m_1$ for all $j\in[0,\ell]\setminus\{1\}$, where $\ba_0 = \bu_0=\bp_0 =\mathbf{0}$ (in other words, some coordinate of $\ba_1 -\ba_{j}$ grows faster than log).
   Then there exist a positive integer $s=O_{d, \ell, m}(1)$ and vectors
   \begin{align}\label{E: set of coeffs poly + sub}
       \bgamma_1, \ldots, \bgamma_s\in &\{\bbeta_{1d_{1j}}-\bbeta_{jd_{1j}}:\; j\in[0,\ell]\; \textrm{with}\; d_{1j}>0\}\\
       \nonumber\cup &\{\balpha_{1d'_{1j}}-\balpha_{jd'_{1j}}:\; j\in[0,\ell]\; \textrm{with}\; d_{1j} = 0\; \textrm{and}\; d'_{1j}>m_1\} 
   \end{align}
   such that for all systems $(X, \CX, \mu, T_1, \ldots, T_k)$ and 1-bounded functions $f_1\in L^\infty(\mu)$, we have
			\begin{multline*}
				\limsup_{N\to\infty}\sup_{\substack{\norm{f_2}_\infty, \ldots, \norm{f_\ell}_\infty\leq 1}}\sup_{|c_n|\leq 1}\norm{\E_{n\in[N]}c_n\cdot \prod_{j\in[\ell]} T^{\floor{\ba_j(n)}}f_j}_{L^2(\mu)}^{O_{d, \ell, m}(1)}
    \ll_{d, k, \ell, m} \nnorm{f_1}_{\bgamma_1, \ldots, \bgamma_s}^+.
			\end{multline*}
		\end{proposition}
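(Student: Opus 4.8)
The plan is to emulate the proof of \cite[Proposition 5.1]{Ts22}, but to route both the polynomial part and the sublinear part through the machinery we have already established. The first step is to pass to short intervals via Lemma~\ref{L: double averaging}, choosing the length $L$ so that the sublinear functions $g_1, \ldots, g_m$ become essentially constant on each interval $(N, N+L(N)]$ while the polynomials $\bp_j$ are unaffected. Concretely, we want $g_m'(t)^{-1}\succ L(t)$ so that the Taylor remainder $g_i'(N) L(N)$ tends to zero for each $i\in[m]$, and simultaneously $1\prec L(t)$; such $L$ exists since $g_m$ is sublinear (the relevant estimates on derivatives come from Lemma~\ref{L: degree inequality}, invoked exactly as in the proof of Proposition~\ref{P: sublinear}). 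After this reduction, for each fixed $N$ the inner average over $n\in[L(N)]$ becomes a polynomial average
\begin{align*}
    \E_{n\in[L(N)]} c_n\cdot \prod_{j\in[\ell]} T^{\floor{\bp_j(N+n)}+\floor{\bu_j(N)}+\br_{jN}} f_j
\end{align*}
up to error terms that take finitely many values (removed by Lemma~\ref{L: errors}) and vanishing Taylor remainders.

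\textbf{Handling the polynomial average.} Since some coordinate of $\ba_1-\ba_j$ grows faster than $\log$ for every $j\neq 1$, and this growth is witnessed either by the polynomial part (when $d_{1j}>0$) or by the sublinear part (when $d_{1j}=0$ but $d'_{1j}>m_1$), we split into two regimes. If $d_{1j}>0$ for all $j\neq 1$, then $\bp_1$ is essentially distinct from $\bp_2, \ldots, \bp_\ell$ as polynomials in $n$, and we may apply the finitary PET estimate of Proposition~\ref{P: PET I} (with $n_0 = N$) to each inner average, obtaining control by an average (over tuples $\uh, \um, \um'$) of finitary generalized box seminorms whose directions are multilinear polynomials $\bc_1(\uh), \ldots, \bc_{s_2}(\uh)$ built from the leading coefficients $\bbeta_{1d_{1j}}-\bbeta_{jd_{1j}}$. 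The average over $N$ then still carries the conditional expectation through $T^{\floor{\bu_j(N)}}$, which is exactly the kind of sublinear average handled by Proposition~\ref{P: sublinear}: after composing appropriately we disentangle those $\ba_j$ whose polynomial parts agree with $\bp_1$, using the sublinear leading coefficients $\balpha_{1d'_{1j}}-\balpha_{jd'_{1j}}$ to finish. Finally, we concatenate the resulting average of finitary box seminorms along the polynomials $\bc_1(\uh), \ldots, \bc_{s_2}(\uh)$ into a single generalized box seminorm via Proposition~\ref{P: polynomial concatenation}; Lemma~\ref{L: seminorms of subgroups} lets us discard the harmless integer scaling factors, and the directions that survive are exactly the vectors listed in \eqref{E: set of coeffs poly + sub}. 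The complementary regime, where $d_{1j} = 0$ for some $j$ so that $\bp_1$ shares its leading behavior with some $\bp_j$, is reduced to the previous one by first running the $N$-average (sublinear) argument to peel off those indices, then applying the polynomial estimate to the genuinely distinct part; when $\bp_1$ itself is linear we additionally invoke Proposition~\ref{P: PET for polynomials with shifts 2} (its linear counterpart) in place of Proposition~\ref{P: PET I}, and absorb the resulting linear shift terms using the Gowers-Cauchy-Schwarz inequality as in the proof of Proposition~\ref{P: PET for polynomials twisted by duals}, Case 2.

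\textbf{Main obstacle.} The delicate point — and the one requiring the bulk of the work — is the bookkeeping of coefficients when the polynomial average over $n$ and the sublinear average over $N$ are interleaved. After applying Proposition~\ref{P: PET I} to the $n$-average we obtain box seminorms whose \emph{directions} are polynomials in $\uh$ with vector coefficients among $\{\bbeta_{1d_{1j}}-\bbeta_{jd_{1j}}\}$; but the functions inside these seminorms are themselves multiplicative derivatives of $T^{\floor{\bu_j(N)}}f_j$, so running Proposition~\ref{P: sublinear} on the $N$-average requires checking that the hypothesis ``some coordinate of the relevant difference of sublinear sequences grows faster than $\log$'' is inherited — this is where we need $d'_{1j}>m_1$ precisely for those $j$ with $d_{1j}=0$. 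Matching these two independent sources of growth control, and ensuring that after the final concatenation the surviving directions land in the set \eqref{E: set of coeffs poly + sub} and nowhere larger, is the technical heart of the argument; it parallels, but is strictly more involved than, the single-transformation reduction in \cite{Ts22} because here the concatenation step (Proposition~\ref{P: polynomial concatenation}) replaces the simple PET argument used there. The rest — passing to short intervals, Taylor expanding, removing finite-valued errors, taking the various limits in the correct order (first $N\to\infty$ inside, then the concatenation parameter $H\to\infty$) — is routine given the lemmas of Sections~\ref{S: seminorms}--\ref{S: polynomial concatenation}.
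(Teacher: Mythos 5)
Your overall architecture --- short intervals of length $1\prec L(N)\prec g_m'(N)^{-1}$ via Lemma~\ref{L: double averaging}, PET (Proposition~\ref{P: PET I}) on the inner polynomial average, Proposition~\ref{P: sublinear} on the $N$-average, and finally Proposition~\ref{P: polynomial concatenation} --- is indeed the paper's. The genuine gap is in how you organize the case where $d_{1j}=0$ for some $j$, i.e.\ where $\bp_1$ coincides with another $\bp_j$ and the separation must come from the sublinear parts. You propose to handle this ``complementary regime'' by ``first running the $N$-average (sublinear) argument to peel off those indices, then applying the polynomial estimate''. That order cannot be executed as stated: after passing to short intervals the quantity is $\E_{N\in[R]}\norm{\E_{n\in[L(N)]}\cdots}_{L^2(\mu)}$, and the sublinear data $\floor{\bu_j(N)}$ occur only as shifts of the functions sitting inside the inner $n$-average, so there is no $N$-average of sublinear iterates to run until the inner average has been processed. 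The paper's fix is an organizational move you never state: reorder so that $\bp_1=\cdots=\bp_{\ell_0}$ are exactly the polynomials equal to $\bp_1$, group the corresponding functions into the single function $F_{RN}=\prod_{j\in[\ell_0]}T^{\floor{\bu_j(N)}}f_{jR}$, apply Proposition~\ref{P: PET I} once with $F_{RN}$ in the distinguished role (essential distinctness holds since $\bp_1\neq\bp_j$ for $j>\ell_0$), and only then apply Proposition~\ref{P: sublinear} to the $N$-average of $\abs{\int\prod_{j\in[\ell_0]}T^{\floor{\bu_j(N)}}G_{jR\uh\um\um'}\,d\mu}$, whose iterates $\bu_j-\bu_{\ell_0}$ have a coordinate growing faster than $\log$ precisely because $d'_{1j}>m_1$ whenever $d_{1j}=0$. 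This single uniform pass (PET first, sublinear second, concatenation last) is what produces the two families of directions in \eqref{E: set of coeffs poly + sub}; note also that your text assigns the ``disentangling of the $\ba_j$ whose polynomial parts agree with $\bp_1$'' to the regime in which no such $j$ exists, which shows the two regimes have been conflated.

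Two smaller points. Proposition~\ref{P: PET for polynomials with shifts 2} and the Gowers--Cauchy--Schwarz absorption of shifts are irrelevant here: there are no dual twists, hence no parameters $\uk,\uk'$, and Proposition~\ref{P: PET I} already covers the case of linear $\bp_1$. You also do not treat the degenerate cases in which $\bp_1$ is constant while some other $\bp_j$ is not (handled by composing with an iterate of maximal polynomial degree, as at the end of the proof of Proposition~\ref{P: PET I}) or in which all $\bp_j$ are constant (a direct appeal to Proposition~\ref{P: sublinear}).
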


The vectors in \eqref{E: set of coeffs poly + sub} are the leading coefficients of $\ba_1, \ba_1 - \ba_2, \ldots, \ba_1 - \ba_\ell$ with regards to the ordering of the generators of $\ba_1, \ldots, \ba_\ell$ as
\begin{align*}
         g_1(t),\; \ldots,\; g_{m}(t),\; t,\; t^2,\; \ldots,\; t^d
\end{align*}
which matches the growth rate of these generators.

The proof of Proposition \ref{P: sublinear + polynomial} follows a similar strategy as the proof of the analogous result in the single transformation case \cite[Proposition 5.1]{Ts22}. That is, we pass to short intervals in such a way that the sublinear terms are constant on each short interval. We apply the arguments for polynomials from Section \ref{S: polynomial PET} to handle the polynomial average over $n$ running over short intervals of length $L(N)$; then we use the results for sublinear functions from Section~\ref{S: linear and sublinear} to handle the average over $N$ that contains the sublinear parts of the original sequences. At the end, we apply the concatenation results from Section~\ref{S: polynomial concatenation} to concatenate the average of generalized box seminorms that arose at the earlier steps of the argument. This last step differs from the conclusion of the proof of \cite[Proposition~5.1]{Ts22} in which no concatenation was required.

    \begin{proof}
     Let 
    \begin{align*}
        \veps = \limsup_{N\to\infty}\sup_{\substack{\norm{f_2}_\infty, \ldots, \norm{f_\ell}_\infty\leq 1}}\sup_{|c_n|\leq 1}\norm{\E_{n\in[N]}c_n\cdot \prod_{j\in[\ell]} T^{\floor{\ba_j(n)}}f_j}_{L^2(\mu)}.
    \end{align*}
    We allow all the quantities below to depend on $d, k, \ell, m$ (however, powers of $\veps$ and the integers $s_i$ will stay independent of $k$). 

    If all the polynomials $\bp_1, \ldots, \bp_\ell$ are constant, then the result follows from Proposition~\ref{P: sublinear}. Suppose therefore that at least one of the polynomials is nonconstant. We assume that $\bp_1$ is nonconstant; if this is not the case and, say, $\p_\ell$ is nonconstant, then we can reduce to the previous case by composing the integral with $T^{-\floor{\p_\ell(n)}}$ and arguing as at the end of Proposition~\ref{P: PET I}.

    Fix functions $f_{jN}$ (with $f_{1N} = f_1$) that realize the limsup above. We then use Lemma~\ref{L: double averaging} to subdivide $\N$ into short intervals, obtaining
    \begin{align*}
        \limsup_{R\to\infty}
        \sup_{|c_n|\leq 1} \E\limits_{N\in[R]} 
        \norm{\E_{n\in[L(N)]}c_n\cdot \prod_{j\in[\ell]}T^{\floor{\ba_j(N+n)}}f_{jR}}_{L^2(\mu)}\geq \veps.
    \end{align*}    
    We choose the length $L$ of the short intervals in such a way that the sublinear parts $\bu_1, \ldots, \bu_\ell$ are approximately constant on each short interval. To achieve this, we pick $L(N)\prec {g'_m(N)}^{-1}$; with that choice, we have
    \begin{align*}
       \lim_{N\to\infty}\max_{n\in[L(N)]}|\ba_j(N+n) - (\bu_j(N) + \bp_j(N+n))| =0
    \end{align*}
    for every $j\in[\ell].$
    By Lemma \ref{L: errors} and the pigeonhole principle, there exist $\br_{j}\in\Z^k$ such that
    \begin{align*}
        \limsup_{R\to\infty}
        \E\limits_{N\in[R]}\sup_{|c_n|\leq 1}
        \norm{\E_{n\in[L(N)]}c_n\cdot \prod_{j\in[\ell]}T^{\floor{\bp_j(N+n)}}\brac{T^{\floor{\bu_j(N)}+\br_{j}}f_{jR}}}_{L^2(\mu)}
        \gg \veps.
    \end{align*}

   Rearranging if necessary, we can assume that $\bp_1, \ldots, \bp_{\ell_0}$ are identical and distinct from $\bp_{\ell_0+1}, \ldots, \bp_{\ell}$ for some $\ell_{0}\in[\ell]$. 

 Applying Proposition \ref{P: PET I} to the average over $n$, we obtain positive integers $s_1, s_2=O(1)$ (independent of $k$) such that 
\begin{align*}
    \limsup_{R\to\infty}\E\limits_{N\in[R]}\E\limits_{\uh\in[\pm H]^{s_1}}\E_{\um, \um'\in[\pm M]^{s_2}} \abs{\int \Delta_{\bc_1(\uh), \ldots, \bc_{s_2}(\uh); (\um,\um')} F_{RN}\, d\mu} 
    \gg\veps^{O(1)}
\end{align*}
for  some polynomials $\bc_1, \ldots, \bc_{s_2}\in\R^k[\uh]$ satisfying the conditions of Proposition \ref{P: PET I} and
    \begin{align*}
        F_{RN} = \prod_{j\in[\ell_0]}T^{\floor{\bu_j(N)}} f_{j R}.
    \end{align*}
    We note also that $\bc_1, \ldots, \bc_{s_2}$ do not depend on~$N$.
    Letting 
    \begin{align*}
        G_{jR\uh\um\um'} = \Delta_{\bc_1(\uh), \ldots, \bc_{s_2}(\uh); (\um,\um')} f_{jR}
    \end{align*}
    for every $j\in[\ell_0]$,
    we can rewrite the inequality above as
    \begin{align*}
   \limsup_{R\to\infty}\E\limits_{N\in[R]}\E\limits_{\uh\in[\pm H]^{s_1}}\E_{\um, \um'\in[\pm M]^{s_2}} \abs{\int \prod_{j\in[\ell_0]} T^{\floor{\bu_j(N)}} G_{jR\uh\uk\uk'\um\um'}\, d\mu} \gg\veps^{O(1)}     .
    \end{align*}
    Composing the integral with $T^{-\floor{\bu_{\ell_0}(N)}}$, swapping the minus sign inside the integer part, handling the error terms using Lemma \ref{L: errors}, passing to the product system, and using the Cauchy-Schwarz inequality and properties of limsups, we get
    \begin{multline*}
        \E\limits_{\uh\in[\pm H]^{s_1}}\E_{\um, \um'\in[\pm M]^{s_2}}\limsup_{R\to\infty}\\
        \norm{\E\limits_{N\in[R]}\prod_{j\in[\ell_0-1]} (T\times T)^{\floor{\bu_j(N)-\bu_{\ell_0}(N)}+\br_j} G_{jR\uh\um\um'}\otimes \overline{G_{jR\uh\um\um'}}}_{L^2(\mu\times\mu)} \gg \veps^{O(1)}
    \end{multline*}      
    for some $\br_j\in\Z^k$.
    By assumption, for any  $j\in[0,\ell_0]\setminus\{1\}$, we have $d'_{1j}>m_1$, i.e., some coordinate of $\bu_1- \bu_{j}$ grows faster than log.
    Since $\bu_1, \ldots, \bu_{\ell_0}$ are sublinear, we can apply Proposition~\ref{P: sublinear} to find a positive integer $s_3 = O(1)$ (independent of $k$) for which
    \begin{align*}
        \E\limits_{\uh\in[\pm H]^{s_1}}\E_{\um, \um'\in[\pm M]^{s_2}}
        \nnorm{G_{1R\uh\um\um'}}_{\substack{
        (\balpha_{1d'_{12}}-\balpha_{2d'_{12}})^{s_3}, \ldots,  (\balpha_{1d'_{1\ell_0}}-\balpha_{\ell_0 d'_{1\ell_0}})^{s_3}}}^+
        \gg \veps^{O(1)}.
    \end{align*}    
    Recalling the definition of $G_{1R\uh\um\um'}$, taking $M\to\infty$, and using the inductive formula, we get that 
    \begin{align*}
        \E\limits_{\uh\in[\pm H]^{s_1}}\nnorm{f_1}_{\substack{ 
        \bc_1(\uh), \ldots, \bc_{s_2}(\uh), (\balpha_{1d'_{12}}-\balpha_{2d'_{12}})^{s_3}, \ldots,  (\balpha_{1d'_{1\ell_0}}-\balpha_{\ell_0 d'_{1\ell_0}})^{s_3}}}^+ \gg \veps^{O(1)}.
    \end{align*}
    
    We then take the limsup as $H\to\infty$ and conclude by applying Proposition~\ref{P: polynomial concatenation} together with the previously observed fact that the coefficients of $\bc_1, \ldots, \bc_{s_2}$ satisfy the conditions of Proposition \ref{P: PET I}.
    \end{proof}

\section{Seminorm estimates for arbitrary Hardy sequences}\label{S: general case}
Finally, we are ready to state and prove the general case of Theorem \ref{T: box seminorm bound intro}.

  \begin{theorem}[Box seminorm bound]\label{T: box seminorm bound no duals}
			Let $d, k, \ell, m\in\N$, $0\leq m_1\leq m_2\leq m$ be integers and  $\CQ = \{g_1,...,g_m\}\subseteq \mathcal{H}$ be a collection of Hardy functions satisfying the following growth conditions:
   \begin{enumerate}
       \item $1 \prec g_1(t)\prec \cdots \prec g_m(t)\ll t^d$ are strongly nonpolynomial;
       \item $g_i(t)\ll\log t$ iff $i\in[m_1]$;
       \item $g_i(t)\prec t^\delta$ for every $\delta>0$ (i.e., $\fracdeg g_i = 0$) iff $i\in[m_2]$.
   \end{enumerate}
   For $j\in[\ell]$, let
     \begin{gather*}
     \ba_j = \bu_j + \bp_j +\br_j\in\CH^k\\
         \textrm{for}\quad \bu_j = \sum_{i\in[m]}\balpha_{ji} g_i,\quad\p_j(t) = \sum_{i=0}^d \bbeta_{ji} t^i\in\R^k[t]\quad \textrm{and}\quad \lim_{t\to\infty}\br_j(t)\to\mathbf{0}
     \end{gather*}
     for some $\balpha_{ji},\bbeta_{ji}\in\R^{k}$.
   Lastly, let
   \begin{align*}
        d_{jj'} &= \max\{i\in[d]:\; \bbeta_{ji}\neq \bbeta_{j'i}\}\\ 
        d'_{jj'} &= \max\{i\in[m]:\; \balpha_{ji}\neq \balpha_{j'i}\},
   \end{align*}
   and assume that $d_{1j}>0$ or  $d'_{1j}>m_1$ for all $j\in[0,\ell]\setminus\{1\}$, where $\ba_0 = \bu_0 = \bp_0 = \mathbf{0}$ (thus, $\ba_1- \ba_{j}$ has a coordinate that grows faster than log). 
   Then there exist a positive integer $s=O_{d, \ell, \CQ}(1)$ and vectors
\begin{align}
    \nonumber\bgamma_1, \ldots, \bgamma_s\in &\{\balpha_{1d'_{1j}} - \balpha_{jd'_{1j}}:\; j\in[0,\ell]\setminus\{1\},\; d'_{1j}> m_2\}\\
    \label{E: set of coefficients} \cup &\{\bbeta_{1d_{1j}} - \bbeta_{jd_{1j}}:\; j\in[0,\ell]\setminus\{1\},\; d'_{1j}\leq m_2,\; d_{1j}>0\}\\
    \nonumber  \cup &\{\balpha_{1d'_{1j}} - \balpha_{jd'_{1j}}:\; j\in[0,\ell]\setminus\{1\},\; m_1 < d'_{1j}\leq m_2,\; d_{1j} = 0\}
\end{align}
   such that for all systems $(X, \CX, \mu, T_1, \ldots, T_k)$ and 1-bounded functions $f_1\in L^\infty(\mu)$, we have
			\begin{multline*}
				\brac{\limsup_{N\to\infty}\sup_{\substack{\norm{f_2}_\infty, \ldots, \norm{f_\ell}_\infty\leq 1}}\sup_{|c_n|\leq 1}\norm{\E_{n\in[N]}c_n\cdot \prod_{j\in[\ell]} T^{\floor{\ba_j(n)}}f_j}_{L^2(\mu)}}^{O_{d, \ell, \CQ}(1)}\\
    \ll_{d, k, \ell, \CQ} \nnorm{f_1}_{\bgamma_1, \ldots, \bgamma_s}^+.
			\end{multline*}
		\end{theorem}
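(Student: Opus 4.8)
The plan is to reduce the general case of Theorem~\ref{T: box seminorm bound no duals} to the polynomial-plus-sublinear case treated in Proposition~\ref{P: sublinear + polynomial} by passing to short intervals on which the ``slow'' generators $g_1,\dots,g_{m_2}$ are essentially constant and the ``fast'' nonpolynomial generators $g_{m_2+1},\dots,g_m$ are well-approximated by polynomials in the local variable. Concretely, first fix $f_{jN}$ realizing the $\limsup$ and apply Lemma~\ref{L: double averaging} to write the single average over $[N]$ as a double average $\E_{N\in[R]}\E_{n\in[L(N)]}$ for a carefully chosen length $L\in\CH$. The choice of $L$ must be such that (a) each $g_i$ with $i\le m_2$ (i.e. $\fracdeg g_i=0$) varies by $o(1)$ across $(N,N+L(N)]$, and (b) each $g_i$ with $m_2 < i\le m$ admits on that interval a Taylor expansion to a polynomial in $n$ of degree $O(1)$ with $o(1)$ error — this is exactly where the refined simultaneous Taylor expansion results of Appendix~\ref{A: approximations} (Proposition~\ref{P: Taylor expansion ultimate}) are needed, since we must control not just the degrees but the precise leading coefficients of these polynomial approximants, and know that $g_i,g_j$ produce approximants of equal degree iff $\fracdeg g_i=\fracdeg g_j$. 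Using Lemma~\ref{L: errors} to absorb the integer-part and Taylor errors (which take finitely many values), we rewrite the inner average over $n$ as one along polynomials $\bP_j(N; n)$ whose coefficients depend on $N$ through the values $g_i(N)$ and $g_i'(N),\dots$, plus the still-present sublinear terms $\bu_j^{\mathrm{slow}}(N) = \sum_{i\le m_2}\balpha_{ji}g_i(N)$ acting as a constant shift.

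Next I would run the PET machinery of Section~\ref{S: polynomial PET} (Proposition~\ref{P: PET I} or, more precisely, Proposition~\ref{P: PET for polynomials twisted by duals} in its dual-free incarnation) on the average over $n\in[L(N)]$, for each fixed $N$, obtaining control by a finite average over tuples $(\uh,\um,\um')$ of finitary generalized box seminorms whose directions $\bc_1(\uh),\dots,\bc_{s_2}(\uh)$ are multilinear polynomials in $\uh$ with coefficients among the leading coefficients of $\bP_1,\bP_1-\bP_2,\dots,\bP_1-\bP_\ell$. The crucial bookkeeping step — and I expect this to be the main obstacle — is to verify that these leading coefficients, \emph{as functions of $N$}, are themselves Hardy sequences that are sums of a polynomial and a sublinear part, and that their own ``leading coefficients'' (with respect to the growth ordering $g_1,\dots,g_m,t,\dots,t^d$) lie in the prescribed set \eqref{E: set of coefficients}. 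This requires tracking how $d'_{1j}, d_{1j}$ interact with the Taylor truncation: a difference $\ba_1-\ba_j$ whose fastest-growing discrepancy is in a $g_i$ with $i>m_2$ contributes, after Taylor expansion, a polynomial discrepancy in $n$ whose leading coefficient is (a constant multiple of) $(\balpha_{1d'_{1j}}-\balpha_{jd'_{1j}})\cdot g_{d'_{1j}}^{(r)}(N)$, and this grows polynomially in $N$; whereas if the discrepancy is in a slow $g_i$ or in the genuine polynomial part, it surfaces in the $N$-average as a sublinear or polynomial term. Proposition~\ref{P: Taylor expansion ultimate} is what guarantees the degrees and coefficients line up as claimed.

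Having disentangled the $n$-average, I then move to the outer average over $N\in[R]$. After composing with an appropriate iterate (as in the proof of Proposition~\ref{P: sublinear + polynomial}) to make one of the surviving directions have maximal growth, the $N$-average is of the polynomial-plus-sublinear type, so Proposition~\ref{P: sublinear + polynomial} applies and bounds it — for each fixed $(\uh,\um,\um')$ — by a single generalized box seminorm $\nnorm{\cdot}^+$ of bounded degree whose directions come from the set of leading coefficients (in the combined growth ordering) of the relevant differences, i.e. precisely the three families appearing in \eqref{E: set of coefficients}. Using the inductive formula \eqref{E: inductive formula} to reassemble, one is left with an average over $(\uh,\um,\um')$ of generalized box seminorms of $f_1$ along directions that are multilinear polynomials in these parameters with coefficients in \eqref{E: set of coefficients}. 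A final application of the polynomial concatenation result Proposition~\ref{P: polynomial concatenation} (via Proposition~\ref{P: Box seminorm control for polynomials twisted by duals}) collapses this average of seminorms into a single $\nnorm{f_1}^+_{\bgamma_1,\dots,\bgamma_s}$ with $\bgamma_1,\dots,\bgamma_s$ drawn from \eqref{E: set of coefficients} and $s = O_{d,\ell,\CQ}(1)$, as required. Throughout, the powers of $\veps$ and the integers $s_i$ stay independent of $k$, so the final bound has the claimed polynomial shape. The only case not covered by the above reduction is when all $\bp_1,\dots,\bp_\ell$ are constant and all generators are subfractional; that degenerate subfractional case is exactly Corollary~\ref{C: sublinear}, which is invoked directly.
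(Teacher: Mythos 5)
Your outline does follow the paper's route: the paper proves the stronger dual-twisted estimate (Theorem~\ref{T: box seminorm bound}) by exactly the scheme you describe --- short intervals and simultaneous Taylor expansion via Proposition~\ref{P: Taylor expansion ultimate}, PET on the inner average, Proposition~\ref{P: sublinear + polynomial} on the outer average over $N$, and a final application of Proposition~\ref{P: polynomial concatenation} --- and then deduces Theorem~\ref{T: box seminorm bound no duals} from it together with Corollary~\ref{C: sublinear}. However, your case analysis is wrong at one point. The short-interval-plus-PET argument needs $\ba_1$ itself to have positive fractional degree (i.e.\ $d_{10}>0$ or $d'_{10}>m_2$): if $\ba_1$ is subfractional, its Taylor approximant $\ba_{1N}$ is constant in $n$ on each short interval, so it is not essentially distinct from $\mathbf{0}$ and PET cannot output any seminorm of $f_1$ --- the function is simply absorbed. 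Hence the cases not covered by your main argument are not only ``all $\ba_j$ subfractional'' but also ``$\ba_1$ subfractional while some $\ba_i$, $i\ge 2$, has positive fractional degree''; the paper handles the latter by composing the integral with $T^{-\floor{\ba_i(n)}}$ (legitimate precisely because there are no dual twists), which makes the new first sequence $\ba_1-\ba_i$ have positive fractional degree while leaving the family of differences, and hence the set \eqref{E: set of coefficients}, unchanged. Without this reduction your argument produces no seminorm of $f_1$ in that case.

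Second, the step you label ``bookkeeping'' hides an essential manoeuvre rather than a verification. The coefficients emerging from PET involve $g_i^{(d_{g_i})}(N)$ with $d_{g_i}\ge 2d$ large, and these decay to zero as $N\to\infty$; as they stand they are not of polynomial-plus-sublinear type, and box seminorms along directions tending to $\mathbf{0}$ carry no information. The paper first changes variables $n\mapsto \floor{H(N)}n+r$ on each short interval, so that the relevant coefficients become $g_i^{(d_{g_i})}(N)\floor{H(N)}^{d_{g_i}}$, then rewrites the whole expression as a function of $\floor{H(N)}$ and invokes Lemma~\ref{L: removing H} to replace $\floor{H(N)}$ by $N$; only after this do the new coefficients $\chi_{g_i}(N)=g_i^{(d_{g_i})}\circ H^{-1}(N)\cdot N^{d_{g_i}}$ become sublinear, of positive fractional degree and of pairwise distinct growth (Corollary~\ref{C: properties of chi}), which is exactly what the application of Proposition~\ref{P: sublinear + polynomial} to the $N$-average and the genericity statement of Lemma~\ref{L: coordinates of b} require. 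The choice of $H$ and these properties are what Proposition~\ref{P: Taylor expansion ultimate}\eqref{i: comparing frac degs} and \eqref{i: property implying different chi} are engineered for; citing that proposition for ``degrees and coefficients lining up'' without the rescaling and the removal of $\floor{H(N)}$ leaves the key step of your third paragraph unjustified.
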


Before we prove Theorem \ref{T: box seminorm bound no duals}, we make several remarks regarding its content.
  First, it is a priori far from obvious where the set of directions \eqref{E: set of coefficients} comes from. We can think of $\ba_1, \ldots, \ba_\ell$ as $\R^k$-linear combinations of $g_1(t), \ldots, g_m(t), t, t^2, \ldots, t^d$, and we want to think of these generators as being ordered in an ascending way as
  \begin{align}\label{E: unnatural ordering}
     g_1(t),\; \ldots,\; g_{m_2}(t),\; t,\; t^2,\; \ldots,\; t^d, \; g_{m_2 + 1}(t),\; \ldots,\; g_m(t).
  \end{align}
  In other words, all strongly nonpolynomial functions of positive fractional degree take precedence over polynomials. This ordering naturally comes up in our argument, and it has to do with how we Taylor expand $g_1, \ldots, g_m$ on short intervals. Basically, the functions $g_1, \ldots, g_{m_2}$ will be constant on short intervals; by contrast, the functions $g_{m_2 + 1}(t),\; \ldots,\; g_m(t)$ will be approximated by polynomials whose degree in $n$ far surpasses $d$. 
  The collection \eqref{E: set of coefficients} can then be interpreted as the set of leading coefficients of $\ba_1, \ba_1-\ba_2, \ldots, \ba_1 - \ba_\ell$ with respect to \eqref{E: unnatural ordering}.
  
  Interestingly,  Theorem~\ref{T: box seminorm bound no duals} does not generalize Proposition~\ref{P: sublinear + polynomial}; rather, it invokes Proposition~\ref{P: sublinear + polynomial} in its proof. The reason why Proposition~\ref{P: sublinear + polynomial} is not merely a subcase of Theorem~\ref{T: box seminorm bound no duals} is that the set of directions appearing in the conclusion of the former is different from \eqref{E: set of coefficients}, as it corresponds to the ordering
  \begin{align}\label{E: natural ordering}
      g_1(t),\; \ldots,\; g_{m}(t),\; t,\; t^2,\; \ldots,\; t^d,
  \end{align}
  which is arguably more natural than \eqref{E: unnatural ordering} in that $g_1(t) \prec \cdots \prec g_m(t)\prec t \prec \cdots \prec t^d$ under the assumption of Proposition \ref{P: sublinear + polynomial}. It is an interesting open question whether a version of Theorem \ref{T: box seminorm bound no duals} might hold with a set of direction vectors corresponding to the ordering \eqref{E: natural ordering} for arbitrary strongly nonpolynomial $g_1, \ldots, g_m$.

Theorem \ref{T: box seminorm bound no duals} will be derived from the following more general result for averages twisted by dual sequences. We note that in Theorem \ref{T: box seminorm bound} below, $\ba_1$ cannot be subfractional. The missing case of Theorem \ref{T: box seminorm bound no duals} in which $\ba_1, \ldots, \ba_\ell$ are all subfractional is covered by Corollary \ref{C: sublinear}.
  \begin{theorem}[Box seminorm bound with dual twists]\label{T: box seminorm bound}
			Let $d, k, \ell, m\in\N$, $J\in\N_0$, $0\leq m_1\leq m_2\leq m$ be integers and  $\CQ = \{g_1,...,g_m, b_1, \ldots, b_J\}\subseteq \mathcal{H}$ be a collection of Hardy functions satisfying the following growth conditions:
   \begin{enumerate}
       \item $1 \prec g_1(t)\prec \cdots \prec g_m(t)\ll t^d$ are strongly nonpolynomial;
       \item $g_i(t)\ll\log t$ iff $i\in[m_1]$;
       \item $g_i(t)\prec t^\delta$ for every $\delta>0$ (i.e. $\fracdeg g_i = 0$) iff $i\in[m_2]$;
       \item $b_j(t)\ll t^d$ for every $j\in[J]$.
   \end{enumerate}
   For $j,j'\in[\ell]$, let $\ba_j, \bu_j, \bp_j, \br_j, d_{jj'}$ and $d'_{jj'}$ be as in Theorem \ref{T: box seminorm bound no duals}. Suppose that the following two conditions hold:
   \begin{enumerate}
       \item for all $j\in[0,\ell]\setminus\{1\}$, we have $d_{1j}>0$ or  $d'_{1j}>m_1$, i.e., $\ba_1- \ba_{j}$ has a coordinate that grows faster than log;
       \item $d_{10} > 0$ or $d'_{10} > m_2$, i.e., $\ba_1$ has positive fractional degree.
   \end{enumerate}
   Then there exist a positive integer $s=O_{d, \ell, \CQ}(1)$ and vectors $\bgamma_1, \ldots, \bgamma_s$ in \eqref{E: set of coefficients} such that for all systems $(X, \CX, \mu, T_1, \ldots, T_k)$ and 1-bounded functions $f_1\in L^\infty(\mu)$, we have
			\begin{multline*}
				\brac{\limsup_{N\to\infty}\sup_{\substack{\norm{f_2}_\infty, \ldots, \norm{f_\ell}_\infty\leq 1,\\ \CD_1, \ldots, \CD_J\in\FD_d}}\sup_{|c_n|\leq 1}\norm{\E_{n\in[N]}c_n\cdot \prod_{j\in[\ell]} T^{\floor{\ba_j(n)}}f_j \cdot \prod_{j\in[J]}\CD_j(\floor{b_j(n)})}_{L^2(\mu)}}^{O_{d, \ell, \CQ}(1)}\\
    \ll_{d, k, \ell, \CQ} \nnorm{f_1}_{\bgamma_1, \ldots, \bgamma_s}^+.
			\end{multline*}
		\end{theorem}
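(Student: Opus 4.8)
The plan is to follow the blueprint of the single-transformation argument from \cite[Proposition~5.1]{Ts22}, but to replace each of its three main steps by the multi-transformation technology developed in Sections~\ref{S: linear and sublinear}--\ref{S: polynomial + sublinear}, paying particular attention to tracking the correct leading coefficients. First I would dispose of the degenerate case $m = m_2$ (so every $g_i$ has fractional degree $0$); there condition (ii) forces $d_{10}>0$, i.e.\ $\ba_1$ already has positive polynomial degree, so the statement reduces to Proposition~\ref{P: sublinear + polynomial} applied after absorbing the subfractional parts $g_1,\dots,g_m$ (which behave just like the sublinear terms in that proposition). So assume $g_m \succ t^\delta$ for some $\delta>0$, i.e.\ $m_2 < m$; let $\veps$ denote the limsup on the left.

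The heart of the argument is to pass to short intervals. Using Lemma~\ref{L: double averaging}, write $\E_{n\in[R]} \approx \E_{N\in[R]}\E_{n\in(N,N+L(N)]}$ for a length $L$ chosen so that $g_1,\dots,g_{m_2}$ are essentially constant on each short interval while $g_{m_2+1},\dots,g_m$ (of positive fractional degree and hence strictly nonpolynomial) are well approximated by polynomials in $n$; concretely one takes $L(t)$ with $L(t) \prec g_{m_2+1}'(t)^{-1}$ or, more carefully, in a range dictated by the simultaneous Taylor expansion of the $g_i$'s with $\fracdeg g_i>0$ as governed by Proposition~\ref{P: Taylor expansion ultimate}. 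After this truncation, I would remove the dual sequences $\CD_j(\floor{b_j(N+n)})$ using Lemma~\ref{L: removing duals finitary}, and then Taylor-expand: on the short interval around $N$ the sequence $\ba_j(N+n)$ becomes $\bu_j^{\mathrm{sub}}(N) + \bq_{j,N}(n) + (\text{error})$, where $\bu_j^{\mathrm{sub}}(N) = \sum_{i\le m_2}\balpha_{ji}g_i(N)$ collects the (now frozen) subfractional part, and $\bq_{j,N} \in \R^k[n]$ is a polynomial of degree $O_\CQ(1)$ whose coefficients depend on $N$ through the derivatives of the $g_i$'s and through $\bp_j$. One then applies the finitary PET results — Proposition~\ref{P: PET for polynomials twisted by duals} (or Proposition~\ref{P: PET I} since the duals are already gone) — to the inner average over $n\in[L(N)]$, producing an average over tuples $(\um,\um',\uh)$ of finitary generalized box seminorms $\abs{\int \Delta_{\bc_1(\uh),\dots,\bc_{s_2}(\uh);(\um,\um')}F_{RN}\,d\mu}$, where $F_{RN} = \prod_{j\le\ell_0} T^{\floor{\bu_j^{\mathrm{sub}}(N)}}f_{jR}$ for the indices $j$ whose leading short-interval polynomial coefficient agrees with that of $\ba_1$, and the $\bc_i(\uh)$ are multilinear polynomials built from the relevant leading coefficients.

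Now the outer average over $N$ must be handled. Having passed to the product system and applied Cauchy--Schwarz to remove $F_{RN}$ down to the functions $G_{jR\uh\um\um'} = \Delta_{\bc_1(\uh),\dots;(\um,\um')}f_{jR}$, the average over $N$ runs over the iterates $T^{\floor{\bu_j^{\mathrm{sub}}(N)-\bu_{\ell_0}^{\mathrm{sub}}(N)}}$ together with the $N$-dependent polynomial coefficients coming from $g_{m_2+1},\dots,g_m$. Crucially, after the change of variables that linearizes the leading $g$-terms on sub-progressions, these $N$-iterates are \emph{sums of a sublinear part and a polynomial part} in $N$, so Proposition~\ref{P: sublinear + polynomial} applies and bounds the $N$-average by an average of generalized box seminorms whose directions are the leading coefficients of $\ba_1,\ba_1-\ba_2,\dots,\ba_1-\ba_\ell$ in the ordering \eqref{E: unnatural ordering} — precisely the set \eqref{E: set of coefficients}. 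Putting these together yields an average over $(\um,\um',\uh)$ of generalized box seminorms along polynomials $\bc_i(\uh)$ with coefficients drawn from \eqref{E: set of coefficients}. A final application of Proposition~\ref{P: polynomial concatenation} (via Proposition~\ref{P: Box seminorm control for polynomials twisted by duals}) concatenates this into a single generalized box seminorm $\nnorm{f_1}^+_{\bgamma_1,\dots,\bgamma_s}$ with $\bgamma_i$ in \eqref{E: set of coefficients} and $s = O_{d,\ell,\CQ}(1)$, as desired; Lemma~\ref{L: seminorms of subgroups} absorbs the harmless scalar factors (factorials, derivative values) appearing on the generators.

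\textbf{Main obstacle.} The genuinely delicate point is the bookkeeping of leading coefficients through the short-interval Taylor expansion and the subsequent change of variables: one must verify that the coefficients of the polynomials $\bq_{j,N}$ (and hence of the $\bc_i(\uh)$ produced by PET, and of the $N$-sequences fed into Proposition~\ref{P: sublinear + polynomial}) really do have their \emph{leading} parts equal to the vectors in \eqref{E: set of coefficients} and not some spurious lower-order combination — and in particular that the degree of $\bq_{j,N}$ in $n$ correctly reflects the fractional degree of $\ba_1 - \ba_j$, so that two sequences of equal fractional degree are not accidentally separated or two of distinct fractional degree accidentally identified. This is exactly what the refined simultaneous Taylor expansion results of Appendix~\ref{A: approximations} (Proposition~\ref{P: Taylor expansion ultimate}) are designed to control, and invoking them at the right granularity — tracking which $g_i$'s contribute to which monomial in $n$ on the short interval, and how the errors interact with the floor functions (handled as always by Lemma~\ref{L: errors}) — is where the bulk of the technical care must go.
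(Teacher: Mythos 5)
Your plan follows the paper's own route essentially step for step: short intervals with the simultaneous Taylor expansion of Proposition~\ref{P: Taylor expansion ultimate}, PET on the inner polynomial average (the dual twists being absorbed exactly as in Proposition~\ref{P: PET for polynomials twisted by duals}, whether you remove them yourself first or let that proposition do it), Proposition~\ref{P: sublinear + polynomial} on the outer average over $N$, and a final application of Proposition~\ref{P: polynomial concatenation}, with Lemma~\ref{L: seminorms of subgroups} absorbing scalar factors; you also correctly single out the coefficient bookkeeping as the crux. Two points, however, are elided in a way that matters.

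First, the hand-off to Proposition~\ref{P: sublinear + polynomial} is not yet licensed as you state it. The relevant change of variables is $n\mapsto\floor{H(N)}n+r$ with $H$ as in Proposition~\ref{P: Taylor expansion ultimate} (a different move from ``linearizing the leading $g$-terms on sub-progressions'', which is the sublinear-case trick), and after PET the directions of the finitary seminorms carry coefficients of the shape $g_i^{(d_{g_i})}(N)\floor{H(N)}^{d_{g_i}}\uh^{\uu}$. As functions of $N$ these are not elements of $\CH^k$ because of the floor, so the outer average is not literally of the sublinear-plus-polynomial form, and one cannot simply swap $\floor{H(N)}$ for $H(N)$ inside a seminorm direction. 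The paper's Step 3 supplies the missing mechanism: every $N$-dependence is rewritten as a function of $\floor{H(N)}$ via $\chi_{g_i}(t)=g_i^{(d_{g_i})}\circ H^{-1}(t)\cdot t^{d_{g_i}}$, with errors controlled by Lemma~\ref{Claim 2} and harmless only because $M$ and $H$ are still held fixed while $N\to\infty$ (the order of limits matters here), and then Lemma~\ref{L: removing H} replaces the average of $A_R(\floor{H(N)})$ by that of $A_R(N)$; Corollary~\ref{C: properties of chi}, resting on the later growth clauses of Proposition~\ref{P: Taylor expansion ultimate}, then verifies that the new coefficients $\chi_{g_i}$ are sublinear, of positive fractional degree and of pairwise distinct growth, which is exactly what Proposition~\ref{P: sublinear + polynomial} requires (and, via Lemma~\ref{L: coordinates of b}-type genericity, what keeps the relevant differences growing faster than $\log$). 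Without this step your outer average cannot be fed into Proposition~\ref{P: sublinear + polynomial}. Second, your shortcut for the case $m=m_2$ reduces to Proposition~\ref{P: sublinear + polynomial} directly, but that proposition has no dual twists while the theorem does; since this case is in any event covered by your main argument (as it is in the paper, where Proposition~\ref{P: Taylor expansion ultimate} simply takes $H(N)=N^{\eta}$ when all $g_i$ are subfractional), the case split should be dropped rather than patched.
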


  We remark that in Theorems \ref{T: box seminorm bound no duals} and \ref{T: box seminorm bound}, we have recorded the dependence of several parameters on $\CQ$, the collection of generators $g_1, \ldots, g_m$ and the sequences $b_1, \ldots, b_J$. Chasing through the argument, one can however establish that the parameters dependent on $\CQ$ really depend on $J, m$ (the numbers of $g_i$'s and $b_i$'s respectively), the maximum growth of $b_1, \ldots, b_J$ (which is controlled by a single parameter $d$), and the collection $\fracdeg g_1, \ldots, \fracdeg g_m$ of the fractional degrees of the generators. Importantly, the parameters will depend on \textit{all} fractional degrees, not just the maximum one. 

  \begin{proof}[Proof of Theorem \ref{T: box seminorm bound no duals} assuming Theorem \ref{T: box seminorm bound} and Corollary \ref{C: sublinear}]
      If $\ba_1$ has positive fractional degree, then the desired estimate in Theorem \ref{T: box seminorm bound no duals} follows immediately from Theorem \ref{T: box seminorm bound}. If $\ba_1$ is subfractional but some $\ba_i$ with $i\in[2,\ell]$ has positive fractional degree, then the result can be deduced from  Theorem \ref{T: box seminorm bound} by composing the integral with $T^{-\floor{\ba_i(n)}}$ in the same way in which Corollary \ref{C: sublinear} is deduced from Proposition~\ref{P: sublinear}. Lastly, if all of $\ba_1, \ldots, \ba_\ell$ are subfractional, then the result follows from Corollary~\ref{C: sublinear} (we note that the second and third case have a nonempty overlap).
  \end{proof}

\subsection{Proof of Theorem \ref{T: box seminorm bound}}

 We are now ready to start the proof of Theorem \ref{T: box seminorm bound}.
  Let
\begin{multline}\label{E: Hardy average}
\veps = \limsup_{N\to\infty}\sup_{\substack{\norm{f_2}_\infty, \ldots, \norm{f_\ell}_\infty\leq 1,\\ \CD_1, \ldots, \CD_J\in\FD_d}}\sup_{|c_n|\leq 1}\norm{\E_{n\in[N]}c_n\cdot \prod_{j\in[\ell]} T^{\floor{\ba_j(n)}}f_j \cdot \prod_{j\in[J]}\CD_j(\floor{b_j(n)})}_{L^2(\mu)}.
\end{multline}
We let all constants depend on $d, k, \ell, \CQ$ (and hence also on $J, m$), noting however that the positive integers $s_i\in\N$ and powers of $\veps$ obtained throughout the argument will not depend on $k$.

 The argument through which we control the average \eqref{E: Hardy average} by a box seminorm consists of five main steps: 
\begin{enumerate}
    \item approximating the sequences $\ba_j$ by polynomials on short intervals;
    \item applying finitary PET bounds for polynomials;
    \item massaging the resulting expression until it becomes an average along Hardy sequences that are sums of polynomial and sublinear terms;
    \item invoking seminorm estimates for such averages (Proposition~\ref{P: sublinear + polynomial}); 
    \item applying concatenation to an average of seminorms with polynomial parameters.
\end{enumerate}

Because of the need to change the order of summations on a number of occasions, much of the argument has to be done with respect to finite averages. We also need much more delicate understanding of the Taylor expansions of nonpolynomial Hardy functions of distinct growth than in previous works on the subject.

\smallskip
 \textbf{Step 1: Passing to short intervals and Taylor expanding $g_i$'s. }
\smallskip 

Our first step is to split the intervals $[N]$ into short intervals $[N+1, N+L(N)]$ using Lemma~\ref{L: double averaging} and approximate the functions $g_i$ on short intervals by polynomials. The function $L\in\CH$ will be specified later; for now, its only important property is that $\fracdeg L\in (0,1)$.
Taylor expanding the functions $g_i$, we define 
\begin{align}\label{E: Taylor approximation}
g_{iN}(n) = \sum_{l=0}^{d_{g_i}} \frac{g_i^{(l)}(N)}{l!} n^l
\end{align}
for all $n\in [L(N)]$. The degree $d_{g_i}$ of the expansion satisfies 
\begin{align*}
    \lim\limits_{N\to\infty}|g_i^{(d_{g_i})}(N)|\cdot N^{d_{g_i}} = \infty\quad \textrm{and}\quad\lim_{N\to\infty}\max\limits_{n\in[L(N)]}|g_i(N+n) - g_{iN}(n)|= 0,
\end{align*}
and this happens whenever
\begin{align}\label{E: property of L}
    \bigabs{ g_i^{(d_{g_i})}(N)}^{-\frac{1}{d_{g_i}}}\prec L(N)\prec \bigabs{ g_i^{(d_{g_i} +1)}(N)}^{-\frac{1}{d_{g_i} +1}}.
\end{align}
In particular,  for $i\in[m_2]$, we have {$d_{g_{i}}=0$,} $g_{iN}(n) = g_i(N)$ and
\begin{align*}
\max_{n\in[L(N)]}|g_i(N+n) - g_{i}(N)|\leq L(N)\cdot|g'_i(N)|\to 0\quad \textrm{as}\quad N\to\infty.
\end{align*}
{Indeed, as $i\in [m_2],$ we have $g_i(N)\prec N^\delta$ for all $\delta>0,$ so by Lemma~\ref{L: Frantzikinakis growth inequalities}, we get $$g'_i(N)\ll N^{-1}g_i(N)\prec N^{\delta-1}$$ for all $\delta>0$; the claimed convergence to 0 then follows from the assumption that $L(N)\ll N^{1-\delta}$ for some $\delta>0$.}
Informally, this asserts that the functions $g_1, \ldots, g_{m_2}$ are essentially constant on the interval $(N, N+L(N)]$.

Thus, we have
\begin{align}\label{E: approximation by polynomial}
 \lim_{N\to\infty}\max\limits_{n\in[L(N)]}|\ba_j(N+n) - \ba_{jN}(n)|=0   
\end{align}
for
\begin{align*}
\ba_{jN}(n) = \sum_{i\in[m]} \balpha_{ji} g_{iN}(n) + \bp_j(N+n)\quad \textrm{and}\quad n\in[L(N)].
\end{align*}

The following result on simultaneous Taylor approximations of $g_1, \ldots, g_m$ is of utmost importance for us, as it tells us how to choose the function $L$, and it gives useful properties of the degrees $d_{g_i}$ of $g_{iN}$.
 \begin{proposition}[Common Taylor expansion]\label{P: Taylor expansion ultimate}
     Let $q\in\N$. Then there exist positive functions $H, L\in\CH$ with fractional degrees in $(0,1)$ such that the degrees $d_{g_i}$ of the Taylor approximants \eqref{E: Taylor approximation} satisfy the following conditions:
     \begin{enumerate}
         \item\label{i: arbitrarily large} if $i>m_2$ (i.e. $\fracdeg g_i>0$), then $d_{g_i}\geq q$, otherwise $d_{g_i} = 0$;
         \item\label{i: equal fractional degrees} for all $i,j\in[m]$, we have $d_{g_i} = d_{g_j}$ iff $\fracdeg g_i = \fracdeg g_j$;
         \item\label{i: property implying common Taylor expansion} for all $i, j \in [m_2+1,m]$,  we have\footnote{We recall that $a \lll b$ iff $\fracdeg a< \fracdeg b$ iff there exists $\delta>0$ such that $a(N)N^\delta\ll b(N)$.}
\begin{equation}\label{E: growth conditions 1}
    \bigabs{g_i^{(d_{g_i}) }(N) }^{-\frac{1}{d_{g_i}}}\lll H(N) \lll L(N) \lll   \bigabs{g_j^{(d_{g_j}+1) }(N)}^{-\frac{1}{d_{g_j}+1}};
   \end{equation}
         \item\label{i: comparing frac degs} for all $i\in[m_2+1,m]$, we have
         \begin{align*}
             \fracdeg \bigabs{g_i^{(d_{g_i}) }(N) }^{-\frac{1}{d_{g_i}}}\leq \fracdeg \bigabs{g_m^{(d_{g_m}) }(N) }^{-\frac{1}{d_{g_m}}},
         \end{align*}
         with equality if and only if $\fracdeg g_i = \fracdeg g_m$.
\item\label{i: property implying different chi} for all distinct $i,j\in [m]$, we have 
\begin{equation*}
    g_i^{(d_{g_i})}(N) H(N)^{d_{g_i}} \not \asymp g_j^{(d_{g_j})}(N) H(N)^{d_{g_j}};
\end{equation*}
in other words, these two functions have distinct growth rates. 
     \end{enumerate}

 Moreover, the function $H$ takes the form
 \begin{align*}
    H(N) = \begin{cases}
    |g_m^{(d_{g_m})}(N)|^{-1/d_{g_m}}N^\eta, \quad &\textrm{if}\quad d_{g_m}>0\\
    N^\eta, \quad &\textrm{if}\quad d_{g_m}=0
    \end{cases},
 \end{align*}
  where $\eta$ can be any number from an interval $(0, \eta_0)$ for some $\eta_0>0$ depending only on the numbers $\fracdeg g_i, d_{g_i}$, and $L$ is any function satisfying $L\succ H$ and \eqref{E: growth conditions 1}.
 \end{proposition}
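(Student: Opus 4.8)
\textbf{Proof proposal for Proposition~\ref{P: Taylor expansion ultimate}.}

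The plan is to build the functions $H$ and $L$ explicitly out of the derivative growth rates of $g_{m_2+1},\dots,g_m$ and then verify the six enumerated properties by a careful bookkeeping of fractional degrees. The starting point is the elementary fact (a consequence of Lemma~\ref{L:Relation of growth rates} and the closure properties of Hardy fields) that differentiation lowers fractional degree by exactly $1$: if $\fracdeg g = c$ then $\fracdeg g^{(l)} = c - l$ for every $l$. In particular, for $i > m_2$ with $\fracdeg g_i = c_i > 0$, the degree $d_{g_i}$ of the Taylor approximant \eqref{E: Taylor approximation} — characterized by $|g_i^{(d_{g_i})}(N)| N^{d_{g_i}} \to \infty$ but $|g_i^{(d_{g_i}+1)}(N)| N^{d_{g_i}+1} \not\to\infty$, i.e. $\fracdeg g_i^{(l)} + l \geq 0$ for $l = d_{g_i}$ and $< 0$ for $l = d_{g_i}+1$ — is simply $d_{g_i} = \lceil c_i \rceil$ when $c_i\notin\Z$ and needs a slightly more delicate treatment (using Appendix~\ref{A: approximations}, specifically the behavior of $g_i^{(c_i)}$) when $c_i \in \Z$; but regardless, it is finite, it only depends on $\fracdeg g_i$, and it is the same for $g_i$ and $g_j$ exactly when $\fracdeg g_i = \fracdeg g_j$. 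This gives property~\eqref{i: equal fractional degrees} once $L$ is chosen; for property~\eqref{i: arbitrarily large}, I will instead \emph{not} take the minimal-degree Taylor expansion but an artificially longer one of degree $\max(q, \lceil c_i\rceil)$ for $i > m_2$ — this is harmless since extending a Taylor expansion past its ``natural'' length only adds terms that are negligible on the short interval, and it forces $d_{g_i}\geq q$; for $i\leq m_2$ we keep $d_{g_i}=0$, as established in the main text.

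Next I compute the fractional degrees of the quantities appearing in \eqref{E: growth conditions 1}. For $i > m_2$, since $\fracdeg g_i^{(d_{g_i})} = c_i - d_{g_i}$, we get
\[
\fracdeg\bigl|g_i^{(d_{g_i})}(N)\bigr|^{-1/d_{g_i}} = \frac{d_{g_i} - c_i}{d_{g_i}}, \qquad
\fracdeg\bigl|g_i^{(d_{g_i}+1)}(N)\bigr|^{-1/(d_{g_i}+1)} = \frac{d_{g_i}+1-c_i}{d_{g_i}+1},
\]
and a short computation shows the first is strictly less than the second (both lie in $[0,1)$, and the map $x\mapsto (x - c_i)/x$ is increasing for $x > 0$, $c_i>0$). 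So each individual ``lower bound'' fractional degree is strictly below each individual ``upper bound'' fractional degree. To get a \emph{single} pair $H \lll L$ sitting strictly between \emph{all} the lower bounds and \emph{all} the upper bounds simultaneously, I set $H$ as in the displayed formula — $H(N) = |g_m^{(d_{g_m})}(N)|^{-1/d_{g_m}} N^\eta$ when $d_{g_m}>0$, and $H(N)=N^\eta$ when $d_{g_m}=0$ — and choose $\eta\in(0,\eta_0)$ small. The point is that $\fracdeg H = \fracdeg |g_m^{(d_{g_m})}|^{-1/d_{g_m}} + \eta$, which by property~\eqref{i: comparing frac degs} (proved by the same monotonicity computation: among $i>m_2$, the quantity $(d_{g_i}-c_i)/d_{g_i}$ is largest for the largest $c_i$, hence for $i=m$) is at least the largest of all the lower-bound fractional degrees, so for $\eta$ small it still falls below every upper-bound fractional degree; taking $\eta_0$ to be the minimum over $i,j>m_2$ of the relevant gaps makes this precise. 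Then $L$ is chosen to be any Hardy function with $L\succ H$ and still satisfying the upper bounds in \eqref{E: growth conditions 1}, e.g. $L = H \cdot N^{\eta'}$ for a second small parameter $\eta'$; such $L$ exists by the same gap argument and automatically has $\fracdeg L\in(0,1)$ since all the relevant fractional degrees lie in $[0,1)$ and we only added small positive increments staying below $1$.

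Finally, property~\eqref{i: property implying different chi} — that the functions $N\mapsto g_i^{(d_{g_i})}(N) H(N)^{d_{g_i}}$ have pairwise distinct growth rates — is what I expect to be the main obstacle, and it is the reason the exponent $\eta$ in $H$ must be taken generic rather than fixed. For $i,j\leq m_2$ this reduces to $g_i\not\asymp g_j$, which holds by hypothesis $g_1\prec\cdots\prec g_m$. For $i,j>m_2$ with $\fracdeg g_i\neq\fracdeg g_j$ the fractional degrees of $g_i^{(d_{g_i})}H^{d_{g_i}}$ and $g_j^{(d_{g_j})}H^{d_{g_j}}$ are already different by a direct computation using the formula for $\fracdeg H$, so the growth rates differ. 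The delicate case is $i,j>m_2$ with $\fracdeg g_i=\fracdeg g_j$ (hence $d_{g_i}=d_{g_j}=:e$): here the two functions are $g_i^{(e)}(N)H(N)^e$ and $g_j^{(e)}(N)H(N)^e$, which are $\asymp$ iff $g_i^{(e)}\asymp g_j^{(e)}$, and one must rule this out. This is exactly where I invoke the fine simultaneous-Taylor-expansion results of Appendix~\ref{A: approximations} (Proposition~\ref{P: Taylor expansion ultimate} is stated there): the hypothesis $g_i\prec g_j$ together with $\fracdeg g_i=\fracdeg g_j$ forces, by the Hardy-field structure, that $g_i/g_j\to 0$ while $(g_i/g_j)' $ is controlled, and iterating this $e$ times (using that in a Hardy field $f\prec g$ with equal fractional degree implies $f'\prec g'$ or at worst $f'\asymp g'$ only in degenerate situations excluded by closure under composition) yields $g_i^{(e)}\prec g_j^{(e)}$ or at least $g_i^{(e)}\not\asymp g_j^{(e)}$. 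Should a borderline case survive — where $g_i^{(e)}\asymp g_j^{(e)}$ despite $g_i\prec g_j$ — the remedy is the extra freedom in $\eta$: perturbing $\eta$ generically within $(0,\eta_0)$ changes $H^e$ by $N^{e\eta}$, and since there are only finitely many pairs $(i,j)$, all but finitely many $\eta$ avoid every coincidence; we fix one such $\eta$. The remaining mixed case, $i\leq m_2$ and $j>m_2$, is immediate because the former has $d_{g_i}=0$, so $g_i^{(0)}H^0 = g_i$ is subfractional while $g_j^{(d_{g_j})}H^{d_{g_j}}$ has positive fractional degree, hence they cannot be $\asymp$. Assembling these verifications gives all of \eqref{i: arbitrarily large}--\eqref{i: property implying different chi} together with the stated form of $H$ and the description of admissible $L$.
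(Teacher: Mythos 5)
The central gap in your proposal is that you treat the choice of the Taylor degrees $d_{g_i}$ as essentially trivial — you set $d_{g_i}=\max(q,\lceil c_i\rceil)$ for $i>m_2$ — whereas finding admissible degrees is in fact the heart of the proposition. This choice immediately breaks property~\eqref{i: equal fractional degrees}: if, say, $c_1=\tfrac13$, $c_2=\tfrac12$ and $q=10$, then your formula gives $d_{g_1}=d_{g_2}=10$ despite $c_1\neq c_2$. It also breaks property~\eqref{i: property implying common Taylor expansion}: writing $\fracdeg\bigabs{g_i^{(d_{g_i})}}^{-1/d_{g_i}}=1-c_i/d_{g_i}$ and $\fracdeg\bigabs{g_j^{(d_{g_j}+1)}}^{-1/(d_{g_j}+1)}=1-c_j/(d_{g_j}+1)$, the simultaneous inequality \eqref{E: growth conditions 1} demands $c_j/(d_{g_j}+1)<c_i/d_{g_i}$ for \emph{every} pair $(i,j)$, and in the same numerical example $c_2/(d_{g_2}+1)=\tfrac{1}{22}$ is \emph{not} less than $c_1/d_{g_1}=\tfrac{1}{30}$. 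Similarly, your claim that property~\eqref{i: comparing frac degs} follows "by monotonicity" ($(d_{g_i}-c_i)/d_{g_i}$ is largest for $i=m$) is false for your choice of degrees: with a constant $d_{g_i}=q$ and $c_i<c_m$, one gets $c_i/d_{g_i}<c_m/d_{g_m}$, the wrong direction. The requirement that the intervals $\bigl(c_i/(d_{g_i}+1),\,c_i/d_{g_i}\bigr]$ have a common point, together with the strict inequalities $c_m/d_{g_m}<c_i/d_{g_i}$ and $c_m/d_{g_m}<(c_i-c_j)/(d_{g_i}-d_{g_j})$, is a genuine simultaneous Diophantine constraint; the paper isolates it as Lemma~\ref{L: fractional powers common expansion} and solves it via a recursion through nested integer parts and a Weyl-equidistribution argument (Lemma~\ref{L: nested integer parts}), obtaining degrees that in general are much larger than $q$ and depend jointly on all the $c_i$. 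Your proposal has no substitute for this step.

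A secondary point: your worry about the "borderline case" in property~\eqref{i: property implying different chi} where $d_{g_i}=d_{g_j}$ and possibly $g_i^{(e)}\asymp g_j^{(e)}$ is actually unfounded — in a Hardy field, $g_i\prec g_j$ with equal fractional degree implies $g_i^{(l)}\prec g_j^{(l)}$ for every $l$ by iterated L'H\^opital (the eventual monotonicity of Hardy germs guarantees the limit of $g_i^{(l)}/g_j^{(l)}$ exists at each step), which is exactly what the paper invokes. But, more importantly, your proposed remedy for the borderline case (perturbing $\eta$) cannot work there, because $H(N)^{d_{g_i}}$ and $H(N)^{d_{g_j}}$ coincide when $d_{g_i}=d_{g_j}$ and the factor of $N^{\eta e}$ cancels on both sides.
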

 When applying Proposition \ref{P: Taylor expansion ultimate}, we will choose
 \begin{align}\label{E: degree of approximation}
     q = 2d\geq 2\max(\{\deg\bp_j:\; j\in[\ell]\}\cup\{\fracdeg g_i:\; i\in[m]\}),
 \end{align}
 remarking that any higher $q$ would do as well.
	
The first condition ensures that after passing to short intervals, those $g_i$ that are not sublinear can be expanded as polynomials of arbitrarily high degree (in particular, the degrees of their Taylor expansions are higher than the degrees of the polynomials $\bp_1, \ldots \bp_\ell$). The second  condition guarantees that strongly nonpolynomial functions of distinct fractional degrees can be expanded as polynomials of distinct degrees. The inequalities \eqref{E: growth conditions 1} provide the common Taylor expansion (as they imply \eqref{E: property of L}). They are also instrumental in showing that after the change of variables, the lengths of the intervals will go to $\infty$ (as $H\prec L$), and that the new coefficients obtained after changes of variables will grow faster than log. The property \eqref{i: comparing frac degs} will be used to show later on that these new coefficients are sublinear, and in fact, their fractional degrees are bounded away from 1. 
The last condition will ensure that these new coefficients have distinct growth rates.
We will use this proposition as a black box and postpone its proof until Appendix \ref{A: approximations}.

Splitting $\N$ into short intervals using Lemma \ref{L: double averaging}, approximating $\ba_j(N+n)$ by $\ba_{jN}$ for each $j\in[\ell]$ as in \eqref{E: approximation by polynomial} and handling the error terms using Lemma \ref{L: errors}, we obtain
\begin{multline*}
\limsup_{R\to\infty}\;    \sup_{\substack{\norm{f_2}_\infty, \ldots, \norm{f_\ell}_\infty\leq 1,\\ \CD_1, \ldots, \CD_J\in\FD_d}}\; \E\limits_{N\in[R]}\; \sup_{|c_n|\leq 1}\\ 
\norm{\E_{n\in[L(N)]}c_n \cdot \prod_{j\in[\ell]} T^{\floor{\ba_{jN}(n)}}f_j \cdot  \prod_{j\in[J]} \CD_j(\floor{b_j(N+n)})}_{L^2(\mu)}\gg \veps.
\end{multline*}

It is at this point that we use the assumption that $d_{10}>0$ or $d'_{10}>m_2$, i.e., $\ba_1$ has positive fractional degree: this assumption is required for the polynomial $\ba_{1N}$ to be nonconstant (which is necessary for otherwise $f_1$ would vanish from the average above). If $d_{10}>0$, then the polynomial part of $\ba_{1N}$ is nonconstant whereas if $d'_{10}>m_2$, then the nonpolynomial part of $\ba_{1N}$ is noncostant by Proposition \ref{P: Taylor expansion ultimate}\eqref{i: arbitrarily large}. 

Rearranging $\ba_1, \ldots, \ba_\ell$ if necessary, we may assume that there exists $\ell_{0}\in[\ell]$ such that $j\in[\ell_{0}]$ if and only if $\ba_1-\ba_j$ is subfractional, so that
\begin{align*}
    \ba_{j}= \ba_1 + \sum_{i\in[m_2]} (\balpha_{ji}-\balpha_{1i})g_i\quad \textrm{for}\quad j\in[\ell_0],
\end{align*}
and similarly for $\ba_{jN}$. Then
\begin{multline*}
\limsup_{R\to\infty}    \sup_{\substack{\norm{f_{\ell_0+1}}_\infty, \ldots, \norm{f_\ell}_\infty\leq 1,\\ \CD_1, \ldots, \CD_J\in\FD_d}}\E\limits_{N\in[R]}\sup_{|c_n|\leq 1}\\ 
\norm{\E_{n\in[L(N)]}c_n \cdot T^{\floor{\ba_{1N}(n)}} F_{RN}\cdot \prod_{j = \ell_0+1}^\ell T^{\floor{\ba_{jN}(n)}}f_j \cdot  \prod_{j\in[J]} \CD_j(\floor{b_j(N+n)})}_{L^2(\mu)}\gg \veps
\end{multline*}
for
\begin{align}\label{E: F_RN}
F_{RN} =  \prod_{j\in[\ell_0]} T^{\floor{\sum_{i\in[m_2]} (\balpha_{ji}-\balpha_{1i})g_i(N)}} f_{jR},
\end{align}
where $f_{1R} = f_1$ and $f_{jR}$ for $j\geq 2$ are some 1-bounded functions that bring the $L^2(\mu)$ norms above close to the supremum, {and the error terms are absorbed by the sup over~$c_{n}$}.

We set
\begin{gather*}
 d''_{jj'} := \deg\brac{\sum_{i\in[m]} (\balpha_{ji}-\balpha_{j'i}) g_{iN}}
\end{gather*}
to be the degree of the strongly nonpolynomial part of $\ba_{jN} - \ba_{j'N}$. We note from Proposition \ref{P: Taylor expansion ultimate}\eqref{i: arbitrarily large} that
\begin{align}\label{E: relationship between degrees}
    d''_{jj'} = 0 \iff d'_{jj'} \leq m_2,
\end{align}
and we also observe from the definition of $d''_{jj'}$ above that $d''_{jj'} = d_{g_i}$ for $i = d'_{jj'}$. We also subdivide $[m] = \CB_1 \cup \cdots \cup \CB_{m_3}$ into classes of indices
\begin{align*}
\CB_j = \{i\in[m]:\; d_{g_i} = j\}
\end{align*}
corresponding to the sublinear functions whose Taylor expansions have the same degree.
By Proposition~\ref{P: Taylor expansion ultimate}\eqref{i: equal fractional degrees}, all $g_i$ belonging to a fixed class $\CB_j$ have the same fractional degree. Moreover, if $i,i'\in\CB_j$ for the same $j$ and $i<i'$, then $g_i^{(d_{g_i})}\prec g_{i'}^{(d_{g_{i'}})}$; consequently, the functions $\{g_i^{(d_{g_i})}:\; i\in\CB_j\}$ are linearly independent. Because of this and the assumption~\eqref{E: degree of approximation},  the leading coefficient of $\ba_{jN}- \ba_{j' N}$ is
 \begin{align}\label{E: leading coeff nonpolynomial}
	 \sum_{i\in\CB_{d''_{jj'}}} (\balpha_{ji}-\balpha_{j'i}) \frac{g_i^{(d_{g_i})}(N)}{d_{g_i}!} = \sum_{i\in\CB_{d''_{jj'}}} (\balpha_{ji}-\balpha_{j'i}) \frac{g_i^{(d''_{jj'})}(N)}{d''_{jj'}!}
 \end{align}
 if $d''_{jj'} >0$ (or equivalently if $d'_{jj'}>m_2$) and 
 \begin{align}\label{E: leading coeff polynomial}
 	\bbeta_{jd_{jj'}} - \bbeta_{j' d_{jj'}}
 \end{align}
otherwise.

Before we apply our PET results, we want to change variables in order to ensure that the leading coefficients of the polynomials $g_{iN}$ grow faster than 1. Substituting $n\mapsto \floor{H(N)}n + r$  and setting $\tilde{L}(N) = L(N)/H(N)$ (which satisfies $\tilde{L}\succ 1$ by Proposition~\ref{P: Taylor expansion ultimate}), we have
\begin{multline}\label{E: Before Step 2}
\limsup_{R\to\infty} \sup_{\substack{\norm{f_{\ell_0+1}}_\infty, \ldots, \norm{f_\ell}_\infty\leq 1,\\ \CD_1, \ldots, \CD_J\in\FD_d}}  \E\limits_{N\in[R]} \E_{r\in [H(N)]}
\sup_{|c_n|\leq 1}
\left|\!\left|\E_{n\in[\tilde{L}(N)]}c_n \cdot T^{\floor{\ba_{1N}(\floor{H(N)}n + r)}} F_{RN}\right.\right.\\ \left.\left. \prod_{j = \ell_0+1}^\ell T^{\floor{\ba_{jN}(\floor{H(N)}n + r)}}f_j \cdot  \prod_{j\in[J]} \CD_j(\floor{b_j(N+\floor{H(N)}n + r)})\right|\!\right|_{L^2(\mu)}\gg \veps.
\end{multline}

\smallskip
 \textbf{Step 2:  PET in $n$ on short intervals.}
\smallskip 

We move on now to apply our PET bounds from Section \ref{S: polynomial PET}.
Since $\ba_{1N}, \ba_{(\ell_0+1)N}, \ldots, \ba_{\ell N}$ are polynomials in $n$, and $\ba_{1N}$ is nonconstant, \eqref{E: Before Step 2} is amenable to Proposition~\ref{P: PET for polynomials twisted by duals}. Indeed, 
by Proposition~\ref{P: PET for polynomials twisted by duals}, there exist positive integers $s_1, s_2=O(1)$ (independent of $k$) and nonzero multilinear polynomials $\bc_{1N}, \ldots, \bc_{s_2 N}\in\R^k[\uh]$ satisfying 
\begin{multline}\label{E: before removing H}
    \limsup_{R\to\infty}   \E\limits_{N\in[R]} \E\limits_{\uh\in[\pm H]^{s_1}} \E_{\um, \um'\in[\pm M]^{s_2}}\\
  \abs{\int \Delta_{\bc_{1N}(\uh), \ldots, \bc_{s_2 N}(\uh); (\um,\um')}F_{RN}\, d\mu}
  +\frac{1}{H}+\frac{1}{M}\gg \veps^{O(1)}.    
\end{multline}
The coefficients of the polynomials $\bc_{1N}, \ldots, \bc_{s_2 N}$ come from the leading coefficients of the polynomials
\begin{align}\label{E: leading coeffs of ajN}
    \ba_{1N}(N+\floor{H(N)}n + r) - \ba_{jN}(N+\floor{H(N)}n + r)
\end{align}
for $j\in\{0\}\cup [\ell_0+1, \ldots, \ell]$.
An analysis of the leading coefficients of \eqref{E: leading coeffs of ajN} shows that we can split $\bc_{l N} = \bc^{\pol}_{l N} + \bc^{\nonpol}_{l N}$ into its ``polynomial'' and ``nonpolynomial'' parts as
\begin{align}\label{E: c_l pol}
\bc^{\pol}_{l N}(\uh) &= \sum_{\uu\in \CA_l} (|\uu|+1)! (\bbeta_{1(|\uu|+1)} - \bbeta_{w_{l\uu}(|\uu|+1)})\floor{H(N)}^{|\uu|+1} \uh^\uu,\\ 
\label{E: c_l nonpol}
\bc^{\nonpol}_{l N}(\uh) &= \sum_{\uu\notin\CA_l} \brac{\sum_{i\in\CB_{|\uu|+1}} (\balpha_{1i}-\balpha_{w'_{l\uu} i}) g_i^{(|\uu|+1)}(N)} \floor{H(N)}^{|\uu|+1} \uh^\uu, 
\end{align}
for some set of multiindices $\CA_l\subseteq\{0,1\}^{s_1}$ and indices $w_{l\uu}, w'_{l\uu}\in\{0\}\cup \{\ell_0+1, \ldots, \ell\}$ satisfying
\begin{align}\label{E: degrees of differences poly}
d_{1w_{l\uu}} = \deg(\ba_{1N} - \ba_{w_{l\uu}N}) = \deg(\bp_1 - \bp_{w_{l\uu}}) = |\uu|+1 \quad \textrm{for}\quad \uu\in\CA_l,
\end{align} 
and
\begin{align}\label{E: degrees of differences nonpoly}
d''_{1 w'_{l\uu}}  = \deg(\ba_{1N} - \ba_{w'_{l\uu}N}) = d_{g_i} = |\uu|+1
\end{align}
for $\uu\notin\CA_l$ and $i\in \CB_{|\uu|+1}$. By comparing the degrees of $g_{iN}$ and $\bp_{j}$, we make the following important observation.
\begin{align}\label{E: description of A_l}
    \uu\in \CA_l \quad \Longleftrightarrow \quad \ba_1 - \ba_{w_{l\uu}}\quad \textrm{has\; no\; contribution\; from}\quad g_i \quad\textrm{for}\quad i\in[m_2+1, m].
\end{align}

\smallskip
 \textbf{Step 3:  Removing $\floor{H(N)}$.}
\smallskip 

The formulas \eqref{E: c_l pol} and \eqref{E: c_l nonpol} for $\bc_{lN}$ somewhat inconveniently contain the expressions $\floor{H(N)}$, and our next goal is to replace all their instances by $N$ using the lemma below.
    \begin{lemma}[{\cite[Lemma~5.1]{Fr10}}]\label{L: removing H}
        Let $(A_R(N))_{N, R\in\N}$ be a 1-bounded, two-parameter sequence of vectors in a normed space and let $H\in\mathcal{H}$ satisfy the growth condition $t^{\delta}\prec H(t)\prec t$ for some $\delta>0$. Then, we have 
        \begin{equation*}
	       \limsup\limits_{R\to \infty} \norm{\E\limits_{N\in[R]} A_R(\floor{H(N)})}\ll_H  \limsup\limits_{R\to \infty} \norm{ \E\limits_{N\in[R]} A_R(N)}.
	\end{equation*}
    \end{lemma}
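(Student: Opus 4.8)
The plan is to prove Lemma~\ref{L: removing H} by a change of variables that replaces the average over $N$ by a weighted average over the values $M=\floor{H(N)}$, followed by an Abel summation; this is precisely \cite[Lemma~5.1]{Fr10}, and we sketch the argument. Since $\mathcal{H}_0$ is closed under compositional inversion and $t^{\delta}\prec H(t)\prec t$, the inverse $\phi:=H^{-1}$ belongs to $\mathcal{H}_0$ and satisfies $t\prec\phi(t)\prec t^{1/\delta}$; in particular $\fracdeg\phi=1/\fracdeg H\in[1,\infty)$, its derivative $\phi'$ is eventually positive and nondecreasing (a bounded or eventually decreasing $\phi'$ would force $\phi(t)=O(t)$, contradicting $t\prec\phi$), and $M\phi'(M)\asymp_H\phi(M)$ by the growth estimates of Section~\ref{SS: Hardy field functions} (Lemma~\ref{L: properties of fracdeg}). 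For each admissible $M$ the fibre $I_M:=\{N\in[R]:\floor{H(N)}=M\}$ is an interval of integers with $\bigl\lvert|I_M|-(\phi(M+1)-\phi(M))\bigr\rvert\le 2$; these fibres partition $[R]$, and for $M_1:=\floor{H(R)}$ one has $|I_{M_1}|\asymp_H\phi(M_1)/M_1\asymp R/H(R)=o(R)$.

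Using the fibres $I_M$ I would start from the exact identity $\E_{N\in[R]}A_R(\floor{H(N)})=\tfrac1R\sum_M|I_M|\,A_R(M)$, and replace $|I_M|$ by $\tilde w_M:=\phi(M+1)-\phi(M)=\phi'(\xi_M)$ with $\xi_M\in(M,M+1)$, which costs $\tfrac1R\sum_M\bigl\lvert|I_M|-\tilde w_M\bigr\rvert\ll M_1/R=o(1)$. The point of this replacement is that the $\tilde w_M$ form a genuinely nondecreasing sequence (since $\xi_M<\xi_{M+1}$ and $\phi'$ is nondecreasing) with $\sum_M\tilde w_M=R+O(M_1)$ and $M_1\tilde w_{M_1}\asymp_H R$. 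The heart of the matter is then an Abel summation of $\sum_M\tilde w_M A_R(M)$: writing $S_R(K):=\sum_{M\le K}A_R(M)$, peeling off the boundary term $\tilde w_{M_1}S_R(M_1)$ and the sum $\sum_{M<M_1}(\tilde w_{M+1}-\tilde w_M)S_R(M)$ of nonnegative increments, bounding $\norm{S_R(K)}\le K\,\norm{\E_{M\in[K]}A_R(M)}$ by $1$-boundedness, and using that the monotone weights concentrate their mass near $M=M_1$, one obtains, for each fixed $\alpha\in(0,1)$,
\[
\norm{\E_{N\in[R]}A_R(\floor{H(N)})}\ll_H \alpha+\max_{\alpha\floor{H(R)}\le K\le\floor{H(R)}}\norm{\E_{M\in[K]}A_R(M)}+o_{R\to\infty}(1).
\]
Taking $\limsup_{R\to\infty}$ (so that the lengths $K$ run over arbitrarily large values) and then letting $\alpha\to0$ yields $\limsup_{R\to\infty}\norm{\E_{N\in[R]}A_R(\floor{H(N)})}\ll_H\limsup_{R\to\infty}\norm{\E_{N\in[R]}A_R(N)}$.

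The main obstacle here is organizational rather than conceptual: one has to make the Hardy-field input precise---the existence and growth of $\phi=H^{-1}$, the eventual monotonicity of $\phi'$ and hence of the weights $\tilde w_M$, their concentration near $M_1$, and the identity $M\phi'(M)\asymp\phi(M)$ that turns the Abel boundary term $\tilde w_{M_1}S_R(M_1)$ into $O(R)$ times a Cesàro average---and to carry along the $o(R)$ error terms coming from the floor functions in the change of variables and from the boundary in the Abel summation. All of these are routine consequences of the closure of $\mathcal{H}_0$ under differentiation and compositional inversion together with the relation between $\fracdeg$ and growth rates in Lemma~\ref{L: properties of fracdeg}; as the statement coincides with \cite[Lemma~5.1]{Fr10}, one may also simply invoke that reference.
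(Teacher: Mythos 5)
The paper provides no proof of Lemma~\ref{L: removing H}; it is attributed to~\cite[Lemma~5.1]{Fr10} and used as a black box. Your change-of-variables plus Abel-summation outline is indeed the argument behind that lemma, so at the level of ``what proof is being invoked'' you match the paper.

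However, there is a genuine gap that affects both your sketch and the statement as printed in the paper, and you should flag it. In~\cite{Fr10} the lemma is proved for a \emph{one-parameter} sequence $A(N)$; here it is stated for a two-parameter family $A_R(N)$. Your Abel summation, performed at a fixed $R$, produces norms $\norm{\E_{M\in[K]}A_R(M)}$ with $K\le M_1=\floor{H(R)}\prec R$. When you then take $\limsup_{R\to\infty}$, the index $R$ of $A_R$ and the length $K$ of the averaging window are permanently mismatched ($K\prec R$), so the resulting quantity is \emph{not} dominated by $\limsup_{R\to\infty}\norm{\E_{N\in[R]}A_R(N)}$; your parenthetical remark that ``the lengths $K$ run over arbitrarily large values'' is true but does not save the argument, because it is $A_R$ with the \emph{large} index $R$ that is being averaged over the \emph{short} window $[K]$. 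In fact the two-parameter statement is false as written: take $H(t)=t^{1/2}$ (so $\delta$ can be any value in $(0,1/2)$), let $v$ be a fixed unit vector, and for $R$ large set
\[
A_R(M)=\begin{cases} v, & M\le 2\floor{\sqrt{R}},\\[2pt] -\dfrac{2\floor{\sqrt{R}}}{R-2\floor{\sqrt{R}}}\,v, & 2\floor{\sqrt{R}}<M\le R,\end{cases}
\]
extended arbitrarily (and $1$-bounded) elsewhere. Then $\E_{N\in[R]}A_R(N)=0$ exactly, so the right-hand side of the lemma is $0$; but $\floor{\sqrt{N}}\le\sqrt{R}<2\floor{\sqrt{R}}$ for every $N\in[R]$, so $A_R(\floor{H(N)})=v$ identically and the left-hand side is $1$. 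Thus the inequality fails. The one-parameter version (where $A_R=A$) is correct and is what~\cite{Fr10} proves; the uniform-in-$R$ upgrade needs an extra hypothesis relating $\norm{\E_{M\in[K]}A_R(M)}$ for $K\asymp H(R)$ to the diagonal quantity, or the statement must be reformulated (e.g.\ with a supremum over window lengths on the right-hand side). If you intend to use the lemma in the two-parameter form, you must either add such a hypothesis or verify it in each application.
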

In order to apply Lemma \ref{L: removing H}, we need to rephrase the integral in \eqref{E: before removing H} so that it only depends on $\floor{H(N)}$, not $N$.  This requires us to deal with the term $g_i^{(d_{g_i})}(N)$   appearing in the definition of $\bc^{\nonpol}_{l N}$ (for $i\in [m_2+1, m]$) and in the formula \eqref{E: F_RN} for $F_{RN}$ (for $i\in[m_2]$).
To this end, 
we set
\begin{align*}
\chi_{g_i}(t) = g_i^{(d_{g_i})}\circ H\inv(t) \cdot t^{d_{g_i}}\quad \textrm{so that}\quad \chi_{g_i}(H(N)) = g_i^{(d_{g_i})}(N) H(N)^{d_{g_i}}    
\end{align*}
are the expressions appearing in the formula for strongly nonpolynomial functions. In particular, Proposition \ref{P: Taylor expansion ultimate} gives the following corollary.

\begin{corollary}[Properties of the new coefficients]\label{C: properties of chi}
    The functions $\chi_{g_i}$ enjoy the following properties:
    \begin{enumerate}
        \item if $i\in[m_2]$, then $\chi_{g_i} = g_i\circ H\inv$; in particular, $\fracdeg \chi_{g_i} = 0$;
        \item\label{i: chi sublinear} for $i\in [m_2+1, m]$, we have $\fracdeg \chi_{g_i}\in (0,1)$; in particular, they are sublinear and have positive fractional degrees;
        \item\label{i: chi distinct fracdeg} if $i, j\in[m]$ are distinct, then $\chi_{g_i}\not\asymp\chi_{g_j}$.
    \end{enumerate}
\end{corollary}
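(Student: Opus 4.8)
The statement to prove is Corollary~\ref{C: properties of chi}, which records the properties of the rescaled coefficient functions $\chi_{g_i}(t) = g_i^{(d_{g_i})}\circ H^{-1}(t)\cdot t^{d_{g_i}}$ that will be needed later in Step~3 and beyond.

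\textbf{Plan of proof.} The corollary is a direct consequence of Proposition~\ref{P: Taylor expansion ultimate}, so the plan is essentially to translate each listed property into a statement about $H$, $g_i^{(d_{g_i})}$, and their composition/inversion, and then invoke the appropriate clause of that proposition together with the basic algebra of fractional degrees (Lemma~\ref{L: properties of fracdeg}). First, for part (i): when $i\in[m_2]$ we have $d_{g_i}=0$ by Proposition~\ref{P: Taylor expansion ultimate}\eqref{i: arbitrarily large}, hence $g_i^{(d_{g_i})} = g_i$ and $t^{d_{g_i}} = 1$, so $\chi_{g_i} = g_i\circ H^{-1}$ on the nose. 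Since $g_i$ is subfractional ($\fracdeg g_i = 0$ by hypothesis (iii) of Theorem~\ref{T: box seminorm bound}) and $\fracdeg H \in (0,1)$ by Proposition~\ref{P: Taylor expansion ultimate}, the chain rule for fractional degrees gives $\fracdeg(g_i\circ H^{-1}) = \fracdeg g_i \cdot \fracdeg H^{-1} = 0$; here I would use that $\fracdeg H^{-1} = 1/\fracdeg H$ is a finite positive number and $0$ times anything finite is $0$. (One should be slightly careful that $H^{-1}\in\CH_0$ — it need not have polynomial growth — but composing with a subfractional function brings us back into $\CH$; this is the kind of routine Hardy-field bookkeeping I would not belabor.)

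For part (ii): fix $i\in[m_2+1,m]$, so $d_{g_i}\geq q = 2d \geq 1$. By Proposition~\ref{P: Taylor expansion ultimate}\eqref{i: property implying common Taylor expansion} (applied with $j=i$) we have $|g_i^{(d_{g_i})}(N)|^{-1/d_{g_i}} \lll H(N)$, i.e. $\fracdeg\bigl(|g_i^{(d_{g_i})}|^{-1/d_{g_i}}\bigr) < \fracdeg H$. Writing $\rho_i := \fracdeg g_i^{(d_{g_i})}$, this says $-\rho_i/d_{g_i} < \fracdeg H$. On the other hand, the Taylor-expansion condition $\bigl|g_i^{(d_{g_i})}(N)\bigr|^{-1/d_{g_i}} \prec L(N) \prec \bigl|g_i^{(d_{g_i}+1)}(N)\bigr|^{-1/(d_{g_i}+1)}$ from \eqref{E: property of L}, together with $H\prec L$, forces the complementary inequality; more directly, I would argue that $\fracdeg\chi_{g_i} = \fracdeg\bigl(g_i^{(d_{g_i})}\circ H^{-1}\bigr) + d_{g_i}\,\fracdeg H^{-1} = \rho_i/\fracdeg H + d_{g_i}/\fracdeg H = (\rho_i + d_{g_i})/\fracdeg H$. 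Since $g_i$ has polynomial growth with $\fracdeg g_i > 0$ and $d_{g_i}$ is the degree of its Taylor approximant, Lemma~\ref{L: properties of fracdeg} gives $\fracdeg g_i^{(d_{g_i})} = \fracdeg g_i - d_{g_i}$, so $\rho_i + d_{g_i} = \fracdeg g_i \in (0, d]$ — strictly positive. Combined with $d_{g_i} \geq q = 2d > \fracdeg g_i$, we get $0 < \rho_i + d_{g_i} = \fracdeg g_i < d_{g_i} \leq d_{g_i}/\fracdeg H$ only after dividing by $\fracdeg H \in (0,1)$: precisely, $0 < \fracdeg\chi_{g_i} = \fracdeg g_i/\fracdeg H$, and this is $< 1$ exactly when $\fracdeg g_i < \fracdeg H$, which is one of the inequalities packaged in \eqref{E: growth conditions 1} (rearranging $|g_i^{(d_{g_i})}(N)|^{-1/d_{g_i}} \lll H(N)$ into $\fracdeg g_i < \fracdeg H \cdot d_{g_i}$... ). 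I would double-check the exact arithmetic against the precise normalization in Proposition~\ref{P: Taylor expansion ultimate}, but the upshot is $\fracdeg\chi_{g_i} \in (0,1)$, hence $\chi_{g_i}$ is sublinear with positive fractional degree.

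For part (iii): the distinctness of growth rates $\chi_{g_i}\not\asymp\chi_{g_j}$ for $i\neq j$ is exactly Proposition~\ref{P: Taylor expansion ultimate}\eqref{i: property implying different chi}, which states that $g_i^{(d_{g_i})}(N)H(N)^{d_{g_i}} \not\asymp g_j^{(d_{g_j})}(N)H(N)^{d_{g_j}}$; substituting $N = H^{-1}(t)$ (a legitimate monotone change of variable since $H$ is eventually increasing with $H(t)\to\infty$) turns the left side into $\chi_{g_i}(t)$ and the right into $\chi_{g_j}(t)$, and the relation $\not\asymp$ is preserved under such reparametrization. The only subtlety worth a sentence is that $\not\asymp$ for germs means the ratio does not stay bounded away from $0$ and $\infty$; for Hardy functions this ratio has a limit in $[0,\infty]$, so $\not\asymp$ is equivalent to the ratio tending to $0$ or $\infty$, and this property is manifestly invariant under the substitution $N\mapsto H^{-1}(t)$.

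\textbf{Main obstacle.} There is no serious obstacle here — the corollary is a packaging lemma. The only place requiring genuine care is part (ii), where one must correctly track the fractional-degree arithmetic through the composition with $H^{-1}$ and the multiplication by $t^{d_{g_i}}$, and verify that the two inequalities $\fracdeg\chi_{g_i} > 0$ and $\fracdeg\chi_{g_i} < 1$ follow, respectively, from $\fracdeg g_i > 0$ and from the displayed chain \eqref{E: growth conditions 1} of Proposition~\ref{P: Taylor expansion ultimate}. I would want to make sure the normalization conventions for $\fracdeg$ of a composition and of a negative-exponent power match those established in Lemma~\ref{L: properties of fracdeg}, but these are mechanical checks rather than conceptual ones.
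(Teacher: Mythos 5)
Parts (i) and (iii) of your proposal are correct and essentially identical to the paper's own proof: (i) follows from $d_{g_i}=0$ and the composition rule for fractional degrees, and (iii) is an immediate restatement of Proposition~\ref{P: Taylor expansion ultimate}\eqref{i: property implying different chi} under the substitution $N=H^{-1}(t)$.

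Part (ii), the only nontrivial part, contains a genuine computational error. Since $\chi_{g_i}(t)=g_i^{(d_{g_i})}\circ H^{-1}(t)\cdot t^{d_{g_i}}$, the factor $t^{d_{g_i}}$ contributes $d_{g_i}$ to the fractional degree, not $d_{g_i}\,\fracdeg H^{-1}$; your formula $\fracdeg\chi_{g_i}=(\fracdeg g_i^{(d_{g_i})}+d_{g_i})/\fracdeg H=\fracdeg g_i/\fracdeg H$ is therefore wrong (you have in effect computed the fractional degree of $g_i^{(d_{g_i})}(H^{-1}(t))\cdot(H^{-1}(t))^{d_{g_i}}$), the correct value being $\fracdeg\chi_{g_i}=\frac{\fracdeg g_i-d_{g_i}}{\fracdeg H}+d_{g_i}$. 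The error is not harmless: with your formula the bound $\fracdeg\chi_{g_i}<1$ would require $\fracdeg g_i<\fracdeg H<1$, which fails whenever $\fracdeg g_i\geq 1$ (e.g.\ $g_i(t)=t^{3/2}$), and your rearrangement of \eqref{E: growth conditions 1} into $\fracdeg g_i<\fracdeg H\cdot d_{g_i}$ is also off, since $\bigabs{g_i^{(d_{g_i})}}^{-1/d_{g_i}}\lll H$ reads $1-\fracdeg g_i/d_{g_i}<\fracdeg H$. The paper's proof instead uses the explicit form of $H$, writing $\fracdeg H=1-\frac{c_m}{d_{g_m}}+\eta$ with $c_i=\fracdeg g_i$, so that $\fracdeg\chi_{g_i}=\frac{c_i d_{g_m}-d_{g_i}c_m+\eta d_{g_i}d_{g_m}}{d_{g_m}-c_m+\eta d_{g_m}}$; it checks the denominator is positive via $d_{g_m}\geq q=2d\geq 2c_m$, deduces positivity of the whole expression from Proposition~\ref{P: Taylor expansion ultimate}\eqref{i: comparing frac degs} together with $\eta>0$, and gets the upper bound $<1$ from the inequality $c_i d_{g_m}<c_m(d_{g_i}+1)$ packaged in \eqref{E: growth conditions 1}, again using $d_{g_m}\geq 2c_m$ and the smallness of $\eta$. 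None of these steps is visible from your (incorrect) formula, so part (ii) of your argument needs to be redone along these lines.
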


\begin{proof}
By Proposition \ref{P: Taylor expansion ultimate}\eqref{i: arbitrarily large}, $d_{g_i} = 0$ whenever $i\in[m_2]$, and so $\chi_{g_i} = g_i\circ H\inv$ by definition. Then $\fracdeg \chi_{g_i} = \fracdeg g_i/\fracdeg{H} = 0$ by Lemma~\ref{L: properties of fracdeg}\eqref{i: fracdeg product}, and the first property follows. The last property follows immediately from Proposition~\ref{P: Taylor expansion ultimate}\eqref{i: property implying different chi}. The second property is vacuously true whenever $m_2 = m$, i.e., all the $g_i$'s are subfractional. Otherwise we have $H(N) = |g_m^{(d_{g_m})}(N)|^{-1/d_{g_m}} N^\eta$ for some $\eta>0$, and hence by Lemma~\ref{L: properties of fracdeg}, we have
\begin{align*}
    \fracdeg \chi_{g_i} = \frac{\fracdeg g_i^{(d_{g_i})}}{\fracdeg H} + d_{g_i} = \frac{c_i - d_{g_i}}{1-\frac{c_m}{d_{g_m}}+\eta}+d_{g_i} = \frac{c_i d_{g_m} - d_{g_i} c_m + \eta d_{g_i}d_{g_m}}{d_{g_m}-c_m + \eta d_{g_m}}
\end{align*}
where $c_i = \fracdeg g_i>0$. Because of Proposition \ref{P: Taylor expansion ultimate}\eqref{i: arbitrarily large} and our choice of $q$, we have
\begin{align}\label{E: d_m}
    d_{g_m} \geq q\geq 2d\geq 2c_m    
\end{align}
(recall that $d$ upper bounds the fractional degrees of $g_i$'s while $q$ lower bounds the degrees of Taylor approximations), and hence the denominator in the formula for $\fracdeg \chi_{g_i}$ above is positive.
The positivity of the denominator in turn implies that $\fracdeg \chi_{g_i} > 0$ iff 
\begin{align}\label{E: proxy bound with eta}
    \frac{c_i}{d_{g_i}}-\frac{c_m}{d_{g_m}}+\eta > 0,
\end{align}
which in turn follows from the positivity of $\eta$ and Proposition \ref{P: Taylor expansion ultimate}\eqref{i: comparing frac degs}. Thus, $\fracdeg \chi_{g_i} > 0$. To prove that $\fracdeg \chi_{g_i} < 1$, it suffices to show in the light of the formula for $\fracdeg \chi_{g_i}$ that
\begin{align*}
    c_i d_{g_m} - d_{g_i} c_m + \eta d_{g_i}d_{g_m} < d_{g_m}-c_m + \eta d_{g_m}.
\end{align*}
This in turn will follow from
\begin{align}\label{E: fracdeg < 1}
    (c_i-1)d_{g_m} < c_m (d_{g_i} - 1)
\end{align}
and taking sufficiently small $\eta>0$. In order to show \eqref{E: fracdeg < 1}, we first use the bound
\begin{align*}
        \bigabs{g_m^{(d_{g_m}) }(N) }^{-\frac{1}{d_{g_m}}} \lll   \bigabs{g_i^{(d_{g_i}+1) }(N)}^{-\frac{1}{d_{g_i}+1}}
\end{align*}
from \eqref{E: growth conditions 1}, which can be restated as $c_i d_{g_m} < c_m (d_{g_i} + 1)$. It then follows that
\begin{align*}
    (c_i - 1) d_{g_m} < c_m(d_{g_i} + 1) - d_{g_m} = c_m(d_{g_i} - 1) + 2 c_m - d_{g_m} \leq c_m(d_{g_i} - 1)
\end{align*}
as long as $d_{g_m} \geq 2 c_m$, which holds by \eqref{E: d_m}. 
\end{proof}

The next lemma is crucial in rephrasing the integral in \eqref{E: before removing H} in terms of $\floor{H(N)}$. 
		\begin{lemma}\label{Claim 2}
			For every $i\in [m]$, we have \begin{equation*}
				\lim\limits_{t\to\infty} \bigabs{g_i^{(d_{g_i})}\circ H^{-1}(t) \cdot \floor{t}^{d_{g_i}}-g_i^{(d_{g_i})}\circ H^{-1}(\floor{t})\cdot\floor{t}^{d_{g_i}}}=0.
			\end{equation*}
		\end{lemma}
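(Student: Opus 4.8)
The statement is that the difference $\bigabs{g_i^{(d_{g_i})}\circ H^{-1}(t)\cdot\floor{t}^{d_{g_i}} - g_i^{(d_{g_i})}\circ H^{-1}(\floor{t})\cdot\floor{t}^{d_{g_i}}}$ tends to $0$; factoring out $\floor{t}^{d_{g_i}}$, this amounts to showing that $$\bigabs{g_i^{(d_{g_i})}\circ H^{-1}(t) - g_i^{(d_{g_i})}\circ H^{-1}(\floor{t})}\cdot \floor{t}^{d_{g_i}} \to 0.$$ If $d_{g_i}=0$ the claim is vacuous (the two terms are literally equal), so assume $d_{g_i}\ge 1$. The plan is to apply the mean value theorem to the function $\phi_i := g_i^{(d_{g_i})}\circ H^{-1}$ on the interval $[\floor{t},t]$, which has length at most $1$: there is $\xi\in[\floor{t},t]$ with $\phi_i(t)-\phi_i(\floor{t}) = \phi_i'(\xi)(t-\floor{t})$, hence $\bigabs{\phi_i(t)-\phi_i(\floor{t})}\le \sup_{\xi\in[\floor{t},t]}\bigabs{\phi_i'(\xi)}$. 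So it suffices to show $\bigabs{\phi_i'(\xi)}\cdot \xi^{d_{g_i}}\to 0$ as $\xi\to\infty$ (using that $\floor{t}\le\xi$ and everything is eventually monotone-ish so $\floor{t}^{d_{g_i}}\asymp\xi^{d_{g_i}}$), i.e. $\fracdeg \phi_i' < -d_{g_i}$.

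The key computation is to identify $\fracdeg \phi_i$ and then $\fracdeg\phi_i'$. By definition $\phi_i(t) = g_i^{(d_{g_i})}(H^{-1}(t))$, and recall from Proposition \ref{P: Taylor expansion ultimate} and the definition of $\chi_{g_i}$ that $\chi_{g_i}(t) = \phi_i(t)\cdot t^{d_{g_i}}$; Corollary \ref{C: properties of chi} tells us $\fracdeg\chi_{g_i}<1$ (it is $0$ for $i\in[m_2]$ and in $(0,1)$ for $i>m_2$), so by Lemma \ref{L: properties of fracdeg} (the product rule for fractional degrees) $\fracdeg\phi_i = \fracdeg\chi_{g_i} - d_{g_i} < 1 - d_{g_i}$. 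Since Hardy fields are closed under differentiation, $\phi_i'\in\CH$ and $\fracdeg\phi_i' = \fracdeg\phi_i - 1 < -d_{g_i}$ (using the standard fact, which follows from Lemma \ref{L: properties of fracdeg} or Lemma \ref{L: degree inequality}, that differentiation lowers the fractional degree by exactly $1$ for Hardy functions, together with the observation that $\phi_i$ is not eventually constant unless $g_i^{(d_{g_i})}$ is, in which case $d_{g_i}$ would not be the correct Taylor degree). Therefore $\bigabs{\phi_i'(\xi)}\xi^{d_{g_i}}\to 0$, which is what we needed.

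Assembling the pieces: for $t$ large, pick $\xi_t\in[\floor{t},t]$ by the mean value theorem applied to $\phi_i$, so that $$\bigabs{g_i^{(d_{g_i})}\circ H^{-1}(t)\cdot\floor{t}^{d_{g_i}} - g_i^{(d_{g_i})}\circ H^{-1}(\floor{t})\cdot\floor{t}^{d_{g_i}}} \le \bigabs{\phi_i'(\xi_t)}\cdot \floor{t}^{d_{g_i}} \le \bigabs{\phi_i'(\xi_t)}\cdot \xi_t^{d_{g_i}},$$ and the right-hand side tends to $0$ as $t\to\infty$ because $\xi_t\to\infty$ and $\fracdeg\phi_i' < -d_{g_i}$. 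The only mild subtlety — and the step I would be most careful with — is justifying that differentiation drops the fractional degree by exactly $1$ here rather than merely by at most some amount; this is standard for Hardy functions of nonzero fractional degree, and in the borderline case where $\fracdeg\phi_i$ could be $0$ (possible only when $d_{g_i}=1$ and $i\in[m_2]$, but then $d_{g_i}=0$ by Proposition \ref{P: Taylor expansion ultimate}\eqref{i: arbitrarily large}, a contradiction, so in fact $\fracdeg\phi_i<0$ strictly whenever $d_{g_i}\ge 1$) we still get $\fracdeg\phi_i' \le \fracdeg\phi_i - 1 < -d_{g_i}$ from the monotonicity/L'Hôpital estimates in Lemma \ref{L: degree inequality}, which is all that is required.
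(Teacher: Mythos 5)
Your argument is essentially the paper's: both proofs apply the mean value theorem to $\phi_i=g_i^{(d_{g_i})}\circ H^{-1}$ on $[\floor{t},t]$ and then reduce the needed decay of $\phi_i'(\xi)\,\xi^{d_{g_i}}$ to the properties of $\chi_{g_i}(t)=\phi_i(t)t^{d_{g_i}}$ established in Corollary~\ref{C: properties of chi}; the paper phrases the last step as $\phi_i'(t)\ll \phi_i(t)/t$ (Lemma~\ref{L: Frantzikinakis growth inequalities}) together with sublinearity of $\chi_{g_i}$, while you phrase it as a fractional-degree computation, which is the same content. One remark on your justification of the derivative step: the exact identity $\fracdeg \phi_i'=\fracdeg\phi_i-1$ can fail (e.g.\ when a function tends to a nonzero constant), and Lemma~\ref{L: degree inequality} is not quite the right reference; the clean justification of the one-sided bound you actually use is $\phi_i'\ll \phi_i/t$, which holds for every element of $\CH$ by Lemma~\ref{L:Relation of growth rates} (or Lemma~\ref{L: Frantzikinakis growth inequalities}), and this is exactly what the paper invokes.

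The one genuine error is your dismissal of the case $d_{g_i}=0$: there the expression is $\bigabs{g_i\circ H^{-1}(t)-g_i\circ H^{-1}(\floor{t})}$, and the two terms are \emph{not} literally equal, since one is evaluated at $t$ and the other at $\floor{t}$. This case is not vacuous, but it is covered by the very same argument you run for $d_{g_i}\geq 1$ (and in fact more easily): by the mean value theorem and $\phi_i'\ll\phi_i/t$ with $\phi_i=\chi_{g_i}$ of fractional degree $0$ (Corollary~\ref{C: properties of chi}(i)), the difference is $O\bigl(\chi_{g_i}(\floor{t})/\floor{t}\bigr)\to 0$. So you should simply delete the claim of vacuity and treat all $i\in[m]$ uniformly, as the paper does.
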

		\begin{proof}
  By the mean value theorem, Lemma~\ref{L: Frantzikinakis growth inequalities}, and the fact that $\brac{g_i^{(d_{g_i})}\circ H^{-1}}'$ is decreasing, we have 
  \begin{align*}
      \bigabs{g_i^{(d_{g_i})}\circ H^{-1}(t)-g_i^{(d_{g_i})}\circ H^{-1}(\floor{t})} &\leq \{t\} \cdot \brac{g_i^{(d_{g_i})}\circ H^{-1}}'(\floor{t})\\
      &\ll \frac{g_i^{(d_{g_i})}\circ H^{-1}(\floor{t})}{\floor{t}}.
  \end{align*}
Replacing $\floor{t}$ by $t$ for simplicity, it therefore suffices to show that
\begin{equation*}
				g_i^{(d_{g_i})}\circ H^{-1}(t) \cdot t^{d_{g_i}}\prec t.
			\end{equation*}
        But the left hand side is just $\chi_{g_i}(t)$, and so the property follows immediately from Corollary~\ref{C: properties of chi}(ii), and specifically from the sublinearity of $\chi$.
\end{proof}
  
Recall the formula for $\bc^{\nonpol}_{l N}$ from \eqref{E: c_l nonpol}. By Lemma \ref{Claim 2} applied with $t = {H(N)}$,  we have 
\begin{align*}
\lim_{N\to\infty}\abs{\bc^{\nonpol}_{l N}(\uh) -\sum_{\uu\notin\CA_l} \brac{\sum_{i\in\CB_{|\uu|+1}} (\balpha_{1i}-\balpha_{w'_{l\uu} i}) \chi_{g_i}(\floor{H(N)})} \uh^\uu} = 0
\end{align*}
for every $l$ and $\uh$, and so we can replace $\bc^{\nonpol}_{l N}(\uh)$ in \eqref{E: before removing H} with the second expression above. We also set 
\begin{align*}
    \tilde{F}_{RN} = \prod_{j\in[\ell_0]} T^{\floor{\sum_{i\in[m_2]} (\balpha_{ji}-\balpha_{1i})g_i\circ H\inv(N)}} f_{jR} = \prod_{j\in[\ell_0]} T^{\floor{\sum_{i\in[m_2]} (\balpha_{ji}-\balpha_{1i})\chi_{g_i}(N)}} f_{jR},
\end{align*}
so that $\tilde{F}_{RH(N)} = F_{RN}$. Lemma \ref{Claim 2} applied with $t = {H(N)}$ similarly gives
\begin{align*}
    \lim_{N\to\infty}\abs{\sum_{i\in[m_2]} (\balpha_{ji}-\balpha_{1i})g_i(N) - \sum_{i\in[m_2]} (\balpha_{ji}-\balpha_{1i})g_i\circ H\inv(\floor{H(N)})} = 0,
\end{align*}
which allows us to replace $F_{RN}$ with $\tilde{F}_{R\floor{H(N)}}$. After all this massaging, \eqref{E: before removing H} becomes
\begin{multline}\label{E: before removing H 2}
        \limsup_{R\to\infty}   \E\limits_{N\in[R]} \E\limits_{\uh\in[\pm H]^{s_1}} \E_{\um, \um'\in[\pm M]^{s_2}}\\
  \abs{\int \Delta_{\tilde{\bc}_{1\floor{H(N)}}(\uh), \ldots, \tilde{\bc}_{s_2 \floor{H(N)}}(\uh); (\um, \um')}\tilde{F}_{R\floor{H(N)}}\, d\mu}
  +\frac{1}{H}+\frac{1}{M}\gg \veps^{O(1)} 
\end{multline}
for $\tilde{\bc}_{l N} = \tilde{\bc}^{\pol}_{l N} + \tilde{\bc}^{\nonpol}_{l N}$ given by
\begin{align*}
\tilde{\bc}^{\pol}_{l N}(\uh) &= \sum_{\uu\in \CA_l} (|\uu|+1)! (\bbeta_{1(|\uu|+1)} - \bbeta_{w_{l\uu}(|\uu|+1)})N^{|\uu|+1} \uh^\uu\\ 
\tilde{\bc}^{\nonpol}_{l N}(\uh) &= \sum_{\uu\notin\CA_l} \brac{\sum_{i\in\CB_{|\uu|+1}} (\balpha_{1i}-\balpha_{w'_{l\uu} i}) \chi_{g_i}(N)} \uh^\uu. 
\end{align*}
The new polynomials $\tilde{\bc}_{l N}\in \R^k[\uh]$ are also nonzero and multilinear.

The integral in \eqref{E: before removing H 2} is now phrased purely in terms of $\floor{H(N)}$ rather than $N$.  By Lemma~\ref{L: removing H}, we get\footnote{In reality, to apply Lemma~\ref{L: removing H}, we need the average over $N$ to be inside the integral. This minor technicality can be dealt with in an entirely standard way:  we apply the Cauchy-Schwarz inequality to \eqref{E: before removing H 2}, pass to the product system, incorporate the average over $N$ inside the integral, apply the Cauchy-Schwarz inequality, and only then apply Lemma \ref{L: removing H}. The desired conclusion will follow once we pass back to the original system.}
\begin{multline}\label{E: tilde c}
 \E\limits_{\uh\in[\pm H]^{s_1}} \E_{\um, \um'\in[\pm M]^{s_2}}\limsup_{R\to\infty}   \E\limits_{N\in[R]}
  \abs{\int \Delta_{\tilde{\bc}_{1N}(\uh), \ldots, \tilde{\bc}_{s_2 N}(\uh); (\um, \um')}\tilde{F}_{RN}\, d\mu}+\frac{1}{H}+\frac{1}{M}\gg\veps^{O(1)}.
\end{multline}

\smallskip
 \textbf{Step 4: Applying seminorm estimates for the polynomial + sublinear case.}
\smallskip

Our next goal is to analyze the average over $N$. The key observation, already made in \cite{Ts22}, is that the iterates in \eqref{E: tilde c} are sums of polynomial and sublinear terms when reinterpreted as sequences in $N$; hence \eqref{E: tilde c} is amenable to Proposition~\ref{P: sublinear + polynomial}. 
In order to construe the polynomials $\tilde{\bc}_{l N}$ as functions of $N$ rather than $\uh$, we define 
\begin{align*}
    \bb_{j\ueps\um\um'\uh}(N) &=  \sum_{i\in[m_2]} (\balpha_{ji}-\balpha_{1i})\chi_{g_i}(N) + \sum_{l\in[s_2]} m_l^{\eps_l}\tilde{\bc}_{l N}(\uh)\\
    &= \sum_{l\in[s_2]} m_l^{\eps_l} \tilde{\bc}^{\pol}_{l N}(\uh) + \sum_{i\in[m_2]} (\balpha_{ji}-\balpha_{1i})\chi_{g_i}(N)\\
    &+ \sum_{i\in[m_2+1, m]} \Bigbrac{\sum_{l\in[s_2]}\; \sum_{\substack{\uu\notin\CA_l:\\ |\uu|+1 = d_{g_i}}}\;(\balpha_{1i}-\balpha_{w'_{l\uu}i})m_l^{\eps_l} \uh^\uu}\chi_{g_i}(N),
\end{align*}
recalling the convention \eqref{E: h^eps},
so that
\begin{align*}
    \Delta_{\tilde{\bc}_{1N}(\uh), \ldots, \tilde{\bc}_{s_2 N}(\uh); (\um, \um')}\tilde{F}_{RN} = \prod_{j\in[\ell_0]}\prod_{\ueps\in\{0,1\}^{s_2}}\CC^{|\ueps|} T^{\floor{\bb_{j\ueps\um\um'\uh}(N)}+\br_{j\veps N\um\um'\uh}}f_{jR}
\end{align*}
for some $O(1)$ error terms $\br_{j\veps N\um\um'\uh}\in\Z^k$
that come from comparing a sum of integer parts to the integer part of the sum.
Combining this identity with \eqref{E: tilde c} and removing the error terms using Lemma \ref{L: errors}, we deduce that
\begin{multline*}
 \E\limits_{\uh\in[\pm H]^{s_1}} \E_{\um, \um'\in[\pm M]^{s_2}}\sup_{\norm{f_2}_\infty, \ldots, \norm{f_{\ell_0}}_\infty\leq 1}\\
 \limsup_{R\to\infty}\E\limits_{N\in[R]}\sup_{|c_N|\leq 1}
  \abs{\int c_N\cdot \prod_{j\in[\ell_0]}\prod_{\ueps\in\{0,1\}^{s_2}}\CC^{|\ueps|} T^{\floor{\bb_{j\ueps\um\um'\uh}(N)}}f_j\, d\mu}+\frac{1}{H}+\frac{1}{M}\gg\veps^{O(1)}.
\end{multline*}
Passing to the product system and applying the Cauchy-Schwarz inequality, we get
\begin{multline*}
 \E\limits_{\uh\in[\pm H]^{s_1}} \E_{\um, \um'\in[\pm M]^{s_2}} \sup_{\norm{f_2}_\infty, \ldots, \norm{f_{\ell_0}}_\infty\leq 1}\\
 \limsup_{R\to\infty}
 \norm{\E\limits_{N\in[R]} \prod_{j\in[\ell_0]}\prod_{\ueps\in\{0,1\}^{s_2}}\CC^{|\ueps|}(T\times T)^{\floor{\bb_{j\ueps\um\um'\uh}(N)}}f_j\otimes \overline{f_j}}_{L^2(\mu\times\mu)}+\frac{1}{H}+\frac{1}{M}\gg\veps^{O(1)}.
\end{multline*}

In order to apply Proposition \ref{P: sublinear + polynomial}, we need to understand the differences $\bb_{1\underline{1}\um\um'\uh} -\bb_{j\ueps\um\um'\uh}$ for $(j,\ueps)\in([0,\ell_0]\times\{0,1\}^{s_2})\setminus(1,\underline{1})$ which take the form
    \begin{multline*}
    \bb_{1\underline{1}\um\um'\uh}(N) -\bb_{j\ueps\um\um'\uh}(N) = \sum_{l\in[s_2]} (m_l' - m_l^{\eps_l}) \tilde{\bc}^{\pol}_{l N}(\uh) + \sum_{i\in[m_2]} (\balpha_{1i}-\balpha_{ji})\chi_{g_i}(N)\\
+ \sum_{i\in [m_2+1, m]} \Bigbrac{\sum_{l\in[s_2]}\; \sum_{\substack{\uu\notin\CA_l:\\ |\uu|+1 = d_{g_i}}}\;(\balpha_{1i}-\balpha_{w'_{l\uu}i})(m_l' - m_l^{\eps_l}) \uh^\uu}\chi_{g_i}(N);
\end{multline*}
specifically, we need to show the following.
\begin{lemma}\label{L: coordinates of b}
    For each $(j,\ueps)\in([0,\ell_0]\times\{0,1\}^{s})\setminus(1,\underline{1})$, the difference function $\bb_{1\underline{1}\um\um'\uh} -\bb_{j\ueps\um\um'\uh}: (t_0, \infty)\to\R^k$ has a coordinate function growing faster than $\log$ for all but $O(M^{2s_1 - 1} H^{s_1-1})$ elements $$(\um, \um', \uh)\in [\pm M]^{s_2}\times [\pm M]^{s_2}\times [\pm H]^{s_1}.$$
\end{lemma}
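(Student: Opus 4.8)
\textbf{Proof plan for Lemma~\ref{L: coordinates of b}.}

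The plan is to separate the analysis of the difference $\bb_{1\underline{1}\um\um'\uh} - \bb_{j\ueps\um\um'\uh}$ into its ``strongly nonpolynomial'' part (the span of $\chi_{g_i}$ for $i\in[m]$, these being pairwise non-asymptotically-equivalent sublinear functions of positive fractional degree by Corollary~\ref{C: properties of chi}) and its ``polynomial'' part (coming from the terms $(m'_l - m_l^{\eps_l})\tilde{\bc}^{\pol}_{lN}(\uh)$, which are polynomials in $N$). Since a nonzero $\R^k$-linear combination of the linearly independent functions $1, \chi_{g_1}(N), \dots, \chi_{g_{m_2}}(N), g_{m_2+1}\text{-type terms}, N, N^2, \dots, N^d, \dots$ always has a coordinate growing faster than $\log$ (the slowest nontrivial function appearing grows at least like a fixed positive power of some $\chi_{g_i}$ or like $N^\delta$, hence faster than $\log$), it suffices to show that for all but $O(M^{2s_1-1}H^{s_1-1})$ triples $(\um,\um',\uh)$, the difference function $\bb_{1\underline{1}\um\um'\uh} - \bb_{j\ueps\um\um'\uh}$ is \emph{not identically zero} as a germ; equivalently, that at least one of the coefficient vectors (of some $\chi_{g_i}(N)$ with $i>m_2$, or of some monomial $N^{|\uu|+1}$ coming from $\tilde\bc^{\pol}_{lN}$, or of some $\chi_{g_i}(N)$ with $i\le m_2$) is nonzero.

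First I would treat the case $j\ne 1$. Here the term $\sum_{i\in[m_2]}(\balpha_{1i}-\balpha_{ji})\chi_{g_i}(N)$ is present, and since $\chi_{g_1},\dots,\chi_{g_{m_2}}$ are linearly independent over $\R^k$ (they are pairwise non-equivalent by Corollary~\ref{C: properties of chi}(iii), or use that $g_1,\dots,g_{m_2}$ themselves are independent and $H^{-1}$ is a monotone Hardy germ), this sum vanishes only if $\balpha_{1i}=\balpha_{ji}$ for all $i\le m_2$. But by the hypothesis that $\ba_1 - \ba_j$ has a coordinate growing faster than $\log$ (condition (i) of Theorem~\ref{T: box seminorm bound}, i.e. $d_{1j}>0$ or $d'_{1j}>m_1$), either some $\bbeta_{1i}\ne\bbeta_{ji}$ with $i\ge 1$ (so the polynomial part of $\bb_{1\underline 1\um\um'\uh}-\bb_{j\ueps\um\um'\uh}$ carries a genuine $N^i$ term whose coefficient, up to the factor $(m'_l - m_l^{\eps_l})$ for the relevant $l$, is nonzero for all but an $O(1/M)$-proportion of $\um,\um'$), or some $\balpha_{1i}\ne\balpha_{ji}$ with $m_1 < i \le m_2$, in which case the $\chi_{g_i}(N)$-term is genuinely present and $\chi_{g_i}$ grows faster than $\log$ by Corollary~\ref{C: properties of chi}(ii) (its fractional degree lies in $(0,1)$, so $\chi_{g_i}(N)\succ N^{\delta}\succ \log N$ for some $\delta>0$) — or some $\balpha_{1i}\ne\balpha_{ji}$ with $i>m_2$, handled together with the $j=1$ case below. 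In each of these subcases the set of bad $(\um,\um',\uh)$ is contained in a union of $O(1)$ ``coordinate hyperplanes'' of the form $\{m'_l = m_l^{\eps_l}\}$ or similar, each of size $O(M^{2s_1-1}H^{s_1})$, which is dominated by the claimed bound $O(M^{2s_1-1}H^{s_1-1})$ only after one more observation — so actually the hyperplane count must be refined; see the next paragraph.

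Next I would treat the case $j=1$ (so necessarily $\ueps\ne\underline 1$), and simultaneously the residual subcase above where the relevant index is $i>m_2$. Now the $[m_2]$-terms cancel and we are left with $\sum_{l}(m'_l - m_l^{\eps_l})\tilde\bc^{\pol}_{lN}(\uh)$ plus $\sum_{i>m_2}\big(\sum_{l}\sum_{\uu\notin\CA_l, |\uu|+1=d_{g_i}}(\balpha_{1i}-\balpha_{w'_{l\uu}i})(m'_l-m_l^{\eps_l})\uh^\uu\big)\chi_{g_i}(N)$. The strategy is: pick an index $l_0$ with $\eps_{l_0}=0$ (exists since $\ueps\ne\underline 1$), and examine the coefficient of $\chi_{g_i}(N)$ for the largest $i>m_2$ with $\uu\notin\CA_{l_0}$ and $|\uu|+1=d_{g_i}$ — or, if no such $i$ contributes, the leading monomial in $N$ of $(m'_{l_0}-m_{l_0})\tilde\bc^{\pol}_{l_0 N}(\uh)$. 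In either situation the coefficient is, as a function of $\uh$, a nonzero multilinear polynomial (because the $\tilde\bc_{lN}$ are nonzero multilinear by construction, and the $\uh^\uu$'s are distinct monomials) times $(m'_{l_0}-m_{l_0})$; hence it is nonzero provided (a) $m'_{l_0}\ne m_{l_0}$, which excludes an $O(M^{2s_1-1})$-set of $\um,\um'$ for each of the $O(H^{s_1})$ choices of $\uh$, and (b) $\uh$ avoids the zero set of that fixed nonzero multilinear polynomial of degree $\le d$, which by Schwartz–Zippel excludes $O(H^{s_1-1})$ values of $\uh$ (for each of the $O(M^{2s_1})$ choices of $\um,\um'$). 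Combining (a) and (b) and using that the total parameter space has size $\asymp M^{2s_1}H^{s_1}$, the exceptional set has size $O(M^{2s_1-1}H^{s_1}) + O(M^{2s_1}H^{s_1-1})$; since $H\le M$ in all our applications (indeed $H\ll M$), the dominant term is $O(M^{2s_1-1}H^{s_1-1})\cdot M = O(M^{2s_1}H^{s_1-1})$ — so the bound as literally stated, $O(M^{2s_1-1}H^{s_1-1})$, forces us to be careful and instead count in the normalized way: after dividing by $|[\pm M]^{2s_1}\times[\pm H]^{s_1}|$ the exceptional proportion is $O(1/M + 1/H) = O(1/H)$, which is absorbed into the $1/H + 1/M$ error already present in \eqref{E: tilde c}. \emph{The main obstacle} is precisely this bookkeeping: one must verify that the coefficient vector singled out really is a nonzero multilinear polynomial in $\uh$ (this relies on the nonvanishing and multilinearity of $\tilde\bc_{lN}$ from Step~2--3, together with \eqref{E: description of A_l} to control which $\uu$ land in $\CA_l$), and one must confirm that the conditions (i)--(ii) of Theorem~\ref{T: box seminorm bound}, transported through the Taylor-expansion and change-of-variables machinery of Steps~1--3, indeed guarantee that \emph{some} such coefficient is not forced to be zero for structural reasons; the hypothesis $d_{1j}>0$ or $d'_{1j}>m_1$ is exactly what is needed here, via the translation \eqref{E: relationship between degrees} between $d'_{1j}$ and $d''_{1j}$.
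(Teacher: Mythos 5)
Your overall strategy matches the paper's: split into the cases $j\neq 1$ and $j=1,\ \ueps\neq\underline 1$, find a forced nonzero term, and in the second case use $m'_{l}\neq m_l$ plus Schwartz--Zippel on $\uh$ to bound the exceptional set. The bookkeeping you do at the end, concluding that what really matters is that the exceptional \emph{proportion} is $O(1/M + 1/H)$, also aligns with how the lemma is used after \eqref{E: tilde c}. However, there are two issues worth flagging.

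First, a genuine error in a cited justification. In the $j\neq 1$ case with $m_1 < i \le m_2$, you assert that $\chi_{g_i}\succ\log$ ``by Corollary~\ref{C: properties of chi}(ii) (its fractional degree lies in $(0,1)$).'' This is wrong: Corollary~\ref{C: properties of chi}(ii) gives $\fracdeg\chi_{g_i}\in(0,1)$ only for $i\in[m_2+1,m]$, while for $i\le m_2$ part (i) of that corollary tells you $\fracdeg\chi_{g_i}=0$. The conclusion $\chi_{g_i}\succ\log$ is nevertheless true, but for a different reason: since $i>m_1$ we have $g_i\succ\log$, and $\chi_{g_i}=g_i\circ H^{-1}$ with $H^{-1}$ of finite positive fractional degree, so $\chi_{g_i}(N)\succ\log(H^{-1}(N))\asymp\log N$. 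You need to replace the cited reason with this one.

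Second, a harmless but telling inefficiency. Your trichotomy for $j\neq 1$ (``some $\bbeta_{1i}\neq\bbeta_{ji}$, or some $\balpha_{1i}\neq\balpha_{ji}$ with $m_1<i\le m_2$, or some $\balpha_{1i}\neq\balpha_{ji}$ with $i>m_2$'') lists cases that cannot occur. By the choice of $\ell_0$ in Step~1 of the proof of Theorem~\ref{T: box seminorm bound}, for $j\in[\ell_0]$ the difference $\ba_1-\ba_j$ is subfractional; hence $d_{1j}=0$ and $\balpha_{1i}=\balpha_{ji}$ for all $i>m_2$, so the hypothesis $d_{1j}>0$ or $d'_{1j}>m_1$ forces $d'_{1j}\in[m_1+1,m_2]$. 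The paper's proof goes directly to this unique possibility, with no exceptional set at all in the $j\neq 1$ case. Your extra branches (and the suggestion that the first branch needs an $O(1/M)$-exclusion) indicate you did not use the structure of the index set $[\ell_0]$; tightening this would simplify your argument and eliminate the spurious exclusions.
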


\begin{proof}
First, we observe that the term $\sum_{l\in[s_2]} m_l^{\eps_l}\tilde{\bc}^{\pol}_{l N}(\uh)$ is a polynomial in $N$ while the other terms in $\bb_{1\underline{1}\um\um'\uh} -\bb_{j\ueps\um\um'\uh}$ are sublinear in $N$ due to Corollary \ref{C: properties of chi}\eqref{i: chi sublinear}. Moreover, thanks to Corollary \ref{C: properties of chi}\eqref{i: chi distinct fracdeg}, for all distinct $i,i'\in [m]$, the functions $\chi_{g_i}, \chi_{g_{i'}}$ have distinct growth rates. 

If $j\neq 1$, then $\balpha_{1i}\neq\balpha_{ji}$ for some $i\in [m_1+1, m_2]$ due to the assumption that $\ba_1 - \ba_j$ differs by a subfractional term faster than log, and since the coefficients $\chi_{g_i}$ all have distinct growth and grow sublinearly (so more slowly than the polynomial $\tilde{\bc}^{\pol}_{l N}(\uh)$ for generic $\uh$), it follows that $\bb_{1\underline{1}\um\um'\uh} -\bb_{j\ueps\um\um'\uh}$ has a coordinate growing faster than log. If $\ueps = \underline{1}$, then necessarily $j\neq 1$, and so we land in the previous case.

If $j=1$, $\ueps\neq \underline{1}$, then
pick $l\in[s_2]$ for which $\eps_l =0$. Then $m_l \neq m_l'$ for all but $O(M)$ choices $(m_l, m'_l)\in[\pm M]^2$.
Since $\bc_{lN}$ is nonzero, one of the two things has to happen:
\begin{itemize}
    \item ${\bc}^{\pol}_{l N}$ is nonzero, in which case $\tilde{\bc}^{\pol}_{l N}(\uh)$ is a nonconstant polynomial in $N$ for all but $O(H^{s_1-1})$ values $\uh\in[\pm H]^{s_1}$;
    \item or ${\bc}^{\nonpol}_{l N}$ is nonzero, so that $\balpha_{1i}\neq\balpha_{w'_{l\uu}i}$ for some $i\in [m_2+1,m]$. Then $\tilde{\bc}^{\nonpol}_{l N}(\uh)$ has a coordinate growing faster than $\log$ for all but $O(H^{s_1-1})$ values $\uh\in[\pm H]^{s_1}$  because the functions $\chi_{g_i}$ have distinct growth rates and satisfy $\chi_{g_i}\succ \log$ by Corollary \ref{C: properties of chi}\eqref{i: chi sublinear}.
\end{itemize}
\end{proof}

By Lemma \ref{L: coordinates of b}, the function $\bb_{1\underline{1}\um\um'\uh} -\bb_{j\ueps\um\um'\uh}$ is nonconstant for a generic choice of $(\um, \um', \uh)$. Therefore, it follows from Proposition \ref{P: sublinear + polynomial} that for all $(\um,\um',\uh)$ but the exceptional set from Lemma~\ref{L: coordinates of b}, we have that \footnote{This is the point of the argument where we need the bounds in Proposition \ref{P: sublinear + polynomial} to be quantitative, or at least uniform in $(\um,\um',\uh)$, since otherwise we cannot guarantee the positivity of the average in $(\um, \um', \uh)$ as we take $M, H$ to infinity. }
\begin{align}\label{E: final concat}
    \E\limits_{\uh\in[\pm H]^{s_1}}\E_{\um, \um'\in[\pm M]^{s_2}}\nnorm{f_1}_{\bgamma_1(\um, \um', \uh), \ldots, \bgamma_{s_3}(\um, \um', \uh)}^+ +\frac{1}{H}+\frac{1}{M}\gg  \veps^{O(1)}
\end{align}
for a positive integer $s_3 = O(1)$ and nonzero vectors $\bgamma_1(\um, \um', \uh), \ldots, \bgamma_{s_3}(\um, \um', \uh)$ belonging to the set
\begin{gather*}
    \Bigl\{D!\cdot\sum_{l\in[s_2]}(m_l'-m_l^{\eps_l})\sum_{\substack{\uu\in\CA_l:\\ |\uu|+1
    =D}}(\bbeta_{1D}-\bbeta_{w_{l\uu}D})\uh^\uu:\; D\in[d],\; \ueps\in\{0,1\}^{s_2}\setminus\{\underline{1}\}\Bigr\}\\
     \cup\; \Bigl\{\sum_{l\in[s_2]}\; \sum_{\substack{\uu\notin\CA_l:\\ |\uu|+1
    = d_{g_i}}}\;(\balpha_{1i}-\balpha_{w'_{l\uu}i})(m_l'-m_l^{\eps_l}) \uh^\uu:\; i\in [m_2+1, m],\; \ueps\in\{0,1\}^{s_2}\setminus\{\underline{1}\}\Bigr\}\\
    \cup\; \{\balpha_{1i}-\balpha_{ji}:\; i\in [m_1+1, m_2],\; j\in[0,\ell],\; d_{1j} = 0,\; d'_{1j} = i\}.
\end{gather*}

The hierarchy of the three sets above is then as follows. If $\bb_{1\underline{1}\um\um'\uh}(N) -\bb_{j\ueps\um\um'\uh}(N)$ contains a nonzero element from the first set, then the vectors $\bgamma_i(\um, \um', \uh)$ that it contributes will come from the first set. Otherwise, if it contains a nonzero element from the second set, then it will contribute vectors from the second set. If the contributions of the first two sets to $\bb_{1\underline{1}\um\um'\uh}(N) -\bb_{j\ueps\um\um'\uh}(N)$ are trivial, then only it contributes vectors from the third set. 

It is important to emphasize that nowhere above can one find {vectors} $\balpha_{1i}-\balpha_{ji}$ with $i\in[m_1]$, i.e., those corresponding to $g_i\ll \log$; this is a direct consequence of our assumption that the differences $\ba_1 - \ba_i$ for $i\in[0,\ell]\setminus\{j\}$ have a coordinate growing faster than $\log$.

\smallskip
 \textbf{Step 5: A final concatenation argument.}
\smallskip

Recalling \eqref{E: degrees of differences poly} and \eqref{E: degrees of differences nonpoly}, we deduce that the coefficients of $\bgamma_1, \ldots, \bgamma_{s_3}$ interpreted as polynomials in $\um, \um', \uh$ belong to the set  of nonzero vectors 
\begin{align*}
    &\{d_{1j}!\cdot(\bbeta_{1d_{1j}} - \bbeta_{jd_{1j}}):\; j\in[0,\ell]\setminus\{1\},\; d'_{1j}\leq m_2,\; d_{1j}>0\}\\
    \cup &\;     \{\balpha_{1d'_{1j}} - \balpha_{jd'_{1j}}:\; j\in[0,\ell]\setminus\{1\},\; m_1 < d'_{1j}\leq m_2,\; d_{1j} = 0\}\\ 
    \cup &\; \{\balpha_{1d'_{1j}} - \balpha_{jd'_{1j}}:\; j\in[0,\ell]\setminus\{1\},\; d'_{1j}> m_2\},
\end{align*}
which is precisely the set \eqref{E: set of coefficients} with the first two components swapped. The first subset of the union above comes from the leading coefficients of $\bp_1-\bp_j$ for those $j$ for which $\ba_1-\ba_j$ is a sum of a polynomial and a sublinear term; that these are the only coefficients that arise follows from the observation \eqref{E: description of A_l}. The second subset corresponds to the case when $\ba_1 - \ba_j$ has a nonpolynomial term of positive fractional degree, in which case $\balpha_{1d'_{1j}} - \balpha_{jd'_{1j}}$ is the leading coefficient of the nonpolynomial part of $\ba_1 - \ba_j$. Lastly, the third subset consists of the leading coefficients of $\ba_1 - \ba_j$ when $\ba_1 - \ba_j$ is sublinear. Taking the limsup as $M\to\infty$ in \eqref{E: final concat} followed by limsup as $H\to\infty$, and applying Proposition \ref{P: polynomial concatenation}, we deduce that
\begin{align*}
    \nnorm{f_1}_{\bgamma_1, \ldots, \bgamma_{s_4}}^+ \gg \veps^{O(1)}
\end{align*}
for some $s_{4}=O(1)$ and nonzero $\bgamma_1, \ldots, \bgamma_{s_4}$ in \eqref{E: set of coefficients}. Using Lemma \ref{L: seminorms of subgroups}, we can replace $d_{1j}!\cdot(\bbeta_{1d_{1j}} - \bbeta_{jd_{1j}})$ with simply $\bbeta_{1d_{1j}} - \bbeta_{jd_{1j}}$. Theorem \ref{T: box seminorm bound} follows by taking $s=s_4$.

\section{Upgrading to Host-Kra seminorm estimates}\label{S: HK control}

The next objective is to upgrade Theorem \ref{T: box seminorm bound} to Theorem \ref{T: HK control}, i.e. replace generalized box seminorm estimates by proper Host-Kra seminorm estimates for averages \eqref{E: general average 2} along pairwise independent Hardy sequences. We will prove this by a variant of the seminorm smoothing argument developed in \cite{FrKu22a}. The argument does not really use any particular properties of Hardy sequences other than the estimates provided by Theorem~\ref{T: box seminorm bound}, so we will carry it out in a more abstract form that makes the computations cleaner and may find further applications in the future. The setting in which we end up working is captured by the definition below.

\begin{definition}
    Let $\CQ =\{q_1, \ldots, q_m\}$ be an ordered collection of sequences $q_i: \N\to\R$ whose germs are linearly independent,\footnote{That means that for every  $N\in\N$, the restrictions of $q_1, \ldots, q_m$ to $[N, \infty)$ are $\R$-independent.} and fix an integer $0\leq m_1<m$.

    Let $a_1, \ldots, a_\ell\in \Span_\R\CQ$ be given by $a_j(n) = \sum_{i=1}^m \alpha_{ji} q_i(n)$. We define the \textit{degree} of $a_j$ via $$\deg a_j := \max\{i\in[m]:\; \alpha_{ji}\neq 0\},$$ and its \textit{complexity} by $\max\limits_{i\in[m]}\{|\alpha_{ji}|,\; 1/|\alpha_{ji}|:\; \alpha_{ji}\neq 0\}.$ We call the sequences $a_1, \ldots, a_\ell$ \textit{pairwise independent} if $(\alpha_{j(m_1+1)}, \ldots, \alpha_{jm})$ and $(\alpha_{j'(m_1+1)}, \ldots, \alpha_{j'm})$ are $\R$-linearly independent for all distinct $j,j'\in[0,\ell]$. We also denote $d_{jj'} = \max\{i\in[m]:\; \alpha_{ji}\neq\alpha_{j'i}\}$ to be the \textit{degree} of $a_j-a_{j'}$, setting $d_{jj'} = 0$ if $a_j = a_{j'}$.
    
    We call $\CQ$ \textit{good for smoothing} if for all $d,\ell, M\in\N$, $J\in\N_0$, sequences $a_1, \ldots, a_\ell$, $b_1, \ldots, b_J\in \Span_\R\CQ$ with $a_1, \ldots, a_\ell$ being pairwise independent and of complexity at most $M$, and $a_1$ having maximum degree among $a_1, \ldots, a_\ell$, systems $(X, \CX, \mu, T_1, \ldots, T_k)$, indexing tuples $\eta\in[k]^\ell$, and 1-bounded functions $f_1, \ldots, f_\ell\in L^\infty(\mu)$, we have
    \begin{multline}\label{E: gen box bound}
         \limsup_{N\to\infty}\sup_{\substack{\CD_1, \ldots, \CD_J \in \FD_d}} \sup_{|c_n|\leq 1}\norm{\E_{n\in [N]} c_{n}\cdot \prod_{j\in[\ell]}  T_{\eta_j}^{\floor{a_j(n)}} f_j \cdot \prod_{j\in[J]}\CD_j(\floor{b_j(n)})}_{L^2(\mu)}^{{O_{d,J,\ell, \CQ}(1)}}\\ 
         \ll_{d, J, k, \ell, M, \CQ} \nnorm{f_1}_{(\alpha_{1d_{10}}\be_{\eta_1})^s,\; (\alpha_{1d_{12}}\be_{\eta_1}-\alpha_{2d_{12}}\be_{\eta_2})^s,\; \ldots,\;  (\alpha_{1d_{1\ell}}\be_{\eta_1}-\alpha_{\ell d_{1\ell}}\be_{\eta_\ell})^s}^+
    \end{multline}
    for some $s = O_{d,J,\ell, \CQ}(1)$. 
\end{definition}

Whilst it is not necessary for the seminorm estimates above to be of polynomial shape (any uniform bound would do), this is what we have in our case, and so there is no harm in specifying the shape of seminorm control in our definition.

    Note that the definition above is sensitive to the ordering of $q_1, \ldots, q_m$ and the choice of $m_1.$ In our case, the ordered basis $q_1, \ldots, q_m$ of sequences will consist of 
   \begin{align}\label{E: good for smoothing family}
       g_1(n), \ldots, g_{m_2}(n), n, n^2, \ldots, n^d, g_{m_2+1}(n), \ldots, g_m(n)
   \end{align} 
   satisfying the assumptions of Theorem \ref{T: box seminorm bound}, 
   and pairwise independence of two sequences in the real span of \eqref{E: good for smoothing family} amounts to saying that any nontrivial real combination of $g_{m_1+1}(n), \ldots, g_m(n), n, \ldots, n^d$ grows faster than log. 

       Lastly, we remark that in contrast to the arguments from the previous sections, here we do not allow $f_2, \ldots, f_\ell$ to depend on $N$. This will be important later on, as in the proof of Theorem \ref{T: HK control}, we will transfer the control over the average between the functions $f_1, \ldots, f_\ell$, and for this to make sense, we need them to be fixed.
   
    Theorem \ref{T: box seminorm bound} and Proposition \ref{P: sublinear} then imply the following.
   \begin{theorem}[Hardy sequences are good for smoothing]\label{T: box seminorm bound rephrased}
			Let $d, m\in\N$, $0\leq m_1\leq m_2\leq m$, and  $g_1,...,g_m\in \mathcal{H}$ be Hardy functions satisfying the following growth conditions:
   \begin{enumerate}
       \item $1 \prec g_1(t) \cdots \prec g_m(t)\ll t^d;$ 
       \item $g_i(t)\ll\log t$ iff $i\in[m_1]$;
       \item $g_i(t)\prec t^\delta$ for every $\delta>0$ (i.e. $\fracdeg g_i = 0$) iff $i\in[m_2]$.
   \end{enumerate}
   Then the ordered family \eqref{E: good for smoothing family} is good for smoothing.
   \end{theorem}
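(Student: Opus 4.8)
The plan is to directly verify that the ordered family \eqref{E: good for smoothing family} satisfies the condition \eqref{E: gen box bound} defining ``good for smoothing'', by invoking Theorem~\ref{T: box seminorm bound} (when $a_1$ has positive fractional degree) together with Proposition~\ref{P: sublinear} (for the remaining subfractional case). The main point is a bookkeeping exercise: I must translate between the abstract ``degree'' and ``complexity'' language of the smoothing definition and the concrete hypotheses of Theorem~\ref{T: box seminorm bound}, checking that the three growth conditions match, that pairwise independence of $a_1, \ldots, a_\ell$ in $\Span_\R\CQ$ corresponds exactly to the running assumption ``$\ba_1 - \ba_j$ has a coordinate growing faster than $\log$'' for all $j\neq 1$ once we pass to the one-coordinate setting dictated by the indexing tuple $\eta$, and that the output seminorm directions $(\alpha_{1 d_{1j}}\be_{\eta_1} - \alpha_{j d_{1j}}\be_{\eta_j})^s$ coincide with the vectors $\bgamma_1, \ldots, \bgamma_s$ produced by Theorem~\ref{T: box seminorm bound} under the identification $\ba_j(n) = a_j(n)\be_{\eta_j}$.

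\textbf{Key steps, in order.} First I would fix $d, \ell, M, J$ and sequences $a_1, \ldots, a_\ell, b_1, \ldots, b_J \in \Span_\R\CQ$ as in the definition, with $a_1, \ldots, a_\ell$ pairwise independent of complexity $\leq M$ and $a_1$ of maximum degree, together with $\eta\in[k]^\ell$ and the system $(X, \CX, \mu, T_1, \ldots, T_k)$. I would set $\ba_j(n) := a_j(n)\be_{\eta_j} \in \CH^k$, so that $T_{\eta_j}^{\floor{a_j(n)}} = T^{\floor{\ba_j(n)}}$, and likewise regard each $b_j$ as a scalar Hardy sequence of growth $\ll t^d$ (note the $b_j$ need only satisfy the growth bound; the indexing of the dual functions $\CD_j \in \FD_d$ is already built into the definition of $\FD_d$). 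Writing $a_j = \sum_{i=1}^m \alpha_{ji} q_i$ with $q_i$ as in \eqref{E: good for smoothing family}, I decompose $\ba_j = \bu_j + \bp_j + \br_j$ exactly as in Theorem~\ref{T: box seminorm bound}: the strongly-nonpolynomial part $\bu_j = \sum_{i} (\textrm{coeff})\, g_i\,\be_{\eta_j}$ collects the $g_i$-terms, the polynomial part $\bp_j(t) = \sum_{i=0}^d (\textrm{coeff})\, t^i\, \be_{\eta_j}$ collects the monomial terms, and $\br_j = \mathbf{0}$ (there are no error terms here). Second, I verify the two hypotheses of Theorem~\ref{T: box seminorm bound}: condition (i), that $\ba_1 - \ba_j$ has a coordinate growing faster than $\log$ for all $j\in[0,\ell]\setminus\{1\}$, follows because pairwise independence of $a_1, \ldots, a_\ell$ in the sense of the smoothing definition says precisely that $(\alpha_{j(m_1+1)}, \ldots, \alpha_{jm})$ and $(\alpha_{1(m_1+1)}, \ldots, \alpha_{1m})$ are $\R$-linearly independent, hence $a_1 - a_j$ has a nonzero coefficient on some $q_i$ with $i > m_1$, i.e.\ on a basis element growing faster than $\log$; condition (ii), that $\ba_1$ has positive fractional degree (i.e.\ $d_{10} > 0$ or $d'_{10} > m_2$), is handled by splitting into cases. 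Third, in the case where $a_1$ involves some $q_i$ with $\fracdeg q_i > 0$ (i.e.\ a genuine monomial $n^i$ or some $g_i$ with $i > m_2$), hypothesis (ii) holds and Theorem~\ref{T: box seminorm bound} applies directly, yielding $s = O_{d,\ell,\CQ}(1)$ and vectors $\bgamma_1, \ldots, \bgamma_s$ in the set \eqref{E: set of coefficients}; I then check that \eqref{E: set of coefficients}, under the identification $\ba_j = a_j\be_{\eta_j}$, is exactly the set $\{\alpha_{1 d_{10}}\be_{\eta_1}\} \cup \{\alpha_{1 d_{1j}}\be_{\eta_1} - \alpha_{j d_{1j}}\be_{\eta_j} : j\in[2,\ell]\}$ appearing on the right-hand side of \eqref{E: gen box bound}, so passing to the seminorm along $(\bgamma_i)^{s}$ for each such vector (using the monotonicity of generalized box seminorms, Lemma~\ref{L: monotonicity property}, to pad to a common multiplicity) gives \eqref{E: gen box bound}. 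Fourth, in the remaining case where $a_1$ is subfractional (only $g_i$ with $i\leq m_2$ and no monomials), since $a_1$ has maximum degree all of $a_1, \ldots, a_\ell$ are subfractional, so all $\ba_j$ are subfractional tuples; then the average is a sublinear average twisted by dual sequences with $\ba_1$ of maximum growth, and Proposition~\ref{P: sublinear} applies directly — its hypotheses (at least one coordinate of $\ba_1 - \ba_j$ growing faster than $\log$ for $j\neq 1$, and $\balpha_{1m}\neq\mathbf{0}$, i.e.\ $\ba_1$ of maximum growth) are met by the same pairwise-independence argument and the maximum-degree assumption, and its conclusion is precisely \eqref{E: gen box bound} with the directions $\balpha_{1 d'_{10}}^s, (\balpha_{1 d'_{12}} - \balpha_{2 d'_{12}})^s, \ldots$, which again match after the identification.

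\textbf{Expected obstacle.} I do not anticipate a deep difficulty — the statement is essentially a repackaging of Theorem~\ref{T: box seminorm bound} and Proposition~\ref{P: sublinear} into the abstract ``good for smoothing'' template. The one place requiring genuine care is the precise matching of the index sets of direction vectors: in the abstract definition the degree $d_{jj'} = \max\{i : \alpha_{ji}\neq\alpha_{j'i}\}$ is taken with respect to the single fixed ordering of $\CQ$, whereas Theorem~\ref{T: box seminorm bound} separately tracks the polynomial degree $d_{jj'}$ and the strongly-nonpolynomial degree $d'_{jj'}$ and produces directions from the union \eqref{E: set of coefficients} governed by the relative ordering ``nonpolynomial-of-positive-fractional-degree $\succ$ polynomial $\succ$ subfractional''. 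I must confirm that this is exactly the ordering \eqref{E: good for smoothing family} built into $\CQ$ — that is, that $q_i$'s with $\fracdeg q_i > 0$ (whether monomials or large $g_i$) all sit above the monomials $n, \ldots, n^d$... wait, no: in \eqref{E: good for smoothing family} the monomials $n, \ldots, n^d$ come \emph{before} $g_{m_2+1}, \ldots, g_m$, so the leading coefficient of $a_1 - a_j$ in the $\CQ$-ordering is, when $a_1 - a_j$ has a nonzero coefficient on some $g_i$ with $i > m_2$, exactly that top $g_i$-coefficient, matching the first subset of \eqref{E: set of coefficients}; when it does not, the leading coefficient is the top monomial coefficient, matching the second subset; and when $a_1 - a_j$ is subfractional, it is the top $g_i$-coefficient with $m_1 < i \leq m_2$, matching the third subset. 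This case analysis is routine but must be written out carefully to be convincing. Once it is done, $s$ is taken to be the maximum of the finitely many $O_{d,\ell,\CQ}(1)$ values produced in the two cases, all implicit constants depend only on the allowed parameters, and the proof concludes.
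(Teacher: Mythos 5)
Your proposal is correct and follows essentially the same route as the paper: the paper's proof is exactly the two-case split on whether $\fracdeg a_1>0$ (apply Theorem~\ref{T: box seminorm bound}) or all of $a_1,\ldots,a_\ell$ are subfractional (apply Proposition~\ref{P: sublinear}). The hypothesis-checking and direction-matching you spell out (pairwise independence giving the ``faster than $\log$'' condition, maximum degree giving $\balpha_{1m}\neq\mathbf{0}$, and the ordering \eqref{E: unnatural ordering} matching \eqref{E: good for smoothing family}) is precisely the bookkeeping the paper leaves implicit.
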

   \begin{proof}
       Let $a_1, \ldots, a_\ell$ be linear combinations of sequences in \eqref{E: good for smoothing family}, and suppose that $a_1$ has maximum degree among $a_1, \ldots, a_\ell$ with respect to the ordering of \eqref{E: good for smoothing family}. We want to establish that the estimate \eqref{E: gen box bound} always holds. If $\fracdeg a_1 > 0$, then \eqref{E: gen box bound} holds by Theorem \ref{T: box seminorm bound}. Otherwise all of $a_1, \ldots, a_\ell$ are subfractional by the assumption of the maximum degree of $a_1$, and so the desired estimate follows from Proposition \ref{P: sublinear}.
   \end{proof}

    The purpose of this section is to prove the following Host-Kra seminorm control that implies Theorem \ref{T: HK control} as a special case.
    \begin{proposition}\label{P: smoothing}
        Let $d, k, \ell, m, M\in\N$, $J\in\N_0$ and $\eta\in[k]^\ell$. Suppose that the family of sequences $\CQ$
        is {good for smoothing}, and the sequences $a_1, \ldots, a_\ell\in\Span_\R\CQ$ are pairwise independent and of complexity at most $M$. Then there exists a positive integer $s=O_{d,J,\ell, \CQ}(1)$ such that for all systems $(X, \CX, \mu, T_1, \ldots, T_k)$, 1-bounded functions $f_1, \ldots, f_\ell\in L^\infty(\mu)$ and sequences $b_1, \ldots, b_J\in\Span_\R\CQ$, we have 
\begin{multline}\label{E: HK bound}
         \limsup_{N\to\infty}\sup_{\substack{
         \CD_1, \ldots, \CD_J \in \FD_d}} \sup_{|c_n|\leq 1}\norm{\E_{n\in [N]} c_{n}\cdot \prod_{j\in[\ell]}  T_{\eta_j}^{\floor{a_j(n)}} f_j \cdot \prod_{j\in[J]}\CD_j(\floor{b_j(n)})}_{L^2(\mu)}^{{O_{d,J,\ell, \CQ}(1)}}\\ 
         \ll_{d, J, k, \ell, M, \CQ} \nnorm{f_1}_{s, T_{\eta_1}}.
    \end{multline}
    \end{proposition}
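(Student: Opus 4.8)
\textbf{Proof strategy for Proposition \ref{P: smoothing}.} The plan is to follow the seminorm smoothing scheme of \cite{FrKu22a}, adapted to the setting of generalized box seminorms. The starting point is the box seminorm control furnished by the hypothesis that $\CQ$ is good for smoothing: after composing the integral with an appropriate power of a transformation so that $a_1$ attains maximum degree among $a_1, \ldots, a_\ell$ (this is legitimate because the set of leading coefficients $\alpha_{1d_{1j}}\be_{\eta_1} - \alpha_{jd_{1j}}\be_{\eta_j}$ is invariant under such a composition, exactly as in the deduction of Corollary \ref{C: sublinear} from Proposition \ref{P: sublinear}), the estimate \eqref{E: gen box bound} bounds the average in \eqref{E: HK bound} by a generalized box seminorm $\nnorm{f_1}^+_{(\alpha_{1d_{10}}\be_{\eta_1})^s, (\alpha_{1d_{12}}\be_{\eta_1}-\alpha_{2d_{12}}\be_{\eta_2})^s, \ldots}$. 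The goal is to replace all the ``off-diagonal'' directions $\alpha_{1d_{1j}}\be_{\eta_1} - \alpha_{jd_{1j}}\be_{\eta_j}$ (which may be genuine $\R^k$-vectors not proportional to $\be_{\eta_1}$) by copies of $\be_{\eta_1}$, arriving at a Host-Kra seminorm $\nnorm{f_1}_{s', T_{\eta_1}}$.

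The mechanism for this replacement is an inductive downward argument on $\ell$ (the number of functions), mirroring \cite{FrKu22a}. First I would use the inductive formula \eqref{E: inductive formula} to write $(\nnorm{f_1}^+_{\ldots})^{2^{(\ell-1)s}}$ as an average over parameters $(\bm_j, \bm_j')$ of $\nnorm{\Delta_{\ldots}f_1}^+$ along the single direction $(\alpha_{1d_{10}}\be_{\eta_1})^s$, i.e. along $T_{\eta_1}$. If this outer average of $T_{\eta_1}$-seminorms of multiplicative derivatives is large, then by the converse direction of the Gowers--Cauchy--Schwarz philosophy (a dual-function argument) there is a dual sequence of bounded level with respect to $T_{\eta_1}$ which correlates with many of these derivatives; unpacking the derivatives, this means the original average correlates with an expression involving $f_1$, its conjugates shifted along $a_1$, and a dual sequence. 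One then reinserts this correlation into the original average $\E_{n\in[N]} c_n \prod_j T_{\eta_j}^{\floor{a_j(n)}}f_j \cdot \prod_j \CD_j(\floor{b_j(n)})$, obtaining a new average of the same shape but in which the role of the ``$f_1$ slot'' has effectively been transferred to one of the other functions $f_j$: now $f_j$ sits in the slot whose iterate has maximum degree, and one runs the good-for-smoothing estimate again, this time controlling the average by a box seminorm of $f_j$. Crucially the off-diagonal directions for this new configuration are $\alpha_{jd_{j0}}\be_{\eta_j} - \alpha_{j'd_{jj'}}\be_{\eta_{j'}}$, and pairwise independence guarantees that the relevant differences are still nondegenerate so the estimate is not vacuous. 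Iterating, each pass either terminates (yielding a Host-Kra seminorm directly when the maximal-degree direction is already $\be_{\eta_1}$ up to scalar) or reduces the effective number of ``bad'' directions, and because there are only $\ell$ functions the process stabilizes after $O_\ell(1)$ steps.

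To make the transfer rigorous I would package the dual-sequence bookkeeping via Lemma \ref{L: removing duals finitary}: the key point, emphasized in the introduction, is that dual sequences evaluated at Hardy sequences of bounded growth can be removed by boundedly many applications of van der Corput, so they do not obstruct the iteration and merely inflate the parameter $J$ in a controlled way (the bound in \eqref{E: gen box bound} is uniform over $\CD_1, \ldots, \CD_J \in \FD_d$, which is exactly what allows dual sequences generated at one stage to be absorbed at the next). I would also need the comparison results from Section \ref{S: seminorms} (Lemmas \ref{L: seminorms of subgroups}, \ref{L: dilating seminorms}, \ref{L: plus vs normal seminorm}) to pass freely between $\nnorm{\cdot}^+$ and $\nnorm{\cdot}$, to discard the scalar factors $\alpha_{1d_{1j}}$, $d_{1j}!$, etc., and to relate a degree-$s$ box seminorm along $\langle \be_{\eta_1}\rangle$ with the Host-Kra seminorm $\nnorm{\cdot}_{s, T_{\eta_1}}$. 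All estimates stay polynomial throughout because every tool invoked (van der Corput, Gowers--Cauchy--Schwarz, concatenation, the good-for-smoothing hypothesis) is quantitative with polynomial losses.

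\textbf{Main obstacle.} The delicate point is the transfer step: after extracting a dual-function correlation from the box seminorm of $f_1$ and reinserting it into the original average, one must verify that the resulting configuration of iterates still satisfies the hypotheses needed to apply the good-for-smoothing estimate — in particular that some $a_j$ can be arranged to have strictly maximal degree and that pairwise independence of the relevant differences is preserved. This is precisely where the hypothesis of pairwise independence (rather than mere distinctness) is essential, and it is the analogue of the ``$T_1^{-\sqrt2}T_2^{\sqrt3}$-invariance'' issue flagged in the introduction: one cannot literally pass to functions invariant under a non-integer combination of transformations, so the argument must be carried out entirely at the level of seminorm inequalities and dual correlations, keeping track of which vector in $\R^k$ plays the role of the ``maximal direction'' at each stage. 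Handling the degenerate sub-cases where several $a_j$ share a leading coefficient (so that after composition the maximal-degree slot is not unique) requires the same disentangling maneuver used in Step 3 of the proof of Proposition \ref{P: sublinear}, applied here to the outer average over the derivative parameters.
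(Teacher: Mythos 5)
Your overall philosophy is the right one — a seminorm smoothing scheme in the spirit of \cite{FrKu22a} that passes control between the $f_j$'s via dual-function correlations, and you correctly identify the non-integer invariance as the central technical obstacle. However, your proposed opening move is a genuine gap. You begin by ``composing the integral with an appropriate power of a transformation so that $a_1$ attains maximum degree,'' citing the deduction of Corollary~\ref{C: sublinear} from Proposition~\ref{P: sublinear}. But that deduction is made \emph{precisely} in the situation with no dual twists; the paper flags immediately after Corollary~\ref{C: sublinear} that composing with $T^{-\floor{\ba_i(n)}}$ fails for averages twisted by dual sequences, because the resulting terms $T_{\eta_i}^{-\floor{a_i(n)}}\CD_j(\floor{b_j(n)})$ are no longer of the form demanded by $\FD_d$ and cannot be absorbed by Lemma~\ref{L: removing duals finitary}. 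Since the good-for-smoothing hypothesis \eqref{E: gen box bound} is stated with dual twists (indeed they are essential to the iteration, as you yourself note), your first step is unavailable when $J>0$, and even with $J=0$ the iteration immediately generates dual sequences for later passes. The paper avoids this by never composing: in the proof of Proposition~\ref{P: smoothing}, one picks an index $i$ for which $a_i$ has maximum degree and $w_{\eta_i}$ minimizes the nonzero coordinates of the type $w$, obtains Host--Kra control of $f_i$ via Proposition~\ref{P: iterated smoothing pairwise} applied ``with the role of $1$ played by $i$,'' and then uses Lemma~\ref{L: dual replacement} to trade $f_i$ for a dual sequence, reducing the \emph{length} $\ell$ by one.

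Two further places are underspecified in a way that conceals real work. First, while you correctly say the argument must stay ``entirely at the level of seminorm inequalities,'' the paper's resolution of the non-integer invariance problem is a concrete pair of tools: Lemma~\ref{L: U^1 inverse} shows that a degree-$1$ generalized box seminorm along $\langle\bv\rangle$ realizes as an integral against an \emph{approximately} $\bv$-invariant function, and Lemma~\ref{L: approximate invariance} is then the device that converts a factor $T_{j}^{\floor{\alpha_{j}a(n)}}u$ (with $u$ approximately $(\alpha_j\be_j-\alpha_{j'}\be_{j'})$-invariant) into a factor $T_{j'}^{\floor{\alpha_{j'}a(n)}}u'$. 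Without something like this, the ``transfer to the $f_j$ slot'' you describe is not actually achievable. Second, your termination argument (``because there are only $\ell$ functions the process stabilizes after $O_\ell(1)$ steps'') is not a proof of termination: the same set of functions can be revisited in many configurations. The paper terminates the recursion by a double induction, on the length $\ell$ and on the type $w$ ordered by variance, with the type operation $\sigma_{i_1i_2}$ lowering the type at the ``ping'' step of Proposition~\ref{P: smoothing pairwise} and the ``pong'' step invoking the length-$(\ell-1)$ case. You would need to set up an ordering of this kind to make your iteration well founded.
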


    In proving Proposition \ref{P: smoothing}, we will use the formalism developed in \cite{FrKu22a}. Consider the average
\begin{align}\label{general average}
	\E_{n\in [N]} c_n\cdot \prod_{j\in[\ell]}T_{\eta_j}^{\floor{a_j(n)}}f_j \cdot \prod_{j\in[J]}\CD_{j}(\floor{b_j(n)}).
\end{align}
We let $\ell$ be its \emph{length}, $m^* = \max_j\deg a_j$ be its \textit{degree}, and $\eta=(\eta_1,\ldots, \eta_\ell)\in[k]^\ell$ be its \emph{indexing tuple}.
Furthermore, we define
\begin{align*}
	\FL := \{j\in[\ell]\colon \ \deg a_j = m^*\}
\end{align*}
to be the set of indices corresponding to the sequences $a_1, \ldots, a_m$ of maximum degree.
The \emph{type} of \eqref{general average} is the tuple $w := (w_1, \ldots, w_k)\in[0, \ell]^k$ in which 
\begin{align*}
	w_l := |\{j\in \FL\colon \ \eta_j = l\}| = |\{j\in[\ell]\colon \ \eta_j = l,\ \deg a_j = m^*\}|,
\end{align*}
and it represents the number of times the transformation $T_l$ appears in the average with a sequence of maximum degree. For instance, given the ordered family $\CQ = \{n, n^{1/2}, n^{3/2}, n^{3/2}\log n\}$ (which respects the ordering that we have in our applications since we consider nonpolynomial functions of positive fractional degree before polynomials), the average
\begin{align*}
	\E_{n\in[N]}\, T_1^{\floor{\sqrt{2} n^{3/2}}}f_1\cdot T_1^{\floor{\sqrt{3}n^{1/2}}}f_2 \cdot T_2^{\floor{n^{3/2} + n}}f_3\cdot T_1^n f_4 \cdot T_2^{\floor{n^{3/2} + \sqrt{5} n}}f_5 \cdot \CD_{1}\Bigbrac{\floor{n^{3/2}\log n}}
\end{align*}
has length 5, degree 3, indexing tuple (1, 1, 2, 1, 2)  and type $(1,2)$ because the transformation $T_1$ appears only once with the sequence $n^{3/2}$ of maximum degree (at the index $j=1$) while $T_2$ has an iterate of maximum degree twice, at $j=3, 5$. It does not matter that the sequence $n^{3/2}\log n$ in the dual term has higher degree since our notion of type completely ignores how many dual terms we have and what sequences they involve.

The complexity of an average is measured by its length and type. Types $w$ with a fixed length $|w|:= w_1 + \cdots + w_{k}$ (which coincides with the length of an average of type $w$) are ordered by their variance: we set $w'<w$ if $w'$ has strictly higher variance than $w$. We remark here that comparing variances is the same as comparing second moments since we only compare tuples of the same lengths; hence equivalently $w'<w$ if $${w'_1}^2 + \cdots + {w'_k}^2 > w_1^2 + \cdots + w_k^2.$$  However, it is the variance rather than second moment that carries the intuition for why we take this particular ordering. Indeed, we think of the simplest averages as those in which all the leading terms correspond to the same transformation because for those averages, we can obtain a Host-Kra seminorm control immediately from the definition of being good for smoothing (see Proposition \ref{P: bounds for basic type} below). It is an easy exercise to show that the types of such averages maximize variance. By contrast, the most difficult averages to deal with are those with minimum variance, i.e. those in which each highest degree term corresponds to a different transformation.

In the induction procedure, the type of an average will be lowered through the following \textit{type operation}. For distinct integers $i_1,i_2\in [\ell]$ with $i_1\in\supp(w)$ (which we define to be $\supp(w) = \{l\in[\ell]:\ w_l>0\}$), we define 
\begin{align*}
	(\sigma_{i_1 i_2}w)_l := \begin{cases} w_l,\; &l \neq i_1, i_2\\
		w_{i_1}-1,\; &l = i_1\\
		w_{i_2} + 1,\; &l = i_2.
	\end{cases}.
\end{align*}
For instance, $\sigma_{12}(2, 3, 7) = (1, 4, 7)$. A simple computation shows that if $w_{i_1}\leq w_{i_2}$, then $\sigma_{i_1 i_2}w > w$, in which case $\sigma_{i_1 i_2}$ \textit{lowers the type} of $w$. Using this operation, we can for instance obtain the following chains of types:
\begin{align*}
	(4, 0, 0) < (3,1,0) < (2, 2, 0) < (2, 1, 1) \quad \textrm{and}\quad
	(0, 4, 0) < (1, 3, 0) < (2, 2, 0) < (2, 1, 1).
\end{align*}
We remark that this is not how the ordering on types of fixed length is defined in \cite{FrKu22a}, but the notion of order from \cite{FrKu22a} is a suborder of the current one in that $w'<w$ whenever $w'\prec w$, where $\prec$ is the order from \cite{FrKu22a}. Taking the (slightly more complicated) notion of type from \cite{FrKu22a} would thus work equally well.

The ordering $<$ on types organizes the way in which we pass from averages of indexing tuple $\eta$ to averages of indexing tuple $\eta'$. After applying the type operations finitely many types to a type $w'$, we arrive at a type $w$ with the property that $w_l = 0$ for all but one value $l\in\FL$; we call these types \textit{basic}.  For instance, $(4, 0, 0)$ and $(0, 4, 0)$ in the example above are basic types but $(3, 1, 0), (1, 3, 0), (2,2,0), (2,1,1)$ are not. Basic types are named so because they will serve as the basis for our induction procedure in that whenever the average has a basic type and $a_1$ is a sequence of maximum degree, we immediately obtain the claimed result.
\begin{proposition}\label{P: bounds for basic type}
    Let $d, k, \ell, m, M\in\N$ and $J\in\N_0$. Suppose that the family of sequences $\CQ$ is {good for smoothing}, and the sequences $a_1, \ldots, a_\ell\in \Span_\R\CQ$ are pairwise independent and of complexity at most $M$. Suppose that $a_1$ is a sequence of maximum degree and the tuple $\eta\in[k]^\ell$ is basic.\footnote{By abuse of notation, we say that the indexing tuple of an average is basic if the average itself is also basic. This is an abuse of notation because the type depends on $\eta|_\FL$ rather than $\eta$, and to know $\FL$, we also need to know the sequences $a_1, \ldots, a_\ell$.} Then there exists a positive integer $s=O_{d,J,\ell, \CQ}(1)$ such that for all systems $(X, \CX, \mu, T_1, \ldots, T_k)$,  1-bounded functions $f_1, \ldots, f_\ell\in L^\infty(\mu)$ and sequences $b_1, \ldots, b_J\in\Span_\R\CQ$, the bound \eqref{E: HK bound} holds. 
\end{proposition}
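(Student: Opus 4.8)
The plan is to reduce the average in \eqref{general average}, with $\eta$ basic and $a_1$ of maximum degree, to a form where the definition of ``good for smoothing'' applies directly and already outputs a Host--Kra seminorm. Since $\eta$ is basic, there is a single transformation index $l^*\in[k]$ such that every $a_j$ of maximum degree $m^*$ satisfies $\eta_j = l^*$; in particular $\eta_1 = l^*$. The sequences $a_j$ with $\deg a_j < m^*$ (call their index set $\FL^c$) are grouped separately. First I would fold all the low-degree iterates $T_{\eta_j}^{\floor{a_j(n)}}f_j$ for $j\in\FL^c$ into dual terms: actually, more simply, I would observe that by a van der Corput / PET reduction (the growth condition is that $a_1$ has strictly maximal degree, so all differences $a_1 - a_j$ for $j\in\FL^c$ have degree $m^*$, hence grow faster than $\log$ provided $m^* > m_1$, which holds since $a_1$ is not subfractional in the relevant case) these lower order terms do not obstruct the estimate. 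The cleanest route, however, is to note that if $a_1$ has maximum degree and $\eta$ is basic, then \emph{all} sequences of maximum degree are attached to $T_{l^*}$, so the family $\{a_j : j\in\FL\}$ together with the pairwise independence hypothesis is exactly the input to \eqref{E: gen box bound} with $\eta_j = l^*$ for all $j\in\FL$: the resulting seminorm
\begin{align*}
    \nnorm{f_1}_{(\alpha_{1d_{10}}\be_{l^*})^s,\; (\alpha_{1d_{12}}\be_{l^*}-\alpha_{2d_{12}}\be_{l^*})^s,\; \ldots}^+
\end{align*}
has all its directions proportional to $\be_{l^*}$.

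The key point is that a generalized box seminorm $\nnorm{f_1}^+_{(\beta_1\be_{l^*})^{s}, (\beta_2 \be_{l^*})^{s}, \ldots}$ whose directions are all nonzero scalar multiples of a single unit vector $\be_{l^*}$ is comparable, up to the rescaling property (Lemma~\ref{L: dilating seminorms}) and passing between $\nnorm{\cdot}^+$ and $\nnorm{\cdot}$ (Lemma~\ref{L: plus vs normal seminorm}), to an ordinary Host--Kra seminorm $\nnorm{f_1}_{s', T_{l^*}}$ for $s' = O(1)$. Indeed, each group $\langle \beta_i \be_{l^*}\rangle$ is a finite-index subgroup of $\langle \be_{l^*}\rangle$ once we clear denominators (the $\beta_i$'s are real, but the generalized box seminorm along $\langle \beta \be_{l^*}\rangle$ only depends on the group up to commensurability in the sense of Lemma~\ref{L: dilating seminorms}, and by Lemma~\ref{L: seminorms of subgroups} and the invariance property we may replace each by $\langle \be_{l^*}\rangle$ itself). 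After this replacement, $\nnorm{f_1}^+_{\langle \be_{l^*}\rangle^{s}}$ is by definition the box seminorm along $s$ copies of $\langle \be_{l^*}\rangle$, which equals $\nnorm{f_1}_{s, T_{l^*}}$ up to one extra averaging coming from the plus (Lemma~\ref{L: plus vs normal seminorm}), i.e. $\nnorm{f_1}^+_{\langle \be_{l^*}\rangle^s} \leq \nnorm{f_1}_{s+1, T_{l^*}}$. Composing these comparisons and absorbing all the $O(1)$ losses into the exponent, we obtain \eqref{E: HK bound} with $\eta_1 = l^*$, which is exactly the claim.

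So the proof has three moves: (i) use that $\eta$ is basic to identify the common index $l^*$ and reorganize the average so that every maximal-degree iterate acts by $T_{l^*}$, noting that the pairwise independence hypothesis for $a_1, \ldots, a_\ell$ is inherited; then (ii) invoke the ``good for smoothing'' property \eqref{E: gen box bound} (available by hypothesis, e.g. via Theorem~\ref{T: box seminorm bound rephrased} in the Hardy case) to bound the average by $\nnorm{f_1}^+$ of a generalized box seminorm whose directions $\alpha_{1d_{1j}}\be_{l^*} - \alpha_{jd_{1j}}\be_{l^*}$ are all scalar multiples of $\be_{l^*}$ --- here we use crucially that for $j$ with $\eta_j \ne l^*$ we necessarily have $d_{1j} = m^* = \deg a_1$ so that the direction is $\alpha_{1m^*}\be_{l^*}$, still proportional to $\be_{l^*}$, while for $j$ with $\eta_j = l^*$ the direction is manifestly proportional to $\be_{l^*}$; and (iii) collapse this box seminorm to a genuine Host--Kra seminorm $\nnorm{f_1}_{s, T_{l^*}}$ using Lemmas~\ref{L: dilating seminorms}, \ref{L: seminorms of subgroups}, and \ref{L: plus vs normal seminorm}. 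The main obstacle is step (iii): one must be careful that the scalars $\alpha_{jd_{1j}}$ can be irrational, so the groups $\langle \alpha \be_{l^*}\rangle$ are not literally subgroups of $\Z^k$; the resolution is that the generalized box seminorm machinery of Section~\ref{S: seminorms} was built precisely to handle real subgroups of $\R^k$, and the rescaling property Lemma~\ref{L: dilating seminorms} shows $\nnorm{f}^+_{(\alpha\be_{l^*})^s} \asymp_{\alpha} \nnorm{f}^+_{\be_{l^*}^s}$ uniformly once $\alpha$ is bounded away from $0$ and $\infty$ (which it is, by the complexity bound $M$), after which $\nnorm{f}^+_{\be_{l^*}^s}$ is literally the classical box/Host--Kra seminorm. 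A secondary subtlety is the degenerate possibility that some $\alpha_{jd_{1j}}\be_{l^*} - \alpha_{j'd_{1j'}}\be_{l^*}$ vanishes, but a vanishing direction only makes the seminorm larger (monotonicity, Lemma~\ref{L: monotonicity property}), so it can be harmlessly discarded or replaced by $\be_{l^*}$.
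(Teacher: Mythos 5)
Your proposal is correct and follows essentially the same route as the paper's proof: invoke the good-for-smoothing bound \eqref{E: gen box bound} (legitimate because $a_1$ has maximum degree), observe that basicness forces every direction $\alpha_{1d_{1j}}\be_{\eta_1}-\alpha_{jd_{1j}}\be_{\eta_j}$ to be a real multiple of $\be_{\eta_1}$ (either $\deg a_j<\deg a_1$, in which case $d_{1j}=m^*$ and $\alpha_{jd_{1j}}=0$, or $\deg a_j=\deg a_1$, in which case $\eta_j=\eta_1$ by basicness), and then collapse the resulting generalized box seminorm to $\nnorm{f_1}_{s,T_{\eta_1}}$ via Lemmas~\ref{L: dilating seminorms} and \ref{L: plus vs normal seminorm}; the opening aside about folding lower-degree iterates into duals or running PET is unnecessary, and you rightly abandon it. One remark in your final paragraph is wrong, though it does not damage the argument because the situation it addresses never occurs: the ``degenerate possibility'' of a vanishing direction cannot arise, since for $\deg a_j<\deg a_1$ the scalar is $\alpha_{1m^*}\neq 0$, for $\deg a_j=\deg a_1$ it is $\alpha_{1d_{1j}}-\alpha_{jd_{1j}}$, nonzero by the very definition of $d_{1j}$, and $\alpha_{1d_{10}}\neq 0$ for the $j=0$ term; moreover your proposed remedy is backwards, since by Lemma~\ref{L: seminorms of subgroups} the seminorm along a larger group is controlled by the one along a smaller group, so a bound featuring the trivial direction could not be upgraded to one along $\langle\be_{\eta_1}\rangle$ by ``monotonicity''. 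The correct move is simply to record the non-vanishing, as the paper does. (A finer point: the complexity bound $M$ gives an upper bound on the scalars but not a lower bound on differences such as $\alpha_{1d_{1j}}-\alpha_{jd_{1j}}$, so ``bounded away from $0$ by $M$'' is not literally justified; the paper's own appeal to Lemma~\ref{L: dilating seminorms} is equally terse here, so this is not a deviation from its argument.)
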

\begin{proof}
Let $a_j(n) = \sum_{i=1}^m \alpha_{ji} q_i(n)$ for each $j\in[\ell]$.
    From the property of being good for smoothing and the assumption of maximum degree of $a_1$, we immediately get the bound \eqref{E: gen box bound}. It remains to show that each vector
    \begin{align}\label{E: coefficientvectors}
        \alpha_{1d_{10}}\be_{\eta_1},\; \alpha_{1d_{12}}\be_{\eta_1}-\alpha_{2d_{12}}\be_{\eta_2},\; \ldots,\;  \alpha_{1d_{1\ell}}\be_{\eta_1}-\alpha_{\ell d_{1\ell}}\be_{\eta_\ell}
    \end{align}
    appearing in the generalized box seminorm from \eqref{E: gen box bound} is a nonzero real multiple of $\be_{\eta_1}$; if that is the case, then the result will follow from Lemma \ref{L: dilating seminorms}. Since the average is of basic type and $a_1$ has maximum degree, one of the two things can happen for each $j\in[0,\ell]\setminus\{1\}$: either $\deg a_j < \deg a_1$, in which case $$\alpha_{1d_{1j}} \be_{\eta_1} - \alpha_{jd_{1j}} \be_{\eta_j} = \alpha_{1d_{1j}} \be_{\eta_1},$$ or $\deg a_j = \deg a_1$, in which case $\eta_j = \eta_1$, so that  $$\alpha_{1d_{1j}} \be_{\eta_1} - \alpha_{jd_{1j}} \be_{\eta_j} = (\alpha_{1d_{1j}} - \alpha_{jd_{1j}}) \be_{\eta_1}.$$ One way or another, we end up with a nonzero real multiple of $\be_{\eta_1}$.
\end{proof}

When the type of an average is not basic, we will prove Proposition \ref{P: smoothing} by inducting on the type and length of the average, reducing both until we reach an average of basic type. The length of the average in the induction process will be reduced using the proposition below. 
\begin{proposition}\label{P: iterated smoothing pairwise}
        Let $d, k, \ell, m, M\in\N$ and $J\in\N_0$. Suppose that the family of sequences $\CQ$ is {good for smoothing}, and the sequences $a_1, \ldots, a_\ell\in\Span_\R\CQ$ are pairwise independent and of complexity at most $M$. Suppose that $a_1$ is a sequence of maximum degree, $\eta\in[k]^\ell$ is a tuple of type $w$, and $w_{\eta_1}$ is minimal among the nonzero coordinates of $w$. Then there exists a positive integer $s=O_{d,J,\ell, \CQ}(1)$ such that for all systems $(X, \CX, \mu, T_1, \ldots, T_k)$, 1-bounded functions $f_1, \ldots, f_\ell\in L^\infty(\mu)$ and sequences $b_1, \ldots, b_J\in\Span_\R\CQ$, the bound \eqref{E: HK bound} holds. 
\end{proposition}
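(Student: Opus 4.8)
The plan is to prove \eqref{E: HK bound} by a double induction: an outer induction on the type $w$ of the indexing tuple $\eta$ (ordered as in the discussion preceding Proposition~\ref{P: bounds for basic type}), with an inner induction on the length $\ell$, Proposition~\ref{P: bounds for basic type} serving as the base case. After normalising we may assume $f_1, \ldots, f_\ell$ are $1$-bounded, and we write $\delta$ for the quantity on the left of \eqref{E: HK bound} and $m^* = \max_j \deg a_j = \deg a_1$. First I would dispose of the base case: if $w$ is a \emph{basic} type, then $a_1$ has maximal degree and $\eta$ is basic, so Proposition~\ref{P: bounds for basic type} gives \eqref{E: HK bound} verbatim. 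So assume $w$ is not basic. Because $w_{\eta_1}$ is minimal among the nonzero coordinates of $w$ and $w$ is not basic, there is an index $j_0 \in \FL$ with $\eta_{j_0} =: \eta' \neq \eta_1$ and $w_{\eta'} \geq w_{\eta_1} \geq 1$.

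Next, since $\CQ$ is good for smoothing and $a_1$ has maximal degree, estimate \eqref{E: gen box bound} bounds $\delta^{O(1)}$ by $\nnorm{f_1}^+$ along the groups generated by $\alpha_{1 m^*}\be_{\eta_1}$ and by $\mathbf{v}_j := \alpha_{1 d_{1j}}\be_{\eta_1} - \alpha_{j d_{1j}}\be_{\eta_j}$ for $j = 2, \ldots, \ell$, each with multiplicity $s$. The first direction is a nonzero multiple of $\be_{\eta_1}$. If every $\mathbf{v}_j$ is likewise a multiple of $\be_{\eta_1}$ we are done immediately by Lemma~\ref{L: dilating seminorms}; otherwise, using pairwise independence, I would select a genuinely diagonal direction $\mathbf{v}_{j_0} = \alpha\be_{\eta_1} - \beta\be_{\eta'}$ with $\beta \neq 0$, for the index $j_0$ above.

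The heart of the argument is the smoothing step, which I would carry out following the seminorm smoothing scheme of \cite{FrKu22a}. Supposing $\delta$ is above the relevant threshold, the box seminorm estimate together with the inductive and monotonicity formulas (Lemmas~\ref{L: monotonicity property}, \ref{L: plus vs normal seminorm}, and the dual identity \eqref{dual identity}) lets me replace $f_1$ inside the original average by a finite linear combination of level-$O(1)$ dual functions of the generalised box seminorm along $\langle \mathbf{v}_{j_0}\rangle$ (and the remaining directions), at the cost of a negligible error; it is here that the hypothesis that $f_2, \ldots, f_\ell$ do not depend on $N$ is essential, so that the replacement is uniform in $N$. Substituting such a dual function $\CD$ for $f_1$ and composing the integral with $T_{\eta_1}^{-\floor{a_1(n)}}$, I would check that $T_{\eta'}$ is no longer attached to a term of maximal degree: the $a_{j_0}$-contribution is converted into the lower-degree difference $a_{j_0} - a_1$ at $\eta'$, while the bounded-growth $T_{\eta_1}$- and $T_{\eta'}$-iterates spawned by the dual parameters are absorbed into a dual sequence and removed via Lemma~\ref{L: removing duals finitary}. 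The net effect on the type is exactly the operation $\sigma_{\eta' \eta_1}$; since $w_{\eta_1}$ is minimal, this \emph{lowers} the type (or, if the $j_0$-function disappears entirely, strictly decreases the length). Either way the resulting average is covered by the outer (resp. inner) inductive hypothesis and is controlled by $\nnorm{\CD}_{s', T_{\eta_1}}$, hence by $\nnorm{f_1}_{s', T_{\eta_1}}$ after unwinding $\CD$ with the Gowers--Cauchy--Schwarz inequality (Lemma~\ref{L: GCS}). Iterating this reduction finitely many times eliminates every diagonal direction and yields \eqref{E: HK bound}.

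The main obstacle, and the part that requires genuine care rather than bookkeeping, is verifying that the substitution-and-conjugation in the smoothing step really does realise the type operation $\sigma_{\eta'\eta_1}$ (or a strict length drop) and does not reintroduce maximal-degree terms at $\eta'$: this forces one to control precisely how the bounded-growth iterates coming from the dual parameters interact with the Hardy and polynomial iterates in $n$, and to keep every implicit constant and every seminorm degree of the shape $O_{d, J, \ell, \CQ}(1)$ throughout. This is the content I would import, with the necessary adaptations to generalised box seminorms and to the abstract ``good for smoothing'' setting, from the corresponding argument in \cite{FrKu22a}.
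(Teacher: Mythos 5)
Your high-level plan (double induction on type and length, with Proposition~\ref{P: bounds for basic type} as the base case and a type-lowering step using pairwise independence) matches the paper's scheme, but the mechanism you describe for the smoothing step is not correct, and the missing piece is precisely the one that carries the weight of the argument.

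First, the concrete error: you claim that after substituting a dual function for $f_1$ and ``composing the integral with $T_{\eta_1}^{-\floor{a_1(n)}}$, \ldots the $a_{j_0}$-contribution is converted into the lower-degree difference $a_{j_0}-a_1$ at $\eta'$.'' But composing with $T_{\eta_1}^{-\floor{a_1(n)}}$ only alters iterates of $T_{\eta_1}$; since $\eta'\neq\eta_1$, the iterate $T_{\eta'}^{\floor{a_{j_0}(n)}}$ is untouched and stays maximal degree. What actually lowers the type in the paper is Lemma~\ref{L: approximate invariance}: when the structured function $u$ replacing $f_1$ is a product of approximately $\bv_{j_0}$-invariant functions (with $\bv_{j_0}=\alpha\be_{\eta_1}-\beta\be_{\eta'}$), the iterate $T_{\eta_1}^{\floor{\alpha a_1(n)}}u$ can be rewritten as $T_{\eta'}^{\floor{\beta a_1(n)}}u'$, moving the maximal-degree iterate at index $1$ from $T_{\eta_1}$ to $T_{\eta'}$; this is the type operation $\sigma_{\eta_1\eta'}$. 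Related to this, taking a full level-$O(1)$ dual function of the generalized box seminorm along \emph{all} of $\bv_{j_0}$ and the remaining directions simultaneously, as you propose, does not give you approximate $\bv_{j_0}$-invariance: only the degree-$1$ inverse theorem (Lemma~\ref{L: U^1 inverse}) produces approximately invariant functions, and the paper therefore peels off exactly one direction at a time (Proposition~\ref{P: smoothing pairwise}), using Lemma~\ref{L: dual-difference interchange} with $s'=1$ and its option (ii).

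Second, a single pass cannot close the induction. After the swap, the structured function sits under $T_{\eta'}$, not $T_{\eta_1}$, so the inductive hypothesis for the lower type yields Host--Kra control with respect to $T_{\eta'}$ on the \emph{other} function $f_{j_0}$, not the control $\nnorm{f_1}_{s',T_{\eta_1}}$ you want. The paper deals with this by a two-step \emph{ping--pong} (Proposition~\ref{P: smoothing pairwise}): the ``ping'' yields an auxiliary bound $\nnorm{f_{j_0}}_{\bv_1,\ldots,\bv_s,\be_{\eta_{j_0}}^{s_1}}\gg\veps^{O(1)}$ on the other function; the ``pong'' then repeats the argument with $f_{j_0}$ in the role of $f_1$, but now the approximately $\be_{\eta_{j_0}}$-invariant functions obtained from this auxiliary control are dual sequences of $T_{\eta_{j_0}}$ and are absorbed by Lemma~\ref{L: removing duals finitary}, dropping the length by one and letting the inner induction land on $\nnorm{f_1}_{\bv_1,\ldots,\bv_s,\be_{\eta_1}^{s'}}$. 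This two-pass transfer is exactly the step your sketch leaves out; your ``unwinding $\CD$ with Gowers--Cauchy--Schwarz'' does not account for the fact that the controlling transformation has changed index. Finally, the paper's proof of the present proposition is itself a one-line reduction: starting from \eqref{E: gen box bound} (converted to an unplused seminorm via Lemma~\ref{L: plus vs normal seminorm}) and iterating Proposition~\ref{P: smoothing pairwise} to replace each non-$\be_{\eta_1}$ direction by boundedly many copies of $\be_{\eta_1}$; everything substantive lives in that proposition.
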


Proposition \ref{P: iterated smoothing pairwise} will be derived from an iterated application of the result below. 
\begin{proposition}  \label{P: smoothing pairwise}
        Let $d, k, \ell, m, M\in\N$ and $J\in\N_0$. Suppose that the family of sequences $\CQ = \{q_1, \ldots, q_m\}$ is {good for smoothing}, and the sequences $a_1, \ldots, a_\ell\in\Span_\R\CQ$ given by $a_j(n) = \sum_{i\in[m]}\alpha_{ji}q_i(n)$ 
    are pairwise independent and of complexity at most $M$. Suppose that $a_1$ is a sequence of maximum degree, $\eta\in[k]^\ell$ is a tuple of type $w$, and $w_{\eta_1}$ is minimal among the nonzero coordinates of $w$. Then for all vectors  $\bv_1, \ldots, \bv_{s+1}$ in \eqref{E: coefficientvectors} there exists a positive integer $s'=O_{d,J,\ell, s, \CQ}(1)$ with the following property: if $(X, \CX, \mu, T_1, \ldots, T_k)$ is a system and 
        \begin{multline}\label{E: input smoothing bound}
         \limsup_{N\to\infty}\sup_{\substack{\CD_1, \ldots, \CD_J \in \FD_d}} \sup_{|c_n|\leq 1}\norm{\E_{n\in [N]} c_{n}\cdot \prod_{j\in[\ell]}  T_{\eta_j}^{\floor{a_j(n)}} f_j \cdot \prod_{j\in[J]}\CD_j(\floor{b_j(n)})}_{L^2(\mu)}^{{O_{d,J,\ell, s, \CQ}(1)}}\\ 
         \ll_{d, J, k, \ell, M, s, \CQ} \nnorm{f_1}_{\bv_1, \ldots, \bv_{s+1}}
    \end{multline}
    holds for all 1-bounded functions $f_1, \ldots, f_\ell\in L^\infty(\mu)$ and sequences $b_1, \ldots, b_J\in\Span_\R\CQ$, then 
            \begin{multline}\label{E: output smoothing bound}
         \limsup_{N\to\infty}\sup_{\substack{\CD_1, \ldots, \CD_J \in \FD_d}} \sup_{|c_n|\leq 1}\norm{\E_{n\in [N]} c_{n}\cdot \prod_{j\in[\ell]}  T_{\eta_j}^{\floor{a_j(n)}} f_j \cdot \prod_{j\in[J]}\CD_j(\floor{b_j(n)})}_{L^2(\mu)}^{{O_{d,J,\ell, s, \CQ}(1)}}\\ 
         \ll_{d, J, k, \ell, M, s, \CQ} \nnorm{f_1}_{\bv_1, \ldots, \bv_{s}, \be_{\eta_1}^{s'}}
    \end{multline}
    also holds for all 1-bounded functions $f_1, \ldots, f_\ell\in L^\infty(\mu)$ and sequences $b_1, \ldots, b_J\in\Span_\R\CQ$.
\end{proposition}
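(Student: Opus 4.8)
\textbf{Proof proposal for Proposition~\ref{P: smoothing pairwise}.}
The plan is to run a single seminorm smoothing step in the style of \cite{FrKu22a}, using the hypothesis \eqref{E: input smoothing bound} as the starting point. First I would unpack the degree-$1$ generalized box seminorm $\nnorm{f_1}_{\bv_1, \ldots, \bv_{s+1}}$ using the inductive formula \eqref{E: inductive formula}: it is (up to a power) an average over $\bm_{s+1}, \bm_{s+1}'\in\langle \bv_{s+1}\rangle$ of $\nnorm{\Delta_{(\bm_{s+1}, \bm_{s+1}')}f_1}_{\bv_1, \ldots, \bv_s}$. The key point is that $\bv_{s+1}$ lies in the list \eqref{E: coefficientvectors}, so either $\bv_{s+1}$ is a nonzero multiple of $\be_{\eta_1}$ (in which case we are already done by Lemma~\ref{L: dilating seminorms} after folding it into the $\be_{\eta_1}$ slots), or $\bv_{s+1} = \alpha_{1d_{1j}}\be_{\eta_1} - \alpha_{jd_{1j}}\be_{\eta_j}$ for some $j$ with $\eta_j\neq\eta_1$ and $\deg a_j = \deg a_1 = m^*$, i.e.\ $j\in\FL$. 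In the latter case the strategy is to ``freeze'' the direction $\bv_{s+1}$ by passing to the extension of the system on which $T_1^{-\alpha_{1d_{1j}}}T_{\eta_j}^{\alpha_{jd_{1j}}}$ (more precisely the relevant one-parameter real flow acting along $\langle \bv_{s+1}\rangle$, as in Lemma~\ref{L: convergence} and the discussion following it) becomes an honest measure-preserving action, and then average over that action.

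Concretely, I would proceed as follows. Apply the van der Corput inequality (Lemma~\ref{L: finitary van der Corput}) once to the average in \eqref{general average}, differencing in a direction that realizes a translate of $\langle \bv_{s+1}\rangle$; this is where the minimality assumption on $w_{\eta_1}$ is used, because it guarantees that after the van der Corput step the index $1$ can be made to have the smallest multiplicity, so that the leading terms can be reorganized to reduce the type, and that $f_1$ stays in the ``privileged'' slot controlled by the hypothesis. The resulting averages have $2\ell$ functions, still of maximum degree $m^*$ but now with $f_1$ replaced by a multiplicative derivative $\Delta_{(\bm, \bm')}f_1$ along $\langle\bv_{s+1}\rangle$, twisted by additional dual sequences of bounded degree (the derivatives of the other $f_j$'s are absorbed into the dual part via Lemma~\ref{L: removing duals finitary}, exactly as in the proof of Proposition~\ref{P: sublinear}). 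Now apply the induction hypothesis \eqref{E: input smoothing bound} (valid because the family is good for smoothing, $a_1$ still has maximum degree, and the pairwise independence and complexity bounds are preserved) to each of these averages over the shift parameter; this yields
\begin{align*}
  \veps^{O_{d,J,\ell,s,\CQ}(1)} \ll_{d,J,k,\ell,M,s,\CQ} \E_{\bm, \bm'\in\langle\bv_{s+1}\rangle} \nnorm{\Delta_{(\bm,\bm')}f_1}_{\bv_1, \ldots, \bv_{s+1}},
\end{align*}
where $\veps$ denotes the left-hand side of \eqref{E: output smoothing bound} (without the final power). The crucial upgrade is then: the extra differencing along $\langle\bv_{s+1}\rangle$ together with a Cauchy--Schwarz / Gowers--Cauchy--Schwarz manipulation allows one to replace one copy of $\bv_{s+1}$ by $\be_{\eta_1}$. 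This is the heart of \cite{FrKu22a}'s smoothing: because $\bv_{s+1} = \alpha_{1d_{1j}}\be_{\eta_1} - \alpha_{jd_{1j}}\be_{\eta_j}$, a derivative along $\langle\bv_{s+1}\rangle$ combined with a derivative along $\langle\be_{\eta_j}\rangle$ (which is freely available since we may introduce an extra averaging over $\langle\be_{\eta_j}\rangle$ inside the seminorm via the monotonicity property, Lemma~\ref{L: monotonicity property}) produces, after concatenation via Lemma~\ref{L: sumset lemma}, a derivative along $\langle\be_{\eta_1}\rangle$; iterating this a bounded number of times converts all $\bv_{s+1}$-directions into $\be_{\eta_1}$-directions with only a polynomial loss, giving \eqref{E: output smoothing bound} with $s' = O_{d,J,\ell,s,\CQ}(1)$.

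The main obstacle, and the step where the present setting genuinely departs from \cite{FrKu22a}, is making rigorous the passage to ``$T_1^{-\alpha_{1d_{1j}}}T_{\eta_j}^{\alpha_{jd_{1j}}}$-invariance'' when $\alpha_{1d_{1j}}, \alpha_{jd_{1j}}$ are irrational and this is not a well-defined transformation --- precisely the issue flagged in the introduction around \eqref{E: average example 2}. I would resolve this by working throughout with the real-flow formalism of Lemma~\ref{L: convergence}: the generalized box seminorm $\nnorm{\cdot}_{\bv_1, \ldots, \bv_{s+1}}$ is already defined along the real subgroup $\langle\bv_{s+1}\rangle\subseteq\R^k$, so ``freezing'' the $\bv_{s+1}$-direction simply means conditioning onto the factor invariant under the averaging operator $\E_{\bm\in\langle\bv_{s+1}\rangle}T^{\floor{\bm}}(\cdot)$, which is legitimate by the invariance property (the unnamed lemma following Lemma~\ref{L: triangle}) and the rescaling property (Lemma~\ref{L: dilating seminorms}). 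The bookkeeping of dual sequences (ensuring they remain of bounded degree $O_d(1)$ after the van der Corput and smoothing steps, so that Lemma~\ref{L: removing duals finitary} and the good-for-smoothing hypothesis continue to apply) and the verification that the type strictly decreases at each application of Proposition~\ref{P: smoothing pairwise} --- so that the outer induction in Proposition~\ref{P: iterated smoothing pairwise} and then Proposition~\ref{P: smoothing} terminates --- are routine but must be tracked carefully; I would defer all such coefficient-chasing to the explicit computation, following the template of \cite[Section~5]{FrKu22a} mutatis mutandis.
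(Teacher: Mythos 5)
Your treatment of the easy cases ($\bv_{s+1}$ a multiple of $\be_{\eta_1}$) matches the paper, but the main step of your argument has a genuine gap. You claim that after differencing, ``a derivative along $\langle\bv_{s+1}\rangle$ combined with a derivative along $\langle\be_{\eta_j}\rangle$ produces, after concatenation via Lemma~\ref{L: sumset lemma}, a derivative along $\langle\be_{\eta_1}\rangle$,'' and that iterating this converts the direction $\bv_{s+1}=\alpha_{1d_{1j}}\be_{\eta_1}-\alpha_{jd_{1j}}\be_{\eta_j}$ into $\be_{\eta_1}$ with polynomial loss. This is false as a local seminorm manipulation: the sumset $\langle\bv_{s+1}\rangle+\langle\be_{\eta_j}\rangle$ contains no nonzero multiple of $\be_{\eta_1}$ when $\alpha_{jd_{1j}}$ is irrational, and in any case concatenation (Lemma~\ref{L: sumset lemma}, Section~\ref{S: concatenation}) only yields seminorms along sums of the given groups, never along a fresh direction like $\langle\be_{\eta_1}\rangle$ that is not contained in such a sum. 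A lower bound on $\nnorm{f_1}_{\bv_1,\ldots,\bv_{s+1}}$ simply does not imply, by Cauchy--Schwarz/Gowers--Cauchy--Schwarz algebra alone, a lower bound on $\nnorm{f_1}_{\bv_1,\ldots,\bv_s,\be_{\eta_1}^{s'}}$; transferring information between these directions requires going back to the dynamics of the original average. Relatedly, your appeal to the minimality of $w_{\eta_1}$ (``after the van der Corput step the index $1$ can be made to have the smallest multiplicity'') does not produce any type reduction, and you never invoke the instances of Proposition~\ref{P: smoothing} for lower type or shorter length, which are the actual inductive inputs this proposition is designed to exploit.

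The paper's proof is a two-step ``ping--pong.'' In the ping step, one replaces $f_1$ by a weak limit $\tilde f_1$ via Lemma~\ref{L: dual replacement}, applies the hypothesis \eqref{E: input smoothing bound} to get $\fnnorm{\tilde f_1}_{\bv_1,\ldots,\bv_{s+1}}\gg\veps^{O(1)}$, and unpacks this with the dual--difference interchange (Lemma~\ref{L: dual-difference interchange}, case $s'=1$), which produces approximately $\bv_{s+1}$-invariant functions via the degree-1 inverse theorem (Lemma~\ref{L: U^1 inverse}). The crucial move is Lemma~\ref{L: approximate invariance}: substituting such a function into the average converts the iterate $T_{\eta_1}^{\floor{a_1(n)}}$ into $T_{\eta_l}^{\floor{(\alpha_{ld_{1l}}/\alpha_{1d_{1l}})a_1(n)}}$, i.e.\ it changes the indexing tuple from $\eta_1$ to $\eta_l$; since $w_{\eta_1}$ is minimal and $\eta_l\neq\eta_1$, the type strictly drops, and Proposition~\ref{P: smoothing} for lower type gives auxiliary control $\nnorm{f_l}_{\bv_1,\ldots,\bv_s,\be_{\eta_l}^{s_1}}\gg\veps^{O(1)}$ on the \emph{other} function $f_l$. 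In the pong step one repeats this with $f_l$: the auxiliary control, fed through Lemma~\ref{L: dual replacement} and Lemma~\ref{L: dual-difference interchange} with $s'=s_1\geq 1$, replaces $f_l$ by a product of bounded-level dual functions of $T_{\eta_l}$, so the average shortens to length $\ell-1$, and Proposition~\ref{P: smoothing} for shorter averages returns the desired bound $\nnorm{f_1}_{\bv_1,\ldots,\bv_s,\be_{\eta_1}^{s'}}\gg\veps^{O(1)}$. Your proposal contains none of these ingredients (no transfer to $f_l$, no weak limits, no approximate-invariance substitution into the average, no use of the lower-type or shorter-length induction), and the direct conversion you substitute for them is exactly the step that cannot be carried out; this is the very difficulty, flagged around \eqref{E: average example 2} in the introduction, that the approximate-invariance machinery was built to overcome.
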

\begin{proof}[Proof of Proposition \ref{P: iterated smoothing pairwise} using Proposition \ref{P: smoothing pairwise}]
    By Lemma~\ref{L: plus vs normal seminorm} and the definition of being good for smoothing, we have the bound \eqref{E: gen box bound} (Lemma \ref{L: plus vs normal seminorm} allows us to use ``unplused'' seminorms rather than the ``plused'' ones at the cost of increasing the degree by 1). The bound \eqref{E: HK bound} then follows by iteratively applying Proposition \ref{P: smoothing pairwise}, at each step replacing one vector in the seminorm in \eqref{E: gen box bound} different from $\be_{\eta_1}$ by a bounded number of copies of $\be_{\eta_1}$.
\end{proof}

We are now in the position to set out the main steps in the proof of Proposition~\ref{P: smoothing} for an average of length $\ell$ and type $w$:
\begin{enumerate}
    \item We find an index $i\in[\ell]$ so that $a_i$ is a sequence of maximal degree and $w_{\eta_i}$ minimizes the nonzero coordinates of $w$. Applying Proposition \ref{P: iterated smoothing pairwise} with the role of 1 played by $i$, we obtain Host-Kra seminorm control on index $i$. By standard maneuvers, this allows us to replace $f_i$ by a bounded degree dual function of $T_{\eta_i}$, and thus we reduce to an average of length $\ell-1$. Then Proposition \ref{P: smoothing} for the average of length $\ell$ and type $w$ follows by invoking Proposition \ref{P: smoothing} for averages of length $\ell-1$. 
    \item To prove Proposition \ref{P: smoothing pairwise} (from which Proposition \ref{P: iterated smoothing pairwise} follows by the argument above), we argue in two steps, which in \cite{FrKu22a} have been called \textit{ping} and \textit{pong}. In the \textit{ping} step, we invoke Proposition \ref{P: smoothing} for an average of length $\ell$ and type $w'<w$; in the \textit{pong} step, we invoke Proposition \ref{P: smoothing} for an average of length $\ell-1$.
\end{enumerate}
Thus, in the inductive step we invoke Proposition \ref{P: iterated smoothing pairwise} for averages of either smaller length or the same length but lower type. After finitely many iterations, we therefore land in the case of averages of length 1 or basic type. Since averages of length 1 are automatically of basic type, the base case is fully covered by Proposition \ref{P: bounds for basic type}.

The proofs of  Propositions \ref{P: smoothing} and \ref{P: smoothing pairwise} resemble closely the proofs from \cite{FrKu22a} at the level of both the general setup and specific maneuvers. The main challenge in adapting the arguments from \cite{FrKu22a} is the inadequacy of certain technical tools from \cite{FrKu22a} for our setting. Specifically, the proofs from \cite{FrKu22a} utilize crucially the fact that for $\alpha_j, \alpha_{j'}\in\Z$, a function has a large box seminorm $\nnorm{\cdot}_{\alpha_j \be_j - \alpha_{j'}\be_{j'}}$ if and only if it correlates with a $T_j^{\alpha_j}T_{j'}^{-\alpha_{j'}}$-invariant function. This statement does not make sense if $\alpha_j$ or $\alpha_{j'}$ are not integers, so we need instead to develop an approximate version of the inverse theorem for degree-1 generalized box seminorms (Lemma \ref{L: U^1 inverse}) and a way of applying it to the problem (Lemma~\ref{L: approximate invariance}).

\subsection{Preliminary lemmas}
Before we prove Propositions \ref{P: smoothing} and \ref{P: smoothing pairwise}, we gather several lemmas needed in the proofs. The first one is a straightforward modification of \cite[Proposition 4.2]{Fr21}, phrased in a way that allows an immediate application. It says that at the cost of a small loss in the lower bound, we can replace an arbitrary function $f_1$ by one that is structured in an appropriate sense. 
\begin{lemma}\label{L: dual replacement}
	Let $d,\ell\in\N$, $J\in\N_0$ be numbers, $a_1,\ldots, a_\ell, b_1, \ldots, b_J\colon \N\to \R$ be sequences, $(X, \CX, \mu,T_1,\ldots, T_k)$ be a system, $\eta\in[k]^\ell$ be an indexing tuple and  $f_1, \ldots, f_\ell\in L^\infty(\mu)$ be $1$-bounded functions. For every $\delta>0$ there exists an increasing sequence of integers $(N_i)_{i\in\N}$, 1-bounded weights $(c'_n)_{n\in\N}$, $1$-bounded functions $(g_i)_{i\in\N}\subseteq L^\infty(\mu)$, and $\CD_1', \ldots, \CD_J'\in\FD_d$ such that 
 \begin{multline*}
     	\limsup_{N\to\infty}\sup_{\substack{\CD_1, \ldots, \CD_J \in \FD_d}} \sup_{|c_n|\leq 1}\norm{\E_{n\in [N]} c_n\cdot \prod_{j\in[\ell]}T_{\eta_j}^{\floor{a_j(n)}}f_j \cdot \prod_{j\in[J]}\CD_{j}(\floor{b_j(n)})}_{L^2(\mu)}^4-\delta\\
      \leq\limsup_{N\to\infty}\sup_{\substack{\CD_1, \ldots, \CD_J \in \FD_d}} \sup_{|c_n|\leq 1}\norm{\E_{n\in [N]} c_n\cdot T_{\eta_1}^{\floor{a_1(n)}}\tilde{f}_1\cdot \prod_{j\in[2,\ell]}T_{\eta_j}^{\floor{a_j(n)}}f_j \cdot \prod_{j\in[J]}\CD_{j}(\floor{b_j(n)})}_{L^2(\mu)},
 \end{multline*}
 where $\tilde{f}_1$ is the weak limit
 \begin{multline}\label{E: tilded}
     \tilde{f}_1 = \lim_{i\to\infty} \E_{n\in [N_i]} \overline{c'_n}\cdot T_{\eta_1}^{-\floor{a_1(n)}}g_i\\ \prod_{j\in[2, \ell]}T_{\eta_1}^{-\floor{a_1(n)}} T_{\eta_j}^{\floor{a_j(n)}}\overline{f_j} \cdot \prod_{j\in[J]}T_{\eta_1}^{-\floor{a_1(n)}} \overline{\CD'_{j}(\floor{b_j(n)})}.
 \end{multline}
\end{lemma}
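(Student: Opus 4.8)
\textbf{Proof plan for Lemma \ref{L: dual replacement}.}

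The plan is to adapt the standard ``dual replacement'' argument (cf. \cite[Proposition 4.2]{Fr21}) to the present multidimensional setting with dual twists. First I would fix $\delta>0$ and observe that the quantity
\[
    \veps := \limsup_{N\to\infty}\sup_{\CD_1, \ldots, \CD_J \in \FD_d}\sup_{|c_n|\leq 1}\norm{\E_{n\in [N]} c_n\cdot \prod_{j\in[\ell]}T_{\eta_j}^{\floor{a_j(n)}}f_j \cdot \prod_{j\in[J]}\CD_{j}(\floor{b_j(n)})}_{L^2(\mu)}^4
\]
is realized, up to an error of at most $\delta$, along some increasing sequence $(N_i)_{i\in\N}$, with weights $c'_n$ and dual sequences $\CD'_1, \ldots, \CD'_J\in\FD_d$ chosen so that for each $i$,
\[
    \norm{\E_{n\in [N_i]} c'_n\cdot \prod_{j\in[\ell]}T_{\eta_j}^{\floor{a_j(n)}}f_j \cdot \prod_{j\in[J]}\CD'_{j}(\floor{b_j(n)})}_{L^2(\mu)}^4 \geq \veps - \delta.
\]
Expanding the fourth power as $\|F_i\|_{L^2(\mu)}^4 = \langle F_i, F_i\rangle^2$ where $F_i$ denotes the average inside the norm, I would use the inner product structure to write $\|F_i\|_{L^2(\mu)}^4 = \langle F_i \cdot \overline{F_i}, \mathbf{1}\rangle \cdot \|F_i\|_{L^2(\mu)}^2$, but more usefully I would open up just \emph{one} copy of the square: $\|F_i\|_{L^2(\mu)}^2 = \langle F_i, F_i\rangle = \int F_i \cdot \overline{F_i}\, d\mu$, and then peel off the factor $T_{\eta_1}^{\floor{a_1(n)}}f_1$ from the definition of $F_i$. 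Composing the integral with $T_{\eta_1}^{-\floor{a_1(n)}}$ inside one of the two averages, one factors the expression as $\int f_1 \cdot \overline{g_{i}}\, d\mu$ where $g_i = \overline{F_i}$ (a fixed $1$-bounded function for each $i$) and the ``dualized'' object is precisely the average appearing in \eqref{E: tilded}, namely
\[
    h_i := \E_{n\in [N_i]} \overline{c'_n}\cdot T_{\eta_1}^{-\floor{a_1(n)}}g_i\prod_{j\in[2, \ell]}T_{\eta_1}^{-\floor{a_1(n)}} T_{\eta_j}^{\floor{a_j(n)}}\overline{f_j} \cdot \prod_{j\in[J]}T_{\eta_1}^{-\floor{a_1(n)}} \overline{\CD'_{j}(\floor{b_j(n)})}.
\]
Since the $h_i$ are uniformly $1$-bounded in $L^\infty(\mu)$, they have a weak-$*$ convergent subsequence in $L^2(\mu)$; passing to that subsequence (and relabeling $(N_i)$, $g_i$ accordingly) I set $\tilde f_1$ to be the weak limit, which is exactly \eqref{E: tilded}. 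Here I should be careful: $g_i$ still depends on $i$, so I would additionally extract a weak-$*$ limit of the $g_i$ if needed, or simply carry the sequence $(g_i)$ as in the statement, which is what the lemma actually asserts.

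The final step is to compare $\veps$ with the dualized limsup. From $\|F_i\|_{L^2(\mu)}^2 = \int f_1\cdot \overline{h_i}\, d\mu$ and $\|F_i\|_{L^2(\mu)}^4 \geq \veps - \delta$, I get $\left|\int f_1\cdot \overline{h_i}\, d\mu\right| \geq (\veps-\delta)^{1/2}$ for all $i$ in the subsequence, hence by weak convergence $\left|\int f_1 \cdot \overline{\tilde f_1}\, d\mu\right| \geq (\veps - \delta)^{1/2}$, i.e. $\|\tilde f_1\|_{L^2(\mu)}^2 \geq \int f_1 \cdot \overline{\tilde f_1}\, d\mu$ has absolute value at least $(\veps-\delta)^{1/2}$ --- wait, more precisely I want to feed $\tilde f_1$ back into the original average. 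Replacing $f_1$ by $\tilde f_1$ and running the averaging estimate: since $\tilde f_1$ is (a limit of) averages of the shape above, a direct computation shows
\[
    \int \tilde f_1 \cdot \overline{\tilde f_1}\, d\mu = \lim_{i\to\infty} \int h_i \cdot \overline{\tilde f_1}\, d\mu,
\]
and each $\int h_i \cdot \overline{\tilde f_1}\, d\mu$ is, after undoing the composition with $T_{\eta_1}^{-\floor{a_1(n)}}$, an average of the form $\E_{n\in[N_i]} c'_n \cdot \int T_{\eta_1}^{\floor{a_1(n)}}\tilde f_1 \cdot \prod_{j\geq 2}T_{\eta_j}^{\floor{a_j(n)}}f_j \cdot \prod_j \CD'_j(\floor{b_j(n)})\cdot \overline{g_i}\, d\mu$, which by Cauchy--Schwarz in $g_i$ is bounded by the dualized limsup on the right-hand side of the claimed inequality. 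Chaining these bounds gives $\veps - \delta \le \|\tilde f_1\|_{L^2(\mu)}^4 \le \big(\text{dualized limsup}\big)^{2}$ or the analogous power, and adjusting the exponent bookkeeping (the statement has a fourth power on the left and a first power on the right, consistent with two applications of Cauchy--Schwarz) yields exactly the inequality in the lemma.

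The main obstacle I anticipate is purely bookkeeping: tracking the exponents through the ``open one square, dualize, weak limit, close back up'' maneuver so that the $4$ on the left and the $1$ on the right match, and ensuring that the dual sequences $\CD'_j$ extracted for the lower bound are the \emph{same} ones that appear in \eqref{E: tilded} (they are, since we fixed them when selecting the near-optimal $(N_i)$). A secondary technical point is the justification of interchanging the weak limit defining $\tilde f_1$ with the integration against the fixed test functions $\prod_j T_{\eta_j}^{\floor{a_j(n)}}f_j$, which is immediate since for fixed $n$ the operator is a fixed isometry and everything is $1$-bounded, so dominated convergence applies after expanding the finite average. No genuinely new idea is needed beyond \cite{Fr21}; the only care is that the roles of $f_1$ and the dual twists do not interfere, which they do not because the composition by $T_{\eta_1}^{-\floor{a_1(n)}}$ is applied uniformly to \emph{all} remaining factors including the $\CD'_j$.
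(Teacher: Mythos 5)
Your proposal reproduces the paper's own argument: fix a near-optimal choice of weights and dual sequences, expand one copy of the $L^2(\mu)$ norm of the average, compose with $T_{\eta_1}^{-\floor{a_1(n)}}$ to dualize against $f_1$, pass to a weakly convergent subsequence to define $\tilde f_1$ as in \eqref{E: tilded}, and then apply Cauchy--Schwarz twice (once to convert $\int f_1\,\tilde f_1\,d\mu \geq \veps^2-\delta$ into $\|\tilde f_1\|_{L^2(\mu)}^2\geq\veps^4-\delta'$ using $|f_1|\leq 1$, once to remove $g_i$ after undoing the composition), which is exactly how the paper gets the exponents to match. The only slippages are cosmetic — a conjugation flip between $g_i$ and $\overline{F_i}$, and a mid-paragraph false start — neither of which affects the validity of the route.
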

\begin{proof}
    Let
    \begin{align*}
        \veps = \limsup_{N\to\infty}\sup_{\substack{\CD_1, \ldots, \CD_J \in \FD_d}} \sup_{|c_n|\leq 1}\norm{\E_{n\in [N]} c_n\cdot \prod_{j\in[\ell]}T_{\eta_j}^{\floor{a_j(n)}}f_j \cdot \prod_{j\in[J]}\CD_{j}(\floor{b_j(n)})}_{L^2(\mu)},
    \end{align*}
    and fix $\delta>0$. There exist $\CD_1', \ldots, \CD_J', (c'_n)$ such that 
        \begin{align*}
        \limsup_{N\to\infty}\norm{\E_{n\in [N]} c_n'\cdot \prod_{j\in[\ell]}T_{\eta_j}^{\floor{a_j(n)}}f_j \cdot \prod_{j\in[J]}\CD'_{j}(\floor{b_j(n)})}_{L^2(\mu)}^2 \geq \veps^2 - \delta.
    \end{align*}
    Defining
    \begin{align*}
        F_N = \E_{n\in [N]} c'_n\cdot \prod_{j\in[\ell]}T_{\eta_j}^{\floor{a_j(n)}}f_j \cdot \prod_{j\in[J]}\CD_j'(\floor{b_j(n)}),
    \end{align*}
    expanding the inner product and composing the integral with $T_{\eta_1}^{-\floor{a_1(n)}}$, we get that 
    \begin{multline*}
        \norm{\E_{n\in [N]} c_n'\cdot \prod_{j\in[\ell]}T_{\eta_j}^{\floor{a_j(n)}}f_j \cdot \prod_{j\in[J]}\CD'_{j}(\floor{b_j(n)})}_{L^2(\mu)}^2\\ = \int \overline{f_1} \cdot \E_{n\in [N]} \overline{c'_n}\cdot T_{\eta_1}^{-\floor{a_1(n)}}F_N\cdot \prod_{j\in[2, \ell]}T_{\eta_1}^{-\floor{a_1(n)}} T_{\eta_j}^{\floor{a_j(n)}}\overline{f_j} \cdot \prod_{j\in[J]}T_{\eta_1}^{-\floor{a_1(n)}} \overline{\CD'_{j}(\floor{b_j(n)})}\; d\mu.
    \end{multline*}
    By weak compactness, there exists a sequence $N_i\to\infty$ along which the average over $n\in[N_i]$ converges weakly; we call the weak limit $\tilde{f}_1$ and set $g_i = F_{N_i}$. Then $\int f_1 \cdot \tilde{f}_1\; d\mu \geq \veps^2-\delta$. 
    Applying the Cauchy-Schwarz inequality, using the 1-boundedness of $f_1$, and redefining $\delta$ appropriately, we get $\int \tilde{f}_1 \cdot \overline{\tilde{f}_1}\; d\mu \geq \veps^4-\delta$. The result follows on expanding the second $\tilde{f}_1$, composing back with $T_{\eta_1}^{\floor{a_1(n)}}$, applying the Cauchy-Schwarz inequality and taking the sups and limsup. 
\end{proof}

\begin{definition}
    Given a system $(X, \CX, \mu, T_1, \ldots, T_k)$ and a finitely generated subgroup $G\subseteq \R^k$, we call a function $u\in L^\infty(\mu)$ \textit{approximately $G$-invariant} if it takes the form
    \begin{align*}
        u = \E_{\bm\in G}T^{\floor{\bm}}f.   
    \end{align*}
    for some $f\in L^\infty(\mu)$. If $G = \langle \bg\rangle$, then we also call $u$ \emph{approximately $\bg$-invariant}.
\end{definition}

Such functions naturally appear in inverse theorems for degree-1 generalized box seminorms.
\begin{lemma}[Degree-1 inverse theorem]\label{L: U^1 inverse}
Let $(X, \CX, \mu, T_1, \ldots, T_k)$ be a system  and $G\subseteq \R^k$ be a finitely generated subgroup. For every $f\in L^\infty(\mu)$ there exists an approximately $G$-invariant function $u\in L^\infty(\mu)$ satisfying
    \begin{align}
        \nnorm{f}_{G}^2 = \int_X f \cdot u\, d\mu.
    \end{align}
\end{lemma}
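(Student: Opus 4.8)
The plan is to mimic the classical proof of the degree-1 inverse theorem for Host--Kra seminorms, the only novelty being that the averaging group $G$ is now a finitely generated subgroup of $\R^k$ acting via $\bm\mapsto T^{\floor{\bm}}$ rather than $\Z$ acting via a single transformation. First I would recall from the definition of the generalized box seminorm together with the inductive formula \eqref{E: inductive formula} and the discussion right after it that
\begin{align*}
    \nnorm{f}_G^2 = \E_{\bm,\bm'\in G}\int \Delta_{(\bm,\bm')}f\, d\mu = \E_{\bm,\bm'\in G}\int T^{\floor{\bm}}f\cdot T^{\floor{\bm'}}\overline f\, d\mu,
\end{align*}
and that, by composing the integral with $T^{-\floor{\bm'}}$ and using Lemma~\ref{L: convergence} to split the double average $\E_{\bm,\bm'\in G}$ as $\E_{\bm'\in G}\E_{\bm\in G}$ and then change variables $\bm\mapsto \bm+\bm'$ inside the (convergent) average over $\bm$ (invariance of the $G$-average under translation by elements of $G$), one obtains
\begin{align*}
    \nnorm{f}_G^2 = \int \bigl(\E_{\bm\in G}T^{\floor{\bm}}f\bigr)\cdot\overline{\bigl(\E_{\bm'\in G}T^{\floor{\bm'}}f\bigr)}\, d\mu = \int \Bigl|\E_{\bm\in G}T^{\floor{\bm}}f\Bigr|^2\, d\mu.
\end{align*}
This already exhibits $\nnorm{f}_G^2$ as $\int f\cdot u\,d\mu$ for a natural candidate $u$, after one more manipulation.

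Next I would set $P_G f := \E_{\bm\in G}T^{\floor{\bm}}f$, which exists in $L^2(\mu)$ by Lemma~\ref{L: convergence}(i) (with $s=1$, $\ell=1$, $\gamma_{11}=1$). The key algebraic point is that $P_G$ is a self-adjoint idempotent on $L^2(\mu)$: self-adjointness follows by composing the integral $\int (P_G f)\overline g\,d\mu$ with $T^{-\floor{\bm}}$ and using translation-invariance of the $G$-average exactly as above, while idempotency $P_G(P_G f) = P_G f$ follows from the identity $\E_{\bk\in G}\E_{\bm\in G}T^{\floor{\bk+\bm}}f = \E_{\bm\in G}T^{\floor{\bm}}f$ — this last identity is precisely the content of \eqref{E: shifting by a subgroup} in the proof of Lemma~\ref{L: seminorms of subgroups} (taken there with $G'=G$), modulo dealing with the error $\floor{\bk+\bm}-\floor{\bk}-\floor{\bm}$, which is bounded and can be absorbed; alternatively one can argue directly from Lemma~\ref{L: convergence} that the relevant iterated average collapses. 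Granting self-adjointness and idempotency, the computation above reads $\nnorm{f}_G^2 = \int |P_G f|^2\,d\mu = \int (P_G f)\cdot \overline{P_G f}\,d\mu = \int f\cdot \overline{P_G(P_G f)}\,d\mu = \int f\cdot \overline{P_G f}\,d\mu$. Hence I would take $u := \overline{P_G f} = \overline{\E_{\bm\in G}T^{\floor{\bm}}f} = \E_{\bm\in G}T^{\floor{\bm}}\overline f$ (using that $T$ commutes with conjugation), which is manifestly approximately $G$-invariant in the sense of the definition, being of the form $\E_{\bm\in G}T^{\floor{\bm}}\overline f$ with $\overline f\in L^\infty(\mu)$ and $\|\overline f\|_{L^\infty(\mu)} = \|f\|_{L^\infty(\mu)}<\infty$. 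This gives the claimed identity $\nnorm{f}_G^2 = \int f\cdot u\,d\mu$.

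The step I expect to be the main (minor) obstacle is the rigorous treatment of the integer-part error terms: unlike in the $\Z$-action case, $\bm\mapsto T^{\floor{\bm}}$ is not a genuine group action, so $T^{\floor{\bm+\bm'}}\neq T^{\floor{\bm}}T^{\floor{\bm'}}$ in general. The cleanest way around this is to not expand at all but to work entirely at the level of the averaged operators: Lemma~\ref{L: convergence}(ii) lets me reorder the iterated averages, and the translation-invariance property $\E_{\bm\in G}A(\bm) = \E_{\bm\in G}A(\bm+\bk)$ for $\bk\in G$ (valid whenever the limit exists, since $G$-averages are taken along Følner sequences on $G$) is exactly what replaces the homomorphism property in the classical argument. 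With that, both self-adjointness of $P_G$ and the evaluation $\nnorm{f}_G^2 = \int f\cdot u\,d\mu$ go through with no genuine error terms appearing. I would also remark, for completeness, that $u$ inherits the $L^\infty$ bound $\|u\|_{L^\infty(\mu)}\le \|f\|_{L^\infty(\mu)}$, although this is not needed for the statement.
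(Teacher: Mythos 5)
Your reduction of $\nnorm{f}_G^2$ to $\int\bigl|\E_{\bm\in G}T^{\floor{\bm}}f\bigr|^2\,d\mu$ is fine (it follows directly from Lemma~\ref{L: convergence}), but the rest of the argument breaks at the step you yourself flagged as the ``minor obstacle'': the operator $P_G f := \E_{\bm\in G}T^{\floor{\bm}}f$ is \emph{neither} self-adjoint \emph{nor} idempotent unless $G\subseteq\Z^k$. The adjoint computes to $P_G^*g = \E_{\bm\in G}T^{-\floor{\bm}}g$, and $-\floor{\bm}\neq\floor{-\bm}$ on the non-lattice part of $G$, so the change of variables $\bm\mapsto-\bm$ does not turn $P_G^*$ into $P_G$; likewise $P_G^2 f = \E_{\bm,\bm'\in G}T^{\floor{\bm}+\floor{\bm'}}f$, and $\floor{\bm}+\floor{\bm'}\neq\floor{\bm+\bm'}$, so the identity of type \eqref{E: shifting by a subgroup} you invoke does not give $P_G^2=P_G$. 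These are not errors that ``can be absorbed'': they change the answer.

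Here is a concrete counterexample. Take $k=1$, $X=\T$, $T$ the rotation by $\theta=1/\sqrt{2}$, $f(x)=e(x)$, and $G=\langle\sqrt{2}\rangle$. Then $T^{\floor{\sqrt{2}m}}f = e(\floor{\sqrt{2}m}\theta)f$, and since $\floor{\sqrt{2}m}/\sqrt 2 = m - \{\sqrt 2 m\}/\sqrt 2$, equidistribution of $\{\sqrt 2 m\}$ gives
\begin{align*}
P_G f = \phi\cdot f,\qquad \phi := \E_{m\in\Z}e\bigl(\floor{\sqrt{2}m}/\sqrt{2}\bigr) = \int_0^1 e(-v/\sqrt{2})\,dv,
\end{align*}
a genuinely complex number with $0<|\phi|<1$ (numerically $\phi\approx -0.22-0.29\,i$). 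Then $\nnorm{f}_G^2 = \|P_G f\|_{L^2}^2 = |\phi|^2$, whereas your candidate $u=\overline{P_G f}=\overline{\phi}\,\overline{f}$ gives $\int f\cdot u\,d\mu = \overline{\phi}\neq |\phi|^2$. So the claimed identity $\nnorm{f}_G^2 = \int f\cdot u\,d\mu$ fails for your choice of $u$, and the supposed self-adjoint idempotency of $P_G$ is the culprit: on this eigenfunction $P_G$ acts by a multiplier $\phi\notin\{0,1\}$, while a self-adjoint idempotent would force $\phi\in\{0,1\}$.

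The correct choice comes from not replacing $P_G^*P_G$ by $P_G$: writing $\nnorm{f}_G^2=\langle P_Gf,P_Gf\rangle=\langle f,P_G^*P_Gf\rangle$ gives $u=\overline{P_G^*P_Gf}=\E_{\bm'\in G}T^{\floor{\bm'}}\bigl(\E_{\bm\in G}T^{-\floor{\bm}}\overline{f}\bigr)$. This is approximately $G$-invariant in the sense of the definition (it equals $\E_{\bm'\in G}T^{\floor{\bm'}}g$ with $g = \E_{\bm\in G}T^{-\floor{\bm}}\overline{f}\in L^\infty(\mu)$), and it reproduces $\nnorm{f}_G^2$ exactly. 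Unwinding, this is precisely the $u$ in the paper's two-line proof, which composes with $T^{-\floor{\bm}}$ and splits the double limit via Lemma~\ref{L: convergence} but never discards the inner average. So the fix is small --- keep the double average instead of collapsing it --- but the collapse you attempted is where the argument goes wrong.
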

\begin{proof}
By the definition of degree 1-box norms, we have
        \begin{align*}
        \nnorm{f}_{G}^2 &= \int_X \E_{\bm,\bm'\in G}T^{\floor{\bm}}f\cdot T^{\floor{\bm'}}\overline{f}\, d\mu\\
        &= \int_X f\cdot \E_{\bm'\in G}T^{\floor{\bm'}} \Bigbrac{\E_{\bm\in G}T^{-\floor{\bm}}\overline{f}}\, d\mu,\\
    \end{align*}
    where in passing to the second line, we compose the integral with $T^{-\floor{\bm}}$ and split the limit using Lemma \ref{L: convergence}. The result follows immediately.
\end{proof}

The following lemma explains the origin of the name for approximately invariant functions and illustrates the way in which such functions will be useful for us.
\begin{lemma}[Approximate invariance]\label{L: approximate invariance}
    Let $k,s\in\N$, $(X, \CX, \mu, T_1, \ldots, T_k)$ be a system, $j,j'\in[0,k]$ be indices, and $\alpha_j, \alpha_{j'}\in\R$. If $u\in L^\infty(\mu)$ is a product of $s$ 1-bounded approximately $(\alpha_j \be_j - \alpha_{j'} \be_{j'})$-invariant functions, then there exists 1-bounded $u'\in L^\infty(\mu)$ such that for every $N\in\N$, functions $A_1, \ldots, A_N\in L^\infty(\mu)$ and sequence $a:\N\to\R$, we have
    \begin{align*} 
        \sup_{|c_n|\leq 1}\norm{\E_{n\in[N]}c_n \cdot A_n \cdot T_j^{\floor{\alpha_j a(n)}}u}_{L^2(\mu)}\ll_{s} \sup_{|c_n|\leq 1}\norm{\E_{n\in[N]}c_n \cdot A_n \cdot T_{j'}^{\floor{\alpha_{j'} a(n)}}u'}_{L^2(\mu)}
    \end{align*}
\end{lemma}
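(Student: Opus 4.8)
\textbf{Proof plan for Lemma~\ref{L: approximate invariance}.}

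The idea is to exploit the fact that an approximately $(\alpha_j\be_j - \alpha_{j'}\be_{j'})$-invariant function $v = \E_{\bm\in G}T^{\floor{\bm}}g$ with $G = \langle \alpha_j\be_j - \alpha_{j'}\be_{j'}\rangle$ is, up to bounded error terms coming from floor functions, invariant under the ``transformation'' $T_j^{\alpha_j}T_{j'}^{-\alpha_{j'}}$. Concretely, if $v = \E_{m\in\Z}T_j^{\floor{\alpha_j m}}T_{j'}^{\floor{-\alpha_{j'}m}}g$, then for any $a(n)$ one has, after pigeonholing to make the error term $\br$ coming from comparing $\floor{\alpha_j m} + \floor{\alpha_j a(n)}$ with $\floor{\alpha_j(m + a(n))}$ (and similarly for $\alpha_{j'}$) independent of $m$ and $n$,
\[
    T_j^{\floor{\alpha_j a(n)}}v = T_{j'}^{-\floor{-\alpha_{j'} a(n)}}T^{\br}\brac{\E_{m\in\Z}T_j^{\floor{\alpha_j(m+a(n))}}T_{j'}^{\floor{-\alpha_{j'}(m+a(n))}}g}
\]
up to a loss of a bounded multiplicative factor, and after the change of variables $m\mapsto m - a(n)$ (using that $\E_{m\in\Z}$ is translation invariant on $\Z$) the inner average becomes $v$ again, while $-\floor{-\alpha_{j'}a(n)} = \lceil \alpha_{j'}a(n)\rceil = \floor{\alpha_{j'}a(n)} + O(1)$, which again we absorb into $T^{\br}$ by a second pigeonholing. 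This converts a $T_j$-shift of $v$ into a $T_{j'}$-shift of $T^{\br}v$ at the cost of a bounded factor. Setting $u' = T^{\br}v$ (or rather, iterating over the $s$ factors, $u' = T^{\br}u$ for a single error vector $\br$ obtained by pigeonholing simultaneously on all factors) then gives the claim for the case where each factor is a genuine average over $\Z$.

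In more detail: write $u = \prod_{i=1}^s v_i$ with $v_i = \E_{\bm\in G}T^{\floor{\bm}}g_i$. Using Lemma~\ref{L: convergence} to realise each $v_i$ as a genuine limit $\lim_{M\to\infty}\E_{m\in[\pm M]}T_j^{\floor{\alpha_j m}}T_{j'}^{\floor{-\alpha_{j'} m}}g_i$, apply the argument of the previous paragraph to each $v_i$ simultaneously: composing $T_j^{\floor{\alpha_j a(n)}}u$ with the appropriate transformation and changing variables in each of the $s$ inner averages, one finds after a single pigeonhole choice of $\br\in\Z^k$ (the number of possible error terms being $O_s(1)$, hence the implicit constant depending on $s$) that
\[
    T_j^{\floor{\alpha_j a(n)}}u = \lim_{M\to\infty}\E_{m_1, \ldots, m_s\in[\pm M]}c_{a(n)}\cdot T_{j'}^{\floor{\alpha_{j'}a(n)}}T^{\br}\prod_{i=1}^s T_j^{\floor{\alpha_j m_i}}T_{j'}^{\floor{-\alpha_{j'}m_i}}g_i
\]
for some $O_s(1)$-bounded weight $c_{a(n)}$ depending only on $a(n)$ through which error term it lands in; restricting to the values of $n$ for which all error terms agree loses only a factor $O_s(1)$ in the $L^2(\mu)$ norm. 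This says precisely that $T_j^{\floor{\alpha_j a(n)}}u = c_{a(n)}\cdot T_{j'}^{\floor{\alpha_{j'}a(n)}}u'$ with $u' = T^{\br}u$ and $|c_{a(n)}| = O_s(1)$. Substituting into the average $\E_{n\in[N]}c_n\cdot A_n\cdot T_j^{\floor{\alpha_j a(n)}}u$ and absorbing $c_{a(n)}$ into the supremum over $1$-bounded weights $c_n$ (after rescaling by the $O_s(1)$ bound) yields the claimed inequality. The degenerate cases $j = 0$ or $j' = 0$ (where $T_0 = \id$ and $\alpha_0$ is irrelevant) are handled the same way, with one of the two shifts simply being trivial.

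The main obstacle is bookkeeping rather than conceptual: one must be careful that the pigeonholing of the floor-function error terms can be done \emph{uniformly in $n$} and simultaneously across all $s$ factors, so that a \emph{single} bounded vector $\br$ (and hence a single function $u' = T^{\br}u$) works for every $n\in[N]$ and every $N$. This is where the hypothesis that $u$ is a product of a \emph{bounded} number $s$ of approximately invariant functions enters, and it is the reason the implicit constant is allowed to depend on $s$. A secondary technicality is justifying the interchange of the limit $\lim_{M\to\infty}$ with the finite average $\E_{n\in[N]}$ and with the integral; this is routine given Lemma~\ref{L: convergence} and the bounded convergence theorem, exactly as remarked in Section~\ref{S: seminorms}.
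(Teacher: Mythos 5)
There is a genuine gap at the heart of your argument: the claimed identity
\begin{align*}
    T_j^{\floor{\alpha_j a(n)}}u \;=\; c_{a(n)}\cdot T_{j'}^{\floor{\alpha_{j'}a(n)}}\,T^{\br}u
\end{align*}
with a \emph{single} fixed vector $\br$ and a scalar weight $c_{a(n)}$ is false in general. When you shift $m\mapsto m-\floor{a(n)}$ inside each defining average $\E_{m\in[\pm M]}T_j^{\floor{\alpha_j m}}T_{j'}^{\floor{-\alpha_{j'}m}}g_i$, the floor-function carries produce error terms $r'_{m,n}, r''_{m,n}$ that depend on $m$ as well as on $n$ (for irrational $\alpha_j$ the carry takes different values on a positive proportion of $m$'s). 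These errors are \emph{shifts applied inside the average over $m$}, not scalars: they can neither be pulled out of the average as a single $T^{\br}$ nor be converted into a bounded weight $c_{a(n)}$, and no pigeonholing over the unbounded range of $m$ can make them constant while preserving the identity. Your "main obstacle" paragraph only worries about uniformity in $n$ and across the $s$ factors, but the real obstruction is the $m$-dependence, which your scheme cannot remove; consequently the substitution step that feeds the identity into the $L^2$ average does not go through, and the conclusion that one may take $u'=T^{\br}u$ is unsupported (and is not what one should expect).

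The paper's proof avoids this by not trying to keep $u$ intact. One expands $u=\prod_i\lim_M\E_{m_i}T_j^{\floor{\alpha_j m_i}+r'_{m_i,n}}T_{j'}^{\floor{-\alpha_{j'}m_i}+r''_{m_i,n}}f_i$ after the change of variables, rewrites the product of averages as an average over the tuple $(m_1,\ldots,m_s)$, and moves that average \emph{outside} the $L^2(\mu)$ norm by the triangle inequality; for each fixed tuple the remaining $n$-dependence of the integer shifts is removed with Lemma~\ref{L: errors} (absorbed into the supremum over $1$-bounded weights $c_n$), and only then does one pigeonhole in $(m_1,\ldots,m_s)$. The resulting $u'$ is a product of fixed shifts $T_j^{\floor{\alpha_j m_i}+r'_{m_i}}T_{j'}^{\floor{-\alpha_{j'}m_i}+r''_{m_i}}f_i$ at the selected tuple — a perfectly good $1$-bounded function, but not a translate of $u$. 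Your write-up has the right first move (transfer the $T_j$-shift to a $T_{j'}$-shift via translation invariance of the inner averages), but to close the argument you need the triangle-inequality/Lemma~\ref{L: errors}/pigeonhole sequence in that order, and you must give up the claim $u'=T^{\br}u$.
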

\begin{proof}
Let
\begin{align*}
    u = \prod_{i\in[s]}\lim_{M\to\infty} \E_{m\in[\pm M]}T_j^{\floor{\alpha_j m}}T_{j'}^{\floor{-\alpha_{j'} m}}f_i.
\end{align*}
Then 
\begin{align*}
    T_j^{\floor{\alpha_j a(n)}}u &= \prod_{i\in[s]} T_j^{\floor{\alpha_j a(n)}} \lim_{M\to\infty} \E_{m\in[\pm M]}T_j^{\floor{\alpha_j m}}T_{j'}^{\floor{-\alpha_{j'} m}}f_i\\
    &= \prod_{i\in[s]}\lim_{M\to\infty} \E_{m\in[\pm M]}T_j^{\floor{\alpha_j (m+\floor{a(n)})}+r_{m,n}}T_{j'}^{\floor{-\alpha_{j'} m}}f_i\\
    &= \prod_{i\in[s]}\lim_{M\to\infty} \E_{m\in[\pm M]}T_j^{\floor{\alpha_j m}+r'_{m,n}}T_{j'}^{\floor{-\alpha_{j'} (m-\floor{a(n)})}}f_i\\
    &= T_{j'}^{\floor{\alpha_{j'} a(n)}}\brac{\prod_{i\in[s]}\lim_{M\to\infty} \E_{m\in[\pm M]}T_j^{\floor{\alpha_j m}+r'_{m,n}}T_{j'}^{\floor{-\alpha_{j'} m}+r''_{m,n}}f_i}
\end{align*}
for some integers $r_{m,n}, r'_{m,n}, r''_{m,n} = O(1)$ that come from commuting sums and integer parts. Plugging this formula into the original $L^2(\mu)$ norm, applying the triangle inequality and using Lemma \ref{L: errors} to make $r_{m,n}, r'_{m,n}, r''_{m,n}$ independent of $n$, we get
    \begin{multline*}
        \sup_{|c_n|\leq 1}\norm{\E_{n\in[N]}c_n \cdot A_n \cdot T_j^{\floor{\alpha_j a(n)}}u}_{L^2(\mu)}
        \ll_s \limsup_{M\to\infty}\E_{m_1, \ldots, m_s\in[\pm M]}\\
        \sup_{|c_n|\leq 1}\norm{\E_{n\in[N]}c_n \cdot A_n \cdot T_{j'}^{\floor{\alpha_{j'} a(n)}}\brac{\prod_{i\in[s]} T_j^{\floor{\alpha_j m_i}+r'_{m_i}}T_{j'}^{\floor{-\alpha_{j'} m_i}+r''_{m_i}}f_i}}_{L^2(\mu)}
    \end{multline*}
    for some $r'_{m_i}, r''_{m_i}\in\Z$. Upon pigeonholing in $m_1, \ldots, m_s$, the result follows with $$u' := \prod_{i\in[s]} T_j^{\floor{\alpha_j m_i}+r'_{m_i}}T_{j'}^{\floor{-\alpha_{j'} m_i}+r''_{m_i}}f_i.$$
\end{proof}

\begin{lemma}[Dual-difference interchange]\label{L: dual-difference interchange}
	Let $(X, \CX, \mu,T_1, \ldots, T_k)$ be a system,  $s,$ $s'\in \N$, $\bv_1, \ldots, \bv_{s+1}\in\R^k$ be vectors,
	$(f_{n,i})_{n,i\in\N}\subseteq L^\infty(\mu)$ be 1-bounded, and $\tilde{f}\in L^\infty(\mu)$ be defined by
	\begin{align*}
        \tilde{f}:=\, \lim_{i\to\infty}\E_{n\in [N_i]}\, f_{n,i},
	\end{align*}
	for some $N_i\to\infty$, where the average is assumed to converge weakly. Then 
	\begin{multline*}
            \fnnorm{\tilde{f}}_{\bv_1, \ldots, \bv_s, \bv_{s+1}^{s'}}^{O_{s,s'}(1)}
            \leq \liminf_{M\to\infty}\E_{\um, \um',\um'',\um'''\in [\pm M]^s}\limsup_{i\to\infty} \E_{n\in[N_i]}\\
             \int\Delta_{\substack{(\floor{\bv_1 m_1'} - \floor{\bv_1 m_1}, \floor{\bv_1 m_1'''} - \floor{\bv_1 m_1''}), \ldots,\\ (\floor{\bv_s m_s'} - \floor{\bv_s m_s}, \floor{\bv_s m_s'''} - \floor{\bv_s m_s''})}}\; f_{n,i} \cdot u_{\um\um'\um''\um'''} \, d\mu
    \end{multline*}
    for 1-bounded functions $u_{\um\um'\um''\um'''}\in L^\infty(\mu)$ which can be either of the following:
	\begin{enumerate}
            \item\label{it: ddi s'} if $s'\geq 1$, then  $u_{\um\um'\um''\um'''}$ is a product of $2^{s}$ 1-bounded dual functions of level $s'$ with respect to ${\bv_{s+1}}$;\
	    \item\label{it: ddi 1} if $s'=1$, then we can alternatively take $u_{\um\um'\um''\um'''}$ to be a product of $2^s$ 1-bounded approximately ${\bv_{s+1}}$-invariant functions.
	\end{enumerate}
\end{lemma}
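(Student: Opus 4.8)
\textbf{Proof plan for Lemma \ref{L: dual-difference interchange}.}

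The plan is to interchange the weak limit defining $\tilde f$ with the iterated multiplicative derivatives and the integral, paying the price of extra Cauchy--Schwarz applications that double the parameters $\um$ into the four copies $\um,\um',\um'',\um'''$. First I would unwind the definition of $\nnorm{\tilde f}_{\bv_1,\ldots,\bv_s,\bv_{s+1}^{s'}}$ using the inductive formula \eqref{E: inductive formula}: separate the first $s$ groups $\langle\bv_1\rangle,\ldots,\langle\bv_s\rangle$ from the last $s'$ copies of $\langle\bv_{s+1}\rangle$, so that
\[
\nnorm{\tilde f}_{\bv_1,\ldots,\bv_s,\bv_{s+1}^{s'}}^{2^{s+s'}} = \E_{\um,\um'\in[\pm M]^s\text{-limit}}\;\nnorm{\Delta_{\ldots}\tilde f}_{\bv_{s+1}^{s'}}^{2^{s'}},
\]
where the multiplicative derivative is taken in the directions $(\floor{\bv_i m_i},\floor{\bv_i m_i'})$. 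The key point is that $\Delta_{(\bm,\bm')}$ is a bilinear-type operation, so a product $T^{\floor{\bm}}\tilde f\cdot T^{\floor{\bm'}}\overline{\tilde f}$ involving a weak limit in one factor can be handled by the dominated convergence theorem and weak-$*$ continuity: $\int \Delta_{(\bm,\bm')}\tilde f\cdot w\,d\mu = \lim_i \E_{n\in[N_i]}\int \Delta_{(\bm,\bm')}f_{n,i}\cdot w\,d\mu$ for any fixed test function $w$; but since $\tilde f$ appears in \emph{both} slots of $\Delta$, one first applies Cauchy--Schwarz to isolate a single occurrence of $\tilde f$, which is precisely what produces the doubled tuples $\um'',\um'''$ alongside $\um,\um'$ and the shifts $\floor{\bv_i m_i'}-\floor{\bv_i m_i}$, $\floor{\bv_i m_i'''}-\floor{\bv_i m_i''}$ in the statement. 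Iterating this over the $s$ directions $\bv_1,\ldots,\bv_s$ (using Lemma~\ref{L: convergence} to permute and exchange the limits freely, and Lemma~\ref{L: errors} to absorb the $O(1)$ errors from swapping integer parts with subtraction) brings the weak limit all the way inside, leaving a single copy of $f_{n,i}$ in the derivative and a test function $u_{\um\um'\um''\um'''}$ built entirely out of the $s'$-fold $\bv_{s+1}$-average of $\tilde f$-independent data.

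Next I would identify the test function. After the interchange, the term multiplying $\Delta f_{n,i}$ under the integral is what remains of the $\nnorm{\cdot}_{\bv_{s+1}^{s'}}^{2^{s'}}$ part evaluated at $\tilde f$, namely (by the dual identity \eqref{dual identity}) a product over $\ueps\in\{0,1\}^s$ of level-$s'$ dual functions $\CD_{s',\bv_{s+1}}(\cdot)$ applied to suitable shifts of $\tilde f$; there are $2^s$ such factors, one for each $\ueps$, which gives case \eqref{it: ddi s'}. For case \eqref{it: ddi 1}, when $s'=1$ one uses Lemma~\ref{L: U^1 inverse} instead: the degree-$1$ seminorm $\nnorm{g}_{\bv_{s+1}}^2 = \int g\cdot u\,d\mu$ with $u = \E_{\bm\in\langle\bv_{s+1}\rangle}T^{\floor{\bm}}(\E_{\bm'\in\langle\bv_{s+1}\rangle}T^{-\floor{\bm'}}\overline g)$ manifestly approximately $\bv_{s+1}$-invariant, and again we get a product of $2^s$ of these. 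The power $O_{s,s'}(1)$ on the left comes from the repeated Cauchy--Schwarz steps (each squaring the quantity) and from the H\"older step used to pull the $\um$-average out of the $2^{s'}$-th power.

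The main obstacle I anticipate is bookkeeping the limit exchanges rigorously: the expression nests a weak limit (in $i$), a $\liminf$ over $M$ from the outer box-seminorm average, and the inner $\limsup$ over $i$ that reappears after the interchange, and one must ensure that every Cauchy--Schwarz application is applied to the \emph{finite} averages $\E_{n\in[N_i]}$ before any limit is taken, then the limits restored in the correct order. This is where Lemma~\ref{L: convergence} does the heavy lifting --- it guarantees the relevant iterated box-seminorm limits exist and can be permuted, so the $\liminf_M$ on the right is legitimate --- but the argument must be arranged so that the weak convergence $\E_{n\in[N_i]}f_{n,i}\rightharpoonup\tilde f$ is only ever used against a \emph{fixed} bounded test function, which forces the order: first expand the seminorm fully into an integral against a product of $\tilde f$'s and their translates, then peel off one $\tilde f$ at a time by Cauchy--Schwarz, and only at the very end replace that last isolated $\tilde f$ by its defining average and pass to the weak limit. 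Everything else (commuting $\floor{\cdot}$ past subtraction, pigeonholing errors, converting products of dual/invariant functions) is routine via Lemmas~\ref{L: errors}, \ref{L: GCS}, and \ref{L: approximate invariance}.
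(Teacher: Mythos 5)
Your overall toolkit is the right one (inductive formula, the dual identity and Lemma~\ref{L: U^1 inverse} to produce the test functions, a Gowers--Cauchy--Schwarz step to double the parameters $\um,\um'$ into $\um,\um',\um'',\um'''$), but the order in which you propose to use the weak limit is the reverse of what the conclusion requires, and this is a genuine gap. You say: expand the seminorm, ``peel off one $\tilde f$ at a time by Cauchy--Schwarz, and only at the very end replace that last isolated $\tilde f$ by its defining average.'' But the Cauchy--Schwarz/GCS steps do not leave a single isolated occurrence of $\tilde f$: each application squares the expression and duplicates the retained function, so after running them you are looking at $\E_{\um,\um',\um'',\um'''}\int \Delta_{(\ldots)}\tilde f\cdot u\,d\mu$, in which $\tilde f$ occupies all $2^s$ slots of the multiplicative derivative. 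Weak convergence only lets you linearize \emph{one} occurrence of $\tilde f$ against a fixed bounded test function; it does not commute with products, so there is no way to pass from $\Delta_{(\ldots)}\tilde f$ to $\Delta_{(\ldots)}f_{n,i}$ with the \emph{same} pair $(n,i)$ in every slot, which is exactly what the right-hand side of the lemma demands. Replacing just one slot leaves $2^s-1$ copies of $\tilde f$ in the integrand, and absorbing those into $u$ destroys the required structure ($u$ must be a product of $2^s$ dual, resp.\ approximately invariant, functions).

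The fix is to do the weak-limit substitution \emph{before} the doubling step, as in the paper. From the inductive formula one writes $\veps^{2^{s+s'}}=\lim_M\E_{\um,\um'\in[\pm M]^s}\int \Delta_{\bv_1,\ldots,\bv_s;(\um,\um')}\tilde f\cdot u_{\um\um'}\,d\mu$, where $u_{\um\um'}$ is a \emph{single} level-$s'$ dual function with respect to $\bv_{s+1}$ (or, for $s'=1$, a single approximately $\bv_{s+1}$-invariant function via Lemma~\ref{L: U^1 inverse}); note these are built from shifts of $\tilde f$ itself, not from ``$\tilde f$-independent data'' as you wrote, though the statement does not care. One then expands only the copy of $\tilde f$ at the vertex $\ueps=\underline{1}$ via $\tilde f=\lim_i\E_{n\in[N_i]}f_{n,i}$ --- the remaining $2^s-1$ copies of $\tilde f$ together with $u_{\um\um'}$ form the fixed test function --- and pulls $\lim_i\E_{n\in[N_i]}$ outside the finite $\um,\um'$-average. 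Only now does one apply the twisted Gowers--Cauchy--Schwarz inequality (Lemma~\ref{L: GCS twisted}) with $f_{\underline{1}}=f_{n,i}$: the duplication then happens \emph{inside} the finite average over $n\in[N_i]$ and produces $\Delta_{(\ldots)}f_{n,i}$ with a single $(n,i)$, the leftover $\tilde f$'s are discarded by $1$-boundedness, and $u_{\um\um'}$ becomes the product of $2^s$ shifted dual (resp.\ approximately invariant) functions $u_{\um\um'\um''\um'''}$. Incidentally, no invocation of Lemma~\ref{L: errors} is needed: the shifts in the conclusion are differences of floors, $\floor{\bv_j m_j'}-\floor{\bv_j m_j}$, which is exactly the form the twisted GCS inequality outputs.
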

We note two alternative conclusions in the case $s'=1$.

In proving Lemma \ref{L: dual-difference interchange}, we will use the following version of the Gowers-Cauchy-Schwarz inequality.
\begin{lemma}[Twisted Gowers-Cauchy-Schwarz inequality]\label{L: GCS twisted} Let $(X, \CX, \mu,T_1, \ldots, T_k)$ be a system, $s\in \N$, $\bv_1, \ldots, \bv_s\in\R^k$ be vectors, and  $(f_\ueps)_{\ueps\in\{0,1\}^s},$ $ (u_{\um\um'})_{\um, \um'\in\N^s}$  be tuples of 1-bounded functions in $L^\infty(\mu)$. Then for every $M\in \N$, we have
\begin{align*}
&\abs{\E_{\um,\um'\in [\pm M]^s}\,  \int \prod_{\ueps\in \{0,1\}^s} T^{\floor{\bv_1 m_1^{\eps_1}} + \cdots + \floor{\bv_s m_s^{\eps_s}}} f_\ueps\cdot u_{\um\um'}\, d\mu}^{2^s}\\
&\leq \E_{\um, \um',\um'',\um'''\in [\pm M]^s}\,  \int \Delta_{\substack{(\floor{\bv_1 m_1'} - \floor{\bv_1 m_1}, \floor{\bv_1 m_1'''} - \floor{\bv_1 m_1''}), \ldots,\\ (\floor{\bv_s m_s'} - \floor{\bv_s m_s}, \floor{\bv_s m_s'''} - \floor{\bv_s m_s''})}}\; f_{\underline{1}} \cdot u_{\um\um'\um''\um'''}\, d\mu,
\end{align*}
where $u_{\um\um'\um''\um'''}$ is a product of $2^s$ functions from the set $$\{T^\bn \CC^{\eps} u_{\uk, \uk'}:\; k_i\in \{m_i, m_i''\},\; k'_i\in \{m_i', m_i'''\},\; \eps\in\{0,1\},\; \bn\in\Z^k\}.$$
\end{lemma}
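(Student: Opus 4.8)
The plan is to prove Lemma~\ref{L: GCS twisted} by treating it as a twisted version of the finitary Gowers--Cauchy--Schwarz inequality (Lemma~\ref{L: GCS finitary}), where the twist $u_{\um\um'}$ rides along passively through the Cauchy--Schwarz unfolding. The key observation is that the iterated Cauchy--Schwarz argument that proves the usual Gowers--Cauchy--Schwarz inequality only exploits the product structure $\prod_{\ueps} T^{\floor{\bv_1 m_1^{\eps_1}}+\cdots+\floor{\bv_s m_s^{\eps_s}}}f_\ueps$ in the first $s$ variables $\um$; any extra factor depending only on the variables being ``doubled'' can be carried through, at the cost of doubling it as well and conjugating half the copies. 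So I would induct on $s$, at each step performing one application of the Cauchy--Schwarz inequality in the $i$-th pair of variables $(m_i, m_i')$.

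First I would set up the notation: write the left-hand side expression, compose the integral with $T^{-\floor{\bv_1 m_1}-\cdots-\floor{\bv_s m_s}}$ so that one of the monomials becomes the identity, and then duplicate the variable $m_1$ into $m_1, m_1''$ via Cauchy--Schwarz. Concretely, the integral splits as $\int (\cdots) \cdot u_{\um\um'}\, d\mu$ where $(\cdots)$ involves $f_{0\tilde\ueps}$ and $f_{1\tilde\ueps}$ (the latter carrying an extra $T^{\floor{\bv_1 m_1'}-\floor{\bv_1 m_1}}$), and applying Cauchy--Schwarz in the average over $(m_1, m_1')$ against the remaining variables produces $\int \Delta_{(\floor{\bv_1 m_1'}-\floor{\bv_1 m_1},\, \floor{\bv_1 m_1'''}-\floor{\bv_1 m_1''})}(\cdots')\cdot u_{\um\um'}\overline{u_{(m_1'',\um_{\geq 2})(m_1''',\um'_{\geq 2})}}\, d\mu$, where now the functions indexed by $\tilde\ueps\in\{0,1\}^{s-1}$ are themselves of the required multiplicative-derivative shape in the first coordinate. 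Iterating over $i=2,\ldots,s$, each step doubles both the $f$-structure and the twist; after $s$ steps only $f_{\underline 1}$ survives in the non-trivial slot (all others having been absorbed, since the $\Delta$-operators annihilate the $f_{\ueps}$ with $\ueps\neq\underline 1$ by 1-boundedness, exactly as in the untwisted proof), and the twist has become a product of $2^s$ functions of the form $T^{\bn}\CC^{\eps}u_{\uk,\uk'}$ with the indices as stated. The shifts $\bn\in\Z^k$ appear because commuting $T^{\floor{\cdot}}$ past sums inside floors produces bounded error terms; these are harmless since the statement allows arbitrary $\bn$.

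The main technical care needed is bookkeeping: tracking precisely which variables each copy of $u$ picks up and verifying that after all $s$ Cauchy--Schwarz steps one lands on exactly $\Delta_{(\floor{\bv_1 m_1'}-\floor{\bv_1 m_1},\,\floor{\bv_1 m_1'''}-\floor{\bv_1 m_1''}),\ldots}\,f_{\underline 1}$ and not some permuted or differently-conjugated variant. This is routine but must be done carefully; I would phrase the induction so that the inductive hypothesis is exactly the $s$-variable statement applied with $f_{\underline 1}$ replaced by an $(s-1)$-fold multiplicative derivative in the coordinate just processed, and with $u$ replaced by a product of two of the original twists. The passage between ``integer part of a sum'' and ``sum of integer parts'' (handled implicitly, or via Lemma~\ref{L: errors} if one wants to be fully rigorous about the bounded shifts) is the only place where working over $\R$ rather than $\Z$ introduces anything beyond the classical argument, and it contributes only the free shifts $\bn$. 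Thus the proof is essentially a verbatim adaptation of the standard Gowers--Cauchy--Schwarz unfolding with a spectator function, and I expect no genuine obstacle — the ``hard part'' is purely notational discipline in stating the inductive hypothesis precisely enough that the twist's combinatorics close up.
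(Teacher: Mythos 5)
Your proposal is correct and takes essentially the same route as the paper, which proves the case $s=2$ (remarking that the general case is analogous) by exactly this iterated Cauchy--Schwarz: compose with $T^{-\floor{\bv_i m_i}}$ at step $i$, drop the factors with $\eps_i=0$ by $1$-boundedness, and let the twist double with half the copies conjugated. The only cosmetic correction is that the shifts $\bn\in\Z^k$ on the copies of $u$ arise from these compositions (e.g.\ $T^{-\floor{\bv_1 m_1}-\floor{\bv_2 m_2}}$) rather than from floor-versus-sum error terms, so no appeal to Lemma~\ref{L: errors} is needed.
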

\begin{proof}
    We prove the case $s=2$, and the general case follows similarly. Upon composing with $T^{-\floor{\bv_1 m_1}}$, the expression inside the absolute value equals
    \begin{multline*}
        \E_{m_2,m_2'\in[\pm M]}\int  T^{\floor{\bv_2 m_2}}f_{00}\cdot T^{\floor{\bv_2 m_2'}}f_{01}\\
         \E_{m_1, m_1'\in[\pm M]} T^{\floor{\bv_1 m_1'}-\floor{\bv_1 m_1}}\brac{ T^{\floor{\bv_2 m_2}}f_{10}\cdot T^{\floor{\bv_2 m_2'}}f_{11}}\cdot T^{-\floor{\bv_1 m_1}}u_{m_1m_2m_1'm_2'}\; d\mu.
    \end{multline*}
    By the Cauchy-Schwarz inequality, its square is bounded by 
    \begin{multline*}
        \E_{m_2,m_2'\in[\pm M]}\int\E_{m_1, m_1', m_1'', m_1'''\in[\pm M]} \\
        \Delta_{(\floor{\bv_1 m_1'}-\floor{\bv_1 m_1}, \floor{\bv_1 m_1'''}-\floor{\bv_1 m_1''})}\brac{ T^{\floor{\bv_2 m_2}}f_{10}\cdot T^{\floor{\bv_2 m_2'}}f_{11}}\\
         T^{-\floor{\bv_1 m_1}}u_{m_1m_2m_1'm_2'}\cdot T^{-\floor{\bv_1 m_1''}}\overline{u_{m_1''m_2m_1'''m_2'}}\; d\mu.
    \end{multline*}
    Composing with $T^{-\floor{\bv_2 m_2}}$ and changing the order of summations, this equals
    \begin{multline*}
        \E_{m_1, m_1', m_1'', m_1'''\in[\pm M]}\int  \Delta_{(\floor{\bv_1 m_1'}-\floor{\bv_1 m_1}, \floor{\bv_1 m_1'''}-\floor{\bv_1 m_1''})}f_{10}\\ 
        \E_{m_2,m_2'\in[\pm M]} \Delta_{(\floor{\bv_1 m_1'}-\floor{\bv_1 m_1}, \floor{\bv_1 m_1'''}-\floor{\bv_1 m_1''})} T^{\floor{\bv_2 m_2'}-\floor{\bv_2 m_2}}f_{11}\\
         T^{-\floor{\bv_1 m_1}-\floor{\bv_2 m_2}}u_{m_1m_2m_1'm_2'}\cdot T^{-\floor{\bv_1 m_1''}-\floor{\bv_2 m_2}}\overline{u_{m_1''m_2m_1'''m_2'}}\; d\mu.
    \end{multline*}
    An application of the Cauchy-Schwarz inequality allows us to bound the square of the above by
    \begin{multline*}
        \E_{m_1, m_1', m_1'', m_1'''\in[\pm M]}\int \left|\E_{m_2,m_2'\in[\pm M]} \Delta_{(\floor{\bv_1 m_1'}-\floor{\bv_1 m_1}, \floor{\bv_1 m_1'''}-\floor{\bv_1 m_1''})} T^{\floor{\bv_2 m_2'}-\floor{\bv_2 m_2}}f_{11}\right.\\
        \left. T^{-\floor{\bv_1 m_1}-\floor{\bv_2 m_2}}u_{m_1m_2m_1'm_2'}\cdot T^{-\floor{\bv_1 m_1''}-\floor{\bv_2 m_2}}\overline{u_{m_1''m_2m_1'''m_2'}} \vphantom{\E_{m_2,m_2'\in[\pm M]}}\right|^2\; d\mu.
    \end{multline*}
    The result follows with
    \begin{multline*}
        u_{\substack{m_1m_1'm_1''m_1'''\\ m_2m_2'm_2''m_2'''}} = T^{-\floor{\bv_1 m_1}-\floor{\bv_2 m_2}}u_{m_1m_2m_1'm_2'}\cdot T^{-\floor{\bv_1 m_1''}-\floor{\bv_2 m_2}}\overline{u_{m_1''m_2m_1'''m_2'}}\\
        T^{-\floor{\bv_1 m_1}-\floor{\bv_2 m_2''}}\overline{u_{m_1m_2''m_1'm_2'''}}\cdot T^{-\floor{\bv_1 m_1''}-\floor{\bv_2 m_2''}}{u_{m_1''m_2''m_1'''m_2'''}}
    \end{multline*}
    upon expanding the square.
\end{proof}

\begin{proof}[Proof of Lemma \ref{L: dual-difference interchange}]
Let 
\begin{align*}
    \veps = \fnnorm{\tilde{f}}_{\bv_1, \ldots, \bv_s, \bv_{s+1}^{s'}}. 
\end{align*}

    Using the inductive formula for generalized box norms \eqref{E: inductive formula}, we get that
    \begin{align*}
        \lim_{M\to\infty}\E_{\um, \um'\in[\pm M]^s}\int \Delta_{\bv_1, \ldots, \bv_s; (\um, \um')}\tilde{f} \cdot u_{\um\um'}\; d\mu = \veps^{2^{s+s'}},
    \end{align*}
    where we can take $u_{\um\um'}$ to be any of the following depending on which conclusion we want to obtain:
    \begin{enumerate}
        \item if $s'\geq 1$: a dual function of level $s'$ with respect to $\bv_{s+1}$ (follows from the definition of dual functions);
        \item if $s'=1$: an approximately $\bv_{s+1}$-invariant function (follows from Lemma \ref{L: U^1 inverse}).
    \end{enumerate}
    We expand the expression above further as
    \begin{multline*}
        \lim_{M\to\infty}\lim_{i\to\infty}\E_{n\in[N_i]}\E_{\um, \um'\in[\pm M]^s}\\
        \int \brac{\prod_{\ueps\in\{0,1\}^s_*}\CC^{|\ueps|} T^{\floor{\bv_1 m_1^{\eps_1}}+\cdots + \floor{\bv_s m_s^{\eps_s}}}\tilde{f}} \cdot T^{\floor{\bv_1 m_1'}+\cdots + \floor{\bv_s m_s'}}f_{n,i} \cdot u_{\um\um'}\; d\mu;
    \end{multline*}    
    the reason why we expand only one copy of $\tilde{f}$ is to take the average over $n$ and the limit $N\to\infty$ out of the integral using weak convergence. The result follows from an application of Lemma \ref{L: GCS twisted} with $f_{\underline{1}} = f_{n,i}$.
\end{proof}

\subsection{Proof of Proposition \ref{P: smoothing pairwise}}

We now prove Proposition \ref{P: smoothing pairwise} for averages of length $\ell$ and type $w$ assuming Proposition \ref{P: smoothing} for averages of length $\ell-1$ as well as length $\ell$ and types $w'<w$.  If the type $w$ is basic, then the claim follows immediately from Proposition~\ref{P: bounds for basic type}, so we assume otherwise.  

    Let $\bv_{s+1} = \alpha_{1d_{1l}}\be_{\eta_1} - \alpha_{ld_{1l}}\be_{\eta_l}$ for some $l\in[0,\ell]\setminus\{1\}$; it is nonzero since it is the leading coefficient of $a_1\be_{\eta_1} - a_l\be_{\eta_l}$, and $a_1, a_l$ are essentially distinct (which follows from them being pairwise independent). We first sort out the easy cases. If $\eta_l = \eta_1$, then $\bv_{s+1}$ is a nonzero multiple of $\be_{\eta_1}$, and the claim follows from Lemmas~\ref{L: dilating seminorms} and \ref{L: plus vs normal seminorm}.\footnote{Specifically, we can use Lemma~\ref{L: plus vs normal seminorm} to add $+$ to our seminorm along $\bv_1, \ldots, \bv_{s+1}$, then apply Lemma~\ref{L: dilating seminorms} to replace $\bv_{s+1}$ with $\be_{\eta_1}$, and then apply the other part of Lemma~\ref{L: plus vs normal seminorm} to ``unplus'' the seminorm at the cost of adding one more $\be_{\eta_1}$. This gives the result with $s'=2$.} If $\deg a_l < \deg a_1$, then $\alpha_{ld_{1l}} = 0$, and the claim once again follows from Lemmas~\ref{L: dilating seminorms} and \ref{L: plus vs normal seminorm}. 
    
    Suppose therefore that $\eta_l\neq \eta_1$ and $\deg a_l = \deg a_1$ ($\deg a_l > \deg a_1$ is not allowed since $a_1$ has the maximal degree by assumption), in which case $\alpha_{ld_{1l}} \neq 0$. The proof of	Proposition~\ref{P: smoothing pairwise} in this case follows the same two-step strategy as in the analogous result in \cite{FrKu22a}. First, we establish auxiliary control of our average by $\nnorm{f_l}_{{\bv_1}, \ldots, {\bv_{s}}, \be_{\eta_l}^{s_1}}$ for some bounded $s_1\in\N$, and then we use this ancillary bound to get the claimed result. Like in \cite{FrKu22a}, we refer to these two steps as \textit{ping} and \textit{pong} since we are transferring the control from $f_1$ to $f_l$ and then back to $f_1$ much like a {table} tennis player passes the ball to his opponent, only to receive it back a second later.

    Throughout the argument, we let all the constant depend on $d, J, k, \ell, M, s, \CQ$, noting however that $s_1, s'$ as well as the powers of $\veps$ will not depend on $k, M$.
	
	\smallskip
	
	\textbf{Step 1 (\textit{ping}): Obtaining auxiliary control by a seminorm of $f_l$.}
	\smallskip
	
	Let
        \begin{align*}
           \veps = \limsup_{N\to\infty}\sup_{\substack{\CD_1, \ldots, \CD_J \in \FD_d}} \sup_{|c_n|\leq 1}\norm{\E_{n\in [N]} c_{n}\cdot \prod_{j\in[\ell]}  T_{\eta_j}^{\floor{a_j(n)}} f_j \cdot \prod_{j\in[J]}\CD_j(\floor{b_j(n)})}_{L^2(\mu)}.
        \end{align*} 
	Then Proposition~\ref{L: dual replacement} gives us a 1-bounded function $\tilde{f}_1$ as in \eqref{E: tilded} for which 
        \begin{multline*}
            \limsup_{N\to\infty}\sup_{\substack{\CD_1, \ldots, \CD_J \in \FD_d}} \sup_{|c_n|\leq 1}\\
            \norm{\E_{n\in [N]} c_{n}\cdot T_{\eta_1}^{\floor{a_1(n)}} \tilde{f}_1 \cdot \prod_{j\in[2, \ell]}  T_{\eta_j}^{\floor{a_j(n)}} f_j \cdot \prod_{j\in[J]}\CD_j(\floor{b_j(n)})}_{L^2(\mu)} \gg \veps^{O(1)}.
        \end{multline*}
	By our assumption, we have 
	$$
	\fnnorm{\tilde{f}_1}_{{\bv_1}, \ldots, {\bv_{s+1}}}\gg \veps^{O(1)}.
	$$
        We then unpack this seminorm using Lemma \ref{L: dual-difference interchange} and the formula \eqref{E: tilded} for $\tilde{f}_1$; after composing the resulting expression with $T_{\eta_1}^{\floor{a_1(n)}}$, and applying the Cauchy-Schwarz inequality, we obtain
        \begin{multline*}
            \liminf_{M\to\infty}\E_{\um, \um',\um'',\um'''\in [\pm M]^s}\limsup_{N\to\infty} \sup_{|c_n|\leq 1}\left|\!\left|\vphantom{\prod_{j\in[J]}}\E_{n\in[N]}c_n\cdot T_{\eta_1}^{\floor{a_1(n)}} u_{\um\um'\um''\um'''}\right.\right.\\
            \left.\left.\prod_{j\in[2, \ell]}T_{\eta_j}^{\floor{a_j(n)}}f_{j\um\um'\um''\um'''} \cdot \prod_{j\in[J]} \CD'_{j\um\um'\um''\um'''}(\floor{b_j(n)})\right|\!\right|_{L^2(\mu)}\gg \veps^{O(1)},
        \end{multline*}
        where
        \begin{align*}
            f_{j\um\um'\um''\um'''} &= \Delta_{\substack{(\floor{\bv_1 m_1'} - \floor{\bv_1 m_1}, \floor{\bv_1 m_1'''} - \floor{\bv_1 m_1''}), \ldots,\\ (\floor{\bv_s m_s'} - \floor{\bv_s m_s}, \floor{\bv_s m_s'''} - \floor{\bv_s m_s''})}}\overline{f_j}\\
            \textrm{and}\quad\CD'_{j\um\um'\um''\um'''} &= \Delta_{\substack{(\floor{\bv_1 m_1'} - \floor{\bv_1 m_1}, \floor{\bv_1 m_1'''} - \floor{\bv_1 m_1''}), \ldots,\\ (\floor{\bv_s m_s'} - \floor{\bv_s m_s}, \floor{\bv_s m_s'''} - \floor{\bv_s m_s''})}}\overline{\CD'_{j}}
        \end{align*}
        for some $\CD'_1, \ldots, \CD'_J\in \FD_d$ given by Lemma \ref{L: dual replacement}, and each $u_{\um\um'\um''\um'''}$ is a product of $2^s$ 1-bounded approximately $(\alpha_{1d_{1l}}\be_{\eta_1} - \alpha_{ld_{1l}}\be_{\eta_l})$-invariant functions.

        Thus, at the cost of having a family of averages to deal with rather than a single one, we have replaced the arbitrary function $f_1$ by one which is ``structured'' in the sense of being a product of approximately invariant functions. We now use this structure to lower the type of the average. By Lemma \ref{L: approximate invariance}, we can find 1-bounded functions $u'_{\um\um'\um''\um'''}$ such that
        \begin{multline*}
            \liminf_{M\to\infty}\E_{\um, \um',\um'',\um'''\in [\pm M]^s}\limsup_{N\to\infty} \sup_{|c_n|\leq 1}\left|\!\left|\vphantom{\prod_{j\in[J]}}\E_{n\in[N]}c_n\cdot T_{\eta'_1}^{\floor{a'_1(n)}} u'_{\um\um'\um''\um'''}\right.\right.\\ 
            \left.\left.\prod_{j\in[2, \ell]}T_{\eta'_j}^{\floor{a'_j(n)}}f_{j\um\um'\um''\um'''} \cdot \prod_{j\in[J]} \CD'_{j\um\um'\um''\um'''}(\floor{b_j(n)})\right|\!\right|_{L^2(\mu)}\gg \veps^{O(1)},
        \end{multline*}
	where 
		\begin{gather*} 
  a'_j(n):= \begin{cases} \frac{\alpha_{ld_{1l}}}{\alpha_{1d_{1l}}}a_1(n),\; &j = 1\\
                                    a_j,\; &j\in[2, \ell]
		\end{cases}, \quad\textrm{and}\quad
            \eta'_j := \begin{cases}
            \eta_l,\; &j = 1\\
            \eta_j,\; &j\in[2, \ell]
		\end{cases}.
  \end{gather*}
        Since $a_1, \ldots, a_\ell$ are pairwise independent,\footnote{All we need is that the sequences that we are dealing with at any given step of the argument are essentially distinct. However, the fact that we rescale sequences at subsequent steps means that essential distinctness need not be preserved from one step to another. By contrast, pairwise independence is preserved under scaling, and therefore it is the right assumption to make on our sequences.} $a'_1, \ldots, a'_\ell$ are as well. Moreover, the complexity of $a'_j$'s is at most the square of the complexity of $a_j$'s.

     By assumption, $w_{\eta_1}$ minimizes the nonzero coordinates of $w$. Since $\eta_l\neq\eta_1$, we deduce that the type $w'=\sigma_{\eta_1\eta_l}w$ of the new averages indexed by $(\um, \um',\um'',\um''')\in\Z^{4s}$ is strictly lower than $w$. We therefore apply inductively Proposition \ref{P: smoothing} for averages of length $\ell$ and type $w'<w$ to find a positive integer $s_1=O(1)$ such that for a set $B$ of $(\um, \um',\um'',\um''')\in\Z^{4s}$ of lower density $\Omega(\veps^{O(1)})$, we have
     \begin{align*}
         \nnorm{f_{l, \um\um'\um''\um'''}}_{s_1, T_{\eta_l}} = \nnorm{\Delta_{\substack{(\floor{\bv_1 m_1'} - \floor{\bv_1 m_1}, \floor{\bv_1 m_1'''} - \floor{\bv_1 m_1''}), \ldots,\\ (\floor{\bv_s m_s'} - \floor{\bv_s m_s}, \floor{\bv_s m_s'''} - \floor{\bv_s m_s''})}}\overline{f_l}}_{s_1, T_{\eta_l}}\gg \veps^{O(1)}.
     \end{align*}
    Averaging over $B$, extending the average to all of $\Z^{4s}$ by nonnegativity, raising it to the power $2^{s_1}$ using the H{\"o}lder inequality and applying the inductive formula for generalized box seminorms \eqref{E: inductive formula}, we get the auxiliary norm control\footnote{We are glossing over the fact that the inductive formula works for multiplicative derivatives of the form $\Delta_{(\floor{\bv_1 m_1'}, \floor{\bv_1 m_1})}$ rather than $\Delta_{(\floor{\bv_1 m_1'} - \floor{\bv_1 m_1}, \floor{\bv_1 m_1'''} - \floor{\bv_1 m_1''})}$. One way to fix it is to pigeonhole in $(m_1, m_1'')$ to find a single pair for which the appropriate fiber in $B$ has a lower density $\Omega(\veps^{O(1)})$ in $\Z^{2s}$, and then use the Gowers-Cauchy-Schwarz inequality to remove the shifts by these fixed pairs.}
    \begin{align*}
        \nnorm{f_l}_{\bv_1, \ldots, \bv_s, \be_{\eta_l}^{s_1}}\gg \veps^{O(1)}.
    \end{align*}

	\smallskip
	
	\textbf{Step 2 (pong): Obtaining control by a seminorm of $f_1$.}
	
	\smallskip
	
	To get the claim that $\nnorm{f_1}_{{\bv_1}, \ldots, {\bv_s}, \be_{\eta_1}^{s'}}$ controls the average for some bounded $s'\in\N$, we repeat the procedure once more with $f_l$ in place of $f_1$. From Proposition~\ref{L: dual replacement} 
	it follows that
         \begin{multline*}
            \limsup_{N\to\infty}\sup_{\substack{\CD_1, \ldots, \CD_J \in \FD_d}} \sup_{|c_n|\leq 1}\\
            \norm{\E_{n\in [N]} c_{n}\cdot T_{\eta_l}^{\floor{a_l(n)}} \tilde{f}_l \cdot \prod_{j\in[\ell], j\neq l}  T_{\eta_j}^{\floor{a_j(n)}} f_j \cdot \prod_{j\in[J]}\CD_j(\floor{b_j(n)})}_{L^2(\mu)} \gg \veps^{O(1)}.
        \end{multline*}
	for some function
          \begin{multline*}
            \tilde{f}_l = \lim_{i\to\infty} \E_{n\in [N_i]} \overline{c'_n}\cdot T_{\eta_l}^{-\floor{a_l(n)}}g_i\cdot\prod_{j\in[\ell], j\neq l}T_{\eta_l}^{-\floor{a_l(n)}} T_{\eta_j}^{\floor{a_j(n)}}\overline{f_j} \cdot \prod_{j\in[J]}T_{\eta_l}^{-\floor{a_l(n)}} \overline{\CD'_{j}(\floor{b_j(n)})}.
        \end{multline*}
	where $g_i$'s are 1-bounded functions, $c'_n$'s are 1-bounded weights, $\CD'_1, \ldots, \CD'_J\in \FD_d$ are dual sequences, and the limit is a weak limit. Then the auxiliary control obtained in the \textit{ping} step gives 
	$$
	\fnnorm{\tilde{f}_l}_{{\bv_1}, \ldots, {\bv_s}, \be_{\eta_l}^{s_1}}\gg \veps^{O(1)}.
	$$
	Lemma \ref{L: dual-difference interchange} implies that
 \begin{multline*}
            \liminf_{H\to\infty}\E_{\um, \um',\um'',\um'''\in [\pm M]^s}\limsup_{N\to\infty} \sup_{|c_n|\leq 1}\\
            \norm{\E_{n\in[N]}c_n\cdot \prod_{j\in[\ell], j\neq l}T_{\eta_j}^{\floor{a_j(n)}}f_{j\um\um'\um''\um'''} \cdot \prod_{j\in[J+1]} \CD'_{j\um\um'\um''\um'''}(\floor{b_j(n)})}_{L^2(\mu)}\gg \veps^{O(1)},
        \end{multline*}
        where
        \begin{align*}
            f_{j\um\um'\um''\um'''} &= \Delta_{\substack{(\floor{\bv_1 m_1'} - \floor{\bv_1 m_1}, \floor{\bv_1 m_1'''} - \floor{\bv_1 m_1''}), \ldots,\\ (\floor{\bv_s m_s'} - \floor{\bv_s m_s}, \floor{\bv_s m_s'''} - \floor{\bv_s m_s''})}}\overline{f_j}\\
        \end{align*}
        and
        \begin{align*}
            \CD'_{j\um\um'\um''\um'''}(n) = \begin{cases}\Delta_{\substack{(\floor{\bv_1 m_1'} - \floor{\bv_1 m_1}, \floor{\bv_1 m_1'''} - \floor{\bv_1 m_1''}), \ldots,\\ (\floor{\bv_s m_s'} - \floor{\bv_s m_s}, \floor{\bv_s m_s'''} - \floor{\bv_s m_s''})}}\overline{\CD'_{j}}(n),\; &j\in[J]\\
            T_{\eta_l}^{n} u_{\um\um'\um''\um'''},\; &j = J+1
            \end{cases},
        \end{align*}
        for $b_{J+1} = a_l$, some $\CD'_1, \ldots, \CD'_J\in \FD_d$ given by Lemma \ref{L: dual replacement}, and 1-bounded functions $u_{\um\um'\um''\um'''}$, each of which is a product of $2^s$ level-$s_1$ dual functions of $T_{\eta_l}$.

        Each of the new averages indexed by $(\um, \um',\um'',\um''')\in\Z^{4s}$ has length $\ell-1$. Hence, applying Proposition \ref{P: smoothing} for averages of length $\ell-1$, we obtain a positive integer $s'=O(1)$ such that for a set $B'$ of $(\um, \um',\um'',\um''')\in\Z^{4s}$ of lower density $\Omega(\veps^{O(1)})$, we have
        \begin{align*}
            \nnorm{\Delta_{\substack{(\floor{\bv_1 m_1'} - \floor{\bv_1 m_1}, \floor{\bv_1 m_1'''} - \floor{\bv_1 m_1''}), \ldots,\\ (\floor{\bv_s m_s'} - \floor{\bv_s m_s}, \floor{\bv_s m_s'''} - \floor{\bv_s m_s''})}} f_1}_{s', T_{\eta_1}} \gg \veps^{O(1)}.
        \end{align*}
        Averaging over $B'$, extending the average to all of $\Z^4$ by nonnegativity, raising it to the power $2^{s'}$ using the H{\"o}lder inequality and applying the inductive formula for generalized box seminorms \eqref{E: inductive formula}, we get        
        \begin{align*}
            \nnorm{f_1}_{\bv_1, \ldots, \bv_s, \be_{\eta_1}^{s'}} \gg \veps^{O(1)},
        \end{align*}
        as claimed.

\subsection{Proof of Proposition \ref{P: smoothing}}
Finally, we prove Proposition \ref{P: smoothing} for averages of length $\ell$ and type $w$ by assuming Proposition \ref{P: iterated smoothing pairwise} for averages of length $\ell$ and type $w$ as well as Proposition \ref{P: smoothing} for averages of length $\ell-1$.	In the base case $\ell=1$, Proposition \ref{P: smoothing} follows directly from Proposition \ref{P: bounds for basic type}. We assume therefore that $\ell>1$. 

 We let all the constants depend on $d, J, k, \ell, M, s, \CQ$, noting however that $s'$ as well as the powers of $\veps$ will not depend on $k, M$.
 
 Let 
 \begin{align*}
     \veps = \limsup_{N\to\infty}\sup_{\substack{\CD_1, \ldots, \CD_J \in \FD_d}} \sup_{|c_n|\leq 1}\norm{\E_{n\in [N]} c_{n}\cdot \prod_{j\in[\ell]}  T_{\eta_j}^{\floor{a_j(n)}} f_j \cdot \prod_{j\in[J]}\CD_j(\floor{b_j(n)})}_{L^2(\mu)}.
 \end{align*}
 Fix $i\in[\ell]$ for which $a_i$ is a sequence of maximum degree, and $w_{\eta_i}$ is minimal among the nonzero coordinates of $w$. By Proposition \ref{P: iterated smoothing pairwise} with the role of 1 played by $i$, there exists a positive integer $s'=O(1)$ for which 
 \begin{align*}
    \nnorm{f_i}_{s', T_{\eta_i}} \gg \veps^{O(1)}.
 \end{align*}
This gives the claimed Host-Kra seminorm control on $f_i$. If $i=1$, then we are done. Otherwise we also need to obtain a Host-Kra seminorm estimate in terms of other functions. To this end, we use Lemma \ref{L: dual replacement} to replace $f_i$ with $\tilde{f}_i$ (constructed analogously to $\tilde{f}_1$ from \eqref{E: tilded}), and then apply Proposition \ref{P: iterated smoothing pairwise} to get
 \begin{align*}
    \fnnorm{\tilde{f}_i}_{s', T_{\eta_i}} \gg \veps^{O(1)}.
 \end{align*}
 Using the identity \eqref{dual identity} and expanding the definition of $\tilde{f}_i$, we get
 \begin{multline*}
     \limsup_{N\to\infty}\sup_{\substack{\CD_1, \ldots, \CD_J \in \FD_d}} \sup_{|c_n|\leq 1}\norm{\E_{n\in [N]} c_{n}\cdot \prod_{j\in[\ell], j\neq i}  T_{\eta_j}^{\floor{a_j(n)}} f_j \cdot \prod_{j\in[J+1]}\CD_j(\floor{b_j(n)})}_{L^2(\mu)} \gg \veps^{O(1)},
 \end{multline*}
 where $b_{J+1} = a_i$ and 
 \begin{align*}
     \CD_{J+1}(n) =  T_{\eta_i}^{n}\CD_{s', T_{\eta_i}}\tilde{f}_i.
 \end{align*}
This average has length $\ell-1$, and so the claimed Host-Kra seminorm control of functions $f_j$ with $j\neq i$ follows by invoking Proposition \ref{P: smoothing} for averages of length $\ell-1$.

\section{Joint ergodicity for pairwise independent Hardy sequences}\label{S: joint ergodicity proofs}
The purpose of this section is to derive the joint ergodicity results from Section \ref{SS: joint ergodicity results}.
We start with a few relevant definitions.

\begin{definition}[Nonergodic eigenfunction]
    Let $(X, \CX, \mu, T)$ be a system. A function $\chi\in L^\infty(\mu)$ is a \textit{nonergodic eigenfunction} of $T$ if it satisfies the following two conditions:
    \begin{enumerate}
        \item there exist a $T$-invariant set $E\in \CX$ and a $T$-invariant function $\phi: X\to\T$ s.t.
        \begin{align*}
            T\chi(x) = 1_E(x)e(\phi(x))\chi(x)\quad \textrm{for}\quad \mu\textrm{-a.e.}\quad x\in X;   
        \end{align*}
        \item $|\chi(x)|\in\{0,1\}$ for $\mu$-a.e. $x\in X$.
    \end{enumerate}

    Similarly, we call $\chi\in L^\infty(\mu)$ an \textit{eigenfunction} of $T$ with \textit{eigenvalue} $\lambda\in[0,1)$ if $T\chi(x) = e(\lambda)\chi(x)$ holds for $\mu$-a.e. $x\in X$, and $|\chi|=1$ $\mu$-a.e.
\end{definition}

Classically, eigenfunctions of $T$ generate the Kronecker factor $\CK(T)$ whereas eigenfunctions with rational eigenvalues span the rational Kronecker factor $\Krat(T)$. Moreover, nonergodic eigenfunctions are known to generate the Host-Kra factor $\CZ_1(T)$ (see \cite[Theorem~5.2]{FH18} or \cite[Proposition~4.3]{FrKu22a}).  If $T$ is ergodic, then $\CZ_1(T)$ equals $\CK(T)$, but this fails for nonergodic systems in general (e.g., consider $T(x,y) = (x, y+x)$ on $X=\T^2$).

\begin{definition}[Good properties]\label{D: good properties}
    Let $a_1, \ldots, a_\ell:\N\to\R$ be sequences and let $(X, \CX, \mu, T_1, \ldots, T_\ell)$ be a system. The sequences $a_1, \ldots, a_\ell$ are:
    \begin{enumerate}
        \item \textit{good for seminorm control} for $(X, \CX, \mu, T_1, \ldots, T_\ell)$ if there exists $s\in\N$ such that for all 1-bounded $f_1, \ldots, f_\ell\in L^\infty(\mu)$, we have
    \begin{align}\label{E: convergence to 0}
        \lim_{N\to\infty}\norm{\E_{n\in[N]} T_1^{\floor{a_1(n)}}f_1 \cdots T_\ell^{\floor{a_\ell(n)}}f_\ell}_{L^2(\mu)} = 0
    \end{align}
    whenever $\nnorm{f_j}_{s, T_j} = 0$ for some $j\in[\ell]$;
        \item \textit{controlled by the Kronecker factor} for $(X, \CX, \mu, T_1, \ldots, T_\ell)$ if \eqref{E: convergence to 0} holds whenever $\E(f_j|\CK(T_j)) = 0$ for some $j\in[\ell]$;
        \item \textit{controlled by the rational Kronecker factor} on $(X, \CX, \mu, T_1, \ldots, T_\ell)$ if \eqref{E: convergence to 0} holds whenever $\E(f_j|\Krat(T_j)) = 0$ for some $j\in[\ell]$;
        \item \textit{good for equidistribution} for $(X, \CX, \mu, T_1, \ldots, T_\ell)$ if for eigenvalues $\lambda_1, \ldots, \lambda_\ell$ of $T_1, \ldots, T_\ell$ respectively such that $\lambda_1, \ldots, \lambda_\ell$ are not all 0, we have
        \begin{align}\label{E: good for equidistribution}
            \lim_{N\to\infty} \E_{n\in[N]}e(\lambda_1 \floor{a_1(n)} + \cdots + \lambda_\ell \floor{a_\ell(n)}) = 0;
        \end{align}
        \item \textit{good for strong irrational equidistribution} for $(X, \CX, \mu, T_1, \ldots, T_\ell)$ if \eqref{E: good for equidistribution} holds for eigenvalues $\lambda_1, \ldots, \lambda_\ell$ of $T_1, \ldots, T_\ell$ respectively such that $\lambda_1, \ldots, \lambda_\ell$ are not all rational;
        \item \textit{good for irrational equidistribution} for $(X, \CX, \mu, T_1, \ldots, T_\ell)$ if \eqref{E: good for equidistribution} holds for eigenvalues $\lambda_1, \ldots, \lambda_\ell$ of $T_1, \ldots, T_\ell$ respectively such that $\lambda_1, \ldots, \lambda_\ell\in \mathbb{R}\setminus\mathbb{Q}_\ast$ not all $0$;
        \item \textit{good for equidistribution} if \eqref{E: good for equidistribution} holds for all $\lambda_1, \ldots, \lambda_\ell\in[0,1)$ not all 0.
    \end{enumerate}
\end{definition}

The following characterizations have been proved in \cite{FrKu22a}.
\begin{theorem}\label{T: joint ergodicity criteria}
    Let $a_1, \ldots, a_\ell:\N\to\R$ be sequences and $(X, \CX, \mu, T_1, \ldots, T_\ell)$ be a system. Then 
    \begin{align}\label{E: weak joint ergodicity}
                \lim_{N\to\infty}\norm{\E_{n\in[N]}\prod_{j\in[\ell]} T_j^{\floor{a_{j}(n)}}f_j - \prod_{j\in[\ell]} \E(f_j|\CI(T_j))}_{L^2(\mu)} = 0
    \end{align}
    holds for all $f_1, \ldots, f_\ell\in L^\infty(\mu)$
    if and only if 
    \begin{enumerate}
        \item the sequences are good for seminorm control for this system, and
        \item the limiting formula \eqref{E: weak joint ergodicity} holds whenever $f_1, \ldots, f_\ell$ are nonergodic eigenfunctions of $T_1, \ldots, T_\ell$ respectively.
    \end{enumerate}

\smallskip

    Similarly,
    \begin{align}\label{E: Krat control}
                \lim_{N\to\infty}\norm{\E_{n\in[N]}\prod_{j\in[\ell]} T_j^{\floor{a_{j}(n)}}f_j - \E_{n\in[N]}\prod_{j\in[\ell]} T_j^{\floor{a_{j}(n)}}\E(f_j|\Krat(T_j))}_{L^2(\mu)} = 0
    \end{align}
    holds for all $f_1, \ldots, f_\ell\in L^\infty(\mu)$
    if and only if 
    \begin{enumerate}
        \item the sequences are good for seminorm control for this system, and
        \item the limiting formula \eqref{E: Krat control} holds whenever $f_1, \ldots, f_\ell$ are nonergodic eigenfunctions of $T_1, \ldots, T_\ell$ respectively.
    \end{enumerate}
\end{theorem}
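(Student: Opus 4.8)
The statement I want to prove is Theorem~\ref{T: joint ergodicity criteria}, which characterizes when the joint ergodicity identities \eqref{E: weak joint ergodicity} and \eqref{E: Krat control} hold in terms of (a) the sequences being good for seminorm control and (b) the identities holding on nonergodic eigenfunctions. This result is attributed to \cite{FrKu22a}, so strictly speaking it is an external input; nonetheless, here is how I would reconstruct the proof from the tools assembled in this paper.

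\textbf{Plan of proof.} The two directions are of very different difficulty. For the \emph{necessity} direction, suppose \eqref{E: weak joint ergodicity} holds for all $f_1,\dots,f_\ell\in L^\infty(\mu)$. Condition (ii) is immediate, since nonergodic eigenfunctions are a special class of bounded functions. For condition (i), the point is that if $\nnorm{f_j}_{s,T_j}=0$ for some $j$ and a sufficiently large fixed $s$, then $\E(f_j\mid \CI(T_j))=0$ as well (the invariant factor $\CI(T_j)$ is contained in the Host--Kra factor $\CZ_{s-1}(T_j)$, on which the degree-$s$ seminorm is positive-definite); hence the right-hand side of \eqref{E: weak joint ergodicity} vanishes and we get \eqref{E: convergence to 0}. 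The same reasoning handles the $\Krat$ version, using that $\Krat(T_j)\subseteq \CZ_1(T_j)\subseteq\CZ_{s-1}(T_j)$ so that $\nnorm{\cdot}_{s,T_j}=0$ forces $\E(\cdot\mid\Krat(T_j))=0$.

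The \emph{sufficiency} direction is the substantive one. Assume (i) and (ii). Using that the sequences are good for seminorm control, there is $s\in\N$ such that the average in \eqref{E: convergence to 0} vanishes whenever some $\nnorm{f_j}_{s,T_j}=0$; equivalently, the limiting behavior of $\E_{n\in[N]}\prod_j T_j^{\floor{a_j(n)}}f_j$ depends only on the projections $\E(f_j\mid\CZ_{s-1}(T_j))$. So it suffices to prove \eqref{E: weak joint ergodicity} when each $f_j$ is measurable with respect to $\CZ_{s-1}(T_j)$. The strategy is then a descent on the degree of the Host--Kra factors. One shows, by a van der Corput / PET-type argument combined with the seminorm estimates already available, that the limiting average is in fact controlled by the \emph{degree-one} factor $\CZ_1(T_j)$ of each coordinate --- intuitively because only the ``linear'' part of the nilsequences survives the averaging against an equidistributed (or strongly-equidistributed) Hardy sequence. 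Concretely, one peels off one level of the Host--Kra tower at a time: if $f_j$ is $\CZ_{r}(T_j)$-measurable with $r\geq 2$, a cube/van der Corput manipulation plus the seminorm bound of Theorem~\ref{T: box seminorm bound rephrased} (or Theorem~\ref{T: HK control}) reduces matters to $f_j$ being $\CZ_{r-1}(T_j)$-measurable, at the cost of introducing dual sequences that are harmless because of Lemma~\ref{L: removing duals finitary}. Iterating down to $r=1$ reduces the whole problem to the case where every $f_j$ is $\CZ_1(T_j)$-measurable, i.e.\ (by \cite[Theorem 5.2]{FH18} / \cite[Proposition 4.3]{FrKu22a}) a combination of nonergodic eigenfunctions. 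At that point hypothesis (ii) applies directly and gives \eqref{E: weak joint ergodicity}. The $\Krat$-version \eqref{E: Krat control} is handled by the identical descent, the only change being that in the final $\CZ_1$-step one keeps the ``convolution'' form $\E_{n\in[N]}\prod_j T_j^{\floor{a_j(n)}}\E(f_j\mid\Krat(T_j))$ rather than factoring completely --- this is because when the eigenvalues involved are all rational the exponential sums in \eqref{E: good for equidistribution} need not vanish, so one cannot collapse the average to a product of integrals, but one can still discard everything orthogonal to $\Krat(T_j)$.

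\textbf{Main obstacle.} The crux is the reduction to degree-one factors: getting from ``controlled by $\CZ_{s-1}(T_j)$'' down to ``controlled by $\CZ_1(T_j)$''. This requires a careful iterated van der Corput argument that, at each stage, recognizes the resulting iterated difference functions as dual sequences of bounded level and then removes them using Lemma~\ref{L: removing duals finitary}, while simultaneously tracking that the Hardy sequences $a_1,\dots,a_\ell$ remain in a controllable form (pairwise independent, with the relevant linear combinations growing faster than $\log$) throughout. The bookkeeping is exactly the sort handled by the seminorm-smoothing machinery of Section~\ref{S: HK control}; indeed, for the present paper one can largely bypass this step by quoting Theorem~\ref{T: HK control} to get $\CZ_s$-control and then invoking the degree-lowering argument of \cite{FrKu22a} as a black box, which is the route the authors take.
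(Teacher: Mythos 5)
First, note that the paper does not prove this statement at all: Theorem~\ref{T: joint ergodicity criteria} is quoted verbatim from \cite{FrKu22a} (``The following characterizations have been proved in \cite{FrKu22a}''), so there is no internal proof to compare against, and your opening observation that it is an external input is correct. Your necessity direction is also fine: since $\nnorm{f_j}_{1,T_j}=0$ iff $\E(f_j|\CI(T_j))=0$ (resp.\ $\nnorm{f_j}_{2,T_j}=0$ forces $\E(f_j|\Krat(T_j))=0$ because $\Krat(T_j)\subseteq\CZ_1(T_j)$), the validity of \eqref{E: weak joint ergodicity} (resp.\ \eqref{E: Krat control}) for all bounded functions immediately yields the seminorm-control property, and condition (ii) is a special case.

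The genuine gap is in your sufficiency sketch. The theorem is stated for \emph{arbitrary} sequences $a_1,\ldots,a_\ell:\N\to\R$, with the only available hypotheses being the abstract conditions (i) and (ii); yet your descent argument leans on tools that exist only for Hardy sequences in special position --- Theorem~\ref{T: HK control}, Theorem~\ref{T: box seminorm bound rephrased}, Lemma~\ref{L: removing duals finitary}, and the requirement that the sequences ``remain pairwise independent, with the relevant linear combinations growing faster than $\log$.'' None of this is part of the hypotheses, so the proposed reduction cannot be carried out in the stated generality; in particular PET/van der Corput manipulations of the iterates are unavailable when nothing is assumed about $a_1,\ldots,a_\ell$ beyond (i). Relatedly, the step ``peel off one level of the Host--Kra tower at a time via a cube/van der Corput manipulation'' is not justified and is not how the cited argument proceeds: in \cite{FrKu22a} the passage from $\CZ_s$-control to the $\CZ_1$ (nonergodic eigenfunction) level is a \emph{degree-lowering} argument applied to the dual of the averaging operator, exploiting the seminorm-control hypothesis together with the structure of Host--Kra factors and soft (qualitative-to-quantitative) functional-analytic input, rather than differencing the individual functions $f_j$ down the tower. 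Your final remark that one can quote the degree-lowering of \cite{FrKu22a} as a black box is accurate, but then the substantive content of the sufficiency direction is precisely that black box, and the intermediate reduction you describe does not supply it.
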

When $T_1, \ldots, T_\ell$ are ergodic, then in the first part of Theorem \ref{T: joint ergodicity criteria}, the second product in \eqref{E: weak joint ergodicity} simplifies to $\prod_{j\in[\ell]}\int f_j\; d\mu$, while in the second conditions of both statements we just need to consider eigenfunctions. 

\subsection{Implications for general sequences}

We start with several rudimentary lemmas that show logical relationships between various properties from Definitions~\ref{D: good properties} and conditions from Problem~\ref{Pr: joint ergodicity problem}. A standard result that we use many times concerns the equivalent conditions for the good equidistribution property.

\begin{lemma}\label{L: equivalent form of good eq prop}
    Let $a_1, \ldots, a_\ell:\N\to\R$ be sequences and $(X, \CX, \mu, T_1, \ldots, T_\ell)$ be a system. Then the following are equivalent: 
    \begin{enumerate}
        \item the sequences are good for equidistribution for the system;
        \item the identity \eqref{E: joint ergodicity} holds for all eigenfunctions $f_1, \ldots, f_\ell$ of $T_1, \ldots, T_\ell$ respectively;
        \item the identity 
        \begin{align}\label{E: eigenfunctions on product system}
                    \lim_{N\to\infty}\norm{\E_{n\in[N]}\bigotimes_{j\in[\ell]} T_j^{\floor{a_{j}(n)}}f_j - \prod_{j\in[\ell]} \int f_j\, d\mu}_{L^2(\mu^{\ell})} = 0
        \end{align}
         holds for all eigenfunctions $f_1, \ldots, f_\ell$ of $T_1, \ldots, T_\ell$ respectively.
         \end{enumerate}
\end{lemma}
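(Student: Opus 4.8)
\textbf{Proof plan for Lemma \ref{L: equivalent form of good eq prop}.}

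The plan is to establish the cycle of implications (iii) $\Rightarrow$ (ii) $\Rightarrow$ (i) $\Rightarrow$ (iii), since each step is then a short computation. First I would prove (iii) $\Rightarrow$ (ii): if the product-system identity \eqref{E: eigenfunctions on product system} holds, then projecting from $L^2(\mu^\ell)$ down to the diagonal copy of $X$ (i.e.\ composing with the map $x\mapsto (x,\dots,x)$, which sends $\bigotimes_j g_j$ to $\prod_j g_j$ and is a contraction $L^2(\mu^\ell)\to L^2(\mu)$) immediately yields \eqref{E: joint ergodicity} for eigenfunctions. Next, for (ii) $\Rightarrow$ (i): given eigenfunctions $\chi_j$ of $T_j$ with eigenvalues $\lambda_j$ not all zero, we have $T_j^{\floor{a_j(n)}}\chi_j = e(\lambda_j \floor{a_j(n)})\chi_j$ $\mu$-a.e., so
\begin{align*}
    \E_{n\in[N]}\prod_{j\in[\ell]} T_j^{\floor{a_j(n)}}\chi_j = \brac{\E_{n\in[N]}e(\lambda_1\floor{a_1(n)}+\cdots+\lambda_\ell\floor{a_\ell(n)})}\prod_{j\in[\ell]}\chi_j.
\end{align*}
Since $|\chi_j|=1$, taking $L^2(\mu)$ norms shows that \eqref{E: joint ergodicity} for these eigenfunctions is equivalent to the scalar average $\E_{n\in[N]}e(\lambda_1\floor{a_1(n)}+\cdots+\lambda_\ell\floor{a_\ell(n)})$ tending either to $0$ (note $\int\chi_j\,d\mu=0$ when $\lambda_j\neq 0$, so the constant to subtract is $0$); hence (ii) gives exactly \eqref{E: good for equidistribution} for every choice of eigenvalues not all $0$, which is (i). (One subtlety: if some but not all $\lambda_j=0$, the corresponding $\chi_j$ is $T_j$-invariant with $\int\chi_j\,d\mu$ possibly nonzero; but then $\prod_j\int\chi_j\,d\mu$ could also be nonzero, so one should compare against that constant rather than $0$. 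I would phrase (ii) as the version of \eqref{E: joint ergodicity} where the subtracted quantity is $\prod_j\int\chi_j\,d\mu$, or simply restrict attention to eigenvalues that are all nonzero, reducing to the nonzero case by absorbing invariant eigenfunctions, which are constants if $T_j$ is ergodic; in the general statement one handles a zero $\lambda_j$ by noting that then the claim for the full tuple follows from the claim for the subtuple of nonzero indices.)

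Finally, (i) $\Rightarrow$ (iii): suppose the sequences are good for equidistribution. Eigenfunctions span a dense subspace only of the Kronecker factor, but that is irrelevant here — we are only asserting \eqref{E: eigenfunctions on product system} for $f_j$ that are themselves eigenfunctions, not for general functions. So let $\chi_j$ be an eigenfunction of $T_j$ with eigenvalue $\lambda_j$. Then
\begin{align*}
    \E_{n\in[N]}\bigotimes_{j\in[\ell]}T_j^{\floor{a_j(n)}}\chi_j = \brac{\E_{n\in[N]}e(\lambda_1\floor{a_1(n)}+\cdots+\lambda_\ell\floor{a_\ell(n)})}\bigotimes_{j\in[\ell]}\chi_j
\end{align*}
in $L^2(\mu^\ell)$, and $\norm{\bigotimes_j\chi_j}_{L^2(\mu^\ell)}=1$. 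If the $\lambda_j$ are not all zero, (i) forces the scalar average to $0$, and $\prod_j\int\chi_j\,d\mu=0$ because at least one factor vanishes, so \eqref{E: eigenfunctions on product system} holds. If all $\lambda_j=0$, then each $\chi_j$ is $T_j$-invariant and the average is simply $\bigotimes_j\chi_j$, which equals $\prod_j\int\chi_j\,d\mu$ precisely when each $\chi_j$ is constant — true if $T_j$ is ergodic, and in the nonergodic case one must again be careful. I expect this bookkeeping around the all-zero (or partially zero) eigenvalue case to be the only mild obstacle; everywhere else the argument is a direct unwinding of the eigenfunction relation, and the cleanest fix is to state conditions (ii) and (iii) of the lemma with the constant $\prod_j\int f_j\,d\mu$ in place of a bare product of integrals, or to invoke the standing convention in this paper's joint ergodicity results that such limiting formulas are understood modulo invariant factors, so that the degenerate case is automatically subsumed.
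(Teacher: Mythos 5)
Your step (iii) $\Rightarrow$ (ii) contains a genuine error: the map $f \mapsto f\circ\Delta$ (pullback along the diagonal $\Delta\colon X\to X^\ell$) is \emph{not} a contraction from $L^2(\mu^\ell)$ to $L^2(\mu)$ — in fact it is not even well-defined on all of $L^2(\mu^\ell)$ once $\ell\geq 2$, because the diagonal is a $\mu^\ell$-null set, so a $\mu^\ell$-a.e. equivalence class does not determine a $\mu$-a.e. class of its diagonal restriction. Even on the specific elements at hand (tensors of unit-modulus eigenfunctions minus a constant), the pullback can \emph{increase} the $L^2$ norm: if $g$ is unit-modulus with $\int g\,d\mu = c_0\neq 0$ and $\int g^2\,d\mu = d_0$, then $\norm{g\otimes g - c_0^2}_{L^2(\mu^2)}^2 = 1 - |c_0|^4$ while $\norm{g^2 - c_0^2}_{L^2(\mu)}^2 = 1 - 2\,\mathrm{Re}(\overline{c_0^2}\,d_0) + |c_0|^4$, and the latter exceeds the former whenever $\mathrm{Re}(\overline{c_0^2}\,d_0) < |c_0|^4$, which can happen. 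So you cannot "project down to the diagonal" and expect norms to shrink. The fix is the same maneuver you already use for (ii)$\Rightarrow$(i) and (i)$\Rightarrow$(iii): use the eigenfunction relation to factor both $\E_n\prod_j T_j^{\floor{a_j(n)}}\chi_j$ and $\E_n\bigotimes_j T_j^{\floor{a_j(n)}}\chi_j$ as the \emph{same} scalar $\E_n\, e(\sum_j\lambda_j\floor{a_j(n)})$ times a fixed unit-norm function ($\prod_j\chi_j$, resp.\ $\bigotimes_j\chi_j$); the vanishing of each $L^2$ expression is then controlled entirely by the scalar, so (ii) and (iii) are each directly equivalent to (i). This is in effect what the paper does: it proves (i)$\Leftrightarrow$(iii) by this scalar extraction and remarks that (i)$\Leftrightarrow$(ii) "follows similarly," never attempting a direct map between the product and single systems.

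Your bookkeeping concern about the all-zero eigenvalue case is, however, a correct and sharper observation than the paper's. When every $\lambda_j = 0$, each $\chi_j$ is $T_j$-invariant with $|\chi_j|=1$, and \eqref{E: eigenfunctions on product system} would demand $\bigotimes_j\chi_j = \prod_j\int\chi_j\,d\mu$ in $L^2(\mu^\ell)$, which forces each $\chi_j$ to be constant — automatic for ergodic $T_j$, false in general. The paper's proof simply declares that in the all-zero case "there is nothing to check" and that "the identity holds with both expressions on the left hand side being $1$," which is only true under an implicit restriction (to ergodic systems, or to tuples of eigenvalues not all zero). Your proposed remedy — restrict (ii) and (iii) to eigenvalue tuples that are not all zero, or compare against $\E(\prod_j\chi_j|\bigcap_j\CI(T_j))$ rather than $\prod_j\int\chi_j\,d\mu$ — is the right instinct and matches how the lemma is actually applied later (via Theorem~\ref{T: joint ergodicity criteria}, which works with conditional expectations onto invariant factors).
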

\begin{proof} 
    We prove the equivalence of (i) and (iii); the equivalence of (i) and (ii) follows similarly. Let $\lambda_1, \ldots, \lambda_\ell$ be the eigenvalues of $f_{1},\dots,f_{\ell}$ with respect to $T_1, \ldots, T_\ell$ respectively. If $\lambda_1 = \cdots = \lambda_\ell = 0$, then there is nothing to check regarding equidistribution, and the identity \eqref{E: eigenfunctions on product system} holds with both expressions {on the left hand side} being 1. Suppose therefore that $\lambda_j \neq 0$ for some $j\in[\ell]$. Then $\int f_j\; d\mu = 0$, and so the Cartesian product in \eqref{E: eigenfunctions on product system} vanishes. The tensor product in \eqref{E: eigenfunctions on product system} equals 
 \begin{align*}
     \lim_{N\to\infty}\E_{n\in[N]} e(\lambda_1 \floor{a_1(n)}+\cdots + \lambda_\ell \floor{a_\ell(n)})\cdot \bigotimes_{j\in[\ell]}f_{j},
 \end{align*}
 and since $\bigotimes_{j\in[\ell]}f_{j}\neq 0$ $\mu$-a.e., it vanishes if and only if \eqref{E: good for equidistribution} holds. Hence being good for equidistribution is equivalent to condition (iii).
\end{proof}

Lemma \ref{L: equivalent form of good eq prop} gives the following immediate corollary.
\begin{corollary}\label{C: equivalent form of good eq prop}
    Let $a_1, \ldots, a_\ell:\N\to\R$ be sequences and $(X, \CX, \mu, T_1, \ldots, T_\ell)$ be a system. Suppose that one of the conditions holds:
    \begin{enumerate}
        \item the identity \eqref{E: weak joint ergodicity} holds for all $f_1, \ldots, f_\ell\in L^\infty(\mu)$; or
        \item $(T_1^{\floor{a_1(n)}}\times \cdots \times T_\ell^{\floor{a_\ell(n)}})_n$ is ergodic on the product space $(X^\ell, \CX^{\otimes \ell}, \mu^\ell)$. 
    \end{enumerate}
    Then the sequences are good for equidistribution for the system.
\end{corollary}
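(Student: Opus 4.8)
The plan is to establish each of the two implications by reducing to the equidistribution characterization furnished by Lemma~\ref{L: equivalent form of good eq prop}, combined with the fact (used in its proof) that an eigenfunction with nonzero eigenvalue has zero integral. Throughout, fix eigenfunctions $f_1, \ldots, f_\ell$ of $T_1, \ldots, T_\ell$ with eigenvalues $\lambda_1, \ldots, \lambda_\ell$ respectively, not all zero, and let $j_0\in[\ell]$ be an index with $\lambda_{j_0}\neq 0$; then $\int f_{j_0}\, d\mu = 0$, hence both $\prod_{j\in[\ell]}\E(f_j|\CI(T_j))$ and $\prod_{j\in[\ell]}\int f_j\, d\mu$ vanish (for the former, note an eigenfunction with nonzero eigenvalue is orthogonal to the invariant factor). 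By Lemma~\ref{L: equivalent form of good eq prop}, it therefore suffices in each case to show that $\lim_{N\to\infty}\norm{\E_{n\in[N]}\prod_{j\in[\ell]}T_j^{\floor{a_j(n)}}f_j}_{L^2(\mu)} = 0$, or equivalently that $\lim_{N\to\infty}\E_{n\in[N]}e(\lambda_1\floor{a_1(n)}+\cdots+\lambda_\ell\floor{a_\ell(n)}) = 0$.

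For assertion (i), suppose \eqref{E: weak joint ergodicity} holds for all $f_1, \ldots, f_\ell\in L^\infty(\mu)$. Apply it to the chosen tuple of eigenfunctions: since $\prod_{j\in[\ell]}\E(f_j|\CI(T_j)) = 0$ as noted above, \eqref{E: weak joint ergodicity} gives exactly $\lim_{N\to\infty}\norm{\E_{n\in[N]}\prod_{j\in[\ell]}T_j^{\floor{a_j(n)}}f_j}_{L^2(\mu)} = 0$. Expanding the product of eigenfunctions, $\prod_{j\in[\ell]}T_j^{\floor{a_j(n)}}f_j = e(\lambda_1\floor{a_1(n)}+\cdots+\lambda_\ell\floor{a_\ell(n)})\cdot\prod_{j\in[\ell]}f_j$, and since $\prod_{j\in[\ell]}f_j$ is a fixed nonzero function (each $f_j$ has modulus $1$ a.e.\ off a null set, so the product is nonzero $\mu$-a.e.), the $L^2(\mu)$ norm above equals $\norm{\prod_{j\in[\ell]}f_j}_{L^2(\mu)}\cdot\bigabs{\E_{n\in[N]}e(\lambda_1\floor{a_1(n)}+\cdots+\lambda_\ell\floor{a_\ell(n)})}$, forcing the exponential sum to tend to $0$. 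This is precisely \eqref{E: good for equidistribution}, so the sequences are good for equidistribution for the system.

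For assertion (ii), suppose $(T_1^{\floor{a_1(n)}}\times\cdots\times T_\ell^{\floor{a_\ell(n)}})_n$ is ergodic on $(X^\ell, \CX^{\otimes\ell}, \mu^\ell)$. Apply the definition of ergodicity of this sequence of transformations to the function $F = f_1\otimes\cdots\otimes f_\ell\in L^\infty(\mu^\ell)$: by definition of an ergodic sequence, $\lim_{N\to\infty}\norm{\E_{n\in[N]}(T_1^{\floor{a_1(n)}}\times\cdots\times T_\ell^{\floor{a_\ell(n)}})F - \int F\, d\mu^\ell}_{L^2(\mu^\ell)} = 0$. Here $\int F\, d\mu^\ell = \prod_{j\in[\ell]}\int f_j\, d\mu = 0$ since $\int f_{j_0}\, d\mu = 0$, and $(T_1^{\floor{a_1(n)}}\times\cdots\times T_\ell^{\floor{a_\ell(n)}})F = e(\lambda_1\floor{a_1(n)}+\cdots+\lambda_\ell\floor{a_\ell(n)})\cdot F$. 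As $F\neq 0$ in $L^2(\mu^\ell)$ (being a tensor product of functions of modulus $1$ a.e.), the same computation as before gives $\lim_{N\to\infty}\bigabs{\E_{n\in[N]}e(\lambda_1\floor{a_1(n)}+\cdots+\lambda_\ell\floor{a_\ell(n)})} = 0$, which is \eqref{E: good for equidistribution}. Hence the sequences are good for equidistribution for the system, completing the proof.

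There is no real obstacle here: the statement is essentially a bookkeeping consequence of Lemma~\ref{L: equivalent form of good eq prop} together with the elementary observation that eigenfunctions with nonzero eigenvalue are orthogonal to the invariant (and more generally have mean zero). The only point requiring a word of care is handling the case where all eigenvalues vanish — but then condition (5) of Definition~\ref{D: good properties} (equivalently \eqref{E: good for equidistribution}) is vacuous, so nothing needs to be checked, exactly as in the proof of Lemma~\ref{L: equivalent form of good eq prop}.
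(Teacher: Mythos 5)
Your proposal is correct and follows essentially the same route as the paper: the paper simply notes that condition (i) (resp.\ (ii)) of the corollary yields condition (ii) (resp.\ (iii)) of Lemma~\ref{L: equivalent form of good eq prop} and invokes that lemma, whereas you inline exactly the computation underlying that lemma (an eigenfunction with a nonzero eigenvalue has zero mean and zero projection onto the invariant factor, and the product, resp.\ tensor product, of modulus-one eigenfunctions is nonzero, so the exponential averages must vanish). The substance is identical, and your explicit remark that only eigenvalue tuples with some $\lambda_{j}\neq 0$ need to be checked matches the definition of being good for equidistribution.
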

\begin{proof}
    If the condition (i) (resp. (ii)) holds, then the condition (ii) (resp. (iii)) of Lemma \ref{L: equivalent form of good eq prop} holds, and so the sequences are good for equidistribution for the system.
\end{proof}

The next result gives conditions under which joint ergodicity implies the product ergodicity condition from Problem~\ref{Pr: joint ergodicity problem}.
\begin{lemma}\label{L: weak (ii)}
    Let $a_1, \ldots, a_\ell:\N\to\R$ be sequences and $(X, \CX, \mu, T_1, \ldots, T_\ell)$ be a system. Suppose that the following two conditions hold:
    \begin{enumerate}
        \item $a_1, \ldots, a_\ell$ are jointly ergodic for $(X, \CX, \mu, T_1, \ldots, T_\ell)$,
        \item each $a_j$ is controlled by the Kronecker factor on the system $(X\times X, \CX\otimes \CX, \mu\times\mu,$ $T_j\times T_j)$.
    \end{enumerate}
    Then $(T_1^{\floor{a_1(n)}}\times \cdots \times T_\ell^{\floor{a_\ell(n)}})_n$ is ergodic on the product space $(X^\ell, \CX^{\otimes \ell}, \mu^{\ell})$.
\end{lemma}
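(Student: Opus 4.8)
The plan is to verify ergodicity of $S_n := T_1^{\floor{a_1(n)}}\times\cdots\times T_\ell^{\floor{a_\ell(n)}}$ on $(X^\ell,\CX^{\otimes\ell},\mu^\ell)$ directly from the definition: for every $F\in L^\infty(\mu^\ell)$ we must show that $\E_{n\in[N]}S_n F$ converges in $L^2(\mu^\ell)$ to $\int F\,d\mu^\ell$. By a standard density argument it suffices to treat tensor products $F = f_1\otimes\cdots\otimes f_\ell$ with each $f_j\in L^\infty(\mu)$, so the target becomes
\begin{align*}
    \lim_{N\to\infty}\norm{\E_{n\in[N]}\bigotimes_{j\in[\ell]}T_j^{\floor{a_j(n)}}f_j - \prod_{j\in[\ell]}\int f_j\,d\mu}_{L^2(\mu^\ell)} = 0.
\end{align*}
First I would split each $f_j = \E(f_j|\CK(T_j)) + (f_j - \E(f_j|\CK(T_j)))$ and expand the tensor product multilinearly into $2^\ell$ terms. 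Any term in which at least one factor is the ``Kronecker-orthogonal'' piece $f_j - \E(f_j|\CK(T_j))$ should vanish in the limit: viewing such a term as living on the product system $(X\times X,\CX\otimes\CX,\mu\times\mu,T_j\times T_j)$ in the $j$-th pair of coordinates (while the other coordinates contribute bounded functions that can be absorbed), hypothesis (ii) — that $a_j$ is controlled by the Kronecker factor on $(X\times X,\CX\otimes\CX,\mu\times\mu,T_j\times T_j)$ — forces the corresponding average to tend to $0$ in $L^2$. The precise way to invoke control by the Kronecker factor here is to bound the $L^2(\mu^\ell)$-norm of the offending term by the $L^2(\mu\times\mu)$-norm of an average of the shape $\E_{n\in[N]}(T_j\times T_j)^{\floor{a_j(n)}}g$ with $\E(g|\CK(T_j\times T_j))$ vanishing in the appropriate direction; one does this by a Cauchy–Schwarz / van der Corput-type maneuver that pushes everything onto the $j$-th coordinate pair, exactly as in the standard reduction used when proving that seminorm control implies Kronecker control. (Alternatively, one notes $\CK(T_j\times T_j)$ is generated by $\chi\otimes\overline{\chi'}$ for eigenfunctions $\chi,\chi'$ of $T_j$, reducing to exponential sums.)

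Next I would handle the single surviving term in which every factor is its Kronecker projection, i.e. $\bigotimes_j \E(f_j|\CK(T_j))$. Here I further decompose each $\E(f_j|\CK(T_j))$ into its eigenfunction components, reducing by multilinearity and an approximation argument to the case where each $f_j$ is an eigenfunction $\chi_j$ of $T_j$ with eigenvalue $\lambda_j$. Since $a_1,\dots,a_\ell$ are jointly ergodic for $(X,\CX,\mu,T_1,\dots,T_\ell)$ by hypothesis (i), Corollary~\ref{C: equivalent form of good eq prop} tells us the sequences are good for equidistribution for this system, so by Lemma~\ref{L: equivalent form of good eq prop} the identity \eqref{E: eigenfunctions on product system} holds for all eigenfunctions $\chi_1,\dots,\chi_\ell$ — which is precisely the statement that $\E_{n\in[N]}\bigotimes_j T_j^{\floor{a_j(n)}}\chi_j$ converges to $\prod_j\int\chi_j\,d\mu$ in $L^2(\mu^\ell)$. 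Summing the contributions, all $2^\ell-1$ ``mixed'' terms vanish, the purely-Kronecker term gives $\prod_j\int f_j\,d\mu$ (since taking Kronecker projections does not change the integrals), and we obtain the claimed ergodicity.

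The main obstacle I anticipate is the rigorous justification that a mixed term vanishes, i.e. making precise the reduction ``a factor orthogonal to $\CK(T_j)$ in the $j$-th coordinate kills the average via control by the Kronecker factor on $(X\times X,T_j\times T_j)$''. The subtlety is that control by the Kronecker factor is a statement about an $\ell$-fold average of the form $\E_{n\in[N]}T_1^{\floor{a_1(n)}}h_1\cdots T_\ell^{\floor{a_\ell(n)}}h_\ell$ on a single system with $\ell$ commuting transformations, whereas here we have a tensor product over $\ell$ \emph{different} coordinate spaces each with \emph{one} transformation; one must argue that hypothesis (ii), which concerns each $a_j$ in isolation on $(X\times X, T_j\times T_j)$ (a system with a single transformation, so the $\ell=1$ case of the relevant notion), genuinely suffices. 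This is done by iterated Cauchy–Schwarz: one integrates out all coordinates except the $j$-th pair, estimates the $L^2(\mu^\ell)$-norm of the mixed term by a constant times the $L^2(\mu\times\mu)$-norm of $\E_{n\in[N]}(T_j\times T_j)^{\floor{a_j(n)}}(g_N)$ for suitable $1$-bounded $g_N$ whose conditional expectation onto $\CK(T_j\times T_j)$ vanishes because $f_j-\E(f_j|\CK(T_j))$ does, and then applies hypothesis (ii). Once this reduction is carried out carefully the rest is bookkeeping.
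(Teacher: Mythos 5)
Your proposal is correct and follows essentially the same route as the paper: reduce to tensor products $f_1\otimes\cdots\otimes f_\ell$, kill any term with $\E(f_j|\CK(T_j))=0$ by a Cauchy--Schwarz passage to $(X\times X,\CX\otimes\CX,\mu\times\mu,T_j\times T_j)$ together with $\CK(T_j\times T_j)=\CK(T_j)\otimes\CK(T_j)$ and hypothesis (ii), and then treat the remaining Kronecker-measurable part by reducing to eigenfunctions and invoking Corollary~\ref{C: equivalent form of good eq prop} with Lemma~\ref{L: equivalent form of good eq prop}. The ``obstacle'' you flag is resolved exactly as you anticipate: the paper expands the square of the $L^2(\mu^\ell)$-norm as a double average over $n,m$, bounds it by a single factor, and applies Cauchy--Schwarz to dominate the fourth power by $\limsup_{N\to\infty}\norm{\E_{n\in[N]}(T_j\times T_j)^{\floor{a_j(n)}}(f_j\otimes\overline{f_j})}_{L^2(\mu\times\mu)}^2$, to which hypothesis (ii) applies.
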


Corollary \ref{C: equivalent form of good eq prop} and Lemma \ref{L: weak (ii)} jointly imply that under the additional property (ii) from Lemma \ref{L: weak (ii)}, the difference and product ergodicity conditions from Problem~\ref{Pr: joint ergodicity problem} are necessary for joint ergodicity on a given system.
 \begin{proof}
     We want to show that 
     \begin{align}\label{E: convergence on product system}
         \lim_{N\to\infty}\norm{\E_{n\in[N]}T_1^{\floor{a_1(n)}}\times \cdots \times T_\ell^{\floor{a_\ell(n)}} f - \int f\; d\mu^{\ell}}_{L^2(\mu^{\ell})} = 0
     \end{align}
     for all $f\in L^\infty(\mu^{\ell})$. By an $L^2(\mu^{\ell})$-approximation argument, it suffices to consider 1-bounded functions of the form $f_1\otimes \cdots \otimes f_\ell$.  For such functions, we have 
     \begin{align*}
         &\limsup_{N\to\infty}\norm{\E_{n\in[N]}T_1^{\floor{a_1(n)}}\times \cdots \times T_\ell^{\floor{a_\ell(n)}} (f_1\otimes \cdots \otimes f_\ell)}_{L^2(\mu^{\ell})}^2\\
         &\qquad\qquad\qquad =\limsup_{N\to\infty} \E_{n, m\in[N]}\prod_{j\in[\ell]}\int T_j^{\floor{a_j(n)}}f_j\cdot T_j^{\floor{a_j(m)}}\overline{f_j}\; d\mu\\
         &\qquad\qquad\qquad \leq\limsup_{N\to\infty} \E_{n, m\in[N]}\abs{\int T_j^{\floor{a_j(n)}}f_j\cdot T_j^{\floor{a_j(m)}}\overline{f_j}\; d\mu}
     \end{align*}
     for each fixed $j\in[\ell]$. Applying the Cauchy-Schwarz inequality and passing to the product system, we get  
     \begin{align*}
         &\limsup_{N\to\infty}\norm{\E_{n\in[N]}T_1^{\floor{a_1(n)}}\times \cdots \times T_\ell^{\floor{a_\ell(n)}} (f_1\otimes \cdots \otimes f_\ell)}_{L^2(\mu^{\ell})}^4\\
         &\qquad\qquad\qquad \leq\limsup_{N\to\infty} \norm{\E_{n\in[N]}(T_j\times T_j)^{\floor{a_j(n)}}(f_j\otimes\overline{f_j})}_{L^2(\mu\times\mu)}^2,
     \end{align*}     
     and by the second condition, the latter vanishes whenever $\E(f_j\otimes\overline{f_j}|\CK(T_j\times T_j)) = 0$. Since $\CK(T_j\times T_j) = \CK(T_j)\otimes \CK(T_j)$, this last condition holds whenever $\E(f_j|\CK(T_j)) = 0$. 

    Using the well-known fact that $\CK(T_1 \times \cdots \times T_\ell) = \CK(T_1)\otimes \cdots \otimes \CK(T_\ell)$ \cite[Lemma~4.18]{Fu81} and an $L^2(\mu^{\ell})$ approximation argument, it therefore 
 suffices to show that (\ref{E: convergence on product system}) holds if $f_{1},\dots,f_{\ell}$ are eigenfunctions of $T_1, \ldots, T_\ell$ respectively. Since joint ergodicity implies being good for equidistribution by Corollary \ref{C: equivalent form of good eq prop}, the veracity of \eqref{E: convergence on product system} for eigenfunctions follows from Lemma \ref{L: equivalent form of good eq prop}.
 \end{proof}

Being good for equidistribution for a specific system does not imply that \eqref{E: weak joint ergodicity} holds for all \textit{nonergodic} eigenfunctions, as illustrated by the following example.
\begin{example}
Let $Y=\{y_0, y_1, y_2\}$, $Z = \{z_0, z_1\}$, $X$ be the disjoint union of $Y$ and $Z$, and $T:X\to X$ be defined by setting $T|_Y, T|_Z$ to be nontrivial permutations of $Y, Z$ respectively. Then $T$ admits no nontrivial eigenvalues, and so any sequence $a: \N\to\R$ is vacuously good for equidistribution. At the same time, the identity \eqref{E: weak joint ergodicity} fails for $a(n) = 6n$ since then
\begin{align*}
    \lim_{N\to\infty}\E_{n\in[N]} T^{6n}f = f
\end{align*}
is different from $\E(f|\CI(T))$ whenever $f|_Y$ or $f|_Z$ is not constant. Since every $f:X\to\C$ is a linear combination of nonergodic eigenfunctions, this implies that the identity \eqref{E: weak joint ergodicity} fails for some nonergodic eigenfunctions.
\end{example}
However, one can check that if a family of sequences is good for equidistribution, then \eqref{E: weak joint ergodicity} holds for all systems and all nonergodic eigenfunctions.

Lastly, we show how seminorm estimates aid in obtaining the sufficient direction in Problem~\ref{Pr: joint ergodicity problem}.
 \begin{lemma}[Sufficient direction for joint ergodicity]\label{L: sufficient direction}
     Let $a_1, \ldots, a_\ell:\N\to\R$ be sequences and $(X, \CX, \mu, T_1, \ldots, T_\ell)$ be a system with $T_1, \ldots, T_\ell$ ergodic. Suppose that:
     \begin{enumerate}
         \item $a_1, \ldots, a_\ell$ are {good for seminorm control} for $(X, \CX, \mu, T_1, \ldots, T_\ell)$; and
         \item $(T_1^{\floor{a_1(n)}}\times \cdots \times T_\ell^{\floor{a_\ell(n)}})_n$ is ergodic on the product space $(X^\ell, \CX^{\otimes \ell}, \mu^\ell)$. 
     \end{enumerate}
    Then $a_1, \ldots, a_\ell$ are jointly ergodic for $(X, \CX, \mu, T_1, \ldots, T_\ell)$.
 \end{lemma}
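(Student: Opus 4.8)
The plan is to combine the two hypotheses through Theorem~\ref{T: joint ergodicity criteria}, which reduces joint ergodicity to (a) good seminorm control and (b) the limiting formula on nonergodic (here, since all $T_j$ are ergodic, simply ergodic) eigenfunctions. Hypothesis~(i) gives us exactly (a), so the whole burden is to verify the limiting formula
\begin{align*}
	\lim_{N\to\infty}\norm{\E_{n\in[N]}\prod_{j\in[\ell]} T_j^{\floor{a_j(n)}}f_j - \prod_{j\in[\ell]}\int f_j\, d\mu}_{L^2(\mu)} = 0
\end{align*}
when $f_1, \ldots, f_\ell$ are eigenfunctions of $T_1, \ldots, T_\ell$ respectively.

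First I would observe that hypothesis~(ii), via Corollary~\ref{C: equivalent form of good eq prop}, implies that $a_1, \ldots, a_\ell$ are good for equidistribution for the system. Then I would invoke Lemma~\ref{L: equivalent form of good eq prop}: being good for equidistribution is equivalent to condition~(ii) of that lemma, namely that \eqref{E: joint ergodicity} holds for all eigenfunctions $f_1, \ldots, f_\ell$ of $T_1, \ldots, T_\ell$. This is precisely the statement that the limiting formula holds on eigenfunctions (using ergodicity to replace $\E(f_j|\CI(T_j))$ by $\int f_j\, d\mu$, and noting that a nonergodic eigenfunction of an ergodic transformation is an ordinary eigenfunction up to a multiplicative constant of modulus one, which can be absorbed). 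Combining this with hypothesis~(i) through the ``if'' direction of the first part of Theorem~\ref{T: joint ergodicity criteria} yields \eqref{E: weak joint ergodicity} for all $f_1, \ldots, f_\ell\in L^\infty(\mu)$, which under ergodicity of $T_1, \ldots, T_\ell$ is exactly joint ergodicity.

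There is essentially no hard analytic obstacle here; the work has all been done in the seminorm estimates (feeding into hypothesis~(i)) and in the abstract criterion of \cite{FrKu22a} recorded as Theorem~\ref{T: joint ergodicity criteria}. The one point requiring a small amount of care is the reduction from nonergodic eigenfunctions to ordinary eigenfunctions: Theorem~\ref{T: joint ergodicity criteria} asks for the limiting formula on nonergodic eigenfunctions, but when $T_j$ is ergodic, a nonergodic eigenfunction $\chi$ of $T_j$ satisfies $T_j\chi = 1_E e(\phi)\chi$ with $E$ and $\phi$ being $T_j$-invariant, hence $E$ has measure $0$ or $1$ and $\phi$ is a.e.\ constant; thus either $\chi = 0$ a.e.\ (in which case both sides of the limiting formula vanish) or $\chi$ is a genuine eigenfunction with $|\chi| = 1$ a.e. So verifying the formula on ordinary eigenfunctions suffices, and that is supplied by Lemma~\ref{L: equivalent form of good eq prop}. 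I would therefore write the proof as: hypothesis~(ii) $\Rightarrow$ good for equidistribution (Corollary~\ref{C: equivalent form of good eq prop}) $\Rightarrow$ limiting formula on eigenfunctions (Lemma~\ref{L: equivalent form of good eq prop}) $\Rightarrow$, together with hypothesis~(i), joint ergodicity (Theorem~\ref{T: joint ergodicity criteria}).
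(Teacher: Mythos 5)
Your proposal is correct and follows exactly the paper's route: reduce via Theorem~\ref{T: joint ergodicity criteria} to verifying \eqref{E: joint ergodicity} on eigenfunctions, obtain the good-for-equidistribution property from hypothesis~(ii) through Corollary~\ref{C: equivalent form of good eq prop}, and conclude with Lemma~\ref{L: equivalent form of good eq prop}. Your extra check that nonergodic eigenfunctions of an ergodic $T_j$ reduce to ordinary eigenfunctions (or vanish a.e.) is just a spelled-out version of the remark the paper makes immediately after Theorem~\ref{T: joint ergodicity criteria}, so the two arguments coincide.
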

 \begin{proof}
     In the light of Theorem \ref{T: joint ergodicity criteria}, it suffices to show that \eqref{E: joint ergodicity} holds when $f_1, \ldots, f_\ell$ are eigenfunctions of $T_1, \ldots, T_\ell$ respectively. The second condition and Corollary \ref{C: equivalent form of good eq prop} imply that $a_1, \ldots, a_\ell$ are good for equidistribution for the system, and then Lemma \ref{L: equivalent form of good eq prop} implies that \eqref{E: joint ergodicity} holds for eigenfunctions.
 \end{proof}

 \subsection{Specializing to pairwise independent Hardy sequences}
We now specialize to the case of pairwise independent Hardy sequences, and use the general results from the previous section in conjunction with specific properties of Hardy sequences to prove results from Section \ref{SS: joint ergodicity results}. We start with the easy proofs.
\begin{proof}[Proof of Theorem \ref{T: Joint ergodicity for weakly mixing transformations}]
    If $T_1, \ldots, T_\ell$ are weakly mixing, then $\nnorm{f_j}_{s, T_j} = \abs{\int f_j\; d\mu}$ for every $j\in[\ell]$ and $f_j\in L^\infty(\mu)$, and so the result follows immediately from Theorem~\ref{T: HK control} and a telescoping identity.
\end{proof}

 For the proof of Theorem \ref{T: Krat control}, we need the following lemma:

\begin{lemma}\label{2to3}
Let  $\ell\in\N$ and $a_1, \ldots, a_\ell\in \CH$ be irrationally independent. Then the sequences $\floor{a_{1}(n)},\dots,\floor{a_{\ell}(n)}$ are good for irrational equidistribution for any system $(X, \CX, \mu, T_1, \ldots, T_\ell),$ where the $T_1,\ldots, T_\ell$ are totally ergodic.
\end{lemma}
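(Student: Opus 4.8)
The plan is to reduce the claimed equidistribution statement to Boshernitzan's theorem (Theorem~\ref{T: Boshernitzan}) via a careful analysis of what ``good for irrational equidistribution'' requires. Unpacking Definition~\ref{D: good properties}(vi), we must show that for eigenvalues $\lambda_1, \ldots, \lambda_\ell$ of the totally ergodic transformations $T_1, \ldots, T_\ell$ with $\lambda_1, \ldots, \lambda_\ell \in \R \setminus \Q_\ast$ not all zero, we have
\begin{align*}
    \lim_{N\to\infty} \E_{n\in[N]} e(\lambda_1 \floor{a_1(n)} + \cdots + \lambda_\ell \floor{a_\ell(n)}) = 0.
\end{align*}
Since $T_j$ is \emph{totally ergodic}, its eigenvalues are all irrational (a rational eigenvalue $p/q$ would force a nontrivial $T_j^q$-invariant function, contradicting ergodicity of $T_j^q$); this is consistent with the hypothesis $\lambda_j \in \R\setminus\Q_\ast$, and in fact every nonzero eigenvalue of $T_j$ is automatically irrational. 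So the task is purely one about equidistribution of the real sequence $\lambda_1 \floor{a_1(n)} + \cdots + \lambda_\ell \floor{a_\ell(n)}$ modulo $1$.

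First I would replace each $\floor{a_j(n)}$ by $a_j(n) - \{a_j(n)\}$ and absorb the bounded fractional-part pieces; more precisely, the strategy is to show $\lambda_1 a_1(n) + \cdots + \lambda_\ell a_\ell(n)$ is equidistributed mod $1$ and then transfer this to the floor version. The cleanest route is to invoke Boshernitzan's criterion directly: set $a(t) = \lambda_1 a_1(t) + \cdots + \lambda_\ell a_\ell(t) \in \CH$ (a Hardy function as an $\R$-linear combination of Hardy functions). By irrational independence of $a_1, \ldots, a_\ell$ with the scalars $\lambda_1, \ldots, \lambda_\ell \in \R\setminus\Q_\ast$ not all zero, Definition of irrational independence gives that for every $p \in \Q[t]$,
\begin{align*}
    \lim_{t\to\infty} \frac{|a(t) - p(t)|}{\log t} = \infty,
\end{align*}
so by Theorem~\ref{T: Boshernitzan} the sequence $(a(n))_{n\in\N}$ is equidistributed mod $1$, hence $\E_{n\in[N]} e(a(n)) \to 0$. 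The remaining point is to pass from $e(a(n))$ to $e(\lambda_1\floor{a_1(n)} + \cdots + \lambda_\ell\floor{a_\ell(n)})$. I would handle this by a standard approximation: write $\lambda_j \floor{a_j(n)} = \lambda_j a_j(n) - \lambda_j \{a_j(n)\}$, partition $[0,1)$ into small intervals $I_1, \ldots, I_K$, and on the event that $(\{a_1(n)\}, \ldots, \{a_\ell(n)\})$ lies in a fixed product cell, the error terms $\lambda_j \{a_j(n)\}$ are nearly constant, so $e(\lambda_1\floor{a_1(n)}+\cdots)$ agrees up to $O(1/K)$ with a constant multiple of $e(a(n))$ restricted to that event; applying equidistribution of $a(n)$ along each such subset (which follows from equidistribution of the full tuple of relevant Hardy functions, again via Boshernitzan-type arguments, or more simply via a second application of Weyl's criterion to the vector-valued sequence) and summing over the $K$ cells gives a bound $o(1) + O(1/K)$, and letting $K\to\infty$ finishes it.

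The step I expect to be the main obstacle is precisely this last transfer from the continuous combination $a(n)$ to the combination of integer parts $\lambda_j\floor{a_j(n)}$, because the fractional parts $\{a_j(n)\}$ are correlated with $a(n)$ and with each other, so one cannot naively factor the average. The honest fix is to prove joint equidistribution: the vector $(a_1(n), \ldots, a_\ell(n), a(n)) \bmod 1$ — or rather a suitable enlargement capturing all $\R$-linear combinations that could appear — is equidistributed in the torus. This requires checking Weyl's criterion for every nonzero integer combination $\sum_j k_j a_j(n) + k_0 a(n)$, which after collecting terms is $\sum_j (k_j + k_0\lambda_j) a_j(n)$; if all coefficients $k_j + k_0\lambda_j$ vanish then (since the $\lambda_j$ are not all rational and the $k_j, k_0$ are integers) one deduces $k_0 = 0$ and hence all $k_j = 0$, a contradiction, while otherwise the combination has a coefficient tuple that is either all-irrational-or-zero (when $k_0 \ne 0$, using $\lambda_j$ irrational) or integral-hence-satisfying-pairwise-independence (when $k_0 = 0$), so in either case irrational independence of $a_1, \ldots, a_\ell$ — or the pairwise independence it implies — combined with Theorem~\ref{T: Boshernitzan} forces the relevant Weyl sum to vanish. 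Once joint equidistribution on the torus is in hand, the floor-function average is a Riemann-sum limit of $\int_{\T^\ell} e(\lambda_1\lfloor\cdot\rfloor + \cdots)$-type integrals that evaluate to $0$, and the lemma follows; I would write this out carefully as it is the only genuinely delicate part.
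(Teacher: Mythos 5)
Your first half is on the right track (discard the indices with $\lambda_j=0$, use irrational independence plus Theorem~\ref{T: Boshernitzan} to kill exponential sums with irrational coefficient vectors), but the transfer step from $e(\lambda_1 a_1(n)+\cdots+\lambda_\ell a_\ell(n))$ to $e(\lambda_1\floor{a_1(n)}+\cdots+\lambda_\ell\floor{a_\ell(n)})$ has a genuine gap. You propose to prove joint equidistribution of the vector $(a_1(n),\ldots,a_\ell(n),a(n))$ mod $1$, and in the case $k_0=0$ of Weyl's criterion you claim that the integer combination $\sum_j k_j a_j(n)$ equidistributes because pairwise (or irrational) independence plus Boshernitzan applies. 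This is false: irrational independence says nothing about combinations with nonzero \emph{rational} coefficients, and Boshernitzan requires staying logarithmically away from every rational polynomial, which such combinations need not do. A concrete counterexample inside an irrationally independent family: $a_1(t)=t^{3/2}+t$, $a_2(t)=t^{3/2}$ (this is even strongly irrationally independent, cf.\ example (ii) in the introduction), where $a_1-a_2=t$, so $\E_{n\in[N]}e(a_1(n)-a_2(n))=1$ and the vector $(a_1(n),a_2(n))$ is \emph{not} equidistributed in $\T^2$. Since your cell-partition argument needs the visit frequencies of $(\{a_1(n)\},\ldots,\{a_\ell(n)\})$ to each cell, i.e.\ exactly these $k_0=0$ Weyl sums, the route through full joint equidistribution cannot work. (A secondary issue: in the case $k_0\ne 0$ you assert all coefficients $k_j+k_0\lambda_j$ are irrational or zero, but indices with $\lambda_j=0$ give nonzero integers $k_j$; this one is fixable by discarding those indices at the outset, as you implicitly intend, but the $k_0=0$ case is not.)

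The correct transfer — which is what the argument of \cite[Proposition~2.8]{KoSu23} cited by the paper does, and which the paper itself illustrates in the proof of Corollary~\ref{C: countably many exceptions} — avoids step functions entirely. After dropping the indices with $\lambda_j=0$, write
\begin{align*}
e\Bigl(\sum_j \lambda_j\floor{a_j(n)}\Bigr)=e\Bigl(\sum_j \lambda_j a_j(n)\Bigr)\cdot G\bigl(\rem{a_1(n)},\ldots\bigr),\qquad G(x_1,\ldots)=\prod_j e(-\lambda_j x_j),
\end{align*}
and approximate the Riemann-integrable function $G$ from above and below (real and imaginary parts separately) by trigonometric polynomials. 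The resulting exponential sums are $\E_{n\in[N]}e\bigl(\sum_j(\lambda_j+k_j)a_j(n)\bigr)$ with $k_j\in\Z$, and now every coefficient $\lambda_j+k_j$ is irrational, so the coefficient vector lies in $(\R\setminus\Q_\ast)^\ell\setminus\{\underline{0}\}$ and irrational independence plus Theorem~\ref{T: Boshernitzan} makes each of them vanish. This sandwich only ever requires the characters whose frequencies are shifted by the irrational $\lambda_j$'s, so it sidesteps the (generally false) equidistribution of the fractional-part vector that your proposal relies on. The paper omits the proof, referring to \cite{KoSu23}, so the intended argument is precisely this one rather than yours; as written, your proof does not go through without replacing the joint-equidistribution step by the trigonometric-polynomial approximation.
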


\begin{proof} The proof is similar to \cite[Proposition~2.8]{KoSu23} and so it is omitted.
    \end{proof}

\begin{proof}[Proofs of Theorems \ref{T: joint ergodicity for strongly independent Hardy sequences} and \ref{T: Krat control}]
If $a_1, \ldots, a_\ell$ are strongly independent (resp. strongly irrationally independent), then by Theorem~\ref{T: Boshernitzan} as well as \cite[Lemma~6.2]{Fr21} (resp. \cite[Lemma~6.4]{Fr21}), the sequences $a_1, \ldots, a_\ell$ are good for equidistribution (resp. strong irrational equidistribution). This, the seminorm estimates from Theorem~\ref{T: HK control}, and Theorem~\ref{T: joint ergodicity criteria} give the results, where for the irrationally independent case and a system with totally ergodic transformations, we use Lemma~\ref{2to3} to get the corresponding equidistribution property. 
\end{proof}


\begin{proof}[Proof of Theorem \ref{T: joint ergodicity for strongly independent Hardy sequences along prime numbers}]
    The result follows from \cite[Theorem 1.2]{KoTs23}, Theorem~\ref{T: Joint ergodicity for weakly mixing transformations} (resp. Theorem \ref{T: joint ergodicity for strongly independent Hardy sequences}), and the fact that if $a_1,\ldots, a_{\ell}$ are strongly independent (resp. pairwise independent), then the same holds for  $a_1(W\cdot+b), \ldots, a_\ell(W\cdot+b)$ for all $W,b\in \N$.
\end{proof}

To prove Theorem \ref{T: joint ergodicity}, we need a few more auxiliary results. The first one concerns the nonvanishing of exponential sums along pairwise independent Hardy sequences.

 \begin{lemma}\label{L: countably many exceptions}
     Let $a_1, a_2\in\CH$ be pairwise independent. Then there exists at most countably many $(\lambda_1, \lambda_2)\in\R^2$ for which
     \begin{align*}
         \E_{n\in[N]}e(\lambda_1 a_1(n) + \lambda_2 a_2(n))
     \end{align*}
     does not converge to 0 as $N\to\infty$.
 \end{lemma}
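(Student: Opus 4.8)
The plan is to reduce the statement to a classical equidistribution fact about a single Hardy sequence, namely Boshernitzan's criterion (Theorem~\ref{T: Boshernitzan}) together with the growth dichotomy that distinguishes subpolynomial Hardy functions from genuinely oscillating ones. First I would observe that for a fixed pair $(\lambda_1,\lambda_2)\in\R^2$, the sequence $c(n):=\lambda_1 a_1(n)+\lambda_2 a_2(n)$ is itself a Hardy function in $\CH$ (since $\CH$ is a field), so by Theorem~\ref{T: Boshernitzan} the average $\E_{n\in[N]}e(c(n))$ fails to converge to $0$ precisely when $c(n)$ stays within $O(\log t)$ of some rational polynomial, i.e. when there exists $p\in\Q[t]$ with $|\lambda_1 a_1(t)+\lambda_2 a_2(t)-p(t)|\ll \log t$. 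Actually I need to be slightly more careful: Theorem~\ref{T: Boshernitzan} as stated gives equidistribution mod $1$, which implies $\E_{n\in[N]}e(c(n))\to 0$; the converse direction (if not equidistributed then the exponential sum does not vanish) requires knowing that a Hardy function which is \emph{not} equidistributed mod $1$ must be within $O(\log t)$ of a rational polynomial, which is exactly the ``only if'' content of Theorem~\ref{T: Boshernitzan}. So the set of ``bad'' pairs is contained in
\[
B:=\{(\lambda_1,\lambda_2)\in\R^2:\ \exists\, p\in\Q[t]\ \text{with}\ |\lambda_1 a_1(t)+\lambda_2 a_2(t)-p(t)|\ll\log t\}.
\]

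Next I would show $B$ is countable. The key point is that the map sending $(\lambda_1,\lambda_2)$ to the germ of $\lambda_1 a_1+\lambda_2 a_2$ modulo the subspace of functions of growth $O(\log t)$ is injective on $B$ after quotienting appropriately — more precisely, independence of $a_1,a_2$ (in the sense of the pairwise independence definition in the paper: any nontrivial combination $c_1 a_1+c_2 a_2$ grows faster than $\log t$) guarantees that if $(\lambda_1,\lambda_2)$ and $(\lambda_1',\lambda_2')$ both lie in $B$ with the same witnessing polynomial being replaceable, then their difference would give a combination of $a_1,a_2$ within $O(\log t)$ of a rational polynomial; combined with independence this forces the difference to be governed entirely by a rational polynomial, and one shows this pins down $(\lambda_1-\lambda_1',\lambda_2-\lambda_2')$ to lie in a fixed countable set. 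The cleanest way to organize this: for each $(\lambda_1,\lambda_2)\in B$ pick a witnessing $p_{\lambda}\in\Q[t]$; I claim the map $(\lambda_1,\lambda_2)\mapsto p_{\lambda}$ has countable image (since $\Q[t]$ is countable) and each fiber is a singleton. Fiber-singleton: if $(\lambda_1,\lambda_2)$ and $(\mu_1,\mu_2)$ share the witness $p$, then $(\lambda_1-\mu_1)a_1+(\lambda_2-\mu_2)a_2 = O(\log t)$, and by the independence hypothesis this is only possible when $\lambda_1=\mu_1$ and $\lambda_2=\mu_2$. Hence $B$ injects into $\Q[t]$ and is countable.

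The main obstacle I anticipate is making the ``converse'' direction of Boshernitzan's theorem fully rigorous in the exact form needed: Theorem~\ref{T: Boshernitzan} is stated as an equivalence ($(a(n))$ equidistributed mod $1$ $\iff$ $a$ is $\log$-far from every rational polynomial), so non-equidistribution gives me exactly that $c=\lambda_1 a_1+\lambda_2 a_2$ is within $O(\log t)$ of some $p\in\Q[t]$ — that is what I want. But I must also handle the degenerate case $\lambda_1=\lambda_2=0$ separately (there the exponential sum is identically $1$, never $0$, so $(0,0)\in B$; this is a single point and does not affect countability). A second minor subtlety: ``does not converge to $0$'' is slightly weaker than ``is equidistributed fails'', but for Hardy sequences of polynomial growth the Cesàro averages $\E_{n\in[N]}e(c(n))$ always converge (this follows from equidistribution theory for Hardy functions, e.g. the machinery underlying Theorem~\ref{T: Boshernitzan} or classical van der Corput estimates), so ``does not converge to $0$'' is equivalent to ``converges to a nonzero limit'', which is equivalent to non-equidistribution mod $1$. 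I would state this convergence as a known fact (or cite the relevant equidistribution result) rather than reprove it. Putting these pieces together: the set of $(\lambda_1,\lambda_2)$ for which $\E_{n\in[N]}e(\lambda_1 a_1(n)+\lambda_2 a_2(n))\not\to 0$ is contained in the countable set $B$, which completes the proof.
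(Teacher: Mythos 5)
Your proof is correct, and your countability argument is cleaner than the paper's while using the same two ingredients: Theorem~\ref{T: Boshernitzan} to reduce to counting pairs $(\lambda_1,\lambda_2)$ for which $\lambda_1 a_1+\lambda_2 a_2$ is $O(\log t)$-close to some $p\in\Q[t]$, plus pairwise independence. Where the paper runs a three-case analysis (strongly independent, so $(0,0)$ is the only bad pair; all bad pairs confined to a single rational line; two $\R$-independent bad pairs, which force $a_1,a_2$ to be real multiples of rational polynomials modulo $O(\log)$, followed by a coefficient count), you proceed directly: map each bad pair to a witnessing polynomial in $\Q[t]$ and observe that pairwise independence makes this map injective, since two pairs sharing a witness would produce a nontrivial real combination of $a_1,a_2$ that is $O(\log t)$. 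An injection into a countable set finishes the proof. Both arguments land in the same place, but yours is shorter and avoids the case split; the paper's version exposes more of the structure of the exceptional set, which the lemma itself does not require.

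Two minor inaccuracies in your closing remarks, both harmless to the argument. The comparison ``does not converge to $0$ is slightly weaker than equidistribution fails'' is backwards: Weyl's criterion gives that equidistribution implies the exponential sum tends to $0$, so failure of the sum to vanish is the \emph{stronger} condition, not the weaker one. And it is not true that $\E_{n\in[N]}e(c(n))$ converges for every $c\in\CH$ of polynomial growth; for $c(t)=\log t$ the averages wind around a circle and do not converge. Neither point is actually needed: your proof only uses the containment of the lemma's exceptional set inside your set $B$, and that follows from Weyl plus the ``if'' direction of Boshernitzan's theorem (applied in contrapositive form), both of which you invoke correctly before the digression. The extra material on equivalence and convergence can simply be deleted.
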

 Lemma \ref{L: countably many exceptions} clearly fails without the pairwise independence assumption. For instance, if $a_1 = a_2$, then the conclusion fails for any $(\lambda_1, \lambda_2) = (\lambda, -\lambda)$ with $\lambda\in\R$.
 \begin{proof}
     By Theorem~\ref{T: Boshernitzan}, a sequence $a\in\CH$ is equidistributed if and only if stays logarithmically away from rational polynomials.\footnote{We recall that this means that $a - p \succ \log$ for any $p\in\Q[t]$; otherwise we say that $a$ is \textit{logarithmically close to a rational polynomial}.} Our objective therefore is to show that $\lambda_1 a_1 + \lambda_2 a_2$ is logarithmically away from rational polynomials for all but countably many bad pairs $(\lambda_1, \lambda_2)\in\R^2$. 
     
     If $a_1, a_2$ are strongly independent, then the only bad pair is $(0,0)$, and we are done. Otherwise there exists some $(\lambda_1, \lambda_2)\neq(0,0)$ for which $\lambda_1 a_1 + \lambda_2 a_2$ is logarithmically close to a rational polynomial. A rescaling $c\lambda_1 a_1 + c\lambda_2 a_2$ is also logarithmically close to a rational polynomial if and only if $c\in\Q$. If $\{(c\lambda_1, c\lambda_2):\; c\in\Q\}$ are the only bad pairs, then this gives us countably many bad pairs, and the claim holds. 
     
     Otherwise there exists $(\lambda_1', \lambda_2')\neq(0,0)$ such that $(\lambda_1, \lambda_2), (\lambda_1', \lambda_2')$ are $\R$-linearly independent, and $\lambda'_1 a_1 + \lambda'_2 a_2$ is also logarithmically close to a rational polynomial. 
    Then 
    \begin{align*}
        (\lambda_1 \lambda_2' - \lambda'_1 \lambda_2) a_1 = \lambda_2'(\lambda_1 a_1 + \lambda_2 a_2) - \lambda_2(\lambda'_1 a_1 + \lambda'_2 a_2)
    \end{align*}
    is also logarithmically close to a rational polynomial, and similarly for $(\lambda_1 \lambda_2' - \lambda'_1 \lambda_2) a_2$. The $\R$-linear independence of $(\lambda_1, \lambda_2), (\lambda_1', \lambda_2')$ implies that $\lambda_1 \lambda_2' - \lambda'_1 \lambda_2 \neq 0$, and hence
    \begin{align*}
        a_1 = p_1 + g_1 \qquad \textrm{and}\qquad a_2 = p_2 + g_2,
    \end{align*}
    where $p_1, p_2$ are real multiples of some rational polynomials and $g_1, g_2\ll \log$. In other words, the abundance of bad pairs $(\lambda_1, \lambda_2)$ imposes a special structure on the functions $a_1, a_2$. 
    
    So far, we have made no use of the pairwise independence of $a_1, a_2$; this is the time to utilize it. The pairwise independence of $a_1, a_2$ implies that $p_1, p_2$ are pairwise independent, and we claim that $\lambda''_1 p_1 + \lambda''_2 p_2$ can be a rational polynomial for only countably many pairs $(\lambda_1'', \lambda_2'')\in\R^2$. Letting $p_1(t) = \sum_{i=1}^d \beta_{1i}t^i$, $p_2(t) = \sum_{i=1}^d \beta_{2i}t^i$ (we can absorb constant terms into $g_1, g_2$), we can then find indices $1 \leq i < j\leq d$ such that $(\beta_{1i}, \beta_{1j})$ and $(\beta_{2i}, \beta_{2j})$ are $\R$-linearly independent. If $\lambda''_1 p_1 + \lambda''_2 p_2\in\Q[t]$, then 
    \begin{align*}
        \lambda_1'' \beta_{1i} + \lambda_2'' \beta_{2i} &= q_i\\
        \lambda_1'' \beta_{1j} + \lambda_2'' \beta_{2j} &= q_j
    \end{align*}    
    for some $q_1, q_j\in\Q$. The $\R$-linear independence of $(\beta_{1i}, \beta_{1j})$ and $(\beta_{2i}, \beta_{2j})$ then implies that $(q_i, q_j)$ uniquely determines $(\lambda_1'', \lambda_2'')$. Since there are countably many choices of $(q_i, q_j)\in\Q^2$, we also have countably many choices of $(\lambda_1'', \lambda_2'')\in\R^2$ that make $\lambda''_1 p_1 + \lambda''_2 p_2$ into a rational polynomial, and we are done.
 \end{proof}

    The next corollary shows that the conclusion of Lemma \ref{L: countably many exceptions} is upheld when we take integer parts of $a_1, a_2$.
 \begin{corollary}\label{C: countably many exceptions}
     Let $a_1, a_2\in\CH$ be independent. Then there exists at most countably many $(\lambda_1, \lambda_2)\in\R^2$ for which
     \begin{align*}
         \E_{n\in[N]}e(\lambda_1 \floor{a_1(n)} + \lambda_2 \floor{a_2(n)})
     \end{align*}
     does not converge to 0 as $N\to\infty$.
 \end{corollary}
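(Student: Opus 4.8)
The plan is to reduce Corollary~\ref{C: countably many exceptions} to Lemma~\ref{L: countably many exceptions} by controlling the error incurred when we replace $a_i(n)$ by $\floor{a_i(n)}$. Writing $\{t\}=t-\floor{t}$ for the fractional part, we have $e(\lambda_i\floor{a_i(n)})=e(\lambda_i a_i(n))e(-\lambda_i\{a_i(n)\})$, so the exponential sum with integer parts differs from the one without by a factor of modulus $1$ that depends on $n$ only through $\{a_1(n)\}$ and $\{a_2(n)\}$. The natural move is therefore to expand this correction factor in terms of the equidistribution of the pair $(\{a_1(n)\},\{a_2(n)\})$ in $\T^2$, or more directly, to approximate the (discontinuous but bounded) function $(x_1,x_2)\mapsto e(-\lambda_1 x_1-\lambda_2 x_2)$ on $\T^2$ by a trigonometric polynomial and pass the problem back to sums of the form $\E_{n\in[N]}e(\mu_1 a_1(n)+\mu_2 a_2(n))$ for integer combinations $\mu_i$ of $\lambda_i$ and the Fourier frequencies.

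Concretely, first I would fix $(\lambda_1,\lambda_2)\in\R^2$ and, for an integer parameter $K$, write $e(-\lambda_i\{a_i(n)\})$ as $\sum_{j\in\Z}\widehat{\phi}_{\lambda_i}(j)\,e(j a_i(n))$ where $\phi_{\lambda_i}(x)=e(-\lambda_i x)$ on $[0,1)$ extended $1$-periodically; since $\phi_{\lambda_i}$ is of bounded variation, its Fourier coefficients satisfy $\widehat{\phi}_{\lambda_i}(j)\ll_{\lambda_i} 1/|j|$, so truncating at $|j|\le K$ incurs an $L^1$-type error that is $O(\log K/K)$ once we smooth (or one uses a Fej\'er-type kernel to get a clean $O(1/K)$ bound after Ces\`aro averaging). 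Substituting the truncated expansions, $\E_{n\in[N]}e(\lambda_1\floor{a_1(n)}+\lambda_2\floor{a_2(n)})$ becomes $\sum_{|j_1|,|j_2|\le K}\widehat{\phi}_{\lambda_1}(j_1)\widehat{\phi}_{\lambda_2}(j_2)\,\E_{n\in[N]}e((\lambda_1+j_1)a_1(n)+(\lambda_2+j_2)a_2(n))$ plus an error that is $o_{K\to\infty}(1)$ uniformly in $N$. By Lemma~\ref{L: countably many exceptions}, each inner average tends to $0$ as $N\to\infty$ unless $(\lambda_1+j_1,\lambda_2+j_2)$ lies in the countable bad set $B\subseteq\R^2$; since there are only finitely many $(j_1,j_2)$ in play for each $K$, the full sum tends to $0$ as $N\to\infty$ provided $(\lambda_1+j_1,\lambda_2+j_2)\notin B$ for all $j_1,j_2\in\Z$. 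Letting $K\to\infty$, we conclude $\E_{n\in[N]}e(\lambda_1\floor{a_1(n)}+\lambda_2\floor{a_2(n)})\to 0$ whenever $(\lambda_1,\lambda_2)$ avoids the set $B' := \bigcup_{(j_1,j_2)\in\Z^2}(B-(j_1,j_2))$.

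Finally, $B'$ is a countable union of translates of the countable set $B$, hence itself countable, which gives exactly the claimed statement: the set of $(\lambda_1,\lambda_2)\in\R^2$ for which the integer-part exponential average fails to converge to $0$ is contained in $B'$ and is therefore at most countable. I expect the main technical obstacle to be making the interchange of the ``$K\to\infty$'' limit with the ``$N\to\infty$'' limit fully rigorous: one must ensure the Fourier-truncation error is bounded \emph{uniformly in $N$}, which is why I would use a smoothed/Fej\'er truncation rather than the raw partial sums, and one must handle the (finitely many, for each $N$) low-order terms where $(\lambda_1+j_1,\lambda_2+j_2)$ might land in $B$ by absorbing them into $B'$ from the outset rather than trying to show they vanish. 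Everything else — the bounded-variation Fourier decay estimate and the countability bookkeeping — is routine.
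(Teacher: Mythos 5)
Your proposal follows essentially the same route as the paper's proof: rewrite $e(\lambda_i\floor{a_i(n)})$ as $e(\lambda_i a_i(n))$ times a $1$-periodic correction, approximate that correction by trigonometric polynomials (the paper approximates the Riemann-integrable function $e(-\lambda_1 x-\lambda_2 y)$ from above and below, you use a Fej\'er-type truncation --- the same standard device), and apply Lemma~\ref{L: countably many exceptions} to the integer-shifted frequencies $(\lambda_1+k_1,\lambda_2+k_2)$, so that the exceptional set is a countable union of $\Z^2$-translates of the lemma's countable bad set. This matches the paper's argument, which is likewise brief about the error-control/limit-interchange step you single out as the main technical point.
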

\begin{proof}
    We follow the strategy of \cite[Lemma 6.2]{Fr21}.
    We begin by rephrasing our exponential sum as
         \begin{align*}
         \E_{n\in[N]}e(\lambda_1 \floor{a_1(n)} + \lambda_2 \floor{a_2(n)}) = \E_{n\in[N]}e(\lambda_1 {a_1(n)} + \lambda_2 {a_2(n)}-\lambda_1 \rem{a_1(n)} - \lambda_2 \rem{a_2(n)}).
     \end{align*}
     The function $G(x,y) = e(-\lambda_1 x - \lambda_2 y)$ on $[0,1)^2$ is Riemann integrable, and so we can split it into real and imaginary parts and approximate each from below and above by trigonometric polynomials. It therefore suffices to show that
    \begin{align*}
         \E_{n\in[N]}e((\lambda_1 + k_1) {a_1(n)} + (\lambda_2 + k_2){a_2(n)})
     \end{align*}
     fails to converge to 0 as $N\to\infty$ for countably many pairs $(\lambda_1, \lambda_2, k_1, k_2)\in\R^2\times\Z^2$. This follows from Lemma \ref{L: countably many exceptions}. 
\end{proof}

 The next lemma shows that joint ergodicity for pairwise independent Hardy sequences implies the difference ergodicity condition from Problem~\ref{Pr: joint ergodicity problem}.
\begin{lemma}\label{L: (i)}
    Let $a_1, \ldots, a_\ell\in\CH$ be pairwise independent, and suppose that they are jointly ergodic for the system $(X, \CX, \mu, T_1, \ldots, T_\ell)$. Then $(T_i^{\floor{a_i(n)}}T_j^{-\floor{a_j(n)}})_n$ is ergodic for $(X, \CX, \mu)$ for all distinct $i,j\in[\ell]$.
\end{lemma}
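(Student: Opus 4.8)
The plan is to verify directly that the sequence $(S_n)_n$ with $S_n:=T_i^{\floor{a_i(n)}}T_j^{-\floor{a_j(n)}}$ is ergodic on $(X,\CX,\mu)$, i.e. that $\E_{n\in[N]}S_nf\to\int f\,d\mu$ in $L^2(\mu)$ for every $f\in L^\infty(\mu)$. Since each $S_n$ is measure-preserving we have $\int S_nf\,d\mu=\int f\,d\mu$, and a routine Hilbert-space computation shows that this $L^2$-convergence is equivalent to $\E_{n,n'\in[N]}\langle S_nf,S_{n'}f\rangle_{L^2(\mu)}\to\bigl|\int f\,d\mu\bigr|^2$. Because $T_i$ and $T_j$ commute, $S_{n'}^{-1}S_n=S_nS_{n'}^{-1}=T_i^{\floor{a_i(n)}-\floor{a_i(n')}}T_j^{\floor{a_j(n')}-\floor{a_j(n)}}$, so applying $S_{n'}^{-1}$ inside the integral and using Fubini one rewrites $\E_{n,n'\in[N]}\langle S_nf,S_{n'}f\rangle=\langle W_N,f\rangle$ with $W_N:=\E_{n\in[N]}S_nF_N$ and $F_N:=\E_{n'\in[N]}S_{n'}^{-1}f$; composing once more with $S_n^{-1}$ gives the clean identity $\langle W_N,f\rangle=\norm{F_N}_{L^2(\mu)}^2$. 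Since $\norm{\E_{n\in[N]}S_nf}^2=\langle W_N,f\rangle\ge|\langle\E_{n\in[N]}S_nf,1\rangle|^2=\bigl|\int f\,d\mu\bigr|^2$ automatically, the whole statement reduces to the upper bound $\limsup_N\norm{F_N}_{L^2(\mu)}^2\le\bigl|\int f\,d\mu\bigr|^2$, and after subtracting the mean we may assume $\int f\,d\mu=0$ and must prove $F_N\to0$ in $L^2(\mu)$, where $F_N=\E_{n'\in[N]}T_i^{-\floor{a_i(n')}}T_j^{\floor{a_j(n')}}f$.

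The joint ergodicity hypothesis enters first through weak convergence: testing $F_N$ against $g\in L^\infty(\mu)$ and using measure-preservation, $\langle F_N,g\rangle=\int\E_{n'\in[N]}\bigl(T_i^{\floor{a_i(n')}}\bar g\cdot T_j^{\floor{a_j(n')}}f\bigr)\,d\mu$, and since $a_1,\dots,a_\ell$ are jointly ergodic for $(X,\CX,\mu,T_1,\dots,T_\ell)$ (take $f_m=1$ for $m\neq i,j$) the inner average converges in $L^2(\mu)$, hence in $L^1(\mu)$, to $\int\bar g\,d\mu\int f\,d\mu=0$; thus $F_N\rightharpoonup0$ weakly, and likewise $\E_{n\in[N]}T_i^{\floor{a_i(n)}}h\cdot T_j^{\floor{a_j(n)}}\bar f\to\int h\,d\mu\int\bar f\,d\mu$ in $L^2(\mu)$ for every fixed $1$-bounded $h$. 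The upgrade of weak to strong convergence of $F_N$ is where pairwise independence is used: pairwise independence of $a_i,a_j$ forces $a_i\succ\log$ and $a_j\succ\log$, so that, viewing $\E_{n'\in[N]}S_{n'}^{-1}f$ as a single-function ($\ell=1$) multiple ergodic average along the composite iterate having $-a_j$ in coordinate $j$ and $a_i$ in coordinate $i$, Theorem~\ref{T: box seminorm bound no duals} applies and yields $\limsup_N\norm{F_N}_{L^2(\mu)}^{O(1)}\ll\nnorm{f}^+_{\bgamma_1,\dots,\bgamma_s}$ for a generalized box seminorm of bounded degree along the leading coefficients of that iterate.

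It then remains to treat functions $f$ with $\nnorm{f}^+_{\bgamma_1,\dots,\bgamma_s}\neq0$. Here I would combine three inputs: Theorem~\ref{T: joint ergodicity criteria}, which says that joint ergodicity forces $a_1,\dots,a_\ell$ to be good for seminorm control on the system, so that some Host–Kra seminorm $\nnorm{\cdot}_{s',T_j}$ controls the averages; the quantitative, uniform-in-the-non-distinguished-function nature of Theorems~\ref{T: HK control} and \ref{T: box seminorm bound}; and the good-equidistribution property implied by joint ergodicity (Corollary~\ref{C: equivalent form of good eq prop}). Splitting $f$ into a piece lying in the appropriate structured factor and a piece killed by the controlling seminorm, the latter contributes $o_{N\to\infty}(1)$ to $\norm{F_N}$ uniformly, while on the structured piece the equidistribution together with the nonergodic-eigenfunction clause of Theorem~\ref{T: joint ergodicity criteria} drives $\E_{n'\in[N]}S_{n'}^{-1}f$ to its mean $0$. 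The main obstacle is precisely this last step: the joint-ergodicity hypothesis a priori only controls averages of \emph{fixed} functions, whereas we must rule out cancellation-free behaviour of the $N$-dependent averages $F_N$; circumventing this requires exploiting the quantitative seminorm estimates (which are uniform over the auxiliary functions) together with a density reduction of the structured component to finite-dimensional subspaces, after which the eigenfunction/equidistribution input suffices.
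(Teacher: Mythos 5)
Your opening reductions are fine but do not advance the problem: the identity $\norm{\E_{n\in[N]}S_nf}_{L^2(\mu)}=\norm{\E_{n\in[N]}S_n^{-1}f}_{L^2(\mu)}$ just replaces one average of the form $\E_n T_i^{\pm\floor{a_i(n)}}T_j^{\mp\floor{a_j(n)}}f$ by another of the same form, and the weak convergence $F_N\rightharpoonup 0$ is exactly the ``easy'' part the paper flags as already known. Your use of Theorem~\ref{T: box seminorm bound no duals} with $\ell=1$ is legitimate and gives a correct partial statement: if $\nnorm{f}^+_{\bgamma_1,\dots,\bgamma_s}=0$ then $F_N\to 0$. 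The genuine gap is the remaining case. You propose to split $f$ into a ``structured'' piece plus a piece killed by the controlling seminorm, and to handle the structured piece via equidistribution and the nonergodic-eigenfunction clause of Theorem~\ref{T: joint ergodicity criteria}. Neither half of this is available: generalized box seminorms along irrational directions do not come with a factor or an inverse theorem in this paper (the authors explicitly note that even the degree-$1$ versions lack a clean inverse theorem, and work around this only via approximately invariant functions), so the decomposition you need is not justified; and even granting it, Theorem~\ref{T: joint ergodicity criteria} speaks about the full $\ell$-fold averages $\E_n\prod_j T_j^{\floor{a_j(n)}}f_j$ evaluated at (nonergodic) eigenfunctions, not about the two-term difference average $\E_n T_i^{\floor{a_i(n)}}T_j^{-\floor{a_j(n)}}g$ of a general ``structured'' $g$ for a degree-$s$ seminorm. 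Your closing sentence concedes this obstacle and offers only a sketch (``density reduction of the structured component to finite-dimensional subspaces'') with no argument; as written the proof does not close.

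What the paper does instead bypasses any inverse theory. For mean-zero $f$ one applies the Herglotz--Bochner theorem to the $\Z^2$-action generated by $(T_i,T_j)$, so that $\norm{\E_{n\in[N]}T_i^{\floor{a_i(n)}}T_j^{-\floor{a_j(n)}}f}_{L^2(\mu)}$ equals the $L^2(\nu)$-norm of the exponential sums $\E_{n\in[N]}e(\lambda_i\floor{a_i(n)}-\lambda_j\floor{a_j(n)})$ against the spectral measure $\nu$ of $f$ (with no atom at the origin). Pairwise independence enters through Corollary~\ref{C: countably many exceptions} (built on Lemma~\ref{L: countably many exceptions}): these exponential sums tend to $0$ for all but countably many frequencies. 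A countable set can carry $\nu$-mass only through atoms, atoms of $\nu$ are eigenvalues of the joint $(T_i,T_j)$-action, and at such eigenvalues joint ergodicity --- via the good-equidistribution property of Corollary~\ref{C: equivalent form of good eq prop} --- forces the exponential sums to vanish as well; the bounded convergence theorem then finishes the proof. Your proposal never invokes this countable-exceptions input, which is precisely where pairwise independence does the decisive work, and without it (or a full inverse theory for the generalized box seminorms, which the paper does not provide) the structured case cannot be completed along the lines you describe.
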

 For general sequences $a_1, \ldots, a_\ell:\Z\to\R$, a much simpler argument shows that joint ergodicity implies the average along $(T_i^{\floor{a_i(n)}}T_j^{-\floor{a_j(n)}})_n$  to converge to an integral \textit{weakly} (see e.g. \cite[Proposition 5.3]{DKS22}). However, for independent Hardy sequences $a_i, a_j$, the $L^2(\mu)$ limit along $(T_i^{\floor{a_i(n)}}T_j^{-\floor{a_j(n)}})_n$ may a priori not exist, and so we need a different approach than the one used in previous works on joint ergodicity \cite{DFKS22, DKS23, DKS22,  FrKu22a, FrKu22b} in which the $L^2(\mu)$ convergence was already known. 

\begin{proof}
    Let $f\in L^\infty(\mu)$, and suppose that $\int f\; d\mu = 0$, so that the claim reduces to showing that
    \begin{align*}
        \lim_{N\to\infty}\norm{\E_{n\in[N]}T_i^{\floor{a_i(n)}}T_j^{-\floor{a_j(n)}}f}_{L^2(\mu)} = 0.
    \end{align*}
    By the classical Herglotz-Bochner theorem, there exists a finite Borel measure $\nu$ on $\T^2$ such that
    \begin{align*}
        \norm{\E_{n\in[N]}T_i^{\floor{a_i(n)}}T_j^{-\floor{a_j(n)}}f}_{L^2(\mu)} = \norm{\E_{n\in[N]}e\brac{\lambda_i\floor{a_i(n)}-\lambda_j\floor{a_j(n)}}}_{L^2(\nu(\lambda_1, \lambda_2))}
    \end{align*}
    for every $N\in\N$; since $\int f\; d\mu = 0$, we have no point mass on the origin. By Corollary~\ref{C: countably many exceptions}, the exponential sum vanishes in the limit for all but countably many $(\lambda_i, \lambda_j)\in\T^2$. By the joint ergodicity, the exponential sum also converges to 0 whenever $(\lambda_i, -\lambda_j)$ is an eigenvalue of the joint action of $(T_i, T_j)$. Hence the set $(\lambda_i, \lambda_j)\in\T^2$ for which the exponential sum does not vanish has no point masses, and since it is countable, it is $\nu$-measure must be 0. The claim follows from the bounded convergence theorem.
\end{proof}

\begin{lemma}\label{L: Kronecker for single average}
    Let $a\in\CH$ with $a\succ \log$. Then $a$ is controlled by the Kronecker factor on any system $(X, \CX, \mu, T)$.
\end{lemma}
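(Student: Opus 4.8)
The plan is to show that if $\E(f|\CK(T)) = 0$ then $\E_{n\in[N]} c_n \cdot T^{\floor{a(n)}}f$ tends to $0$ in $L^2(\mu)$ for every $1$-bounded weight $(c_n)$, which is what ``controlled by the Kronecker factor'' requires. The natural first move is spectral: by the Herglotz--Bochner theorem there is a finite Borel measure $\sigma_f$ on $\T$ (the spectral measure of $f$ with respect to $T$) such that $\norm{\E_{n\in[N]} T^{\floor{a(n)}}f}_{L^2(\mu)}^2$ is comparable to a double average of $e(\lambda(\floor{a(n)}-\floor{a(m)}))$ integrated against $\sigma_f\times\sigma_f$; and the hypothesis $\E(f|\CK(T)) = 0$ says precisely that $\sigma_f$ has no atoms, i.e. $\sigma_f(\{\lambda\})=0$ for every $\lambda\in\T$.

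Next I would invoke the equidistribution input that is already available in the excerpt. By Boshernitzan's theorem (Theorem~\ref{T: Boshernitzan}), since $a\succ\log$, either $(a(n))$ is equidistributed $\bmod~1$, or $a$ is logarithmically close to a rational polynomial, and in the latter case $(\lambda a(n))$ is still equidistributed $\bmod~1$ for every $\lambda\notin\Q$ outside a countable set (the argument in the proof of Corollary~\ref{C: countably many exceptions}, which replaces $\floor{a(n)}$ by $a(n)$ via a Riemann-integrability argument, handles the integer-part issue). In all cases one concludes that $\E_{n\in[N]} e(\lambda\floor{a(n)}) \to 0$ for all $\lambda\in\T$ outside a countable set $E\subseteq\T$. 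Since $\sigma_f$ is non-atomic, $\sigma_f(E)=0$. To deal with the weight $c_n$, I would apply van der Corput (Lemma~\ref{L: finitary van der Corput}) once to reduce $\norm{\E_{n\in[N]}c_n T^{\floor{a(n)}}f}^2$ to an average over shifts $h$ of $\E_n \langle c_n T^{\floor{a(n)}}f, c_{n+h} T^{\floor{a(n+h)}}f\rangle$; after expanding spectrally and composing appropriately, the relevant exponential sums involve $\floor{a(n+h)}-\floor{a(n)}$, whose equidistribution properties (again via Boshernitzan applied to the Hardy function $a(t+h)-a(t)$, which still satisfies $a(t+h)-a(t)\succ\log$ or is treated by the countable-exception argument) force decay; alternatively, one can absorb $c_n$ directly by noting that the spectral estimate $\norm{\E_{n\in[N]}c_n T^{\floor{a(n)}}f}^2_{L^2}\le \int_\T |\E_{n\in[N]}c_n e(\lambda\floor{a(n)})|^2\, d\sigma_f(\lambda)$ and bounding the integrand by $1$ while using dominated convergence on the set where the unweighted sums decay.

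The main obstacle is the presence of the arbitrary $1$-bounded weight $c_n$: for a \emph{fixed} $\lambda$, $\E_{n\in[N]} c_n e(\lambda\floor{a(n)})$ need not tend to $0$ (take $c_n = \overline{e(\lambda\floor{a(n)})}$), so one cannot argue pointwise in $\lambda$ and then integrate. The correct route is to keep the weight inside and run van der Corput on the full averaged operator first, thereby converting the weighted sum into an \emph{unweighted} double average in which the phases $\lambda(\floor{a(n+h)}-\floor{a(n)})$ appear; the weight gets squared away into a $1$-bounded coefficient depending on $(n,h)$ that can be dominated, and the $h$-average of the (now genuinely decaying in $n$ for $\sigma_f$-a.e. $\lambda$) inner sums tends to $0$ by dominated convergence using $\sigma_f\times\sigma_f$ non-atomicity of the diagonal. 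I would therefore carry out the steps in the order: (1) spectral reduction via Herglotz--Bochner; (2) van der Corput to remove the weight at the cost of passing to differences $a(n+h)-a(n)$; (3) apply Boshernitzan/Corollary~\ref{C: countably many exceptions}-type equidistribution to the Hardy differences to get decay for all but countably many spectral parameters; (4) conclude by dominated convergence, using that $\E(f|\CK(T))=0$ makes the spectral measure non-atomic so the exceptional countable set is null.
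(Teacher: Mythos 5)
The definition of ``controlled by the Kronecker factor'' in Definition~\ref{D: good properties} does \emph{not} involve a $1$-bounded weight $(c_n)$: for $\ell=1$ it only asks that $\norm{\E_{n\in[N]}T^{\floor{a(n)}}f}_{L^2(\mu)}\to 0$ whenever $\E(f|\CK(T))=0$, with no $c_n$ in sight. You have therefore imposed an extraneous uniformity requirement, and the entire step (2) in your plan -- the van der Corput pass to shifts $h$ -- is there to deal with a weight that was never part of the statement.

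That extra step is where a genuine error occurs. You claim that the differenced function $a(t+h)-a(t)$ ``still satisfies $a(t+h)-a(t)\succ\log$ or is treated by the countable-exception argument,'' but this is false for sublinear $a$. If, say, $a(t)=t^{1/2}$ (which satisfies $a\succ\log$), then $a(t+h)-a(t)\approx h/(2\sqrt{t})\to 0$; consequently $\floor{a(n+h)}-\floor{a(n)}$ is eventually $0$ for most $n$ and the inner exponential sums converge to $1$, not $0$. The van der Corput step destroys exactly the $\succ\log$ growth property that the Boshernitzan input requires, so steps (2) and (3) do not compose.

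If you drop the weight (as the definition permits), your route is correct and differs substantively from the paper's. The paper only sketches a PET-flavoured argument: decompose $a=p+g$, iterate van der Corput to kill the polynomial part, pass to short intervals to handle the strongly nonpolynomial part, and treat several cases depending on which of $p,g$ dominates; the authors decline to write out ``the tedious analysis of several cases.'' Your approach instead writes
\[
\norm{\E_{n\in[N]}T^{\floor{a(n)}}f}_{L^2(\mu)}^2=\int_\T\Bigabs{\E_{n\in[N]}e(\lambda\floor{a(n)})}^2\,d\sigma_f(\lambda)
\]
via Herglotz--Bochner, uses $\E(f|\CK(T))=0\iff\sigma_f$ nonatomic, invokes Boshernitzan (plus the Riemann-integrability trick that replaces $\floor{a(n)}$ by $a(n)$, exactly as in the proof of Corollary~\ref{C: countably many exceptions}) to show $\E_{n\in[N]}e(\lambda\floor{a(n)})\to 0$ for all $\lambda$ outside a countable set, and concludes by bounded convergence. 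This is the same style of argument the paper uses in Lemma~\ref{L: (i)}, and it is arguably cleaner than the PET sketch since it avoids the case analysis entirely. But as written your proposal does not go through, because the van der Corput reduction breaks equidistribution for sublinear $a$; once you remove the spurious weight, the argument closes.
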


Lemma~\ref{L: Kronecker for single average} is fairly classical, and we refrain from providing a proof since it would descend into a tedious analysis of several cases. If $a$ is a polynomial of degree $d$, then it can be proved by $d-1$ applications of the van der Corput inequality and the known fact that weighted averages along $\floor{\alpha n}$ are controlled by the Kronecker factor. If $a = p + g$ for $p\in\R[t]$ and strongly nonpolynomial $g\prec p$, then after $d-1$ applications of the van der Corput inequality, we additionally need to pass to short intervals on which $g$ is constant and then apply the results for the linear case. If $a = p + g$ for $p\in\R[t]$ and strongly nonpolynomial $g\succ \log$ (which overlaps with the previous case), then we can pass to short intervals, Taylor expand $g$ to a polynomial of degree exceeding the degree of $p$, apply the van der Corput inequality to kill $p$, and then apply the usual machinery for polynomials.

We can now deduce Theorem~\ref{T: joint ergodicity} from the following more precise statement that illustrates exactly how the implications in the joint ergodicity classification problem for pairwise independent Hardy sequences go.
\begin{theorem}\label{T: joint ergodicity 2}
    Let $\ell\in\N$, $(X, \CX, \mu, T_1,$ $\ldots, T_\ell)$ be a system, and $a_1, \ldots, a_\ell\in \CH$ be pairwise independent. Then the following implications hold:
     \begin{enumerate}
         \item the joint ergodicity of $a_1, \ldots, a_\ell$ for $(X, \CX, \mu, T_1, \ldots, T_\ell)$ is equivalent to the ergodicity of $(T_1^{\floor{a_1(n)}}\times \cdots \times T_\ell^{\floor{a_\ell(n)}})_n$ on the product system $(X^\ell, \CX^{\otimes \ell}, \mu^\ell)$;
         \item the joint ergodicity of $a_1, \ldots, a_\ell$ for $(X, \CX, \mu, T_1, \ldots, T_\ell)$ implies that the sequence $(T_i^{\floor{a_i(n)}}T_j^{-\floor{a_j(n)}})_n$ is ergodic on $(X, \CX, \mu)$ for all distinct $i, j\in[\ell]$.
     \end{enumerate}
\end{theorem}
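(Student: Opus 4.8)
\textbf{Proof plan for Theorem \ref{T: joint ergodicity 2}.}
The plan is to assemble the statement from the general lemmas of Subsection~11.1 together with the Hardy-specific inputs established just above (Lemmas~\ref{L: (i)}, \ref{L: Kronecker for single average}, and Corollary~\ref{C: countably many exceptions}) and the seminorm control from Theorem~\ref{T: HK control}. For part~(i), one direction is immediate: if $a_1,\ldots,a_\ell$ are jointly ergodic for $(X,\CX,\mu,T_1,\ldots,T_\ell)$, then by Lemma~\ref{L: weak (ii)} (whose hypothesis~(ii) I must verify) the product sequence $(T_1^{\floor{a_1(n)}}\times\cdots\times T_\ell^{\floor{a_\ell(n)}})_n$ is ergodic on $(X^\ell,\CX^{\otimes\ell},\mu^\ell)$. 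To verify hypothesis~(ii) of Lemma~\ref{L: weak (ii)}, I need each $a_j$ to be controlled by the Kronecker factor on $(X\times X,\CX\otimes\CX,\mu\times\mu,T_j\times T_j)$; since $a_j\in\CH$ and pairwise independence forces $a_j\succ\log$ (a nontrivial linear combination, here just $a_j$ itself, must grow faster than $\log$), this is exactly Lemma~\ref{L: Kronecker for single average} applied to the system $(X\times X,\CX\otimes\CX,\mu\times\mu,T_j\times T_j)$. For the converse direction of part~(i), I invoke Lemma~\ref{L: sufficient direction}: its hypothesis~(i), that $a_1,\ldots,a_\ell$ are good for seminorm control, follows from Theorem~\ref{T: HK control} (pairwise independent Hardy sequences are controlled by a Host--Kra seminorm $\nnorm{\cdot}_{s,T_j}$, which vanishes exactly when $\nnorm{f_j}_{s,T_j}=0$, giving \eqref{E: convergence to 0}), and hypothesis~(ii) is precisely the assumed ergodicity of the product action. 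Wait---Lemma~\ref{L: sufficient direction} as stated assumes $T_1,\ldots,T_\ell$ are ergodic; I should check whether the product ergodicity hypothesis already forces each $T_j$ to be ergodic (it does: if $(T_1^{\floor{a_1(n)}}\times\cdots)_n$ is ergodic on $X^\ell$, then projecting to the $j$-th coordinate shows $(T_j^{\floor{a_j(n)}})_n$ is ergodic on $X$, hence $a_j$ is jointly ergodic with itself for $T_j$, which in particular forces $T_j$ ergodic), so the hypotheses of Lemma~\ref{L: sufficient direction} are met and joint ergodicity follows.

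Part~(ii) is essentially a restatement of Lemma~\ref{L: (i)}, so the proof there is a one-line appeal: joint ergodicity of pairwise independent Hardy sequences $a_1,\ldots,a_\ell$ for the system immediately yields the ergodicity of $(T_i^{\floor{a_i(n)}}T_j^{-\floor{a_j(n)}})_n$ on $(X,\CX,\mu)$ for all distinct $i,j\in[\ell]$. The only thing to remark is that Lemma~\ref{L: (i)} itself rests on Corollary~\ref{C: countably many exceptions} (countably many exceptional frequencies for exponential sums along $(\floor{a_i(n)},\floor{a_j(n)})$) combined with the Herglotz--Bochner spectral argument, but that work is already done upstream.

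The main obstacle I anticipate is not any single hard estimate---all the heavy lifting (the seminorm estimates, the equidistribution counting) is imported---but rather the bookkeeping of matching the hypotheses of the abstract lemmas to the concrete Hardy setting. In particular I must be careful that: (a) pairwise independence of the \emph{family} $a_1,\ldots,a_\ell\in\CH$ really does imply each individual $a_j\succ\log$, which is needed to apply Lemma~\ref{L: Kronecker for single average} and which follows by taking the linear combination with $c_j=1$ and all other coefficients zero in the definition of pairwise independence; and (b) the passage to the product systems $T_j\times T_j$ preserves both ``Hardy sequence'' (the sequence doesn't change, only the transformation does) and the relevant Kronecker-control statement. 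I would therefore organize the writeup as: first dispose of the elementary observation that product ergodicity implies coordinatewise ergodicity; then prove each implication of~(i) by citing Lemmas~\ref{L: weak (ii)} and \ref{L: sufficient direction} respectively with the hypotheses checked via Theorem~\ref{T: HK control} and Lemma~\ref{L: Kronecker for single average}; then conclude~(ii) by citing Lemma~\ref{L: (i)}. Finally, Theorem~\ref{T: joint ergodicity} itself is then immediate: combining (i) and (ii) shows that joint ergodicity implies both conditions of Problem~\ref{C: joint ergodicity conjecture}, while (i) (the converse direction) together with the fact (Corollary~\ref{C: equivalent form of good eq prop} plus Lemma~\ref{L: weak (ii)}) that the two conditions of Problem~\ref{C: joint ergodicity conjecture} imply product ergodicity shows the reverse, so the two conditions are exactly equivalent to joint ergodicity for pairwise independent Hardy sequences.
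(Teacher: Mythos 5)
Your proposal is correct and follows essentially the same route as the paper: the forward implication of (i) via Lemma \ref{L: weak (ii)} with hypothesis (ii) supplied by Lemma \ref{L: Kronecker for single average} (using that pairwise independence gives $a_j\succ\log$), the converse via Lemma \ref{L: sufficient direction} together with the seminorm control of Theorem \ref{T: HK control}, and part (ii) via Lemma \ref{L: (i)}. The extra details you supply (coordinatewise ergodicity of each $T_j$ from product ergodicity, and the $a_j\succ\log$ check) are correct and only make explicit what the paper leaves implicit.
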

 \begin{proof}
In the first statement, the forward implication follows from Lemmas \ref{L: weak (ii)} and \ref{L: Kronecker for single average}, and the converse implication is covered by Lemma \ref{L: sufficient direction} and Theorem \ref{T: HK control}. The second statement follows from Lemma \ref{L: (i)}.
\end{proof}

\begin{appendix}

\section{Properties of Hardy functions}\label{A: Hardy properties}
In this appendix, we gather properties of Hardy sequences that are relevant in our proofs. Most of the lemmas exist in the literature already, so we omit their proofs.

The first lemma appears in \cite{Fr09} and allows us to compare the growth rate of a Hardy field function to the growth rate of its derivative.
 \begin{lemma}[{\cite[Lemma 2.1]{Fr09}}]\label{L: Frantzikinakis growth inequalities}
    Suppose that $a\in \mathcal{H}$ satisfies $t^{-m}\prec a(t)\prec t^m$ for some  $m
    \in\N$ and $a(t)$ does not converge to a nonzero constant as $t\to\infty$. Then  \begin{equation*}
        \frac{a(t)}{t\log^2 t}\prec a'(t)\ll \frac{a(t)}{t}.
    \end{equation*}
    In addition, if $a(t)\gg t^{\veps}$ or $a(t)\ll t^{-\veps}$ for some $\veps>0$, we also have \begin{equation*}
        a'(t)\sim \frac{a(t)}{t}.
   \end{equation*} 
\end{lemma}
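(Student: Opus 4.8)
The plan is to exploit the basic structural feature of Hardy fields — every germ is eventually monotone, hence has a limit in $[-\infty,+\infty]$ — applied to the logarithmic derivative $h:=a'/a$ and to its rescalings by powers of $t$ and $\log t$. First I would reduce to the case $a(t)>0$ for large $t$: a nonzero Hardy field germ is eventually nonzero and therefore of constant sign by the intermediate value theorem, and passing from $a$ to $-a$ affects none of the hypotheses or conclusions, all of which are stated in terms of absolute values. The hypotheses also force $a\not\equiv 0$ and (since $a$ does not converge to a nonzero constant) $a'\not\equiv 0$ eventually, so $h\in\mathcal H$ is a well-defined, eventually nonzero germ. Moreover $th$, $t\log^2 t\cdot h$, and $\log^2 t$ all lie in $\mathcal H$ — recall $\log t\ll t$, so $\log$ and $\log^2$ have polynomial growth — and hence each of them has a limit.

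For the upper bound $a'(t)\ll a(t)/t$ I would show that $th(t)$ cannot tend to $\pm\infty$. If $th(t)\to+\infty$, then for every $C>0$ one has $h(t)\ge C/t$ for $t\ge T_C$, and integrating the identity $h=(\log a)'$ from $T_C$ to $t$ gives $a(t)\ge a(T_C)(t/T_C)^C$; thus $a(t)/t^C$ does not tend to $0$, contradicting $a(t)\prec t^m$ once $C=m$. The case $th(t)\to-\infty$ is symmetric: it yields $a(t)\ll t^{-C}$ for all $C$, contradicting $t^{-m}\prec a(t)$. Hence $th$ has a finite limit $\ell\in\R$, which is exactly the statement $a'(t)=O(a(t)/t)$ (and already records that $ta'/a$ converges). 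For the lower bound $a(t)/(t\log^2 t)\prec a'(t)$, equivalently $|t\log^2 t\cdot h(t)|\to\infty$, I argue by contradiction: if this germ has a finite limit, then $|h(t)|\le C/(t\log^2 t)$ eventually, so $|\log a(t)-\log a(t_0)|\le C\int_{t_0}^t \frac{ds}{s\log^2 s}\le C/\log t_0$, whence $\log a$ is bounded; then $a$ is bounded between two positive constants, and being an eventually monotone Hardy field germ it converges to a nonzero constant, contradicting the hypothesis.

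For the ``in addition'' clause, suppose $a(t)\gg t^{\veps}$ (the case $a(t)\ll t^{-\veps}$ is analogous). I already know that $\ell:=\lim th(t)$ exists and is finite, so it suffices to rule out $\ell=0$. If $\ell=0$, then for each $\delta>0$ one has $|h(t)|\le \delta/t$ eventually, and integrating yields $\log a(t)\le \delta\log t+O_\delta(1)$, hence $a(t)\ll t^{\delta}$ for every $\delta>0$; taking $\delta=\veps/2$ contradicts $a(t)\gg t^{\veps}$. Therefore $\ell\ne 0$, i.e.\ $ta'(t)/a(t)$ converges to a nonzero constant, which is precisely the assertion $a'(t)\sim a(t)/t$.

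The only genuinely delicate point I anticipate is the bookkeeping ensuring that the auxiliary germs $h$, $th$, and $t\log^2 t\cdot h$ really belong to $\mathcal H$ and hence inherit the limit-existence property: this uses closure of $\mathcal H$ under differentiation and quotients together with the a priori bound $h=O(1/t)$ from the first step (which is what makes $th$ of polynomial growth). Everything else is elementary integration of $(\log a)'$. If one wanted the statement for an arbitrary Hardy field not containing $\log$, one would instead run the $\log^2 t$-rescaling argument inside the Hardy field extension generated by $\log$, which exists by standard results; in the paper's setting this is unnecessary since $\log\in\mathcal H$.
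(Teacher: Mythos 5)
The paper itself offers no proof of this lemma; it is quoted verbatim from \cite{Fr09}, so there is no internal argument to compare yours against. Your proof is correct. The reduction to $a>0$, the observation that $a'\not\equiv 0$ eventually (else $a$ converges to a nonzero constant), and the three limit arguments for $th$ and $t\log^2 t\cdot h$ via integration of $h=(\log a)'$ are all sound, as is the treatment of the strengthened conclusion under $a\gg t^\veps$ or $a\ll t^{-\veps}$. One simplification is available at the point you flag as delicate: you do not need $h$, $th$, or $t\log^2 t\cdot h$ to lie in the polynomial-growth class $\mathcal{H}$ in order to use the eventual-monotonicity/limit-existence property; that property holds for germs in the ambient Hardy field $\mathcal{H}_0$, which is a genuine field closed under differentiation and quotients and which contains $t$ and $\log t$. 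Since $a,a'\in\mathcal{H}_0$ and $a\ne 0$ eventually, all three auxiliary germs lie in $\mathcal{H}_0$ with no growth bookkeeping required, and the apparent circularity (needing the upper bound $h=O(1/t)$ before one can conclude $th\in\mathcal{H}$) dissolves. With that adjustment the argument is clean and complete.
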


In the case of strongly nonpolynomial functions that dominate $\log$, we can show that the derivatives always satisfy the assumptions of the hypothesis, so that this lemma can be iterated arbitrarily many times. The proof of this assertion appears in \cite[Lemma~A.2]{Ts22}. 

\begin{lemma}[{\cite[Lemma~A.2]{Ts22}}]\label{L: degree inequality}
Suppose that $a\in \mathcal{H}$ satisfies $a(t) \succ \log t$ and $t^{d}\prec a(t)\prec t^{d+1}$ for some $d\in\N_0$. Then, for any $m\geq d+1,$ we have  \begin{equation*}
        1\prec |a^{(m)}(t)|^{-\frac{1}{m}}\prec |a^{(m+1)}(t)|^{-\frac{1}{m+1}}\prec t.
    \end{equation*}
    Additionally, the leftmost relation holds under the weaker assumption $a(t) \succ 1$.
\end{lemma}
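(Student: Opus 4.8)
The plan is to reduce the three-fold chain to a single estimate and then prove that estimate by a case analysis on $c:=\fracdeg a\in[d,d+1]$, using that Lemma \ref{L: Frantzikinakis growth inequalities} can be iterated. Throughout one uses that $a$, being strongly nonpolynomial, is not eventually a polynomial, so every $a^{(m)}$ is a nonzero Hardy germ of polynomial growth: indeed $t^{-M}\prec a^{(m)}\prec t^{M}$ for a suitable $M$, since if some $a^{(m)}$ decayed faster than every power of $t$ then integrating back $m$ times would force $a$ to converge to a finite constant, contrary to $a\succ 1$. Consequently Lemma \ref{L: Frantzikinakis growth inequalities} applies to every $a^{(m)}$, and from it one reads off $\fracdeg a^{(j)}=\fracdeg a-j$ for all $j$.

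I would first carry out the reduction. The leftmost inequality $1\prec|a^{(m)}|^{-1/m}$ is equivalent to $a^{(m)}(t)\to 0$; this follows for all $m\ge d+1$ from a L'H\^opital-type argument in the Hardy field ($a\prec t^{d+1}$ gives $a^{(d+1)}\to 0$, and the derivative of a Hardy germ tending to $0$ again tends to $0$), and uses only $a\prec t^{d+1}$, which in particular proves the last sentence of the statement. Next, the upper bound in Lemma \ref{L: Frantzikinakis growth inequalities} gives $|a^{(m+1)}|\ll|a^{(m)}|/t$, and a short algebraic manipulation shows that the middle inequality $|a^{(m)}|^{-1/m}\prec|a^{(m+1)}|^{-1/(m+1)}$ is then a formal consequence of the rightmost inequality $|a^{(m)}|^{-1/m}\prec t$, i.e. of
\[
P(m):\qquad |a^{(m)}(t)|\succ t^{-m}\qquad (m\ge d+1).
\]
Thus everything comes down to proving $P(m)$.

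For $P(m)$, if $c\notin\Z$ then no derivative $a^{(j)}$ is subfractional, so each satisfies $a^{(j)}\gg t^{\veps}$ or $a^{(j)}\ll t^{-\veps}$ for some $\veps>0$; hence the sharp second part of Lemma \ref{L: Frantzikinakis growth inequalities} applies at every differentiation step and yields $a^{(m)}\sim a/t^{m}$, so $|a^{(m)}|\,t^{m}\sim a\succ\log t\succ 1$. If $c\in\Z$ then exactly one derivative in the list $a,a',a'',\dots$ is subfractional, namely $a^{(c)}$, so the sharp asymptotics of Lemma \ref{L: Frantzikinakis growth inequalities}(ii) are available on both sides of the index $c$ and only the single step of differentiating $a^{(c)}$ incurs the lossy first part of the lemma, at a cost of a factor $\log^{2}t$. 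One checks this gives $|a^{(c)}|\,t^{c}\asymp a$ and $|a^{(m)}|\,t^{m}\gg a(t)/(\log t)^{2}$ for $m\ge c+1$; since $\fracdeg a=c$, the right-hand side tends to $\infty$ whenever $c\ge 1$, which settles every integer value of $c$ except $c=d=0$.

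The remaining case $c=d=0$, where $a$ is subfractional but $a\succ\log t$, is the crux, since there the $\log^{2}t$ loss is not affordable ($a$ may grow only like $(\log t)^{1+\delta}$). The key point — and the step I expect to be the main obstacle — is the sharper claim that $a\succ\log t$ forces $t\,a'(t)\to\infty$. I would prove this cleanly: setting $h:=a/\log t$, we have $h\to\infty$, hence $h$ is eventually increasing and $h'\ge 0$, while $a=h\log t$ gives $t a'=t h'\log t+h\ge h\to\infty$. Granting this, $\fracdeg a'=-1$, so all of $a',a'',\dots$ lie in the good regime of Lemma \ref{L: Frantzikinakis growth inequalities}, whence $a^{(m)}\sim a'/t^{m-1}$ and $|a^{(m)}|\,t^{m}\asymp t\,a'(t)\to\infty$, which is $P(m)$. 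This completes the proof of $P(m)$, and with it the lemma.
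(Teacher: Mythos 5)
Your argument is correct, and in fact the paper offers nothing to compare it with: Lemma \ref{L: degree inequality} is quoted from \cite[Lemma A.2]{Ts22} and no proof is reproduced here, so your write-up stands or falls on its own. It stands. The reduction is sound: the leftmost relation is equivalent to $a^{(m)}\to 0$ and follows from $a\prec t^{d+1}$ alone (which also disposes of the final sentence of the statement); the middle relation follows from $P(m)\colon |a^{(m)}|\succ t^{-m}$ together with the upper bound $|a^{(m+1)}|\ll |a^{(m)}|/t$ of Lemma \ref{L: Frantzikinakis growth inequalities}; and your case analysis on $c=\fracdeg a$ proves $P(m)$, with the sharp asymptotics $a^{(j+1)}\asymp a^{(j)}/t$ valid at every step except possibly the single index where the derivative is subfractional, where the $\log^2 t$ loss is affordable whenever $c\geq 1$. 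You correctly isolate the genuine crux, namely $c=d=0$ with $\log t\prec a\prec t$, where the lossy bound only gives $ta'\succ a/\log^2 t$, which need not tend to infinity; your sharper claim that $a\succ\log t$ forces $ta'\to\infty$ (via $h=a/\log t\to\infty$, $h'\geq 0$ eventually, $ta'=th'\log t+h\geq h$) is exactly what is needed and is proved correctly, after which $|a^{(m)}|t^m\asymp ta'\to\infty$ closes the case.

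One small imprecision worth repairing is in your Step 0. If some $a^{(m)}$ decayed faster than every power of $t$, integrating back does not only threaten to make $a$ converge to a finite constant: at each level the antiderivative could instead tend to a \emph{nonzero} constant, in which case $a$ becomes asymptotic to a constant multiple of an integer power of $t$, and the contradiction is with the strong nonpolynomiality $t^{d}\prec a\prec t^{d+1}$ rather than with $a\succ 1$; moreover, to recurse when the limit is zero you should note that superpolynomial smallness of $a^{(j)}$ propagates to $a^{(j-1)}-\lim a^{(j-1)}$ by integrating the tail, not merely convergence. Alternatively, you can skip this ad hoc argument entirely: the upper polynomial bounds follow from your L'H\^opital step, and then $a^{(k)}\in\CH$ with $\fracdeg a^{(k)}=\fracdeg a-k\in\R$ by Lemmas \ref{L:Relation of growth rates} and \ref{L: properties of fracdeg}\eqref{i: fracdeg derivative}, which gives the two-sided polynomial bounds (and the non-convergence to a nonzero constant) needed to invoke Lemma \ref{L: Frantzikinakis growth inequalities} at every stage. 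With that adjustment the proof is complete.
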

By considering $a(t) = \log t$ and its derivatives, it is immediate that the assumption $a(t) \succ \log t$ is necessary for the second and third relations to hold.

We turn to presenting various properties of our novel notion of fractional degree. First, we show that the fractional degree of a function in $\CH$ is always a real number.
\begin{lemma}\label{L:Relation of growth rates}
Let $a\in \CH$. We have 
\[\lim_{t\to\infty}\frac{ta'(t)}{a(t)}\in \R.\]
\end{lemma}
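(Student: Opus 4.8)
\textbf{Proof plan for Lemma~\ref{L:Relation of growth rates}.}

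The plan is to show that the logarithmic derivative $L_a(t) := \frac{ta'(t)}{a(t)}$ always has a limit as $t\to\infty$, and that this limit is finite. The key structural fact is that $\CH$ is a Hardy field, hence every germ in $\CH$ is either eventually zero or eventually of constant sign, and is eventually monotone; in particular every germ in $\CH$ has a limit in $\R\cup\{\pm\infty\}$ as $t\to\infty$. Applying this to the germ $L_a$, which lies in $\CH$ because $\CH$ is closed under differentiation, multiplication, and division (so $L_a = t\cdot a'/a$ is a well-defined element of $\CH$ as soon as $a\not\equiv 0$; the case $a\equiv 0$ being trivial or excluded), we immediately get that $\lim_{t\to\infty} L_a(t)$ exists in $\R\cup\{\pm\infty\}$. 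So the entire content of the lemma is the \emph{finiteness} of this limit, and this is where the polynomial growth hypothesis built into the definition of $\CH$ is used: by definition every $a\in\CH$ satisfies $a(t)\ll t^d$ for some $d>0$, and similarly $1/a$ (if $a$ is eventually nonzero and, say, eventually bounded away from $0$, or more generally using that $a\in\CH\Rightarrow$ its reciprocal germ is comparable to a power) is of polynomial growth, so $t^{-m}\prec a(t)\prec t^{m}$ for some $m\in\N$.

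First I would dispose of degenerate cases. If $a$ is eventually zero the statement is vacuous (or one interprets $ta'/a$ as undefined/zero); if $a(t)$ converges to a nonzero constant, then $a'(t)\to 0$ and in fact $a'\in\CH$ has polynomial growth, so $L_a(t) = ta'(t)/a(t)\to 0$ by Lemma~\ref{L: Frantzikinakis growth inequalities} (which gives $a'(t)\ll a(t)/t$), and we are done. So assume $a$ does not converge to a nonzero constant and satisfies $t^{-m}\prec a(t)\prec t^m$ for some $m\in\N$; these are exactly the hypotheses of Lemma~\ref{L: Frantzikinakis growth inequalities}. That lemma then yields the two-sided bound
\[
\frac{a(t)}{t\log^2 t}\prec a'(t)\ll \frac{a(t)}{t},
\]
and hence, multiplying by $t/a(t)$ (whose sign is eventually constant, equal to the eventual sign of $a$, which we may absorb by replacing $a$ with $|a|$ or by tracking signs — both $a$ and $a'$ are eventually of constant sign in a Hardy field, so $L_a$ is eventually of constant sign and the inequalities are preserved up to that sign),
\[
\frac{1}{\log^2 t}\prec |L_a(t)| \ll 1.
\]
In particular $|L_a(t)|$ is eventually bounded above by an absolute constant, so $\lim_{t\to\infty} L_a(t)$, which we already know exists in $\R\cup\{\pm\infty\}$, cannot be $\pm\infty$; it is a finite real number. (The lower bound $1/\log^2 t \prec |L_a|$ is not needed for finiteness — it only shows $L_a$ does not decay too fast — but it comes for free.)

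I do not anticipate a serious obstacle here; the only points requiring a little care are the bookkeeping of signs (handled by the Hardy-field property that $a$, $a'$, and thus $L_a$ are eventually of constant sign) and making sure the hypotheses of Lemma~\ref{L: Frantzikinakis growth inequalities} are met, i.e.\ that $a$ has two-sided polynomial growth $t^{-m}\prec a\prec t^m$. The upper bound $a\prec t^m$ is immediate from the definition of $\CH$. For the lower bound $t^{-m}\prec a$: since $a\in\CH$ and $a$ is eventually nonzero, its reciprocal $1/a$ is again in $\CH_0$ and one checks it has polynomial growth — indeed in a Hardy field containing $\log$ and closed under composition the fractional degree formula $\fracdeg a = \lim \log a(t)/\log t$ shows $a$ is comparable to $t^{c}$ in the logarithmic scale for $c=\fracdeg a$, giving $a(t)\gg t^{c-1}\gg t^{-m}$ for suitable $m$; alternatively one invokes Lemma~\ref{L: Frantzikinakis growth inequalities} in its second form once polynomial growth of $1/a$ is known. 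Either way the required two-sided bound holds, completing the proof.
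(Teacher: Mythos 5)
Your first step (the limit exists in the extended real line because $\CH$ is closed under differentiation, multiplication and division, and Hardy field germs are eventually monotone) is exactly the paper's. The gap is in the finiteness step. To invoke Lemma~\ref{L: Frantzikinakis growth inequalities} you need the two-sided bound $t^{-m}\prec a(t)\prec t^m$; the definition of $\CH$ only supplies the upper bound, and your justification of the lower bound is circular. You appeal to ``$a$ is comparable to $t^c$ for $c=\fracdeg a$'' and to the formula $\fracdeg a=\lim_{t\to\infty}\log a(t)/\log t$, but the finiteness of $\fracdeg a=\lim_{t\to\infty} ta'(t)/a(t)$ is precisely the statement of Lemma~\ref{L:Relation of growth rates} that you are proving, and the $\log a/\log t$ formula is itself deduced from it (via L'H\^opital) only after finiteness is known. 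Likewise ``one checks $1/a$ has polynomial growth'' is exactly the missing lower bound, not something that follows from the stated hypotheses: with the paper's one-sided notion of polynomial growth, a germ such as $e^{-t}$ lies in $\CH$ while $1/a$ grows super-polynomially, so no argument from $a\in\CH$ alone can deliver $t^{-m}\prec a$. A secondary slip: in the case where $a$ converges to a nonzero constant you again cite Lemma~\ref{L: Frantzikinakis growth inequalities}, whose hypotheses explicitly exclude that case (the case is easy to handle directly --- convergence of $a$ forces $a'$ to be improperly integrable, hence $ta'\to 0$ --- but the citation as written is not valid).

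The paper avoids any lower bound on $a$ by a direct integration of the logarithmic derivative: if the limit were $+\infty$, then $ta'(t)/a(t)>M$ for all large $t$, and integrating $a'/a\geq M/t$ from $N$ to $x$ gives $|a(x)|\geq |a(N)|(x/N)^M$ for every $M$, contradicting $a(t)\ll t^d$; the case $-\infty$ is declared analogous. (Even there one implicitly needs slightly more than the one-sided growth bound, but that is the paper's business, not yours.) This direct integration is the ingredient your proposal is missing: if you want to keep the route through Lemma~\ref{L: Frantzikinakis growth inequalities}, you must first prove, not assume, a polynomial lower bound on $|a|$, and doing so honestly amounts to the same integration argument the paper uses.
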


\begin{proof}
Since Hardy fields are closed under multiplication and differentiation, and Hardy field functions are eventually monotone,\footnote{The closure of the Hardy field under compositional inverses implies that each Hardy field function has eventually constant sign, for otherwise it would not be possible to define an inverse. Applying this observation to $a'$, we deduce that $a$ is eventually monotone.} the limit  $\lim\limits_{t\to\infty}\frac{ta'(t)}{a(t)}$ exists in the extended real line. 

 We will show that it cannot equal to $+\infty$ (the case $-\infty$ is completely analogous).

Indeed, assuming the contrary, for every $M>0$ there exists $N>0$ so that for all $x>N,$ we have $\frac{ta'(t)}{a(t)}>M.$ Then
\[\log\left(\left(\frac{x}{N}\right)^{M}\right)=\int_N^x \frac{M}{t}\;dt\leq\int_{N}^x \frac{a'(t)}{a(t)}\;dt\leq \log\frac{|a(x)|}{|a(N)|},\]
from where we get
\[|a(N)| (x/N)^{M}\leq |a(x)|,\] a contradiction to the fact that $a$ is of polynomial growth.
\end{proof}

The next lemma lists various straightforward properties of the notion of fractional degree.

\begin{lemma}[Properties of fractional degrees]\label{L: properties of fracdeg}
    Let $a, b\in\CH$. Then
\begin{enumerate}
    \item\label{i: fracdeg derivative} $\fracdeg{a^{(k)}} = \fracdeg{a} - k$ for any $k\in\N$ whenever $a$ is strongly nonpolynomial;
    \item\label{i: fracdeg product} $\fracdeg (ab) = \fracdeg a+\fracdeg b$;
    \item\label{i: fracdeg composition} $\fracdeg{a\circ b} = \fracdeg{a}\cdot\fracdeg{b}$ whenever  $\lim\limits_{t\to\infty}b(t)=\infty$; and
    \item\label{i: fracdeg power} $\fracdeg{a^k} = k\cdot \fracdeg{a}$ for any $k\in\R$ for which $a^k\in\CH$.
\end{enumerate}
\end{lemma}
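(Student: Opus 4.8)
The statement to prove is Lemma~\ref{L: properties of fracdeg}, which collects four algebraic properties of the fractional degree $\fracdeg a = \lim_{t\to\infty} ta'(t)/a(t)$. The plan is to reduce everything to the logarithmic-derivative identity $(\log|a|)'(t) = a'(t)/a(t)$, so that $\fracdeg a$ becomes $\lim_{t\to\infty} t (\log|a|)'(t)$, and then exploit the fact that the map $a \mapsto a'/a$ turns products into sums and interacts nicely with compositions. Throughout I would freely use Lemma~\ref{L:Relation of growth rates} to guarantee that all the limits in question exist as real numbers, and the closure of $\CH$ under differentiation, multiplication, division, and composition to ensure the relevant germs lie in a Hardy field (so that the limits defining their fractional degrees make sense).

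For \eqref{i: fracdeg product}, I would simply write $\dfrac{t(ab)'(t)}{(ab)(t)} = \dfrac{t a'(t)}{a(t)} + \dfrac{t b'(t)}{b(t)}$ and pass to the limit; both limits on the right exist by Lemma~\ref{L:Relation of growth rates}, giving $\fracdeg(ab)=\fracdeg a + \fracdeg b$ immediately. For \eqref{i: fracdeg composition}, the chain rule gives $\dfrac{t(a\circ b)'(t)}{(a\circ b)(t)} = \dfrac{b(t) a'(b(t))}{a(b(t))}\cdot \dfrac{t b'(t)}{b(t)}$; since $b(t)\to\infty$, the first factor tends to $\fracdeg a$ (substituting $s = b(t)$), and the second tends to $\fracdeg b$, so the product tends to $\fracdeg a\cdot \fracdeg b$. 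For \eqref{i: fracdeg power}, assuming $a^k\in\CH$, I would use $\dfrac{t(a^k)'(t)}{a^k(t)} = k\cdot \dfrac{t a'(t)}{a(t)}$ (valid wherever $a$ has constant sign, which holds eventually in a Hardy field), and take limits; alternatively this is the special case $\fracdeg(a\circ t^k)$... but the direct computation is cleaner and also covers non-integer $k$.

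For \eqref{i: fracdeg derivative}, which is the one genuinely requiring the strongly nonpolynomial hypothesis, I would argue as follows. Write $c = \fracdeg a$; the hypothesis $t^d \prec a(t) \prec t^{d+1}$ forces $c\in[d,d+1]$, and in fact, combined with $a$ being strongly nonpolynomial, one gets that $a$ grows at least like $t^d$ up to subpolynomial factors. By Lemma~\ref{L: Frantzikinakis growth inequalities} applied to $a$ (which has polynomial growth and does not tend to a nonzero constant since $a(t)\succ t^d$ with $d \geq 1$, or handled directly if $d = 0$), we get $a'(t) \asymp a(t)/t$ up to logarithmic corrections; more precisely I want to identify $\fracdeg a'$ exactly. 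The cleanest route: it suffices to prove $\fracdeg a' = \fracdeg a - 1$ for a single derivative and then iterate, since $a'$ is again strongly nonpolynomial (as $t^{d-1}$-ish $\prec a'(t) \prec t^d$-ish by Lemma~\ref{L: Frantzikinakis growth inequalities}, using $a\succ\log$ which follows from $a$ being strongly nonpolynomial with $d\geq 1$, and separately treating $d=0$ where $a\succ 1$ suffices for the relevant part). To compute $\fracdeg a'$, note $\dfrac{t a''(t)}{a'(t)}$; here I would use the Hardy field fact that $a'/a$, being a Hardy field germ tending to $0$ like $c/t$ (since $ta'(t)/a(t)\to c$), has the property that its logarithmic derivative $(a'/a)'/(a'/a) = a''/a' - a'/a$ satisfies $t\bigl(a''/a' - a'/a\bigr) \to -1$ — because $a'/a \sim c/t$ when $c \neq 0$ gives by Lemma~\ref{L: Frantzikinakis growth inequalities} (applied to $a'/a$, which is $\asymp t^{-1}$) that $(a'/a)' \sim -\,(a'/a)/t$, i.e. $\fracdeg(a'/a) = -1$. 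Then $\fracdeg a' = \fracdeg(a'/a) + \fracdeg a = -1 + c$ by part \eqref{i: fracdeg product}. The case $c = 0$ (possible only when $d = 0$, e.g. $a = \exp(\sqrt{\log t})$) needs the finer part of Lemma~\ref{L: Frantzikinakis growth inequalities} or Lemma~\ref{L: degree inequality}: there $a'(t)$ still satisfies $a(t)/(t\log^2 t) \prec a'(t) \ll a(t)/t$, and one checks $\fracdeg a' = -1$ using that $\fracdeg$ is insensitive to logarithmic factors; this is the one spot I expect to be slightly delicate and where I would lean on Lemma~\ref{L: degree inequality} to control $a^{(m)}$ cleanly.

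The main obstacle, then, is part \eqref{i: fracdeg derivative} in the borderline regime where $\fracdeg a = 0$ or $\fracdeg a$ is an integer: there the asymptotic $a'/a \sim c/t$ degenerates (its leading term vanishes), and one must instead show directly that $a''/a'$ still behaves like $(c-1)/t$ to leading order, which requires knowing that the ``correction terms'' in the logarithmic derivative are themselves subpolynomial — precisely the content of Lemmas~\ref{L: Frantzikinakis growth inequalities} and~\ref{L: degree inequality}. Since the paper states it omits these proofs anyway (''Most of the lemmas exist in the literature already, so we omit their proofs''), in the write-up I would present the three easy identities \eqref{i: fracdeg product}, \eqref{i: fracdeg composition}, \eqref{i: fracdeg power} via the one-line logarithmic-derivative computations above, and for \eqref{i: fracdeg derivative} reduce to a single derivative, invoke Lemma~\ref{L: Frantzikinakis growth inequalities} to get $\fracdeg(a'/a) = -1$, and conclude by \eqref{i: fracdeg product}, flagging the $\fracdeg a \in \Z$ case as handled by the second half of Lemma~\ref{L: Frantzikinakis growth inequalities}.
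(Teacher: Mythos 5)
Your proofs of (ii), (iii) and (iv) coincide with the paper's: the Leibniz rule, the chain rule with the substitution $s=b(t)$, and the identity $t(a^k)'/a^k = k\,ta'/a$, each followed by passing to the limit. For (i) you take a genuinely different route. The paper's argument is a one-line recursion obtained from L'H\^opital: it shows $\lim_{t\to\infty} ta^{(k+1)}(t)/a^{(k)}(t) = \lim_{t\to\infty} ta^{(k)}(t)/a^{(k-1)}(t) - 1$, so the fractional degree drops by exactly one per differentiation, with no case distinction and no appeal to Lemma~\ref{L: Frantzikinakis growth inequalities} at all (only Lemma~\ref{L:Relation of growth rates} to know the limits are real). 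You instead factor $a' = (a'/a)\cdot a$, compute $\fracdeg(a'/a)=-1$, and invoke part (ii). This works, and in the nondegenerate case it is even simpler than you make it: once $ta'/a\to c\neq 0$ you have $a'/a\sim c/t$, and $\fracdeg(a'/a)=-1$ follows directly from the $\log$-quotient formulation of $\fracdeg$ stated in Section~\ref{SS: Hardy field functions}, so the sign-corrected reading of Lemma~\ref{L: Frantzikinakis growth inequalities} you lean on is unnecessary. The price of your route is precisely the degenerate case you flag, when $\fracdeg$ of some iterated derivative vanishes (e.g.\ $a=t\log t$ after one derivative); it does close, because the two-sided bound $a/(t\log^2 t)\prec a'\ll a/t$ from Lemma~\ref{L: Frantzikinakis growth inequalities} sandwiches $\fracdeg a'$ at $\fracdeg a-1$ from both sides (the relations $\ll$ and $\succ$ respect $\fracdeg$, and logarithmic factors have fractional degree $0$), but this should be said explicitly rather than delegated vaguely to Lemma~\ref{L: degree inequality}. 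The paper's L'H\^opital trick buys uniformity (no splitting into $c\neq 0$ and $c=0$); your route buys nothing extra here but is a perfectly legitimate alternative.

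One small repair: your justification for iterating, namely that ``$a'$ is again strongly nonpolynomial,'' is false when $d=0$ (take $a=\log t$, so $a'=1/t\prec 1$). The iteration does not need it: what must persist are the hypotheses of Lemma~\ref{L: Frantzikinakis growth inequalities}, i.e.\ that no $a^{(j)}$ converges to a nonzero constant (and none vanishes eventually), and this follows from the strong nonpolynomiality of $a$ itself, since $a^{(j)}\to L\neq 0$ would force $a$ to be asymptotic to a polynomial. With that substitution, and the degenerate case spelled out as above, your argument for (i) is complete, though bulkier than the paper's recursion.
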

We note that the first property fails for polynomials since all sufficiently large derivatives of a particular polynomial are zero.
\begin{proof}
To prove the first property, we first note from Lemma \ref{L:Relation of growth rates} that there exists $c\in\R$ for which
\[\lim_{t\to\infty}\frac{ta'(t)}{a(t)}=c.\]
By a repeated application of L'H\^opital's rule, we deduce that for every $k\in \N,$ we have 
\[\lim_{t\to\infty}\frac{ta^{(k+1)}(t)}{a^{(k)}(t)}=\lim_{t\to\infty}\frac{ta^{(k)}(t)}{a^{(k-1)}(t)}-1=\ldots=\lim_{t\to\infty}\frac{ta'(t)}{a(t)}-k=c-k.\] 

The second property is a simple application of Leibniz' rule. For the third property, we compute that
\begin{align*}
    \lim_{t\to\infty}\frac{t (a\circ b)'(t)}{a\circ b(t)} = \lim_{t\to\infty}\frac{t a'(b(t))b'(t)}{a(b(t))} = \lim_{t\to\infty}\frac{tb'(t)}{b(t)}\cdot\frac{b(t) a'(b(t))}{a(b(t))}.
\end{align*}
Since $b(t)\to\infty$ as $t\to\infty$, we have
\begin{align*}
    \lim_{t\to\infty}\frac{b(t) a'(b(t))}{a(b(t))} = \lim_{t\to\infty}\frac{t a'(t)}{a(t)} = \fracdeg a,
\end{align*}
and so the result follows from the product rule for limits.

Lastly, since 
\begin{align*}
     \lim_{t\to\infty}\frac{t (a^k)'(t)}{a^k(t)} = k \cdot \lim_{t\to\infty}\frac{t a(t)^{k-1}a'(t)}{a^k(t)} = k\cdot \fracdeg a,
\end{align*} we have property \eqref{i: fracdeg power}.
\end{proof}

 \section{Simultaneous approximations in short intervals}\label{A: approximations}
 In this part, we provide a proof for Proposition \ref{P: Taylor expansion ultimate}. This necessitates the study of the growth rates of Hardy field functions and their interaction with differentiation. A large part of this analysis has been carried out originally in \cite{Ts22}. However, the information provided by the work in \cite{Ts22} is insufficient for our purposes.

Firstly, we establish a combinatorial lemma, which virtually covers the case of Proposition \ref{P: Taylor expansion ultimate} when all the implicit Hardy field functions are distinct fractional powers.

\begin{lemma}\label{L: fractional powers common expansion}
	    Let $\theta_1,\theta_2,\dots,\theta_{\ell}$ be distinct positive real numbers, all smaller than a real number $u$. Then {for any $q\in\N$, there exist integers $d_{\theta_1},\dots,d_{\theta_{\ell}},d_{u}\geq q$ such that the following conditions are satisfied:}
\begin{enumerate}
    \item\label{i: Kftp1} we have \begin{equation}\label{E: non-empty intersection of intervals}
      \Big(\frac{u}{d_u+1},\frac{u}{d_u} \Big] \cap\brac{\bigcap_{i=1}^{\ell}\Big(\frac{\theta_i}{d_{\theta_i}+1},\frac{\theta_i}{d_{\theta_i}}\Big]} \neq \emptyset;
    \end{equation}
    \item\label{i: Kftp2} for any $i\in [\ell]$, we have \begin{equation}\label{E: first bound on d_u}
        \frac{u}{d_{u}}<\frac{\theta_{i}}{d_{\theta_{i}}}.
    \end{equation}In particular, $d_u>d_{\theta_i}$;
    \item\label{i: Kftp3} for any $i, j\in [\ell]$ with $i\neq j$, we have \begin{equation}\label{E: coefficients have distinct growth}
        \frac{u}{d_{u}}<\frac{\theta_{i}-\theta_{j}}{d_{\theta_{i}}-d_{\theta_{j}}}.
    \end{equation}
\end{enumerate}
         \end{lemma}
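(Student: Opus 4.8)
The plan is to reduce the whole statement to a single rational parameter $c$ lying in an open interval determined by the $\theta_i$ and $u$, and then set $d_{\theta_i} = \lfloor \theta_i/c \rfloor$ and $d_u = \lfloor u/c \rfloor$. First I would observe that condition \eqref{i: Kftp1} is equivalent to the existence of a common point $c$ in the intersection of the half-open intervals; concretely, if $c$ lies in all of $(\theta_i/(d_{\theta_i}+1), \theta_i/d_{\theta_i}]$ and in $(u/(d_u+1), u/d_u]$, then $d_{\theta_i} = \lfloor \theta_i/c\rfloor$ (with the convention that handles the half-open endpoint) and similarly for $d_u$. So the real task is: choose a rational $c>0$, small enough that all the implied integers exceed $q$, such that the induced integer parts satisfy the two strict inequalities \eqref{E: first bound on d_u} and \eqref{E: coefficients have distinct growth}.

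\textbf{Key steps.} The crucial point is that as $c \to 0^+$, we have $d_{\theta_i} = \lfloor \theta_i/c\rfloor = \theta_i/c + O(1)$ and $d_u = u/c + O(1)$, hence $u/d_u = c(1 + O(c)) \to 0$ while $\theta_i/d_{\theta_i} = c(1+O(c)) \to 0$ as well, but the ratios $\frac{u/d_u}{\theta_i/d_{\theta_i}} \to 1$. That is too crude to get the strict inequalities directly, so instead I would argue as follows. Pick first a rational $c_0 \in \bigl(0, \min_i \theta_i\bigr)$ that is not of the form $\theta_i/n$ or $u/n$ or $(\theta_i - \theta_j)/n$ for any positive integer $n$ and any $i,j$ — this excludes only countably many values, so such $c_0$ exists in any interval. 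For such $c_0$, all the integer parts are "generic": $u/d_u \ne \theta_i/d_{\theta_i}$ strictly, etc. Then I would replace $c_0$ by $c = c_0/M$ for a large integer $M$; this scales $d_\bullet$ by roughly $M$ and makes them all exceed $q$, while the genericity (strict inequalities) is preserved because dividing $c_0$ by $M$ multiplies each $d$ by $M$ up to a bounded error, and for $M$ large the sign of each of the finitely many strict inequalities $u/d_u < \theta_i/d_{\theta_i}$, $u/d_u < (\theta_i - \theta_j)/(d_{\theta_i} - d_{\theta_j})$ is governed by the leading-order behaviour $u/d_u \approx c$, $\theta_i/d_{\theta_i} \approx c$, $(\theta_i-\theta_j)/(d_{\theta_i}-d_{\theta_j}) \approx c$, together with the next-order correction. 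Here I need to be careful: to leading order all three quantities equal $c$, so I must extract the subleading term. Writing $d_{\theta_i} = \theta_i/c - \{\theta_i/c\}$ and expanding, $\theta_i/d_{\theta_i} = c + c^2\{\theta_i/c\}/\theta_i + O(c^3)$, and likewise for $u/d_u$ and for the difference quotient. So the inequalities \eqref{E: first bound on d_u}, \eqref{E: coefficients have distinct growth} become, after dividing by $c^2$, comparisons of fractional-part terms that are $O(1)$ and bounded away from equality once $c_0$ is generic — and this separation is stable under $c_0 \mapsto c_0/M$. Then choosing $M$ large finishes all three conditions simultaneously, since \eqref{i: Kftp2}'s consequence $d_u > d_{\theta_i}$ is immediate from \eqref{E: first bound on d_u} plus $u > \theta_i$.

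\textbf{Alternative cleaner route.} Actually I expect the cleanest execution avoids the asymptotic expansion: I would directly pick a rational $c$ in the open interval $\bigl(\tfrac{u}{d_u+1}, \tfrac{u}{d_u}\bigr)$ and impose that $c$ simultaneously lies in the open (not half-open) versions of all $\ell+1$ intervals, and that $c < (\theta_i - \theta_j)/(d_{\theta_i} - d_{\theta_j})$ wherever this makes sense. One shows these constraints are consistent by a greedy/inductive construction on $\ell$: order the $\theta_i$, and build $c$ as a limit of nested rational intervals, at each stage shrinking to avoid the finitely many "bad" rationals and to keep the $d$'s large. The inequality \eqref{E: coefficients have distinct growth} should be derivable from \eqref{E: first bound on d_u} applied to the function $\theta_i - \theta_j$ viewed as another exponent, noting $d_{\theta_i} - d_{\theta_j}$ plays the role of its integer part up to $O(1)$: indeed if $c \in (\tfrac{\theta_i}{d_{\theta_i}+1}, \tfrac{\theta_i}{d_{\theta_i}}]\cap(\tfrac{\theta_j}{d_{\theta_j}+1}, \tfrac{\theta_j}{d_{\theta_j}}]$ then $d_{\theta_i} - d_{\theta_j}$ is within $1$ of $(\theta_i-\theta_j)/c$, which for $c$ small forces $(\theta_i - \theta_j)/(d_{\theta_i} - d_{\theta_j}) = c(1 + O(c)) > u/d_u = c(1+O(c))$ once the $O(c)$ corrections are pinned down by the genericity of $c$.

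\textbf{Main obstacle.} The genuine difficulty is precisely \eqref{E: coefficients have distinct growth} / \eqref{E: first bound on d_u}: since $u/d_u$, $\theta_i/d_{\theta_i}$, and $(\theta_i - \theta_j)/(d_{\theta_i}-d_{\theta_j})$ all converge to $c$ at the same leading rate, a naive "take $c$ small" argument gives equality in the limit, not strict inequality. The fix — choosing $c$ generic so that the subleading fractional-part terms break ties in the right direction, and then checking this is preserved under further shrinking of $c$ — is the part that will require the most careful bookkeeping. Everything else (the equivalence of \eqref{i: Kftp1} with the existence of a common $c$, the lower bound $d_\bullet \geq q$ from taking $c$ small, and the deduction $d_u > d_{\theta_i}$) is routine.
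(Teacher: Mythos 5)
Your reformulation is sound: condition (i) is indeed equivalent to the existence of a single $c>0$ with $d_{\theta_i}=\lfloor \theta_i/c\rfloor$ and $d_u=\lfloor u/c\rfloor$, and you correctly locate the difficulty in the fact that $u/d_u$, $\theta_i/d_{\theta_i}$ and $(\theta_i-\theta_j)/(d_{\theta_i}-d_{\theta_j})$ all equal $c$ to leading order, so the strict inequalities are decided by the fractional parts $\{\theta_i/c\}$, $\{u/c\}$. But the mechanism you propose to control those fractional parts does not work, and this is exactly where the real content of the lemma lies. First, "genericity" in the sense of avoiding countably many bad values of $c_0$ cannot deliver the one-sided inequalities: the conditions are not about avoiding equalities but about forcing the fractional parts into a specific ordered configuration. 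Concretely, if $u=3$, $\theta_1=1$ (allowed by the hypotheses), then for \emph{every} $c$ with $\{1/c\}<1/3$ one has $\lfloor 3/c\rfloor=3\lfloor 1/c\rfloor$ and hence the exact equality $u/d_u=\theta_1/d_{\theta_1}$; the bad set of $c$ has positive measure, so no countable exclusion helps, and the good window (here roughly $\{1/c\}\in(1/3,1/2)$, forced also by the requirement $\{2/c\}>\{1/c\}$ coming from (iii)) must be \emph{hit}, not merely "not missed". Second, the claimed stability under $c_0\mapsto c_0/M$ is false: replacing $c_0$ by $c_0/M$ replaces $\{\theta_i/c_0\}$ by $\{M\theta_i/c_0\}$, which oscillates with $M$ (periodically for rational data, equidistributing for irrational data), so the sign of each subleading comparison changes infinitely often along the scaling family; e.g. with $\theta_1=1,\theta_2=2,u=3$ and $c_0=5/7$, the residues $M\equiv 1\pmod 5$ satisfy all conditions while $M\equiv 0,2\pmod 5$ violate them. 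Selecting a good $M$ is therefore a simultaneous Diophantine problem, and neither your main route nor the "cleaner" nested-intervals sketch supplies an argument for its solvability; likewise, deducing (iii) from (ii) by treating $\theta_i-\theta_j$ as another exponent again gives only leading-order equality, since $\lfloor\theta_i/c\rfloor-\lfloor\theta_j/c\rfloor$ need not be $\lfloor(\theta_i-\theta_j)/c\rfloor$ and the correction has exactly the same size as the quantity you must beat.

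For comparison, the paper's proof is built around precisely this missing step. It extracts a maximal $\Q$-independent subset $c_0<\cdots<c_k$ of the $\theta_i$, and uses Weyl equidistribution (their Lemma \ref{L: nested integer parts}) to choose a large integer $\theta$ so that the fractional parts $\{c_1\cdots c_i\theta/Q\}$ (suitably normalized) land in prescribed \emph{nested} windows $[2^{i-1}\veps/Q,2^i\veps/Q]$ and are divisible by $Q=M!$; the remaining $d_{\theta_i}$ are then defined by extending \emph{linearly} over the finite set $\mathcal{A}$ of bounded rational combinations of the $c_i$'s lying in $(0,u)$, the $M!$-divisibility guaranteeing integrality, and condition (iii) is obtained because $\theta_i-\theta_j$ itself belongs to $\mathcal{A}$ and the construction is exactly linear, so no floor-rounding error appears in the difference. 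This is how rational dependences among the $\theta_i$ (which your single-floor construction must also confront, as the example above shows) and the one-sided strict inequalities are handled simultaneously; your proposal would need an analogue of this equidistribution-plus-linearity argument to close the gap.
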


The reader will observe that no Hardy sequences appear in our statement, so we make some remarks on the connection between this lemma and Proposition \ref{P: Taylor expansion ultimate}.  
Suppose we are given the collection $g_i(t)=t^{\theta_i}$ and $g(t)=t^u$ with the notation of Lemma \ref{L: fractional powers common expansion}.
The computation of the derivatives of fractional powers is straightforward, so that
unpacking the complicated conditions of Proposition \ref{P: Taylor expansion ultimate} leads us to some inequalities between the integers $\theta_i,u$ and $d_{\theta_i},d_u$, which will correspond to the degrees of the Taylor expansions.
For instance, the fact that the intersection \eqref{E: non-empty intersection of intervals} is nonempty corresponds to condition \eqref{i: property implying common Taylor expansion} of Proposition \ref{P: Taylor expansion ultimate}, the inequality \eqref{E: first bound on d_u} corresponds to condition \eqref{i: comparing frac degs} while the inequality \eqref{E: coefficients have distinct growth} corresponds to condition \eqref{i: property implying different chi}. Then, our main task is to demonstrate that this system of inequalities possesses infinitely many solutions $(d_{\theta_1},\dots,d_{\theta_{\ell}},d_{u})\in\N^{\ell+1}$. We will establish this by constructing such a family of solutions, which will be produced through the set of the return times of a (multidimensional) irrational rotation.

We will use the following lemma, which is a simple consequence of Weyl's equidistribution theorem.
\begin{lemma}\label{L: nested integer parts}
	    Let $k,Q\in\N$ and $\veps,x_1,\dots, x_k>0$ be such that $1,\; x_1,\; x_1x_2,\; \ldots,\; x_1\cdots x_k$ are $\mathbb{Q}$-independent. Then there exist infinitely many $\theta \in \N$ with $Q$ dividing $\theta$ such that the following hold:
     \begin{enumerate}
         \item we have the inequality \begin{equation}\label{E: inequality with integer parts0}
	        \floor{x_k\floor{x_{k-1}\floor{\dots \floor{x_2\floor{x_1\theta}}\dots}   }}\geq x_k\ldots x_1\theta-\veps;
	    \end{equation}
        \item for every $1\leq i\leq k$, $Q$ divides $ \floor{x_i\floor{x_{i-1}\floor{\dots \floor{x_2\floor{x_1\theta}}\dots}   }}$.
     \end{enumerate}
	\end{lemma}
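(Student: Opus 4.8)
\textbf{The plan for Lemma \ref{L: nested integer parts}.}
The idea is to realize the simultaneous conditions through the return times of a single irrational rotation on a torus. First I would set up the rotation: consider the point $\alpha = (x_1, x_1 x_2, \ldots, x_1\cdots x_k)\in\T^k$ and the rotation $R_\alpha$ on $\T^k$. Since $1, x_1, x_1 x_2, \ldots, x_1\cdots x_k$ are $\Q$-independent, $\alpha$ generates a minimal (indeed uniquely ergodic) rotation on $\T^k$ by Weyl's theorem. The natural move is to look at the orbit of the origin under $R_\alpha^{Q\theta'}$ as $\theta'$ ranges over $\N$; equivalently, one studies the equidistribution of the sequence $(Q\theta' x_1, Q\theta' x_1 x_2, \ldots, Q\theta' x_1\cdots x_k) \bmod 1$, which is still equidistributed in $\T^k$ because $1, Q x_1, Q x_1 x_2, \ldots, Q x_1\cdots x_k$ remain $\Q$-independent. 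By equidistribution, the set of $\theta'$ for which each coordinate $\{Q\theta' x_1\cdots x_i\}$ lies in a prescribed small interval $[0, \delta)$ (for a suitably small $\delta = \delta(\veps, k, x_1, \ldots, x_k)$) has positive density, hence is infinite; I then set $\theta = Q\theta'$, so $Q\mid\theta$ automatically.

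Next I would verify that this choice delivers both conclusions. The key elementary observation is that if $\{y\}<\delta$ then $\floor{y} = y - \{y\} > y - \delta$, and moreover $\floor{x_{i+1}\floor{\cdots}}$ differs from $x_{i+1}\cdots x_1\theta$ by a controlled amount: writing $e_i := x_i\cdots x_1\theta - \floor{x_i\floor{\cdots\floor{x_1\theta}}}$ for the accumulated error at level $i$, one gets the recursion $e_{i+1} \le x_{i+1} e_i + \{x_{i+1}(x_i\cdots x_1\theta - e_i)\}$, and each fractional-part term can be made smaller than $\delta$ by the equidistribution choice applied to the finitely many relevant linear combinations (there are only boundedly many, depending on $k$ and the $x_i$, since $e_i$ itself is a bounded perturbation). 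Iterating the recursion $k$ times bounds $e_k$ by $C(x_1,\ldots,x_k,k)\cdot\delta$, and choosing $\delta$ small enough relative to $\veps$ gives $e_k < \veps$, which is exactly \eqref{E: inequality with integer parts0}. For the divisibility conclusion (ii), I would additionally impose that $\{Q\theta' x_1\cdots x_i\}$ be not merely small but such that $x_i\floor{\cdots}$ is within $\delta$ of an integer multiple of $Q$; concretely, I would run the same equidistribution argument on the torus with the constraints phrased modulo $Q$ (i.e., ask that $Q$ times the appropriate quantity be close to $0$ in $\T$), exploiting that $Q\theta' x_1\cdots x_i$ being close to a multiple of $Q$ is again a positive-density condition on $\theta'$ by $\Q$-independence. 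Since $\floor{x_i\floor{\cdots}}$ then differs from a multiple of $Q$ by something strictly less than $1$ in absolute value, it must itself be that multiple of $Q$, giving $Q\mid \floor{x_i\floor{\cdots}}$.

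The main obstacle I anticipate is bookkeeping the error propagation through the nested floor functions while simultaneously maintaining the divisibility-by-$Q$ constraints at every level: each floor introduces a fresh fractional-part error that feeds multiplicatively into the next level, so one must choose $\delta$ small enough to absorb all $k$ layers at once, and the divisibility requirement means the ``target'' for each $\{x_i\floor{\cdots}\}$ is not simply ``near $0$'' but ``near $0$ modulo $\tfrac1Q$-scaled lattice'', which must be shown to still be a non-trivial (positive density) condition. Both of these are handled by noting that all the auxiliary linear forms in $\theta'$ that appear have irrational leading coefficients (by the $\Q$-independence hypothesis, which is robust under the bounded integer perturbations coming from the $e_i$'s and under multiplication by $Q$), so a single application of Weyl's theorem to a product torus of bounded dimension covers every constraint simultaneously. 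Once this is in place, the two conclusions follow by the elementary floor inequalities described above, and the proof of Lemma \ref{L: nested integer parts} is complete; this is precisely the engine that will drive the construction of the solution family $(d_{\theta_1}, \ldots, d_{\theta_\ell}, d_u)$ in Lemma \ref{L: fractional powers common expansion}.
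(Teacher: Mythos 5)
Your proposal identifies the right engine (Weyl equidistribution on $\T^k$ with $Q$ absorbed into the linear forms) and the right reduction for divisibility, but it has a genuine gap in the error-propagation step. You ask that each $\{Q\theta' x_1\cdots x_i\}$ lie in a small interval $[0,\delta)$, i.e.\ all the relevant fractional parts be \emph{small}. This is insufficient. Writing $N_i := \floor{x_i\floor{\cdots\floor{x_1\theta}}}$ and $e_i := x_1\cdots x_i\theta - N_i$, your recursion involves $\{x_{i+1}N_i\}$, and this fractional part is \emph{not} a fixed linear form in $\theta$ (because $N_i$ itself is a $\theta$-dependent integer), so equidistribution does not directly control it. If one tries to write $x_{i+1}N_i = x_1\cdots x_{i+1}\theta - x_{i+1}e_i$, the issue is underflow: when $\{x_1\cdots x_{i+1}\theta\}$ is small (near $0$) while $x_{i+1}e_i$ is a larger positive quantity, the subtraction wraps past an integer, $\floor{x_{i+1}N_i}$ drops by one, and $e_{i+1}$ jumps up by nearly $1$. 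Your claim that ``there are only boundedly many relevant linear combinations, since $e_i$ is a bounded perturbation'' does not resolve this, since boundedness of $e_i$ does not make $\{x_{i+1}N_i\}$ a controllable linear form.

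The missing idea is a \emph{sandwich} condition, not just a smallness condition. The paper's proof imposes, roughly,
\begin{equation*}
2^{i-1}\veps \cdot x_1\cdots x_i \;\leq\; \{x_1\cdots x_i\,\theta\} \;\leq\; 2^{i}\veps \cdot x_1\cdots x_i
\end{equation*}
simultaneously for all $i$ (and analogously with $\theta/Q$ to get divisibility by $Q$). The geometrically growing \emph{lower} bounds ensure that at each level the accumulated error from the floors, which the induction bounds above by $2^i\veps\,x_1\cdots x_i$, is no larger than the fractional part $\{x_1\cdots x_{i+1}\theta\}$ that cushions the next subtraction. This is exactly what forecloses underflow and yields the exact identity
\begin{equation*}
\floor{x_{i+1}\floor{x_i\floor{\cdots\floor{x_1\theta}}}} \;=\; \floor{x_1\cdots x_{i+1}\,\theta},
\end{equation*}
from which both the quantitative bound (i) and the divisibility (ii) follow immediately, since (ii) reduces to $Q\mid\floor{x_1\cdots x_i\theta}$ which is enforced by the same sandwiching applied to $\theta/Q$. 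So once you replace your one-sided smallness constraints with the two-sided geometric sandwiching and run the induction to establish the displayed equality of integer parts, your approach closes the gap and matches the paper's argument.
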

    \begin{proof}
        Since $1,x_1,x_1x_2,\ldots,x_1x_2\ldots x_k$ are $\mathbb{Q}$-independent, there exist infinitely many $\theta\in \N$, such that the following collection of inequalities is satisfied \begin{equation*}
              \begin{cases}
        \frac{\veps}{Q}\leq \frac{\{x_1\theta/Q\}}{x_1}\leq \frac{2\veps}{Q}\\
        \frac{2\veps}{Q} \leq \frac{\{x_1x_2\theta/Q\}}{x_1x_2}\leq \frac{4\veps}{Q} \\
      \vdots \\
        \frac{2^{k-2}\veps}{Q} \leq \frac{\{x_1\cdots x_{k-1}\theta/Q\}}{x_1\cdots x_{k-1}}\leq \frac{2^{k-1}\veps}{Q} \\
       \frac{2^{k-1} \veps}{Q}\leq \frac{\{x_1\cdots x_k\theta/Q\}}{x_1\cdots x_k}\leq \frac{2^k\veps}{Q}
    \end{cases},
        \end{equation*}where we assume (as we may) that $\veps$ is very small  in terms of $Q/(2^k x_1\cdots x_k)$. Furthermore, we can choose $\theta$ to be a multiple of $Q$ (by applying the previous argument to $Qx_1,\; Qx_1x_2,\; \ldots,\; Qx_1\cdots x_k$).
        
Observe that the previous inequalities immediately imply the following weaker inequalities \begin{equation}\label{E: inequalities without Q}
    2^{i-1}\veps\leq \frac{\{x_1\cdots x_i\theta\}}{x_1\cdots x_i}\leq {2^i\veps}.
\end{equation}for every $1\leq i\leq k$.   This is an immediate consequence of the observation that if 
$\{a\}$ is very small in terms of $1/Q$, then $\{Qa\}=Q\{a\}$ (which does not hold, in general, if $Q$ is not an integer).

        We use induction to show that for all $1\leq i \leq k$ and any $\theta$ satisfying the previous inequalities, we have \begin{equation*}
            x_i\cdots x_1\theta\geq \floor{x_i\floor{x_{i-1}\floor{\dots \floor{x_2\floor{x_1\theta}}\dots}   }}\geq    x_i\cdots x_1\theta-2^{i}\veps x_i\dots x_{1}
        \end{equation*}
 For $i=1$, the inequality follows from \eqref{E: inequalities without Q}. Now, assume that the claim has been proven for $i$. Then, we want to show that \begin{equation*}
              x_{i+1}\cdots x_1\theta\geq \floor{x_{i+1}\floor{x_{i}\floor{\dots \floor{x_2\floor{x_1\theta}}\dots}   }}\geq    x_{i+1}\cdots x_1\theta-2^{i+1}\veps x_{i+1}\cdots x_1.
        \end{equation*}The left inequality is always valid, as dropping the integer parts increases the quantity in the middle. In addition, the induction hypothesis implies that \begin{equation}\label{E: inequalities following from induction hypothesis}
             x_{i+1}\cdots x_1\theta\geq x_{i+1}\floor{x_{i}\floor{\dots \floor{x_2\floor{x_1\theta}}\dots}   }\geq    x_{i+1}\cdots x_1\theta-2^{i}\veps x_{i+1}\cdots x_1
        \end{equation}Since $$2^i\veps x_{i+1}\cdots x_1\leq \{   x_{i+1}\cdots x_1\theta\}\leq 2^{i+1}\veps x_{i+1}\cdots x_1 $$ by \eqref{E: inequalities without Q}, we infer that
        \begin{equation}\label{E: equality of integer parts}
            \floor{ x_{i+1}\cdots x_1\theta} =\floor{x_{i+1}\floor{x_{i}\floor{\dots \floor{x_2\floor{x_1\theta}}\dots}   }}=\floor{ x_{i+1}\cdots x_1\theta-2^i\veps x_{i+1}\cdots x_1}.
        \end{equation}
        Consequently, using the fact that the fractional part of $x_{i+1}\cdots x_1\theta$ is smaller than $2^{i+1}\veps x_{i+1}\cdots x_1$, we infer that 
        \begin{equation*} 
             \floor{x_{i+1}\floor{x_{i}\floor{\dots \floor{x_2\floor{x_1\theta}}\dots}   }}\geq x_{i+1}\cdots x_1\theta-2^{i+1}\veps x_{i+1}\cdots x_1
        \end{equation*}and the induction is complete.

Now, we show that $Q$ divides each one of the integers $\floor{x_{i}\floor{\dots \floor{x_2\floor{x_1\theta}}\dots}   }$. 

We recall \eqref{E: equality of integer parts} which readily implies that
\begin{equation*}
             \floor{x_i\floor{x_{i-1}\floor{\dots \floor{x_2\floor{x_1\theta}}\dots}   }}=\floor{x_i\cdots x_1\theta}.
        \end{equation*}
 Therefore, it suffices to show that $Q|\floor{x_i\cdots x_1\theta}$, which follows easily from the fact that \begin{equation*}
     \rem{\frac{x_i\cdots x_1\theta}{Q}}\leq \frac{2^i\veps x_i\cdots x_1}{Q}< \frac{1}{Q},
 \end{equation*}where the last inequality holds if we pick $\veps$ to be sufficiently small. 
        \end{proof}

We are ready to prove Lemma \ref{L: fractional powers common expansion}. We will apply Lemma \ref{L: nested integer parts} to a $\Q$-independent subset of $\theta_i$'s. The $d_{\theta_i}$'s will be constructed inductively using iterated integer parts for the subset of the $\theta_i$ and then the other $d_{\theta_i}$ will be defined using linearity.
The assumption that $Q$ divides the expressions with the integer parts in the previous lemma is necessary to ensure that certain fractions in the definition of the $d_i$ below will be integer numbers.
    
\begin{proof}[Proof of Lemma \ref{L: fractional powers common expansion}]
    We choose a subset $\{c_0,\ldots, c_k\}$ of $\{\theta_1,\ldots, \theta_{\ell}\}$ with maximal cardinality, and such that $c_0,\ldots, c_k$ are $\mathbb{Q}$-independent. Reordering if necessary, we will assume that $c_0<c_1<\ldots<c_k$. Observe that for any $\theta_i$, we can write \begin{equation}\label{E: linear combination of theta_i}
        \theta_i=p_{i0}c_0+\dots+p_{ik}c_k
    \end{equation} for some $p_{ij}\in\mathbb{Q}$.
    Let $M\in\N$ be larger than the height\footnote{The height of a rational number $p/q$ written in lowest terms is the quantity $\max\{|p|,|q|\}.$} of all $p_{ij}$. In particular, this implies $|p_{ij}|<M$ for all admissible indices $i,j$.

We consider the set \begin{equation*}
    \mathcal{A}=\Bigl\{\beta\in (0,u)\colon \beta=\sum_{i=0}^{k}\frac{p_i}{q_i}c_i, \ \text{where } p_i,q_i\in \Z, |q_i|<M^2, |p_i|<M^3\Bigr\}
\end{equation*}
Namely, $\mathcal{A}$ contains those linear combinations of $c_0,\ldots, c_k$ with bounded rational coefficients, whose denominators are also bounded, and which also lie in the open interval $(0,u)$.

Observe that all of the initial numbers $\theta_1,\dots,\theta_{\ell}$ belong to $\mathcal{A}$. Also, we observe that $\mathcal{A}$ is finite and therefore bounded away from zero and $u$. Namely, there exists $\delta>0$, such that 
\begin{equation}\label{E: bounded away}
    \delta<s<u
\end{equation}
for all $s\in \mathcal{A}$.

We define $d_0,\dots, d_k$ recursively by \begin{equation}\label{E: recursion for degrees d_i}
	        d_0=\theta, \ \ d_{i+1}=\floor{\frac{c_{i+1}}{c_i}d_i   },\ \text{ for every } 0\leq i\leq k-1.  
	    \end{equation}where $\theta\in \N$ is chosen (arbitrarily large) so that the following relations hold:
\begin{enumerate}
    \item we have that \begin{equation}\label{E: M factorial divides d_i}
   M!\big|d_0\ \text{and } M!\big|d_i= \floor{\frac{c_i}{c_{i-1}} \floor{\dots \floor{\frac{c_2}{c_1}\floor{\frac{c_1}{c_0} d_0}  }   \dots}  }
\end{equation}for every $1\leq i\leq k$;
    \item we have the inequality \begin{equation}\label{E: inequality with nested integer parts}
	         \floor{\frac{c_k}{c_{k-1}} \floor{\dots \floor{\frac{c_2}{c_1}\floor{\frac{c_1}{c_0} d_0}  }   \dots}  }\geq \frac{d_0c_k}{c_0}-c_k\veps,
	    \end{equation}
     where $\veps$ is very small in terms of the fixed numbers $\delta, M,u$ and $c_0$.
\end{enumerate}
      We can choose $d_0$ in such a way by applying Lemma \ref{L: nested integer parts} (with $\theta = d_0, Q = M!, x_1 = \frac{c_1}{c_0}, \ldots, x_k = \frac{c_k}{c_{k-1}}$), since the numbers $1,\frac{c_1}{c_0}.\ldots,\frac{c_k}{c_0}$ are $\mathbb{Q}$-independent by assumption. By the definition of $d_0, \ldots, d_k$, we have the following chain of inequalities \begin{equation}
         \frac{d_0}{c_0}\geq \frac{d_1}{c_1}\geq \ldots\geq \frac{d_k}{c_k}\geq \frac{d_0}{c_0}-\veps.
     \end{equation}
     This implies that \begin{equation}\label{E: distance of d_i/c_i}
         \Bigabs{\frac{d_i}{c_i}-\frac{d_j}{c_j}}\leq \veps\implies |d_ic_j-d_jc_i|\leq \veps|c_i||c_j|<\veps u^2.
     \end{equation}

     Now, for any $s\in \mathcal{A}$ of the form $s=\sum_{i=0}^{k} \frac{p_i}{q_i}c_i, $we define $$D_s=\sum_{i=0}^{k} \frac{p_i}{q_i}d_i.$$
In particular, if $s$ is one of the original numbers $\theta_i$, we will ultimately pick $d_{\theta_i}=D_{\theta_i}$. Observe that there is no ambiguity in this definition, since the numbers $1,\frac{c_1}{c_0},\ldots, \frac{c_k}{c_0}$ are  $\mathbb{Q}$-independent, so that every $s\in A$ has a unique representation as a rational combination of these numbers.

Firstly, we prove that $D_{\theta_i}$ are  integers. 
We observe that is true since the denominators of the rationals ${p_{i0}},\dots, p_{ik}$ in \eqref{E: linear combination of theta_i} are all smaller than $M$ (by the initial choice of $M$) and that $M!$ divides $d_0,\dots, d_{k}$ by \eqref{E: M factorial divides d_i}.

 Next we establish a bound for the difference \begin{equation}
    \Bigabs{\frac{D_s}{s}-\frac{d_0}{c_0}}
\end{equation}for any $s\in \mathcal{A}$.
     Indeed, this may be rewritten as \begin{equation*}
         \abs{
        \dfrac{\sum_{i=0}^{k} \frac{p_i}{q_i} d_i}{\sum_{i=0}^{k} \frac{p_i}{q_i} c_i}-\frac{d_0}{c_0}
         }=\abs{\dfrac{\sum_{i=1}^{k} \dfrac{p_i}{q_i} \big( d_ic_0-d_0c_i\big)}{c_0\big(\sum_{i=0}^{k} \frac{p_i}{q_i} c_i\big)}}\leq \dfrac{\sum_{i=1}^{k} \abs{\frac{p_i}{q_i}} \big| d_ic_0-d_0c_i\big|}{|c_0||s|}<
         \veps \frac{M^3ku^2}{|c_0||\delta|},
      \end{equation*}where we used \eqref{E: bounded away}, \eqref{E: distance of d_i/c_i}, and the fact that $|p_i|<M^3$ by the definition of $\mathcal{A}$.
This also implies that \begin{equation}\label{E: distances of D_s/s}
    \abs{\frac{D_s}{s}-\frac{D_t}{t}}<2\veps \frac{M^3ku^2}{|c_0||\delta|}
\end{equation}for any $s,t\in \mathcal{A}$.

\begin{claim}\label{C: claim 1}
    If $\veps$ is sufficiently small, then the following hold:
\begin{enumerate}
    \item for each $s\in \mathcal{A}$, we have $D_s>0$;
    \item the intersection  \begin{equation*}
    \bigcap_{s\in \mathcal{A}}\Big(\frac{s}{D_{s}+1},\frac{s}{D_{s}}\Big]
\end{equation*}is non-empty.
\end{enumerate}
\end{claim}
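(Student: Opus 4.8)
The plan is to derive both parts of the claim from the estimate established immediately above, namely
\[
\left|\frac{D_s}{s}-\frac{d_0}{c_0}\right| < \veps\,\frac{M^3ku^2}{|c_0||\delta|}=:C\veps \qquad (s\in\mathcal{A}),
\]
from its consequence $\bigl|\frac{D_s}{s}-\frac{D_t}{t}\bigr| < 2C\veps$ for $s,t\in\mathcal{A}$ (which is \eqref{E: distances of D_s/s}), and from the two-sided bound $\delta<s<u$ of \eqref{E: bounded away}. Beyond the smallness of $\veps$ already in force, I would impose the single extra requirement $C\veps<\frac{1}{2u}$; since $\delta,M,u,c_0$ and the bounded integer $k$ are all determined by $\theta_1,\dots,\theta_\ell$ before $\veps$ is chosen (and before $d_0$ is produced by Lemma \ref{L: nested integer parts}), this is admissible and costs nothing.

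For the first part, since $s>0$ it suffices to see that $D_s/s>0$. The displayed estimate gives $\frac{D_s}{s}>\frac{d_0}{c_0}-C\veps\geq\frac{1}{c_0}-C\veps>\frac{1}{u}-\frac{1}{2u}>0$, where I use $d_0\geq M!\geq 1$ and $c_0<u$. Hence $D_s>0$ for every $s\in\mathcal{A}$; in particular, the intervals $\bigl(\frac{s}{D_s+1},\frac{s}{D_s}\bigr]$ appearing in the second part have positive endpoints and the usual orientation, so the statement is well posed.

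For the second part, I would exhibit an explicit common point, namely $x^\ast:=\min_{s\in\mathcal{A}}\frac{s}{D_s}$, the minimum being attained because $\mathcal{A}$ is finite. The inclusion $x^\ast\leq\frac{s}{D_s}$ holds for every $s\in\mathcal A$ by the definition of $x^\ast$. For the strict lower bound, write $\frac{1}{x^\ast}=\max_{t\in\mathcal{A}}\frac{D_t}{t}=\frac{D_{t^\ast}}{t^\ast}$; then $x^\ast>\frac{s}{D_s+1}$ is equivalent to $\frac{D_{t^\ast}}{t^\ast}<\frac{D_s}{s}+\frac{1}{s}$, and this follows from $\frac{D_{t^\ast}}{t^\ast}-\frac{D_s}{s}<2C\veps<\frac{1}{u}<\frac{1}{s}$, using the near-constancy of $\frac{D_t}{t}$ over $\mathcal{A}$ together with $s<u$. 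Thus $x^\ast$ lies in $\bigl(\frac{s}{D_s+1},\frac{s}{D_s}\bigr]$ for all $s\in\mathcal{A}$, and the intersection is non-empty.

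I do not expect any genuine obstacle here: the two estimates the argument rests on are already available, and the finiteness of $\mathcal{A}$ supplies the attained extrema. The only thing requiring care is bookkeeping — checking that the extra smallness condition on $\veps$ is legitimate in the order in which the parameters are fixed, and noting that the whole claim uses only $d_0\geq M!$ (which holds by the construction of $d_0$), so that it is insensitive to how large $d_0$ is eventually chosen.
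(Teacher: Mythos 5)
Your proposal is correct and follows essentially the same route as the paper: part (i) via $\frac{D_s}{s}>\frac{d_0}{c_0}-\veps\frac{M^3ku^2}{|c_0|\delta}>0$, and part (ii) via the estimate $\bigl|\frac{D_s}{s}-\frac{D_t}{t}\bigr|<\frac{2\veps M^3ku^2}{|c_0|\delta}<\frac{1}{u}<\frac{1}{s}$; naming the common point $x^\ast=\min_{s}\frac{s}{D_s}$ is just a repackaging of the paper's pairwise endpoint comparison $\frac{s}{D_s}>\frac{t}{D_t+1}$.
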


For the first part of the claim, note that $s$ is positive by the definition of the set $\mathcal{A}$. So
\begin{equation}\label{E: positivity of D}
    \frac{D_s}{s}>\frac{d_0}{c_0}-\veps\frac{M^3ku^2}{|c_0||\delta|}\geq\frac{1}{c_0}-\veps\frac{M^3ku^2}{|c_0||\delta|}>0
\end{equation} if we pick $\veps$ sufficiently small. 

The second part of the claim amounts to showing that \begin{equation*}
     \frac{s}{D_s}>\frac{t}{D_t+1}
 \end{equation*}for any $s,t\in \mathcal{A}$, which can be rewritten\footnote{Recall that $D_s>0$ for all $s\in \mathcal{A}$.} as \begin{equation*}
     \frac{D_s}{s}-\frac{D_t}{t}<\frac{1}{t}.
 \end{equation*}If the left-hand side is negative, then the claim is obvious. Otherwise, our claim will follow if we prove that \begin{equation*}
     \abs{\frac{D_s}{s}-\frac{D_t}{t}}<\frac{1}{u},
 \end{equation*}since all elements $t\in \mathcal{A}$ are smaller than $u$. However, we have already established that the left-hand side is smaller than \begin{equation*}
     \frac{2\veps M^3ku^2}{|c_0||\delta|}
 \end{equation*}which can be made smaller than $1/u$ if we pick $\veps$ sufficiently small.

 \

\begin{claim}\label{C: claim 2}
    If $\veps$ is sufficiently small, then there exists  $d_u\in\N$, such that the following two conditions are satisfied:
\begin{enumerate}
    \item We have \begin{equation}\label{E: non-empty intersection mathcalA}
       \Big(\frac{u}{d_u+1},\frac{u}{d_u} \Big] \cap\brac{  \bigcap_{s\in \mathcal{A}}\Big(\frac{s}{D_{s}+1},\frac{s}{D_{s}}\Big]}\neq \emptyset.
\end{equation}
    \item For all $s\in \mathcal{A}$, we have \begin{equation}\label{E: d_u/u dominates}
    \frac{u}{d_u}<\frac{s}{D_s}.
\end{equation}
\end{enumerate}
\end{claim}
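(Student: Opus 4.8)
The plan is to reduce both parts of Claim~\ref{C: claim 2} to finding an integer in a single interval, and then to show that, although the relevant lengths shrink, the interval stays long enough to contain one. By Claim~\ref{C: claim 1}, writing
\[
a := \max_{s\in\mathcal{A}}\frac{s}{D_s+1}, \qquad b := \min_{s\in\mathcal{A}}\frac{s}{D_s},
\]
the common intersection $\bigcap_{s\in\mathcal{A}}\big(\tfrac{s}{D_s+1},\tfrac{s}{D_s}\big]$ equals the nonempty half-open interval $(a,b]$ (the identity is immediate since $\mathcal{A}$ is finite, and nonemptiness is exactly Claim~\ref{C: claim 1}\,(2)). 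Condition~(2) of Claim~\ref{C: claim 2} asks precisely that $\tfrac{u}{d_u}<b$, while condition~(1) holds as soon as $\tfrac{u}{d_u}\in(a,b)$: then $\tfrac{u}{d_u}$ itself belongs both to $(\tfrac{u}{d_u+1},\tfrac{u}{d_u}]$ and to $(a,b]$, so the two intervals meet. Hence it suffices to exhibit $d_u\in\N$ with $a<\tfrac{u}{d_u}<b$, i.e.\ an integer in the open interval $\big(\tfrac{u}{b},\tfrac{u}{a}\big)$.

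The heart of the argument is that this window has length bounded below by a fixed constant strictly larger than $1$. Set $B:=d_0/c_0$ and $\eta_s:=D_s-sB$; the estimate $|D_s/s-d_0/c_0|<C\veps$ obtained just before \eqref{E: distances of D_s/s} (together with $\delta<s<u$) gives $|\eta_s|\le C_1\veps$ for all $s\in\mathcal{A}$, with $C_1$ a fixed constant depending only on $k,M,u,\delta,c_0$. Now $\tfrac{1}{b}=\max_{s}\tfrac{D_s}{s}=B+\max_{s}\tfrac{\eta_s}{s}$ and $\tfrac{1}{a}=\min_{s}\tfrac{D_s+1}{s}=B+\min_{s}\tfrac{\eta_s+1}{s}$, so the two copies of $B$ cancel and
\[
\frac{u}{a}-\frac{u}{b} \;=\; u\Big(\min_{s\in\mathcal{A}}\frac{\eta_s+1}{s}-\max_{s\in\mathcal{A}}\frac{\eta_s}{s}\Big).
\]
Using $|\eta_s|\le C_1\veps$ one gets $\min_{s}\tfrac{\eta_s+1}{s}\ge \tfrac{1-C_1\veps}{s_{\max}}$, where $s_{\max}:=\max\mathcal{A}$, and $\max_{s}\tfrac{\eta_s}{s}\le \tfrac{C_1\veps}{\delta}$. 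Since $\mathcal{A}\subseteq(0,u)$ is finite we have $s_{\max}<u$, hence $\tfrac{1}{s_{\max}}>\tfrac1u$; therefore, once $\veps$ is small enough in terms of the fixed quantities $u,s_{\max},\delta,C_1$ (which is permitted, as $\veps$ was chosen very small in terms of the fixed data), we obtain $\tfrac{u}{a}-\tfrac{u}{b}>u\cdot\tfrac1u=1$.

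An open interval of length greater than $1$ contains an integer, so I would take $d_u:=\lfloor u/b\rfloor+1$: it satisfies $d_u>u/b$ automatically, and $d_u\le u/b+1<u/a$ by the length bound, so $a<\tfrac{u}{d_u}<b$. (Since $u/b=uB+O(\veps)$ is large when $d_0$ is large, $d_u$ is a positive integer, and in fact $d_u\ge q$, consistently with the final requirements of Lemma~\ref{L: fractional powers common expansion}.) It then only remains to read off the two conclusions: $\tfrac{u}{d_u}<b\le \tfrac{s}{D_s}$ for every $s\in\mathcal{A}$ gives~(2), and $\tfrac{u}{d_u}\in(a,b)\subseteq\bigcap_{s\in\mathcal{A}}\big(\tfrac{s}{D_s+1},\tfrac{s}{D_s}\big]$ together with $\tfrac{u}{d_u}\in(\tfrac{u}{d_u+1},\tfrac{u}{d_u}]$ gives~(1).

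The main obstacle — and the place where the hypothesis $s_{\max}<u$ is genuinely used — is exactly the control of the window $\big(\tfrac{u}{b},\tfrac{u}{a}\big)$. Naively this looks hopeless: both $a$ and $b$ converge to $c_0/d_0$ and $b-a$ is only of size $\asymp d_0^{-2}$, so one might expect no integer $d_u$ to be available. The point is to measure $d_u$ rather than $u/d_u$: passing to reciprocals converts the minuscule gap $b-a$ into a window of length $u\big(\tfrac1a-\tfrac1b\big)$ that is bounded below by $u/s_{\max}>1$ uniformly in $d_0$. Everything beyond this observation is routine bookkeeping with the bounds on $\eta_s$ already assembled in the construction.
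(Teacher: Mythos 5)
Your proof is correct and takes essentially the same route as the paper: both arguments reduce the claim to producing an integer $d_u$ strictly between $\max_{s\in\mathcal{A}} u D_s/s$ and $\min_{s\in\mathcal{A}} u(D_s+1)/s$, and both show this window has length greater than $1$ by combining the already-established closeness of $D_s/s$ to $d_0/c_0$ (your $|\eta_s|\le C_1\veps$, the paper's bound \eqref{E: distances of D_s/s}) with the fact that every $s\in\mathcal{A}$ is strictly smaller than $u$, taking $\veps$ sufficiently small. The $\eta_s$ bookkeeping and the explicit choice $d_u=\lfloor u/b\rfloor+1$ are only cosmetic differences from the paper's pairwise comparison of $s,t\in\mathcal{A}$.
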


If Claim \ref{C: claim 2} holds, then the first two conditions of the statement will be proven (with $d_{\theta_i}=D_{\theta_i}$)\footnote{We will also need to show that $D_{\theta_i}$ are integers.} by restricting \eqref{E: non-empty intersection mathcalA} and \eqref{E: d_u/u dominates} from the more general set $\mathcal{A}$ to its subset $\{\theta_1,\ldots, \theta_{\ell}\}$.

 To prove Claim \ref{C: claim 2}, it suffices to show that there exists $d_u\in\N$ such that
 \begin{equation}\label{E: goal inequalities for d_u}
     \frac{D_s}{s} < \frac{d_u}{u}<\frac{D_s+1}{s}
  \end{equation}for all $s\in \mathcal{A}$. 
By multiplying both sides with $u$, we can rephrase these inequalities as 
\begin{equation*}
    \max_{s\in \mathcal{A}}\Bigl\{u\cdot \frac{D_s}{s}\Bigr\}<d_u<\min_{s\in \mathcal{A}}\Bigl\{u\cdot \frac{D_s}{s}+\frac{u}{s}\Bigr\}.
\end{equation*}
So it suffices to show that 
\begin{equation}\label{qrqwrq}
    \min_{s\in \mathcal{A}}\Bigl\{u\cdot \frac{D_s}{s}+\frac{u}{s}\Bigr\}-\max_{s\in \mathcal{A}}\Bigl\{u\cdot \frac{D_s}{s}\Bigr\}>1.
\end{equation}

We remark that the desired $d_{u}$ guaranteed by (\ref{qrqwrq})  is not equal to either $u\cdot \frac{D_t}{s}$ and $u\cdot \frac{D_s}{s}+\frac{u}{s}$ for any $s\in \mathcal{A}$, because the inequality in (\ref{qrqwrq}) is strict.

Note that (\ref{qrqwrq}) holds if we can show that 
\begin{equation*}
 u\cdot \frac{D_s}{s}+\frac{u}{s}  - u\cdot \frac{D_t}{t}>1
\end{equation*}for all $s,t\in \mathcal{A}$, which can be rewritten 
in the form \begin{equation*}
    \frac{D_t}{t}-\frac{D_s}{s}<\frac{1}{s}-\frac{1}{u}.
\end{equation*} Observe that the right-hand side is positive, since all elements of $\mathcal{A}$
 are smaller than $u$. On the other hand, the left-hand side is bounded in magnitude by the quantity in \eqref{E: distances of D_s/s}. If we pick $\veps$ sufficiently small, we can ensure that \begin{equation*}
      \frac{2\veps M^3ku^2}{|c_0||\delta|}<\min_{s\in \mathcal{A}}\Bigabs{{\frac{1}{s}-\frac{1}{u}}},
 \end{equation*}which implies the desired result.

\

In addition to Claim \ref{C: claim 2}, we shall consider the positive integers $D_{\theta_1},\ldots, D_{\theta_{\ell}}$ provided by the previous construction and which correspond to our original real numbers $\theta_1,\ldots, \theta_{\ell}$.
We have already proven that for this choice of $d_u$ and $d_{\theta_i}=D_{\theta_i}$, conditions \eqref{i: Kftp1} and \eqref{i: Kftp2} of our statement hold and we will show that the same extends to condition \eqref{i: Kftp3}.
We also remind the reader that we have already established that $D_{\theta_i}$ are positive integers, so the choice of $d_{\theta_i}=D_{\theta_i}$ is indeed sensible.

Now, we show that condition \eqref{i: Kftp3} in our statement is satisfied.
We have to restrict to the set $\{\theta_1,\dots,\theta_{\ell}\}$ instead of the whole set $\mathcal{A}$.
The main point is that the difference $\theta_{i}-\theta_j$ belongs to the set $\mathcal{A}$ (when it is a positive number), while the difference $s-t$ might not. Indeed, if $p_i/q_i$ and $p'_i/q'_i$ are the coefficients of $s,t$ in their linear combinations with respect to $c_0,\dots, c_k$, then the difference $s-t$ is a linear combination of $c_0,\dots,c_k$. However, the rational number $p_i/q_i-p'_i/q'_i$ can have a denominator comparable to $q_iq'_i$, which can be as large as $M^4$.

To conclude the proof, we show that \begin{equation}\label{feqf}
    \frac{u}{d_u}<\frac{\theta_i-\theta_j}{D_{\theta_i}-D_{\theta_j}}
\end{equation}for all distinct $i,j\in [\ell]$. Assume without loss of generality that $\theta_i>\theta_j$. If $\theta_i-\theta_j\in \mathcal{A}$, then $D_{\theta_i-\theta_j}=D_{\theta_i}-D_{\theta_j}$ by the linearity in the definition of $D_s$, and so \eqref{E: d_u/u dominates} implies (\ref{feqf}). So it suffices to show that $\theta_i-\theta_j\in \mathcal{A}$.

We remark the obvious bounds \begin{equation*}
    0<\theta_i-\theta_j<u.
\end{equation*}In addition, if we write $$\theta_i=\sum_{h=0}^{k}p_{ih}c_{h}$$
and similarly for $\theta_j$, then the rational numbers $p_{ih}$ are bounded by $M$ and have height at most $M$, by the initial choice of the positive integer $M$. Therefore, we have \begin{equation*}
\theta_i-\theta_j=\sum_{h=0}^{k}(p_{ih}-p_{jh})c_h,
 \end{equation*}and the rational number $p_{ih}-p_{jh}$ is bounded in magnitude by $2M<M^3$ (so that its numerator is bounded by $M^3$ as well) and has denominator at most $2M^2$, which are both acceptable.

This shows that $\theta_i-\theta_j$ is an element of $\mathcal{A}$ and we are done.
\end{proof}

We are now ready to prove Proposition \ref{P: Taylor expansion ultimate}, which is a consequence of the following proposition.

\begin{proposition}\label{P: Taylor expansion ultimate in the positive fracdeg case}
     Let $m, q\in\N$ and $ g_1\prec \dots\prec g_m\in \mathcal{H}$ be strongly nonpolynomial functions satisfying $c_i := \fracdeg g_i > 0$.
      Then there exist $d_{g_1},\dots,d_{g_m}\in\N$ such that the following hold:
     \begin{enumerate}
         \item\label{i: large enough degrees} for all $i\in [m]$, $d_{g_i}\geq q$;
         \item\label{i: condition for equal degrees} for $i,j\in[m]$, we have $d_{g_i} = d_{g_j}$ iff $c_i = c_j$;
         \item\label{i: relations on different endpoints} for all $i,j\in [m]$,  we have
\begin{equation}\label{E: growth conditions for different endpoints}
    \bigabs{g_i^{(d_{g_i}) }(N) }^{-\frac{1}{d_{g_i}}}\lll   \bigabs{g_j^{(d_{g_j}+1) }(N)}^{-\frac{1}{d_{g_j}+1}};
   \end{equation}
   \item\label{i: property implying sublinearity for super-fractional} for all $i\in[m]$, we have \begin{equation}\label{E: growth conditions that imply sublinearity and super-fractionality on coefficients}
             \fracdeg \bigabs{g_i^{(d_{g_i}) }(N) }^{-\frac{1}{d_{g_i}}}\leq \fracdeg \bigabs{g_m^{(d_{g_m}) }(N) }^{-\frac{1}{d_{g_m}}},
\end{equation}
         with equality if and only if $\fracdeg g_i = \fracdeg g_m$;

\item\label{i: property implying different chi for super-fractional case} for all distinct $i,j\in[m]$ and all $\eta >0$, we have 
\begin{equation*}
    g_i^{(d_{g_i})}(N)\bigabs{g_{m}^{(d_{g_m}) }(N)}^{-\frac{d_{g_i}}{d_{g_m}}}N^{\eta d_{g_i}}\not\asymp    g_j^{(d_{g_j})}(N)\bigabs{g_{m}^{(d_{g_m}) }(N)}^{-\frac{d_{g_j}}{d_{g_m}}}N^{\eta d_{g_j}}.
\end{equation*}
     \end{enumerate}
 \end{proposition}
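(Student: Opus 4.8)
The plan is to deduce everything from the combinatorial input Lemma~\ref{L: fractional powers common expansion}, reducing conditions (i)--(iv) (and the non-degenerate instances of (v)) to arithmetic of fractional degrees, which Lemma~\ref{L: properties of fracdeg} makes completely explicit, and invoking the finer Hardy-field calculus of Lemmas~\ref{L: Frantzikinakis growth inequalities} and~\ref{L: degree inequality} only for the remaining degenerate instances of (v). Write $c_i=\fracdeg g_i$. Since $g_1\prec\cdots\prec g_m$ forces $c_1\le\cdots\le c_m$ (if $c_i>c_j$ for $i<j$ then $g_j\lll g_i$, contradicting $g_i\prec g_j$), the largest fractional degree is $c_m$. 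Let $\gamma_1<\cdots<\gamma_r$ enumerate the distinct values among $c_1,\dots,c_m$, so $\gamma_r=c_m$. If $r=1$ all the conclusions are trivial except (v), handled as below; if $r\ge 2$, apply Lemma~\ref{L: fractional powers common expansion} with $u=\gamma_r$ and $\{\theta_1,\dots,\theta_{r-1}\}=\{\gamma_1,\dots,\gamma_{r-1}\}$, obtaining integers $d_{\gamma_1},\dots,d_{\gamma_{r-1}}$ and $d_u=:d_{\gamma_r}$, all at least $q$, and declare $d_{g_k}:=d_{\gamma_a}$ whenever $c_k=\gamma_a$.

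Conditions (i) and (ii) are then immediate: (i) is the lower bound $d_{\gamma_a},d_u\ge q$ in Lemma~\ref{L: fractional powers common expansion}, and (ii) holds by construction since $d_{g_k}=d_{g_l}$ exactly when $c_k=c_l$. For the rest, the crucial identity (from Lemma~\ref{L: properties of fracdeg}, using that the $g_i$ are strongly nonpolynomial) is $\fracdeg\bigl(|g_i^{(d_{g_i})}(N)|^{-1/d_{g_i}}\bigr)=1-c_i/d_{g_i}$. Hence condition (iii), namely $|g_i^{(d_{g_i})}|^{-1/d_{g_i}}\lll|g_j^{(d_{g_j}+1)}|^{-1/(d_{g_j}+1)}$, is equivalent to $c_j/(d_{g_j}+1)<c_i/d_{g_i}$, which holds for all $i,j$ because the intervals in \eqref{E: non-empty intersection of intervals} have a common point, so the left endpoint of any one lies strictly below the right endpoint of any other; and condition (iv), namely $c_m/d_{g_m}\le c_i/d_{g_i}$ with equality iff $c_i=c_m$, is precisely \eqref{E: first bound on d_u} (strict inequality when $c_i=\gamma_a$ with $a<r$) together with the obvious equality when $c_i=c_m$.

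Condition (v) is where the work concentrates. By Lemma~\ref{L: properties of fracdeg}, the fractional degree of $g_i^{(d_{g_i})}(N)\,|g_m^{(d_{g_m})}(N)|^{-d_{g_i}/d_{g_m}}\,N^{\eta d_{g_i}}$ equals $c_i+d_{g_i}(\eta-c_m/d_{g_m})$. When $c_i\ne c_j$ and neither equals $c_m$, the fractional degrees of the two sides of (v) coincide only at $\eta=c_m/d_{g_m}-(c_i-c_j)/(d_{g_i}-d_{g_j})$, which is negative by \eqref{E: coefficients have distinct growth}; so for every $\eta>0$ the two sides have distinct fractional degrees and $\not\asymp$ follows. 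When $c_i=c_j$ (hence $d_{g_i}=d_{g_j}=:d$) the ratio of the two sides is $g_i^{(d)}/g_j^{(d)}$, independent of $\eta$; taking WLOG $g_i\prec g_j$, if $g_i^{(d)}\asymp g_j^{(d)}$ then integrating down $d$ times---legitimate because $g_j^{(k)}$ never tends to a nonzero constant for $0\le k\le d$ (its fractional degree is $c_j-k$, and a strongly nonpolynomial function is never $\asymp t^{\text{integer}}$)---would force $g_i\asymp g_j$, a contradiction. The remaining case, exactly one of $c_i,c_j$ equal to $c_m$, is the main obstacle: here \eqref{E: first bound on d_u} forces the fractional degrees to coincide at one \emph{positive} value $\eta_{ij}$, and at $\eta_{ij}$ one must rule out $\asymp$ by a finer analysis of the explicit monomial-in-$g_i,g_j,g_m,N$ Hardy function, which by Lemma~\ref{L: Frantzikinakis growth inequalities} amounts to comparing the slowly-varying parts of $g_i$ and $g_m$; since the application of this proposition in the proof of Proposition~\ref{P: Taylor expansion ultimate} only ever uses $\eta$ in a small interval $(0,\eta_0)$, it suffices to choose $\eta_0$ below the finitely many $\eta_{ij}$. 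I expect this degenerate instance of (v) to be the only genuinely delicate point; everything else is fractional-degree bookkeeping through Lemmas~\ref{L: properties of fracdeg} and~\ref{L: Frantzikinakis growth inequalities}.
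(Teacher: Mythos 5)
Your handling of conditions (i)--(iv) via Lemma~\ref{L: fractional powers common expansion}, the fractional-degree identity $\fracdeg |g_i^{(d)}|^{-1/d}=1-c_i/d$, and the case split for (v) exactly mirror the paper's proof. Your integration-down argument for the case $c_i=c_j$ of (v) is a correct and slightly more detailed version of the paper's terse ``result follows since $g_i, g_j$ have distinct growth,'' and the remark that $g_j^{(k)}$ never tends to a nonzero constant (because $g_j$, being strongly nonpolynomial, is never $\asymp t^{\text{integer}}$) is exactly the point that makes the iterated L'H\^opital step legitimate.

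The substantive observation is your flag on condition (v) when $c_i\neq c_j$ and one of them equals $c_m$. You are right, and this is in fact a gap in the paper's own argument: the paper cites \eqref{E: D_l/C_l is big v2} for arbitrary $c_i\neq c_j$, but that inequality is stated only for indices $1\leq i<j\leq\ell-1$, i.e., when neither fractional degree equals $C_\ell=c_m$. If $c_j=c_m>c_i$, then \eqref{E: D_l/C_l is big} forces the \emph{opposite} mediant inequality: cross-multiplying $\frac{c_m}{d_{g_m}}<\frac{c_i}{d_{g_i}}$ shows $\frac{c_m-c_i}{d_{g_m}-d_{g_i}}<\frac{c_m}{d_{g_m}}$, so the critical value $\eta_{ij}=\frac{c_m}{d_{g_m}}-\frac{c_i-c_j}{d_{g_i}-d_{g_j}}$ is strictly positive, exactly as you note. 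Moreover, the ``finer analysis'' route you tentatively float cannot succeed in general: for pure fractional powers $g_i(t)=t^{c_i}$, at $\eta=\eta_{ij}$ both sides of (v) reduce to constant multiples of the same power of $N$, so they are in fact $\asymp$ and the claim fails. Consequently the statement cannot hold for \emph{all} $\eta>0$, and the only correct repair is your second one: restrict $\eta$ to an interval $(0,\eta_0)$ lying below the finitely many positive $\eta_{ij}$, which is all that the application in Proposition~\ref{P: Taylor expansion ultimate} ever uses. Your blind proposal is thus slightly more careful than the published proof, while ultimately following the same route.
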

 
\begin{proof}
{Throughout the proof, we will make use of Lemma \ref{L: properties of fracdeg} without citations.}
We may write $g_i(t)=t^{c_i}b_i(t)$, where $b_i(t)\in \CH$ is a function of fractional degree $0$.
     We partition the set $[m]$ into disjoint, consecutive intervals $B_1,\dots, B_{\ell}$, where $g_i$ and $g_j$ belong to the same set $B_{r}$ if and only if $c_i=c_j$. 
     For $r\in[\ell]$, we shall denote by $C_{r}$ the common fractional power corresponding to the set $B_{r}$.
     Let $D_1,\ldots, D_{\ell-1}$ and $D_\ell$ be the integers $d_{\theta_1}, \ldots, d_{\theta_\ell}$ and $d_u$ provided by Lemma \ref{L: fractional powers common expansion}. Then we set
     \begin{align*}
         d_{g_i} = D_r \quad \Longleftrightarrow \quad i \in B_r.
     \end{align*}
    Namely, the choice of $d_{g_i}$ is the one provided by Lemma \ref{L: fractional powers common expansion} if we took $g_i$ to be fractional powers. 
     For reference, we will rewrite the main bounds from Lemma \ref{L: fractional powers common expansion} that $D_1,\ldots, D_{\ell}$ obey:\begin{enumerate}
         \item we have\begin{equation}\label{E: finite intersection non-empty}  
               \bigcap_{i=1}^{\ell}\Big(\frac{C_i}{D_{i}+1},\frac{C_i}{D_{i}}\Big] \neq \emptyset;
         \end{equation}
         \item for any $1\leq i\leq \ell-1$, we have \begin{equation}\label{E: D_l/C_l is big}
             \frac{C_{\ell}}{D_{\ell}}<\frac{C_i}{D_i};
         \end{equation}
         \item for any $1\leq i< j\leq \ell-1$ we have \begin{equation}\label{E: D_l/C_l is big v2}
             \frac{C_{\ell}}{D_{\ell}}<\frac{C_i-C_j}{D_i-D_j}.
         \end{equation}
     \end{enumerate}

Firstly, we observe that Lemma \ref{L: fractional powers common expansion} allows us to choose all  the numbers $d_{g_i}$ to be larger than $q$. Thus, condition \eqref{i: large enough degrees} is satisfied.  Secondly, condition \eqref{i: condition for equal degrees} follows by construction. Noting that
\begin{align*}
    \fracdeg \bigabs{g_i^{(d) }}^{-\frac{1}{d}} = 1- \frac{c_i}{d}
\end{align*} 
for any $d\in\N$, condition \eqref{i: relations on different endpoints} is equivalent to the inequality $\frac{c_j}{d_{g_j}+1} < \frac{c_i}{d_{g_i}}$; when $c_i = c_j$, this is immediate from the positivity of fractional degrees, otherwise it follows from \eqref{E: finite intersection non-empty}. Similarly, condition \eqref{i: property implying sublinearity for super-fractional} is equivalent to $\frac{c_m}{d_{g_m}}\leq \frac{c_i}{d_{g_i}}$, with equality only if and only if $c_i = c_m$, and this is an immediate consequence of \eqref{E: D_l/C_l is big}. To prove the last condition, we split into the cases $c_i = c_j$ and $c_i \neq c_j$. If $c_i = c_j$, then $d_{g_i} = d_{g_j}$ as well, and then result follows since $g_i, g_j$ have distinct growth. For the case $c_i \neq c_j$, we have
\begin{align*}
    g_i^{(d_{g_i})}(N)\bigabs{g_{m}^{(d_{g_m}) }(N)}^{-\frac{d_{g_i}}{d_{g_m}}}N^{\eta d_{g_i}} \asymp g_j^{(d_{g_j})}(N)\bigabs{g_{m}^{(d_{g_m}) }(N)}^{-\frac{d_{g_j}}{d_{g_m}}}N^{\eta d_{g_j}}
\end{align*}
only if both sides have the same fractional degrees. We will show that this is not the case. Since
\begin{align*}
    \fracdeg \abs{ g_i^{(d_{g_i})}(N)\bigabs{g_{m}^{(d_{g_m}) }(N)}^{-\frac{d_{g_i}}{d_{g_m}}}N^{\eta d_{g_i}}} = \frac{c_i d_{g_m} - c_m d_{g_i} + \eta d_{g_i}d_{g_{m}}}{d_{g_{m}}},
\end{align*}
both functions above have the same fractional degrees if and only if 
\begin{align}\label{E: to fail}
    \frac{c_i - c_j}{d_{g_i}-d_{g_j}} = \frac{c_m}{d_{g_m}} - \eta.
\end{align}

Since 
\begin{align*}
    \frac{c_m}{d_{g_m}} - \eta < \frac{c_m}{d_{g_m}} < \frac{c_i - c_j}{d_{g_i}-d_{g_j}}
\end{align*}
for every $\eta>0$ by \eqref{E: D_l/C_l is big v2}, the equality \eqref{E: to fail} will fail for all $\eta$ in this range, giving the result.
\end{proof}
We are ready to conclude the proof of Proposition \ref{P: Taylor expansion ultimate}.
\begin{proof}[Proof of Proposition \ref{P: Taylor expansion ultimate}]
    We use Proposition \ref{P: Taylor expansion ultimate in the positive fracdeg case} for functions $g_{m_2 + 1}, \ldots, g_m$ to find degrees $d_{g_{m_2+1}}, \ldots, d_{g_m}$ satisfying the conditions of Proposition \ref{P: Taylor expansion ultimate in the positive fracdeg case}, and we keep using the notation $c_i = \fracdeg g_i$. If $d_{g_m}  = 0$, i.e. all $g_1, \ldots, g_m$ are subfractional, we let $\eta_0 = 1$, otherwise take any $\eta_0$ satisfying
    \begin{align}\label{E: eta_0}
        0<\eta_0 <\frac{c_m}{d_{g_m}} -\max_{i\in[m_2+1, m]} \frac{c_i}{d_{g_i}+1} < c_m.
    \end{align}
     By  \eqref{E: growth conditions for different endpoints} applied with $i= m, j = i$, we have
    \begin{align*}
        \frac{c_m}{d_{g_m}} > \max_{i\in[m_2+1, m]} \frac{c_i}{d_{g_i}+1}, 
    \end{align*}
    and so the range of $\eta_0$ is nonempty.
    We then set $H(N) = N^{\eta}$ if $d_{g_m} = 0$ and
    $$H(N) = |g_m^{(d_{g_m})}(N)|^{-1/d_{g_m}}N^\eta$$
    otherwise for an arbitrary choice of $\eta\in (0, \eta_0)$. By \eqref{E: eta_0} and Proposition \ref{P: Taylor expansion ultimate in the positive fracdeg case}\eqref{i: property implying sublinearity for super-fractional}, we have
    \begin{align*}
            \bigabs{g_i^{(d_{g_i}) }(N) }^{-\frac{1}{d_{g_i}}}\lll H(N) \lll   \bigabs{g_j^{(d_{g_j}+1) }(N)}^{-\frac{1}{d_{g_j}+1}}
    \end{align*}
    for all $i,j\in[m_2+1, m]$,
    proving the first part of \eqref{E: growth conditions 1} (this holds vacuously if $d_{g_m} = 0$). We can then take $L$ to be any positive Hardy function satisfying 
    \begin{align*}
        H(N) \lll L(N)\lll  \bigabs{g_j^{(d_{g_j}+1) }(N)}^{-\frac{1}{d_{g_j}+1}}
    \end{align*}
    for all $j\in[m_2+1, m]$ (if $d_{g_m} = 0$, then this condition again holds vacuously, so we can take $L(N)=N^{\eta'}$ for $\eta<\eta'<1$). This completes the proof of \eqref{E: growth conditions 1}, and hence the condition \eqref{i: property implying common Taylor expansion} of Proposition~\ref{P: Taylor expansion ultimate}. So chosen functions $H, L$ have fractional degrees between 0 and 1 because the functions $\bigabs{g_i^{(d_{g_i})}}^{-\frac{1}{d_{g_i}}}, \bigabs{g_i^{(d_{g_i}+1)}}^{-\frac{1}{d_{g_i}+1}}$ do. It follows from this and the argument above Proposition \ref{P: Taylor expansion ultimate} that the degree of the Taylor expansions of $g_1, \ldots, g_{m_2}$ is 0. Together with Proposition \ref{P: Taylor expansion ultimate in the positive fracdeg case}, this completes the proof of the conditions \eqref{i: arbitrarily large} and \eqref{i: equal fractional degrees} of Proposition \ref{P: Taylor expansion ultimate}. Property \eqref{i: comparing frac degs} follows immediately from the analogous property in Proposition \ref{P: Taylor expansion ultimate in the positive fracdeg case}, and so it remains to show the property \eqref{i: property implying different chi}. For $i\in[m_2]$, the fact that $d_{g_i} = 0$ implies that $g_i^{(d_{g_i})}(N) H(N)^{d_i} =g_i(N)$, and so the functions $g_i^{(d_{g_i})}(N) H(N)^{d_i}, g_j^{(d_{g_j})}(N) H(N)^{d_j}$ have distinct growth for $i,j \in[m_2]$ because $g_i, g_j$ have distinct growth. For $i,j\in [m_2+1, m]$, property \eqref{i: property implying different chi} follows from Proposition~\ref{P: Taylor expansion ultimate in the positive fracdeg case}\eqref{i: property implying different chi for super-fractional case}. Lastly, we have $\fracdeg g_i^{(d_{g_i})}(N) H(N)^{d_i} = 0$ iff $i\in[m_2]$, and so property  \eqref{i: property implying different chi} clearly holds when exactly one $i,j$ is in $[m_2]$. 
\end{proof}

\end{appendix}

\end{document}